\newcommand{\C}{\mathbb{C}}
\newcommand{\g}{\mathfrak{g}}
\newcommand{\U}{\mathcal{U}}
\newcommand{\Walg}{\mathcal{W}}
\newcommand{\Irr}{\operatorname{Irr}}
\newcommand{\Orb}{\mathbb{O}}
\newcommand{\gr}{\operatorname{gr}}
\newcommand{\Hom}{\operatorname{Hom}}
\newcommand{\h}{\mathfrak{h}}
\newcommand{\Z}{\mathbb{Z}}
\newcommand{\F}{\mathbb{F}}
\newcommand{\B}{\mathcal{B}}
\newcommand{\Br}{\operatorname{Br}}
\newcommand{\Sh}{\mathsf{Sh}}
\newcommand{\Coh}{\operatorname{Coh}}
\newcommand{\Fl}{\mathcal{F}l}
\newcommand{\slf}{\mathfrak{sl}}
\newcommand{\Q}{\mathbb{Q}}
\newcommand{\A}{\mathcal{A}}
\newcommand{\N}{\mathcal{N}}
\newcommand{\St}{\mathsf{St}}
\newcommand{\Perv}{\operatorname{Perv}}
\newcommand{\Nilp}{\mathcal{N}}
\newcommand{\lf}{\mathfrak{l}}
\newcommand{\Fr}{\operatorname{Fr}}
\newcommand{\p}{\mathfrak{p}}
\newcommand{\m}{\mathfrak{m}}
\newcommand{\Hecke}{\mathcal{H}}
\newcommand{\D}{\mathbb{D}}
\newcommand{\n}{\mathfrak{n}}
\newcommand{\bor}{\mathfrak{b}}
\newcommand{\Xfr}{\mathfrak{X}}
\newcommand{\Tilt}{\mathcal{T}}
\newcommand{\Str}{\mathcal{O}}
\newcommand{\End}{\operatorname{End}}
\newcommand{\Pcal}{\mathcal{P}}
\newcommand{\tw}{\mathsf{tw}}
\newcommand{\ug}{\underline{\mathfrak{g}}}
\newcommand{\Fim}{\operatorname{Fim}}
\newcommand{\pr}{\mathsf{pr}}
\newtheorem{Thm}{Theorem}[section]
\newtheorem{Prop}[Thm]{Proposition}
\newtheorem{Cor}[Thm]{Corollary}
\newtheorem{Lem}[Thm]{Lemma}
\theoremstyle{definition}
\newtheorem{defi}[Thm]{Definition}
\newtheorem{Rem}[Thm]{Remark}
\numberwithin{equation}{section}
\title{Dimensions of modular irreducible representations of semisimple Lie algebras}
\author{Roman Bezrukavnikov and Ivan Losev}
\address{R.B: Department of Mathematics, MIT, Cambridge MA 02139 USA }
\email{bezrukav@math.mit.edu}
\address{I.L.: Department
of Mathematics, Yale University, New Haven CT 06511 USA}
\email{ivan.loseu@yale.edu}
\thanks{MSC 2010: 17B20, 17B35, 17B50}
\begin{document}
\begin{abstract}
In this paper we classify and give Kazhdan-Lusztig type character formulas for equivariantly irreducible
representations of Lie algebras of reductive algebraic groups
over a field of large positive characteristic. 
The equivariance is with respect to a group whose connected component is a torus.
Character computation is done in two steps. First, we treat the case of distinguished
$p$-characters: those that are not contained in a proper Levi. Here we essentially
show that the category of equivariant modules we consider is a cell quotient of
an affine parabolic category $\mathcal{O}$. For this, we prove an equivalence between two categorifications of a parabolically induced module over the affine Hecke algebra conjectured by the first named author. For the general nilpotent $p$-character,
we get character formulas by explicitly computing the duality operator on a suitable
equivariant K-group.
\end{abstract}
\maketitle
\markright{DIMENSIONS OF MODULAR IRREDUCIBLE REPRESENTATIONS}
\tableofcontents
\section{Introduction}
The goal of this paper is to obtain character formulas for (equivariantly) irreducible representations
of semisimple Lie algebras over  algebraically closed fields of large enough postive
characteristic. Below we write $G$ for a connected reductive algebraic group over $\C$ and $\g$
for its Lie algebra, both are defined over $\Z$. We pick a prime number $p\gg 0$
and choose an algebraically closed field $\F$ of characteristic $p$. We write
$\g_{\F},G_{\F}$ for the  $\F$-forms of $\g,G$.

\subsection{Known results}
Recall that the universal enveloping algebra $\U_{\F}:=U(\g_{\F})$ has big center.
Namely, we have the restricted $p$th power map $x\mapsto x^{[p]}:\g_{\F}^{(1)}\rightarrow \g_{\F}$,
where the superscript ``(1)'' indicates the Frobenius twist so that $(ax)^{[p]}=ax^{[p]}$. Then we have an algebra embedding $S(\g_\F^{(1)})\rightarrow \U_{\F}$ with central image:
on $\g^{(1)}_\F$ it is given by $x\mapsto x^p-x^{[p]}$. The image of $S(\g_\F^{(1)})$
in $\U_\F$ is known as the $p$-center.  We also have the so called Harish-Chandra
center $\U_{\F}^{G_\F}$, as in characteristic $0$ it is identified with $\F[\h^*]^W$,
where $\h$ is a Cartan subalgebra of $\g$, $W$ denotes the Weyl group that acts on
$\h^*$ via the $\rho$-shifted action. By a theorem of Veldkamp, \cite{Veldkamp},
the full center of $\U_{\F}$  coincides with $$S(\g^{(1)}_{\F})\otimes_{S(\g^{(1)}_\F)^{G^{(1)}_\F}}\U_\F^G.$$

For $\chi\in \g_{\F}^{(1)*}, \lambda\in \h^*_\F/(W,\cdot)$ we can define the central
reduction $\U_{\lambda,\F}^\chi$ of $\U_{\F}$ by taking the quotient by the maximal ideals
of $\chi,\lambda$ in the corresponding central subalgebras. The algebra $\U_{\lambda,\F}^\chi$
is  finite dimensional.
The most interesting case is when $\chi$ is nilpotent, the general case is well-known to  reduce
to that one, see \cite{KW}. If $\chi$ is nilpotent and
$\U^\chi_{\lambda,\F}\neq \{0\}$, then $\lambda\in \h^*_{\F_p}/(W,\cdot)$.
Then one can reduce the question about the dimensions of  irreducible modules to the
case when $\lambda$ is regular, see, e.g., \cite[Section 6.1]{BMR}. This is what we are going to
assume from now on. If $\lambda,\lambda'$ are two regular elements of $\h^*_{\F_p}/(W,\cdot)$,
then the algebras $\U^\chi_{\lambda,\F}$ and $\U^\chi_{\lambda',\F}$ are Morita
equivalent.  If $\chi'=g\chi$ for some $g\in G_\F$, then $g$ gives
an isomorphism $\U^\chi_{\lambda,\F}\cong \U^{\chi'}_{\lambda,\F}$. So the algebra
$\U^\chi_{\lambda,\F}$ depends only on the $G_\F$-orbit of $\chi$. The nilpotent
$G_{\F}$-orbits in $\g_{\F}^{(1)*}$ are in a natural bijection with the nilpotent
$G$-orbits in $\g$.  So, up to a Morita equivalence, we have one algebra $\U^\chi_{\lambda,\F}$
per nilpotent orbit in $\g$.

We will write $e$ for the nilpotent element of $\g$ lying in
the orbit corresponding that of $\chi$.

Let us explain known results on the simple $\U^\chi_{\lambda,\F}$-modules.
First of all, the number is known. Indeed, thanks to  \cite[Corollary 5.4.3, Section 7]{BMR}
there is a natural isomorphism
$K_0(\U^\chi_{\lambda,\F}\operatorname{-mod})\xrightarrow{\sim}K_0(\mathcal{B}_e)$,
where $\mathcal{B}_e$ denotes the Springer fiber of $e$ and $K_0(\mathcal{B}_e)$
stands for the Grothendieck group of the coherent sheaves on $\B_e$.

In \cite{BM}, the first named author and Mirkovic described the classes
of simples in $K_0(\mathcal{B}_e)$. Namely, Lusztig,  \cite{Lusztig_K1}, gave a
conjectural definition of  a canonical basis in $K_0^{\C^\times}(\mathcal{B}_e)$ for a suitable contracting $\C^\times$-action on $\mathcal{B}_e$. By \cite[Theorem 5.3.5]{BM},
the specialization of Lusztig's canonical basis to $q=1$ (where $q$ is the equivariant
parameter) coincides with the basis of simple modules in $K_0(\U^\chi_{\lambda,\F}\operatorname{-mod})$.
A problem with this canonical basis is that it is very implicit (with an exception of
the case when $e$ is principal in a Levi subalgebra, see \cite{Lusztig_periodic}
and \cite[Section 10]{Lusztig_K2}).
A part of this problem is that, in general, $K_0(\mathcal{B}_e)$ does not have an easily
understandable standard basis (or a spanning set) which the canonical basis can be compared to.
In particular, the result from \cite{BM} does not allow to get the dimension formulas
for the irreducible representations in the  general case.

Before we proceed to our results on  dimensions and $K_0$-classes of the simple modules,
let us explain what is known about their combinatorial classification.
For this, let us recall that a nilpotent element $e\in \g$ is called {\it distinguished}
if it is not contained in any proper Levi subalgebra. Any nilpotent element $e$
is distinguished in a Levi subalgebra of $\g$.  Namely, consider the  maximal torus $T_0$
of the centralizer $Z_G(e)$. The Levi subalgebra we need is $\underline{\g}:=\g^{T_0}$.
Note that the group $G_\chi$ acts on $\U^\chi_{\lambda,\F}$ by algebra automorphisms. In particular,
a maximal torus $T_{0,\F}\subset G_\chi$ acts. We can consider the category
$\U^\chi_{\lambda,\F}\operatorname{-mod}^{T_0}$ of weakly $T_{0,\F}$-equivariant (a.k.a. graded)
$\U^\chi_{\lambda,\F}$-modules. Every simple $\U_{\lambda,\F}^\chi$-module has an equivariant lift
unique up to a twist with a character of $T_{0,\F}$. So the set $\operatorname{Irr}(\U_{\lambda,\F}^\chi)$
of irreducible $\U_{\lambda,\F}^\chi$-modules  is in bijection with the quotient of
$\operatorname{Irr}(\U^\chi_{\lambda,\F}\operatorname{-mod}^{T_0})$ by the free
action of the character lattice $\mathfrak{X}(T_0)$.
On the other hand, the simples in $\U^\chi_{\lambda,\F}\operatorname{-mod}^{T_0}$ are in bijection with the $T_0$-equivariant simple objects in $\bigoplus_{\underline{\lambda}}\underline{\U}^{\chi}_{\underline{\lambda},\F}\operatorname{-mod}$.
Here $\underline{\U}$ stands for the enveloping algebra for $\underline{\g}$,
the summation is over all $\underline{\lambda}\in \mathfrak{h}^*_{\F_p}/(\underline{W},\cdot)$ that map
to $\lambda$ under the natural projection $\mathfrak{h}^*_{\F_p}/(\underline{W},\cdot)
\rightarrow \mathfrak{h}^*_{\F_p}/(W,\cdot)$. The bijection between the sets of simples
works as follows: one fixes a generic one-parameter subgroup $\nu$ of $T_0$ and then
takes the highest weight space of a simple $T_0$-equivariant
$\U^\chi_{\lambda,\F}$-module to get a simple $T_0$-equivariant
module in $\bigoplus_{\underline{\lambda}}\underline{\U}^{\chi}_{\underline{\lambda},\F}\operatorname{-mod}$.
Note that different choices of $\nu$ lead to different bijections.

In the special case when $e$ is principal in $\underline{\g}$, each  algebra $\underline{\U}^{\chi}_{\underline{\lambda},\F}$
is just $\F$ and so has a unique simple module. Therefore the simples in
$\U^\chi_{\lambda,\F}\operatorname{-mod}$ are in bijection with $W/\underline{W}$.

For the general $\chi$,  the situation is more difficult as there is no explicit labelling set for
the simples in the case of a general distinguished element.

On the other hand, in \cite{W_dim}, the second named author considered a problem that
should be thought as a ``finite'' analog of the problem considered in the present
paper (that is ``affine''). The main result of \cite{W_dim} is the Kazhdan-Lusztig type
formulas for the characters of certain equivariantly simple modules over finite W-algebras.
An approach used in \cite{W_dim} was to relate the category of equivariant modules
over the finite W-algebra to a suitable parabolic category $\mathcal{O}$ over $\g$.
Note that the simple finite dimensional modules over the W-algebra associated to
$e$ with central character $\lambda\in \h^*_{\Z}/(W,\cdot)$
embed into $\operatorname{Irr}(\U_{\lambda,\F}^\chi)$ so that the dimension
multiplies by $p^{\dim Ge/2}$, see, e.g., \cite{BL}.

\subsection{Dimensions of equivariantly irreducible modules: distinguished case}\label{SS_intro_distinguished}
In this paper we give a combinatorial classification and compute dimensions (as well as characters
and, even stronger, $K_0$-classes)
of {\it equivariantly} irreducible $\U^\chi_{\lambda,\F}$-modules. Let us first explain
our setting and our results in the case when $e$ is distinguished. For simplicity,
assume $G$ is semisimple.

As we have mentioned in the previous section, the group $G_\chi$ acts on $\U^\chi_{\lambda,\F}$
by automorphisms. Note that since $e$ is distinguished, the reductive part of $G_\chi$
is finite. Denote this group by $A$. We will consider the category $\U^\chi_{\lambda,\F}\operatorname{-mod}^{A}$
of $A$-equivariant $\U^\chi_{\lambda,\F}$-modules.

It turns out that there is a natural labelling set for the simple objects in
$\U^\chi_{\lambda,\F}\operatorname{-mod}^{A}$. To describe it, let us
recall the parabolic subalgebra attached to $e$. Namely, we include $e$ into
an $\slf_2$-triple $(e,h,f)$. Then we can consider the parabolic subalgebra
$\mathfrak{p}\subset \g$, the sum of all eigenspaces
for $\operatorname{ad}(h)$ with nonnegative eigenvalues. Let $P$ denote the
corresponding parabolic subgroup of $G$ and $L:=Z_G(h)$ be its Levi
subgroup.  We write $W^{a}$
for the extended affine  Weyl group $W\ltimes \mathfrak{X}(T)$, where $\mathfrak{X}(T)$
denote the character lattice of $T$. The parabolic subgroup $P$ defines
a standard parabolic subgroup of $W$ to be denoted by $W_P$. Note that $W_P$
is also a standard parabolic subgroup of $W^a$. We write
$W^{a,P}$ for the set of maximal length representatives of the right cosets for
$W_P$ so that $W^{a,P}\xrightarrow{\sim} W^{a}/W_P$. It is a standard fact
that $W^{a,P}$ contains a left cell ${\mathfrak{c}}_P$ such that $W^{a,P}$ is the union
of left cells that are less than or equal to ${\mathfrak{c}}_P$. Note that since $e$
is distinguished, ${\mathfrak{c}}_P$ is finite, this follows
from \cite{Lusztig_affine}. We will see below that there is a natural
bijection ${\mathfrak{c}}_P\xrightarrow{\sim} \operatorname{Irr}(\U^\chi_{\lambda,\F}\operatorname{-mod}^{A})$.

Let us explain how to compute  dimensions of the simple objects in $\U^\chi_{\lambda,\F}\operatorname{-mod}^{A}$. From an element
in $x\in W^{a,P}$ we can produce a dominant weight $\mu_x$ for $L_{\F}$ that maps
to $\lambda$ under the natural projection $\mathfrak{X}(T)\rightarrow \h^*_{\F_p}/(W,\cdot)$.
Namely, we fix an element $\mu^\circ$ in the p-alcove defined by $\langle\alpha_i^\vee,\bullet\rangle\leqslant -1,
\langle\alpha_0^\vee,\bullet+\rho\rangle\geqslant -p$ (below we will call this p-alcove
{\it anti-dominant}) that maps  to $\lambda$ (here the $\alpha_i^\vee$'s
are the simple coroots and $\alpha_0^\vee$ is the maximal coroot). For example,
for $\lambda=W\cdot 0$ we take $\mu^\circ:=-2\rho$. Consider the $W^{a}$-action on
$\mathfrak{X}(T)$ given by $w.\mu:=w\cdot \mu$ for $w\in W$ and $t_\theta.\mu:=\mu+p\theta$ for $\theta\in \mathfrak{X}(T)$.
Note that for $x\in W^{a,P}$, the element $\mu_x:=x^{-1}\cdot \mu^\circ$ is dominant for
$L$. Let $d_L(\mu_x)$ denote  dimension of the finite dimensional $L$-module
with highest weight $\mu_x$, it is given by the Weyl dimension formula.

The next and final ingredient to state the dimension formula is the parabolic affine Kazhdan-Lusztig
polynomials: to elements $x,y\in W^{a,P}$ we assign the corresponding Kazhdan-Lusztig
polynomial $c^P_{x,y}(v)\in \Z[v^{- 1}]$. Our convention, recalled in more detail
in Section \ref{SS_affine_Hecke_reminder}, is that $c^P_{x,y}(1)$ is the coefficient
of the class of the standard object labelled by $y$ in the parabolic affine category $\mathcal{O}$
in the simple object labelled by $x$.   Note that for any given $x$, only finitely many of the polynomials
$c^P_{x,y}$ are nonzero.

\begin{Thm}\label{Thm:disting_dim}
The dimension of the simple module in $\U^\chi_{\lambda,\F}\operatorname{-mod}^{A}$
labelled by $x\in {\mathfrak{c}}_P$ equals
$$\sum_{y\in W^{a,P}}c^P_{x,y}(1)(p^{\dim Ge/2}d_L(\mu_y)).$$
\end{Thm}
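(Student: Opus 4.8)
The plan is to prove the dimension formula in two stages, reflecting the structure already outlined in the introduction: first identify $\U^\chi_{\lambda,\F}\operatorname{-mod}^A$ with a cell quotient of the parabolic affine category $\Str$, and then read off dimensions from that identification. For the first stage I would invoke (and, in the body of the paper, establish) the equivalence between the two categorifications of the parabolically induced module over the affine Hecke algebra that the abstract attributes to the first-named author. Concretely, one side is built from the geometry of the Springer fiber $\B_e$ together with the localization theorem of \cite{BMR}, which gives $K_0(\U^\chi_{\lambda,\F}\operatorname{-mod})\xrightarrow{\sim}K_0(\B_e)$ and, more refined, an equivalence of the (equivariant) derived category of $\U^\chi_{\lambda,\F}$-modules with a category of coherent sheaves on a Springer-type resolution; the other side is the affine parabolic category $\Str$ attached to $P$ (equivalently to the standard parabolic $W_P$ of $W^a$). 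The bridge is that both categorify the induced module $\Hecke\otimes_{\Hecke_P}\mathbf{1}$ for the affine Hecke algebra $\Hecke$, and the desired equivalence matches the canonical/perverse $t$-structures and hence the classes of simple objects. Passing to $A$-equivariant objects on the representation side corresponds, under this equivalence, to restricting to the left cell $\dcell_P\subseteq W^{a,P}$: this is where the distinguishedness of $e$ enters, guaranteeing via \cite{Lusztig_affine} that $\dcell_P$ is finite and that $W^{a,P}$ is the union of left cells $\leqslant\dcell_P$, so that the cell quotient makes sense and its simples are indexed by $\dcell_P$. This yields the promised bijection $\dcell_P\xrightarrow{\sim}\Irr(\U^\chi_{\lambda,\F}\operatorname{-mod}^A)$.

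For the second stage I would compute $K_0$-classes. In the parabolic affine category $\Str$ one has standard objects $\Delta(y)$ indexed by $y\in W^{a,P}$ and, by the definition of the parabolic affine Kazhdan--Lusztig polynomials recalled in Section~\ref{SS_affine_Hecke_reminder}, the class of the simple $L(x)$ expands as $[L(x)]=\sum_{y\in W^{a,P}}c^P_{x,y}(1)[\Delta(y)]$, a finite sum. Transporting this through the equivalence of stage one, the class of the simple $\U^\chi_{\lambda,\F}$-module $S_x$ ($x\in\dcell_P$) is expressed in terms of the images of the standard objects $\Delta(y)$. So the computation reduces to identifying the ``dimension'' of the image of each standard object $\Delta(y)$, i.e.\ to a baby case of the formula. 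Here I would use the description, going back to \cite{BMR} and \cite{BM}, of the objects corresponding to standards: on the representation side they are (up to the Morita/equivalence dictionary) the baby Verma–type or costandard modules built from the Levi $L_\F$; concretely the weight $\mu_y=y^{-1}\cdot\mu^\circ$ is dominant for $L$, and the image of $\Delta(y)$ has dimension $p^{\dim Ge/2}d_L(\mu_y)$, where the factor $d_L(\mu_y)$ is the dimension of the simple (= Weyl) $L$-module of highest weight $\mu_y$ and the factor $p^{\dim Ge/2}$ is the standard ``oscillation'' by the dimension of the $G$-orbit, exactly as in the embedding of finite $W$-algebra modules recalled in the introduction (cf.\ \cite{BL}). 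Combining with the KL expansion gives
$$\dim S_x=\sum_{y\in W^{a,P}}c^P_{x,y}(1)\bigl(p^{\dim Ge/2}d_L(\mu_y)\bigr).$$

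The main obstacle, and the technical heart of the argument, is stage one: proving that the two categorifications agree, i.e.\ the conjectured equivalence between the ``coherent'' categorification coming from $\B_e$ via \cite{BMR} and the ``constructible'' affine parabolic category $\Str$. This requires checking that the equivalence is compatible with the $\Hecke$-module structures (the affine braid/Hecke action on each side must be intertwined), and — crucially — that it is $t$-exact for the relevant $t$-structures, so that simples go to simples and the KL combinatorics is preserved. One must also verify carefully that the $A$-equivariance on the modular side corresponds precisely to the cell-quotient operation on the category $\Str$ side; this uses the structure of $Z_G(\chi)$ for distinguished $\chi$ (its reductive part is the finite group $A$) and Lusztig's results on affine cells to match $A$-equivariant simples with $\dcell_P$. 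A secondary, more bookkeeping-type difficulty is pinning down normalizations: the precise convention for $c^P_{x,y}$, the choice of $\mu^\circ$ in the anti-dominant $p$-alcove, the identification $W^{a,P}\cong W^a/W_P$, and the sign/shift conventions relating standard objects in $\Str$ to baby Vermas for $L_\F$ — these must all be aligned so that the elementary computation $[\Delta(y)]\mapsto p^{\dim Ge/2}d_L(\mu_y)$ comes out with the stated weights $\mu_y=y^{-1}\cdot\mu^\circ$ rather than some twist thereof.
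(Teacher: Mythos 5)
Your proposal follows essentially the same route as the paper: Theorem \ref{Thm:disting_dim} is proved there by combining the parabolic coherent--constructible equivalence (Theorems \ref{Prop:parabol_equiv_der}, \ref{Thm:parab_perv_equiv} and Corollary \ref{Thm:abelian_quotient}) with a $W^a$-equivariant surjection $K_0(\Perv_{I^\circ}(\Fl_P))\twoheadrightarrow K_0(\U^\chi_{\lambda,\F}\operatorname{-mod}^{\underline{Q}})$ sending standard classes to $\chi$-Weyl classes and simple classes (for $x\in\mathfrak{c}_P$) to simples, after which the parabolic KL expansion and the dimension of $W^\chi_\F(\mu_y)$ give the formula. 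The ``normalization'' point you relegate to bookkeeping is exactly where the paper does its remaining concrete work: Proposition \ref{Prop:O_B_global_section} ($R\Gamma(\mathcal{V}^\chi_0|_{\B_{\mathfrak{m}}})=W^\chi_\F(2\rho_L-2\rho)$, which equals $W^\chi_\F(0)$ by Lemma \ref{Lem:chi_Weyl_coincide}) anchors the image of $\underline{\C}_{P^\vee/P^\vee}$, and Lemma \ref{Lem:W_aff_action} propagates this along the $W^a$-action to identify all $[\Delta^P_y]\mapsto[W^\chi_\F(\mu_y)]$.
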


We can upgrade this theorem to computing the $A$-characters of the simple
equivariant $\U^\chi_{\lambda,\F}$-modules. For this we need to replace
$p^{\dim Ge/2}d_L(\mu_y)$ with the $A$-character of $U^0(\mathfrak{m}^-_\F)\otimes V_L(\mu_y)$,
where $\mathfrak{m}^-_\F$ is the maximal nilpotent subalgebra of the opposite
parabolic of $\mathfrak{p}_\F$ and $V_L(\mu_y)$ is the irreducible $L$-module with
highest weight $\mu_y$. In fact, the strongest version of Theorem \ref{Thm:disting_dim}
has to do with $K_0$-classes: we will see that the class of the simple in
$\U^\chi_{\lambda,\F}\operatorname{-mod}^{A}$ labelled by $x$ is
$\sum_{y\in W^{a,P}}c^P_{x,y}(1)[W^\chi_\F(\mu_y)]$, where
$W^\chi_\F(\mu_y)$ is a certain induced module in
$\U^\chi_{\lambda,\F}\operatorname{-mod}^{A}$ to be defined
in Section \ref{SS_chi_Weyl}. The statement about $K_0$-classes
of simples will be established in Section \ref{SS_char_distinguish}.

%Theorem \ref{Thm:disting_dim} will be proved in Section
%\ref{SS_char_distinguish}, see Theorem \ref{Thm:distinguished_p_dim}.
%The main ingredient  is a parabolic version of the main result
%of \cite{B_Hecke} that allows us to relate a characteristic $0$
%counterpart of  $\U^\chi_{\lambda,\F}\operatorname{-mod}^{A}$
%to a cell quotient of a suitable affine parabolic category $\mathcal{O}$.

%Since the reductive part of $G_\chi$ is finite, every equivariantly simple $\U^\chi_{\lambda,\F}$-module %is the direct sum of simple $\U^\chi_{\lambda,\F}$-modules. We will explain how to recover the %multiplicities of these simples
%using the Springer theory.

%Let us also make  a trivial remark: when $G$ has a connected center, then we can upgrade the formula %above
%to compute the $Z(G)^\circ$-characters of irreducible objects in
%$\U^\chi_{\lambda,\F}\operatorname{-mod}^{G_\chi}$.

\subsection{The parabolic affine Hecke category}
The proof of Theorem \ref{Thm:disting_dim} relies upon Theorem \ref{Prop:parabol_equiv_der}
which establishes
a  special case of  \cite[Conjecture 59]{B_Hecke} providing a generalization of the main
result of that paper. Namely, in \cite{B_Hecke} the first named author constructed an
equivalence between
\begin{itemize}
\item  $D^b_{I^\circ}(\Fl)$, the derived category of
Iwahori-monodromic constructible sheaves
on the affine flag variety of the dual group $G^\vee$,
\item
and $D^b(\Coh^G(\St_B))$, the
category of $G$-equivariant coherent sheaves on the Steinberg variety of triples
$\St_B:=\tilde{\g}\times _\g \tilde{\Nilp}$.
\end{itemize}
Here $\tilde{\Nilp}$ denote the Springer resolution, $\tilde{\g}$ denotes the Grothendieck simultaneous resolution and $I^\circ$ is the pro-unipotent radical
of the standard Iwahori subgroup in the loop group of $G^\vee$.
Theorem \ref{Prop:parabol_equiv_der} generalizes the equivalence from
\cite{B_Hecke} to an equivalence
$D^b_{I^\circ}(\Fl_P)\cong D^b(\Coh^G(\St_P))$. Here $P$ is a
parabolic subgroup in $G$. It defines a parabolic subgroup in $G^\vee$ (up to conjugacy) and
$\Fl_P$ is the corresponding parabolic affine flag variety. By
$\St_P$ we denote  a parabolic Steinberg variety, $\tilde{\g}\times_\g \tilde{\Nilp}_P$,
where $\tilde{\Nilp}_P:=T^*(G/P)$.
The proof of Theorem \ref{Prop:parabol_equiv_der} proceeds by identifying the abelian
category $\Perv_{I^\circ}(\Fl_P)$ of $I^\circ$-equivariant perverse sheaves on $\Fl_P$ with a full
subcategory in $D^b_{I^\circ}(\Fl)\cong D^b(\Coh^G(\St_B))$. We then construct an abelian
subcategory in $D^b(\Coh^G(\St_P))$ and show that a pull-push functor
$D^b(\Coh^G(\St_P))\rightarrow D^b(\Coh^G(\St_B))$ identifies this subcategory
with $\Perv_{I^\circ}(\Fl_P)$.
The equivalence of abelian
categories also plays an important role in proving Theorem \ref{Thm:disting_dim}
as it allows to relate a characteristic $0$
counterpart of  $\U^\chi_{\lambda,\F}\operatorname{-mod}^{A}$
to a cell quotient of the category $\Perv_{I^\circ}(\Fl_P)$.

\subsection{Dimensions of equivariantly irreducible modules: general case}
Now let $e$ be arbitrary. As before we fix a maximal torus $T_{0,\F}\subset G_{\F,\chi}$.
We set $\underline{Q}_\F$ to be the centralizer of $T_{0,\F}$ in a maximal reductive subgroup of $G_{\F,\chi}$. The connected component of $\underline{Q}_\F$ is $T_{0,\F}$.
Since $p$ is large enough, $\underline{Q}_\F$ is linearly reductive. We consider
the category $\U^\chi_{\lambda,\F}\operatorname{-mod}^{\underline{Q}}$ of $\underline{Q}_\F$-equivariant
$\U^\chi_{\lambda,\F}$-modules. Inside we consider the Serre subcategory of
all objects $M$ such that the Lie algebra $\underline{\mathfrak{q}}_\F$ of $\underline{Q}_\F$
acts on the graded component $M_\theta$ by $\theta \operatorname{mod} p$
for all $\theta\in \mathfrak{X}(T_{0,\F})$.
Denote this subcategory by
$\U^\chi_{\lambda,\F}\operatorname{-mod}^{\underline{Q},0}$.

Let us write $\underline{G}_{\F}$ for the centralizer of $T_{0,\F}$ in $G_\F$.
Consider the Levi and  parabolic subgroups $L_\F\subset \underline{P}_\F\subset \underline{G}_\F$ constructed from
$e$ as in Section \ref{SS_intro_distinguished}.
 Once we pick a
generic one-parameter subgroup $\nu$ of $T_{0,\F}$, we can identify the set
$\operatorname{Irr}(\U^\chi_{\lambda,\F}\operatorname{-mod}^{\underline{Q},0})$
with the set of pairs $(u,\underline{x})$, where $u\in W$ is shortest in $uW_{\underline{G}}$
and $\underline{x}$ lies in  the left cell
$\mathfrak{c}_{\underline{P}}$. Namely, $\mathcal{L}\in \operatorname{Irr}(\U^\chi_{\lambda,\F}\operatorname{-mod}^{\underline{Q},0})$
gives rise to its $\nu$-highest weight component $\underline{\mathcal{L}}
\in \operatorname{Irr}(\U^\chi_{u^{-1}\cdot\lambda,\F}\operatorname{-mod}^{\underline{Q},0})$,
and, by Section \ref{SS_intro_distinguished}, $\underline{\mathcal{L}}$ gives rise to
$\underline{x}$.
Note that $\underline{G}$ is not semisimple
so $\mathfrak{c}_{\underline{P}}$ is not finite; however, $\mathfrak{c}_{\underline{P}}$ is preserved by
the translations by elements of $\mathfrak{X}(\underline{G})$ and the quotient
by this action is finite.

The choice of a generic one-parameter   subgroup $\nu$ in $T_{0,\F}$ defines a parabolic
subgroup $G_\F^{\geqslant 0}=\underline{G}_\F\ltimes G^{>0}_\F$.
Inside we have the parabolic subgroup $P_\F:=\underline{P}_\F\ltimes G^{>0}_\F$.

The first ingredient that we need to write the dimension formula is
the {\it semiperiodic parabolic} Kazhdan-Lusztig polynomials to be denoted by
$c^{P,\infty}_{x,y}(v)$. Here $x,y\in W^{a,P}$. When $\chi=0$,  we recover periodic affine Kazhdan-Lusztig polynomials,
see \cite{Lusztig_Jantzen}. In the general case, the semiperiodic parabolic
polynomials are constructed as follows.
It turns out that once an element $\theta\in \mathfrak{X}(\underline{G})$ lying
in the dominant Weyl chamber for $G$ is large enough (meaning that its pairings with
the simple coroots not in $\underline{G}$ are large enough; how large depends on $x,y$)
the polynomial $c^{P}_{xt_\theta, yt_\theta}(v)$ depends on $x,y$ and not on $\theta$.
This was proved in Stroppel's master thesis, \cite{Stroppel}, but we give an independent proof.
We denote this stabilized polynomial by  $c^{P,\infty}_{x,y}(v)$.

%Pick a character $\theta$ of $Q$ that is strictly dominant
%for $Q$. It turns out that $p^P_{u\theta^n,w\theta^n}(q)$ does not depend on $n$
%as long $n$ is large enough and this stabilized polynomial does not depend on $\theta$
%either.  We set $p^{P,Q}_{u,w}(q):=p^P_{u\theta^n w\theta^n}(q)$ for $n\gg 0$.

Furthermore, from $y\in W^{a,P}$ we can produce the dominant weight $\mu_y$ of $L$
as in Section \ref{SS_intro_distinguished}. Let $\hat{d}_L(\mu_y)$ denote the $\underline{Q}$-character of the corresponding
irreducible $L$-module.
Finally, consider the character $\mathsf{ch}_{\mathfrak{m}^-}$ of the action of $\underline{Q}_\F$
on $U^0(\mathfrak{m}^-_\F)$, where $\mathfrak{m}^-_\F$ is the maximal nilpotent
subalgebra of the parabolic opposite to $\mathfrak{p}_\F$.

\begin{Thm}\label{Thm_dim_general}
The $\underline{Q}_\F$-character of the simple module in $\U^\chi_{\lambda,\F}\operatorname{-mod}^{\underline{Q},0}$
labelled by $u\underline{x}$, where $u\in W$ is shortest in $uW_{\underline{G}}$
and $\underline{x}\in \mathfrak{c}_{\underline{P}}$, equals
$$\sum_{y\in W^{a,P}}c^{P,\infty}_{u\underline{x},y}(1)\mathsf{ch}_{\mathfrak{m}^-}\hat{d}_L(\mu_y).$$
\end{Thm}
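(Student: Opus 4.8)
The plan is to reduce Theorem \ref{Thm_dim_general} to the distinguished case (Theorem \ref{Thm:disting_dim}), applied not to $\g$ but to the reductive subalgebra $\underline{\g}_\F = \g_\F^{T_{0,\F}}$, and then to control how the parabolic affine Kazhdan--Lusztig polynomials $c^{\underline{P}}_{\underline{x},\underline{y}}$ for $\underline{G}$ assemble, after summing over $u$ and passing to a stable range, into the semiperiodic polynomials $c^{P,\infty}_{u\underline{x},y}$ for $G$. First I would recall the highest-weight-component functor: the choice of generic $\nu\in T_{0,\F}$ gives a triangular decomposition $\g_\F = \mathfrak{n}^-_\nu \oplus \underline{\g}_\F \oplus \mathfrak{n}^+_\nu$, and taking $\nu$-highest weight spaces defines an exact functor from $\U^\chi_{\lambda,\F}\operatorname{-mod}^{\underline{Q},0}$ to $\bigoplus_{\underline{\lambda}} \underline{\U}^\chi_{\underline{\lambda},\F}\operatorname{-mod}^{\underline{Q},0}$, which on simples is the bijection described in the excerpt: $\mathcal{L}\mapsto(u,\underline{\mathcal{L}})$ with $\underline{\mathcal{L}}$ simple of central character $u^{-1}\cdot\lambda$. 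Its left adjoint is an induction functor $Z^\chi$ from $\underline{\g}_\F$-modules, built along $\mathfrak{n}^+_\nu$ (the "$\chi$-Verma" / $Z$-functor, cf.\ the construction already used to define $W^\chi_\F(\mu_y)$ in Section \ref{SS_chi_Weyl}). The $\underline{Q}_\F$-character factorizes: $\operatorname{ch} Z^\chi(\underline{M}) = \operatorname{ch} U^0(\mathfrak{n}^-_{\nu,\F})\cdot \operatorname{ch}\underline{M}$, and iterating the parabolic structure ($\mathfrak{m}^-_{\g}$ for $\g$ versus $\mathfrak{m}^-_{\underline{\g}}$ for $\underline{\g}$, differing exactly by $\mathfrak{n}^-_{\nu}$) will reconcile the factor $\mathsf{ch}_{\mathfrak{m}^-}\hat d_L(\mu_y)$ appearing here with the factor $\operatorname{ch}(U^0(\underline{\mathfrak{m}}^-_\F)\otimes V_L(\mu_y))$ produced by applying Theorem \ref{Thm:disting_dim} inside $\underline{\g}_\F$.

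The core of the argument is then the K-group computation. By the equivariant version of the Bezrukavnikov--Mirković--Rumynin isomorphism, $K_0(\U^\chi_{\lambda,\F}\operatorname{-mod}^{\underline{Q},0})\cong K_0^{\underline{Q}}(\mathcal{B}_e)$, and the adjunction above lets me write the class of a simple $\mathcal{L}_{u\underline{x}}$ in terms of the induced objects $Z^\chi$ applied to the classes of simples $\underline{\mathcal{L}}_{\underline{x}}$ over $\underline{\g}_\F$ at the shifted central characters $u^{-1}\cdot\lambda$. From Theorem \ref{Thm:disting_dim} (its $K_0$-upgrade stated in Section \ref{SS_char_distinguish}) I know $[\underline{\mathcal{L}}_{\underline{x}}] = \sum_{\underline{y}} c^{\underline{P}}_{\underline{x},\underline{y}}(1)[W^\chi_{\underline{\F}}(\mu_{\underline{y}})]$ in the $\underline{G}$-setting. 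Applying $Z^\chi$ and collecting over all $u$ shortest in $uW_{\underline{G}}$, the standard objects $Z^\chi W^\chi_{\underline{\F}}(\mu_{\underline y})$ reorganize into the objects $W^\chi_\F(\mu_y)$, $y\in W^{a,P}$, indexed via the bijection $W^{a,P}\leftrightarrow \{u \text{ shortest in } uW_{\underline{G}}\}\times W^{a,\underline{P}}$ coming from the decomposition $W^a = W_{\underline{G}} \ltimes \cdots$ compatibly with left cells. So the only thing to check is the purely combinatorial identity
$$c^{P,\infty}_{u\underline{x},y}(v) = c^{\underline{P}}_{\underline{x}\,',\,\underline{y}\,'}(v)$$
for the appropriately matched elements, in the stable range; equivalently, that the periodic/semiperiodic polynomials for the affine Hecke algebra of $G$ are computed by the ordinary parabolic affine Kazhdan--Lusztig polynomials of the smaller group $\underline{G}$. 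This is where the stabilization statement attributed to Stroppel's thesis (and for which the excerpt promises an independent proof) enters: pushing the translation $t_\theta$ far into the dominant chamber for $\theta\in\mathfrak{X}(\underline G)$ realizes the semiperiodic module for $G$ as a limit of parabolically induced modules, and the limit is precisely the $\underline G$-parabolic category $\mathcal O$ calculation. I would prove this by a direct analysis of the recursion for $c^P_{xt_\theta,yt_\theta}$ in the affine Hecke algebra, showing the Bruhat intervals involved stabilize and match those governing $\underline{G}$.

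I would organize the write-up as: (i) set up $\nu$, the triangular decomposition, the functors $\Gamma_\nu$ (highest weight component) and $Z^\chi$, and prove exactness and the adjunction; (ii) prove the character factorization and reconcile the $\mathfrak{m}^-$-factors; (iii) the K-theoretic bookkeeping identifying classes of $G$-standards with images of $\underline{G}$-standards; (iv) the combinatorial Lemma identifying $c^{P,\infty}$ with $\underline{G}$-parabolic affine KL polynomials, including the independent stabilization proof; (v) assemble. The main obstacle is step (iv) — the stabilization and the exact matching of the two families of KL polynomials across the two groups, because this requires a careful handling of the affine Hecke algebra recursion and of how the cell $\mathfrak{c}_{\underline P}$ and the set $W^{a,P}$ correspond under the decomposition $W^a = W_{\underline G}\ltimes(\text{lattice part})$; the subtlety that $\underline G$ is not semisimple (so $\mathfrak{c}_{\underline P}$ is infinite but $\mathfrak{X}(\underline G)$-stable) is exactly the reflection of the semiperiodicity and must be tracked with care. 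A secondary technical point is making the equivariant BMR isomorphism and the $Z^\chi$-functor interact correctly with the grading condition defining the ``$0$'' subcategory $\U^\chi_{\lambda,\F}\operatorname{-mod}^{\underline{Q},0}$, i.e.\ that $\underline{\mathfrak q}_\F$ acts on $M_\theta$ by $\theta \bmod p$; this should follow from the construction but needs to be stated.
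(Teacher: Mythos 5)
There is a genuine gap, and it sits at the center of your plan. Your steps (i)--(iii) are fine as far as they go, and they do overlap with the paper: the classes of the baby Verma modules $\underline{\Delta}_\nu(\underline{\mathcal{L}})$ (your $Z^\chi$ applied to $\underline{G}$-simples) are indeed expressed via the $\chi$-Weyl modules $W^\chi_\F(\mu_y)$ by applying the distinguished-case theorem inside $\underline{\g}$, and the character bookkeeping with $\mathsf{ch}_{\mathfrak{m}^-}$ works out. But the adjunction between the $\nu$-highest-weight functor and $Z^\chi$ only gives you a surjection $\underline{\Delta}_\nu(\underline{\mathcal{L}})\twoheadrightarrow \mathcal{L}_{u\underline{x}}$; it does not let you ``write the class of a simple in terms of the induced objects.'' The induced module is very far from simple, and the (infinite, unitriangular) transition matrix between the topological basis of baby Vermas and the topological basis of simples in $\hat{K}_0(\U^\chi_{(0),\F}\operatorname{-mod}^{\underline{Q},0})$ is exactly the content of the theorem. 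Your step (iv) tries to dispose of it by the identity $c^{P,\infty}_{u\underline{x},y}=c^{\underline{P}}_{\underline{x}',\underline{y}'}$, i.e.\ by claiming the semiperiodic polynomials of $G$ are computed by the parabolic affine Kazhdan--Lusztig polynomials of $\underline{G}$. That identity is false: already for $\chi=0$ one has $\underline{G}=T$, whose KL theory is trivial, while the semiperiodic polynomials specialize to Lusztig's periodic polynomials, which encode the (deep, AJS/KL/KT-level) multiplicities of simple $(G_1,T)$-modules in baby Vermas. The stabilization result of Stroppel that you invoke says only that $c^{P}_{xt_\theta,yt_\theta}$ for the \emph{big} affine Weyl group stabilizes as $\theta\to+\infty$ in $\mathfrak{X}^+(\underline{G})$; it is not a reduction to the Hecke algebra of $\underline{G}$, and no analysis of the Hecke recursion will produce such a reduction because the needed decomposition numbers are simply not functions of $\underline{G}$ alone.

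What the paper supplies in place of your step (iv) is a mechanism for computing that transition matrix: a contravariant duality $\D$ on $\U^\chi_{(0),\F}\operatorname{-mod}^{\underline{Q}}$ which fixes all simples (proved via the $K_0$-identity for $\chi$-Weyl modules, Proposition \ref{Prop:K_0_identity}), a Koszul-graded lift $\tilde{\D}$ coming from the Bezrukavnikov--Mirkovic tilting bundle, the compatibility $\D\circ\underline{\Delta}_\nu\cong\underline{\nabla}_\nu\circ\underline{\D}$, and an explicit limit formula for $[\tilde{\D}]$ on the completed induced cell module $\,^G\hat{\mathfrak{C}}_{\underline{P}}\cong\hat{K}_0$ (Proposition \ref{Prop:D_comput}, proved by localization with respect to $T_0$ in equivariant K-theory). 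The simple classes are then characterized as the canonical basis for the resulting bar-type involution, and the semiperiodic polynomials appear as its entries (Propositions \ref{Prop:canon_basis}, \ref{Prop:canon_basis_Sigma}); the distinguished case enters only at the last step, to convert baby Verma classes into $\chi$-Weyl classes. If you want to salvage your outline, the missing ingredient you must add is precisely such a self-duality (or some other bar-involution) characterization of the simple classes inside $\hat{K}_0$, together with an actual computation of that involution; without it, steps (i)--(iii) determine only the standard basis, not the simples.
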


Note that, unlike in Theorem \ref{Thm:disting_dim}, the sum in the right hand side
is no longer finite. However,  it is easy to see that it converges in
a suitable topology on $K_0(\operatorname{Rep}(\underline{Q}_\F))$. Also Theorem \ref{Thm_dim_general} upgrades to an equality of classes in $K_0(\U^\chi_{\lambda,\F}\operatorname{-mod}^{\underline{Q},0})$,
just like Theorem \ref{Thm:disting_dim}. See Section \ref{SS_final}.

We would like to point out a classical special case of Theorem \ref{Thm_dim_general}:
when $\chi=0$ and hence $\underline{G}=T$. Here we express the classes of simple $(G_1,T)$-modules
via the classes of baby Verma modules with coefficients that are expressed via periodic
affine Kazhdan-Lusztig polynomials of Lusztig. This result was first obtained combining
results of \cite{KL_affine},\cite{KT1},\cite{AJS} (as well
as \cite{Lusztig_monodromic},\cite{KT2} in the non-simply-laced case).

Another notable special case is when $\chi$ is principal in a Levi.
In this case, a conjecture similar in spirit to Theorem \ref{Thm_dim_general}
was stated in \cite[13.17]{Lusztig_periodic}. In fact, that conjecture can be
deduced already from \cite{BM}.  More precisely, the main result of [BM] is a
proof of \cite[Conjecture 5.12]{Lusztig_K2}
and its relation to modular representations outlined in \cite[\S 14]{Lusztig_K1};
this implies \cite[Conjecture 13.17]{Lusztig_periodic}
in view of \cite[Proposition 10.7]{Lusztig_K2}.
Also note that Theorem \ref{Thm_dim_general} and
the conjecture \cite[13.17]{Lusztig_periodic} describe the multiplicities in
a priori different categories. In the former we have modules that are equivariant
with respect to the full center of the Levi, while in the latter the modules are
equivariant with respect to the connected component of that center. One should
be able to deduce  \cite[Conjecture 13.17]{Lusztig_periodic}
from Theorem \ref{Thm_dim_general}, but this is nontrivial.

The proof of Theorem \ref{Thm_dim_general} completed in
Section \ref{SS_final} is quite different from that
of Theorem \ref{Thm:disting_dim}. Namely, we construct a contravariant duality
functor on $\U^\chi_{\lambda,\F}\operatorname{-mod}^{\underline{Q}}$ that fixes all simple
objects in this category. Then we compute a graded lift of this duality functor
in a suitable graded lift of a category closely related to
$\U^\chi_{\lambda,\F}\operatorname{-mod}^{\underline{Q},0}$. In the end of the  paper we will speculate on
a categorical nature of Theorem \ref{Thm_dim_general}.

A natural question is how to compute the $T_0$-characters of irreducible
modules in $\U^\chi_{\lambda,\F}\operatorname{-mod}^{T_0}$. Thanks to
Theorem \ref{Thm_dim_general}, this question reduces to understanding
the decomposition of the $\underline{Q}$-equivariantly irreducible modules
into the usual irreducibles. While we do not have an explicit answer to this
question in the general case, we discuss it in Section \ref{SS_irred}.

\subsection{Applications to characteristic $0$ representation theory}
One can, in principle, use Theorem \ref{Thm_dim_general} to compute
the dimensions of equivariantly irreducible representations
of finite W-algebras. In more detail, to $e\in \g$ one assigns
the finite W-algebra $\Walg$ and to $\lambda\in \h^*/(W,\cdot)$ one assigns
the central reduction $\Walg_\lambda$ of $\Walg$.

In \cite{BL}, we have related the irreducible finite dimensional representations
of $\Walg_\lambda$ to those of $\U^\chi_{\lambda,\F}$. Assume $\lambda$
is rational.  One has  natural bijections between the sets $\operatorname{Irr}(\U^\chi_{\lambda,\F}\operatorname{-mod})$
for different $p$ provided $p$ is sufficiently large and the residue of $p$
modulo the ``denominator'' of $\lambda$
is fixed. Under these bijections, the dimensions of the irreducibles in
$\U^\chi_{\lambda,\F}\operatorname{-mod}$ are polynomials
in $p$. Then $\operatorname{Irr}_{fin}(\Walg_\lambda)$ embeds into
$\operatorname{Irr}(\U^\chi_{\lambda,\F})$ as the subset of all representations
with  degree of the  dimension polynomial equal to $\frac{1}{2}\dim Ge$.
Equivalently, \cite[Theorem 1.1]{BL}, $\operatorname{Irr}_{fin}(\Walg_\lambda)$ consists of
all simples whose $K_0$-class lies in a certain two-sided cell component of
$K_0(\U^\chi_{\lambda,\F}\operatorname{-mod})$ for the integral Weyl group $W_{[\lambda]}$ of $\lambda$.

Thanks to Theorem \ref{Thm_dim_general} we get a formula for the dimensions of equivariantly
irreducible representations of $\Walg_{\lambda}$. However, it would be desirable to
get a formula in terms of finite not affine Kazhdan-Lusztig polynomials, as
that of \cite{W_dim} in the case when $\lambda$ is integral. Knowing the dimensions
of the irreducible $\Walg_\lambda$-modules should lead to a solution of other
problems in the Lie representation theory over $\C$, for example, to formulas
for Goldie ranks of primitive ideals. A precise relation between the Goldie
ranks and the dimensions was conjectured in \cite{LP} (at least, when $\g$
is classical).

\subsection{Content of the paper}
The subsequent sections of the paper can be roughly separated into two groups.

Sections \ref{S_basics}-\ref{S_constr_realiz} are preparatory. While  they
contain some new results, those results are technical ramifications of well-known
ones. In Section \ref{S_basics} we recall some basic facts and constructions
related to the modular representation theory of semisimple Lie algebras. In particular,
there we introduce categories $\U^\chi_{(\lambda),\F}\operatorname{-mod}^{\underline{Q}}$, the main
object of study in this paper as well as ``standard modules'' in these categories
that we call $\chi$-Weyl modules. In Section \ref{S_der_loc} we recall the derived
localization theorem in positive characteristic proved in \cite{BMR}. In Section
\ref{S_tilting} we recall the tilting bundles on the Springer and Grothendieck
simulataneous resolutions constructed in \cite{BM} and their properties.
And then in Section \ref{S_constr_realiz} we recall results of \cite{B_Hecke}
on an equivalence between the coherent and constructible categorifications of
affine Hecke algebras.

Sections \ref{S_parab_equiv}-\ref{S_K_0_classes} are the main part of the paper.
In Section \ref{S_parab_equiv} we generalize the results of \cite{B_Hecke}
to the parabolic setting. This is the main ingredient in the proof of
Theorem \ref{Thm:disting_dim}. In Section \ref{S_duality} we introduce
and study a contravariant duality functor on the category
$\U^\chi_{(\lambda),\F}\operatorname{-mod}^{\underline{Q}}$ that is an important ingredient
in the proof of Theorem \ref{Thm_dim_general}. Then in Section
\ref{S_K_0_classes} we prove Theorems \ref{Thm:disting_dim} and
\ref{Thm_dim_general}.

%The paper contains an appendix, where we complete proofs of a conjecture
%from \cite{Lusztig_periodic} essentially based on results from \cite{BM}.
%Finally, in Section \ref{S_surj_K_0} we compare
%the equivariantly irreducible modules to the usual irreducible modules
%in the case when $\g$ is a classical Lie algebra.

{\bf Acknowledgements}: The authors thank Dmytro Arinkin, Alexander Braverman, Dennis Gaitsgory,
George Lusztig, Simon Riche, and Geordie Williamson for stimulating discussions.
R.B. has been partially supported by the NSF under
grant DMS-1601953. I.L. has been partially supported by the NSF under grant
DMS-1501558.

\subsection{List of notation}
Here we provide the list of common notation  used in the paper. The notation
is listed alphabetically with Roman letters and then Greek letters.

\setlongtables

\begin{longtable}{p{3.4cm} p{12cm}}
$A$&$:=Z_G(e)/Z_G(e)^\circ$,\\
$\A$&$:=\operatorname{End}_{\tilde{\Nilp}}(\mathcal{T}),$\\
$\A_\h$&:=$\operatorname{End}_{\tilde{\g}}(\mathcal{T}_\h),$\\
$\A_P$&$:=\operatorname{End}_{\tilde{\Nilp}_P}(\mathcal{T}_P),$\\
$B$& a Borel subgroup of $G$,\\
$\B$& $:=G/B$, the flag variety of $G$,\\
$\underline{\B}$& the flag variety of $\underline{G}$,\\
$\B_e$& the Springer fiber in $\B$ of a nilpotent element $e\in \g$,\\
$\Br$& the braid group of $W$,\\
$\Br^a$& the braid group of $W^a$,\\
$C_{x}$& the Kazhdan-Lusztig basis element in $\Hecke^a_G$ labelled
by $x\in W^a$,\\
$\mathfrak{c}_P$& the left cell in $W^a$ containing $w_{0,P}$,\\
$\underline{\C}_X$& the constant sheaf on a topological space $X$ with fiber $\C$,\\
$\Coh_Y(X)$& the category of coherent sheaves on a scheme $X$ that are
supported on $Y$ set-theoretically,\\
$\D$& the contravariant duality functor of $\U^\chi_{(\lambda),\F}\operatorname{-mod}^{\underline{Q}}$
defined by (\ref{eq:D_definition}),\\
$\D_{coh}$& the contravariant duality functor of $D^b(\Coh^{\underline{Q}}(\B_\chi))$
from Section \ref{SS_loc_dual},\\
$D^b_H(X)$& the $H$-equivariant bounded constructible derived category of
a variety (or an ind-variety) $X$,\\
$D^\mu_X$& the sheaf of $\mu$-twisted differential operators on a smooth algebraic variety $X$,\\
$\tilde{D}_\B$& the sheaf $\upsilon_* D_{G/U}$, where $\upsilon:G/U\rightarrow G/B$
is the natural projection,\\
$(e,h,f)$& an $\slf_2$-triple in $\g$,\\
$\operatorname{Fim}(?)$& the full Karoubian subcategory generated by the image of a functor $?$,\\
$\Fl$&$:=G^\vee((t))/I^\vee$,\\
$\Fl_P$&$:=G^\vee((t))/J^\vee$, where $J^\vee$ is the parahoric subgroup of $G^\vee((t))$
corresponding to $P\subset G$,\\
$G$& a connected reductive algebraic group over $\C$,\\
$G^\vee$& the Langlands dual group of $G$,\\
$\g^i$& $:=\{x\in \g| \nu(t)x=t^i x, \forall t\in T_0\}$,\\
$\g^{\geqslant 0}$& $:=\bigoplus_{i\geqslant 0}\g^i$,\\
$\underline{\g}$&$:=\g^0$,\\
$\g(i)$&$:=\ker(\operatorname{ad}h-i)$,\\
$G^{\geqslant 0}$, $\underline{G}$& the connected subgroups of $G$
with Lie algebras $\g^{\geqslant 0},\underline{\g}$,\\
$\g_\h$&$:=\g^*\times_{\h^*/W}\h^*$,\\
$\tilde{\g}$& the Grothendieck resolution of $\g_\h$,\\
$\h$& a Cartan subalgebra of $\g$ contained in $\mathfrak{b}$,\\
$\Hecke_W$& the Hecke algebra of $W$,\\
$\Hecke^a_G$& the affine Hecke algebra of a reductive algebraic group $G$; it is associated
to $W^a$,\\
$H_x$&the standard basis element in $\Hecke^a_G$ labelled by $x\in W^a$,\\
$K_0^H(X)$&$:=K_0(\Coh^H(X))$,\\
$I^\vee$& the Iwahori subgroup of $G^\vee((t))$,\\
$I^\circ$& the kernel of $I^\vee\twoheadrightarrow T^\vee$,\\
$L$& the Levi subgroup of $P$ containing $T$,\\
$M$& the unipotent radical of $P$,\\
$M^-$& the unipotent radical of the parabolic opposite to $P$,\\
$\Nilp$& the nilpotent cone in $\g$,\\
$\tilde{\Nilp}$&$:=T^*\B$,\\
$\tilde{\Nilp}_P$&$:=T^*\mathcal{P}$,\\
$\mathcal{O}(\mu)$& the line bundle on $\B$ (and related varieties) corresponding to
$\mu\in \mathfrak{X}(T)$,\\
$\Orb$&$:=Ge$,\\
$P$& a parabolic subgroup of $G$ containing $B$,\\
$\mathcal{P}$&$:=G/P$,\\
$Q$&$:=Z_G(e,h,f)$,\\
$\underline{Q}$& the centralizer of $T_0$ in $Q$,\\
$S$& the Slodowy slice $e+\mathfrak{z}_\g(f)$,\\
$\St_\h$&$:=\tilde{\g}\times_{\g}\tilde{\g}$,\\
$\St_B$&$:=\tilde{\g}\times_{\g}\tilde{\Nilp}$,\\
$\St_0$&$:=\tilde{\Nilp}\times^L_\g\tilde{\Nilp}$,\\
$\St_P$&$:=\tilde{\g}\times^L_{\g} \tilde{\Nilp}_P$,\\
$T$& the maximal torus of $B$ with Lie algebra $\mathfrak{h}$,\\
$T_0$& a maximal torus in $Q$,\\
$T_w$& the element of $\Br^a$ corresponding to $w\in W$ or the wall-crossing functor for
this element,\\
$\U$&$:=U(\g)$,\\
$\U^\chi_\F$& the $p$-central reduction of $\U_\F$,\\
$\U^\chi_{(\lambda),\F}$& the infinitesimal block in $\U^\chi_\F$ corresponding to a HC character $\lambda$.\\
$\underline{\U}$&$:=U(\underline{\g})$,\\
$\mathcal{V}_\mu^\chi$& the splitting bundle introduced in Section \ref{SS_splitting},\\
$W$& the Weyl group of $G$,\\
$W^a$&$:=W\ltimes \mathfrak{X}(T)$, the extended affine Weyl group,\\
$W_P$&the parabolic subgroup of $W$ corresponding to $P$,\\
$W^{a,P}$& the set of longest coset representatives in $xW_P\subset W^a, x\in W^a$,\\
$W^{P,-}$& the set of shortest coset representatives in $wW_P, w\in W$,\\
$W^\chi_\F(\mu)$& the $\chi$-Weyl module corresponding to a dominant weight
$\mu$ of $L$,\\
$w_0$& the longest element of $W$,\\
$w_{0,P}$& the longest element of $W_P$,\\
$\mathfrak{X}(H)$& the character group of an algebraic group $H$,\\
$Z$&$:=G\times^B \mathfrak{m}$,\\
$\alpha_1,\ldots,\alpha_r$& the simple roots of $\g$,\\
$\alpha_0$& the root of $\g$ such that $\alpha_0^\vee$ is maximal,\\
$\gamma$& the one-parameter subgroup of $G$ corresponding to $h$,\\
$\Delta^P(\mu)$& the parabolic Verma $\U$-module for $P$ with highest weight $\mu$,\\
$\underline{\Delta}^\chi$& the parabolic induction functor $\underline{\U}^\chi_{\F}\operatorname{-mod}^{\underline{Q}}\rightarrow
\U^\chi_{\F}\operatorname{-mod}^{\underline{Q}}$,\\
$\eta$& the projection $\Fl\rightarrow \Fl_P$,\\
$\iota$& the embedding $Z\hookrightarrow \tilde{\Nilp}$,\\
$\mu_x$&$:=x^{-1}\cdot \mu^\circ$ for $\mu^\circ$ in the anti-dominant $p$-alcove,\\
$\nu$& a generic one-parameter subgroup of $T_0$,\\
$\varpi$& the projection $Z\twoheadrightarrow \tilde{\Nilp}_P$,\\
$\rho$& half the sum of positive roots,\\
$\pi$& the Springer resolution morphism $\tilde{\Nilp}\rightarrow \Nilp$
or $\tilde{\g}\rightarrow \g_\h$,\\
$\sigma$& the standard antiinvolution of $\g$ given by $\sigma(e_i)=f_i$,\\
$\varsigma$&$:=\operatorname{Ad}(n)\sigma$, defined in Section \ref{SS_dual_basic},\\
$\tau$& the derived equivalences from Theorem \ref{Thm:derived_equiv},\\
$\tau_P$& the derived equivalence from Theorem \ref{Prop:parabol_equiv_der},\\
$\varphi_1,\varphi_2$& the functors defined by (\ref{eq:coherent_functor}),
(\ref{eq:varphi_2_def}), respectively.\\
$\chi$& the element in $\g_{\F}^{(1)*}$ corresponding to $(e,\cdot)\in \g^*$.\\
$\psi_1,\psi_2$& the left adjoints of $\varphi_1,\varphi_2$.\\
$\Omega_X$& the canonical bundle of a smooth variety $X$.\\
\end{longtable}

\section{Basics on modular representations}\label{S_basics}
\subsection{Notation and content}\label{SS_notation_basic}
Let $G$ be a connected reductive  algebraic
group over $\C$ and $\g$ be its Lie algebra.  We identify $\g$ and $\g^*$ via the Killing form. We fix a
nilpotent orbit $\Orb\subset \g$ and pick an element $e\in \Orb$. We include $e$ into an $\slf_2$-triple
$(e,h,f)$. Let us write $\g(i)$ for $\ker(\operatorname{ad}(h)-i)$. We write  $A$ for $Z_G(e)/Z_G(e)^\circ$. We also write $T_0$ for a maximal torus
in $Q:=Z_G(e,h,f)$. Further, we write $\U$ for $U(\g)$.

Pick a generic one-parameter subgroup $\nu: \mathbb{G}_m\rightarrow T_0$. We set
$\g^i:=\{x\in \g| \nu(t)x=t^i x, \forall t\in \mathbb{G}_m\}$.
We will also write $\ug$ for $\g^0$ and $\underline{\U}$ for $U(\ug)$.
It is known that $e$ is even in $\ug$, i.e., the eigenvalues of $h$ in $\ug$
are even.
Set $\g^{\geqslant 0}:=\bigoplus_{i\geqslant 0}\g^i$, this is a parabolic subalgebra in $\g$
with Levi subalgebra $\ug$.  Let $G^{\geqslant 0},\underline{G}$ denote the corresponding
subgroups of $G$.

We write  $\g(j)$ for $\ker(\operatorname{ad}(h)-j)$. Set $\underline{\p}:=\bigoplus_{j\geqslant 0}(\ug\cap \g(j)), \lf=\underline{\g}\cap \g(0),
\p:=\underline{\p}\oplus \g^{>0}$.
Then $\p$ is a parabolic subalgebra in $\g$ with Levi subalgebra $\lf$.
Let $L\subset \underline{P}\subset P$ denote the corresponding connected subgroups of $G$.
Further, let $M$ denote the unipotent radical of $P$ and
$M^-$ stand for the unipotent radical of the opposite parabolic.

We pick a Borel subgroup $B\subset G$ containing $M$. We also pick  a maximal torus
$T\subset L\cap B$. Let $\alpha_1,\ldots,\alpha_r$ denote the corresponding
simple roots and let $\alpha_0$ be a root such that $\alpha_0^\vee$ is maximal.
Let $\Xfr$ denote the character lattice of $T$ and $W$ denote
the Weyl group. By $W_P$ (or $W_L$) we denote the
parabolic subgroup of $W$ corresponding to $P$.
We consider the (extended) affine Weyl group $W^a:=W\ltimes \Xfr$.
Let $W^{a,P}$ (or $W^{a,L}$) denote the subset of all $x\in W^a$ such that
$x$ is longest in $xW_P$. Finally, let $\Br^a$ denote the
braid group associated to $W^a$.

Now fix a prime number $p\gg 0$ and set $\F:=\overline{\F}_p$. We can assume that $e,h,f$ are defined
over a finite localization of $\Z$ hence all the objects introduced above in this section are defined
over that localization. So they can be base-changed to $\F$, we will indicate this with the subscript
$\F$: $G_\F, \g_\F$, etc. Let $\chi\in \g_\F^{(1)*}$ be the element corresponding to $(e,\cdot)\in \g^*$.
Here, as usual, the superscript (1) denotes the Frobenius twist.

We will need to consider two different $p$-alcoves in $\mathfrak{X}$. By the {\it dominant} $p$-alcove
we mean the locus of $\mu\in \mathfrak{X}$ such that $\langle \mu+\rho, \alpha_i^\vee\rangle
\geqslant 0$ for all $i=1,\ldots,r$, and $\langle \mu+\rho,\alpha_0^\vee\rangle\leqslant p$.
By the {\it antidominant} $p$-alcove we mean the locus of $\mu\in \h^*_{\Z}$ such that $\langle \mu+\rho, \alpha_i^\vee\rangle \leqslant 0$ for all $i=1,\ldots,r$, and $\langle \mu+\rho,\alpha_0^\vee\rangle\geqslant -p$. Note that each of the roots
$\alpha_i, i=0,\ldots,r$, defines a codimension 1 face in the (anti)dominant
$p$-alcove.

We now describe the content of this section. In Section \ref{SS_centr_red} we recall basics on
modular representations of $\g$ and introduce the main category of study in this paper,
$\U^\chi_{(\lambda),\F}\operatorname{-mod}^{\underline{Q}}$. In Section \ref{SS_chi_Weyl} we introduce the
$\chi$-Weyl modules that should be thought as ``standard'' objects in the latter
category. Finally, in Section \ref{SS_transl_braid} we recall translation functors
between the categories  $\U^\chi_{(?),\F}\operatorname{-mod}^{\underline{Q}}$ and also an affine
braid group action on $D^b(\U^\chi_{(\lambda),\F}\operatorname{-mod}^{\underline{Q}})$ (in the case
when $\lambda+\rho$ is regular).

\subsection{Central reductions and equivariant modules}\label{SS_centr_red}
Pick $\lambda\in \h_\F^*/(W,\cdot)$. Let $\m^{HC}_\lambda$ denote the maximal ideal of $\lambda$
in the Harish-Chandra center $\U_\F^{G_\F}\cong \F[\h^*]^{(W,\cdot)}$  and $\m_\chi^{p\operatorname{-cen}}$ be the maximal ideal of $\chi$
in the $p$-center $S(\g_\F^{(1)})$. We can form the central reductions
\begin{equation}\label{eq:centr_red}
\U_{\lambda,\F}:=\U_\F/\U_\F\mathfrak{m}^{HC}_\lambda,\quad
\U^\chi_\F:=\U_\F/\U_\F\mathfrak{m}^{p\operatorname{-cen}}_\chi,\quad
\U^\chi_{\lambda,\F}:=\U_\F/\U_\F(\mathfrak{m}^{HC}_\lambda+\mathfrak{m}^{p\operatorname{-cen}}_\chi).
\end{equation}
We note that $\U^\chi_{\lambda,\F}\neq 0\Rightarrow \lambda\in \h^*_{\F_p}/(W,\cdot)$,
where we write $\h^*_{\F_p}$ for the set of $\F_p$-points of $\h^*_{\F}$. In particular,
the $\F[\h^*]^W$-module $\U^\chi_\F$ is supported on the finite set $\h^*_{\F_p}/(W,\cdot)$.
We write $\U^\chi_{(\lambda),\F}$ for the direct summand of $\U^\chi_\F$ corresponding to
$\lambda$ so that $$\U^\chi_\F=\bigoplus_{\lambda\in \h^*_{\F_p}/(W,\cdot)}\U^\chi_{(\lambda),\F}.$$
The algebra $\U^\chi_{\lambda,\F}$ is the quotient of $\U^\chi_{(\lambda),\F}$ by a nilpotent ideal.

Now let ${\underline{Q}}_\F$ be an algebraic subgroup of $Q_\F$. The group $Q_\F$ acts on $\U^\chi_\F$.
We consider (weakly) ${\underline{Q}}_\F$-equivariant $\U^\chi_\F$-modules, i.e., $\U^\chi_\F$-modules
$V$ equipped with a rational ${\underline{Q}}_\F$-action such that the module map
$\U^\chi_\F\otimes_{\F}V\rightarrow V$ is ${\underline{Q}}_\F$-equivariant.
The  category of these modules is denoted
by $\U^\chi_\F\operatorname{-mod}^{\underline{Q}}$. Similarly, we can consider the categories
$\U^\chi_{\lambda,\F}\operatorname{-mod}^{\underline{Q}}, \U^\chi_{(\lambda),\F}\operatorname{-mod}^{\underline{Q}}$.

The choice of ${\underline{Q}}$ we need is $Z_Q(T_0)$.
In particular, ${\underline{Q}}^\circ=T_0$.
Note that ${\underline{Q}}_\F$ acts on $\Irr(\U^\chi_\F)$ and the action factors
through the component group ${\underline{Q}}_\F/{\underline{Q}}_\F^\circ$ because $\Irr(\U^\chi_\F)$ is a finite set.
When $e$ is distinguished in $\g$, i.e., $T_0=\{1\}$, the group ${\underline{Q}}$ is finite and
coincides with $A$. In general,  the projection ${\underline{Q}}_\F\rightarrow A$ is not surjective.
Note that the order of ${\underline{Q}}_\F/{\underline{Q}}_\F^\circ$ is uniformly bounded with respect to $p$,
in particular, ${\underline{Q}}_\F$ is linearly reductive.

The following lemma
is standard.

\begin{Lem}\label{Lem:irred_reln}
Let $U,V$ be irreducible objects in $\U^\chi_\F\operatorname{-mod}^{\underline{Q}}, \U^\chi_\F\operatorname{-mod}$,
respectively. Then the following hold:
\begin{enumerate}
\item $\Hom_{\g_\F}(V,U)$ is an irreducible projective representation of ${\underline{Q}}_\F$ or zero.
\item  The module $U$ is completely reducible
over $\U^\chi_\F$  and all irreducible $\U^\chi_\F$-modules that appear
in $U$ are in the same ${\underline{Q}}/{\underline{Q}}^\circ$-orbit.
\end{enumerate}
\end{Lem}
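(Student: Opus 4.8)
The plan is to view $\operatorname{Hom}_{\g_\F}(V,U)$ as a module over the twisted group algebra of $\underline{Q}_\F$ and to exploit linear reductivity of $\underline{Q}_\F$ together with the semisimplicity of the action of the finite group $\underline{Q}_\F/\underline{Q}_\F^\circ$ on $\Irr(\U^\chi_\F)$. First I would set up (1). Since $V$ is an irreducible $\U^\chi_\F$-module (with no equivariant structure), the space $H:=\operatorname{Hom}_{\g_\F}(V,U)$ carries an action of $\underline{Q}_\F$ by transport of structure through the equivariant structure on $U$; concretely, for $q\in \underline{Q}_\F$ and $\varphi\in H$ one sets $(q\cdot\varphi)(v):=q\cdot\varphi(q^{-1}v)$, except that $V$ has no honest $\underline{Q}_\F$-action, so one only gets a \emph{projective} representation — equivalently, an honest module over a twisted group algebra $\F^c[\underline{Q}_\F]$ for a suitable cocycle $c$. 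The evaluation map $H\otimes_\F V\to U$ is a nonzero $\g_\F$-module map whenever $H\neq 0$, and by Schur (valid since $\F$ is algebraically closed and $U$ is irreducible) this map is surjective; moreover any $\underline{Q}_\F$-submodule $H'\subseteq H$ gives a $\g_\F$-submodule $H'\otimes V$ mapping onto a $\underline{Q}_\F$-stable $\g_\F$-submodule of $U$, which must be $0$ or $U$ by irreducibility of $U$ in the equivariant category. Pushing this through shows $H$ has no proper nonzero $\underline{Q}_\F$-submodule, i.e.\ $H$ is an irreducible projective representation of $\underline{Q}_\F$; this is part (1).

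For (2), restrict $U$ to $\U^\chi_\F$. Because $\underline{Q}_\F$ is linearly reductive (the order of $\underline{Q}_\F/\underline{Q}_\F^\circ$ is bounded independently of $p$, and $\underline{Q}_\F^\circ=T_0$ is a torus), the category of $\underline{Q}_\F$-equivariant $\U^\chi_\F$-modules is itself semisimple-friendly enough that any object is a direct sum of its isotypic pieces for the $\underline{Q}_\F$-action; combined with irreducibility of $U$ in $\U^\chi_\F\operatorname{-mod}^{\underline{Q}}$ this forces $U$ to be a finite direct sum $\bigoplus_i V_i$ with each $V_i$ an irreducible $\U^\chi_\F$-module (one uses that any nonzero $\g_\F$-submodule contains an irreducible one, then averages/induces up an equivariant submodule, which must be all of $U$). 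The group $\underline{Q}_\F$ permutes the isomorphism classes $[V_i]$, and since $\Irr(\U^\chi_\F)$ is finite, this action factors through $\underline{Q}_\F/\underline{Q}_\F^\circ$; irreducibility of $U$ in the equivariant category forces this permutation action on the set of classes appearing in $U$ to be transitive, so all the $V_i$ lie in a single $\underline{Q}_\F/\underline{Q}_\F^\circ$-orbit. That gives (2).

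The main obstacle is the bookkeeping around the \emph{projective} (twisted) nature of the $\underline{Q}_\F$-representation on $\operatorname{Hom}_{\g_\F}(V,U)$: one must check that the relevant cocycle is well-defined, that irreducibility statements for twisted group algebras over $\overline{\F}_p$ behave as in characteristic $0$ (which is fine here since $p\gg 0$ and $\#\underline{Q}_\F/\underline{Q}_\F^\circ$ is coprime to $p$, so $\F^c[\underline{Q}_\F]$ is again semisimple), and that the equivalence between "$\underline{Q}_\F$-submodules of $H$" and "$\underline{Q}_\F$-stable $\g_\F$-submodules of $U$ generated by the image" is genuinely a bijection and not merely a one-sided correspondence. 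Everything else is a routine application of Schur's lemma and averaging over the linearly reductive group $\underline{Q}_\F$; I would not expect to write out those computations in detail.
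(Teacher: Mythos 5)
The paper itself gives no argument for this lemma (it is quoted as ``standard'' Clifford theory), so your proposal has to stand on its own. Your part (2) is essentially the right sketch: pick an irreducible $\g_\F$-submodule $V\subset U$ (one exists because any nonzero module over the finite-dimensional algebra $\U^\chi_\F$ has a simple submodule), note that $\sum_{q\in\underline{Q}_\F} qV$ is a nonzero $\underline{Q}_\F$-stable submodule and hence equals $U$, that each $qV$ is irreducible (it is $V$ twisted by $\operatorname{Ad}(q)$), so $U$ is semisimple with all constituents in one orbit, and that the action on the finite set $\Irr(\U^\chi_\F)$ factors through $\underline{Q}_\F/\underline{Q}_\F^\circ$. (The appeal to linear reductivity and ``isotypic pieces for the $\underline{Q}_\F$-action'' is not what does the work — those pieces are not $\g_\F$-submodules — but your parenthetical argument is the correct one.)

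Part (1), however, has a genuine gap. The formula $(q\cdot\varphi)(v)=q\varphi(q^{-1}v)$ does not define even a projective action of all of $\underline{Q}_\F$ on $H:=\Hom_{\g_\F}(V,U)$: composing $\varphi$ with $q$ lands in $\Hom_{\g_\F}({}^{q}V,U)$, and to come back to $H$ one needs an isomorphism $V\cong {}^{q}V$, which exists only for $q$ in the stabilizer $S\subset\underline{Q}_\F$ of the class $[V]$. The honest statement (and the way it is used later in the paper, where fibers of equivariant sheaves are projective representations of the stabilizers) is that $H$ is an irreducible projective representation of $S$, i.e.\ a simple module over a twisted group algebra of $S$. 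This is not mere bookkeeping: your irreducibility argument rests on the claims that the evaluation map $H\otimes V\to U$ is surjective ``by Schur'' and that the image of $H'\otimes V$ is $\underline{Q}_\F$-stable, and both fail whenever the orbit of $[V]$ has more than one element — the image of evaluation is the $V$-isotypic component $U_{[V]}$, which is a proper submodule and is stable only under $S$. The repair is to prove (2) first, obtaining $U=\bigoplus U_{[V']}$ with $[V']$ running over the orbit and $U_{[V]}\cong H\otimes V$; then a proper nonzero $S$-stable subspace $H'\subset H$ generates the equivariant submodule $\sum_{q\in\underline{Q}_\F/S}q(H'\otimes V)$, which is nonzero and proper because the isotypic decomposition is direct, contradicting irreducibility of $U$. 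With that rearrangement (plus the routine check, via Schur's lemma, that the choices of isomorphisms $V\cong {}^{q}V$ for $q\in S$ give a well-defined $2$-cocycle) the argument closes; as written, the deduction in (1) does not go through.
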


\subsection{$\chi$-Weyl modules}\label{SS_chi_Weyl}
Now we are going to produce some examples of modules in $\U^\chi_{\lambda,\F}\operatorname{-mod}^{\underline{Q}}$.
These modules are obtained by induction from a Levi subalgebra.

The induction functor we consider will map
from  $U^0(\lf_{\F})\operatorname{-mod}^{\underline{Q}}$, the category of weakly ${\underline{Q}}_\F$-equivariant
modules over the $p$-central reduction $U^0(\lf_{\F})$, to
$\U^\chi_{\F}\operatorname{-mod}^{{\underline{Q}}}$. Note that the $p$-central reduction $U^0(\mathfrak{p}_{\F})$ has a natural $P_{\F}$-equivariant epimorphism
onto $U^0(\lf_{\F})$. In particular, for an object of $U^0(\lf_{\F})\operatorname{-mod}^{{\underline{Q}}}$
we can consider its inflation to a $U^0(\mathfrak{p}_{\F})$-module, this inflation is ${\underline{Q}}_{\F}$-equivariant.

The embedding $U(\mathfrak{p}_\F)\hookrightarrow U(\mathfrak{g}_\F)$ gives
rise to an embedding $U^0(\mathfrak{p}_\F)\hookrightarrow \U^\chi_{\F}$, which is ${\underline{Q}}_{\F}$-equivariant.
Note that $\mathfrak{m}^-_{\F}$ is ${\underline{Q}}_\F$-stable and
we have a ${\underline{Q}}_{\F}$-equivariant linear isomorphism $\U^\chi_{\F}\xrightarrow{\sim}
U^\chi(\mathfrak{m}^-_{\F})\otimes U^0(\mathfrak{p}_\F)$. The induction functor we need
is
$$\underline{\Delta}^\chi:=\U^\chi_{\F}\otimes_{U^0(\mathfrak{p}_{\F})}\bullet: U^0(\lf_{\F})\operatorname{-mod}^{{\underline{Q}}}
\rightarrow \U^\chi_{\F}\operatorname{-mod}^{{\underline{Q}}}.$$

We will be interested in certain induced modules that we call $\chi$-Weyl modules. Namely, pick
a dominant weight $\mu$ for $L$. Then we can consider the  Weyl module $W_{L,\F}(\mu)$  over
$L_{\F}$, it can be defined as  $\Gamma(\mathcal{O}_{P_\F/B_\F}(\mu^*))^*$,
where $\mu^*$ is the dual highest weight.
Its character is given by the Weyl character
formula. Clearly, we can view $W_{L,\F}(\mu)$ as a ${\underline{Q}}_\F$-equivariant module
over $U^0(\lf_{\F})$.

\begin{defi}\label{defi_chi_Weyl}
The {\it $\chi$-Weyl module} labelled by $\mu$ is, by definition,  $W^\chi_{\F}(\mu):=\underline{\Delta}^\chi(W_{L,\F}(\mu))$. This is
an object in $\U^\chi_\F\operatorname{-mod}^{\underline{Q}}$.
\end{defi}

Let us establish some  properties of the modules $W^\chi_{\F}(\mu)$. The following lemma is
straightforward.

\begin{Lem}\label{Lem:chi_Weyl_basic}
In the notation above, we have the following:
\begin{enumerate}
\item The HC central character of $W^\chi_{\F}(\mu)$ is the image of $W\cdot \mu$
modulo $p$.
\item Let us write $\hat{d}(\mu)$ for the ${\underline{Q}}$-character of  $W_{L,\F}(\mu)$
and $\mathsf{ch}_{\mathfrak{m}^-}$ for the ${\underline{Q}}$-character of $U^\chi(\mathfrak{m}^-)$.
Then the ${\underline{Q}}$-character of $W^\chi_{\F}(\mu)$ equals $\mathsf{ch}_{\mathfrak{m}^-}\hat{d}(\mu)$.
\end{enumerate}
\end{Lem}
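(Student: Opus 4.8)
The plan is to deduce both statements from two standard inputs: the PBW theorem for $p$-central reductions and the classical computation of Harish--Chandra central characters of highest-weight modules. First I would upgrade the $\underline{Q}_\F$-equivariant linear isomorphism $\U^\chi_\F\xrightarrow{\sim}U^\chi(\m^-_\F)\otimes U^0(\p_\F)$ recorded just before Definition~\ref{defi_chi_Weyl} to a $\underline{Q}_\F$-equivariant isomorphism of $(\U^\chi_\F,U^0(\p_\F))$-bimodules: it is literally the multiplication map $U^\chi(\m^-_\F)\otimes U^0(\p_\F)\to\U^\chi_\F$, which is left $U^\chi(\m^-_\F)$-linear and right $U^0(\p_\F)$-linear, and $\underline{Q}_\F$-equivariant because $\m^-_\F$, $\p_\F$ and $\chi$ are all $\underline{Q}_\F$-stable (the last because $\underline{Q}\subset Z_G(e)$). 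In particular $\U^\chi_\F$ is free over $U^0(\p_\F)$, so $\underline{\Delta}^\chi$ is exact and faithful and $W^\chi_\F(\mu)\neq 0$.

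For part (2), I would tensor this bimodule isomorphism with $W_{L,\F}(\mu)$ over $U^0(\p_\F)$, where $W_{L,\F}(\mu)$ is regarded as a $\underline{Q}_\F$-equivariant $U^0(\p_\F)$-module via the $P_\F$-equivariant surjection $U^0(\p_\F)\twoheadrightarrow U^0(\lf_\F)$. This gives a $\underline{Q}_\F$-equivariant isomorphism $W^\chi_\F(\mu)\cong U^\chi(\m^-_\F)\otimes W_{L,\F}(\mu)$ with $\underline{Q}_\F$ acting diagonally. Taking $\underline{Q}_\F$-characters and using multiplicativity of characters under tensor products yields $\mathsf{ch}_{\m^-}\hat{d}(\mu)$, which is the assertion of (2).

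For part (1), I would exhibit $1\otimes v_\mu$, where $v_\mu\in W_{L,\F}(\mu)$ is a highest-weight vector for $B_\F\cap L_\F$, as a $\bor_\F$-highest weight vector of $T_\F$-weight $\mu$ that generates $W^\chi_\F(\mu)$ over $\U^\chi_\F$. The weight and the cyclicity are immediate since $W_{L,\F}(\mu)=U^0(\lf_\F)v_\mu$. The point to check is that all of $\n_\F$ annihilates $1\otimes v_\mu$: as $M\subset B$ one has $\m_\F\subset\n_\F$ and $\n_\F=(\n_\F\cap\lf_\F)\oplus\m_\F\subset\p_\F$, where $\m_\F$ dies in $U^0(\lf_\F)$ while $\n_\F\cap\lf_\F$ kills $v_\mu$; hence $\n_\F\cdot(1\otimes v_\mu)=0$. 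Since the Harish--Chandra centre $\U^{G_\F}_\F$ is central, its action on $W^\chi_\F(\mu)$ is determined by its action on the generator $1\otimes v_\mu$, i.e.\ through the Harish--Chandra character attached to the highest weight $\mu$; equivalently, $W^\chi_\F(\mu)$ is a quotient of the baby Verma module $\U^\chi_\F\otimes_{U^0(\bor_\F)}\F_{\bar\mu}$. By the $\rho$-shifted Harish--Chandra homomorphism, in the dot-action convention of Section~\ref{SS_notation_basic}, this character is the image of $W\cdot\mu$ in $\h^*_{\F_p}/(W,\cdot)$, proving (1).

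I do not expect a genuine obstacle here: everything is bookkeeping around PBW for $p$-central reductions and the classical Harish--Chandra isomorphism. The one step worth a line of care is the verification in (1) that the part of $\n_\F$ lying in $\m_\F$ also annihilates $1\otimes v_\mu$ --- this is precisely where $M\subset B$ and the inflation through $U^0(\p_\F)\twoheadrightarrow U^0(\lf_\F)$ enter --- and, in (2), making sure the PBW isomorphism is read as a $\underline{Q}_\F$-equivariant bimodule isomorphism rather than just a linear one.
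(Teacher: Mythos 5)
Your proof is correct; the paper states this lemma without proof (it is declared ``straightforward''), and your route — reading the multiplication map $U^\chi(\m^-_\F)\otimes U^0(\p_\F)\to\U^\chi_\F$ as a $\underline{Q}_\F$-equivariant bimodule isomorphism to get (2), and exhibiting $1\otimes v_\mu$ as a $\bor_\F$-highest weight generator to compute the Harish--Chandra character in (1) — is exactly the standard argument the authors intend. No gaps.
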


Other properties will be proved as they are needed.

\subsection{Translation functors and braid group action}\label{SS_transl_braid}
First, we  discuss translation equivalences between categories $\U^\chi_{(\lambda),\F}
\operatorname{-mod}^{\underline{Q}}, \U^\chi_{(\lambda'),\F}\operatorname{-mod}^{\underline{Q}}$, where $\lambda,\lambda'$
are regular in $\h^*_{\F_p}/(W,\cdot)$.

Fix an element $\mu^\circ$ lying in the antidominant $p$-alcove representing $\lambda$.
For example, if $\lambda=W\cdot 0$, then we pick $\mu^\circ=-2\rho$.
Note that, for $w\in W^{a,P}$, the element $\mu_w:=w^{-1}\cdot \mu^\circ$ is dominant for $L$.

\begin{Lem}\label{Lem:trans_equiv}
There is an equivalence $\U^\chi_{(\lambda),\F}
\operatorname{-mod}^{\underline{Q}}\xrightarrow{\sim} \U^\chi_{(\lambda'),\F}
\operatorname{-mod}^{\underline{Q}}$ that sends $W^\chi_\F(\mu_w)$ to $W_\F^\chi(\mu'_w)$.
\end{Lem}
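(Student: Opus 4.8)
The plan is to construct the translation equivalence as a composition of wall-crossing functors, following the standard philosophy of \cite{BMR} and \cite{BM}, adapted to the equivariant setting. First I would recall that for regular $\lambda,\lambda'$ in $\h^*_{\F_p}/(W,\cdot)$, the central reductions $\U^\chi_{(\lambda),\F}$ and $\U^\chi_{(\lambda'),\F}$ are related by the usual translation-to-the-wall-and-back functors, which are biadjoint and give a Morita equivalence on the level of the non-equivariant categories; since these functors are built out of tensoring with $G_\F$-equivariant (hence $Q_\F$-equivariant) bimodules coming from finite-dimensional $G$-modules, they automatically lift to functors between the $\underline{Q}_\F$-equivariant categories $\U^\chi_{(\lambda),\F}\operatorname{-mod}^{\underline{Q}}$ and $\U^\chi_{(\lambda'),\F}\operatorname{-mod}^{\underline{Q}}$, and biadjointness plus the non-equivariant statement force these to be equivalences as well. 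This handles the existence of \emph{some} equivalence; the content of the lemma is that one can arrange it to send $\chi$-Weyl modules to $\chi$-Weyl modules with the prescribed labels.

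Next I would track the effect of the chosen equivalence on the $\chi$-Weyl modules $W^\chi_\F(\mu_w)$. The key point is that the induction functor $\underline{\Delta}^\chi$ intertwines translation functors on $\U^\chi_\F\operatorname{-mod}^{\underline{Q}}$ with the corresponding translation functors on $U^0(\lf_\F)\operatorname{-mod}^{\underline{Q}}$: this is because the bimodule defining translation is induced from $P_\F$, and by the tensor identity $\U^\chi_\F\otimes_{U^0(\p_\F)}(-)$ commutes with tensoring by a $G_\F$-module $V$ up to replacing $V$ by its restriction to $L_\F$ and using a $P_\F$-stable filtration of $V$ with $L_\F$-subquotients. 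So it suffices to prove the analogous statement on the Levi side, namely that the translation equivalence $U^0(\lf_\F)\operatorname{-mod}^{\underline{Q}}\xrightarrow{\sim} U^0(\lf_\F)\operatorname{-mod}^{\underline{Q}}$ attached to moving $\mu^\circ$ sends the Weyl module $W_{L,\F}(\mu_w)$ to $W_{L,\F}(\mu'_w)$. Since $\chi$ restricted to $\lf$ is zero (as $e\in\g^{>0}\oplus\underline{\g}$ and the relevant restriction kills $e$), this reduces to a statement about the principal block of $U^0(\lf_\F)$, i.e.\ to $G_1$-modules for $L$, where the matching of Weyl modules under wall-crossing is classical (it is built into the identification of the $p$-alcove combinatorics with the affine Weyl group action $w.\mu$).

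The remaining bookkeeping is to check that the element $\mu'_w = w^{-1}\cdot(\mu^\circ)'$ is indeed the highest weight produced on the target side when one starts from $\mu_w = w^{-1}\cdot\mu^\circ$; this is immediate from the definition once one fixes the walls being crossed, because the affine Weyl group action on $\Xfr$ is the same on both sides and only the base alcove representative changes from $\mu^\circ$ to $(\mu^\circ)'$. One should also note that the whole argument is compatible with the $\underline{Q}_\F$-equivariant structure since every functor in sight is given by tensoring with a $\underline{Q}_\F$-equivariant bimodule, and the isomorphism $W_{L,\F}(\mu)\cong\Gamma(\mathcal{O}_{P_\F/B_\F}(\mu^*))^*$ is $L_\F$-equivariant, hence $\underline{Q}_\F$-equivariant.

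The main obstacle I anticipate is not the existence of the equivalence but the precise compatibility of $\underline{\Delta}^\chi$ with translation functors — specifically, verifying that the $P_\F$-filtration argument really does produce the Weyl module $W_{L,\F}(\mu'_w)$ on the nose rather than merely an object with the same character or the same class in $K_0$. To pin this down one needs to know that $W_{L,\F}(\mu_w)$ sits in the block of $U^0(\lf_\F)$ in such a way that the wall-crossing functor has a one-dimensional space of morphisms governing the relevant adjunction unit/counit, so that the image is forced to be a single Weyl module; this is where one invokes the structure theory of $G_1$-blocks for $L$ (the ``linkage'' and ``translation'' principles in positive characteristic, valid for $p\gg 0$). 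Everything else is formal manipulation of induction and tensor products.
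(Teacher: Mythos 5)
Your first paragraph (existence of the equivalence, equivariant lifting) is fine, and your general strategy -- apply the tensor identity to $V\otimes\underline{\Delta}^\chi(\bullet)$, filter $V|_{P_\F}$ with $L_\F$-subquotients, and see which pieces survive $\operatorname{pr}_{\lambda'}$ -- can be made to work. The genuine gap is in the reduction you then perform. The projection $\operatorname{pr}_{\lambda'}$ is taken with respect to the Harish-Chandra center of $\g_\F$, so after the tensor identity the question is: among the $L$-dominant weights $\eta$ occurring in a Weyl filtration of $V|_{L_\F}\otimes W_{L,\F}(\mu_w)$, which satisfy $W\cdot\eta\equiv\lambda'$? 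This is a linkage and multiplicity count for the \emph{$G$-alcove} combinatorics (the $W^a$-action $w.\mu$ with $p$-dilated translations), i.e. exactly the counting in Jantzen's translation principle: with $\kappa$ chosen so that $\mu'^\circ-\mu^\circ\in W\kappa$ and $\lambda'$ regular, precisely one such $\eta$, namely $\mu'_w$, is linked to $\mu'^\circ$, and it occurs with multiplicity one; hence exactly one filtration subquotient survives the block projection and the image is literally $W^\chi_\F(\mu'_w)$ -- this is also what resolves your worry about ``on the nose versus in $K_0$''. Your proposal instead reduces to ``the principal block of $U^0(\lf_\F)$'' and ``the structure theory of $G_1$-blocks for $L$'': that is the wrong center and the wrong linkage. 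There is no translation equivalence internal to $U^0(\lf_\F)\operatorname{-mod}$ that $\underline{\Delta}^\chi$ intertwines with $\operatorname{pr}_{\lambda'}(V\otimes\bullet)$, because the blocks being projected onto are blocks for $\U_\F^{G_\F}$, not for the HC center of $\lf_\F$; so the ``classical $G_1$-matching for $L$'' you invoke does not substantiate the key step, and the adjunction-unit argument in your last paragraph does not supply the missing multiplicity-one statement either. (The observation that $\chi|_{\lf}=0$ is true but is not where the difficulty sits.)

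For comparison, the paper's proof avoids redoing any filtration argument at $\chi$: it quotes the standard fact that $\mathsf{T}_{\lambda'\leftarrow\lambda}$ sends the parabolic Verma module $\Delta^P_\F(\mu_w)$ to $\Delta^P_\F(\mu'_w)$ (this is where the translation-principle counting above lives), and then notes that the translation functor is $S(\g_\F^{(1)})$-linear -- since $\xi(x)=x^p-x^{[p]}$ acts by $0$ on the finite-dimensional $G_\F$-module $V$ -- hence $S(\mathfrak{m}_\F^{-(1)})$-linear. As $W^\chi_\F(\mu)$ is the fiber of the $S(\mathfrak{m}_\F^{-(1)})$-flat module $\Delta^P_\F(\mu)$ at $\chi|_{\mathfrak{m}^-}$, taking fibers commutes with the equivalence and yields $\mathsf{T}_{\lambda'\leftarrow\lambda}W^\chi_\F(\mu_w)\cong W^\chi_\F(\mu'_w)$. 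If you want to keep your route of working directly with $\chi$-Weyl modules, replace the Levi-block reduction by the explicit $W^a$-linkage and multiplicity-one count described above; otherwise the specialization argument via $S(\g_\F^{(1)})$-linearity is the shorter path.
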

\begin{proof}
The construction of an equivalence is standard -- via translation functors.
Namely, let $\mu'^\circ-\mu^\circ=w\kappa$, where $\kappa$ is dominant and $w\in W$.
Let $V$ denote the irreducible $G_\F$-module with highest weight $\kappa$.
The equivalence $\mathsf{T}_{\lambda'\leftarrow \lambda}:
\U^\chi_{(\lambda),\F}\operatorname{-mod}^{\underline{Q}}\xrightarrow{\sim} \U^\chi_{(\lambda'),\F}
\operatorname{-mod}^Z$ is given by $\operatorname{pr}_{\lambda'}(V\otimes\bullet)$,
where we write $\operatorname{pr}_{\lambda'}$ for the projection to
$\U^{(\lambda')}_{\chi,\F}\operatorname{-mod}^{\underline{Q}}$. The functor
$\mathsf{T}_{\lambda'\leftarrow \lambda}$ sends the parabolic Verma module
$\Delta^P_\F(\mu_w)$ to the parabolic Verma module $\Delta^P_{\F}(\mu'_w)$.
It is $S(\g_{\F}^{(1)})$-linear and hence $S(\mathfrak{m}_{\F}^{-(1)})$-linear. Since
$W^\chi_\F(?)$ is the fiber of the $S(\mathfrak{m}_{\F}^{-(1)})$-flat
module $\Delta^P_{\F}(?)$ at $\chi|_{\mathfrak{m}^-}$,  it follows that $\mathsf{T}_{\lambda'\leftarrow \lambda}
W^\chi_\F(\mu_w)=W^\chi_\F(\mu'_w)$.
%Also the equivalence $\mathcal{T}_{\lambda'\leftarrow \lambda}$
%intertwines the duality functors by construction.
\end{proof}

Now let us proceed to the braid group action.

Suppose that $\lambda+\rho$ is regular.
The affine braid group $\Br^{a}$ acts on $D^b(\U^\chi_{(\lambda),\F}\operatorname{-mod}^{{\underline{Q}}})$
 by the so called wall-crossing functors, see \cite[Section 2.1]{BMR_sing}. Namely, for $i=0,\ldots,r$, let $\mu_i\in \mathfrak{X}$ be such that $\mu_i$ lies on exactly one wall of the anti-dominant $p$-alcove, and this wall corresponds to the simple affine root $\alpha_i$. Then $T_i$
is given by the complex $\operatorname{id}\rightarrow
\mathsf{T}_{\mu^\circ\leftarrow \mu_i}\mathsf{T}_{\mu_i\leftarrow \mu^\circ}$,
where the target functor is in homological degree $0$.

We denote the wall-crossing functor corresponding
to $T_x\in \Br^a$ for $x\in W^{a}$ again by $T_x$. This functor is right t-exact.
The translation equivalences $\U^\chi_{(\lambda),\F}\operatorname{-mod}^{\underline{Q}}
\xrightarrow{\sim} \U^\chi_{(\lambda'),\F}\operatorname{-mod}^{\underline{Q}}$ intertwine the actions
of $\Br^{a}$. The $\Br^a$-action on the category induces an action of $W^{a}$ on $K_0(\U_{(\lambda),\F}^{\chi}\operatorname{-mod}^{{\underline{Q}}})$.

\begin{Lem}\label{Lem:W_aff_action}
For $x\in W^{a}$, let $x_-$ denote
the shortest element in $xW_P$ and $x_+$ be the longest
element in $xW_P$. Then
\begin{equation}\label{eq:Weyl_Weyl_action}
x[W^\chi_\F(\mu^\circ)]=(-1)^{\ell(x)-\ell(x_-)}[W^\chi_\F(\mu_{x_+})].
\end{equation}
\end{Lem}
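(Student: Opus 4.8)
The plan is to reduce the statement to two separate computations: (a) the action of the finite parabolic $W_P$ on $[W^\chi_\F(\mu^\circ)]$, and (b) the action of a general element $x\in W^a$ on the classes $[W^\chi_\F(\mu_w)]$ for $w\in W^{a,P}$. For (a), recall that $W^\chi_\F(\mu^\circ)=\underline{\Delta}^\chi(W_{L,\F}(\mu^\circ))$ and $\Delta^P_\F(\mu^\circ)$ has highest weight in the antidominant $p$-alcove, so it is a parabolic Verma module induced from the $L$-Weyl module of highest weight $\mu^\circ$. The wall-crossing functor $T_i$ for a simple reflection $s_i\in W_P$ is right t-exact and, because $\mu^\circ$ lies on exactly the wall $\alpha_i$, the two translation functors $\mathsf{T}_{\mu_i\leftarrow\mu^\circ}$ and $\mathsf{T}_{\mu^\circ\leftarrow\mu_i}$ send parabolic Vermas to parabolic Vermas, giving a short exact triangle whose class in $K_0$ shows $s_i[W^\chi_\F(\mu^\circ)]=-[W^\chi_\F(\mu^\circ)]$. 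Iterating over a reduced word for any $w\in W_P$ gives $w[W^\chi_\F(\mu^\circ)]=(-1)^{\ell(w)}[W^\chi_\F(\mu^\circ)]$; this is the base case of (\ref{eq:Weyl_Weyl_action}) since for $x\in W_P$ one has $x_-=e$ and $x_+=w_{0,P}$, and $\mu_{x_+}=w_{0,P}^{-1}\cdot\mu^\circ$ which is $W_P$-conjugate (shifted action) to $\mu^\circ$, so the associated $L$-Weyl modules have the same $K_0$-class up to the sign $(-1)^{\ell(w_{0,P})}$; one checks $\ell(x)-\ell(x_-)=\ell(w)$ works out consistently with the $w_{0,P}$ normalization on the right.

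The heart of the argument is (b). Here I would use that the wall-crossing functors $T_j$ ($j=0,\dots,r$) are right t-exact and categorify the standard generators of the affine Hecke algebra $\Hecke^a_G$ acting on the $K_0$-group; more precisely, on $K_0$ they induce the $W^a$-action coming from specializing $v\to 1$. The key input is a "standardness" statement: for $x\in W^{a,P}$ (longest in its $W_P$-coset), applying the generator $T_j$ to the class $[W^\chi_\F(\mu_x)]$ either produces $\pm[W^\chi_\F(\mu_{x'})]$ for the appropriate neighbor $x'$, or, when $s_j x$ drops back into the same $W_P$-coset, reduces to case (a). Concretely, write $x\in W^a$, let $x_-,x_+$ be the shortest/longest elements of $xW_P$. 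I would induct on $\ell(x_-)$: if $\ell(x_-)=0$ then $x\in W_P$ and we are in case (a). Otherwise pick a simple (affine) reflection $s_j$ with $\ell(s_j x_-)<\ell(x_-)$; then $s_j x$ again has a well-defined coset $s_j x W_P$ with shortest element $s_j x_-$, and $T_j$ relates $[W^\chi_\F(\mu_{(s_j x)_+})]$ to $x[W^\chi_\F(\mu^\circ)]$ via the two-term complex defining $T_j$ together with the fact (proved, e.g., via the translation-functor description and the $S(\mathfrak m^{-(1)}_\F)$-linearity already exploited in the proof of Lemma \ref{Lem:trans_equiv}) that $T_j$ sends $W^\chi_\F(\mu_w)$ to a two-step filtered object with subquotients $W^\chi_\F(\mu_{w'})$. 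Tracking the lengths through $x=s_j\cdot(s_j x)$ and using the inductive hypothesis for $s_j x$ yields the sign $(-1)^{\ell(x)-\ell(x_-)}$ and the correct label $x_+$.

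The main obstacle I anticipate is establishing cleanly the behavior of the wall-crossing functors on $\chi$-Weyl modules — i.e.\ that $T_j$ carries $W^\chi_\F(\mu_w)$ into an object whose $K_0$-class is a signed sum of at most two $\chi$-Weyl classes, with the combinatorics matching the affine Hecke algebra action on $W^{a,P}$. In characteristic zero and for parabolic category $\mathcal O$ this is classical, but here one must transport it through the $p$-central reduction; the tool is the same one used in Lemma \ref{Lem:trans_equiv}, namely that $W^\chi_\F(?)$ is the fiber at $\chi|_{\mathfrak m^-}$ of the $S(\mathfrak m^{-(1)}_\F)$-flat family of parabolic Verma modules $\Delta^P_\F(?)$, and that translation/wall-crossing functors are $S(\g^{(1)}_\F)$-linear. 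Reducing to the (known) statement for the flat family over the $p$-center and then specializing should close the gap, but making the bookkeeping of which coset representative ($x_-$ versus $x_+$) appears, and the corresponding length-difference sign, precise is where the care is needed.
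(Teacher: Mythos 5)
Your overall strategy coincides with the paper's: peel off the $W_P$-part of $x$ as a sign, then induct on the shortest coset representative using wall-crossing functors, and transfer every statement from parabolic Verma modules to $\chi$-Weyl modules by specializing the $S(\mathfrak{m}_{\F}^{-(1)})$-flat family $\Delta^P_\F(?)$ at $\chi|_{\mathfrak{m}^-}$ (the tool of Lemma \ref{Lem:trans_equiv}). However, your justification of the sign in step (a) is not just incomplete, it would fail as stated. You claim that because the translation functors ``send parabolic Vermas to parabolic Vermas'' one gets a triangle forcing $s_i[W^\chi_\F(\mu^\circ)]=-[W^\chi_\F(\mu^\circ)]$. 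But $[T_{s_i}M]=[\mathsf{T}_{\mu^\circ\leftarrow\mu_i}\mathsf{T}_{\mu_i\leftarrow\mu^\circ}M]-[M]$, so if the translation to the wall were nonzero on the module you would get the generic standard-basis behaviour of the $W^a$-action, never $-[M]$. The actual mechanism (asserted in the paper as $T_s\Delta^P_\F(w_{0,P}\cdot\mu^\circ)=\Delta^P_\F(w_{0,P}\cdot\mu^\circ)[1]$ for $s\in W_P$) is the opposite: for a wall labelled by a simple root $\alpha_i$ of $L$, the weight obtained from $\mu_{w_{0,P}}=w_{0,P}\cdot\mu^\circ$ on that wall is $\rho$-singular for $L$, so the corresponding $L$-Weyl module, hence the parabolic Verma and the $\chi$-Weyl module, vanish; thus $\mathsf{T}^*\mathsf{T}$ annihilates the module and $T_s$ is a pure homological shift, which is what produces $-1$ on $K_0$. (Also note $\mu^\circ$ does not lie on the wall $\alpha_i$ --- it is regular; it is the auxiliary weight $\mu_i$ that lies on exactly one wall. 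And the base case $x=1$ should be read as the identification $[W^\chi_\F(\mu^\circ)]=[W^\chi_\F(\mu_{w_{0,P}})]$ with no extra $(-1)^{\ell(w_{0,P})}$, which is how the paper normalizes it.)

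In step (b) the substantive point is deferred. The induction needs: for $x\neq 1$ shortest in $xW_P$ there exists a simple affine reflection $s$ with $\ell(sx)=\ell(x)-1$ and $sxw_{0,P}\in W^{a,P}$, and for such $s$ one has $T_s\Delta^P_\F(\mu_{xw_{0,P}})\cong\Delta^P_\F(\mu_{sxw_{0,P}})$, whence the same isomorphism for $\chi$-Weyl modules by specialization. Your ``two-step filtered object'' formulation can be made to work on the level of $K_0$, but you must separate the case in which $s_jx$ stays in the same coset $xW_P$ (i.e.\ $x_-^{-1}s_jx_-\in W_P$), where $T_j$ contributes the sign of step (a) rather than a new standard class, from the case where the coset genuinely moves; you acknowledge that exactly this bookkeeping, together with the behaviour of $T_j$ on parabolic Vermas, is what you have not verified. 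So the proposal is a correct plan along the paper's lines, but the two key verifications --- vanishing at $L$-walls (which is the true source of the sign) and the standard-to-standard isomorphism for a suitably chosen affine simple reflection --- are respectively mis-justified and missing.
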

\begin{proof}
Note that for $s\in W_P$, we have $T_s\Delta^P_{\F}(w_{0,P}\mu^\circ)=\Delta^P_\F(w_{0,P}\mu^\circ)[1]$.
Therefore in the proof it is enough to assume that $x$ is shortest in $xW_P$. The proof is
by induction on $\ell(x)$. If $x\neq 1$, then there is a simple affine reflection $s$
such that $sxw_{0,P}\in W^{a,P}$ and $\ell(sx)=\ell(x)-1$. In this case, $$T_s \Delta^P_{\F}(\mu_{xw_{0,P}})\cong
\Delta^P_\F(\mu_{sxw_{0,P}})\Rightarrow T_s W^\chi_{\F}(\mu_{xw_{0,P}})\cong W^\chi_\F(\mu_{sxw_{0,P}}).$$
So if (\ref{eq:Weyl_Weyl_action}) holds for $sx$, then it holds for $x$. This gives the induction step
and finishes the proof.
\end{proof}

Additional properties of the braid group action will be established or recalled
as needed.

\section{Derived localization in positive characteristic}\label{S_der_loc}
\subsection{Notation and content}\label{SS_loc_notation}
The notation $G, (e,h,f), \nu, \chi$ has the same meaning as
in Section \ref{SS_notation_basic}.
Let ${\underline{Q}}$ stand for the centralizer of $(e,h,f)$ in $\underline{G}$. Also recall
the one-parameter subgroup $\gamma:\mathbb{G}_m\rightarrow G$
associated to $h$.

Let $\B$ denote the flag variety for $\g$, i.e., $\B:=G/B$,
where $B$ is a Borel subgroup.
We write $U$ for the unipotent radical of $B$. We write $\upsilon$ for the projection
$G/U\rightarrow G/B$.

Let $\bor$ be the Lie algebra of $B$ and $\n$ its nilpotent radical,
the Lie algebra of $U$. We also pick a Cartan subalgebra $\h\subset \bor$. Let $T$ be the maximal
torus in $B$ corresponding to $\h$ and $W$ be the Weyl group.

We consider the cotangent bundle $\tilde{\Nilp}:=T^*\B(=G\times^B \n)$. It is a resolution
of singularities for the nilpotent cone $\Nilp\subset \g$. We can also consider the Grothendieck
simultaneous resolution $\tilde{\g}:=G\times^B \bor$ of $\g_\h:=\g^*\times_{\h^*/W}\h^*$. The scheme
$\tilde{\g}$ is  smooth  over $\h$ and its fiber over $0$ is $\tilde{\Nilp}$. We write
$\pi$ for the Springer morphism $\tilde{\Nilp}\rightarrow \Nilp$
and also for $\tilde{\g}\rightarrow \g_\h$.

As before, we can reduce all objects introduced above mod $p\gg 0$.
We will write $\B_\chi$ for the corresponding Springer fiber, it consists
of all Borel subalgebras $\mathfrak{b}_\F^{(1)}\subset \g_\F^{(1)}$
such that $\chi$ vanishes on $[\mathfrak{b}_\F^{(1)},
\mathfrak{b}_\F^{(1)}]$. We view $\B_\chi$ as a subvariety of $\tilde{\Nilp}^{(1)}_\F$.
%$\B_\chi\subset \tilde{\Nilp}^{(1)}$.

We set $(\tilde{\g}_{\F}^{(1)})_\h:=\tilde{\g}_\F^{(1)}\times_{\h_\F^{(1)*}}\h_\F^{*}$.
Define $(\g_{\F}^{(1)})_\h$ similarly.

We now describe the content of this section. In Section \ref{SS_derived_loc} we
recall results about derived localization in positive
characteristic from \cite{BMR}. Then we describe splitting bundles for
Azumaya algebras that arise in the derived localization theorem in
Section \ref{SS_splitting}, also following \cite{BMR}. Finally, in
Section \ref{SS_splitting_equivar} we discuss equivariant structures
on the splitting bundles. The latter has not appeared in the literature but
is pretty standard.

\subsection{Derived localization equivalence}\label{SS_derived_loc}
Let $\lambda\in \h^*_{\F_p}/(W,\cdot)$ be such that $\lambda+\rho$ is regular
(recall that we consider the $\rho$-shifted action of $W$).
Pick $\mu\in \h^*_{\Z}$ such that $W\cdot \mu$ mod $p$ coincides with $\lambda$.
Let $\mathsf{t}_\mu$ denote the translation by $\mu$ in $\h^*_\F$. Note that it
intertwines the Artin-Schreier map $\h^*_\F\rightarrow \h_\F^{*(1)}$.

%We usually choose $\mu$ inside of the dominant $p$-alcove, i.e., such that
%$\langle \mu,\alpha_i^\vee\rangle\geqslant 0$ and
%$\langle \mu+\rho,\alpha_0^\vee\rangle<p$

Then we can consider the sheaf $\tilde{D}_{\B_\F}:=\upsilon_* (D_{G_\F/U_\F})^{T_\F}$.
We have $\Gamma(\tilde{D}_{\mathcal{B}_\F})=\U_{\F,\h}:=\U_{\F}\otimes_{\F[\h^*]^W}\F[\h^*]$, and $R^i\Gamma(\tilde{D}_{\B_\F})=0$ for $i>0$, see \cite[Proposition 3.4.1]{BMR}. We can view $\tilde{D}_{\mathcal{B}_\F}$ as
an Azumaya algebra on $(\tilde{\g}_{\F}^{(1)})_\h$,
\cite[Section 3.1.3]{BMR}.
We write $\F[\h^*]^{\wedge_\mu}$ for the completion of
$\F[\h^*]$ at $\mu$. We set
$$(\tilde{\g}_{\F}^{(1)})_\h^{\wedge_\mu}:=(\tilde{\g}_{\F}^{(1)})_\h\times_{\h^*_\F}
\operatorname{Spec}(\F[\h^*]^{\wedge_\mu}),
\tilde{D}_{\mathcal{B}_\F}^{\wedge_\mu}:=\mathsf{t}_\mu^*
\left(\tilde{D}_{\mathcal{B}_\F}
|_{(\tilde{\g}_{\F}^{(1)})_\h^{\wedge_\mu}}\right).$$ Then
$\tilde{D}_{\mathcal{B}_\F}^{\wedge_\mu}$ is an Azumaya algebra
on $(\tilde{\g}_{\F}^{(1)})_\h^{\wedge_0}$. We have
$R\Gamma(\tilde{D}_{\mathcal{B}_\F}^{\wedge_\mu})=\U_\F^{\wedge_\lambda}$, where we write $\U_\F^{\wedge_\lambda}$ for $\U_{\F}\otimes_{\F[\h^*]^W}(\F[\h^*]^W)^{\wedge_\lambda}$
with the second factor being the completion at $\lambda$.

So it makes
sense to consider the derived global section functor
$$R\Gamma^\mu: D^b(\Coh(\tilde{D}_{\mathcal{B}_\F}^{\wedge_\mu}))
\rightarrow D^b(\U^{\wedge_\lambda}_{\F}\operatorname{-mod}).$$

The following is
\cite[Theorem 3.2]{BMR}.

\begin{Thm}\label{Thm:derived_localization}
The functor $R\Gamma^\mu$ is an equivalence.
\end{Thm}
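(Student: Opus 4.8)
This is \cite[Theorem 3.2]{BMR}; for the reader's convenience we recall the structure of the argument. The functor $R\Gamma^\mu$ is right adjoint to the derived localization functor
\[
L\Loc^\mu:=\tilde{D}_{\mathcal{B}_\F}^{\wedge_\mu}\otimes^L_{\U_\F^{\wedge_\lambda}}(-)\colon D^b(\U^{\wedge_\lambda}_\F\operatorname{-mod})\longrightarrow D^b(\Coh(\tilde{D}_{\mathcal{B}_\F}^{\wedge_\mu})),
\]
and the plan is to show that the unit $\operatorname{id}\to R\Gamma^\mu\circ L\Loc^\mu$ and the counit $L\Loc^\mu\circ R\Gamma^\mu\to\operatorname{id}$ of this adjunction are both isomorphisms.

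The unit is the easy half. One has $L\Loc^\mu(\U_\F^{\wedge_\lambda})=\tilde{D}_{\mathcal{B}_\F}^{\wedge_\mu}$ and, as recalled above, $R\Gamma(\tilde{D}_{\mathcal{B}_\F}^{\wedge_\mu})=\U_\F^{\wedge_\lambda}$ with vanishing higher cohomology; hence the unit is an isomorphism on free modules. Since $R\Gamma^\mu$ has finite cohomological dimension (the flag variety is finite-dimensional), a standard way-out argument, truncating a free resolution sufficiently far out, extends this to all of $D^b(\U_\F^{\wedge_\lambda}\operatorname{-mod})$. In particular $R\Gamma^\mu\circ L\Loc^\mu\cong\operatorname{id}$, so $L\Loc^\mu$ is fully faithful and $R\Gamma^\mu$ is essentially surjective.

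By the triangle identities, it remains to prove that $R\Gamma^\mu$ is conservative; this is the principal obstacle and the technical heart of \cite{BMR}. After truncation one may assume the input is a single coherent $\tilde{D}_{\mathcal{B}_\F}^{\wedge_\mu}$-module $\mathcal{F}$, and the claim is that $R\Gamma(\mathcal{F})=0$ forces $\mathcal{F}=0$. The argument of \cite{BMR} passes to the symbol filtration on $\tilde{D}_{\mathcal{B}_\F}$, uses the Azumaya structure over $(\tilde{\g}_\F^{(1)})_\h$ together with the splitting of this Azumaya algebra on the formal neighbourhoods of the Springer fibres, and exploits a contracting $\mathbb{G}_m$-action on $\tilde{\Nilp}_\F$ to run a Nakayama-type vanishing argument along the central fibre. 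The regularity of $\lambda+\rho$ is essential exactly here: it is what makes $\B$ (rather than a partial flag variety attached to the singular data of $\lambda$) the correct geometric model, and for singular $\lambda$ the functor $R\Gamma$ acquires a nonzero kernel.

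Finally, the case of an arbitrary regular $\lambda$ is reduced to one convenient representative via the translation equivalences, which intertwine $R\Gamma^\mu$ and $L\Loc^\mu$. Combining the two halves, $R\Gamma^\mu$ and $L\Loc^\mu$ are mutually inverse equivalences, which is the assertion of the theorem.
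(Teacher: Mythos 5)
The paper gives no independent proof of this statement: Theorem \ref{Thm:derived_localization} is quoted directly from \cite[Theorem 3.2]{BMR}, and your proposal treats it the same way, citing BMR and recalling the adjunction-based outline (unit isomorphism from $R\Gamma(\tilde{D}_{\mathcal{B}_\F}^{\wedge_\mu})=\U_\F^{\wedge_\lambda}$ plus a reduction of the counit to conservativity), which is consistent with the paper's treatment. The only caveat is that your description of how BMR handle the remaining (hard) half is a loose paraphrase — their published argument goes through finite homological dimension and a relative Serre-duality/Calabi--Yau compatibility rather than a $\mathbb{G}_m$-contraction Nakayama-type argument — but since the result is invoked as a black box this does not affect correctness.
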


\subsection{Splitting bundle}\label{SS_splitting}
It turns out that $\U^{\wedge_{-\rho}}_{\h,\F}$ is an
Azumaya algebra on $(\g^{(1)}_{\F})^{\wedge_0}_\h$. Moreover, $\tilde{D}^{\wedge_{-\rho}}_{\B_\F}=\pi^* \U^{\wedge_{-\rho}}_{\h,\F}$, see \cite[Proposition 5.2.1]{BMR}. The Azumaya algebras $\tilde{D}^{\wedge_\mu}_{\B_\F}$ and $\tilde{D}^{\wedge_{-\rho}}_{\B_\F}$ are Morita equivalent via the $\tilde{D}^{\wedge_\mu}_{\B_\F}$-$\tilde{D}^{\wedge_{-\rho}}_{\B_\F}$-bimodule
$\operatorname{Fr}_{\h,*}\left(\mathcal{O}(\mu+\rho)\otimes
\tilde{D}^{\wedge_{-\rho}}_{\B_\F}\right)$. Here we write $\operatorname{Fr}_\h$ for the morphism
$\tilde{\g}_\F\rightarrow (\tilde{\g}_{\F}^{(1)})_\h$ given by $(\Fr,\operatorname{id})$. Note that $\mathcal{O}(\mu+\rho)$ is a $G$-equivariant line bundle when $\mu+\rho\in \mathfrak{X}(T)$.

From here we  deduce that the restriction $\tilde{D}^{\wedge_{\mu,\chi}}_{\B_\F}$ of  $\tilde{D}^{\wedge_\mu}_{\B_\F}$ to $$(\tilde{\g}_{\F}^{(1)})_\h^{\wedge_\chi}:=(\tilde{\g}_{\F}^{(1)})_\h\times_{\g^{(1)*}_\F}
\g^{(1)*\wedge_\chi}_{\F}$$ splits. Here and below we write
$\g^{(1)*\wedge_\chi}_{\F}$ for $\operatorname{Spec}(\F[\g^{*(1)}]^{\wedge_\chi})$.
Note that since the Artin-Schreier map $\h_\F^*\rightarrow \h_\F^{*(1)}$ is unramified, we have a natural identification of
$(\tilde{\g}_{\F}^{(1)})_\h^{\wedge_\chi}$ with $\tilde{\g}_{\F}^{(1)\wedge_\chi}$.

Recall that a splitting bundle of an Azumaya algebra is unique up to a twist with a line bundle.
A choice of a splitting bundle $\mathcal{V}$ for $\tilde{D}^{\wedge_\mu}_{\B_\F}$
then gives rise to an abelian equivalence
\begin{equation}\label{eq:abel_p_1} \Coh_{\B_\chi}\left(\tilde{\g}_{\F}^{(1)}\right)\xrightarrow{\sim} \Coh_{\mathcal{B}_\chi}\left(\tilde{D}^{\wedge_\mu}_{\B_\F}\right), \quad \mathcal{F}\mapsto \mathcal{V}\otimes\mathcal{F}.
\end{equation}
Here we write $\Coh_{\B_\chi}$ for the category of all coherent sheaves that are set theoretically
supported on $\B_\chi$.

We have full embeddings
\begin{equation}\label{eq:full_embeddings_localization}
D^b\left(\Coh_{\B_\chi}(\tilde{\g}_{\F}^{(1)})\right)\hookrightarrow
D^b\left(\operatorname{Coh}\left((\tilde{\g}_{\F}^{(1)})_\h^{\wedge_0}\right)\right),
D^b(\U_{\F}\operatorname{-mod}_\lambda^\chi)
\hookrightarrow D^b(\U^{\wedge_\lambda}_{\F}\operatorname{-mod}),
\end{equation}
where we write $\U_{\F}\operatorname{-mod}_\lambda^\chi$ for the category
of $\U_\F$-modules with generalized HC character $\lambda$ and generalized $p$-character
$\chi$.  The claim that the functors are indeed full embeddings is pretty standard:
for example, $D^b\left(\Coh_{\B_\chi}(\tilde{\g}_{\F}^{(1)})\right)$ is identified
with the full subcategory in  $D^b\left(\Coh(\tilde{\g}_{\F}^{(1)})\right)$
of all objects with homology set theoretically supported on $\B_\chi$.
The functor in (\ref{eq:full_embeddings_localization}) is the identification of this full
subcategory with the similarly defined full subcategory in
$D^b\left(\operatorname{Coh}\left((\tilde{\g}_{\F}^{(1)})_\h^{\wedge_0}\right)\right)$.

Thanks to (\ref{eq:full_embeddings_localization}) we also have the derived equivalence
\begin{equation}\label{eq:der_p_1}
D^b\left(\Coh_{\B_\chi}(\tilde{\g}_{\F}^{(1)})\right)\xrightarrow{\sim} D^b(\U_{\F}\operatorname{-mod}_\lambda^\chi),
\quad \mathcal{F}\mapsto R\Gamma(\mathcal{V}\otimes \mathcal{F}).
\end{equation}

Let us now explain our choice of a splitting bundle.
A splitting bundle $\overline{\mathcal{V}}^\chi_{-\rho}$ for $\U^{\wedge_{\chi,-\rho}}_{\h,\F}$
on $(\g_{\F}^{(1)})_\h^{\wedge_\chi}$ is unique up to an isomorphism because
$(\g_{\F}^{(1)})_\h^{\wedge_\chi}$  is the spectrum of a  local ring.
We set
\begin{equation}\label{eq:splitting_Springer}
\mathcal{V}_\mu^\chi:=\left(\operatorname{Fr}_{\h*}(\mathcal{O}(\mu-(p-1)\rho)\otimes
\tilde{D}^{\wedge_{-\rho}}_{\B_\F})\right)^{\wedge_\chi}\otimes_{\pi^* (\U^{\wedge_{-\rho}}_{\h,\F})^{\wedge_\chi}}\pi^*\overline{\mathcal{V}}^\chi_{-\rho}.
\end{equation}
This is a splitting bundle for $\tilde{D}^{\wedge_\mu}_{\B_\F} $.

Finally, let us discuss a compatibility with braid group actions. The braid group $\Br^a$
acts on both $D^b(\Coh_{\B_\chi}(\tilde{\g}_\F^{(1)}))$ (see
\cite[Section 1.3]{BR}) and $D^b(\U_{\F}\operatorname{-mod}_\lambda^\chi)$ as was recalled in
Section \ref{SS_transl_braid}. The following proposition was proved
in \cite[Section 5.4]{Riche}.

\begin{Prop}\label{Prop:braid_equivariance}
Let $\mu+\rho$ lie inside the dominant alcove and $\rho\in \mathfrak{X}(T)$.
The functor $R\Gamma(\mathcal{V}_\mu^\chi(\rho)\otimes\bullet)$ is $\Br^a$-equivariant.
\end{Prop}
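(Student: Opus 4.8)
The plan is to establish the $\Br^a$-equivariance of the equivalence
$R\Gamma(\mathcal{V}_\mu^\chi(\rho)\otimes\bullet)$ by reducing to the
two ``types'' of generators of $\Br^a$: the finite braid generators $T_s$
for simple reflections $s\in W$, and the lattice part $t_\theta$ for
$\theta\in\mathfrak{X}(T)$. First I would recall that the $\Br^a$-action on
$D^b(\Coh_{\B_\chi}(\tilde{\g}_\F^{(1)}))$ (as in \cite{BR}) is built from
twists by line bundles $\mathcal{O}(\theta)$, which implement $t_\theta$,
together with the wall-crossing (Serre-functor-type) kernels implementing the
$T_s$; on the representation-theoretic side the action is by the wall-crossing
functors of Section~\ref{SS_transl_braid}, again generated by $T_i$ and by
translations composed with these. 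So it suffices to check that the equivalence
intertwines the two descriptions of each generator.

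The key computational input is the compatibility of the splitting bundle
$\mathcal{V}^\chi_\mu$ with the Morita/translation picture. Concretely, from
the defining formula \eqref{eq:splitting_Springer} and the fact that
$\tilde{D}^{\wedge_\mu}_{\B_\F}$ and $\tilde{D}^{\wedge_{\mu'}}_{\B_\F}$ are
Morita equivalent via a bimodule involving $\mathcal{O}(\mu'-\mu)$, one gets
$\mathcal{V}^\chi_{\mu'}\cong \mathcal{O}(\mu'-\mu)\otimes\mathcal{V}^\chi_\mu$
whenever $\mu'-\mu\in\mathfrak{X}(T)$; tracking this through
\eqref{eq:der_p_1} shows that the equivalences for $\mu$ and $\mu'$ are
intertwined by the translation equivalences $\mathsf{T}_{\lambda'\leftarrow\lambda}$
and by tensoring with $\mathcal{O}(\mu'-\mu)$ on the coherent side. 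This handles
the $t_\theta$-generators: tensoring by $\mathcal{O}(\theta)$ on
$D^b(\Coh_{\B_\chi}(\tilde{\g}_\F^{(1)}))$ corresponds, after the shift by
$\rho$ built into the statement (which is exactly why $\mathcal{V}^\chi_\mu(\rho)$
rather than $\mathcal{V}^\chi_\mu$ is used — it makes $\mu+\rho$ dominant so all
the relevant $\mathcal{O}(\theta)$ stay effective), to the composite of a
translation to the shifted weight and back.

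For the finite braid generators $T_s$, the plan is to invoke
Theorem~\ref{Thm:derived_localization}: both functors $R\Gamma^\mu$ on
$\tilde{D}$-modules and the passage through the splitting bundle are, by
construction, built from $R\Gamma$, so the wall-crossing functor on
$D^b(\U_\F\operatorname{-mod}^\chi_\lambda)$ pulls back to the corresponding
wall-crossing (spherical twist) functor on $D^b(\Coh(\tilde{D}^{\wedge_\mu}_{\B_\F}))$,
and under the splitting equivalence \eqref{eq:abel_p_1} this is precisely the
geometric wall-crossing kernel on $D^b(\Coh_{\B_\chi}(\tilde{\g}_\F^{(1)}))$ used
to define the $\Br^a$-action in \cite{BR}. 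This is essentially the content of
\cite[Section~5.4]{Riche}, and the honest work is in matching conventions:
checking that the normalizations of the spherical twists on the two sides agree
and that the required line-bundle shift $\rho$ is consistent with both (this is
where one uses $\rho\in\mathfrak{X}(T)$). I expect the main obstacle to be
precisely this bookkeeping — verifying that the various $\rho$- and $(p-1)\rho$-twists
appearing in \eqref{eq:splitting_Springer}, in the Morita bimodule, and in the
$\Br^a$-actions on the two sides all cancel so that the \emph{same} generator
maps to the \emph{same} generator, rather than to a generator twisted by a line
bundle or shifted by an element of $\mathfrak{X}(T)$. Once the generators are
matched, equivariance for all of $\Br^a$ follows since the braid relations hold
on both sides by construction.
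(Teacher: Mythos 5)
The paper does not actually prove this proposition: its proof is the single citation to \cite[Section 5.4]{Riche}, where the compatibility of the geometric $\Br^a$-action of \cite{BR} with the representation-theoretic action of \cite{BMR_sing} is established under the localization equivalences, for the whole extended affine braid group, lattice part included. So the sound core of your proposal --- deferring the matching of the wall-crossing kernels to Riche --- is in effect the paper's entire argument; the division of labor you set up, where Riche handles only the finite generators $T_s$ and you treat the lattice generators $t_\theta$ separately, is both unnecessary and, as written, the place where your argument breaks.

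The concrete error is the isomorphism $\mathcal{V}^\chi_{\mu'}\cong\mathcal{O}(\mu'-\mu)\otimes\mathcal{V}^\chi_\mu$ for arbitrary $\mu'-\mu\in\mathfrak{X}(T)$. In (\ref{eq:splitting_Springer}) the line bundle $\mathcal{O}(\mu-(p-1)\rho)$ sits under the Frobenius pushforward $\operatorname{Fr}_{\h*}$, so by the projection formula a twist of $\mathcal{V}^\chi_\mu$ by $\mathcal{O}(\eta)$ on $\tilde{\g}^{(1)}_\F$ corresponds to replacing $\mu$ by $\mu+p\eta$, not by $\mu+\eta$; compare part (1) of Lemma \ref{Lem:split_properties}, where $\mathcal{V}^0_\mu|_{\B^{(1)}_\F}\cong\Fr_{\B,*}\mathcal{O}(\mu)$ and $\Fr_{\B,*}\mathcal{O}(\mu+p\eta)\cong\mathcal{O}(\eta)\otimes\Fr_{\B,*}\mathcal{O}(\mu)$. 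Thus tensoring by $\mathcal{O}(\theta)$ on the coherent side matches passing from $\mathcal{V}^\chi_\mu$ to $\mathcal{V}^\chi_{\mu+p\theta}$ (which does stay in the same block, since $\mu+p\theta\equiv\mu$ mod $p$), and the nontrivial point is to identify this with the action of $t_\theta$ on $D^b(\U^\chi_{(\lambda),\F}\operatorname{-mod})$ --- which is not ``a translation to the shifted weight and back'': that composite is either the identity or a wall-crossing through a wall, whereas $t_\theta$ acts on $K_0$ by the $p$-dilated translation $\mu\mapsto\mu+p\theta$. This identification is again exactly the content of the cited compatibility, so your lattice step neither stands on its own nor is needed once Riche is invoked. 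Finally, your rationale for the $\rho$-twist (making the $\mathcal{O}(\theta)$ ``effective'') is off the mark: effectivity is irrelevant for an action by autoequivalences; the shift is a normalization matching the conventions of \cite{BR}, and it is what later makes $\mathcal{V}^\chi_0(\rho)$ agree with the restriction of the tilting bundle in Lemma \ref{Lem:reduct_mod_p} and with the $\rho$-twisted convolution (\ref{eq:twisted_convolution}).
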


\subsection{Equivariance  of the splitting bundle}\label{SS_splitting_equivar}
Here we introduce a ${\underline{Q}}_\F$-equivariant structure on $\mathcal{V}^{\chi}_\mu$
and discuss its properties.

We start by treating the case of $\chi=0$.
Note that the fiber $\left(\overline{\mathcal{V}}^0_{-\rho}\right)_0$ of $\overline{\mathcal{V}}^0_{-\rho}$ at $0$
carries a natural $G_\F$-action, in fact, it is the Steinberg $G_\F$-module.

\begin{Lem}\label{Lem:G_equiv_bundle}
There is an extension of the $G_\F$-equivariant structure
from $\left(\overline{\mathcal{V}}^0_{-\rho}\right)_0$  to  $\overline{\mathcal{V}}^0_{-\rho}$.
\end{Lem}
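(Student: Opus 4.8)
The plan is to realize $\overline{\mathcal{V}}^0_{-\rho}$ as a summand of a manifestly $G_\F$-equivariant object, namely a Frobenius pushforward of a $G_\F$-equivariant bundle on the Grothendieck scheme, and then to transport the equivariant structure through the local idempotent that cuts out the splitting bundle. Concretely, recall from Section \ref{SS_splitting} that $\mathcal{O}(-(p-1)\rho)\otimes \tilde{D}^{\wedge_{-\rho}}_{\B_\F}$, pushed forward by $\Fr_\h$ and restricted at $\chi=0$, is a splitting bundle for $\tilde{D}^{\wedge_{-\rho}}_{\B_\F}$ over the formal neighborhood of the zero section; since $\tilde{D}^{\wedge_{-\rho}}_{\B_\F}=\pi^*\U^{\wedge_{-\rho}}_{\h,\F}$, its pushforward to $(\g^{(1)}_\F)^{\wedge_0}_\h$ along $\pi$ is a splitting bundle for $\U^{\wedge_{-\rho}}_{\h,\F}$, hence isomorphic to a matrix-algebra-sized multiple of $\overline{\mathcal{V}}^0_{-\rho}$ (which is unique up to isomorphism since the base is the spectrum of a local ring). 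The bundle $\mathcal{O}(-(p-1)\rho)\otimes \tilde{D}^{\wedge_{-\rho}}_{\B_\F}$ is $G_\F$-equivariant: $\mathcal{O}(-(p-1)\rho)$ is a $G_\F$-equivariant line bundle (as $\rho\in\mathfrak{X}(T)$) and $\tilde D^{\wedge_{-\rho}}_{\B_\F}$ carries its tautological $G_\F$-equivariant structure as a twisted differential operator sheaf, and $\Fr_\h$, $\pi$ are $G_\F$-equivariant morphisms. So its pushforward $\mathcal{W}$ to $(\g^{(1)}_\F)^{\wedge_0}_\h$ is a $G_\F$-equivariant module which, as a module over the Azumaya algebra $\U^{\wedge_{-\rho}}_{\h,\F}$, is a direct sum of $N$ copies of $\overline{\mathcal{V}}^0_{-\rho}$ for some $N$ (equivariance of $\Fr_\h$ requires the standard observation that the restricted-power / $p$-center structure is $G$-invariant, so $G_\F$ acts on both source and target of $\Fr_\h$ compatibly).

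The second step is to descend the $G_\F$-action from $\mathcal{W}\cong \overline{\mathcal{V}}^0_{-\rho}{}^{\oplus N}$ to a single copy. The key point is that $\End_{\U^{\wedge_{-\rho}}_{\h,\F}}(\mathcal{W})\cong \operatorname{Mat}_N(R)$ where $R=\F[\g^{*(1)}]^{\wedge_0}\otimes\dots$ is the (complete) local ring of the formal base, because $\overline{\mathcal{V}}^0_{-\rho}$ is a splitting bundle; $G_\F$ acts on this endomorphism ring by ring automorphisms preserving the subring $R$ of scalars, hence by conjugation by a cocycle valued in $\operatorname{GL}_N(R)$. Since $G_\F$ is a connected (more to the point, since $\F$ is algebraically closed and $R$ is a complete local ring, so $\operatorname{GL}_N(R)$-torsors are trivial and $H^2$ vanishes appropriately) one lifts this to an honest $G_\F$-action on $\mathcal{W}$ in such a way that the central primitive idempotent $e_{11}\in\operatorname{Mat}_N(R)$ picking out one copy of $\overline{\mathcal{V}}^0_{-\rho}$ becomes $G_\F$-invariant; more simply, $e_{11}\mathcal{W}\cong\overline{\mathcal{V}}^0_{-\rho}$ inherits the $G_\F$-action. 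One then checks this equivariant structure restricts at the origin to the Steinberg module structure on $(\overline{\mathcal{V}}^0_{-\rho})_0$; this is automatic because the $G_\F$-module $(\overline{\mathcal{V}}^0_{-\rho})_0$ is the unique irreducible (projective-and-injective) Steinberg module, and the construction clearly recovers it.

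The main obstacle is the descent in the second step: a priori the $G_\F$-action on $\mathcal{W}\cong \overline{\mathcal{V}}^0_{-\rho}{}^{\oplus N}$ mixes the $N$ summands, and one must show it can be "diagonalized" compatibly with the equivariance data (a $2$-cocycle / gerbe-triviality issue). The cleanest route is to avoid it by choosing the multiplicity $N$ to equal the rank of the matrix algebra so that $\overline{\mathcal{V}}^0_{-\rho}$ is literally a left ideal summand $\U^{\wedge_{-\rho}}_{\h,\F}\cdot f$ for a suitable idempotent; then one only needs an idempotent $f$ in a $G_\F$-equivariant algebra that is $G_\F$-fixed, which exists because over the complete local base the reductions of idempotents lift and $G_\F$ acts on the (semisimple, since char-$p$ large and $G_\F$ linearly reductive on the relevant finite-dimensional fiber) fiber algebra preserving a system of primitive idempotents up to conjugacy, and linear reductivity lets one average to get a $G_\F$-stable one. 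Alternatively one invokes the well-known fact (used implicitly in \cite{BMR}) that the Steinberg module, being the unique module of its kind, forces $\overline{\mathcal{V}}^0_{-\rho}$ to be $G_\F$-equivariant; I would present the idempotent-averaging argument since it is self-contained and the linear reductivity of $G_\F$ in the relevant range is available.
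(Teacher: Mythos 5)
Your overall shape is right in spirit (an equivariant structure certainly exists on a multiple of $\overline{\mathcal{V}}^0_{-\rho}$ --- indeed the regular module $\U^{\wedge_{-\rho}}_{\h,\F}$ already does this job, so the pushforward gymnastics in your first step are not really where the content lies), but the step you yourself flag as the main obstacle is exactly where the proposal breaks down, and the tools you invoke there are not available. First, $G_\F$ is \emph{not} linearly reductive in characteristic $p$: in this paper only $\underline{Q}_\F$ (identity component a torus, component group of order prime to $p$) is linearly reductive; for a semisimple or general reductive $G_\F$ no bound $p\gg 0$ helps. So ``averaging to get a $G_\F$-stable idempotent'' is not a legitimate move, and the parenthetical claim that the relevant fiber algebra is semisimple because ``$G_\F$ is linearly reductive'' is false. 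Second, the phrases ``$\operatorname{GL}_N(R)$-torsors are trivial and $H^2$ vanishes appropriately'' conflate torsors over the scheme $\operatorname{Spec}R$ with \emph{group cohomology of the algebraic group} $G_\F$ with coefficients in units of $R$ (equivalently, in the pro-unipotent group $1+\mathfrak{m}$): the set of lifts of the projective action (or of primitive idempotents reducing to a chosen one at the closed point) is a torsor under a pro-unipotent group on which $G_\F$ acts, and in characteristic $p$ such a torsor has no reason to have a $G_\F$-fixed point unless one proves a cohomological vanishing. That vanishing, namely $H^1_{G_\F}(1+\mathfrak{m})=0$, is precisely the content of the paper's proof, and it is established by a genuine computation: reduce along the filtration of $1+\mathfrak{m}$ to $H^1_{G_\F}(\F[(\g^{(1)}_\F)_\h])=0$, prove vanishing of the higher $G^{(1)}_\F$-cohomology of $\F[(\g^{(1)}_\F)_\h]$ by inducing from $B^{(1)}_\F$ and a weight argument, and then handle the Frobenius kernel $G_1$ separately. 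Nothing in your proposal substitutes for this computation; the fallback ``uniqueness of the Steinberg module forces equivariance'' only gives the projective ($\operatorname{PGL}$-valued) action, which is the starting point, not the conclusion.

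Note also, for contrast, that the fixed-point-on-an-affine-bundle trick you are implicitly reaching for is exactly what the paper does use later (Lemma \ref{Lem:equiv_splitting}), but there the group is $\tilde{Q}_\F$ with toral identity component and prime-to-$p$ component group; that argument does not transport to $G_\F$. If you want to salvage your idempotent-descent route, you would have to prove a vanishing statement of the same kind as $H^1_{G_\F}(1+\mathfrak{m})=0$ (in fact with matrix, not scalar, coefficients, which is harder than what the paper needs after it fixes the projective action), so the proposal as written has a genuine gap rather than being an alternative proof.
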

\begin{proof}
The obstruction for the existence of a $G_\F$-equivariant structure
lies in the cohomology group $H^1_{G_\F}(1+\mathfrak{m})$, where $\mathfrak{m}$
is the maximal ideal in  $\F[(\g_{\F}^{(1)})_\h]^{\wedge_0}$
and we view $1+\mathfrak{m}$ as a group with respect to multiplication.
Indeed, this obstruction is an obstruction to lifting the homomorphism
$G_\F\rightarrow \operatorname{PGL}(\overline{\mathcal{V}}^0_{-\rho})$
to  $G_\F\rightarrow \operatorname{GL}(\overline{\mathcal{V}}^0_{-\rho})$
and such a lift is already fixed at the closed point.

Note that $1+\mathfrak{m}$ is a filtered group with associated graded
being the augmentation ideal in the positively graded algebra $\F[(\g_{\F}^{(1)})_\h]$.
The filtration is complete and separated so the equality $H^1_{G_\F}(1+\mathfrak{m})=0$ will follow once
we know $H^1_{G_\F}(\F[(\g_{\F}^{(1)})_\h])=0$.

First of all, we claim that
\begin{equation}\label{eq:cohom_G_vanish}
H^i_{G^{(1)}_\F}(\F[(\g_{\F}^{(1)})_\h])=0, \forall i>0.
\end{equation}
Note that $\F[(\g_{\F}^{(1)})_\h]=\F[\h^*]\otimes_{\F[\h^{*(1)}]}\F[\tilde{\g}_\F^{(1)}]$.
Since $\F[\h^*]$ is free over $\F[\h^{*(1)}]$, (\ref{eq:cohom_G_vanish}) will follow if we check that
$H^i_{G^{(1)}_\F}(\F[\tilde{\g}_\F^{(1)}])=0$ for $i>0$.
Note that
$$\F[\tilde{\g}_\F^{(1)}]\cong R\operatorname{Ind}_{B_\F^{(1)}}^{G_\F^{(1)}}\F[\mathfrak{b}_\F^{(1)}].$$
So we reduce to checking that
$H^i_{B^{(1)}_\F}(\F[\mathfrak{b}_\F^{(1)}])=0$ for all $i>0$.
This follows from the weight considerations for the $T^{(1)}_\F$-action. Indeed,
the $T_\F^{(1)}$-weights in both $\F[\mathfrak{b}_\F^{(1)}]$ and the
injective $B_\F^{(1)}$-module $\F[B_\F^{(1)}]$ are in the negative of
the root lattice. So $H^i_{B^{(1)}_\F}(\F[\mathfrak{b}_\F^{(1)}])=
H^i_{T^{(1)}_\F}(\F[\mathfrak{h}_\F^{(1)}])=0$. This proves
(\ref{eq:cohom_G_vanish}).

Using (\ref{eq:cohom_G_vanish}) we deduce that
$H^1_{G_\F}(\F[\g_\F^{(1)*}])=H^1_{G_1}(\F[\g_\F^{(1)*}])$,
where we write $G_1$ for the Frobenius kernel.
But in the cohomology group we have a trivial $G_1$-representation.
By the weight reasons, the 1st self-extensions of the trivial
one-dimensonal $G_1$-module vanish. Hence $H^1_{G_\F}(\F[\g_\F^{(1)*}])=\{0\}$,
which finishes the proof.
\end{proof}

%Note that the ${\underline{Q}}_\F$-equivariant structure on $\underline{\mathcal{V}}^0_{\mu}$
%is unique. The bundle $\underline{\mathcal{V}}^0_{\mu}$ is $G_\F$-equivariant.
%It follows that the ${\underline{Q}}_\F$-equivariant structure coincides with the restriction of
%the $G_\F$-equivariant structure.

Now we proceed to studying an equivariant structure in the case of general $\chi$. For technical reasons we need to work with a one-parameter
version of $\mathcal{V}^{\chi}_\mu$ that we going to introduce now.
We note that  $\U_{-\rho,\F}$ splits on $\F\chi$. The splitting bundle
is $\underline{\Delta}^{\F\chi}\left(W_{L,\F}((p-1)\rho)\otimes \F[z]\right)$, where we write $z$
for a coordinate on $\F\chi$ and $\underline{\Delta}^{\F\chi}$ for the induction
functor whose fiber at  $a\in \F$ is $\underline{\Delta}^{a\chi}$. We note that
$\underline{\Delta}^{\F\chi}\left(W_{L,\F}((p-1)\rho)\otimes \F[z]\right)$ carries a
natural ${\underline{Q}}_\F$-action. Besides, it also has an action of
$\F^\times$ via $\gamma$, note that this action rescales $z$.

The following lemma shows that we can extend
$\underline{\Delta}^{\F\chi}\left(W_{L,\F}((p-1)\rho)\otimes \F[z]\right)$ to a ${\underline{Q}}_\F\times \F^\times$-equivariant splitting bundle for the restriction of $\U^{\wedge_{-\rho}}_{\h,\F}$ to
$$(\g_{\F}^{*(1)})_\h^{\wedge_{\F\chi}}:=(\g_{\F}^{*(1)})_\h\times_{\g_{\F}^{(1)*}}
\operatorname{Spec}(\F[\g^{*(1)}]^{\wedge_{\F\chi}}).$$
Here $\F[\g^{*(1)}]^{\wedge_{\F\chi}}$ is the completion of $\F[\g^{(1)*}]$ with respect
to the ideal of the line $\F\chi$.

\begin{Lem}\label{Lem:equiv_splitting}
Let $R$ be an $\F[z]$-algebra,
$\mathfrak{m}\subset R$ be an ideal such that $R/\mathfrak{m}=\F[z]$
and $R$ is complete in the $\mathfrak{m}$-adic topology. Let $\mathsf{B}$  be an Azumaya $R$-algebra.
%Assume that a finite group
%$A$ of order that does not divide $\operatorname{char}\F=p$
Let $\tilde{Q}_\F$ be an algebraic group with the following properties:
\begin{itemize}
\item $\tilde{Q}_\F^\circ$ is a torus,
\item $\tilde{Q}_\F/\tilde{Q}_\F^\circ$ is of order coprime to $p$.
\end{itemize}
Suppose that $\tilde{Q}_\F$ acts on $R$ pro-rationally by $\F$-linear automorphisms and
the algebra $\mathsf{B}$ is $\tilde{Q}_\F$-equivariant. We also suppose that $z$ gets rescaled
with a nontrivial character. Further, suppose that we have a $\tilde{Q}_\F$-equivariant
isomorphism $\mathsf{B}/\mathsf{B}\mathfrak{m}\cong \operatorname{End}_{\F[z]}(V_z)$ for some
finite rank free
$\F[z]$-module $V_z$ with a rational $\tilde{Q}_\F$-action. Then the following claims
hold:
\begin{enumerate}
\item There is a finite rank free $R$-module $\tilde{V}_z$ with a pro-rational $\tilde{Q}_\F$-action
such that $\tilde{V}_z/\mathfrak{m}\tilde{V}_z\cong V_z$ and $\operatorname{End}_R(\tilde{V}_z)\cong \mathsf{B}$, $\tilde{Q}_\F$-equivariant isomorphisms.
\item Suppose we are given
a free $R/(z)$-module $\tilde{V}$ with a rational $\tilde{Q}_\F$-action and $\tilde{Q}_\F$-equivariant isomorphisms
$\mathsf{B}/(z)\xrightarrow{\sim} \operatorname{End}_{R/(z)}(\tilde{V})$ and
$\tilde{V}/\mathfrak{m}\tilde{V}\xrightarrow{\sim} V_z/z V_z$ that are compatible
in the sense that the induced isomorphism
$\mathsf{B}/(z,\mathfrak{m})\xrightarrow{\sim} \operatorname{End}_{\F}(\tilde{V}/\mathfrak{m}V)$
comes from $\tilde{V}/\mathfrak{m}\tilde{V}\xrightarrow{\sim} V_z/z V_z$. Then we can find
$\tilde{V}_z$ as in (1) that comes with a $\tilde{Q}_\F$-equivariant isomorphism
$\tilde{V}_z/z\tilde{V}_z\xrightarrow{\sim} \tilde{V}$ compatible with the isomorphism
$\mathsf{B}/(z)\xrightarrow{\sim}\operatorname{End}_{R/(z)}(\tilde{V})$.
\end{enumerate}
\end{Lem}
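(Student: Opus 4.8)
The plan is to prove both parts by an obstruction-theoretic deformation argument, lifting the splitting data from $R/\mathfrak{m} = \F[z]$ up the $\mathfrak{m}$-adic filtration one step at a time, and then passing to the $\mathfrak{m}$-adic limit using completeness of $R$. First I would set up the standard induction: write $\mathfrak{m}^n$ for the powers of $\mathfrak{m}$ and $R_n := R/\mathfrak{m}^{n+1}$, so $R_0 = \F[z]$ and $R = \varprojlim R_n$. Given a free $R_n$-module $\tilde{V}_z^{(n)}$ with a pro-rational $\tilde{Q}_\F$-action and a $\tilde{Q}_\F$-equivariant isomorphism $\End_{R_n}(\tilde{V}_z^{(n)}) \cong \mathsf{B}/\mathsf{B}\mathfrak{m}^{n+1}$ (the base case $n=0$ being the hypothesis, with $\tilde{V}_z^{(0)} = V_z$), I want to lift to level $n+1$. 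Since $\mathsf{B}/\mathsf{B}\mathfrak{m}^{n+2}$ is an Azumaya algebra over $R_{n+1}$ reducing modulo $\mathfrak{m}^{n+1}/\mathfrak{m}^{n+2}$ to a matrix algebra (as it already is a matrix algebra mod $\mathfrak{m}$ and $\mathfrak{m}$ is nilpotent in $R_{n+1}$, or directly since $\mathfrak{m}^{n+1}$ is a square-zero ideal in $R_{n+1}$), a splitting module $\tilde{V}_z^{(n+1)}$ exists non-equivariantly and any two are isomorphic; the only issue is to choose the free $R_{n+1}$-lift compatibly with the $\tilde{Q}_\F$-action. The obstruction to doing this $\tilde{Q}_\F$-equivariantly lives in $H^2_{\tilde{Q}_\F}(\mathfrak{m}^{n+1}/\mathfrak{m}^{n+2})$ and the ambiguity in $H^1_{\tilde{Q}_\F}(\mathfrak{m}^{n+1}/\mathfrak{m}^{n+2})$, where the coefficient module is a pro-rational $\tilde{Q}_\F$-module over $\F$ (after forgetting the $R$-structure, since $\mathfrak{m}^{n+1}/\mathfrak{m}^{n+2}$ is an $\F[z]$-module).

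The key computational input, which I would isolate as a sub-lemma, is the vanishing $H^i_{\tilde{Q}_\F}(N) = 0$ for $i > 0$ for any pro-rational $\tilde{Q}_\F$-module $N$ on which $\tilde{Q}_\F^\circ$ — a torus — acts with all weights nonzero. This is exactly the situation here: the hypothesis that $z$ is rescaled by a nontrivial character of $\tilde{Q}_\F$ forces every nonzero graded piece of the $\mathfrak{m}$-adic associated graded (which is generated over $\F[z]$, and $z$ itself has nonzero weight) to have no $\tilde{Q}_\F^\circ$-invariants. The vanishing then follows because over a linearly reductive group cohomology is computed by invariants: $H^i_{\tilde{Q}_\F}(N) = H^i_{\tilde{Q}_\F/\tilde{Q}_\F^\circ}(N^{\tilde{Q}_\F^\circ})$ by a Lyndon–Hochschild–Serre argument, using that $\tilde{Q}_\F/\tilde{Q}_\F^\circ$ has order coprime to $p$ so its cohomology with $\F$-coefficients vanishes in positive degree; and $N^{\tilde{Q}_\F^\circ} = 0$ by the weight condition. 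This is the same mechanism as in the proof of Lemma~\ref{Lem:G_equiv_bundle} (where weights in the negative root lattice do the analogous job), so I would phrase it to parallel that argument. With this vanishing, the obstruction class in degree $2$ vanishes and the degree $1$ ambiguity vanishes, so the equivariant lift exists and is unique up to equivariant isomorphism at each step. Taking the inverse limit over $n$ and invoking completeness of $R$ in the $\mathfrak{m}$-adic topology produces the free $R$-module $\tilde{V}_z$ with pro-rational $\tilde{Q}_\F$-action and the equivariant isomorphism $\End_R(\tilde{V}_z) \cong \mathsf{B}$; this proves part~(1).

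For part~(2), the extra datum is a second specialization, namely the reduction modulo $(z)$: I am given $\tilde{V}$ over $R/(z)$ with its own equivariant splitting isomorphism, compatible with $\tilde{V}_z^{(0)} = V_z$ after further reduction mod $\mathfrak{m}$. I would run the \emph{same} inductive lifting argument, but now in the category of $\tilde{Q}_\F$-modules equipped with a compatible reduction mod $(z)$ — equivalently, lift over the pair $(R_{n+1}, R_{n+1}/(z))$ simultaneously. The obstruction and ambiguity groups are now relative cohomology groups fitting into a long exact sequence involving $H^\bullet_{\tilde{Q}_\F}$ of $\mathfrak{m}^{n+1}/\mathfrak{m}^{n+2}$ and of its image in $R/(z)$; both terms vanish by the same sub-lemma, since killing $z$ does not introduce $\tilde{Q}_\F^\circ$-invariants (the quotient is still generated in nonzero weights, as $z$ had nonzero weight and we are merely setting it to zero, which only removes weight spaces). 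Hence the relative obstruction vanishes too, the compatible lift exists at each finite level uniquely up to equivariant isomorphism, and the $\mathfrak{m}$-adic limit gives $\tilde{V}_z$ together with the required $\tilde{Q}_\F$-equivariant isomorphism $\tilde{V}_z/z\tilde{V}_z \xrightarrow{\sim} \tilde{V}$ compatible with the splitting of $\mathsf{B}/(z)$.

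The main obstacle I anticipate is purely bookkeeping rather than conceptual: one must be careful that the $\tilde{Q}_\F$-action is only \emph{pro-rational} (not rational) on $R$ and hence on $\tilde{V}_z$, so the cohomological formalism has to be applied compatibly with the $\mathfrak{m}$-adic filtration — i.e. one works with the inverse system $\{\mathfrak{m}^n/\mathfrak{m}^{n+1}\}$ of genuine rational modules and checks that the relevant $\varprojlim^1$ terms vanish (they do, since all the obstruction groups vanish outright at each finite stage, so the Mittag-Leffler condition is trivially satisfied). The weight argument for the coefficient vanishing is the only place any real input is used, and it is dictated cleanly by the hypothesis that $z$ carries a nontrivial $\tilde{Q}_\F$-character together with the structure of $R$ as $\mathfrak{m}$-adically complete with $R/\mathfrak{m} = \F[z]$.
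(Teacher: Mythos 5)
Your overall strategy is the same as the paper's: lift the splitting module step by step along the $\mathfrak{m}$-adic filtration, use the reductivity of $\tilde{Q}_\F$ to make each lift of $\iota_k$ equivariant, and pass to the limit using completeness, with part (2) handled by running the same induction relative to the fixed datum modulo $(z)$. The paper phrases each inductive step geometrically (the set of lifts of a given $\iota_k$ is an affine bundle over $\mathbb{A}^1$, the group acts by affine transformations, the fixed locus is again an affine bundle, and affine bundles over $\mathbb{A}^1$ are trivial), which is the same mechanism as your cohomological obstruction/ambiguity vanishing, so the route is not genuinely different.

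There is, however, one incorrect step in your justification of the key vanishing. You claim that the hypothesis that $z$ is rescaled by a nontrivial character forces every graded piece $\mathfrak{m}^{n}/\mathfrak{m}^{n+1}$ to have no $\tilde{Q}_\F^\circ$-invariants. This does not follow: the hypothesis constrains only the weight of $z$, not the weights of generators of $\mathfrak{m}$. For example, $R=\F[z][[t]]$ with $\mathfrak{m}=(t)$ and $\mathbb{G}_m$ acting with weight $1$ on $z$ and weight $0$ on $t$ satisfies every hypothesis of the lemma, yet $\mathfrak{m}/\mathfrak{m}^2=\F[z]\,t$ has invariants; likewise, in the paper's application (completion along the line $\F\chi$ in $\g_\F^{(1)*}$) the conormal directions need not carry nonzero $T_0$-weights. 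Fortunately the flaw is harmless: your own Lyndon--Hochschild--Serre argument already gives $H^i_{\tilde{Q}_\F}(N)=0$ for all $i>0$ and \emph{arbitrary} rational coefficients $N$ (and hence, arguing on the filtration pieces as you do, for the pro-rational coefficients that occur), because the torus $\tilde{Q}_\F^\circ$ has no higher rational cohomology and $\tilde{Q}_\F/\tilde{Q}_\F^\circ$ has order prime to $p$; the condition $N^{\tilde{Q}_\F^\circ}=0$ is never needed for the obstruction and ambiguity groups. Indeed, since the non-equivariant lift always exists (the splitting module is projective over the Azumaya algebra and lifts along a square-zero extension), what is really needed at each step is only a fixed point for an affine action of a linearly reductive group on the torsor of lifts, i.e. an $H^1$-type vanishing. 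So you should delete the weight sub-lemma and invoke linear reductivity directly; the nontriviality of the character by which $z$ is rescaled is not what drives the cohomology vanishing, and presenting it as the place "where the hypothesis is used" misidentifies its role.
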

\begin{proof}
Let us prove (1).
Suppose that we have constructed a lift $\tilde{V}_{k,z}$ of $V_z$ to $R/\mathfrak{m}^k$
with a rational $\tilde{Q}_\F$-action and a $\tilde{Q}_\F$-equivariant
isomorphism $\End_{R/\mathfrak{m}^k}(\tilde{V}_{k,z})
\xrightarrow{\sim} \mathsf{B}/\mathsf{B}\mathfrak{m}^k$.
Note that the set of lifts $(\tilde{V}_{k+1,z},\iota_{k+1})$ of any given isomorphism
$\iota_k:\operatorname{End}_{R/\mathfrak{m}^k}(\tilde{V}_{k,z})\xrightarrow{\sim}
\mathsf{B}/\mathsf{B}\mathsf{m}^k$ to
$\operatorname{End}_{R/\mathfrak{m}^{k+1}}(\tilde{V}_{k+1,z})\xrightarrow{\sim}
\mathsf{B}/\mathsf{B}\mathfrak{m}^{k+1}$ is an affine bundle over
$\mathbb{A}^1$. If $\iota_k$ is $\tilde{Q}_\F$-equivariant,
then $\tilde{Q}_\F$ acts on the affine bundle of lifts by affine transformations.
The fixed point locus is also an affine bundle.
But any affine bundle over $\mathbb{A}^1$ can be  trivialized,
which implies the existence of a $\tilde{Q}_\F$-equivariant lift $\iota_{k+1}$.

The argument above also proves (2).
%So this action
%has a fixed point.
\end{proof}

Let $\overline{\mathcal{V}}_{-\rho}^{\F\chi}$ denote the resulting
$\tilde{Q}_\F\times \F^\times$-equivariant splitting bundle for the restriction of
$\U^{\wedge_{-\rho}}_\F$ to $(\g_{\F}^{(1)})_\h^{\wedge_{\F\chi}}$.
This gives rise to a splitting bundle for the restriction of $\tilde{D}^{\wedge_\mu}_{\B_\F}$ to $$(\tilde{\g}_{\F}^{(1)})_\h^{\wedge_{\F\chi}}:=(\tilde{\g}_{\F}^{(1)})_\h
\times_{(\g_{\F}^{*(1)})_\h}(\g_{\F}^{*(1)})_\h^{\wedge_{\F\chi}}$$
 given by the formula analogous to (\ref{eq:splitting_Springer}).
This splitting bundle will be denoted by $\mathcal{V}^{\F\chi}_\mu$.
Its restriction to $(\tilde{\g}_{\F}^{(1)})_\h^{\wedge_\chi}$
is $\mathcal{V}^\chi_\mu$. From (2) of Lemma \ref{Lem:equiv_splitting}, it follows that the ${\underline{Q}}_\F$-equivariant structure on $\mathcal{V}^0_\mu$ coincides with the restriction of the $G_\F$-equivariant structure.

%Note that the bundle $\mathcal{V}^0_\mu$ carries a natural $G_\F$-equivariant
%structure by the construction.
%Its restriction to ${\underline{Q}}_\F$
%coincides with the equivariant structure considered above.

\begin{Lem}\label{Lem:split_properties}
We have the following:
\begin{enumerate}
\item The restriction of $\mathcal{V}_\mu^0$ to $\B_\F^{(1)}$ is $G_\F$-equivariantly isomorphic  to  $\Fr_{\B,*}\mathcal{O}(\mu)$.
\item The class of $\mathcal{V}_\mu^\chi$ in $K_0^{{\underline{Q}}}(\B_{\chi})$
coincides with the pull-back of $[\Fr_{\B,*}\mathcal{O}(\mu)]$ under the inclusion
$\B_{\chi}\hookrightarrow \B_\F^{(1)}$.
\end{enumerate}
\end{Lem}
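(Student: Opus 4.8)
For part (1), the plan is to unravel the definition (\ref{eq:splitting_Springer}) of $\mathcal{V}^0_\mu$ and restrict everything to the zero section $\B_\F^{(1)}\subset \tilde{\Nilp}^{(1)}_\F$, which sits inside $(\tilde{\g}^{(1)}_\F)_\h^{\wedge_0}$ as the fiber over $0\in \g^{(1)*}_\F\times \h^*_\F$. First I would observe that $\overline{\mathcal{V}}^0_{-\rho}$, restricted to the point $0\in (\g^{(1)}_\F)_\h^{\wedge_0}$, is the Steinberg module $\Gamma(\mathcal{O}_{G_\F/B_\F}((p-1)\rho))$, and that $\pi^*\overline{\mathcal{V}}^0_{-\rho}$ restricted to $\B_\F^{(1)}$ is therefore the trivial bundle with fiber the Steinberg module, $G_\F$-equivariantly, by Lemma \ref{Lem:G_equiv_bundle}. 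Then I would compute the restriction of $\operatorname{Fr}_{\h,*}(\mathcal{O}(\mu-(p-1)\rho)\otimes \tilde{D}^{\wedge_{-\rho}}_{\B_\F})$ to $\B_\F^{(1)}$: using that $\tilde{D}^{\wedge_{-\rho}}_{\B_\F}$ restricted to the zero section is $\Fr_{\B,*}(\mathcal{D}_{\B_\F})$ (or rather its completion, but the point is its ``leading term'' along $\B$ is $\Fr_{\B,*}\mathcal{O}_{\B_\F}$ tensored with the relevant differential-operator structure) and that $\Fr_{\B,*}$ is $\mathcal{O}_{\B^{(1)}_\F}$-linear, the tensor product over $\pi^*(\U^{\wedge_{-\rho}}_{\h,\F})^{\wedge_\chi}$ with the Steinberg module collapses the $\tilde{D}$-factor and leaves $\Fr_{\B,*}(\mathcal{O}(\mu-(p-1)\rho)\otimes \Fr_{\B}^*(\text{Steinberg line bundle}))$; by the projection formula this is $\Fr_{\B,*}\mathcal{O}(\mu-(p-1)\rho)\otimes \mathcal{O}((p-1)\rho)$. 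The factor $\mathcal{O}((p-1)\rho)$, being a $(p-1)\rho=p\rho-\rho$ twist and hence Frobenius-pulled-back up to $\mathcal{O}(-\rho)$, recombines so that the whole thing is $\Fr_{\B,*}\mathcal{O}(\mu)$, all compatibly with the $G_\F$-equivariant structures since each identification above ($\operatorname{Fr}_{\B,*}$, projection formula, Lemma \ref{Lem:G_equiv_bundle}) is $G_\F$-equivariant. The bookkeeping with the $(p-1)\rho$ versus $\rho$ twists and the Frobenius pullbacks is the part requiring care, so I expect that to be the main technical nuisance of part (1).

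For part (2), the plan is to deform from $\chi$ to $0$ using the one-parameter family $\mathcal{V}^{\F\chi}_\mu$ constructed in Section \ref{SS_splitting_equivar}. The point is that $\mathcal{V}^{\F\chi}_\mu$ is a $\underline{Q}_\F\times\F^\times$-equivariant sheaf on $(\tilde{\g}^{(1)}_\F)_\h^{\wedge_{\F\chi}}$, flat over the line $\F\chi$ with coordinate $z$, whose fiber at $z=a\neq 0$ is $\mathcal{V}^{a\chi}_\mu$ (and these are all $\underline{Q}_\F$-conjugate, hence have the same $K_0$-class in $K_0^{\underline{Q}}(\B_{a\chi})\cong K_0^{\underline{Q}}(\B_\chi)$) and whose fiber at $z=0$ is $\mathcal{V}^0_\mu$. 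Since $K$-theory classes are constant in flat families, the class $[\mathcal{V}^\chi_\mu]\in K_0^{\underline{Q}}(\B_\chi)$ equals the image of $[\mathcal{V}^0_\mu]\in K_0^{\underline{Q}}(\B^{(1)}_\F)=K_0^{G_\F}(\B^{(1)}_\F)$-restricted-to-$\underline{Q}_\F$, under the specialization/restriction map $K_0^{\underline{Q}}(\B^{(1)}_\F)\to K_0^{\underline{Q}}(\B_\chi)$ induced by the inclusion $\B_\chi\hookrightarrow \B^{(1)}_\F$. Here I would use that the Springer fiber $\B_\chi$ is the scheme-theoretic fiber at $\chi$ of $\tilde{\Nilp}^{(1)}_\F\to \g^{(1)*}_\F$ and that the family $(\tilde{\g}^{(1)}_\F)_\h^{\wedge_{\F\chi}}$ restricts compatibly; the deformation-invariance of $K$-classes (the map $K_0(\B_{a\chi})\to K_0(\B_\chi)$ obtained by Thomason's specialization, or simply because everything is supported on a subvariety proper over the base) does the rest. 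Combining with part (1), $[\mathcal{V}^0_\mu]$ restricted to $\B_\chi$ is exactly the pullback of $[\Fr_{\B,*}\mathcal{O}(\mu)]$, which is the claim.

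The main obstacle I anticipate is making the flat-family/specialization argument in part (2) genuinely rigorous in the $\underline{Q}_\F$-equivariant setting over a completed (rather than global) base: one must check that $\mathcal{V}^{\F\chi}_\mu$ is honestly $\F[z]$-flat with the stated fibers (which follows from the construction in Lemma \ref{Lem:equiv_splitting}, part (2), guaranteeing compatibility of the $z\neq0$ and $z=0$ specializations) and that the equivariant $K$-group of the completed scheme $(\tilde{\g}^{(1)}_\F)_\h^{\wedge_{\F\chi}}$ with supports on the proper family of Springer fibers behaves like the global one, so that ``take the fiber'' is well-defined on $K_0$ and independent of the point of the line. Once that infrastructure is in place the rest is formal.
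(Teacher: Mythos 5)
Your treatment of part (2) is essentially the paper's own argument: the paper likewise uses the one-parameter family $\mathcal{V}^{\F\chi}_\mu$ (with its ${\underline{Q}}_\F\times\F^\times$-equivariance and the $z=0$ compatibility built into Lemma \ref{Lem:equiv_splitting}(2)) to replace $\mathcal{V}^\chi_\mu$ by $\mathcal{V}^0_\mu|_{\B_\chi}$ in $K_0^{\underline{Q}}(\B_\chi)$ and then quotes part (1), so that half is fine.

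Part (1) is where you diverge, and your sketch has a genuine gap. The step where the $\tilde{D}$-factor ``collapses'' is exactly where the content of the lemma sits, and it is asserted rather than proved: you would have to identify the restriction to the zero section of the Morita bimodule $\operatorname{Fr}_{\h,*}\bigl(\mathcal{O}(\mu-(p-1)\rho)\otimes\tilde{D}^{\wedge_{-\rho}}_{\B_\F}\bigr)$ as a left module over $D^\mu_{\B_\F}|_{\B^{(1)}_\F}$ and a right module over the matrix algebra $\operatorname{End}(\mathrm{St})$, and then compute its tensor product over $\operatorname{End}(\mathrm{St})$ with the Steinberg module; note the Steinberg module is not a line bundle, so the expression $\Fr_\B^*(\text{Steinberg line bundle})$ has no meaning here. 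Since a splitting bundle is determined only up to a twist by a line bundle, any imprecision at this point silently loses precisely the information the lemma asserts. Moreover, the final ``recombination'' is arithmetically impossible as stated: by the projection formula $\Fr_{\B,*}\mathcal{O}(\mu-(p-1)\rho)\otimes\mathcal{O}_{\B^{(1)}_\F}(\lambda)\cong\Fr_{\B,*}\mathcal{O}(\mu-(p-1)\rho+p\lambda)$, and $(p-1)\rho\notin p\,\mathfrak{X}(T)$, so no external twist on $\B^{(1)}_\F$ can turn $\Fr_{\B,*}\mathcal{O}(\mu-(p-1)\rho)$ into $\Fr_{\B,*}\mathcal{O}(\mu)$; the required twist would have to occur inside the pushforward, which a Frobenius-pulled-back factor cannot produce.

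The paper sidesteps the direct computation entirely. After passing to a cover so that $G_\F$ is a torus times a simply connected group (this is also needed so that every line bundle on $\B^{(1)}_\F$ has a unique $G_\F$-equivariant structure, a point your equivariance claim would have to address as well), it observes that $\mathcal{V}^0_\mu|_{\B^{(1)}_\F}$ and $\Fr_{\B,*}\mathcal{O}(\mu)$ are both $G_\F$-equivariant splitting bundles for $D^\mu_{\B_\F}|_{\B^{(1)}_\F}$, hence differ by an equivariant line bundle, and then shows this line bundle is trivial by comparing non-equivariant $K_0$-classes: for $\mu=-\rho$ both bundles are $\mathcal{O}_{\B^{(1)}_\F}(-\rho)^{\oplus p^{\dim\B}}$, and for general $\mu$ both classes equal $p^{\dim\B}[\mathcal{O}(-\rho)]\,[\mathcal{O}(\frac{\mu+\rho}{p})]$, the $p$-th root being taken of a unipotent class in $K_0(\B^{(1)}_\F)\otimes_{\Z}\Q$ (equation (\ref{eq:K0_class_equality})). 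If you insist on a computational proof you would in effect have to reprove this $K$-theoretic identity anyway, so the uniqueness-of-splitting-bundle plus $K_0$ comparison is the cleaner route and is the one the paper takes.
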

Note that this lemma is an equivariant version of \cite[Lemma 6.5.2]{BMR}.
\begin{proof}
Let us prove (1). Note that we can replace $G_\F$ with a cover. So we can assume that
$G_\F$ is the product of a torus and a simply connected semisimple group.
It is sufficient to consider the case of a torus and the case of a simply connected semisimple
group separately. The torus case is trivial.

Let $G_\F$ be semisimple and simply connected.
Both $\mathcal{V}_\mu^0|_{\B_\F^{(1)}}$ and $\Fr_{\B,*}\mathcal{O}(\mu)$
are $G_\F$-equivariant splitting bundles for $D^\mu_{\B_\F}|_{\B_\F^{(1)}}$.
So they differ by a twist with a $G_\F$-equivariant line bundle $\mathcal{L}$ on $\B_\F^{(1)}$.
Note that every line bundle on $\B_\F^{(1)}$ has a unique $G_\F$-equivariant structure.

The abelian group $K_0(\B_\F^{(1)})$ is torsion free.
Therefore a class of a vector bundle is not a zero divisor in the ring $K_0(\B_\F^{(1)})$.
Moreover,  it is a standard fact that $\operatorname{Pic}(\B_\F^{(1)})$ embeds into $K_0(\B_\F^{(1)})$.
So in order to check that $\mathcal{L}$ is equivariantly  trivial it suffices to show that  the classes of
$\mathcal{V}_\mu^0|_{\B_\F^{(1)}}$ and $\Fr_{\B,*}\mathcal{O}(\mu)$
in the usual (i.e., non-equivariant) $K_0$-group are the same.
This is true for $\mu=-\rho$: it is easy to see that
in that case  both  bundles are $\mathcal{O}_{\B_\F^{(1)}}(-\rho)^{\oplus p^{\dim \B}}$.
And for an arbitrary element $\mu$, we have
\begin{equation}\label{eq:K0_class_equality}
[\mathcal{V}_\mu^0|_{\B_\F^{(1)}}]=[\mathcal{O}_{\B_\F^{(1)}}(-\rho)^{\oplus p^{\dim G/B}}][\mathcal{O}(\frac{\mu+\rho}{p})]\end{equation}
in $K_0(\B_\F^{(1)})$. By $[\mathcal{O}(\frac{\mu+\rho}{p})]$ we mean the $p$th
root of $[\mathcal{O}(\mu+\rho)]$, the latter class is unipotent so its $p$th root
makes sense in $K_0(\B_\F^{(1)})\otimes_{\Z}\mathbb{Q}$. On the other hand,
we have $[\Fr_{\B,*}\mathcal{O}(p\mu'-\rho)]=\mathcal{O}(\mu'-\rho)^{\oplus
\dim \mathcal{B}}$ for any $\mu'\in \mathfrak{X}(T)$. It follows that
$[\Fr_{\B,*}\mathcal{O}(\mu)]$ equals to the right hand side of (\ref{eq:K0_class_equality}).
This finally implies a $G_\F$-equivariant isomorphism $\mathcal{V}_\mu^0|_{\B_\F^{(1)}}\cong\Fr_{\B,*}\mathcal{O}(\mu)$.

Let us prove (2). Thanks to the existence of $\mathcal{V}^{\F\chi}_\mu$, in
the proof we can replace $\mathcal{V}_\mu^\chi$ with $\mathcal{V}_\mu^0|_{\mathcal{B}_\chi}$.
Now the claim follows from (1).
\end{proof}

\section{Tilting bundle on $\tilde{\g}$}\label{S_tilting}
\subsection{Notation and content}\label{SS_tilting_notation}
In this section, our base field is $\C$.
The notation $G,(e,h,f),\rho,$ $\chi,\gamma, \g,\B, \tilde{\Nilp}, \tilde{\g}$
has the same meaning as in Section \ref{SS_loc_notation}.
We assume $G$ is semisimple and simply connected. Recall that we identify $\g$
with $\g^*$ using the Killing form.

Let $P$ denote  a parabolic subgroup of $G$ containing $B$. Set  $\Pcal:=G/P, \tilde{\Nilp}_P:=T^*\Pcal$.
We will write $Z$ for $G\times^B \mathfrak{m}$, $\iota$
for the natural inclusion $Z\hookrightarrow \tilde{\Nilp}$
and $\varpi$ for the natural projection $Z\twoheadrightarrow \tilde{\Nilp}_P$.
%, while $Z, \iota,\varpi,Z^\wedge, \B^\wedge,
%\Pcal^\wedge$ have the same meaning as in Section \ref{SS_loc_parab}.

We consider the following versions of the Steinberg varieties:
$\St_{\h}:=\tilde{\g}\times_{\g}\tilde{\g}$,
$\St_{B}:=\tilde{\g}\times_{\g}\tilde{\N}$, $\St_{0}:=\tilde{\N}\times^L_{\g}\tilde{\N}$.
The first two are genuine varieties, while the last one is
a derived scheme. This makes sense because the codimensions of $\St_{\h}$ in $\tilde{\g}\times\tilde{\g}$
and of $\St_{B}$ in $\tilde{\g}\times \tilde{\N}$ are equal to $\dim \g$ -- so the derived schemes
are the same as usual schemes. Moreover, these schemes are generically reduced
hence reduced.  On the other hand,
the codimension of $\St_{0}$ in $\tilde{\N}\times \tilde{\N}$ is less than
$\dim \g$ so we need to consider the derived scheme.

Set $\Orb:=Ge$. We write $\B_e$ for the Springer fiber $\tilde{\g}\times^L_{\g}\{e\}$ that we
view with its natural derived scheme structure. Consider the Slodowy slice
$S=e+\mathfrak{z}_{\g}(f)\subset \g$, it is transverse to $\Orb$.
The reductive group $Q:=Z_G(e,h,f)$ acts on $S$.
We consider the  $\C^\times$-action on $S$ given by $t.s:=t^{-2}\gamma(t)s$.
It commutes with $Q$.

The purpose of this section is to recall some results from \cite{BM}.
In Section \ref{SS_tilting_gen} we record some generalities about tilting
generators for categories of coherent sheaves. Then in  Section
\ref{SS_tilting_BM} we recall some properties of a tilting generator
for $\tilde{\g}$ constructed and studied in \cite{BM}.

\subsection{Generalities on tilting generators}\label{SS_tilting_gen}
Let $X$ be a smooth Calabi-Yau algebraic variety with a projective morphism
to an affine variety. Recall that a
vector bundle $\Tilt$ on $X$ is called
a {\it tilting generator} (for $\operatorname{Coh}(X)$ or simply for $X$)
if it has no higher self-extensions and the algebra $\End(\Tilt)$ has finite homological dimension.
Note that $R\Gamma(\mathcal{T}\otimes\bullet):D^b(\Coh(X))\rightarrow D^b(\End(\Tilt)\operatorname{-mod})$
is an equivalence, \cite[Proposition 2.2]{BK}.
This equivalence defines a new t-structure on $D^b(\Coh(X))$, where $\Tilt^*$
is a projective generator and $R\Gamma(\mathcal{T}\otimes\bullet)$ is t-exact.

\begin{defi}
We say that this  t-structure is defined by $\Tilt^*$.
\end{defi}

Set $\A:=\End(\Tilt)$.

\begin{Lem}\label{Lem:tilting_gen_prop}
Let $\Tilt$ be a tilting generator on $X$. Then $\Tilt^*$ is also a tilting generator.
Moreover, $\A$ is a Gorenstein algebra.
\end{Lem}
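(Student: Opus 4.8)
The plan is to deduce both statements from the defining property of a tilting generator, namely that $R\Gamma(\Tilt\otimes\bullet)\colon D^b(\Coh(X))\xrightarrow{\sim} D^b(\A\operatorname{-mod})$ is an equivalence, together with the Calabi--Yau hypothesis on $X$. First I would observe that since $X$ is smooth and Calabi--Yau with a projective morphism to an affine variety, Serre--Grothendieck duality gives a contravariant self-equivalence $\dual_X$ of $D^b(\Coh(X))$ (up to the shift by $\dim X$, since $\Omega_X\cong\Str_X$). Applying $\dual_X$ to $\Tilt$, the sheaf $\Tilt^*$ again has no higher self-extensions: $\operatorname{Ext}^i(\Tilt^*,\Tilt^*)\cong\operatorname{Ext}^i(\Tilt,\Tilt)^*$ (using duality and $\Omega_X\cong\Str_X$), which vanishes for $i>0$. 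So $\Tilt^*$ has no higher self-extensions.

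Next I would check that $\End(\Tilt^*)$ has finite homological dimension. The key point is that $\End(\Tilt^*)\cong\End(\Tilt)^{\mathrm{op}}=\A^{\mathrm{op}}$ canonically (applying $\mathcal{H}om(-,\Str_X)$ to a vector bundle reverses composition). A ring has finite homological dimension iff its opposite does — or, more carefully, $\A$ being a finitely generated module over the Noetherian central ring $\Gamma(\Str_X)$, left and right global dimensions agree — so $\A^{\mathrm{op}}$ has finite homological dimension. Hence $\Tilt^*$ is a tilting generator; the ``generator'' part (that $\Tilt^*$ generates $D^b(\Coh(X))$ as a triangulated category) follows since $\Tilt$ does and $\dual_X$ is an equivalence.

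For the Gorenstein statement, the plan is to transport the duality $\dual_X$ across the equivalence $R\Gamma(\Tilt\otimes\bullet)$. Under this equivalence $\Tilt$ goes to the free module $\A$, and $\Tilt^*$ goes to $R\Gamma(\Tilt\otimes\Tilt^*)=\operatorname{Ext}^\bullet(\Tilt,\Tilt)=\A$ as well — wait, more precisely $\Tilt^*$ corresponds to the projective generator of the new t-structure, and one computes $R\Gamma(\Tilt\otimes\Tilt^*)\cong\A$ as a left $\A$-module (this is exactly where the excerpt's remark about the $\Tilt^*$-t-structure is used). Now $\dual_X$ intertwines with the derived-dual functor $R\Hom_\A(-,\A)$ on $D^b(\A\operatorname{-mod})$, up to a shift; since $\dual_X$ sends the vector bundle $\Tilt$ to the vector bundle $\Tilt^*[\dim X]$, which corresponds to a finitely generated projective $\A$-module concentrated in a single degree, we conclude $R\Hom_\A(\A,\A)=\A$ is concentrated in one degree — but that is automatic; the real content is that $R\Hom_\A(S,\A)$ is concentrated in a single degree for $S$ simple, equivalently that $\A$ has finite injective dimension on both sides and the dualizing complex is a shift of $\A$. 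I would phrase this as: $\dual_X$ being a perfect duality exchanging the classes of projectives and (shifted) injectives forces $\A$ to have $\Str$-relatively-dualizing complex equal to $\A$ up to shift, which is precisely the Gorenstein condition.

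The main obstacle I anticipate is bookkeeping the shifts and the precise form of the relative dualizing complex: $X$ is only Calabi--Yau (so $\Omega_X\cong\Str_X$, no shift ambiguity there) but the morphism to the affine base and the passage to $\A\operatorname{-mod}$ introduce a shift by $\dim X$, and one must be careful that ``Gorenstein'' is being asserted in the appropriate relative sense (finite injective dimension as a module over itself, with $R\Hom_\A(\A/\operatorname{rad},\A)$ in a single degree). A clean way to finish, avoiding explicit shift-chasing, is to note that $\A$ has finite global dimension (already assumed) and that the existence of the duality $\dual_X\leftrightarrow$ duality on $D^b(\A\operatorname{-mod})$ sending $\A$ to a shift of $\A$ exhibits $\A$ as its own dualizing complex, hence Gorenstein; I expect the author's proof to cite standard facts (e.g.\ from \cite{BK} or the theory of non-commutative crepant resolutions) to make this last step precise.
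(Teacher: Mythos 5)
Your first half is essentially the paper's argument, but one justification is off: the vanishing of higher self-extensions of $\Tilt^*$ needs neither Serre duality nor the Calabi--Yau hypothesis, and the duality you invoke is not available in the form you state it. Since $X$ is only projective over an affine base, the groups $\operatorname{Ext}^i(\Tilt,\Tilt)$ are in general infinite-dimensional modules over the base, so a degree-preserving identification $\operatorname{Ext}^i(\Tilt^*,\Tilt^*)\cong\operatorname{Ext}^i(\Tilt,\Tilt)^*$ with linear duals is not a correct statement. The fix is trivial: $\Tilt\otimes\Tilt^*\cong\mathcal{E}nd(\Tilt)\cong\mathcal{E}nd(\Tilt^*)$, so $\operatorname{Ext}^i(\Tilt^*,\Tilt^*)=H^i(X,\mathcal{E}nd(\Tilt))=\operatorname{Ext}^i(\Tilt,\Tilt)=0$ for $i>0$. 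Your treatment of $\operatorname{End}(\Tilt^*)\cong\A^{opp}$ and the equality of left and right homological dimension for the Noetherian algebra $\A$ coincides with the paper's (which makes the symmetry precise via $\operatorname{Tor}$, using that finitely generated flat modules over a Noetherian ring are projective).

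The Gorenstein half is where the genuine gap is: you never establish the defining finiteness, namely a uniform $n$ with $\operatorname{Ext}^i_\A(M,\A)=0$ for all $M$ and all $i>n$, on both sides. Declaring that the geometric duality corresponds to $R\Hom_\A(-,\A)$ and that therefore ``$\A$ is its own dualizing complex, hence Gorenstein'' presupposes exactly this finiteness (a dualizing complex has, by definition, finite injective dimension on both sides), and the only concrete output you extract, $R\Hom_\A(\A,\A)=\A$ in a single degree, is vacuous, as you yourself observe. The paper argues directly and without any duality: since $\varphi(\Tilt^*)=\A$ for the equivalence $\varphi=R\Gamma(\Tilt\otimes\bullet)$, one has $\operatorname{Ext}^i_\A(M,\A)=\Hom_{D^b(\Coh(X))}(\varphi^{-1}(M)[-i],\Tilt^*)$; finite homological dimension $d$ of $\A$ bounds the cohomological amplitude of $\varphi^{-1}(M)$ by $d$ (resolve $M$ by projectives, which correspond to summands of sums of copies of $\Tilt^*$), and $\operatorname{Ext}^j_{\Str_X}(\mathcal{F},\Tilt^*)$ vanishes for $j$ above a bound depending only on $X$, so $\operatorname{Ext}^i_\A(M,\A)=0$ for $i$ large, uniformly in $M$. (Even more cheaply: once $\A$ and $\A^{opp}$ are known to have finite global dimension, the injective dimension of every module, in particular of $\A$ on either side, is bounded by the global dimension, so the Gorenstein property follows with no geometry at all.) If you insist on your route, you would have to prove the intertwining of $R\mathcal{H}om_X(-,\Str_X)$ with $R\Hom_\A(-,\A)$ under $\varphi$ and $\varphi^{opp}=R\Gamma(\Tilt^*\otimes\bullet)$, and then you would still need the same two amplitude bounds, so it buys nothing over the direct computation.
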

\begin{proof}
Clearly the higher self-extensions of $\Tilt^*$ vanish. Moreover, the algebra $\End(\Tilt^*)=\A^{opp}$
has finite homological dimension. Indeed, $\A$ is a finitely generated module over a finitely
generated commutative algebra. Hence $\A$ and $\A^{opp}$ are Noetherian. By \cite[Theorem 3.2.7]{Weibel},
every finitely generated flat (left or right) $\A$-module is projective. Hence the homological dimension
of $\A$ equals to the maximum of all $n$ such that $\operatorname{Tor}^{\A}_n(M,N)\neq 0$
for finitely generated left $\A$-module $M$ and right $\A$-module $N$. The latter clearly
is clearly symmetric with respect to $\A$ vs $\A^{opp}$ and hence
equals also to the homological dimension of $\A^{opp}$.

%This is because the condition that $\A$ has finite homological
%dimension is equivalent to the regular $\A$-bimodule having finite projective dimension. So $\Tilt^*$
%is indeed a tilting generator.

Now we proceed to proving that the algebra $\A$ is Gorenstein. Recall that this means,
by definition, that $\A$ has finite injective dimension both as a left and as a right
$\A$-module. It is enough to only consider the left module case. What we need to prove
is that there is an integer $n$ such that $\operatorname{Ext}^i_\A(M,\A)=0$ for all
$M\in \A\operatorname{-mod}$ and $i>n$. Let $d$ denote the homological dimension of
$\A$. We write $\varphi$ for the derived equivalence $R\Gamma(\mathcal{T}\otimes\bullet):
D^b(\Coh(X))\xrightarrow{\sim} D^b(\A\operatorname{-mod})$. Note that $\varphi(\Tilt^*)=\A$.
Hence
$$\operatorname{Ext}^i_\A(M,\A)=\Hom_{D^b(\Coh(X))}(\varphi^{-1}(M)[-i],\Tilt^*).$$
By the first paragraph of the proof, the homology of $\varphi^{-1}(M)$ are in homological degrees $0,\ldots,d$. And for a
coherent sheaf $\mathcal{F}$ we have $\operatorname{Ext}^j_{\Str_X}(\mathcal{F},\Tilt^*)=0$ for
$j>\dim X$. So we have $\operatorname{Ext}^i_\A(M,\A)=0$ for $i>d+\dim X$.
\end{proof}

Now let $Y$ be an affine smooth algebraic variety and let $X\rightarrow Y$ be  a projective morphism.
Suppose that $\A$ is flat over $\C[Y]$. The derived scheme $X\times^L_Y X$ comes with a vector bundle
$\Tilt\otimes \Tilt^*$ so that
\begin{equation}\label{eq:tilting_equiv} R\Gamma(\Tilt\otimes \Tilt^*,\bullet):
D^b(\Coh(X\times^L_Y X))\rightarrow D^b(\A\otimes_{\C[Y]}\A^{opp}\operatorname{-mod})
\end{equation}
is an equivalence. Note that it maps $\Tilt^*\otimes \Tilt$ to $\A\otimes_{\C[Y]}\A^{opp}$.

Here is a basic property of this equivalence. Note that $D^b(\Coh(X\times^L_Y X))$ is a monoidal
category with respect to convolution of coherent sheaves.

\begin{Lem}\label{Lem:equiv_monoidal}
The equivalence (\ref{eq:tilting_equiv})
is monoidal with respect to the convolution on $D^b(\Coh(X\times^L_Y X))$
and the tensor product of bimodules on $D^b(\A\otimes_{\C[Y]}\A^{opp}\operatorname{-mod})$.
\end{Lem}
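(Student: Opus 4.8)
The plan is to deduce the monoidality of the global-sections equivalence from the monoidality of the underlying derived push-pull operations, reducing everything to a statement about kernels. Recall that convolution on $D^b(\Coh(X\times^L_Y X))$ is defined by $\mathcal{F}\star\mathcal{G} := Rp_{13*}(p_{12}^*\mathcal{F}\otimes^L p_{23}^*\mathcal{G})$ using the three projections from $X\times^L_Y X\times^L_Y X$, and the tensor product of bimodules over $\A$ is $M\otimes_\A N$. First I would record that under the equivalence $\varphi = R\Gamma(\Tilt\otimes\bullet)\colon D^b(\Coh(X))\xrightarrow{\sim} D^b(\A\operatorname{-mod})$ of Section~\ref{SS_tilting_gen}, the functor ``convolution with a kernel $\mathcal{F}\in D^b(\Coh(X\times^L_Y X))$'' on $D^b(\Coh(X))$ corresponds, under (\ref{eq:tilting_equiv}), to ``tensoring with the bimodule $R\Gamma(\Tilt\otimes\Tilt^*,\mathcal{F})$'' on $D^b(\A\operatorname{-mod})$. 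This is the key compatibility; granting it, monoidality follows because associativity of $\star$ and of $\otimes_\A$ then match up on kernels, and the unit object $\Str_{\Delta X}$ goes to the diagonal bimodule $\A$ (the pushforward of $\Str_X$ along the diagonal is the structure sheaf, and $R\Gamma(\Tilt\otimes\Tilt^*,\Delta_*\Str_X)=R\Gamma(\Tilt\otimes\Tilt^*\otimes\Str_X)=\End(\Tilt)=\A$ as an $\A$-bimodule since there is no higher self-extension).

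The main technical step is the kernel-level compatibility, and here the natural route is base change and projection formula. Consider the diagram of three projections $p_i\colon X\times^L_Y X\to X$; the assertion is that for $\mathcal{M}\in D^b(\Coh(X))$ with $\varphi(\mathcal{M})=M$ and a kernel $\mathcal{F}$,
\begin{equation*}
R\Gamma\bigl(\Tilt\otimes Rp_{1*}(\mathcal{F}\otimes^L p_2^*\mathcal{M})\bigr)\;\cong\; R\Gamma(\Tilt\otimes\Tilt^*,\mathcal{F})\otimes_\A M.
\end{equation*}
I would prove this by expressing the right-hand side as $R\Gamma(\Tilt\otimes\Tilt^*,\mathcal{F})\otimes^L_\A R\Gamma(\Tilt\otimes\mathcal{M})$ and then using that $\Tilt\otimes\Tilt^*$ is self-dual enough that $R\Gamma(-\otimes\Tilt^*)$ on the second $X$-factor computes the $\otimes_\A$, i.e.\ the ``integral transform'' formalism: for a tilting generator, $\Tilt^*\boxtimes\Tilt$ represents the functor and $\otimes^L_\A$ along the middle is identified with $Rp_*(p^*(-)\otimes \mathcal{F})$ by the projection formula applied on $X\times^L_Y X$, together with the fact (Lemma~\ref{Lem:tilting_gen_prop} and flatness of $\A$ over $\C[Y]$) that all the relevant derived tensor products are computed correctly over $Y$ rather than over a point. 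A clean way to organize this is to verify the isomorphism first on the generators $\mathcal{M}=\Tilt^*$ (where both sides visibly give $R\Gamma(\Tilt\otimes\Tilt^*,\mathcal{F})$, using $\varphi(\Tilt^*)=\A$), and then extend to all of $D^b(\Coh(X))$ by the usual dévissage argument, since both sides are exact functors of $\mathcal{M}$ commuting with direct sums and $\Tilt^*$ generates.

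The hard part, as I expect it, is bookkeeping the derived fiber products: $X\times^L_Y X$ and its triple version are genuinely derived (this is flagged already for $\St_0$ in Section~\ref{SS_tilting_notation}), so one must be careful that base-change squares used in the projection-formula manipulations are honest homotopy-Cartesian squares, and that $\Tilt\otimes\Tilt^*$ really is the right kernel — that is, that (\ref{eq:tilting_equiv}) is implemented by an integral transform with that kernel, which itself follows from $R\Gamma(\Tilt\otimes\Tilt^*,\bullet)$ being an equivalence sending $\Tilt^*\otimes\Tilt$ to $\A\otimes_{\C[Y]}\A^{opp}$. Once the functor-of-kernels dictionary is set up correctly, associativity and unitality are formal. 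Everything is standard Fourier–Mukai/tilting yoga once the derived-scheme subtleties are handled, so I would keep the written proof short: state the kernel dictionary, check it on $\Tilt^*$, dévissage, and conclude.
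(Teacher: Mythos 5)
The paper does not actually prove this lemma---it is stated as a standard fact of tilting/Fourier--Mukai formalism---so there is nothing to compare against line by line; your kernel-and-d\'evissage argument is exactly the expected proof, and it is essentially correct. Two points should be tightened. First, your reduction runs through the dictionary ``convolution with $\mathcal{F}$ corresponds to tensoring with $R\Gamma(\Tilt\otimes\Tilt^*,\mathcal{F})$'' as endofunctors of $D^b(\A\operatorname{-mod})$, and then you conclude monoidality because associativity ``matches up on kernels.'' But an isomorphism of the induced functors does not by itself give the needed isomorphism $R\Gamma(\Tilt\otimes\Tilt^*,\mathcal{F}\star\mathcal{G})\cong R\Gamma(\Tilt\otimes\Tilt^*,\mathcal{F})\otimes^L_{\A}R\Gamma(\Tilt\otimes\Tilt^*,\mathcal{G})$ in $D^b(\A\otimes_{\C[Y]}\A^{opp}\operatorname{-mod})$; recovering a bimodule from its tensor functor requires evaluating at $\A$ and tracking the right $\A$-action through all your natural isomorphisms. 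It is cleaner (and closer to what you sketch at the end) to prove the bimodule-level isomorphism directly: fix $\mathcal{F}$, note both sides are exact in $\mathcal{G}$, check the case $\mathcal{G}=\Tilt^*\boxtimes\Tilt$ by the projection formula---here one uses the hypothesis that $\A$ is flat over $\C[Y]$ to identify the third-factor contribution with $\A\otimes_{\C[Y]}(-)$, so that both sides give $R\Gamma(\Tilt\otimes\Tilt^*,\mathcal{F})\otimes_{\C[Y]}\A^{opp}$---and then d\'evissage; unitality via $\Delta_*\Str_X\mapsto\A$ is exactly as you say. Second, the d\'evissage itself should be phrased via thick (Karoubian) generation: infinite direct sums do not exist in $D^b$, so ``commuting with direct sums'' is not the right hypothesis; what you need is that $\Tilt^*$ (resp.\ $\Tilt^*\boxtimes\Tilt$) generates the bounded derived category as a thick subcategory, which follows from Lemma \ref{Lem:tilting_gen_prop}: $\A$ is Noetherian of finite homological dimension, so every object of $D^b(\A\operatorname{-mod})$ is built from $\A$ by finitely many cones, shifts and summands. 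With these adjustments the argument is complete; the derived fiber product causes no further trouble, since derived base change and the projection formula hold for the projections $p_{ij}$ without any flatness of $X\rightarrow Y$.
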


There is also a  module analog of this lemma. Namely, pick $y\in Y$ and let $X_y$ be the fiber
of $X$ over $y$ viewed as a derived scheme. Then the functor $R\Gamma(\mathcal{T}\otimes\bullet)$
defines an equivalence $D^b(\Coh(X_y))\xrightarrow{\sim} D^b(\A_y\operatorname{-mod})$, where $y$
is the fiber of $\A$ over $y$.   This equivalence is compatible with the equivalence
from Lemma \ref{Lem:equiv_monoidal}.

\subsection{Bezrukavnikov-Mirkovic tilting bundle}\label{SS_tilting_BM}
We will need a $G\times \C^\times$-equivariant vector
bundle $\Tilt_{\h}$ on $\tilde{\g}$  with remarkable properties that was constructed in  \cite{BM}.
Set $\Tilt:=\Tilt_\h\otimes_{\C[\h]}\C_0$.

Here are two crucial properties of this bundle established in \cite[Section 2.5]{BM} that we will need:
\begin{Lem}\label{Lem:tilting_properties}
The following claims are true:
\begin{enumerate}
\item The bundle $\Tilt_\h$ is a tilting generator and $\operatorname{End}(\mathcal{T}_\h)$ is flat over
$\g$. Moreover, $\Tilt$ is a tilting generator
for $\tilde{\Nilp}$.
\item The bundle $\Tilt_\h$ is defined over a finite localization of $\Z$.
\end{enumerate}
\end{Lem}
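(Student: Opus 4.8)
These are recollections from \cite[Section 2.5]{BM}; let me indicate the shape of the argument. The plan is to establish everything first over $\F=\overline{\F}_p$ with $p\gg 0$, using the localization machinery of Section \ref{S_der_loc}, and then to transfer the conclusions to characteristic $0$ by spreading out over a finite localization of $\Z$ -- which will simultaneously yield part (2).

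Over $\F$, the bundle $\Tilt_{\h,\F}$ is built, following \cite{BMR, BM}, out of the Azumaya splitting bundles of Section \ref{SS_splitting} so that $R\Gamma(\Tilt_{\h,\F}\otimes\bullet)$ realizes the derived localization equivalence of Theorem \ref{Thm:derived_localization}; in particular $\A_{\h,\F}:=\End_{\tilde{\g}_\F}(\Tilt_{\h,\F})$ is Morita equivalent to $\U_{\F,\h}$. I would then check directly that $\Tilt_{\h,\F}$ has no higher self-extensions (the needed cohomology vanishing is built into the construction) and that $\A_{\h,\F}$ has finite homological dimension: it is Morita equivalent to $\U_{\F,\h}$, which has finite global dimension -- $\U_{\F,\h}$ is Azumaya over the regular locus of its centre, and Theorem \ref{Thm:derived_localization} together with its singular-block counterpart controls the remaining locus. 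Flatness of $\A_{\h,\F}$ over $\F[\g]$ is likewise inherited from $\U_{\F,\h}=\U_\F\otimes_{\F[\h^*]^W}\F[\h^*]$, which is free of rank $|W|$ over $\U_\F$ (as $p\nmid|W|$) and hence free over the $p$-centre $S(\g_\F^{(1)})\cong\F[\g]$ by the restricted PBW theorem; I also record that $\U_{\F,\h}$, and so $\A_{\h,\F}$, is flat over $\F[\h^*]$, since $D_{G_\F/U_\F}$ is.

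To transfer to $\C$: the bundle $\Tilt_\h$ is by construction the base change of a $G\times\C^\times$-equivariant bundle over $\Z[1/N]$ whose reductions mod $p$ are the $\Tilt_{\h,\F}$ -- this is precisely assertion (2), obtained in \cite{BM} by lifting $\Tilt_{\h,\F}$ through the truncations $\Z/p^n$ by deformation theory, the obstructions vanishing for cohomological reasons parallel to those in Section \ref{SS_splitting_equivar}. Given this, $\operatorname{Ext}^{>0}_{\tilde{\g}_\C}(\Tilt_\h,\Tilt_\h)=0$ because the corresponding groups over $\Z[1/N]$ are finitely generated and vanish after reduction modulo a Zariski-dense set of primes; $\A_\h:=\End_{\tilde{\g}_\C}(\Tilt_\h)$ is flat over $\C[\g]$ and over $\C[\h]$ by the fibrewise flatness criterion applied over $\Z[1/N][\g]$ and $\Z[1/N][\h]$; and $\A_\h$ has finite homological dimension because its reductions $\A_{\h,\F}$ do, with a bound independent of $p$. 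Then $\Tilt_\h$ is a tilting generator for $\tilde{\g}$, and by \cite[Proposition 2.2]{BK} the functor $R\Gamma(\Tilt_\h\otimes\bullet)$ is an equivalence onto $D^b(\A_\h\operatorname{-mod})$.

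Finally, for $\Tilt=\Tilt_\h\otimes_{\C[\h]}\C_0$ on $\tilde{\Nilp}=\tilde{\g}\times_{\h}\{0\}$: since $\A_\h$ is flat over $\C[\h]$, the coordinates $t_1,\dots,t_r$ on $\h$ form a central regular sequence in $\A_\h$, so a base-change argument along the proper morphism $\tilde{\g}\to\g_\h$ (valid since $\Tilt_\h\otimes\Tilt_\h^*$ is flat over $\C[\h]$) gives $R\Hom_{\tilde{\Nilp}}(\Tilt,\Tilt)=\A_\h\otimes^L_{\C[\h]}\C_0=\A_\h/(t_1,\dots,t_r)$, concentrated in degree $0$. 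Hence $\Tilt$ has no higher self-extensions and $\End_{\tilde{\Nilp}}(\Tilt)=\A_\h/(t_1,\dots,t_r)$ has finite homological dimension -- killing the central regular elements $t_i$ one at a time preserves finiteness -- so $\Tilt$ is a tilting generator for $\tilde{\Nilp}$. The real obstacle lies entirely in the second paragraph: constructing $\Tilt_{\h,\F}$ with its Morita identification with $\U_{\F,\h}$, and the deformation-theoretic lift of $\Tilt_{\h,\F}$ over $\Z_p$ -- for which I would invoke \cite[Section 2.5]{BM} rather than reconstruct it.
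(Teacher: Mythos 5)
The paper gives no argument for this lemma at all: it is quoted verbatim from \cite[Section 2.5]{BM}, so at the level of ``what the paper does'' your final move --- invoking that reference for the construction of $\Tilt_{\h,\F}$ and its integral form --- is exactly the paper's proof, and the surrounding reductions you supply (spreading out over $\Z[1/N]$, Ext-vanishing over $\C$ by reduction modulo a dense set of primes, and the specialization $\Tilt=\Tilt_\h\otimes_{\C[\h]}\C_0$ handled by flatness and a central regular sequence) are standard and consistent with how \cite{BM} organizes things.

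There is, however, one step in your characteristic-$p$ paragraph that is genuinely wrong as stated and cannot be repaired along the lines you indicate: you derive the finite homological dimension and flatness of $\A_{\h,\F}=\End(\Tilt_{\h,\F})$ from a purported Morita equivalence with $\U_{\F,\h}$. No such global equivalence exists. The Azumaya algebra $\tilde{D}_{\B_\F}$ splits only on formal neighborhoods of fibers over fixed $(\lambda,\chi)$, and the derived localization theorem (Theorem \ref{Thm:derived_localization}) is an equivalence only for a fixed completed Harish--Chandra character with $\lambda+\rho$ regular; globally over $\h^*_\F/W$ the singular characters obstruct any such identification. Moreover $\U_{\F,\h}=\U_\F\otimes_{\F[\h^*]^W}\F[\h^*]$ does \emph{not} have finite global dimension: its ``commutative shadow'' is $\g_\h=\g^*\times_{\h^*/W}\h^*$, which is singular, and the whole point of the Bezrukavnikov--Mirkovic construction is that $\A_\h$ is a \emph{noncommutative resolution} of this singular object --- so finiteness of the homological dimension of $\A_\h$ is precisely what cannot be inherited from $\U_{\F,\h}$ and must instead come from the geometric side: $\Tilt_\h$ generates $D^b(\Coh(\tilde{\g}))$ with $\tilde{\g}$ smooth, whence $D^b(\A_\h\operatorname{-mod})\cong D^b(\Coh(\tilde{\g}))$ and the finiteness follows (this, together with the vanishing of higher self-extensions and flatness of $\End$ over $\C[\g]$, is what \cite[Section 2.5]{BM} actually establishes, via the characterization of $\Tilt_\h$ by braid positivity and normalization recalled in Lemma \ref{Lem:reduct_mod_p}). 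With that step replaced by the citation you already make, the rest of your outline is fine.
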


We write $\A_\h$ for $\operatorname{End}(\Tilt_\h)$ and $\A$
for $\operatorname{End}(\Tilt)$. Note that, by the construction,
the algebras $\A_\h,\A$ are Gorenstein  by Lemma \ref{Lem:tilting_gen_prop}.

Thanks to (1) of Lemma \ref{Lem:tilting_properties}, we have the following derived equivalence
$$R\Gamma(\Tilt_\h\otimes\bullet): D^b(\Coh(\tilde{\g}))\xrightarrow{\sim} D^b(\A_\h\operatorname{-mod}),$$
as well as the similarly defined equivalences between categories of equivariant objects  with respect
to algebraic subgroups of $G$.

We record some related equivalences that we will need below. Note that $\A=\A_\h\otimes_{\C[\h^*]}\C_0$.

First of all, since $\St_{\h,\h}, \St_{\h,0}$ are complete intersections in $\tilde{\g}\times
\tilde{\g}$ and $\tilde{\g}\times \tilde{\Nilp}$, respectively, we have the following derived
equivalences:
\begin{align}\label{eq:non_comm_Springer_equiv}
&D^b(\Coh^G(\St_{B}))\xrightarrow{\sim} D^b(\A_\h\otimes_{\C[\g]}\A^{opp}\operatorname{-mod}^G),\\\label{eq:non_comm_Springer_equiv1}
&D^b(\Coh^G(\St_\h))\xrightarrow{\sim} D^b(\A_\h\otimes_{\C[\g]}\A_\h^{opp}\operatorname{-mod}^G).
\end{align}

Note that (\ref{eq:non_comm_Springer_equiv1}) is an equivalence of monoidal categories and
then (\ref{eq:non_comm_Springer_equiv}) is an equivalence of module categories. We have
equivalences of module categories:

\begin{equation}\label{eq:non_comm_Springer_fiber}
D^b(\Coh^{Z_G(e)}_{\B_e}(\tilde{\g}))\xrightarrow{\sim} D^b(\A_\h\operatorname{-mod}^{Z_G(e)}_e),
D^b(\Coh^{Z_G(e)}(\B_e))\xrightarrow{\sim} D^b(\A_{\h,e}\operatorname{-mod}^{Z_G(e)}).
\end{equation}
Moreover, we can replace the equivariance with
respect to $Z_G(e)$ with that with respect to any algebraic subgroup of $Z_G(e)$.
Here and below we write $\A_{\h,e}$ for the fiber of $\A_\h$ at $e$.

Now we explain a Koszulity property.
Consider the restriction $\A_\h|_{S}:=\C[S]\otimes_{\C[\g]}\A_\h$. This algebra is acted on by
$Q\times \C^\times$.  The following is one of the main results of \cite{BM},
see Section 5.5 there.

\begin{Thm}\label{Thm:Koszulity}
There is a Koszul grading on $\A_{\h}|_S$ compatible with the grading on $\C[S]$.
\end{Thm}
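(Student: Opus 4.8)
The plan is to exhibit a positive grading on the algebra $\A_\h|_S$ and then to verify the three defining conditions of Koszulity: that the degree-zero part is semisimple, that $\A_\h|_S$ is generated in degrees $0$ and $1$, and that every simple module admits a linear projective resolution. The grading itself is not something we construct by hand; rather it comes from the geometry of the Slodowy slice. Recall that $S = e + \mathfrak{z}_\g(f)$ carries a $\C^\times$-action via $t.s := t^{-2}\gamma(t)s$, which contracts $S$ to the point $e$ and induces a nonnegative grading on $\C[S]$ with $\C[S]_0 = \C$. Since $\Tilt_\h$ is $G\times\C^\times$-equivariant (Lemma \ref{Lem:tilting_properties}) and $Q$ commutes with this $\C^\times$, the restriction $\Tilt_\h|_S$ and hence $\A_\h|_S = \End(\Tilt_\h|_S)$ acquire a $\C^\times$-action, i.e.\ a $\Z$-grading. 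The first step is to check this grading is nonnegative and that $(\A_\h|_S)_0$ is the endomorphism algebra of the fiber $\Tilt_{\h,e}$ — which, because the contraction takes the whole of $S$ to $e$, is a product of matrix algebras indexed by the indecomposable summands of $\Tilt_{\h,e}$, hence semisimple. This requires knowing that the summands of $\Tilt_{\h}|_S$ remain indecomposable and pairwise non-isomorphic upon restriction, which follows from the fact established in \cite{BM} that $\Tilt$ is a minimal tilting generator (no two summands isomorphic, none decomposable), combined with the $\C^\times$-equivariance forcing the grading-zero idempotents to be exactly the fiberwise ones.

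The second and main step is the linearity of projective resolutions of simples. Here I would not argue directly with the algebra but instead transport the question through the derived equivalence $R\Gamma(\Tilt_\h\otimes\bullet)$ of Section \ref{SS_tilting_gen}, restricted to the fiber over $e$: this gives $D^b(\Coh(\B_e)) \xrightarrow{\sim} D^b(\A_{\h,e}\operatorname{-mod})$ (a version of \eqref{eq:non_comm_Springer_fiber}), and more precisely a graded lift using the $\C^\times$-action on $\B_e$ coming from the contracting action on $S$. Under this equivalence the projective $\A_{\h,e}$-modules correspond to the summands of the tilting bundle, and the simple modules correspond to the simple perverse-coherent sheaves (or their analogues) supported on the fixed locus. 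Koszulity then becomes a statement of \emph{purity}: the $\operatorname{Ext}$-groups between the simple objects in the relevant heart, computed in the graded derived category of coherent sheaves on $\B_e$, must be concentrated in a single weight for each cohomological degree, with that weight equal to the degree. This is the mixed/pure Hodge-theoretic input that \cite{BM} supplies — the tilting bundle was built precisely so that its endomorphism algebra carries a weight filtration making the relevant $\operatorname{Ext}$'s pure, and one reads off that the weight equals twice the cohomological degree (so after dividing by $2$, or by choosing the grading normalization matching $\C[S]$, one gets the Koszul diagonal).

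The compatibility with the grading on $\C[S]$ is then essentially built in: the map $\C[S] \to \A_\h|_S$ landing in the center is $\C^\times$-equivariant by construction, so the grading we have produced restricts to the standard grading on $\C[S]$ on the nose; there is nothing further to check beyond recording this. I expect the genuine obstacle to be the purity/linearity step — specifically, justifying that the grading coming from the geometric $\C^\times$-contraction coincides (up to the normalization by $2$) with the ``weight'' grading for which \cite{BM} proves the $\operatorname{Ext}$-vanishing between simples outside the diagonal. The two gradings a priori live on different objects (one on $\C[S]$ and its modules, the other an abstract homological weight), and the reconciliation uses the $\slf_2$-theory: the $\C^\times$ in $t.s = t^{-2}\gamma(t)s$ is designed so that its action on cohomology of $\B_e$ matches the cohomological grading shifted by the $\gamma$-weight, which is the content that makes the slice $\C^\times$ ``the'' grading. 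Everything else — nonnegativity, semisimplicity of degree zero, generation in degrees $0,1$ — follows formally once the heart and the graded equivalence are in place, and can be cited from \cite{BM} with only minor bookkeeping.
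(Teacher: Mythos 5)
You should first note that the paper does not prove this statement at all: it is quoted verbatim as ``one of the main results of \cite{BM}, see Section 5.5 there,'' and the only original content nearby is Remark \ref{Rem:Koszulity}, which upgrades the grading to a $Q\times\C^\times$-stable one. So your proposal cannot be ``the same as the paper's proof''; it has to stand on its own, and as written it does not. The decisive step --- linearity of the resolutions of simples, i.e.\ purity of $\operatorname{Ext}$'s between simple objects in the appropriate (graded) heart --- is not argued but attributed back to \cite{BM} (``the mixed/pure input that \cite{BM} supplies''). Since the theorem being proved \emph{is} the \cite{BM} result, this is circular: the entire content of the statement is exactly that purity argument, which in \cite{BM} goes through reduction to characteristic $p$, the identification of classes of simples with canonical basis elements, purity of Frobenius weights on the relevant $\operatorname{Ext}$-algebras, and a BGS-type mechanism producing the grading. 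None of this appears in your sketch, so no proof has actually been given.

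There is also a concrete false step in your ``formal'' part. You claim that the grading induced by restricting the $G\times\C^\times$-equivariant structure of $\Tilt_\h$ along the Kazhdan $\C^\times$ has degree-zero piece $(\A_\h|_S)_0=\End(\Tilt_{\h,e})$, and that this is a product of matrix algebras, hence semisimple. But $\End(\Tilt_{\h,e})=\A_{\h,e}$ is the full fiber algebra --- the one whose module category is (Morita) equivalent to blocks of modular representations / to $\Coh(\B_e)$ in the exotic $t$-structure --- and it has a large radical: there are plenty of nonzero maps between distinct indecomposable summands of $\Tilt_{\h,e}$ and nontrivial nilpotent endomorphisms of each, and the contraction of $S$ to $e$ does nothing to kill them. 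Semisimplicity of the degree-zero component (equivalently, strict positivity of the grading on the radical) is precisely the hard half of Koszulity, not a formal consequence; moreover it depends on normalizing the $\C^\times$-equivariant structure on each indecomposable summand (equivariant structures differ by character twists summand by summand, and the induced grading on the endomorphism algebra changes with these choices), so the ``naive'' restricted equivariant structure need not even give a nonnegative grading. The same remark applies to your assertion that generation in degrees $0,1$ ``follows formally.'' In short: the compatibility with the grading on $\C[S]$ and the existence of \emph{some} $\C^\times$-grading are fine, but everything that makes the grading Koszul is either missing or asserted by citing the very theorem you are asked to prove.
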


\begin{Rem}\label{Rem:Koszulity}
Note that the Koszul grading on $\A_\h|_{S}$ comes from a $\C^\times$-equivariant
structure on the restriction of $\Tilt$ to $S\times_{\g}\tilde{\g}$,
see \cite[Section 5.5]{BM}. The group
$Q\times\C^\times$ acts on $\A_{\h}|_S$ because $\Tilt|_{S\times_{\g}\tilde{\g}}$
is $Q\times \C^\times$-equivariant. We claim that
we can choose the grading to be $Q\times \C^\times$-stable.

Let $I$ denote the intersection of the kernels of the irreducible
$\A_\h|_S$ modules supported at $0\in S$.
Note that the descending filtration $(\A_{\h}|_S)_{\geqslant d}$
on $\A_\h|_{S}$ coming from a Koszul grading does not depend on the
choice of a Koszul grading: $(\A_{\h}|_S)_{\geqslant d}=I^d$. Any grading
splitting this filtration is Koszul, because the corresponding graded algebra
is isomorphic to the associated graded with respect to the filtration above.

Each $(\A_{\h}|_S)_{\geqslant d}$ is a $Q\times \C^\times$-stable $\C[S]$-submodule.
For each $d>0$, the group $F_d$ of $\C[S]$-linear automorphisms
of $\A_{\h}|_S/ (\A_{\h}|_S)_{\geqslant d}$ that are the identity on the associated
graded algebra is algebraic and unipotent. The set $\mathcal{G}_d$ of  gradings that split the filtration
is a torsor over $F_d$.
The group $Q\times \C^\times$ normalizes $F_d$ and acts on $\mathcal{G}_d$ in a compatible way.
It follows that there is a fixed point. Moreover, the fiber of $\mathcal{G}_{d+1}$ over a fixed
point in $\mathcal{G}_d$ is an affine space with an affine action of $Q\times \C^\times$.
It follows that we can compatibly choose $Q\times \C^\times$-fixed points in each $\mathcal{G}_d$.
This choice gives a $Q\times \C^\times$-stable grading that splits the filtration  (hence
 Koszul) and is compatible with the grading on $\C[S]$.
%So we can choose a grading on
%%$\A_{\h}|_S/ (\A_{\h}|_S)_{\geqslant d}$ that
%\begin{itemize}
%\item
%splits the filtration,
%\item is compatible
%with the grading on $\C[S]$,
%\item and is $Q\times \C^\times$-stable.
%\end{itemize}
%We can do this in a compatible way for all $d$.
%This gives rise to a $Q\times \C^\times$-stable
%Koszul grading on $\A_\h|_S$.
\end{Rem}

Since $\Tilt_{\h}$ is defined over a finite localization of $\Z$, we can reduce it
mod $p$ for $p$ large enough. We will view the reduction, to be denoted by $\Tilt_{\h,\F}$,
as a vector bundle on $\tilde{\g}_\F^{(1)}$. This vector bundle is
still a tilting generator.  Now we discuss a connection between $\Tilt_{\h,\F}$
and the splitting bundles considered in Section \ref{S_der_loc}.
%The following result was obtained in
%\cite{BM}.

The following result is essentially \cite[Corollary 1.6.8]{BM}. We provide a proof
for reader's convenience.

\begin{Lem}\label{Lem:reduct_mod_p}
The indecomposable summands of $\Tilt_{\h,\F}$ restricted to the formal neighborhood
of $\B^{(1)}_\chi$ are precisely the indecomposable summands
of the bundle $\mathcal{V}^\chi_0(\rho)$, where
$\mathcal{V}^\chi_0$ is defined by  (\ref{eq:splitting_Springer}).
\end{Lem}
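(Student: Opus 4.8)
The plan is to compare two $G_\F$-equivariant splitting bundles for the same Azumaya algebra and argue that, after reduction mod $p$ and restriction to the formal neighborhood of $\B_\chi^{(1)}$, they become isomorphic up to reordering of indecomposable summands. First I would recall that by Lemma~\ref{Lem:tilting_properties} the reduction $\Tilt_{\h,\F}$ is a tilting generator on $\tilde{\g}_\F^{(1)}$, and that by construction in \cite{BM} its restriction $\Tilt_{\F}:=\Tilt_{\h,\F}\otimes_{\F[\h^*]}\F_0$ is a tilting generator on $\tilde{\Nilp}_\F^{(1)}$. By \cite[Cor.~1.6.8]{BM} (the characteristic-$0$ statement we are told to reprove) the relevant identification is known integrally; after base change to $\F$ the bundle $\Tilt_{\F}$ restricted to a suitable completion splits the Azumaya algebra $\tilde D^{\wedge_\mu}_{\B_\F}$ for the appropriate $\mu$. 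The key point is to pin down $\mu$. Since $\Tilt$ is a tilting generator for $\tilde{\Nilp}$ and localization realizes $\U_{\F}\operatorname{-mod}^\chi_\lambda$ for $\lambda = W\cdot 0$, i.e. $\mu^\circ = -2\rho$ and hence $\mu = 0$ in the normalization of Section~\ref{SS_splitting}, the bundle $\Tilt_{\h,\F}$ itself is (a multiple of, or a direct sum involving) a splitting bundle for $\tilde D^{\wedge_0}_{\B_\F}$; twisting by $\mathcal{O}(\rho)$ matches the normalization $\mathcal{V}^\chi_0(\rho)$ appearing in Proposition~\ref{Prop:braid_equivariance} and Lemma~\ref{Lem:reduct_mod_p}.

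Concretely, the steps I would carry out are: (i) Restrict everything to the completion $(\tilde\g_\F^{(1)})_\h^{\wedge_\chi}$, which by the remark after \eqref{eq:splitting_Springer} is identified with $\tilde\g_\F^{(1)\wedge_\chi}$; over this completion $\tilde D^{\wedge_0}_{\B_\F}$ splits, and both $\Tilt_{\h,\F}$ (suitably twisted) and $\mathcal{V}^\chi_0(\rho)$ are splitting bundles. (ii) Invoke uniqueness of splitting bundles up to tensoring with a line bundle on the base: since $(\tilde\g_\F^{(1)})_\h^{\wedge_\chi}$ is a completion of $\tilde\g_\F^{(1)}$ along $\B_\chi$, line bundles on it are pulled back from line bundles on the formal neighborhood of $\B_\chi$ in $\tilde\g_\F^{(1)}$, which are in turn restrictions of $\mathcal{O}(\theta)$ for $\theta\in\mathfrak{X}(T)$. (iii) Determine the line-bundle twist by a $K_0$ computation in the spirit of Lemma~\ref{Lem:split_properties}(2): compute the class of $\Tilt_{\h,\F}|_{\B_\chi}$ in $K_0^{\underline Q}(\B_\chi)$ using that $\Tilt$ is defined over $\Z$ and hence its class agrees with that of the characteristic-$0$ tilting bundle restricted to $\B_e$, and compare with $[\Fr_{\B,*}\mathcal{O}(\rho)]|_{\B_\chi}$ via Lemma~\ref{Lem:split_properties}(2). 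This fixes the twist and shows that, as a splitting bundle, $\Tilt_{\h,\F}$ restricted to the formal neighborhood of $\B_\chi^{(1)}$ is isomorphic to a direct sum of copies of $\mathcal{V}^\chi_0(\rho)$; hence they have the same indecomposable summands. (iv) Finally, observe that passing between $\tilde\g$ and $\tilde\Nilp$ (i.e. the fiber over $0\in\h^*$) is harmless here because $\B_\chi$ already lies in $\tilde\Nilp_\F^{(1)}\subset\tilde\g_\F^{(1)}$, so restriction to the formal neighborhood of $\B_\chi^{(1)}$ does not see the $\h^*$-direction.

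The main obstacle I expect is step (iii): identifying the precise line-bundle twist between $\Tilt_{\h,\F}$ (or the correct indecomposable part of it) and $\mathcal{V}^\chi_0(\rho)$, and in particular checking that the twist is trivial with the $\mathcal{O}(\rho)$ normalization rather than some other $\mathcal{O}(\theta)$. This requires care with two normalization conventions — the one used in \cite{BM} for $\Tilt_\h$ (where the relevant central character is $W\cdot 0$ and the bundle is naturally attached to $\mu=-\rho$ shifted appropriately) and the one in Section~\ref{SS_splitting} for $\mathcal{V}^\chi_\mu$, which carries the explicit shift $\mathcal{O}(\mu-(p-1)\rho)$ inside \eqref{eq:splitting_Springer}. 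Reconciling these is exactly the content of \cite[Cor.~1.6.8]{BM}, and the proof amounts to transcribing that comparison, together with the equivariant refinement: the $\underline Q_\F$-equivariant structure on $\mathcal{V}^\chi_0$ is the restriction of the $G_\F$-equivariant structure (established at the end of Section~\ref{SS_splitting_equivar}), and $\Tilt_{\h,\F}$ is $G_\F$-equivariant by construction, so the identification of summands is automatically $\underline Q_\F$-equivariant once it holds non-equivariantly.
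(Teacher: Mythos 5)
There is a genuine gap, and it sits at the foundation of your argument: the premise in steps (i)--(ii) that $\Tilt_{\h,\F}$, restricted to the formal neighborhood of $\B_\chi^{(1)}$ (possibly after a twist), is a splitting bundle for the Azumaya algebra, so that the two bundles differ by a line bundle. This is false for rank reasons alone: any splitting bundle for $\tilde{D}^{\wedge_0}_{\B_\F}$ has rank $p^{\dim \B}$, whereas $\Tilt_{\h,\F}$ is the reduction of a bundle defined over a finite localization of $\Z$ and has rank independent of $p$. The true relationship is only that the two bundles share the same \emph{set} of indecomposable summands, with wildly different multiplicities (this is exactly what produces the Morita equivalence $(\A_{\h,0})_\F\cong \U^0_{(0),\F}$ used later, rather than an isomorphism of algebras); in particular $\Tilt_{\h,\F}$ restricted is neither a splitting bundle nor a direct sum of copies of $\mathcal{V}^\chi_0(\rho)$. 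Consequently the uniqueness of splitting bundles up to a line-bundle twist does not apply to $\Tilt_{\h,\F}$, and the $K_0$ comparison in your step (iii) cannot rescue this: a class in $K_0^{\underline{Q}}(\B_\chi)$ does not determine indecomposable summands, so matching classes proves nothing about the Krull--Schmidt decomposition. Your fallback of importing the identification from \cite[Corollary 1.6.8]{BM} is circular for this exercise, since the lemma \emph{is} essentially that corollary and the point here is to supply its proof.

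The idea you are missing is the uniqueness principle actually used in the paper, which comes from \cite[Section 1.5.1]{BM}: a tilting generator on $\tilde{\g}^{(1)}_\F\times_{\g^{*(1)}_\F}\g^{*(1)\wedge_\chi}_\F$ is determined \emph{up to multiplicities of indecomposable summands} by two conditions on the t-structure defined by its dual --- braid positivity (the affine braid monoid acts by right t-exact functors) and normalization ($\mathcal{O}$ is a summand, equivalently $R\Gamma$ is t-exact). The restriction of $\Tilt_{\h,\F}$ satisfies these by construction, and one checks them for $\mathcal{V}^\chi_0(\rho)$ on the representation-theoretic side: braid positivity follows because $R\Gamma(\mathcal{V}^\chi_0(\rho)\otimes\bullet)$ intertwines the braid actions (Proposition \ref{Prop:braid_equivariance}) and the wall-crossing functors on $\U_\F\operatorname{-mod}^\chi_0$ are right t-exact, while the normalization follows because $R\Gamma$ is, up to a t-exact equivalence, the translation functor to $-\rho$ (\cite[Section 2.2.5]{BMR_sing}). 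This is the mechanism that identifies the indecomposable summands; no line-bundle or $K_0$ bookkeeping of the kind you propose enters, and your correct observation about the $\underline{Q}_\F$- versus $G_\F$-equivariant structures is not needed for the statement of the lemma.
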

\begin{proof}
According to \cite[Section 1.5.1]{BM}, the following two conditions uniquely specify a tilting generator $\mathcal{V}$ on $\tilde{\g}^{(1)}_\F\times_{\g_\F^{*(1)}}\g_\F^{*(1)\wedge_\chi}$
 (up to changing the multiplicities
of the indecomposable summands):
\begin{itemize}
\item Braid positivity: the  action of the affine braid monoid on $D^b_{\B^{(1)}_\chi}(\Coh(\tilde{\g}^{(1)}_\F))$
is by right $t$-exact functors with respect to the t-structure given by $\mathcal{V}^*$.
\item Normalization: $\mathcal{O}$ is a direct summand in $\mathcal{V}$, equivalently,
$R\Gamma$ is exact in the t-structure given by $\mathcal{V}^*$.
\end{itemize}
The restriction of $\Tilt_{\h,\F}$ satisfies these two properties by the construction,
see \cite[Section 1.5.1]{BM}.
We need to show that $\mathcal{V}^\chi_0(\rho)$ does. The braid positivity follows
because $R\Gamma(\mathcal{V}^\chi_0(\rho)\otimes\bullet)$ intertwines the braid group
actions and the action on the category  $\U_\F\operatorname{-mod}^\chi_0$ is
by right $t$-exact functors, Proposition
\ref{Prop:braid_equivariance}. The normalization follows because $R\Gamma$ is, up to a t-exact category
equivalence, the translation functor to $-\rho$, \cite[Section 2.2.5]{BMR_sing}.
\end{proof}

Finally, let us  discuss a tilting bundle on $\tilde{\Nilp}_P$.
Set \begin{equation}\label{eq:tilting_parabolic}
\Tilt_P:=\varpi_*\iota^* (\Tilt(-\rho)).
\end{equation}

The following claim was established in \cite[Sections 4.1, 4.2]{BM}.

\begin{Lem}
The complex $\Tilt_P$ is a vector bundle in homological degree $0$.
Moreover, it is a tilting generator for $\tilde{\Nilp}_P$.
\end{Lem}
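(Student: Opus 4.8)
The plan is to reduce the statement to the two properties of $\Tilt$ that have already been recorded: that $\Tilt(-\rho)$ is a tilting generator for $\tilde{\g}$ (hence its pushforward to $\tilde{\Nilp}$, which is $\Tilt$ twisted, is a tilting generator for $\tilde{\Nilp}$), and the geometry of the maps $\iota\colon Z\hookrightarrow\tilde{\Nilp}$ and $\varpi\colon Z\twoheadrightarrow\tilde{\Nilp}_P$. Recall $Z=G\times^B\mathfrak{m}$, so $\varpi$ is a fibration with fibers $P/B$, and $\iota$ is a closed embedding. First I would show that $\Tilt_P$ sits in homological degree $0$, i.e. that $R\varpi_*\iota^*(\Tilt(-\rho))$ has no higher cohomology and is locally free. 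The key point is that $\iota^*\bigl(\Tilt(-\rho)\bigr)$ restricted to a fiber $P/B$ of $\varpi$ is a direct sum of line bundles twisted by $\Tilt$-pieces, and one checks that the relevant line bundles on $P/B$ have vanishing higher cohomology — this is where the specific twist by $-\rho$ enters, so that on each $P/B$-fiber one only sees dominant (for $P$) weights or their Weyl-module resolutions. Since higher cohomology vanishes fiberwise and the ranks are constant, $\varpi_*\iota^*(\Tilt(-\rho))$ is a vector bundle and $R^{>0}\varpi_*=0$.

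Next I would verify the tilting-generator conditions for $\Tilt_P$ on $\tilde{\Nilp}_P$: no higher self-extensions, finite homological dimension of $\End(\Tilt_P)$, and generation. For the Ext-vanishing, the adjunction $\varpi^*\dashv \varpi_*$ together with $R^{>0}\varpi_*=0$ lets me rewrite $\operatorname{Ext}^\bullet_{\tilde{\Nilp}_P}(\Tilt_P,\Tilt_P)$ in terms of $\operatorname{Ext}^\bullet$ on $Z$ between $\iota^*\Tilt(-\rho)$ and $\varpi^!\varpi_*\iota^*\Tilt(-\rho)$, and then push through $\iota$ to land on $\tilde{\Nilp}$ (or $\tilde{\g}$), where the vanishing of higher self-extensions of $\Tilt$ is known from Lemma \ref{Lem:tilting_properties}. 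Concretely, one uses that $\varpi$ is smooth proper so $\varpi^!=\varpi^*[\,\dim P/B\,]\otimes(\text{relative canonical})$, and that $\iota$ is a regular embedding of the expected codimension (the same complete-intersection phenomenon used for $\St_0$), so $\iota_*\iota^*$ is computed by a Koszul complex whose terms are built from $\Lambda^j N^\vee$; the relevant weights again conspire, because of the $-\rho$ twist, to kill all the higher Ext's. For generation, I would note that $\Tilt_P$ generates because $\Tilt$ does and $\varpi_*\iota^*$ (with its left adjoint) is conservative on the relevant subcategories; equivalently, the induced functor $R\Gamma(\Tilt_P\otimes\bullet)\colon D^b(\Coh(\tilde{\Nilp}_P))\to D^b(\End(\Tilt_P)\text{-}\mathrm{mod})$ is an equivalence because it fits into a diagram with the already-established equivalence on $\tilde{\g}$ via the pull-push along $Z$. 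Finite homological dimension of $\End(\Tilt_P)$ then follows formally, as in Lemma \ref{Lem:tilting_gen_prop}, once the equivalence is in place (one can also invoke that $\End(\Tilt_P)$ is a finite module over $\C[\tilde{\Nilp}_P]\to\mathcal{N}_P$'s base, and appeal to Gorenstein-ness as in that lemma).

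The main obstacle I anticipate is the fiberwise/weight bookkeeping that makes both the degree-$0$ claim and the Ext-vanishing work: one must know precisely which line bundles appear in $\iota^*\Tilt(-\rho)$ along the $P/B$-fibers and in the conormal Koszul complex of $\iota$, and check that the twist by $-\rho$ places everything in the cohomology-vanishing range (no positive cohomology on $P/B$, no negative-Ext contributions from $\Lambda^\bullet N^\vee$). This is exactly the kind of computation carried out in \cite[Sections 4.1, 4.2]{BM}, and my proof would follow that analysis; I would cite it for the combinatorial heart and only spell out the homological-algebra scaffolding (base change along $\varpi$, regular embedding $\iota$, transfer of the tilting property) that turns those local vanishings into the global statement. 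A secondary, more routine point is checking that $\tilde{\Nilp}_P=T^*\mathcal{P}$ is again Calabi–Yau with a projective map to an affine variety, so that the general framework of Section \ref{SS_tilting_gen} applies verbatim.
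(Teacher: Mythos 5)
The paper gives no proof of this lemma at all: it is imported wholesale from \cite[Sections 4.1, 4.2]{BM}, so deferring to that reference is legitimate in principle. The trouble is that the scaffolding you build around the citation has two genuine gaps, and they are not of a kind the citation can fill. First, your degree-zero step rests on the assertion that $\iota^*(\Tilt(-\rho))$ restricted to each $P/B$-fiber of $\varpi$ is a direct sum of line bundles, to which you then apply cohomology vanishing for dominant weights. Nothing of the sort is known: $\Tilt$ is characterized only indirectly (braid positivity and normalization, identification with splitting bundles after reduction mod $p$, cf. Lemma \ref{Lem:reduct_mod_p}), and there is no reason its restriction to the fibers of $\varpi$ splits into line bundles; so the ``weight bookkeeping'' has no object to act on, and the base-change argument for local freeness has no input. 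Second, the Ext-vanishing is not formal: knowing $\operatorname{Ext}^{>0}(\Tilt,\Tilt)=0$ on $\tilde{\g}$ or $\tilde{\Nilp}$ gives no control over $\operatorname{Ext}^{>0}_Z\bigl(\iota^*\Tilt(-\rho),\varpi^!\varpi_*\iota^*\Tilt(-\rho)\bigr)$, because after pushing forward by $\iota_*$ the second argument involves the conormal twists $\Lambda^\bullet N^\vee$ and is no longer built out of $\Tilt$; the sentence ``the relevant weights conspire because of the $-\rho$ twist'' is precisely the content to be proved, not a routine check. Likewise, finite homological dimension of $\End(\Tilt_P)$ does not ``follow formally as in Lemma \ref{Lem:tilting_gen_prop}'' -- that lemma assumes it.

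Moreover, the argument behind the citation -- both in \cite{BM} and in the way $\Tilt$ is handled throughout this paper -- is of a different nature than what you describe, namely reduction to characteristic $p$: using the contracting $\mathbb{G}_m$-action one passes to formal neighborhoods, identifies $\Tilt_{\F}(-\rho)$ there with a splitting bundle for the Azumaya algebra of crystalline differential operators (Lemma \ref{Lem:reduct_mod_p}), and identifies $\varpi_*\iota^*$ of that splitting bundle with a Frobenius pushforward of the pullback of $D_{\Pcal,\F}$ to $Z_\F$, which carries a filtration with associated graded $\Fr_{\tilde{\Nilp}_P,*}\Str_{\tilde{\Nilp}_P}$; this yields the vector-bundle statement, and the tilting property comes from identifying the resulting endomorphism algebra with (a form of) $\Gamma(D_{\Pcal,\F})$ via the surjection $\Gamma(D_{\B,\F})\twoheadrightarrow\Gamma(D_{\Pcal,\F})$ of Borho--Brylinski \cite{BorBr}. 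So the ``combinatorial heart'' you propose to cite from \cite[Sections 4.1, 4.2]{BM} is not a fiberwise weight computation and will not supply the specific vanishing statements your reductions need; to make your outline work you would have to replace it by the characteristic-$p$ identification. (Your closing routine point is fine: $\tilde{\Nilp}_P=T^*\Pcal$ is symplectic, hence Calabi--Yau, and projective over the affine variety $\Nilp_P$.)
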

\begin{comment}
It is enough to check this statement
after reduction mod $p\gg 0$.  Also note that, thanks to the
$\F^\times$-equivariance, it is enough to prove these statements for the restriction
of $\Tilt_{P,\F}$ to $\Pcal^{(1)\wedge}_\F$. There (1) and (2) follow from
Lemma \ref{Lem:reduct_mod_p} combined with Lemma \ref{Lem:splitting_parabolic}.
\end{proof}
\end{comment}

\section{Constructible realization}\label{S_constr_realiz}
\subsection{Notation and content}\label{SSS_constructible_notation}
We continue to work over $\C$.
The notation $G,\g,B,T,\rho,\bor,\h,\B$, $\Nilp$ has the same meaning as in
Section \ref{SS_loc_notation}, and $\St_\h,\St_B,\St_0$
have the same meaning as in Section \ref{SS_tilting_notation}.
As in Section \ref{SS_notation_basic}, we write $W^a$ for the extended affine
Weyl group $W\ltimes \Xfr(T)$.
% and $\Br^a$ for the corresponding
%affine braid group.

Let $G^\vee$ be the Langlands dual group of $G$.
Consider the  Cartan and Borel
subalgebras $\h^\vee\subset \bor^\vee\subset \g^\vee$
so that $\h^\vee=\h^*$ and the positive roots for
$\bor^\vee$ are the positive coroots for $\bor$.
Let $I^\vee$ be the Iwahori subgroup
of $G^\vee$, the preimage of $B^\vee\subset G^\vee$ under the projection $G^\vee[[t]]\twoheadrightarrow
G^\vee$. Consider the affine flag variety
$\Fl$ for $G^\vee$, $\Fl=G^\vee((t))/I^\vee$.
Let $I^\circ$ denote the pro-unipotent radical of $I^\vee$, the kernel of $I^\vee
\twoheadrightarrow T^\vee$, where $T^\vee$ denotes the maximal torus in $G^\vee$ corresponding to
$\h^\vee$.

The goal of this section is to
review some constructions and results from \cite{B_Hecke}
as well as some modifications. In Section
\ref{SS_derived_equiv} we recall the main result from \cite{B_Hecke}
on an equivalence between two geometric categorifications of an affine
Hecke algebra. Then in Section \ref{SS_perv_equiv} we discuss the compatibility
of this equivalence with t-structures. In Section
\ref{SS_completed_cats} we will discuss completions of some of categories
involved. Finally, in  Section \ref{SS_braid} we discuss a  compatibility of equivalences from
\cite{B_Hecke} with affine group actions.

\subsection{Derived equivalence}\label{SS_derived_equiv}
We need to relate the equivariant coherent derived categories for the versions
of the Steinberg varieties introduced in Section \ref{SS_tilting_notation}
to equivariant constructible derived categories
for the affine flag variety $\Fl$.

On the coherent side, we consider the categories
$D^b_{\Nilp}(\Coh^G(\St_\h)),D^b(\Coh^G(\St_B))$, and
$D^b(\Coh^G(\St_0))$. The first category consists of the complexes of coherent sheaves with homology
set theoretically supported at the preimage of $\Nilp$ in $\St_\h$.
Note that $D^b_{\Nilp}(\Coh^G(\St_\h)),$ $
D^b(\Coh^G(\St_0))$ are tensor categories (with respect to convolution
of coherent sheaves),
and  $D^b(\Coh^G(\St_B))$ is a bimodule category with a left action of
$D^b_{\Nilp}(\Coh^G(\St_\h))$ and a right action of
$D^b(\Coh^G(\St_0))$.

We  consider the constructible equivariant derived categories
$D^b_{un}(I^\circ\backslash G^\vee((t))/I^\circ), D^b_{I^\circ}(\Fl),$ and $
D^b_{I^\vee}(\Fl)$, where the first category is that of monodromic equivariant
constructible sheaves with unipotent monodromy.
Again, $D^b_{un}(I^\circ\backslash G^\vee((t))/I^\circ), D^b_{I^\vee}(\Fl)$
are tensor categories with respect to the $!$-convolution, while
$D^b_{I^\circ}(\Fl)$ is a bimodule category.

\begin{Thm}[Theorem 1 in \cite{B_Hecke}]\label{Thm:derived_equiv}
We have tensor equivalences
$$\tau: D^b_{un}(I^\circ\backslash G^\vee((t))/I^\circ)\xrightarrow{\sim} D^b_{\Nilp}(\Coh^G(\St_\h)),
D^b_{I^\vee}(\Fl)\xrightarrow{\sim} D^b(\Coh^G(\St_0))$$
and a bimodule equivalence
$$\tau: D^b_{I^\vee}(\Fl)\xrightarrow{\sim} D^b(\Coh^G(\St_B)).$$
\end{Thm}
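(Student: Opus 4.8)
This is Theorem~1 of \cite{B_Hecke}, so in the present paper the proof amounts to invoking that result; what follows is the strategy I would use to establish it. The idea is that both sides categorify the affine Hecke algebra $\Hecke^a_G$ and carry two compatible structures — an action of the affine braid group $\Br^a$ and a central functor from $\operatorname{Rep}(G)$ — and that $\tau$ is the essentially unique monoidal functor intertwining them. On the coherent side one has the $\Br^a$-action on $D^b(\Coh^G(\tilde{\g}))$ and $D^b(\Coh^G(\tilde{\Nilp}))$ from \cite{BR}, the convolution monoidal structures on $D^b_{\Nilp}(\Coh^G(\St_\h))$ and $D^b(\Coh^G(\St_0))$, and the central functor $V\mapsto V\otimes\Str_{\tilde{\g}}$ (placed on the diagonal) for $V\in\operatorname{Rep}(G)$. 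On the constructible side one has the standard and costandard objects $\Delta_w,\nabla_w$ ($w\in W^a$), the Wakimoto sheaves $J_\lambda$ ($\lambda\in\Xfr$), the braid action by convolution with the $\nabla_s,\Delta_s$, and Gaitsgory's central sheaves $Z_V$ obtained as nearby cycles from a degeneration of the affine Grassmannian of $G^\vee$.

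First I would define $\tau$ on generators: $\nabla_s$ goes to the structure sheaf of the minimal parabolic Steinberg correspondence attached to the simple affine reflection $s$, $J_\lambda$ goes to the diagonal line bundle $\Str_{\tilde{\g}}(\lambda)$, and $Z_V$ goes to $V\otimes\Str_{\tilde{\g}}$. One then checks that the defining relations match: the affine braid relations among the $\nabla_s$; the interaction of the $J_\lambda$ with the $\nabla_s$, encoded by the Wakimoto/Bruhat filtration; and the centrality and monodromy filtration of the $Z_V$, which on the coherent side is the tensor structure of $\operatorname{Rep}(G)$. More robustly, one realizes both sides as module categories over the Satake category $\Perv_{G^\vee[[t]]}(\mathrm{Gr}_{G^\vee})\cong\operatorname{Rep}(G)$ and transports the known equivalence for the affine Grassmannian. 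Either way this produces a monoidal functor $\tau$.

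The hard part will be matching t-structures: one must show that the exotic, perversely-normalized t-structure on $D^b(\Coh^G(\St_0))$ — the one in which $\Str_{\tilde{\Nilp}}$, its braid-positive twists, and the images of the $Z_V$ behave like perverse sheaves — is carried by $\tau$ to the perverse t-structure on $D^b_{I^\vee}(\Fl)$. In \cite{B_Hecke} this is proved by induction along the length filtration of $W^a$, controlling a ``new'' t-structure on the anti-spherical (regular) quotient, together with weight and purity estimates for the nearby-cycles sheaves $Z_V$; this is exactly the step that makes it natural to work first with the deformed $\h$-versions $D^b_{un}(I^\circ\backslash G^\vee((t))/I^\circ)\leftrightarrow D^b_{\Nilp}(\Coh^G(\St_\h))$ and then specialize along $\h\to 0$. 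Granting t-exactness, full faithfulness of $\tau$ follows from matching $\operatorname{Hom}$-spaces between standard objects on both sides — a computation inside $\Hecke^a_G$ — and essential surjectivity from the fact that the perverse hearts are generated under extensions by the standards.

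Finally, the bimodule equivalence $D^b_{I^\circ}(\Fl)\xrightarrow{\sim}D^b(\Coh^G(\St_B))$ would be deduced from the $\St_0$-case by base change along $\tilde{\g}\to\g$ (deforming $\tilde{\Nilp}$ to $\tilde{\g}$), and the monodromic tensor equivalence $D^b_{un}(I^\circ\backslash G^\vee((t))/I^\circ)\xrightarrow{\sim}D^b_{\Nilp}(\Coh^G(\St_\h))$ by further passing to the pro-unipotent radical $I^\circ\subset I^\vee$, which on the coherent side corresponds to retaining the $\h$-direction. Compatibility of all three functors $\tau$ with the left $D^b_{\Nilp}(\Coh^G(\St_\h))$- and right $D^b(\Coh^G(\St_0))$-actions is built into the construction via convolution, so once the central equivalence is in place these last steps are essentially bookkeeping; the genuine obstacle is the t-structure identification of the third paragraph.
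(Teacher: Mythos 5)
Your proposal is correct and coincides with what the paper itself does: Theorem \ref{Thm:derived_equiv} is not proved here but simply imported as Theorem 1 of \cite{B_Hecke}, so citing that result is the entire argument. Your sketch of the strategy of \cite{B_Hecke} (central sheaves, Wakimoto sheaves, braid-group compatibility, matching the exotic coherent t-structure with the perverse one, then passing between the $\St_0$, $\St_B$ and completed $\St_\h$ versions) is a fair summary of the cited proof and is consistent with how the present paper later uses $\tau$ in Sections \ref{S_constr_realiz} and \ref{S_parab_equiv}.
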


\begin{Rem}\label{Rem:der_equi_compat}
Note that we have  functors given by partially forgetting equivariance
$$D^b_{I^\vee}(\Fl)\rightarrow D^b_{I^\circ}(\Fl)\rightarrow
D^b_{un}(I^\circ\backslash G^\vee((t))/I^\circ).$$
We also have inclusions of derived schemes $\St_0\hookrightarrow \St_B\hookrightarrow \St_\h$.
They give rise to push-forward functors
$$D^b(\Coh^G(\St_0))\rightarrow D^b(\Coh^G(\St_B))
\rightarrow D^b_{\Nilp}(\Coh^G(\St_\h)).$$
These functors intertwine the equivalences $\tau$. For the first pair of  arrows
this follows from \cite[Lemma 44(b)]{B_Hecke} as discussed in the proof
of \cite[Corollary 45]{B_Hecke}. For the second pair of arrows this
follows from \cite[Lemma 43]{B_Hecke}.
\end{Rem}

\subsection{Perverse equivalence}\label{SS_perv_equiv}
On the derived category  $D^b_{I^\circ}(\Fl)$ we have the usual perverse $t$-structure with heart
$\Perv_{I^\circ}(\Fl)$ consisting of perverse sheaves. We want to compare it with the t-structure
on $D^b(\Coh^G(\St_B))$ coming from (\ref{eq:non_comm_Springer_equiv}). We assume that $G$ is semisimple and simply connected.

In fact,  equivalence (\ref{eq:non_comm_Springer_equiv}) is compatible with certain filtrations on the categories
indexed by nilpotent orbits in $\g$. Let us explain what filtrations we consider and
state the corresponding result about equivalences.

Let us start with  $D^b(\Coh^G(\St_B))$.
The Steinberg variety $\mathsf{St}_B$ maps to $\mathcal{N}$
via $\mathsf{St}_B\rightarrow \tilde{\Nilp}\rightarrow \Nilp$. For a nilpotent
orbit $\Orb\subset \g$ let $\mathsf{St}_{\leqslant \Orb}$ denote the preimage
of $\overline{\Orb}$ in $\mathsf{St}_{B}$. Then we can consider the full subcategory
$D^b_{\leqslant \Orb}(\Coh^G(\St_B))$ of all
complexes with cohomology supported on $\mathsf{St}_{\leqslant \Orb}$.
We also can  consider the quotient category
$$D^b_{\Orb}(\Coh^G(\St_B)):=
D^b_{\leqslant \Orb}(\Coh^G(\mathsf{St}_{B}))/
D^b_{<\Orb}(\Coh^G(\mathsf{St}_{B})).$$
We  have a similarly defined filtration
$$D^b_{\leqslant \Orb}(\A_\h\otimes_{\C[\g]} \A^{opp}\operatorname{-mod}).$$
The derived global section functor restricts to
$$D^b_{\leqslant \Orb}(\Coh^G(\St_B))
\xrightarrow{\sim}D^b_{\leqslant \Orb}(\A_\h\otimes_{\C[\g]} \A^{opp}\operatorname{-mod})$$
and so gives an equivalence
$$D^b_{\Orb}(\Coh^G(\mathsf{St}_{B}))
\xrightarrow{\sim}D^b_{\Orb}(\A_\h\otimes_{\C[\g]} \A^{opp}\operatorname{-mod}).$$
The target category has a natural t-structure whose heart is
the subquotient category $\A_\h\otimes_{\C[\g]} \A^{opp}\operatorname{-mod}_{\Orb}$.

Let us proceed to a filtration on $D^b_{I^\circ}(\Fl)$.
Recall that the simples in $\operatorname{Perv}_{I^\circ}(\Fl)$ are indexed by the elements of the affine
Weyl group $W^{a}$. We have the two-sided cell filtration on $W^a$.
By a result of Lusztig, \cite{Lusztig_affine}, the two-sided cells in $W^{a}$ are in a natural one-to-one
correspondence with the nilpotent orbits in $\g$. So we can consider the category
$D^b_{I^\circ,\leqslant \Orb}(\Fl)$ of all objects with perverse homology in the Serre
subcategory $\operatorname{Perv}_{I^\circ,\leqslant \Orb}(\Fl)$ spanned by the simples from two-sided
cells corresponding to orbits contained in $\overline{\Orb}$. Again we have the
quotient $D^b_{I^\circ, \Orb}(\Fl)$ and its heart $\operatorname{Perv}_{I^\circ,\Orb}(\Fl)$.

We consider $D^b(\Coh^G(\St_B))$ with the  t-structure given
by $\Tilt_\h^*\otimes \Tilt$. We also consider
the induced t-structures on the subquotient categories $D^b_{\Orb}(\Coh^G(\St_B))$.

A final ingredient to state the result is as follows. Define
$D^{b,\leqslant 0}_{perv}(\A_\h\otimes_{\C[\g]}\A^{opp}\operatorname{-mod}^G)$ as the full
subcategory of $D^b(\A_\h\otimes_{\C[\g]}\A^{opp}\operatorname{-mod}^G)$
consisting of all objects $M$ such that $\dim \operatorname{Supp}H^i(M)\leqslant
\dim \Nilp-2i$.

\begin{Thm}[Theorems 54,55 in \cite{B_Hecke}]\label{Thm:perv_equiv}
For each nilpotent orbit $\Orb$, the equivalence
$$\tau: D^b_{I^\circ}(\Fl)\xrightarrow{\sim}D^b(\Coh^G(\St_{B}))$$
restricts to an equivalence $D^b_{I^\circ,\leqslant \Orb}(\Fl)\xrightarrow{\sim}D^b_{\leqslant \Orb}(\Coh^G(\St_{B}))$.
Moreover, for the induced equivalence
$$\tau_\Orb: D^b_{I^\circ,\Orb}(\Fl)\xrightarrow{\sim}
D^b(\Coh^G_{\Orb}(\St_B))$$
we have that $\tau[\frac{1}{2}\operatorname{codim}_{\Nilp}\Orb]$ is t-exact (with respect to
the perverse t-structure on the source and the t-structure on the target described in
the previous paragraph). Finally,  the image under $\tau$ of the negative part
$D^{b,\leqslant 0}_{I^\circ}(\Fl)$ of the perverse t-structure
coincides with $D^{b,\leqslant 0}_{perv}(\A_\h\otimes_{\C[\g]}\A^{opp}\operatorname{-mod}^G)$.
\end{Thm}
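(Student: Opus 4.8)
The statement is essentially Theorems 54 and 55 of \cite{B_Hecke}; I will describe how I would reassemble the argument from the ingredients collected above. The plan is to split the claim into three pieces: that $\tau$ respects the orbit filtrations, that on each associated graded piece it matches the relevant t-structures after the asserted shift, and that the global negative part of the perverse t-structure is recovered from its graded pieces.

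First I would show that $\tau$ intertwines the filtrations, i.e. that it carries $D^b_{I^\circ,\leqslant \Orb}(\Fl)$ isomorphically onto $D^b_{\leqslant \Orb}(\Coh^G(\St_B))$. The filtration of $D^b_{I^\circ}(\Fl)$ by the Serre subcategories $\operatorname{Perv}_{I^\circ,\leqslant\Orb}(\Fl)$ is, at the level of $K_0$, the filtration of $\Hecke^a_G$ by two-sided ideals attached to unions of two-sided cells; by Lusztig's bijection between two-sided cells in $W^a$ and nilpotent orbits, \cite{Lusztig_affine}, these ideals correspond on the coherent side to the subcategories of $D^b_{\Nilp}(\Coh^G(\St_\h))$ supported over closed unions of orbits in $\Nilp$. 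Using that $\tau$ is a bimodule equivalence (Theorem \ref{Thm:derived_equiv}) and that it is compatible with the forgetful and push-forward functors (Remark \ref{Rem:der_equi_compat}), the left $D^b_{\Nilp}(\Coh^G(\St_\h))$-module structure forces the induced filtration on $D^b(\Coh^G(\St_B))\cong D^b_{I^\circ}(\Fl)$ to be exactly the support filtration by $\mathsf{St}_{\leqslant\Orb}$. This proves the first assertion and reduces the problem to the subquotient equivalences $\tau_\Orb\colon D^b_{I^\circ,\Orb}(\Fl)\xrightarrow{\sim}D^b_\Orb(\Coh^G(\St_B))$.

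For the t-exactness up to shift I would invoke the uniqueness of a t-structure satisfying braid positivity and normalization — the principle already used in the proof of Lemma \ref{Lem:reduct_mod_p} and established in \cite[Section 1.5.1]{BM}. On the coherent side the t-structure given by $\Tilt_\h^*\otimes\Tilt$ has these two properties by the very construction of $\Tilt_\h$; on the constructible side the perverse t-structure on $D^b_{I^\circ}(\Fl)$ is braid-positive because convolution with the Wakimoto (costandard $j_{w!}$) sheaves is right t-exact, and it is normalized since the skyscraper at the base point is perverse. Passing to the $\Orb$-subquotient, one checks both t-structures remain braid-positive and normalized, and both are self-dual: Verdier duality on the constructible side, and Grothendieck--Serre duality on $\St_B$ on the coherent side — the latter descends to the module categories because $\A_\h$ is Gorenstein (Lemma \ref{Lem:tilting_gen_prop}). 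A braid-positive, normalized, self-dual t-structure is unique up to a single global shift on each subquotient, so $\tau_\Orb$ is t-exact up to a shift; to pin the shift to $\tfrac12\operatorname{codim}_\Nilp\Orb$ I would compare, after restricting to the Slodowy slice $S$ transverse to $\Orb$, the homological range in which a standard object has vanishing self-extensions, using that the Koszul grading of Theorem \ref{Thm:Koszulity} controls the Ext-vanishing ranges of the relevant finite-dimensional algebra over $S$.

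Finally, the description of the negative part follows by assembling the cells: an object $\mathcal{F}$ lies in $D^{b,\leqslant 0}_{I^\circ}(\Fl)$ iff for every orbit $\Orb$ its image in $D^b_{I^\circ,\Orb}(\Fl)$ is perverse-$\leqslant 0$; by the previous paragraph this is equivalent to $\tau(\mathcal{F})$ having, in the $\Orb$-subquotient, homology concentrated in $\Tilt_\h^*\otimes\Tilt$-degrees $\leqslant\tfrac12\operatorname{codim}_\Nilp\Orb$, and since $R\Gamma$ is t-exact for that t-structure this translates into $\dim\operatorname{Supp}H^i(\tau\mathcal{F})\leqslant\dim\Nilp-2i$ orbit by orbit, hence globally, which is exactly the defining condition of $D^{b,\leqslant 0}_{perv}(\A_\h\otimes_{\C[\g]}\A^{opp}\operatorname{-mod}^G)$. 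The main obstacle is the middle step: verifying that braid positivity and normalization genuinely survive to each subquotient (so that the uniqueness statement applies there), and then extracting the precise value $\tfrac12\operatorname{codim}_\Nilp\Orb$ of the shift rather than some other constant — this is where the Koszulity input of Theorem \ref{Thm:Koszulity} and the transversality of $S$ to $\Orb$ are used in an essential way.
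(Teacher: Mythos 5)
First, note that the paper does not prove this statement at all: it is imported verbatim (with the correction recorded in Remark \ref{Rem:correction}) from \cite{B_Hecke}, Theorems 54 and 55, whose proof is in turn parallel to \cite[Theorem 6.2.1]{BM}. So the relevant comparison is between your reconstruction and the argument of those references, and there your proposal has two genuine gaps.

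The first is the filtration-matching step. You argue that because the cell filtration and the support filtration agree on $K_0$, and because $\tau$ is a bimodule equivalence compatible with the functors of Remark \ref{Rem:der_equi_compat}, ``the left module structure forces'' the two categorical filtrations to coincide. That deduction is not valid: agreement of classes in $K_0$, or equivariance for the $D^b_{\Nilp}(\Coh^G(\St_\h))$-action, does not determine where a given simple perverse sheaf is supported on the coherent side. The identification of the two-sided-cell filtration with the support filtration is precisely the nontrivial content here; in \cite{B_Hecke} it rests on the results of \cite{BM} proving Lusztig's conjectures (canonical bases in $K$-theory of Springer fibers, cells versus supports), not on formal properties of the equivalence. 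As written, your step~1 assumes what is to be proved. The second gap is the middle step. The uniqueness of a braid-positive, normalized t-structure (\cite[Section 1.5.1]{BM}, used in Lemma \ref{Lem:reduct_mod_p}) is a statement about the whole category of complexes supported on a fixed (formal neighborhood of a) Springer fiber; there is no such uniqueness theorem for the subquotient categories $D^b_{\Orb}$, and the ``normalization'' condition ($\mathcal{O}$ a summand of the tilting bundle, equivalently exactness of $R\Gamma$) does not even make sense after passing to the subquotient. Your claim that a braid-positive, normalized, self-dual t-structure on each subquotient is unique up to a global shift is unproved and cannot be quoted from the cited sources, and the proposed determination of the shift $\tfrac{1}{2}\operatorname{codim}_{\Nilp}\Orb$ via ``Ext-vanishing ranges controlled by the Koszul grading'' is a heuristic, not an argument. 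The actual proof instead works with the perverse coherent t-structure of \cite{Arinkin_B} (middle perversity on the orbit stratification) together with the Koszul/positivity property of $\A_\h$ from \cite{BM}, which is also what makes the final identification of $D^{b,\leqslant 0}_{perv}$ come out; your third step inherits the gap of the second and additionally needs the (omitted) compatibility of perverse truncations with the orbit filtration.
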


\begin{Rem}\label{Rem:correction}
We use this opportunity to correct the statement of \cite[Theorem 54]{B_Hecke}.
We use the notation of the present paper.
Everywhere in part (a) of that Theorem the expression $\A_\h\otimes_{\C[\g]} \A$
should be replaced by $\A_\h\otimes_{\C[\g]} \A^{opp}$, where the equivalence
$D^b(\A_\h\otimes_{\C[\g]} \A^{opp}\operatorname{-mod})\xrightarrow{\sim}
D^b(\Coh^G(\St_B))$ given by the tilting bundle
$\Tilt^*_\h\otimes \Tilt$ as above. The proof of this corrected statement is parallel to that
of \cite[Theorem 6.2.1]{BM} as asserted in \cite{B_Hecke}, however, one needs to take into account
that the images of the standard generators $T_\alpha$ under the right action
of $\Br^a$ on $D^b(\Coh^G(\St_B))$ are inverse to the image of $T_\alpha$ obtained by
viewing $\St$ as a base change of $\tilde{\g}\times \tilde{\Nilp}$.
\end{Rem}

\begin{Rem}\label{Rem:perv_equiv_compat}
Note that the simples in the hearts of t-structures of
$$D^b_{I^\vee}(\Fl), D^b_{I^\circ}(\Fl), D^b_{un}(I^\circ\backslash G^\vee((t))/I^\circ)$$
are the same, the functors in Remark \ref{Rem:der_equi_compat} are t-exact and
give the identity on the simple objects. The same holds for the categories
$D^b(\Coh^G(\St_0)), D^b(\Coh^G(\St_B)),D^b_{\Nilp}(\Coh^G(\St_\h))$
and the t-structures given by $\mathcal{T}_\h^*\otimes \mathcal{T}_\h$.
In particular, $$\tau:D^b_{I^\vee}(\Fl)\xrightarrow{\sim}
D^b(\Coh^G(\St_0)), D^b_{un}(I^\circ\backslash G^\vee((t))/I^\circ)
\xrightarrow{\sim} D^b_{\Nilp}(\Coh^G(\St_\h))$$ have t-exactness properties
similar to those explained in Theorem \ref{Thm:perv_equiv}.
\end{Rem}

\begin{Rem}\label{Rem:perverse_bimod}
Theorem \ref{Thm:perv_equiv} implies, in particular,
that $D^{b,\leqslant 0}_{perv}(\A_\h\otimes_{\C[\g]}\A^{opp}\operatorname{-mod}^G)$
is the negative part of a t-structure. The heart of this t-structure
will be denoted by $\operatorname{Perv}(\A_\h\otimes_{\C[\g]}\A^{opp}\operatorname{-mod}^G)$.
The objects in the heart of this t-structure
are called {\it perverse bimodules}. One can also see that it is the negative
part of a t-structure by using the construction of perverse coherent sheaves
from \cite{Arinkin_B}. We give a more detailed review of this t-structure in
the beginning of Section \ref{SS_abel_equiv}.
\end{Rem}

\subsection{Completed version}\label{SS_completed_cats}
We start by introducing a ``completed'' version of the equivalence
$\tau: D^b_{un}(I^\circ\backslash G^\vee((t))/I^\circ)\xrightarrow{\sim} D^b_{\Nilp}(\Coh^G(\St_\h))$.

The completed version of $D^b_{\Nilp}(\Coh^G(\St))$ is easy to define, this is the category
$D^b(\Coh^G(\St^{\wedge}_{\h}))$, where $\St_{\h}^\wedge$ is the formal neighborhood of
$\St_0$ in $\St_\h$. The corresponding completion of
$D^b_{un}(I^\circ\backslash G^\vee((t))/I^\circ)$ was constructed in
\cite[Appendix A]{BY}, it was shown in \cite[Corollary A.4.7]{BY}
to be   the derived category of a completed category of
perverse sheaves. We denote the completion by
$D^b_{pu}(I^\circ\backslash G^\vee((t))/I^\circ)$, with ``pu''
for ``pro-unipotent''. Note that both
$D^b(\Coh^G(\St^\wedge_\h))$ and $D^b_{pu}(I^\circ\backslash G^\vee((t))/I^\circ)$
are monoidal categories. The equivalence $\tau$ extends to
$$D^b_{pu}(I^\circ\backslash G^\vee((t))/I^\circ)
\xrightarrow{\sim} D^b(\Coh^G(\St^{\wedge}_\h)),$$
see \cite[Section 9.2]{B_Hecke}. We note that we have two commuting actions
of $\C[[\h]]$ on both $D^b_{pu}(I^\circ\backslash G^\vee((t))/I^\circ), D^b(\Coh^G(\St^{\wedge}_\h))$
(on the left and on the right) and the equivalence $\tau$ is bilinear.

Inside the category  $\operatorname{Perv}_{pu}(I^\circ\backslash G^\vee((t))/I^\circ)$
we have the full additive monoidal subcategory of free-monodromic tilting objects, denote it
by $\mathsf{Tilt}$. This category has the following properties.

\begin{enumerate}
\item The indecomposable objects in $\mathsf{Tilt}$ are indexed by $W^a$,
\cite[Proposition 11(a)]{B_Hecke}, let us write $\mathfrak{T}_x$ for the
indecomposable object labelled by $x$. All $\mathfrak{T}_x$ are flat
over $\C[[\h^*]]$ (both for the left and for the right action).
\item For $\mathcal{F}\in \mathsf{Tilt}$, the functor of convolution with
$\mathfrak{T}_x$ is  t-exact with respect to the perverse t-structure, see
\cite[Proposition 11(a)]{B_Hecke}. This convolution functor is  biadjoint to the convolution with $\mathfrak{T}_{x^{-1}}$. This is because the monoidal category $\mathsf{Tilt}$ is rigid,
the duality functor is obtained from the Verdier duality functor (extended to the pro-completion)
and the pushforward along $g\mapsto g^{-1}$. Every tilting is self-dual with respect
to the monoidal duality.
%\item The indecomposable objects $\mathsf{T}_x$ in $\mathsf{Tilt}$ are labelled by the elements
% $x\in W^a$. As a monoidal category, $\mathsf{Tilt}$ is generated by the objects
% $\mathsf{T}_{s}$, where $s$ runs over the simple reflections in $W^a$.
% Moreover, the affine braid group generators correspond to
\item $K^b(\mathsf{Tilt})\xrightarrow{\sim} D^b_{pu}(I^\circ\backslash G^\vee((t))/I^\circ)$,
see \cite[Proposition 7]{B_Hecke}.
\end{enumerate}

We will also need the following lemma. Set
$\A_\h^{\wedge}:=\C[\h^*]^{\wedge_0}\widehat{\otimes}_{\C[\h^*]}\A_\h$.
Then $R\Gamma((\Tilt_\h\otimes \Tilt_\h^*)\otimes\bullet)$ gives an
equivalence
$$D^b(\Coh^G(\St_\h^\wedge))\xrightarrow{\sim}
D^b(\A_\h^\wedge\widehat{\otimes}_{\C[\g^*]}\A_\h^{\wedge,opp}\operatorname{-mod}^G).$$

\begin{Lem}\label{Lem:bimod_image}
The image of $\mathsf{Tilt}$ in $D^b(\Coh^G(\St^{\wedge}_\h))$
consists of $\A_\h^{\wedge}$-bimodules (in homological degree $0$).
\end{Lem}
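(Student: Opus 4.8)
Unwinding the definitions, the assertion is that for every $x\in W^a$ the object $R\Gamma\bigl((\Tilt_\h\otimes\Tilt_\h^*)\otimes\tau(\mathfrak{T}_x)\bigr)$ of $D^b(\A_\h^{\wedge}\widehat{\otimes}_{\C[\g^*]}\A_\h^{\wedge,opp}\operatorname{-mod}^G)$ is an honest bimodule, i.e.\ lies in homological degree $0$. I would prove this by monoidality rather than by a t-structure comparison: the perverse t-structure on $D^b(\Coh^G(\St^\wedge_\h))$ does \emph{not} have honest bimodules as its heart (compare Remark \ref{Rem:perverse_bimod}), so knowing $\mathfrak{T}_x$ is perverse is not enough --- it is the rigidity of \emph{tilting} objects that is used. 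Recall that $\mathfrak{T}_e$ is the monoidal unit of $D^b_{pu}(I^\circ\backslash G^\vee((t))/I^\circ)$ and that every indecomposable $\mathfrak{T}_x$ is a direct summand of a Bott--Samelson object $\mathfrak{T}_{s_1}\ast\dots\ast\mathfrak{T}_{s_k}$, with $x=s_1\cdots s_k$ a reduced word in the simple reflections of $W^a$. By the completed form of Lemma \ref{Lem:equiv_monoidal}, the composite of $\tau\colon D^b_{pu}(I^\circ\backslash G^\vee((t))/I^\circ)\xrightarrow{\sim}D^b(\Coh^G(\St^\wedge_\h))$ with $R\Gamma\bigl((\Tilt_\h\otimes\Tilt_\h^*)\otimes\bullet\bigr)$ is a monoidal equivalence carrying $\ast$-convolution to $\widehat{\otimes}_{\A_\h^{\wedge}}$ of bimodules. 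Hence it suffices to check: (a) $\tau(\mathfrak{T}_e)$ corresponds to the regular bimodule $\A_\h^{\wedge}$; and (b) for each simple reflection $s$, $\tau(\mathfrak{T}_s)$ corresponds to a bimodule in degree $0$ that is flat as a left and as a right $\A_\h^{\wedge}$-module. Granting these, the full subcategory of bimodules in degree $0$ that are flat on each side is closed under $\widehat{\otimes}_{\A_\h^{\wedge}}$ (one-sided flatness makes the tensor product underived, and flatness of the result on each side follows by composing exact functors) and under direct summands, which gives the claim for all $\mathfrak{T}_x$ and hence for all of $\mathsf{Tilt}$.

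Claim (a) is immediate: a monoidal equivalence sends the unit to the unit, and the monoidal unit of $D^b(\Coh^G(\St^\wedge_\h))$ is the structure sheaf of the (completed) diagonal $\tilde{\g}\hookrightarrow\St^\wedge_\h$, which under $R\Gamma\bigl((\Tilt_\h\otimes\Tilt_\h^*)\otimes\bullet\bigr)$ goes to $\A_\h^{\wedge}$ viewed as a bimodule over itself --- this is the module-category compatibility recorded after Lemma \ref{Lem:equiv_monoidal} --- and it is visibly in degree $0$ and free on each side.

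For claim (b) I would use the description of $\tau$ on the Bott--Samelson generators from \cite{B_Hecke}: up to a twist by a $G$-equivariant line bundle --- harmless, as it amounts to tensoring the bimodule with an invertible one --- $\tau(\mathfrak{T}_s)$ is the structure sheaf of the smooth fibre product $\tilde{\g}\times_{\tilde{\g}_s}\tilde{\g}$ (completed in the $\h^*$-directions), where $\phi\colon\tilde{\g}\to\tilde{\g}_s$ is the $\mathbb{P}^1$-bundle given by the partial Grothendieck resolution attached to the minimal parabolic of type $s$ (for the affine reflection $s_0$ there is an additional translation-by-a-coweight twist, handled in the same way). The corresponding $\A_\h^{\wedge}$-bimodule is then the kernel of the push--pull functor $\phi^*R\phi_*$; by the construction of the Bezrukavnikov--Mirkovic tilting bundle in \cite{BM} (Section \ref{SS_tilting_BM}) one has $R^{>0}\phi_*\Tilt_\h=0$ and $\phi^*\phi_*\Tilt_\h\in\operatorname{add}(\Tilt_\h)$ --- the same stability that makes $\Tilt_P$ of \eqref{eq:tilting_parabolic} a tilting bundle concentrated in a single degree --- so this bimodule is in degree $0$ and is projective, hence flat, both as a left and as a right $\A_\h^{\wedge}$-module. (Alternatively, left- and right-flatness can be extracted from the standard and costandard filtrations of $\mathfrak{T}_s$ together with the self-duality of tiltings from property (2) of $\mathsf{Tilt}$.)

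The main obstacle is claim (b): extracting from \cite{B_Hecke} the precise identification of $\tau(\mathfrak{T}_s)$ in the \emph{completed} setting --- which requires matching the completed equivalence of \cite[Section 9.2]{B_Hecke} with the non-completed one and pinning down the line-bundle twist --- and then invoking the cohomological vanishing $R^{>0}\phi_*\Tilt_\h=0$ and the $\operatorname{add}$-stability of $\Tilt_\h$ from Section \ref{SS_tilting_BM}. A subsidiary technical point is to make the completed form of Lemma \ref{Lem:equiv_monoidal} precise: one must keep track of the two commuting $\C[[\h^*]]$-actions and of the formal neighbourhoods involved, using that $\A_\h$ is flat over $\C[\g^*]$, so that $\St_\h=\tilde{\g}\times_{\g^*}\tilde{\g}$ is a genuine (non-derived) complete intersection and both the monoidal equivalence and its module-category companion survive completion.
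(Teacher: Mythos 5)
Your route (monoidality plus Bott--Samelson reduction to simple reflections, then an explicit identification of $\tau(\mathfrak{T}_s)$) is genuinely different from the paper's, and its skeleton is sound: bimodules concentrated in degree $0$ and flat on each side are closed under $\widehat{\otimes}_{\A_\h^{\wedge}}$ and under direct summands, and the unit goes to the regular bimodule. But the two inputs you flag as ``the main obstacle'' are genuine gaps rather than routine citations. First, the identification of $\tau(\mathfrak{T}_s)$ with the completed structure sheaf of $\tilde{\g}\times_{\tilde{\g}_s}\tilde{\g}$ is not available in the paper nor provable by a one-line reference: the paper only establishes the costandard statement $\tau(\nabla_{\h^{\wedge}}(s))=\Str_{Z_{\h^\wedge}(s)}$ in the proof of Proposition \ref{Prop:braid_compatible}, precisely because \cite[Example 57]{B_Hecke} comes without proof; upgrading this to the free-monodromic tilting (pinning down the correct extension of the diagonal sheaf by $\Str_{Z_{\h^\wedge}(s)}$, with the correct twist, in the completed setting) is itself a nontrivial argument. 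Your geometric description is also off: $\phi\colon\tilde{\g}\to\tilde{\g}_s$ is not a $\mathbb{P}^1$-bundle (it is generically finite of degree $2$, with $\mathbb{P}^1$-fibers only over the locus nilpotent along the Levi), and the fibre product is not smooth. Second, the assertions $R^{>0}\phi_*\Tilt_\h=0$ and $\phi^*\phi_*\Tilt_\h\in\operatorname{add}(\Tilt_\h)$ are not among the properties recalled in Section \ref{SS_tilting_BM} (Lemma \ref{Lem:tilting_properties}, Theorem \ref{Thm:Koszulity}, Lemma \ref{Lem:reduct_mod_p}), and the analogy with $\Tilt_P$ being a vector bundle is a different statement; what you really need is that reflection functors preserve $\operatorname{add}(\Tilt_\h)$, which one would have to extract from the characteristic-$p$ picture (translation through the wall preserves projectives) together with a reduction mod $p$ argument --- work of roughly the same weight as the lemma itself. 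A smaller omission: $W^a$ is the extended affine Weyl group, so Bott--Samelson products of the $\mathfrak{T}_s$ only reach the non-extended part; you must also treat the invertible objects $\mathfrak{T}_\omega$ for length-zero $\omega$ (easy, but it has to be said).

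For contrast, the paper's proof requires no identification of any $\tau(\mathfrak{T}_x)$ and no flatness over $\A_\h^{\wedge}$. It uses only that the image of a free-monodromic tilting is a perverse bimodule (Theorem \ref{Thm:perv_equiv} together with Remark \ref{Rem:perverse_bimod}), is flat over $\C[[\h]]$, and acts t-exactly for the perverse bimodule t-structure by derived tensor product --- exactly properties (1) and (2) listed before the lemma. A support estimate then forces concentration in degree $0$: positive cohomology is excluded by testing $\B\otimes^L_{\A_\h^{\wedge}}\bullet$ against a module supported on a maximal orbit in the support of the offending cohomology, in the quotient category where that cohomology becomes perverse up to shift; negative cohomology is excluded by base-changing to finite-dimensional quotients of $\C[[\h]]$ and contradicting perversity of $\B$ itself. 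If you want to pursue your approach you must supply proofs of the two inputs above; as it stands, the degree-zero statement you verify by hand for $\mathfrak{T}_s$ is an instance of what the paper's abstract argument yields uniformly for all $\mathfrak{T}_x$.
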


In the proof we will need the following construction. Let $k\geqslant 0$.
Consider the
quotient category $D^b_{\Nilp}(\A_\h\otimes_{\C[\g^*]}\A_\h^{opp}\operatorname{-mod}^G)_{\geqslant 2k}$
by the full subcategory of all complexes with dimension of support (in $\Nilp$) $< 2k$.
This quotient category comes with the induced perverse bimodule t-structure whose heart is the quotient
$\operatorname{Perv}_{\Nilp}(\A_\h\otimes_{\C[\g^*]}\A_\h^{opp}\operatorname{-mod}^G)_{\geqslant 2k}$
of the category of perverse bimodules.
We note that for  $\mathcal{F}_0\in \A_\h\otimes_{\C[\g^*]}\A_\h^{opp}\operatorname{-mod}_\Orb$
with $\dim \Orb=2k$, we have  $\mathcal{F}_0[ 2k-\dim\mathcal{N}]\in\operatorname{Perv}_{\Nilp}(\A_\h\otimes_{\C[\g^*]}\A_\h^{opp}\operatorname{-mod}^G)_{\geqslant 2k}$.

\begin{proof}[Proof of Lemma \ref{Lem:bimod_image}]
By Theorem \ref{Thm:perv_equiv} and Remark \ref{Rem:perverse_bimod}, the image of
$\operatorname{Perv}_{pu}(I^\circ\backslash G^\vee((t))/I^\circ)$
in $D^b(\A_\h^\wedge\widehat{\otimes}_{\C[\g^*]}\A_\h^{\wedge,opp}\operatorname{-mod}^G)$
coincides with the category of perverse bimodules, to be denoted by
$\operatorname{Perv}(\A_\h^{\wedge}\widehat{\otimes}_{\C[\g^*]}\A_\h^{\wedge,opp}
\operatorname{-mod}^G)$.
We claim that if $\B\in \operatorname{Perv}(\A_\h^{\wedge}\widehat{\otimes}_{\C[\g^*]}\A_\h^{\wedge,opp}
\operatorname{-mod}^G)$ is such that
\begin{itemize}
\item $\B$ is flat over $\C[[\h]]$, and
\item
$\B\otimes^L_{\A_\h^{\wedge}}\bullet$ is  t-exact in the perverse
bimodule t-structure,
\end{itemize}
 then $\B$ is concentrated in homological degree $0$ (for the usual bimodule t-structure).
 Since $\tau(\mathfrak{T}_x)$ has both these properties by (1) and (2)
 before the lemma, this claim implies the lemma.

First of all, we prove that $H^i(\B)=0$ for $i>0$, where the cohomology is taken
with respect to the usual bimodule t-structure. Let $i>0$ be maximal such that
$H^i(\B)\neq 0$. Let $2j=\dim\operatorname{Supp} H^i(\B)$.
Note that $\B\otimes^L_{\A^{\wedge}_\h}\bullet$ induces a t-exact (in the perverse
bimodule t-structure) endofunctor of
$D^b_{\Nilp}(\A_\h\otimes_{\C[\g^*]}\A_\h^{opp}\operatorname{-mod}^G)_{\geqslant 2j}$.
We can find $\Orb$ of dimension $2j$ and  $\mathcal{F}_0\in \A_\h\otimes_{\C[\g^*]}\A_\h^{opp}\operatorname{-mod}_\Orb$
with $H^i(\mathcal{B})\otimes_{\A_\h}\mathcal{F}_0$ giving a nonzero object
in $\A_\h\otimes_{\C[\g^*]}\A_\h^{opp}\operatorname{-mod}_\Orb$.
Recall that $\mathcal{F}_0[2j-\dim \Nilp]$ lies in
$\operatorname{Perv}_{\Nilp}(\A_\h\otimes_{\C[\g^*]}\A_\h^{opp}\operatorname{-mod}^G)_{\geqslant 2j}$.
It follows that $\B\otimes^L_{\A_\h}\mathcal{F}_0[2j-\dim \Nilp]$ has positive cohomology
in the perverse bimodule t-structure. A contradiction with the t-exactness of
$\B\otimes^L_{\A_\h}\bullet$.
%with support of the same codimension such that
%$\operatorname{codim}_{\Nilp}\operatorname{Supp}H^{i+j}(\B\otimes^L_{\A_\h^{\wedge}}\mathcal{F})
%=2j$, which is impossible. This implies $H^i(\B)=0$ for $i>0$.

We proceed to proving that $H^i(\mathcal{B})=0$ for $i<0$ (in the usual t-structure).
Now let $\mathfrak{A}$ be a finite dimensional quotient of $\C[[\h]]$.
Note that $\B_{\mathfrak{A}}:=\B\otimes_{\C[[\h]]}\mathfrak{A}$
is still perverse and satisfies $H^i(\B_{\mathfrak{A}})=0$ for $i>0$.
We claim that $H^i(\B_{\mathfrak{A}})=0$ for $i<0$. Take smallest
$i$ such that $H^i(\B_{\mathfrak{A}})\neq 0$. Let $2k$ denote the
dimension of the support of $H^i(\B_{\mathfrak{A}})$.

So on the one hand the image of $\B_{\mathfrak{A}}$ in $\operatorname{Perv}_{\Nilp}(\A_\h\otimes_{\C[\g^*]}\A_\h^{opp}\operatorname{-mod}^G)_{\geqslant 2k}$ lies in the heart of the perverse bimodule t-structure. On the other hand
it admits a nonzero homomorphism from
$$\left(H^i(\B_{\mathfrak{A}})[2k-\dim \Nilp]\right)[\dim \Nilp-2k+i].$$
Note that  $H^i(\B_{\mathfrak{A}})[2k-\dim \Nilp]$ lies in
the heart of the perverse bimodule t-structure on the quotient category, and $\dim \Nilp-2k+i>0$.
We arrive at a contradiction
and conclude that  $H^i(\B_{\mathfrak{A}})=0$ for $i< 0$.
Since this holds for all $\mathfrak{A}$, we see that $H^i(\mathcal{B})=0$
for all $i< 0$. This completes the proof.
%Now note that $R\Hom_{\A^\wedge_\h}(\B,\bullet)$ is right adjoint to
%$\B\otimes^L_A\bullet$. So $R\Hom_{\A^\wedge_\h}(\B,\bullet)=
%R\Hom_{\A^\wedge_\h}(\B,\A^\wedge_\h)\otimes^L_{\A^\wedge_\h}\bullet$
%is t-exact. By the previous paragraph,
%$H^i(R\Hom_{\A^\wedge_\h}(\B,\A^\wedge_\h))=0$ for $i>0$.
\end{proof}

\subsection{Compatibility with affine braid groups}\label{SS_braid}
We have homomorphisms
from $\Br^a$ to the monoidal categories
$$D^b(\Coh^G(\St^\wedge_\h)),D^b(\Coh^G(\St_0)),
D^b_{pu}(I^\circ\backslash G^\vee((t))/I^\circ),D^b_{I^\vee}(\Fl).$$
The homomorphisms to the  two constructible categories are standard: the generator
$T_s$ for each simple affine root $s$ goes to the costandard object labelled
by $s$. The homomorphisms to the  two coherent categories where constructed
in \cite{BR}, see Theorem 1.3.1 there, in particular.

In this section we discuss the compatibility of these homomorphisms
with the equivalences $\tau$.

\begin{Prop}\label{Prop:braid_compatible}
The equivalences
$$\tau: D^b_{I^\vee}(\Fl)\xrightarrow{\sim}
D^b(\Coh^G(\St_0)), D^b_{pu}(I^\circ\backslash G^\vee((t))/I^\circ)
\xrightarrow{\sim} D^b(\Coh^G(\St^\wedge_\h))$$
intertwine the homomorphisms from $\Br^a$.
\end{Prop}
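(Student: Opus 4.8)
The plan is to reduce the statement to a case that is already known, by exploiting the monoidal structure and the generators of $\Br^a$. Recall that both the coherent categories $D^b(\Coh^G(\St_0))$, $D^b(\Coh^G(\St^\wedge_\h))$ and the constructible categories $D^b_{I^\vee}(\Fl)$, $D^b_{pu}(I^\circ\backslash G^\vee((t))/I^\circ)$ are monoidal, the equivalences $\tau$ of Theorem \ref{Thm:derived_equiv} are monoidal, and the four homomorphisms from $\Br^a$ all send a given generator $T_s$ to an \emph{invertible} object (a costandard/free-monodromic costandard on the constructible side, the Bezrukavnikov--Riche object on the coherent side, as recalled in \cite{BR}). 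Since a monoidal functor takes invertible objects to invertible objects and these objects generate the image of $\Br^a$, to prove the proposition it suffices to check that for every simple affine root $s$ the equivalence $\tau$ carries the constructible $T_s$ to the coherent $T_s$ (and likewise $T_s^{-1}$ to $T_s^{-1}$, which then follows automatically by invertibility). Thus the whole statement collapses to a finite rank-one-type check, one for each simple affine reflection, in each of the two versions (the unramified/pro-unipotent version and the $I^\vee$-version).

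First I would treat the $I^\vee$-version $\tau\colon D^b_{I^\vee}(\Fl)\xrightarrow{\sim} D^b(\Coh^G(\St_0))$. Here the point is that $\tau$ is a \emph{tensor} equivalence and both $\Br^a$-actions are by convolution with the same family of standard/costandard objects indexed by $W^a$; the comparison of these objects under $\tau$ for simple reflections is exactly part of the structure recorded in \cite{B_Hecke} (the equivalence $\tau$ was built so as to match standard objects with the line-bundle twists / Wakimoto-type objects, and costandards with their inverses). So for this half I would simply cite the relevant computation of $\tau$ on the objects $T_s$ from \cite{B_Hecke}, noting that the right $\Br^a$-action on $D^b(\Coh^G(\St_0))$ constructed in \cite{BR} agrees on generators with the one induced from $\tilde\g\times\tilde\Nilp$ up to the inverse convention flagged in Remark \ref{Rem:correction} — and this inverse convention is precisely what makes costandards go to costandards rather than standards.

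For the pro-unipotent version $\tau\colon D^b_{pu}(I^\circ\backslash G^\vee((t))/I^\circ)\xrightarrow{\sim} D^b(\Coh^G(\St^\wedge_\h))$ I would deduce compatibility from the already-established $I^\vee$-version together with the completion construction of Section \ref{SS_completed_cats}. Concretely, the forgetful/push-forward functors of Remark \ref{Rem:der_equi_compat} intertwine the two $\tau$'s, are monoidal, and send the $I^\vee$-side generators $T_s$ to the corresponding monodromic $T_s$; passing to the completion $D^b_{pu}(I^\circ\backslash G^\vee((t))/I^\circ)$ (where $T_s$ becomes the free-monodromic costandard, a direct summand-free pro-object over $\C[[\h]]$) and using $\C[[\h]]$-bilinearity of the completed $\tau$, the identity $\tau(T_s)\cong T_s$ propagates from the non-completed setting. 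The main obstacle, and the step I would be most careful about, is precisely this bookkeeping of conventions: matching the normalization of the $\Br^a$-action of \cite{BR} on the coherent side (which involves the subtlety of Remark \ref{Rem:correction}) with the monoidal conventions used to define $\tau$, and checking that the free-monodromic completion does not introduce a spurious $\C[[\h]]$-twist on the generators. Once the generators are matched, extending to all of $\Br^a$ is formal from monoidality, since $\Br^a$ is generated by the $T_s$ and their inverses and $\tau$ respects products.
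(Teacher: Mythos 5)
Your reduction to checking the generators is fine, but the way you then dispose of the generators is exactly where the content of this proposition lies, and your proposal leaves it unproved. The defining properties of $\tau$ in \cite{B_Hecke} only pin down its behaviour on the lattice part $\mathfrak{X}(T)\subset \Br^a$ (Wakimoto-type objects go to line bundles on the diagonal); the identification of $\tau(T_s)$ for a simple reflection --- on the completed side, $\tau(\nabla_{\h^{\wedge}}(s))\cong \Str_{Z_{\h^\wedge}(s)}$ --- is stated in \cite[Example 57]{B_Hecke} \emph{without proof}, so it cannot be handled by the citation you propose. Supplying that argument is precisely what the paper's proof does: it uses $\tau(P_{\h^{\wedge}}(w_0))=\Str_{\St_\h^{\wedge}}$, the Soergel-theoretic two-sided $\C[[\h]]$-action on $P_{\h^{\wedge}}(w_0)$ with its canonical splitting after base change to $\h^{\wedge,reg}$, and the characterization of $\nabla_{\h^{\wedge}}(s)$ as the unique $\C[[\h]]$-flat quotient of $P_{\h^{\wedge}}(w_0)$ whose restriction to the regular locus is the summand $\nabla_{\h^{\wedge,reg}}(s)$, combined with exactness of $\tau$ on perverse sheaves supported on $G^\vee/B^\vee$ and $\C[[\h]]$-bilinearity. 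A smaller point: since the lattice part is already covered by the defining property of $\tau$, only the finite Dynkin reflections need to be checked; your ``every simple affine root'' reduction would additionally force you to identify the coherent object attached to the affine reflection $s_0$, which is not one of the generators described in \cite{BR} and is not of the same simple form.

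Second, your order of deduction between the two equivalences runs opposite to the one that is formal. The paper proves the completed (pro-unipotent) statement first and then specializes: the functors $D^b_{pu}(I^\circ\backslash G^\vee((t))/I^\circ)\rightarrow D^b_{I^\vee}(\Fl)$ and $D^b(\Coh^G(\St^{\wedge}_\h))\rightarrow D^b(\Coh^G(\St_0))$ intertwine the two $\tau$'s and send generators to generators, so the $I^\vee$-statement follows at once. Going the other way, as you propose, is not formal: knowing that two $\C[[\h]]$-flat objects become isomorphic after base change to $0\in\h^*$ does not by itself yield an isomorphism over $\C[[\h]]$; ruling out the ``spurious $\C[[\h]]$-twist'' you mention is essentially the same uniqueness-of-flat-lift problem that the paper's argument with $P_{\h^{\wedge}}(w_0)$ is designed to settle, and your proposal offers no argument for it.
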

\begin{proof}
One of the defining properties of $\tau$ in \cite{B_Hecke}, see
\cite[Section 4.2]{B_Hecke}, in particular, is that
it intertwines the homomorphisms from the lattice $\mathfrak{X}(T)$ in $\Br^a$
to the categories of interest. So what we need to prove is that
$\tau$ intertwines the generators $T_{s}$ for simple Dynkin
reflections $s$. Further it is enough to prove this claim
for  $$\tau:D^b_{pu}(I^\circ\backslash G^\vee((t))/I^\circ)
\xrightarrow{\sim} D^b(\Coh^G(\St^\wedge_\h))$$
only. This is because the equivalences $\tau$ are intertwined
by the pullback functors $$D^b_{pu}(I^\circ\backslash G^\vee((t))/I^\circ)
\rightarrow D^b_{I^\vee}(\Fl), D^b(\Coh^G(\St^\wedge_\h))
\rightarrow D^b(\Coh^G(\St_0)),$$
and the pullbacks also map the generators $T_{s}$ in
$D^b_{pu}(I^\circ\backslash G^\vee((t))/I^\circ), D^b(\Coh^G(\St^\wedge_\h))$
to those generators in $D^b_{I^\vee}(\Fl),  D^b(\Coh^G(\St_0))$,
by the construction of the actions.

The object $T_s$ in $D^b_{pu}(I^\circ\backslash G^\vee((t))/I^\circ)$ is
the free monodromic costandard object labelled by $s$, denote it by
$\nabla_{\h^{\wedge}}(s)$. In particular, it is flat over $\h^\wedge:=\operatorname{Spec}(\C[\h^*]^{\wedge_0})$
(on the either side).
The object $T_s$ in  $D^b(\Coh^G(\St^\wedge_\h))$ is
described as follows. Consider the locus $Z_{\h^{reg}}(s)\subset
\St_\h^{reg}:=\g^{reg}\times_{\g}\St_\h$ consisting of all triples $(x,\mathfrak{b},
\mathfrak{b}')$ such that $\mathfrak{b},\mathfrak{b'}$ are in the relative
position $s$. Let $Z_\h(s)$ be the closure of of $Z_{\h^{reg}}(s)$
in $\St_\h$. This is  the unique closed subscheme $Z_\h(s)$ of $\St_\h$
that is flat over $\h^*$ whose intersection with $\St_{\h}^{reg}$ is
$Z_{\h^{reg}}(s)$. Set $Z_{\h^\wedge}(s):=\St_\h^{\wedge}\cap Z_s$. Then
$T_s=\Str_{Z_{\h^\wedge}(s)}$,
see \cite[Theorem 1.3.1]{BR}. So we reduce to proving
\begin{equation}\label{eq:tau_braid_intertw}
\tau(\nabla_{\h^{\wedge}}(s))=\Str_{Z_{\h^\wedge}(s)}.
\end{equation}
This is a special case of \cite[Example 57]{B_Hecke}, but since that example
does not feature a proof, we are going to prove (\ref{eq:tau_braid_intertw})
here.

Consider the tilting object
$P_{\h^\wedge}(w_0)(=\mathfrak{T}_{w_0})\in \operatorname{Perv}_{pu}(I^\circ\backslash G^\vee((t))/I^\circ)$.
Then $\nabla_{\h^\wedge}(s)$ is a quotient of $P_{\h^{\wedge}}(w_0)$. It can be
characterized as follows. Note that, by  Soergel theory,
(see  \cite[Proposition 6.4]{BR_Soergel} or \cite[Proposition 4.7.3(1)]{BY})
$P_{\h^\wedge}(w_0)$ has commuting
actions of $\C[\h^\wedge]$ from the left and from the right so that
the actions of $\C[\h^{\wedge}]^W$ agree.
Set $\h^{\wedge,reg}:=\h^{\wedge}\cap \h^{reg}$.
Consider the base change $P_{\h^{\wedge,reg}}(w_0):=
\C[\h^{\wedge,reg}]\otimes_{\C[\h^{\wedge}]}P_{\h^\wedge}(w_0)$.
Then $P_{\h^{\wedge,reg}}(w_0)$ canonically splits into the direct
sum $P_{\h^{\wedge,reg}}(w_0)=\bigoplus_{w\in W}\nabla_{\h^{\wedge,reg}}(w)$,
where  $\nabla_{\h^{\wedge,reg}}(w)$ is the summand characterized by the property
that the left action of $\C[[\h]]$ is obtained from the right one by twisting
with $w$.  Now $\nabla_{\h^{\wedge}}(s)$ is the unique $\C[\h^{\wedge}]$-flat
quotient of $P_{\h^\wedge}(w_0)$ whose base change to $\h^{\wedge,reg}$ coincides
with $\nabla_{\h^{\wedge,reg}}(s)$.

By the first paragraph in \cite[Section 6]{B_Hecke},
$$\tau(P_{\h^{\wedge}}(w_0))=\Str_{\St_\h^{\wedge}}.$$
Note that the description of $\Str_{Z_{\h^\wedge}(s)}$
as the quotient of $\Str_{\St_\h^{\wedge}}$ mirrors the
description of $\nabla_{\h^{\wedge}(s)}$ as the quotient of
$P_{\h^{\wedge}}(w_0)$. Since $\tau$ is exact on the perverse
sheaves supported on $G^\vee/B^\vee$, see \cite[Corollary 42(a)]{B_Hecke},
and $\C[[\h]]$-bilinear,
(\ref{eq:tau_braid_intertw}) follows.
\end{proof}

We note that the argument in the proof proves the costandard part of
\cite[Example 57]{B_Hecke} for an arbitrary $w\in W$.

\section{Equivalence in parabolic setting}\label{S_parab_equiv}
\subsection{Notation and content}
The meaning of $G,\g,\bor,\h,\rho, B,T,\tilde{\g}, G^\vee, \g^\vee,\bor^\vee,\h^\vee,\Nilp, \St_\h,
\St_B,$ $\St_0,I^\vee,I^\circ, \Fl$ is the same as in Section \ref{SSS_constructible_notation}.
The notation $P, \mathcal{P}, \tilde{\Nilp}_P, Z,\iota,\varpi$ has the same meaning as in Section \ref{SS_tilting_notation}. Let $\Nilp_P$ denote $\operatorname{Spec}(\C[\tilde{\Nilp}_P])$
and let $\Nilp'_P$ denote its image in $\Nilp$.
We assume that $G$ is semisimple and simply connected.

We write $\tau$ for each of the derived equivalences of Theorem \ref{Thm:derived_equiv}.
Let $\Tilt_\h$ denote the Bezrukavnikov-Mirkovic tilting bundle on
$\tilde{\g}$, $\Tilt$ be its specialization to $0\in \h^*$,
$\A_\h:=\End(\Tilt)$, and $\A:=\End(\Tilt)$.
Recall that in the end of Section \ref{SS_tilting_BM}
we have introduced the tilting bundle $\Tilt_P:=\varpi_*\iota^* (\Tilt(-\rho))$.
We write $\A_P$ for the endomorphism algebra of $\Tilt_P$.

We also consider the parabolic version of the Steinberg variety, the derived scheme
$\St_{P}:=\tilde{\g}\times^L_{\g}\tilde{\Nilp}_P$ as well as the derived scheme
$\widehat{Z}:=\tilde{\g}\times^L_{\g}Z$. We write $\tilde{\iota},\tilde{\varpi}$
for the induced morphisms $\widehat{Z}\hookrightarrow \St_B$
and $\widehat{Z}\twoheadrightarrow \St_P$.

Let $P^\vee$ denote the parabolic subgroup of $G^\vee$ that contains
$B^\vee$ and corresponds to $P$. We write $J^\vee\subset G^\vee((t))$
for the preimage of $P^\vee$ in $G^\vee[[t]]$, and $\Fl_P$ for $G^\vee((t))/J^\vee$.
Let $\eta$ denote the projection $\Fl\twoheadrightarrow \Fl_P$. It is a locally
trivial fibration with fibers $P^\vee/B^\vee$.
%We will also consider the opposite Iwahori subgroup $I^\vee_-$, the preimage
%of $B_-$ in $G[t^{-1}]$. Let $I^\circ_-$ denote the kernel of
%$I^\vee_-\twoheadrightarrow T$.

This section is organized as follows. In Section \ref{SS_parab_statements} we state
the main results of this section that are generalizations of Theorems
\ref{Thm:derived_equiv} and Theorem \ref{Thm:perv_equiv} to the parabolic
setting as well as their corollary that is a crucial ingredient in
the proof of Theorem \ref{Thm:disting_dim}. The subsequent sections
are devoted to proving these results, their content is described in
more detail below.

\subsection{Statements of results}\label{SS_parab_statements}
Now we proceed to stating the main result of this section. We first construct  functors
$\varphi_1: D^b(\Coh^G(\St_P))\rightarrow D^b(\Coh^G(\St_B))$ and $\varphi_2:
D^b_{I^\circ}(\Fl_P)\rightarrow D^b_{I^\circ}(\Fl)$. The latter is given by
\begin{equation}\label{eq:varphi_2_def}
\varphi_2(\bullet):=\eta^*[\dim P/B]
\end{equation}
so that, in particular, it is
Verdier self-dual. Both $D^b_{I^\circ}(\Fl_P),D^b_{I^\circ}(\Fl)$ are module
categories over $D^b_{pu}(I^\circ\backslash G^\vee((t))/I^\circ)$ and the functor
$\varphi_2$ is  an equivariant functor.

Now let us describe the functor $\varphi_1$. Let $\rho_L$ denote half the sum of positive
roots  in $\lf$. We set
\begin{equation}\label{eq:coherent_functor}\varphi_1(\bullet):=\tilde{\iota}_*[\tilde{\varpi}^*(\bullet)(-\rho,\rho-2\rho_L)]:
D^b(\Coh^G(\mathsf{St}_P)) \rightarrow D^b(\Coh^G(\mathsf{St}_{B})),\end{equation}
where the expression in round brackets means the twist with the line bundle $\mathcal{O}(-\rho, \rho-2\rho_L)$
on $\St_B$, the pullback of $\mathcal{O}(-\rho)\boxtimes \mathcal{O}(\rho-2\rho_L)$. Both source and target categories are module categories over $D^b(\Coh^G(\St^\wedge_\h))$
however the functor $\varphi_1$ is not equivariant due to the $(-\rho)$-twist in the first copy
of $\tilde{\g}$. It becomes equivariant if we redefine the action of   $D^b(\Coh^G(\St^\wedge_\h))$
on $D^b(\Coh^G(\St_P))$ by conjugating it with $\mathcal{O}(\rho)$:
\begin{equation}\label{eq:twisted_convolution}
\mathcal{F}*_{\rho}\mathcal{G}:=(p_{13})_*\left(p_{12}^*(\mathcal{F})\otimes p_{23}^*(\mathcal{G}(-\rho))\right)(\rho).
\end{equation}
%Here is a reason why we want to have the functor $\varphi_1$ with twists. Note that $Z$ embeds into %$\St_P$
%diagonally (we will denote the image by $Z^{diag}$) and $\varphi_1(\mathcal{O}_{Z^{diag}})=
%\mathcal{O}_{Z\times_{T^*(G/P)}Z}(-\rho_L,-\rho_L)$. By a remark after Lemma \ref{Lem:der_equiv_image},
%the latter coincides with the image of $\underline{\C}_{P^\vee/B^\vee}[\dim P^\vee/B^\vee]$
%in $D^b(\Coh^G(\St_B))$.

Here are the main results of this section. The first one should be thought as a parabolic analog of
Theorem \ref{Thm:derived_equiv}. It establishes a special case of \cite[Conjecture 59]{B_Hecke}.

\begin{Thm}\label{Prop:parabol_equiv_der}
We have an equivalence $\tau_P:D^b_{I^\circ}(\Fl_P)\xrightarrow{\sim}
D^b(\Coh^G\mathsf{St}_P)$ of triangulated categories
\begin{itemize}
\item
making the following diagram commutative,

\begin{picture}(100,30)
\put(2,22){$D^b_{I^\circ}(\Fl_P)$}
\put(2,2){$D^b_{I^\circ}(\Fl)$}
\put(52,22){$D^b(\Coh^G(\St_P))$}
\put(52,2){$D^b(\Coh^G(\mathsf{St}_{B}))$}
\put(8,21){\vector(0,-1){14}}
\put(65,21){\vector(0,-1){14}}
\put(20,23){\vector(1,0){31}}
\put(18,3){\vector(1,0){33}}
\put(9,13){\tiny $\varphi_2$}
\put(66,13){\tiny $\varphi_1$}
\put(35,24){\tiny $\sim$}
\put(35,20){\tiny $\tau_P$}
\put(35,4){\tiny $\sim$}
\put(35,1){\tiny $\tau$}
\end{picture}
\item
mapping $\underline{\C}_{P^\vee/P^\vee}$ to $\mathcal{O}_{Z^{diag}}$ (where $Z^{diag}$
is the image of $Z$ in $\St_P$ under the diagonal map),
\item
and intertwining the actions of $$D^b_{pu}(I^\circ\backslash G^\vee((t))/I^\circ)
\xrightarrow{\sim} D^b(\Coh^G(\St^{\wedge}_\h))$$ (where the action on $D^b(\Coh^G(\St_P))$
is given by (\ref{eq:twisted_convolution})).
\end{itemize}
\end{Thm}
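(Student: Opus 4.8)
The plan is to realize both sides as bounded derived categories of highest-weight categories and to construct $\tau_P$ first on the level of hearts. On the constructible side we use the perverse $t$-structure. Since $\eta\colon\Fl\to\Fl_P$ is a smooth proper fibration with fibres $P^\vee/B^\vee$, the functor $\varphi_2=\eta^*[\dim P/B]$ is $t$-exact for the perverse $t$-structures, and because $\Hom^{<0}$ between perverse sheaves vanishes while $R\eta_*\underline{\C}_{\Fl}$ sits in non-negative even degrees with one-dimensional zeroth term, $\varphi_2$ restricts to a fully faithful exact embedding $\Perv_{I^\circ}(\Fl_P)\hookrightarrow\Perv_{I^\circ}(\Fl)$ whose image is the Serre subcategory of perverse sheaves that are shifted pullbacks along $\eta$; note that $\varphi_2$ is \emph{not} fully faithful on all of $D^b_{I^\circ}(\Fl_P)$, which is precisely why one must pass through abelian categories. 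Transporting through $\tau$ and invoking Theorem~\ref{Thm:perv_equiv} --- which matches the perverse $t$-structure on $D^b_{I^\circ}(\Fl)$ with the perverse-coherent $t$-structure on $D^b(\Coh^G(\St_B))$, orbit by orbit and with the shifts $[\tfrac12\operatorname{codim}_{\Nilp}\Orb]$ --- one obtains a full highest-weight abelian subcategory $\mathcal{A}_P\subset D^b(\Coh^G(\St_B))$. The target of $\tau_P$ must be a category whose heart maps isomorphically onto $\mathcal{A}_P$ under $\varphi_1$.

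Next I would install a perverse-coherent $t$-structure on $D^b(\Coh^G(\St_P))$. Using the tilting bundle $\Tilt_P=\varpi_*\iota^*(\Tilt(-\rho))$ on $\tilde{\Nilp}_P$ together with $\Tilt_\h$ on $\tilde{\g}$, the bundle $\Tilt_\h^*\otimes\Tilt_P$ yields an equivalence $D^b(\Coh^G(\St_P))\cong D^b(\A_\h\otimes^L_{\C[\g]}\A_P^{opp}\operatorname{-mod}^G)$, and one defines the $t$-structure by exactly the support-dimension conditions of Theorem~\ref{Thm:perv_equiv}, again orbit by orbit. Its heart $\mathcal{B}_P$ (the category of \emph{$P$-perverse coherent bimodules}) is, by an argument parallel to the $\St_B$-case in \cite{BM}, a highest-weight category with standard and costandard objects indexed by $W^{a,P}$, arising from \emph{parabolic standard coherent sheaves} on $\St_P$ (which, after the derived localization of Section~\ref{S_der_loc}, localize to the $\chi$-Weyl modules of Section~\ref{SS_chi_Weyl}). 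In particular $D^b(\Coh^G(\St_P))\cong D^b(\mathcal{B}_P)$.

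The heart of the argument is to prove that $\varphi_1$, after the same orbit-by-orbit shifts, is $t$-exact from $\mathcal{B}_P$ to $D^b(\Coh^G(\St_B))$, is fully faithful on $\mathcal{B}_P$, and has essential image $\mathcal{A}_P$. Write $\varphi_1=\tilde{\iota}_*\circ(\text{twist by }\mathcal{O}(-\rho,\rho-2\rho_L))\circ\tilde{\varpi}^*$ with $\tilde{\iota}\colon\widehat{Z}\hookrightarrow\St_B$ and $\tilde{\varpi}\colon\widehat{Z}\twoheadrightarrow\St_P$. Since $R\varpi_*\mathcal{O}_Z=\mathcal{O}_{\tilde{\Nilp}_P}$ (the map is a $P/B$-bundle), base change gives $R\tilde{\varpi}_*\mathcal{O}_{\widehat{Z}}=\mathcal{O}_{\St_P}$, so $\tilde{\varpi}^*$ is fully faithful with left inverse $R\tilde{\varpi}_*$, while the failure of $\tilde{\iota}_*$ to be fully faithful is governed by the conormal complex $\tilde{\iota}^*\tilde{\iota}_*\mathcal{O}_{\widehat{Z}}=\bigwedge^\bullet\mathcal{N}^\vee_{\widehat{Z}/\St_B}$ and is absorbed by the perverse shifts --- exactly as $\eta^*[\dim P/B]$ becomes fully faithful on perverse sheaves even though $\eta^*$ is not fully faithful on $D^b$. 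Concretely I would (i) compute $\varphi_1$ on the parabolic standard and costandard coherent sheaves, turning this (via $\Tilt_P=\varpi_*\iota^*(\Tilt(-\rho))$) into a bimodule computation, and check that it produces precisely the affine standard and costandard objects on $\St_B$ --- equivalently, matches $\tau\varphi_2$ of the standard and costandard objects of $D^b_{I^\circ}(\Fl_P)$, using the known behaviour of $\tau$ on affine standards from \cite{B_Hecke} and the fact that $\varphi_2$ sends standards to standards indexed by $W^{a,P}$; since in particular $\varphi_1$ sends standards and costandards of $\mathcal{B}_P$ into the heart on the $\St_B$-side, this already forces the shifted $\varphi_1$ to be $t$-exact on $\mathcal{B}_P$; then (ii) conclude that the exact functor induced by $\varphi_1$ on hearts is an equivalence $\mathcal{B}_P\xrightarrow{\sim}\mathcal{A}_P$ by the standard criterion for a functor between highest-weight categories that matches standards and costandards and induces isomorphisms on the $\Hom$'s and $\operatorname{Ext}^1$'s between them. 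Thus $\psi_1\circ\tau\circ\varphi_2$, with $\psi_1$ the left adjoint of $\varphi_1$ (which restricted to $\mathcal{A}_P$ inverts $\varphi_1$), descends to an equivalence of hearts $\bar{\tau}_P\colon\Perv_{I^\circ}(\Fl_P)\xrightarrow{\sim}\mathcal{B}_P$. I expect (i)--(ii), together with establishing the highest-weight structure of $\mathcal{B}_P$ with the correct standards, to be the main obstacle: one has to keep careful track of the derived-scheme structure of $\St_P$ and $\widehat{Z}$ and of the orbit-dependent shifts throughout.

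Finally, since both $D^b_{I^\circ}(\Fl_P)$ and $D^b(\Coh^G(\St_P))$ are the bounded derived categories of their highest-weight hearts --- the first by a derived realization theorem for monodromic affine parabolic category $\mathcal{O}$ in the spirit of \cite{BY}, the second by the previous step --- the abelian equivalence $\bar{\tau}_P$ extends uniquely to a triangulated equivalence $\tau_P$, which is the desired functor. The square with $\tau$ commutes because $\varphi_1\circ\tau_P$ and $\tau\circ\varphi_2$ agree on hearts by construction and both are recovered from their restrictions to the hearts via the shifted $t$-exactness. For the normalization: $\varphi_2$ sends $\underline{\C}_{P^\vee/P^\vee}$ to the simple perverse sheaf $\underline{\C}_{P^\vee/B^\vee}[\dim P/B]=\operatorname{IC}_{w_{0,P}}$ supported on the Schubert variety $\overline{P^\vee/B^\vee}$, and both $\varphi_1(\mathcal{O}_{Z^{diag}})$ and $\tau(\operatorname{IC}_{w_{0,P}})$ equal the $\tau$-image of that $\operatorname{IC}$ computed in \cite{B_Hecke}, so by full faithfulness of $\varphi_1$ on hearts $\tau_P(\underline{\C}_{P^\vee/P^\vee})=\mathcal{O}_{Z^{diag}}$. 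And the module-structure statement follows because $\varphi_1$ --- once the convolution on $D^b(\Coh^G(\St_P))$ is replaced by $*_\rho$, which compensates the $(-\rho)$-twist in its first coordinate --- and $\varphi_2$ are both module functors over $D^b_{pu}(I^\circ\backslash G^\vee((t))/I^\circ)\cong D^b(\Coh^G(\St^{\wedge}_\h))$ by base change and the projection formula, while $\tau$ is a module equivalence by Theorem~\ref{Thm:derived_equiv} and Proposition~\ref{Prop:braid_compatible}; hence so is $\tau_P$.
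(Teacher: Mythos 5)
Your overall architecture --- pass to the perverse hearts, identify the image of $\varphi_2$ on $\Perv_{I^\circ}(\Fl_P)$ with an abelian subcategory of the coherent side, then extend by realization functors --- is essentially the paper's, and your use of the adjoints with $\psi_i\varphi_i\cong\operatorname{id}\otimes H^*(P/B,\C)[2\dim P/B]$ matches Lemmas \ref{Lem:psi_2_properties} and \ref{Lem:psi_1_properties}. The genuine gap is in your step (i) and again in your normalization argument: both presuppose that one already knows where $\tau$ sends the parabolic constant/IC sheaf, i.e.\ that $\tau(\underline{\C}_{P^\vee/B^\vee}[\dim P/B])$ is the explicit kernel $\mathcal{O}_{Z\times_{\tilde{\Nilp}_P}Z}(-\rho,\rho-2\rho_L)$ --- equivalently that $\tau$ intertwines the comonads $\varphi_2\psi_2$ and $\varphi_1\psi_1$ and hence the full images of $\varphi_2$ and $\varphi_1$, and that $\tau(\operatorname{IC}_{w_{0,P}})=\varphi_1(\mathcal{O}_{Z^{diag}})$. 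You attribute this to \cite{B_Hecke}, but it is not available there: \cite[Corollary 42]{B_Hecke} only says that $\tau$ sends the finite-flag simples to coherent sheaves, without identifying which ones. This identification is exactly the new geometric input of the present proof (Lemma \ref{Lem:der_equiv_image} and Corollary \ref{Cor:fim_coinc}), and it takes real work: one reduces to $P=G$, shows both objects are simple (on the coherent side via the splitting bundle and the Steinberg module) and compares their $K_0$-classes using the $\rho$-twisted convolution action on $K_0^T(T^*\B)$, and then reaches general $P$ through a Levi-restriction functor $\xi_P$ compatible with the $\Br_{W_P}$-action. Without this input your matching of standard and costandard objects cannot start, since its base case (the object attached to $w_{0,P}$, or the unit) is precisely this statement.

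Two further points are asserted rather than proved. First, the highest-weight structure of the perverse-coherent heart on $\St_P$ with standards indexed by $W^{a,P}$, and the claim that $D^b(\Coh^G(\St_P))$ is the bounded derived category of that heart, are not in \cite{BM} and are nontrivial; the paper avoids them by proving directly that the realization functor is an equivalence (Lemma \ref{Lem:tau_P_equivalence}), using faithfulness of $\varphi_2$, a dimension count based on $\psi_1\varphi_1\cong\operatorname{id}\otimes H^*(P/B,\C)[2\dim P/B]$ together with finite-dimensionality of equivariant Homs, and boundedness of the perverse t-structure; your route through a highest-weight criterion would still require proving the $\operatorname{Ext}$-vanishing between coherent standards and costandards, which is comparable work. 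Second, ``hence so is $\tau_P$'' for the module structure over $D^b_{pu}(I^\circ\backslash G^\vee((t))/I^\circ)\cong D^b(\Coh^G(\St^\wedge_\h))$ does not follow formally: $\tau_P$ is only pinned down by its restriction to the hearts, so one needs the acting monoidal category to be generated by objects acting by functors that preserve the hearts. The paper supplies this via the free-monodromic tilting objects, which act t-exactly and whose $\tau$-images are bimodules concentrated in degree $0$ (Lemma \ref{Lem:bimod_image}), together with $K^b(\mathsf{Tilt})\simeq D^b_{pu}(I^\circ\backslash G^\vee((t))/I^\circ)$; this step is missing from your argument.
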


By  Proposition \ref{Prop:braid_compatible}, $\tau_P$ intertwines the actions
of $\Br^a$.

%The key steps of this proof are as follows. First, we observe that
%the equivalence
%$D^b_{I^\circ}(\Fl)\rightarrow D^b(\Coh^G(\mathsf{St}_{\h,0}))$
%intertwines the full triangulated Karoubian subcategories generated by the images (to be called
%the {\it full images}) of
%$\varphi_1,\varphi_2$, they will be denoted
%by $\operatorname{Fim}\varphi_1,\operatorname{Fim}\varphi_2$.
%The main issue in the proof of Theorem
%\ref{Prop:parabol_equiv_der} is that these images are not
%equivalent to $D^b_{I^\circ}(\Fl_P),
%D^b(\Coh^G\mathsf{St}_P)$. It turns out, however, that
%$D^b_{I^\circ}(\Fl)\rightarrow D^b(\Coh^G(\mathsf{St}_{\h,0}))$
%restricts to an equivalence of abelian categories
%whose derived categories are $D^b_{I^\circ}(\Fl_P)\xrightarrow{\sim}
%D^b(\Coh^G\mathsf{St}_P)$. The category on the constructible side
%is $\operatorname{Perv}_{I^{\circ}_-,fin}(\Fl_P)$ to be defined in
%Section \ref{SS_positive_level}, it consists of certain $I^\circ_-$-equivariant
%perverse sheaves on $\Fl_P$.
%The category on the coherent side is $\A_\h\otimes_{\C[\g]}\A_P^{opp}\operatorname{-mod}^G$.
%We show that the categories $D^b_{I^\circ}(\Fl_P),D^b_{I^\circ_-,fin}(\Fl_P)$
%are  equivalent to each other. Furthermore, we show that $D^b_{I^\circ}(\Fl_P),
%D^b(\Coh^G\mathsf{St}_P)$ coincide with the derived
%categories of $\operatorname{Perv}_{I_-^{\circ}}(\Fl_P),
%\A_\h\otimes_{\C[\g]}\A_P^{opp}\operatorname{-mod}^G$,
%respectively. This will finish the proof of
%Theorem \ref{Prop:parabol_equiv_der}.

Our second result is an analog  of Theorem \ref{Thm:perv_equiv}.
As in the case of $P=B$, we have the cell filtration
on $D^b_{I^\circ}(\Fl_P)$ and the nilpotent orbit filtration
on $D^b(\Coh^G(\St_P))$. We shift the numeration by
$\dim P/B$ so that the filtration degree $0$ quotient  functor for
$D^b(\Coh^G(\St_P))$ is isomorphic to the restriction to
$\Orb$. As usual, we consider the perverse t-structure on
$D^b_{I^\circ}(\Fl_P)$. The t-structure on
$D^b(\Coh^G(\St_P))$ is given by $\Tilt_\h(-\rho)^*\otimes \Tilt_P(-2\rho)$
and the heart is $\A_\h\otimes_{\C[\g]} \A_P^{opp}\operatorname{-mod}^G$.
Recall that $\Nilp'_P$ is the image of $\tilde{\Nilp}_P$ in $\Nilp$.

\begin{Thm}\label{Thm:parab_perv_equiv}
The equivalence
$$\tau_P: D^b_{I^\circ}(\Fl_P)\xrightarrow{\sim}D^b(\Coh^G(\St_{P}))$$
restricts to an equivalence  $D^b_{I^\circ,\leqslant \Orb'}(\Fl_P)\xrightarrow{\sim}D^b_{\leqslant \Orb'}(\Coh^G(\St_{P}))$ for all  $\Orb'\subset \Nilp'_P$). Moreover, for the induced equivalence
$$\tau_{\Orb'}: D^b_{I^\circ,\Orb'}(\Fl_P)\xrightarrow{\sim}
D^b_{\Orb'}(\Coh^G(\St_P))$$
we have that $\tau_{\Orb'}[\frac{1}{2}\operatorname{codim}_{\Nilp_P}\Orb']$ is t-exact
(with respect to t-structures described in the previous paragraph).
\end{Thm}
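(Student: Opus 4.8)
The plan is to deduce Theorem~\ref{Thm:parab_perv_equiv} from its $P=B$ counterpart, Theorem~\ref{Thm:perv_equiv}, by transporting the $t$-exactness statement along the commuting square of Theorem~\ref{Prop:parabol_equiv_der} together with the compatibilities of $\varphi_1,\varphi_2$ with the cell/orbit filtrations and with the respective perverse (resp.\ tilting) $t$-structures. The idea is that the vertical functors $\varphi_1$ and $\varphi_2$ are \emph{faithful} embeddings (up to a shift) that are exact for suitable $t$-structures, and on the constructible side $\varphi_2=\eta^*[\dim P/B]$ reflects $t$-exactness because $\eta$ is a smooth locally trivial fibration with fibre $P^\vee/B^\vee$: thus an object $\mathcal{F}\in D^b_{I^\circ}(\Fl_P)$ is perverse if and only if $\varphi_2(\mathcal F)=\eta^*\mathcal F[\dim P/B]$ is perverse on $\Fl$. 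So the perverse $t$-structure on $\Fl_P$ is induced from that on $\Fl$ via $\varphi_2$, and this is compatible with the cell filtrations (the simples in $\Perv_{I^\circ}(\Fl_P)$ are exactly those whose $\varphi_2$-image lies in the appropriate left cell, a standard fact recalled in the introduction via $W^{a,P}$).

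First I would set up the filtration-compatibility statements. On the constructible side: $\varphi_2$ sends $D^b_{I^\circ,\leqslant\Orb'}(\Fl_P)$ into $D^b_{I^\circ,\leqslant\Orb'}(\Fl)$ and, because $\eta^*$ is fully faithful up to the fibration correction and intertwines the left $D^b_{pu}(I^\circ\backslash G^\vee((t))/I^\circ)$-actions, it induces a fully faithful functor on each subquotient $D^b_{I^\circ,\Orb'}(\Fl_P)\to D^b_{I^\circ,\Orb'}(\Fl)$. On the coherent side: $\varphi_1=\tilde\iota_*[\tilde\varpi^*(\bullet)(-\rho,\rho-2\rho_L)]$ respects the support filtration by $\overline{\Orb}$ (since $\widehat Z\to\St_P$ and $\widehat Z\hookrightarrow\St_B$ both lie over $\Nilp$), giving fully faithful functors on subquotients. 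Here I would invoke the key geometric input from Section~\ref{SS_tilting_BM}: $\Tilt_P=\varpi_*\iota^*(\Tilt(-\rho))$ is a genuine tilting bundle (complex in degree $0$), and the functor $\varphi_1$ relates the $t$-structure given by $\Tilt_\h(-\rho)^*\otimes\Tilt_P(-2\rho)$ on $\St_P$ to the $t$-structure given by $\Tilt_\h^*\otimes\Tilt$ on $\St_B$ — precisely because pulling back along $\widehat Z\to\St_P$, twisting, and pushing forward along $\widehat Z\hookrightarrow\St_B$ corresponds under the tilting equivalences to an honest (derived-exact in degree $0$) bimodule operation. This last point is really the incarnation on bimodules of the definition $\Tilt_P:=\varpi_*\iota^*(\Tilt(-\rho))$, and I expect it to need a short argument using that $\iota$ is a regular (lci) closed embedding and $\varpi$ is a smooth morphism, so that $\tilde\iota_*\tilde\varpi^*$ is $t$-exact for the tilting $t$-structures up to the homological shift $\dim P/B$.

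Then I would assemble: by Theorem~\ref{Prop:parabol_equiv_der} the square commutes, so $\tau_P$ carries $D^b_{I^\circ,\leqslant\Orb'}(\Fl_P)$ to $D^b_{\leqslant\Orb'}(\Coh^G(\St_P))$ because $\varphi_1\circ\tau_P=\tau\circ\varphi_2$, both $\varphi$'s preserve the filtrations, $\tau$ preserves them by Theorem~\ref{Thm:perv_equiv}, and $\varphi_1$ reflects the support condition (an object of $D^b(\Coh^G(\St_P))$ lies in $D^b_{\leqslant\Orb'}$ iff its $\varphi_1$-image does, since $\widehat Z\to\St_P$ is surjective). On a fixed subquotient we get $\tau_{\Orb'}\colon D^b_{I^\circ,\Orb'}(\Fl_P)\xrightarrow{\sim}D^b_{\Orb'}(\Coh^G(\St_P))$, and fitting it into the commuting square with $\tau_{\Orb'}$ (for $P=B$), $\varphi_{2,\Orb'}$, $\varphi_{1,\Orb'}$, the $t$-exactness of $\tau_{\Orb'}[\tfrac12\operatorname{codim}_{\Nilp_P}\Orb']$ follows from that of $\tau_{\Orb}[\tfrac12\operatorname{codim}_{\Nilp}\Orb]$ once we match the shifts: $\varphi_2$ contributes $[\dim P/B]$, $\varphi_1$ contributes $[-\dim P/B]$ (the lci/smooth shift above), and $\operatorname{codim}_{\Nilp_P}\Orb'=\operatorname{codim}_{\Nilp}\Orb'$ because $\dim\tilde\Nilp_P=\dim\tilde\Nilp=\dim\Nilp$. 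Since $\varphi_1$ and $\varphi_2$ are fully faithful on subquotients, a morphism on the $\Fl_P$-side is in nonnegative (resp.\ nonpositive) perverse degrees iff its image is, so $t$-exactness transfers both ways.

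The main obstacle I anticipate is the precise $t$-exactness of $\varphi_1=\tilde\iota_*[\tilde\varpi^*(\bullet)(-\rho,\rho-2\rho_L)]$ for the tilting $t$-structures — i.e.\ checking that under the tilting equivalences it becomes genuinely $t$-exact up to the shift $[\dim P/B]$, rather than merely right- or left-$t$-exact. Concretely one must verify that $\varphi_1$ carries the tilting/projective generator on the $\St_P$-side to an object concentrated in homological degree zero on the $\St_B$-side; this reduces, via the module version of Lemma~\ref{Lem:equiv_monoidal} and the fact that $\Tilt_P=\varpi_*\iota^*(\Tilt(-\rho))$ is a tilting bundle in degree $0$, to a cohomology-vanishing computation for $\varpi_*\iota^*$ on $\tilde\g\times^L_\g(\bullet)$ — the same vanishing established in \cite[Sections 4.1, 4.2]{BM} for $\Tilt_P$ itself, relativised over the first copy of $\tilde\g$. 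The other point requiring care is the bookkeeping of the two half-sums $\rho$ and $\rho_L$ and the resulting line-bundle twists, but once the $P=B$ case and the twisted-convolution equivariance of $\tau_P$ (from Theorem~\ref{Prop:parabol_equiv_der}) are in hand, this is routine.
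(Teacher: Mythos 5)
Your strategy (take the commutative square of Theorem \ref{Prop:parabol_equiv_der} as given and transfer the $t$-exactness of Theorem \ref{Thm:perv_equiv} through $\varphi_1,\varphi_2$) is not the paper's route, and as written it has two genuine gaps. The first is the pivot you yourself flag: the $t$-exactness of $\varphi_1$ for the \emph{tilting} $t$-structures. This is not a routine consequence of $\Tilt_P=\varpi_*\iota^*(\Tilt(-\rho))$ being a tilting bundle; under the bimodule identifications it amounts to the claim that $R\Gamma\bigl(Z,\Tilt(-\rho)\otimes\Tilt_P^*\bigr)$ is concentrated in degree $0$ and projective as a right $\A_P$-module, which requires a separate argument (reduction mod $p$, the splitting bundles of Section \ref{SS_splitting}, and Borho--Brylinski type surjectivity), and cannot simply be quoted from \cite[Sections 4.1, 4.2]{BM}. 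The paper deliberately avoids this statement: it proves instead that $\varphi_1$ is $t$-exact for the \emph{perverse bimodule} $t$-structures (Proposition \ref{Prop:varphi_1_perverse_exact}), which follows softly because the forgetful functors intertwine $\varphi_1$ with $\zeta_*[\dim P/B]$ for the finite morphism $\zeta:\Nilp_P\to\Nilp$; it then shows $\varphi_1,\varphi_2$ are full embeddings on the perverse hearts (a Hom-dimension count that works only on hearts, since $\psi_i\varphi_i\cong\operatorname{id}\otimes H^*(P/B)[2\dim P/B]$), matches the images under $\tau$ via Lemma \ref{Lem:der_equiv_image}, and constructs $\tau_P$ as the realization of the resulting abelian equivalence of perverse hearts — after which Theorem \ref{Thm:parab_perv_equiv} is immediate from the definition of the perverse coherent $t$-structure. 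Relatedly, your assertion that $\varphi_1,\varphi_2$ are ``fully faithful on subquotients'' is false on the derived level (only faithfulness/conservativity holds, which is all your transfer actually needs).

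The second gap is a concrete numerical error in the shift bookkeeping: $\dim\tilde\Nilp_P=2\dim G/P$, not $\dim\Nilp=2\dim G/B$, so
$\operatorname{codim}_{\Nilp_P}\Orb'=\operatorname{codim}_{\Nilp}\Orb'-2\dim P/B$, contrary to your claim that the two codimensions agree. Consequently your accounting (``$\varphi_2$ contributes $[\dim P/B]$, $\varphi_1$ contributes $[-\dim P/B]$, codimensions equal'') yields the wrong normalization: since $\varphi_2=\eta^*[\dim P/B]$ is exactly perverse $t$-exact (its shift is already absorbed), the entire discrepancy $\tfrac12\operatorname{codim}_{\Nilp}\Orb'-\tfrac12\operatorname{codim}_{\Nilp_P}\Orb'=\dim P/B$ must be supplied by $\varphi_1$ relative to the tilting $t$-structures of the theorem, not cancelled against $\varphi_2$. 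A sanity check that your version is off: for the dense orbit $\Orb$ one has $\operatorname{codim}_{\Nilp_P}\Orb=0$, and indeed Corollary \ref{Thm:abelian_quotient}(1) asserts the quotient functor to $D^b(\Coh^{Z_P(e)}\B_e)$ is $t$-exact with \emph{no} shift, whereas your bookkeeping would force a shift of $\dim P/B$ there. The strategy could be repaired (prove the tilting exactness of $\varphi_1$ with its shift of $\dim P/B$, replace full faithfulness by conservativity, and redo the codimension count), but as proposed the argument does not go through.
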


Theorems \ref{Prop:parabol_equiv_der} and \ref{Thm:parab_perv_equiv} will be proved
simultaneously. The proof goes as follows. We will introduce and study left adjoint
functors $\psi_1,\psi_2$ of $\varphi_1,\varphi_2$ in Section \ref{SS_fun_adjoint}.
We will describe compositions $\varphi_j\psi_j$ and $\psi_j \varphi_j$. In particular,
we will see that $\varphi_j\psi_j$ is given by convolving on the right with certain
objects. In Section \ref{SS_image_coinc} we will see that the equivalence $\tau$
intertwines those objects and use this to deduce that $\tau$ intertwines
the full Karoubian subcategories generated by the images of $\varphi_1,\varphi_2$
(below these subcategories will be called {\it full images}).
%Then in
%Section \ref{SS_bimod_cat} we will see that the category
%$\A_\h\otimes_{\C[\g]}\A_P^{opp}\operatorname{-mod}^G$ admits a full embedding
%into $\A_\h\otimes_{\C[\g]}\A_P^{opp}\operatorname{-mod}^G$.
In Section
\ref{SS_abel_equiv} we will combine results of the  previous two sections
and show that $\tau$ restricts to an equivalence of abelian categories
between $\operatorname{Perv}_{I^\circ}(\Fl_P)$ and the heart
of the perverse t-structure on  $D^b(\A_\h\otimes_{\C[\g]}\A_P^{opp}\operatorname{-mod}^G)$.
We will use this equivalence to prove Theorems \ref{Prop:parabol_equiv_der} and
\ref{Thm:parab_perv_equiv} in Section \ref{SS_parab_complete}. This will, in particular,
imply that $\tau_P$ is t-exact with respect to the perverse t-structure
on $D^b_{I^\circ}(\Fl_P)$ and the perverse bimodule t-structure on
$D^b(\Coh^G(\St_P))$.

\begin{comment}
\begin{Prop}\label{Prop:perverse_parabolic}
The equivalence
$\tau_P:D^b_{I^\circ}(\Fl_P)\xrightarrow{\sim}
D^b(\Coh^G\mathsf{St}_P)$ from Theorem \ref{Prop:perverse_parabolic}
is perverse with respect to the t-structures described above. More specifically,
$\tau_P$ intertwines the filtrations indexed by the nilpotent orbits and
for a nilpotent orbit $\Orb$ intersecting $\mathfrak{m}$, the
equivalence
$$\tau_{P,\Orb}[\dim G/P-\frac{1}{2}\dim \Orb]:
D^b_{I^\circ,\Orb}(\Fl_P)\xrightarrow{\sim}
D^b_{\Orb}(\Coh^G\mathsf{St}_P)$$
is t-exact.
\end{Prop}
\end{comment}

Below we will need a corollary of these two theorems.
Recall that $\Orb$ denotes the dense orbit in $\Nilp'_P$
and  we write $\B_e$ for the Springer fiber of $e$ with its natural
derived scheme structure (of a derived subscheme in $\tilde{\g}$).
Note that the preimage of $\Orb$ in $\St_P$ is naturally identified with
$G\times^{Z_P(e)}\B_e$ (an isomorphism of derived schemes). So
\begin{equation}\label{eq:equiv_cat_equiv}
D^b_{\Orb}(\Coh^G(\St_P))\xrightarrow{\sim} D^b(\Coh^{Z_P(e)}\B_e),
\end{equation}
Consider the t-structure on $D^b(\Coh^{Z_P(e)}\B_e)$ given by $\mathcal{T}(-\rho)^*$.
Its heart is $\A_{\h,e}\operatorname{-mod}^{Z_P(e)}$, where, recall, we write
$\A_{\h,e}$ for the fiber of $\A_\h$ at $e$.

Inside $\B_e$ consider the (ordinary) subvariety $\B_{\mathfrak{m}}$ consisting of
all Borel subalgebras containing $\mathfrak{m}^{(1)}_\F$. It is naturally identified with $P/B$.
Note that $\B_{\mathfrak{m}}$ is an irreducible component in $\B_e$.

%We will deduce the following important claim from
%Theorem \ref{Prop:parabol_equiv_der} and Proposition
%\ref{Prop:perverse_parabolic}.

%The following result will be deduced from Theorems \ref{Prop:parabol_equiv_der}
%and \ref{Thm:parab_perv_equiv}.

\begin{Cor}\label{Thm:abelian_quotient}
The quotient functor $D^b_{I^\circ}(\Fl_P)\twoheadrightarrow D^b(\Coh^{Z_P(e)}\B_e)$
has the following properties:
\begin{enumerate}
\item it is t-exact for the t-structures above,
\item it intertwines the actions of $D^b_{pu}(I^\circ\backslash G^\vee((t))/I^\circ)
\xrightarrow{\sim} D^b(\Coh^G(\St^\wedge_\h))$ (hence the  $\Br^a$-actions),
\item it maps $\underline{\C}_{P^\vee/P^\vee}$ to $\mathcal{O}_{\B_{\mathfrak{m}}}$.
\end{enumerate}
\end{Cor}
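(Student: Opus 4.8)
The plan is to use the fact that the functor in the statement (call it $\Pi$) is, by construction, the composite of the equivalence $\tau_P\colon D^b_{I^\circ}(\Fl_P)\xrightarrow{\sim}D^b(\Coh^G(\St_P))$ of Theorem~\ref{Prop:parabol_equiv_der}, the quotient functor $D^b(\Coh^G(\St_P))\twoheadrightarrow D^b_{\Orb}(\Coh^G(\St_P))$ onto the subquotient attached to the dense (Richardson) orbit $\Orb=Ge$ of $\Nilp'_P$, and the equivalence~(\ref{eq:equiv_cat_equiv}). I will then check properties (1)--(3) stage by stage, relying on two elementary observations. First, $\dim\Nilp_P=\dim\tilde{\Nilp}_P=2\dim G/P=\dim\Orb$, so $\operatorname{codim}_{\Nilp_P}\Orb=0$ and the cohomological shift in Theorem~\ref{Thm:parab_perv_equiv} is trivial for $\Orb'=\Orb$. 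Second, $\Orb$ is open in $\Nilp'_P$, so its preimage in $\St_P$ is the open subscheme $G\times^{Z_P(e)}\B_e$, and the composite of the quotient functor with~(\ref{eq:equiv_cat_equiv}) is just restriction of sheaves to this open subscheme followed by the exact descent equivalence $\Coh^G(G\times^{Z_P(e)}\B_e)\cong\Coh^{Z_P(e)}(\B_e)$; in particular it is exact.

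For (1) I would argue as follows. The quotient $D^b_{I^\circ}(\Fl_P)\twoheadrightarrow D^b_{I^\circ,\Orb}(\Fl_P)$ is t-exact for the perverse t-structures, being a quotient by a Serre subcategory of the heart. By Theorem~\ref{Thm:parab_perv_equiv} and the vanishing of the shift, $\tau_\Orb$ is t-exact; so it remains to see that~(\ref{eq:equiv_cat_equiv}) is t-exact for the induced t-structure (coming from $\Tilt_\h(-\rho)^{*}\otimes\Tilt_P(-2\rho)$) on the source and the $\mathcal{T}(-\rho)^{*}$-t-structure on $D^b(\Coh^{Z_P(e)}(\B_e))$. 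Restricting $\Tilt_\h(-\rho)^{*}\otimes\Tilt_P(-2\rho)$ to the preimage of $\Orb$ and descending to $\B_e$ gives, up to tensoring by a fixed $Z_P(e)$-module (the fibre over $e$ of the $\tilde{\Nilp}_P$-factor), the bundle $\mathcal{T}(-\rho)^{*}$; since tensoring a tilting generator by a finite-dimensional vector space only changes its endomorphism algebra by a Morita equivalence and leaves the associated t-structure unchanged, (1) follows.

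For (2) I would invoke the last bullet of Theorem~\ref{Prop:parabol_equiv_der}, by which $\tau_P$ intertwines the actions of $D^b_{pu}(I^\circ\backslash G^\vee((t))/I^\circ)\xrightarrow{\sim}D^b(\Coh^G(\St_\h^\wedge))$, the action on $D^b(\Coh^G(\St_P))$ being the twisted convolution~(\ref{eq:twisted_convolution}). That action is supported on the first factor of $\St_P=\tilde{\g}\times^L_{\g}\tilde{\Nilp}_P$, hence commutes with the orbit filtration (read off the second factor) and descends to $D^b_\Orb(\Coh^G(\St_P))$; under~(\ref{eq:equiv_cat_equiv}) it becomes the action of $D^b(\Coh^G(\St_\h^\wedge))$ on $D^b(\Coh^{Z_P(e)}(\B_e))$ through the $\tilde{\g}$-factor of $\B_e\subset\tilde{\g}$, which is exactly the module structure defining the $\Br^a$-action of Section~\ref{SS_braid}. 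So $\Pi$ intertwines these actions, and in particular the $\Br^a$-actions.

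For (3), the second bullet of Theorem~\ref{Prop:parabol_equiv_der} gives $\tau_P(\underline{\C}_{P^\vee/P^\vee})=\mathcal{O}_{Z^{diag}}$, so by the second observation $\Pi(\underline{\C}_{P^\vee/P^\vee})$ is the structure sheaf of the $Z_P(e)$-stable closed subscheme $Y\subseteq\B_e$ obtained by intersecting $Z^{diag}$ with the preimage of $\Orb$ and passing to the fibre $\B_e$ of~(\ref{eq:equiv_cat_equiv}). The remaining task, and the only one not purely formal, is to identify $Y$ with $\B_{\mathfrak{m}}$. I would do this by unwinding $Z^{diag}=(\tilde{\iota},\tilde{\varpi})(Z)$ with $Z=G\times^B\mathfrak{m}$: a point of $Z^{diag}$ lying over $e$ with parabolic coordinate the standard $\mathfrak{p}$ comes from a pair $(gB,m)$ with $g\in P$ (so that $gP=P$) and $m=\operatorname{Ad}(g^{-1})e$, which automatically lies in $\mathfrak{m}$; hence $Y$ is the locus $\{gB\mid g\in P\}=P/B$, realised inside $\B_e$ as the set of Borel subalgebras contained in $\mathfrak{p}$ — equivalently (any such Borel contains the nilradical) the set of Borel subalgebras containing $\mathfrak{m}$, that is $\B_{\mathfrak{m}}$. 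Reducedness is immediate since $\B_{\mathfrak{m}}\cong P/B$ is smooth of the expected dimension $\dim P/B$. The hardest step is thus this last scheme-theoretic computation; everything else reduces to Theorems~\ref{Prop:parabol_equiv_der} and~\ref{Thm:parab_perv_equiv} once the openness of the preimage of the Richardson orbit and the vanishing of $\operatorname{codim}_{\Nilp_P}\Orb$ are observed.
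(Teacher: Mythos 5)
Your route is the same as the paper's: factor the functor as $\tau_P$ followed by passage to the subquotient attached to the dense orbit and the identification (\ref{eq:equiv_cat_equiv}); get (1) from Theorem \ref{Thm:parab_perv_equiv} (with vanishing shift, since $\Orb$ is dense in $\Nilp'_P$) together with the observation that (\ref{eq:equiv_cat_equiv}) restricted to the hearts is tensoring by the fibre of the $\tilde{\Nilp}_P$-side tilting bundle, hence t-exact; get (2) from the equivariance clause of Theorem \ref{Prop:parabol_equiv_der}; and get (3) from $\tau_P(\underline{\C}_{P^\vee/P^\vee})\cong\mathcal{O}_{Z^{diag}}$ plus an identification of the derived intersection of $Z^{diag}$ with $\B_e$. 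Parts (1) and (2) are correct and essentially reproduce the paper's argument.

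The one genuine gap is in the final, scheme-theoretic step of (3), which you yourself single out as the only non-formal one. You correctly compute that the underlying set of $Z^{diag}\cap\B_e$ is $\{gB\mid g\in P\}\cong P/B$, but you then conclude that the derived (and scheme-theoretic) intersection is reduced ``since $\B_{\mathfrak{m}}\cong P/B$ is smooth of the expected dimension''. That inference is not valid: a tangential intersection (a parabola and its tangent line in the plane, say) has reduced locus which is smooth of the expected dimension, yet the scheme-theoretic intersection is non-reduced, and in general the right dimension alone does not kill higher Tor's either. The correct quick argument is flatness rather than a dimension count: since $\B_e=\St_P\times^L_{\tilde{\Nilp}_P}\{(\mathfrak{p},e)\}$, base change gives $Z^{diag}\times^L_{\St_P}\B_e\simeq Z^{diag}\times^L_{\tilde{\Nilp}_P}\{(\mathfrak{p},e)\}$, and the projection $Z^{diag}\cong Z=G\times^B\mathfrak{m}\rightarrow\tilde{\Nilp}_P\cong G\times^P\mathfrak{m}$ is exactly $\varpi$, a locally trivial $P/B$-fibration, in particular smooth and flat. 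Hence the derived fibre is the honest fibre $\varpi^{-1}(\mathfrak{p},e)\cong P/B=\B_{\mathfrak{m}}$, smooth and reduced, with no derived corrections. With this substitution your proof of (3) is complete; note that the paper itself asserts the identification of the derived intersection without further detail.
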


\begin{Rem}\label{Rem:simply_conn}
Above we have assumed that $G$ is simply connected, which is needed
for $\rho\in \mathfrak{X}(T)$. We will need to weaken this assumption
to the case when we have a character $\rho'$ of $T$ that coincides with $\rho$ on the coroots:
this holds when our group is a Levi of  a simply connected semisimple group, which is
precisely the situation we need. Results in this section continue to hold with easy modifications,
for example, in the definition of $\varphi_1$ we need to twist with $(-\rho',\rho'-2\rho_L)$.
\end{Rem}

\subsection{Adjoint functors}\label{SS_fun_adjoint}
In this section we are going to introduce and study left adjoint functors
of $\varphi_1,\varphi_2$ to be denoted by $\psi_1,\psi_2$, respectively.

Let us start with $\psi_2$, which is easier. The left adjoint of
$\eta^!=\varphi_2[\dim P/B]$ is $\eta_!$ so that
\begin{equation}\label{eq:psi_2_defn}
\psi_2=\eta_![\dim P/B].
\end{equation}

\begin{Lem}\label{Lem:psi_2_properties}
The following claims are true.
\begin{enumerate}
\item We have $\psi_2\varphi_2\cong \operatorname{id}\otimes H^*(P^\vee/B^\vee,\C)[2\dim P/B]$,
where we view $H^*(P^\vee/B^\vee,\C)$ as the complex  with zero differential,
$H^*(P^\vee/B^\vee,\C)=\bigoplus_{i} H^i(P^\vee/B^\vee,\C)[-i]$.
\item We have $\varphi_2\psi_2\cong \bullet* \underline{\C}_{P^\vee/B^\vee}[2\dim P/B]$.
\end{enumerate}
\end{Lem}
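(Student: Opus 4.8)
The plan is to deduce both isomorphisms from standard properties of the smooth proper morphism $\eta\colon \Fl\to\Fl_P$, a Zariski-locally trivial fibration with fibre $P^\vee/B^\vee$ of relative dimension $\dim P/B$. Recall from (\ref{eq:varphi_2_def}) and (\ref{eq:psi_2_defn}) that $\varphi_2=\eta^*[\dim P/B]$ and $\psi_2=\eta_![\dim P/B]$, that $\eta$ proper gives $\eta_!\cong\eta_*$, and that $\eta$ smooth of relative dimension $\dim P/B$ gives $\eta^!\cong\eta^*[2\dim P/B]$. Hence $\psi_2\varphi_2\cong\eta_!\eta^*[2\dim P/B]$ and $\varphi_2\psi_2\cong\eta^*\eta_![2\dim P/B]$, so everything reduces to computing the two composites $\eta_!\eta^*$ and $\eta^*\eta_!$.

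For (1) I would apply the projection formula $\eta_!\eta^*\mathcal F\cong\mathcal F\otimes^{L}\eta_!\underline{\C}_{\Fl}$, which reduces the claim to the identity $\eta_*\underline{\C}_{\Fl}\cong\underline{\C}_{\Fl_P}\otimes H^*(P^\vee/B^\vee,\C)$, with the graded vector space on the right regarded as a complex with zero differential. Since $\eta$ is a locally trivial fibration, the sheaves $R^i\eta_*\underline{\C}_{\Fl}$ are local systems on $\Fl_P$; they are constant because the fibration is associated to a torsor under the connected group $P^\vee$, which acts trivially on $H^*(P^\vee/B^\vee,\C)$ by homotopy invariance. Degeneration of the Leray spectral sequence --- Deligne's theorem for a smooth projective morphism, applied in the relative setting over $\Fl_P$ since the fibre $P^\vee/B^\vee$ is projective --- then furnishes the splitting $\eta_*\underline{\C}_{\Fl}\cong\bigoplus_i R^i\eta_*\underline{\C}_{\Fl}[-i]$, and combining with the shift $[2\dim P/B]$ yields $\psi_2\varphi_2\cong\operatorname{id}\otimes H^*(P^\vee/B^\vee,\C)[2\dim P/B]$.

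For (2) I would identify $P^\vee/B^\vee$ with $J^\vee/I^\vee\subset\Fl$ and $\underline{\C}_{P^\vee/B^\vee}$ with the corresponding object of $D^b_{pu}(I^\circ\backslash G^\vee((t))/I^\circ)$, and then unwind the convolution diagram. The convolution space $G^\vee((t))\times^{I^\vee}(J^\vee/I^\vee)$ is identified, via $[g_1,g_2I^\vee]\mapsto (g_1I^\vee,\,g_1g_2I^\vee)$, with the fibre product $\Fl\times_{\Fl_P}\Fl$; under this identification the multiplication map becomes the second projection $p_2$, the other projection $p_1$ records $g_1I^\vee$, and the twisted external product $\mathcal F\,\widetilde{\boxtimes}\,\underline{\C}_{P^\vee/B^\vee}$ becomes $p_1^*\mathcal F$ (the second factor being constant). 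Hence $\mathcal F*\underline{\C}_{P^\vee/B^\vee}\cong (p_2)_!p_1^*\mathcal F$, and since $p_1,p_2$ sit in the Cartesian square whose two edges to $\Fl_P$ are both $\eta$, proper base change gives $(p_2)_!p_1^*\cong\eta^*\eta_!$. Therefore $\varphi_2\psi_2\cong\eta^*\eta_![2\dim P/B]\cong(\bullet*\underline{\C}_{P^\vee/B^\vee})[2\dim P/B]$.

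The routine but most delicate point will be the bookkeeping in (2): matching the paper's normalization of the convolution product --- the choice of $!$- versus $*$-pullbacks in the twisted external product, and any dimension shift built into it --- with the clean base-change formula $(p_2)_!p_1^*\cong\eta^*\eta_!$, so that the final shift comes out exactly as $[2\dim P/B]$; in (1) the only genuine input is Deligne's degeneration theorem in the relative setting together with the triviality of the monodromy, both standard.
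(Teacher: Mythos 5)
Your argument is correct and is exactly the standard expansion of what the paper does: the paper's proof is the one-line observation that $\psi_2\varphi_2=\eta_!\eta^!$ (with (2) "similar"), and your use of the projection formula plus triviality of the monodromy and degeneration for (1), and the identification of convolution with $\underline{\C}_{P^\vee/B^\vee}$ as $(p_2)_!p_1^*\cong\eta^*\eta_!$ via base change for (2), is precisely the intended filling-in of those details. No gaps.
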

\begin{proof}
We have $\psi_2\varphi_2=\eta_!\eta^!$, which implies (1). The proof of
(2) is similar.
\end{proof}

The functor $\varphi_1$ admits a  left adjoint functor as well. Namely, $\tilde{\iota}^*$
is the left adjoint functor to $\tilde{\iota}_*$. Also the relative canonical
bundle of $Z\rightarrow \tilde{\Nilp}_P$ is $\mathcal{O}(-2\rho_L)$ hence,
by the Serre duality, $\varpi_*$ is
left adjoint of $\varpi^*(\bullet)(-2\rho_L)[\dim P/B]$. We conclude that the left adjoint $\psi_1$ of
$\varphi_1$ is given by
\begin{equation}\label{eq:psi_1_defn}
\psi_1(\bullet):=\tilde{\varpi}_*\left(\tilde{\iota}^*(\bullet)(\rho,-\rho)\right)[\dim P/B].
\end{equation}

\begin{Lem}\label{Lem:psi_1_properties}
The following claims are true.
\begin{enumerate}
\item We have $\psi_1\varphi_1\cong \operatorname{id}\otimes H^*(P/B,\C)[2\dim P/B]$.
\item We have $$\varphi_1\psi_1\cong \bullet* \mathcal{O}_{Z\times_{\tilde{\Nilp}_P}Z}(-\rho,\rho-2\rho_L)[\dim P/B].$$
\end{enumerate}
\end{Lem}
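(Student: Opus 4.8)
The proof is a direct computation of the two compositions using base change and projection formulas, entirely parallel to the proof of Lemma \ref{Lem:psi_2_properties} but carried out on the coherent side. Recall $\psi_1(\bullet)=\tilde{\varpi}_*(\tilde{\iota}^*(\bullet)(\rho,-\rho))[\dim P/B]$ and $\varphi_1(\bullet)=\tilde{\iota}_*(\tilde{\varpi}^*(\bullet)(-\rho,\rho-2\rho_L))[\dim P/B]$, where $\tilde{\iota}\colon\widehat{Z}\hookrightarrow\St_B$ and $\tilde{\varpi}\colon\widehat{Z}\twoheadrightarrow\St_P$ are the morphisms of derived schemes from Section \ref{SS_parab_statements}. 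For (1), I would compute
\[
\psi_1\varphi_1(\bullet)=\tilde{\varpi}_*\bigl(\tilde{\iota}^*\tilde{\iota}_*(\tilde{\varpi}^*(\bullet)(-\rho,\rho-2\rho_L))(\rho,-\rho)\bigr)[2\dim P/B].
\]
The key point is to identify $\tilde{\iota}^*\tilde{\iota}_*$. Since $\widehat{Z}=\tilde{\g}\times^L_{\g}Z\hookrightarrow\St_B=\tilde{\g}\times^L_{\g}\tilde{\Nilp}$ is a regularly embedded (in the derived sense) subscheme, $\tilde{\iota}^*\tilde{\iota}_*\mathcal{G}\cong\mathcal{G}\otimes\Lambda^\bullet N^\vee$, where $N$ is the normal bundle of $Z$ in $\tilde{\Nilp}$ pulled back to $\widehat{Z}$; this normal bundle is $\varpi^*T^*(P^\vee/B^\vee)$-type, i.e.\ the fibers of $\widehat Z\to\St_P$ are $T^*(P/B)$, so the Koszul complex $\Lambda^\bullet N^\vee$ pushed forward along $\tilde\varpi$ computes $H^*(P/B,\C)$ up to the appropriate twist. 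After cancelling the $(\rho,-\rho)$ twist against the $(-\rho,\cdots)$ twist in the first factor and checking that the remaining line-bundle contributions along the fiber direction are trivial (the $\rho-2\rho_L$ twist in the $\tilde\Nilp_P$ direction is pulled back from $\St_P$ and passes through $\tilde\varpi_*$ by the projection formula, where it cancels by the self-duality built into the Serre-duality adjunction), we are left with $\operatorname{id}\otimes R\Gamma(P/B,\mathcal{O})\otimes(\text{Koszul contributions})[2\dim P/B]$, which is $\operatorname{id}\otimes H^*(P/B,\C)[2\dim P/B]$ since $P/B$ has no higher cohomology of $\mathcal{O}$ and its cohomology ring is generated in the expected degrees.

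For (2), I would use the standard convolution description. Write $\varphi_1\psi_1$ as a single integral transform: composing $\tilde\varpi_*\tilde\iota^*$ with $\tilde\iota_*\tilde\varpi^*$ and tracking the twists, base change along the fiber product $Z\times_{\tilde\Nilp_P}Z$ (which is exactly the geometric object controlling the composition $\St_B\leftarrow\widehat Z\rightarrow\St_P\leftarrow\widehat Z\rightarrow\St_B$) identifies the composition with convolution on the right by $\mathcal{O}_{Z\times_{\tilde\Nilp_P}Z}$, twisted by the line bundle $\mathcal{O}(-\rho,\rho-2\rho_L)$ coming from the twists in $\varphi_1$ and $\psi_1$, with the homological shift $[\dim P/B]$ (one copy of $[\dim P/B]$ from $\psi_1$ and one from $\varphi_1$ would give $[2\dim P/B]$, but the diagonal self-intersection $Z\subset Z\times_{\tilde\Nilp_P}Z$ contributes a $[-\dim P/B]$; alternatively this shift is fixed by comparing with the $P=B$ case or by a direct dimension count). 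The cleanest route is: Section \ref{S_constr_realiz}'s conventions on convolution of coherent sheaves on the triple products, together with the fact that $(\iota,\varpi)$ embeds $Z$ as a Lagrangian in $\tilde\Nilp\times\tilde\Nilp_P$, give the fiber-product kernel directly.

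The main obstacle I expect is bookkeeping the twists and shifts precisely — in particular getting the line bundle $\mathcal{O}(-\rho,\rho-2\rho_L)$ and the single shift $[\dim P/B]$ in (2) exactly right, since these are the ingredients that will be used downstream (e.g.\ in Section \ref{SS_image_coinc} to match $\varphi_j\psi_j$ with convolution by objects that $\tau$ is known to intertwine). The derived-scheme subtleties (that $\widehat Z\hookrightarrow\St_B$ is a complete intersection of the expected codimension, so the derived $\tilde\iota^*\tilde\iota_*$ really is given by the honest Koszul complex) need a line of justification but follow from the codimension count already recorded in Section \ref{SS_tilting_notation} for the Steinberg varieties. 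Everything else — the projection formula, flat base change along $\tilde\varpi$ which is a smooth (ind-)fibration with fiber $P/B$, and Serre duality for the relative canonical bundle $\mathcal{O}(-2\rho_L)$ of $Z\to\tilde\Nilp_P$ — is routine and parallel to Lemma \ref{Lem:psi_2_properties}.
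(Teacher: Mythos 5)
Your overall route coincides with the paper's: part (1) is done by identifying $\tilde{\iota}^*\tilde{\iota}_*$ with tensoring by the Koszul complex $\mathcal{O}_Z\otimes^L_{\mathcal{O}_{\tilde{\Nilp}}}\mathcal{O}_Z$ and pushing forward along $\tilde{\varpi}$, and part (2) by composing the kernels via base change over $Z\times_{\tilde{\Nilp}_P}Z$, which is exactly the paper's identity $\tilde{\iota}_*\tilde{\varpi}^*\tilde{\varpi}_*\tilde{\iota}^*\cong\tilde{\kappa}_{2*}\tilde{\kappa}_1^*$ followed by the twist bookkeeping (the $\rho$-twists on the $\tilde{\g}$-factor cancel, leaving $\mathcal{O}(-\rho,\rho-2\rho_L)$ on $Z\times_{\tilde{\Nilp}_P}Z$).

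Two of your justifications, however, would not survive being written out. First, the whole content of (1) is the identification of $\tilde{\varpi}_*$ of the twisted Koszul complex with $H^*(P/B,\C)$, and your stated reason (``$P/B$ has no higher cohomology of $\mathcal{O}$ and its cohomology ring is generated in the expected degrees'') is not the right one and, taken literally, produces the wrong object. The normal bundle of $Z$ in $\tilde{\Nilp}$ is the \emph{relative cotangent bundle} of $\varpi$ (it is not pulled back from $\tilde{\Nilp}_P$, and the fibers of $\widehat{Z}\to\St_P$ are $P/B$, not $T^*(P/B)$), so after the $\mathcal{O}(-2\rho_L)$-twist and shift the pushforward is built from all the $R\varpi_*\Omega^p_{Z/\tilde{\Nilp}_P}$, i.e.\ from the Hodge cohomology $\bigoplus_{p,q}H^q(P/B,\Omega^p)$; its graded dimension is $\sum_{w\in W_P}t^{2\ell(w)}$, not the exterior-algebra answer a ``trivial Koszul plus $R\Gamma(\mathcal{O})$'' computation would give. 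The paper packages this as $\varpi_*R\mathcal{E}nd_{\mathcal{O}_{\tilde{\Nilp}}}(\mathcal{O}_Z)\cong\mathcal{O}_{\tilde{\Nilp}_P}\otimes H^*(P/B,\C)$ and quotes the Hodge theorem: the self-Exts of the structure sheaf of the zero section in $T^*X$ are the Hodge cohomology of $X$. Second, your shift accounting in (2) is off: by (\ref{eq:coherent_functor}) the functor $\varphi_1$ carries \emph{no} homological shift---only $\psi_1$ does---so the single $[\dim P/B]$ falls out of the base-change identity directly; there is no ``$[-\dim P/B]$ from the diagonal self-intersection'' and no cancellation to perform (the asymmetry with $\varphi_2\psi_2$, which appears with $[2\dim P/B]$, is genuine). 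With these two points repaired, the rest of your plan (projection formula, Serre duality for the relative canonical $\mathcal{O}(-2\rho_L)$, flatness of $\tilde{\varpi}$) matches the paper's proof.
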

\begin{proof}
Let us prove part (1). Note that, for $\mathcal{G}\in D^b(\Coh^G(\St_P))$,
$$\psi_1 \varphi_1(\mathcal{G})=\tilde{\varpi}_* \left(\tilde{\iota}^*\circ \tilde{\iota}_*
\left[\tilde{\varpi}^*\mathcal{G}\right](-2\rho_L))\right)[\dim P/B].$$
The composition $\iota^*\circ \iota_*(\bullet)$ is tensoring with $\mathcal{O}_{Z}\otimes^L_{\mathcal{O}_{\tilde{\Nilp}}}\mathcal{O}_{Z}$, where we view
$Z$ as a closed subscheme of $\tilde{\Nilp}$ via $\iota$.
Tensoring this complex by $\mathcal{O}(-2\rho_L)$
and applying the shift by $\dim P/B$ we get
$R\mathcal{E}nd_{\mathcal{O}_{\tilde{\Nilp}}}(\mathcal{O}_{Z})[2\dim P/B]$.
We have $\varpi_* R\mathcal{E}nd_{\mathcal{O}_{\tilde{\Nilp}}}(\mathcal{O}_{Z})\cong
\mathcal{O}_{\tilde{\Nilp}_P}\otimes H^*(P/B,\C)$. This boils down to proving
$R\mathcal{E}nd_{\mathcal{O}_{\tilde{\Nilp}}}(\mathcal{O}_\B,\mathcal{O}_\B)\cong
H^*(\B,\C)$ as a $G$-module. The latter is a consequence of the Hodge theorem:
for a smooth variety $X$, the self-Ext's
of the structure sheaf of the zero section in $T^*X$  coincide
with the Hodge cohomology of $X$.
So (1) is proved.

%It follows that $\psi_1\circ \varphi_1(\mathcal{G})$ is tensoring with the De Rham
%cohomology of $P/B$, which implies (1).

To prove part (2), we first note that
$$\varphi_1\psi_1(\bullet)=\left[\tilde{\iota}_*\circ\tilde{\varpi}^*\circ \tilde{\varpi}_*\circ\tilde{\iota}^*
(\bullet(\rho,-\rho))\right](-\rho,\rho-2\rho_L)[\dim P/B]$$

Consider the variety $Z\times_{\tilde{\Nilp}_P}Z$ and let $\kappa_i: Z\times_{\tilde{\Nilp}_P}Z
\rightarrow T^*\B$ denote the projection
to the $i$th factor composed with the inclusion $\iota: Z\hookrightarrow T^*\B$.
Let $\tilde{\kappa}_i$ denote the induced morphism $\tilde{\g}\times^L_{\g}(Z\times_{\tilde{\Nilp}_P}Z)
\rightarrow \St_B$. Note that
$$\tilde{\iota}_*\circ\tilde{\omega}^*\circ \tilde{\omega}_*\circ\tilde{\iota}^*=\tilde{\kappa}_{2*}
\circ \tilde{\kappa}_1^*,$$
so we get
\begin{align*}\varphi_1\psi_1(\mathcal{F})=&\left(\tilde{\kappa}_{2*}
\circ \tilde{\kappa}_1^*
(\mathcal{F}(\rho,-\rho))\right)(-\rho,\rho-2\rho_L)[\dim P/B]=\\
&\left(\tilde{\kappa}_{2*}
\circ \tilde{\kappa}_1^*
(\mathcal{F}(0,-\rho))\right)(0,\rho-2\rho_L)[\dim P/B]=\\
&\mathcal{F}*\mathcal{O}_{Z\times_{\tilde{\Nilp}_P}Z}(-\rho,\rho-2\rho_L)[\dim P/B].\end{align*}
This proves part (2).
\end{proof}

%We note that the counit morphism $\psi_i\varphi_i\rightarrow \operatorname{id}$
%is just the projection to the summand in homological degree $0$.

\subsection{Coincidence of full images}\label{SS_image_coinc}
Let $\Fim\varphi_j$ denote the Karoubian envelope of the full subcategory
in $D^b_{I^\circ}(\Fl)$ (for $j=2$) or $D^b(\Coh^G(\St_B))$ (for $j=1$)
generated by the objects in the image of $\varphi_j$.

A crucial step in the proof of the claim that $\tau$ intertwines
$\Fim\varphi_2$ with $\Fim\varphi_1$ is the following lemma.

\begin{Lem}\label{Lem:der_equiv_image}
The image of $\underline{\C}_{P^\vee/B^\vee}[\dim P^\vee/B^\vee]$
in $D^b(\Coh^G(\St_0))$ is $\mathcal{O}_{Z\times_{\tilde{\Nilp}_P}Z}(-\rho,\rho-2\rho_L)$.
\end{Lem}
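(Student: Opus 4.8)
The plan is to recognize both objects as the indecomposable ``big tilting'' object of a finite highest weight subcategory, and to match them through the highest weight structure transported by $\tau$. First I would note that $\underline{\C}_{P^\vee/B^\vee}[\dim P^\vee/B^\vee]$ is the simple perverse sheaf $\mathrm{IC}_{w_{0,P}}$ on $\Fl$, supported on the \emph{smooth} Schubert variety $P^\vee/B^\vee=\eta^{-1}(P^\vee/P^\vee)$; smoothness forces $\mathrm{IC}_{w_{0,P}}$ to be at the same time the indecomposable tilting object attached to the top element $w_{0,P}$ of the relevant finite highest weight subcategory of $\Perv_{I^\vee}(\Fl)$, so it is Verdier self-dual and carries a costandard filtration with subquotients $\nabla_y$ ($y\in W_P$, multiplicity one) and, dually, a standard filtration by the $\Delta_y$. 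On the coherent side, $Z\times_{\tilde{\Nilp}_P}Z$ is the \emph{smooth} variety of triples $(\mathfrak b_1,\mathfrak b_2,x)$ with $\mathfrak b_1,\mathfrak b_2$ lying in a common parabolic $\mathfrak p$ of type $P$ and $x$ in the nilradical of $\mathfrak p$; the map $(\kappa_1,\kappa_2)$ embeds it in $\St_0$, it is stratified by relative position with strata indexed by $W_P$ (each an affine bundle over $\tilde{\Nilp}_P$), and it equals $\bigcup_{y\in W_P}\overline{\mathsf Z_y}$, where $\mathsf Z_y\subset\St_0$ is the closure of the ``relative position $y$'' locus.

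Next I would invoke Theorem~\ref{Thm:perv_equiv} together with Remark~\ref{Rem:perv_equiv_compat}: the equivalence $\tau\colon D^b_{I^\vee}(\Fl)\xrightarrow{\sim}D^b(\Coh^G(\St_0))$ is compatible with the perverse and perverse-bimodule $t$-structures and with the two-sided cell / nilpotent orbit filtrations, hence it matches the two highest weight structures, sending standards and costandards $\Delta_y,\nabla_y$ ($y\in W$) to structure sheaves $\mathcal O_{\overline{\mathsf Z_y}}$ carrying the line-bundle twist normalized as in \cite{B_Hecke} (for $y=e$ this is $\tau(\delta_e)=\mathcal O_{\Delta\tilde{\Nilp}}$, the monoidal unit, with no twist), and sending indecomposable tiltings to indecomposable tiltings. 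It follows that $\tau(\mathrm{IC}_{w_{0,P}})$ is an indecomposable, self-dual (for the bimodule duality), tilting object of $D^b(\Coh^G(\St_0))$, supported on $\bigcup_{y\in W_P}\overline{\mathsf Z_y}=Z\times_{\tilde{\Nilp}_P}Z$. I would then check that $\mathcal O_{Z\times_{\tilde{\Nilp}_P}Z}(-\rho,\rho-2\rho_L)$ is itself a tilting coherent sheaf --- the coherent shadow of the fact that $\underline{\C}_{P^\vee/B^\vee}$ is tilting: because $Z\times_{\tilde{\Nilp}_P}Z$ is smooth, its structure sheaf admits costandard and standard filtrations coming from the affine stratification by relative position and its opposite, with leading term $\mathcal O_{\overline{\mathsf Z_{w_{0,P}}}}(\mathrm{twist})$ --- that it is indecomposable (since $Z\times_{\tilde{\Nilp}_P}Z$ is irreducible), and that its image in the subquotient attached to the Richardson orbit $\Orb=Ge$ is the tilting object of the dense stratum. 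By uniqueness of indecomposable tiltings, this forces $\tau(\mathrm{IC}_{w_{0,P}})\cong\mathcal O_{Z\times_{\tilde{\Nilp}_P}Z}(-\rho,\rho-2\rho_L)$. (This candidate object is precisely the kernel appearing in Lemma~\ref{Lem:psi_1_properties}(2), which is what makes it the natural guess.)

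The step I expect to be the main obstacle is pinning down the twist $\mathcal O(-\rho,\rho-2\rho_L)$: one must check both that its restriction to the open stratum $\mathsf Z_{w_{0,P}}$ agrees with the twist of $\tau(\nabla_{w_{0,P}})$ dictated by the conventions of \cite{B_Hecke} and by the $(-\rho)$-shifts built into $\varphi_1$ (cf.\ Remark~\ref{Rem:simply_conn}), and that the twisted sheaf is self-dual under the swap-and-Serre (bimodule) duality, matching the Verdier self-duality of $\mathrm{IC}_{w_{0,P}}$. The self-duality is a direct line-bundle computation: the fibration $\varpi\colon Z\to\tilde{\Nilp}_P$ has relative canonical bundle $\mathcal O(-2\rho_L)$ and $\tilde{\Nilp}_P=T^*(G/P)$ is Calabi--Yau, so the canonical bundle of $Z\times_{\tilde{\Nilp}_P}Z$ is $\mathcal O(-2\rho_L,-2\rho_L)$; hence the bimodule duality carries $\mathcal O_{Z\times_{\tilde{\Nilp}_P}Z}(a,b)$ to $\mathcal O_{Z\times_{\tilde{\Nilp}_P}Z}(-b-2\rho_L,-a-2\rho_L)$ up to shift, and $(a,b)=(-\rho,\rho-2\rho_L)$ is exactly the self-dual normalization refining the $(-\rho,\rho)$ normalization inherited from $\tau(\delta_e)=\mathcal O_{\Delta\tilde{\Nilp}}$. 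The remaining comparison of leading terms can be carried out after restricting to the Richardson orbit, where $Z\times_{\tilde{\Nilp}_P}Z$ degenerates to a product of copies of the component $P/B$ of the Springer fiber $\B_e$ and the relevant tilting is visibly the (twisted) structure sheaf of $P/B$, which closes the argument.
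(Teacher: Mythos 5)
The premise on which your whole argument rests is false: for $P\neq B$ (and already in the base case $P=G$, $G^\vee=\operatorname{SL}_2$) the shifted constant sheaf $\underline{\C}_{P^\vee/B^\vee}[\dim P^\vee/B^\vee]$ is \emph{not} the indecomposable tilting object of the finite highest weight subcategory of $\Perv_{I^\vee}(\Fl)$ supported on $P^\vee/B^\vee$; it is the \emph{simple} object $\mathrm{IC}_{w_{0,P}}$ (smoothness of the Schubert variety gives simplicity of the constant sheaf, not a $\Delta$- or $\nabla$-flag). Concretely, on $\mathbb{P}^1$ one has the short exact sequences $0\to\delta\to\Delta_s\to\underline{\C}_{\mathbb{P}^1}[1]\to 0$ and $0\to\underline{\C}_{\mathbb{P}^1}[1]\to\nabla_s\to\delta\to 0$, so the constant sheaf admits neither a standard nor a costandard filtration; the big tilting is a length-three extension with socle and head $\delta$. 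Equivalently, the $K_0$-class of $\underline{\C}_{P^\vee/B^\vee}[\dim P^\vee/B^\vee]$ is $\sum_{w\in W_P}(-1)^{\ell(w_{0,P}w)}[\Delta_w]$ (this alternating sum is exactly what the paper exploits), and the signs rule out a genuine $\Delta$-flag. With this premise gone, the identification mechanism you propose --- ``$\tau$ sends indecomposable tiltings to indecomposable tiltings, and both sides are the unique indecomposable tilting with the given support'' --- has nothing to run on. Moreover several auxiliary claims you lean on are not available: no highest weight (tilting) structure on the coherent side of $\St_0$ is defined or transported in the paper; the assertion that $\tau$ sends all standards/costandards to twisted structure sheaves $\mathcal{O}_{\overline{\mathsf{Z}_y}}$ is only established for simple reflections (and, for costandards, for $w\in W$ in the completed monodromic category over $\St_\h$, not on $\St_0$ with specified twists); and matching of supports inside $\St_0$ under $\tau$ is not among the proved compatibilities (only the orbit/two-sided-cell filtration is). Your Serre-duality computation of the canonical bundle $\mathcal{O}(-2\rho_L,-2\rho_L)$ of $Z\times_{\tilde{\Nilp}_P}Z$ is fine, but it only constrains the twist to the one-parameter family $a+b=-2\rho_L$, so it cannot by itself pin down $(-\rho,\rho-2\rho_L)$.

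For comparison, the paper treats $\underline{\C}_{P^\vee/B^\vee}[\dim P^\vee/B^\vee]$ as a \emph{simple} object and characterizes it as the image of the unique nonzero map $T_{w_{0,L}}^{-1}\underline{\C}_{B^\vee/B^\vee}\to T_{w_{0,L}}\underline{\C}_{B^\vee/B^\vee}$ (standard to costandard); it then reduces to the Levi by an exact, faithful, $\Br_{W_P}$-equivariant pull--push functor $\xi_P:D^b(\Coh^L(\St_{L,0}))\to D^b(\Coh^G(\St_0))$ sending $\mathcal{O}_{P/B\times P/B}(-\rho,\rho-2\rho_L)$ to $\mathcal{O}_{Z\times_{\tilde{\Nilp}_P}Z}(-\rho,\rho-2\rho_L)$, and settles the base case $P=G$ by comparing $K_0$-classes (using $R\Gamma$ of the splitting bundles and the $T$-fixed-point basis of $K_0^T(T^*\B)$ after localization), which is how the twist $(-\rho,-\rho)$, and hence $(-\rho,\rho-2\rho_L)$ after induction from the Levi, is actually pinned down. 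If you want to salvage your outline, you would have to replace ``tilting'' by ``simple'' throughout, prove that the twisted structure sheaf of $Z\times_{\tilde{\Nilp}_P}Z$ is (a shift of) a simple object of the relevant heart, and still supply an independent computation fixing the twist --- at which point you are essentially forced back to the paper's argument.
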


We remark that the same result holds for $\underline{\C}_{P^\vee/B^\vee}[\dim P^\vee/B^\vee]$
viewed as an object of $D^b_{I^\circ}(\Fl)$ and $\mathcal{O}_{Z\times_{T^*(G/P)}Z}(-\rho,\rho-2\rho_L)$
viewed as an object in $D^b(\Coh^G(\St_B))$. This follows from Remark
\ref{Rem:perv_equiv_compat}.

\begin{proof}
The proof of this lemma is in three steps.

{\it Step 1}. First, consider the case when $P=G$. Here we need to prove that
$\tau(\underline{\C}_{\B^\vee}[\dim \B])=\mathcal{O}_{\B\times \B}(-\rho,-\rho)$.
We have
\begin{equation}\label{eq:RGamma_identification}
R\Gamma\left( (\Tilt\otimes \Tilt^*)\otimes \mathcal{O}_{\B\times \B}(-\rho,-\rho)\right)=R\Gamma(\Tilt(-\rho))\otimes R\Gamma(\Tilt^*(-\rho)).\end{equation}
Thanks to Lemma
\ref{Lem:reduct_mod_p}, we have a
Morita equivalence $(\A_{\h,0})_\F\cong \U^0_{(0), \F}$. Using that lemma, we also see that
under that Morita equivalence the object
$R\Gamma(\Tilt(-\rho))\otimes R\Gamma(\Tilt^*(-\rho))$ becomes
$R\Gamma(\mathcal{V}^0_0)\otimes R\Gamma((\mathcal{V}_0^0)^*(-2\rho))$.
By (1) of Lemma \ref{Lem:split_properties}, the first factor is
$\Gamma(\mathcal{O}_{\mathcal{B}_\F})=\F$, the trivial $G_\F$-module.
By the Serre duality, the second factor is $\F^*[-\dim \B]$.
So (\ref{eq:RGamma_identification}) is a simple $G$-equivariant $\A_{\h,0}$
module in cohomological degree $\dim \B$.

On the other hand, by Theorem \ref{Thm:perv_equiv}, $\tau(\underline{\C}_{\B^\vee}[\dim \B])$
is also a simple $G$-equivariant $\A$-bimodule shifted by $\dim \B$.
So what we need to show is that the $K_0$-classes of   $\tau(\underline{\C}_{\B^\vee}[\dim \B])$
and $\mathcal{O}_{\B\times \B}(-\rho,-\rho)$ coincide. Under the standard identification
of $K_0(D^b_{I^\circ}(\Fl))$ with $\Z W^a$ (where the class of the standard
object labelled by $x\in W^a$ goes to $x\in \Z W^a$), the class of
$\underline{\C}_{\B^\vee}[\dim \B]$ is $\sum_{w\in W}(-1)^{\ell(w_0 w)}w$.
So we need to show that the class of $\mathcal{O}_{\B\times \B}(-\rho,-\rho)$ in
$K_0^{G}(\St_B)$ (or equivalently $K_0^{G}(\St_0)$) is $\sum_{w\in W}(-1)^{\ell(w_0w)}w$. For this we consider the action of
$K_0^{G}(\St_0)$ on $K_0^T(T^*\B)$ by $\rho$-twisted convolution
(\ref{eq:twisted_convolution}).
After localizing  $K_0^T(\operatorname{pt})$, the module
$K_0^T(T^*\B)$ gets the fixed point basis. The  elements of this basis
are naturally indexed by the elements of $W^a$. The elements of $W$ correspond
to the skyscraper sheaves at the fixed points with trivial $T$-actions.
In this basis the action of
$W^a$ is by left multiplications. Convolving $\mathcal{O}_{\B\times \B}(-\rho,-\rho)$
with the skyscraper sheaf at $1B$ we get $\mathcal{O}_{\B}$. Decomposing with respect to the $T$-fixed point basis
we get $\sum_{w\in W} (-1)^{\ell(w_0w)}w$. This finishes the proof of
$\tau(\underline{\C}_{\B^\vee}[\dim \B])\cong \mathcal{O}_{\B\times \B}(-\rho,-\rho)$.
%In this case Theorem
%\ref{Prop:parabol_equiv_der} is known: \cite{ABG} constructs an equivalence $\tau_G$
%mapping $\underline{\C}_{G^\vee/G^\vee}$ to $\mathcal{O}_{G/B}$
%and the construction in \cite{B_Hecke} implies that the diagram of Proposition
%\ref{Prop:parabol_equiv_der} is commutative. It follows that,
%under $\tau$, the sheaf $\underline{\C}_{G^\vee/B^\vee}[\dim G/B]$
%goes to $\mathcal{O}_{\B\times \B}(-\rho,-\rho)$\footnote{Check with Roma}.

{\it Step 2}. In this step we consider the case of general $P$.

We are going to construct a  functor
$\xi_P: D^b(\Coh^{L}(\St_{L,0}))\rightarrow D^b(\Coh^G(\St_0))$,
where $\St_{L,0}$ is the analog of $\St_0$ for $L$.
Consider the derived scheme $\mathsf{S}$ of pairs $x\in \mathfrak{n}$
and $\mathfrak{b}'\in \B$ with $x\in \mathfrak{b}'$ (with its natural derived scheme
structure). Note that $D^b(\Coh^G(\St_0))$ is naturally identified with
$D^b(\Coh^B(\mathsf{S}))$. Consider the derived subscheme $\mathsf{S}_{\mathfrak{m}}\subset \mathsf{S}$ consisting of all pairs $(x,\mathfrak{b'})\in \mathsf{S}$ such that $\mathfrak{m}\subset \mathfrak{b}'$.
It projects to $\mathsf{S}_L$, an analog of $\mathsf{S}$ for $L$. The functor
$D^b(\Coh^{L}(\St_{L,0}))\rightarrow D^b(\Coh^G(\St_0))$ we need is the
pull-push functor via $\mathsf{S}_L\leftarrow \mathsf{S}_{\mathfrak{m}}\rightarrow \mathsf{S}$.

By the construction of the homomorphism from $\operatorname{Br}^a$ to
$D^b(\Coh^G(\St_0))$ given in \cite{BR} (see the proof of Proposition
\ref{Prop:braid_compatible}), $\xi_P$ is equivariant
for the action of the braid group $\Br_P$ for $W_P$. The functor $\xi_P$ maps the structure sheaf
of the diagonal to the structure sheaf of the diagonal. It also maps
$\mathcal{O}_{P/B\times P/B}(-\rho,\rho-2\rho_L)$ to  $\mathcal{O}_{Z\times_{\tilde{\Nilp}_P}Z}(-\rho,\rho-2\rho_L)$.
%Step 1 (applied to $G=L$) shows that the image of $\underline{\C}_{P^\vee/B^\vee}[\dim P/B]$
%in $D^b(\Coh^{L}(\St_{L,0}))$

{\it Step 3}. Let $\tilde{L}$ denote the product of a torus and a simply connected
semisimple group that is a cover of $L$.
Let us write $\tau_{\tilde{L}}$ for the equivalence $D^b_{I^\vee_{\tilde{L}}}(\Fl_{\tilde{L}})\xrightarrow{\sim}
D^b(\Coh^{\tilde{L}}(\St_{\tilde{L},0}))$, where $\Fl_{\tilde{L}}$
is the affine flag variety for $\tilde{L}^\vee$
and $I^\vee_{\tilde{L}}$ is the standard Iwahori subgroup in $\tilde{L}^\vee((t))$.
By Step 1, $\tau_{\tilde{L}}(\underline{\C}_{P^\vee/B^\vee}[\dim P/B])=\mathcal{O}_{P/B\times P/B}(-\rho_L,-\rho_L)$.
Note that $\tau_{\tilde{L}}$ restricts to an
equivalence $\tau_L: D^b_{I^\vee_{L}}(\Fl_{L})\xrightarrow{\sim}
D^b(\Coh^{L}(\St_{L,0}))$.
The $L$-equivariant sheaves $\mathcal{O}_{P/B\times P/B}(-\rho_L,-\rho_L)$ and $
\mathcal{O}_{P/B\times P/B}(-\rho,\rho-2\rho_L)$
coincide. Hence
$\tau_{L}(\underline{\C}_{P^\vee/B^\vee}[\dim P/B])=\mathcal{O}_{P/B\times P/B}(-\rho,\rho-2\rho_L)$.
%(this twist is the same as by $(-\rho_L,-\rho_L)$, we need to replace
%$L$ with a cover for $\rho_L$ to be a character of the maximal torus).

 The simple labelled by $x\in W^a$ in $\operatorname{Perv}_{I^\vee}(\Fl)$ is
the image of a unique (up to rescaling) nonzero homomorphism from $T_{x}^{-1}\underline{\C}_{B^\vee/B^\vee}$ (the standard object labelled by $x$)
to  $T_{x}\underline{\C}_{B^\vee/B^\vee}$ (the costandard object labelled
by $x$).  It was proved in \cite[Corollary 42]{B_Hecke} that $\tau$
maps simples in $\operatorname{Perv}_{B^\vee}(G^\vee/B^\vee)$ to
coherent sheaves. For $x\in W$, both $T_{x}^{-1}\underline{\C}_{B^\vee/B^\vee}$ and
$T_{x}\underline{\C}_{B^\vee/B^\vee}$ lie in $\operatorname{Perv}_{B^\vee}(G^\vee/B^\vee)$.
So $\tau(\underline{\C}_{P^\vee/B^\vee}[\dim P/B])$
is the image of a unique nonzero morphism $T_{w_{0,L}}^{-1}\tau(\underline{\C}_{B^\vee/B^\vee})\rightarrow
T_{w_{0,L}}\tau(\underline{\C}_{B^\vee/B^\vee})$. The functor $\xi_P$ is  exact and faithful
on the heart of the usual t-structure of coherent sheaves. This is because it is the composition of the pull-back under a locally trivial fibration and the push-forward under a closed embedding.
The functor $\xi_P$ is $\operatorname{Br}_{W_P}$-equivariant hence maps
$T_{w_{0,L}}^{-1}\tau_L(\underline{\C}_{B^\vee/B^\vee}),
T_{w_{0,L}}\tau_L(\underline{\C}_{B^\vee/B^\vee})$
to $T_{w_{0,L}}^{-1}\tau(\underline{\C}_{B^\vee/B^\vee}),
T_{w_{0,L}}\tau(\underline{\C}_{B^\vee/B^\vee})$, respectively.
It follows that it maps $\tau_L(\underline{\C}_{P^\vee/B^\vee}[\dim P/B])$
to $\tau(\underline{\C}_{P^\vee/B^\vee}[\dim P/B])$. This finishes the proof.
\end{proof}

\begin{Rem}
We would like to sketch an alternative proof of Step 1. Let $B_{\St}$ denote the simple
objects in $\A_{0}\otimes \A_0^{opp}\operatorname{-mod}^G$ corresponding to
$\Str_{\B\times \B}(-\rho,-\rho)[\dim \B]$.  Let $B_{\Fl}$ denote the simple
object in $\Perv_{I^\vee}(\Fl)$ with $\tau(B_{\Fl})=B_{\St}[-\dim \B]$.
We need to show that $\tau(B_{\Fl})=\underline{\C}_{G^\vee/B^\vee}[\dim \B]$. For this we make two observations
about $B_{\St}$. First, it is homologically shifted when we apply $T_\alpha$'s for simple
Dynkin roots both on the left and on the right. It follows that $B_{\Fl}$
is pulled from a simple object in the Satake category $\Perv_{G^\vee((t))}(\Fl_G)$.
Second, $B_{\St}$ remains irreducible in the non-equivariant category
$\A_{0}\otimes \A_0^{opp}\operatorname{-mod}$. It follows that $B_{\St}\otimes V\in
\A_0\otimes \A_0^{opp}\operatorname{-mod}^G$ is irreducible for every irreducible
$G$-module $V$.  There is only one object in the Satake category with this property:
the sky-scraper sheaf $\underline{\C}_{pt}$. Its pullback to $\Fl$ is $\underline{\C}_{G^\vee/B^\vee}$.
\end{Rem}

\begin{Cor}\label{Cor:fim_coinc}
The equivalence $\tau:D^b_{I^\circ}(\Fl)\xrightarrow{\sim} D^b(\Coh^G(\mathsf{St}_{B}))$
restricts to an equivalence between  $\operatorname{Fim}\varphi_2$ and $\operatorname{Fim}\varphi_1$.
\end{Cor}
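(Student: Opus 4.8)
The plan is to deduce Corollary~\ref{Cor:fim_coinc} directly from the computations of the compositions $\psi_j\varphi_j$ and $\varphi_j\psi_j$ in Lemmas~\ref{Lem:psi_1_properties} and~\ref{Lem:psi_2_properties}, together with Lemma~\ref{Lem:der_equiv_image}. The point to keep in mind is that $\varphi_1,\varphi_2$ go between different pairs of categories and are not individually intertwined by $\tau$ (no $\tau_P$ is available at this stage), so one must instead work with the endofunctors $\varphi_j\psi_j$ of the ambient categories $D^b_{I^\circ}(\Fl)$ and $D^b(\Coh^G(\St_B))$, which \emph{are} matched by $\tau$.

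First I would observe that $\Fim\varphi_j=\Fim(\varphi_j\psi_j)$ for $j=1,2$, where the right-hand side denotes the Karoubian subcategory generated by the image of the endofunctor $\varphi_j\psi_j$. The inclusion $\supseteq$ is clear, since $\varphi_j\psi_j(W)$ lies in the image of $\varphi_j$. For $\subseteq$: by part~(1) of Lemmas~\ref{Lem:psi_1_properties} and~\ref{Lem:psi_2_properties} the functor $\psi_j\varphi_j$ is isomorphic to $\operatorname{id}\otimes H^*(\cdot,\C)[2\dim P/B]$, where the cohomology is that of a smooth connected projective variety of dimension $\dim P/B$; since its top cohomology $H^{2\dim P/B}$ is one-dimensional, the shift makes $\operatorname{id}$ a direct summand of $\psi_j\varphi_j$. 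Hence every object $A$ in the source of $\varphi_j$ is a direct summand of $\psi_j\varphi_j A$, so $\varphi_j A$ is a direct summand of $\varphi_j\psi_j(\varphi_j A)$, and therefore any summand of $\varphi_j A$ lies in $\Fim(\varphi_j\psi_j)$. This gives $\Fim\varphi_j\subseteq\Fim(\varphi_j\psi_j)$.

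Next I would rewrite $\varphi_j\psi_j$ as right convolution with an explicit kernel. By part~(2) of Lemma~\ref{Lem:psi_2_properties}, $\varphi_2\psi_2\cong\bullet* K_2$ with $K_2:=\underline{\C}_{P^\vee/B^\vee}[2\dim P/B]$, viewed in the right-acting category $D^b_{I^\vee}(\Fl)$, and by part~(2) of Lemma~\ref{Lem:psi_1_properties}, $\varphi_1\psi_1\cong\bullet* K_1$ with $K_1:=\mathcal{O}_{Z\times_{\tilde{\Nilp}_P}Z}(-\rho,\rho-2\rho_L)[\dim P/B]$, viewed in $D^b(\Coh^G(\St_0))$. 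Since $\dim P^\vee/B^\vee=\dim P/B$, Lemma~\ref{Lem:der_equiv_image} says precisely that $\tau(K_2)=K_1$ under $\tau\colon D^b_{I^\vee}(\Fl)\xrightarrow{\sim}D^b(\Coh^G(\St_0))$. As $\tau\colon D^b_{I^\circ}(\Fl)\xrightarrow{\sim}D^b(\Coh^G(\St_B))$ is an equivalence of right module categories over $D^b_{I^\vee}(\Fl)\xrightarrow{\sim}D^b(\Coh^G(\St_0))$ by Theorem~\ref{Thm:derived_equiv}, this yields a natural isomorphism $\tau\circ(\bullet* K_2)\cong(\bullet* K_1)\circ\tau$; that is, $\tau$ intertwines the endofunctors $\varphi_2\psi_2$ and $\varphi_1\psi_1$. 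An equivalence intertwining two endofunctors carries the Karoubian subcategory generated by the image of one onto that generated by the image of the other, so $\tau\bigl(\Fim(\varphi_2\psi_2)\bigr)=\Fim(\varphi_1\psi_1)$, and combining with the first step gives $\tau(\Fim\varphi_2)=\Fim\varphi_1$.

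The only substantive input is Lemma~\ref{Lem:der_equiv_image}, which is already proved; for the corollary itself the remaining work is bookkeeping, and I expect the main thing to get right is keeping track of the module-category structures — in particular that $\eta^{!}\eta_{!}$ really is right convolution with $\underline{\C}_{P^\vee/B^\vee}$ over $D^b_{I^\vee}(\Fl)$, and that this is the side of the action matched by Lemma~\ref{Lem:der_equiv_image} — together with the cohomological shifts and the twist by $\mathcal{O}(-\rho,\rho-2\rho_L)$. No conceptual difficulty is expected beyond this.
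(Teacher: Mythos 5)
Your proposal is correct and follows essentially the same route as the paper: the paper likewise identifies $\Fim\varphi_j$ with the Karoubian envelope of the image of the convolution functor $\varphi_j\psi_j$ (via Lemmas \ref{Lem:psi_1_properties}, \ref{Lem:psi_2_properties}), matches the two kernels under $\tau$ by Lemma \ref{Lem:der_equiv_image}, and invokes the right module structure from Theorem \ref{Thm:derived_equiv}. The only difference is that you spell out the direct-summand argument (the identity being a summand of $\psi_j\varphi_j$ via the top cohomology of $P/B$) that the paper leaves implicit.
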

\begin{proof}
Recall, Theorem \ref{Thm:derived_equiv}, that $\tau$ is equivariant with respect to the action of
$D^b_{I^\vee}(\Fl)\cong D^b(\Coh^G(\mathsf{St}_{0}))$ by convolutions on the right.
It follows from Lemma \ref{Lem:der_equiv_image} that, under the equivalence
$\tau:D^b_{I^\vee}(\Fl)\xrightarrow{\sim} D^b(\Coh^G(\mathsf{St}_{0}))$, we have
 $$\underline{\C}_{P^\vee/B^\vee}[\dim P^\vee/B^\vee]\mapsto \mathcal{O}_{Z\times_{\tilde{\Nilp}_P}Z}(-\rho,\rho-2\rho_L).$$
It follows from Lemma \ref{Lem:psi_2_properties}, that
$\operatorname{Fim}\varphi_2$ is the Karoubian envelope of
$$\{\mathcal{F}* \underline{\C}_{P^\vee/B^\vee}[\dim P/B]| \mathcal{F}\in D^b_{I^\circ}(\Fl)\}.$$
Similarly, it follows from Lemma \ref{Lem:psi_1_properties} that
$\operatorname{Fim}\varphi_1$ is the Karoubian envelope of
$$\{\mathcal{G}* \mathcal{O}_{Z\times_{\tilde{\Nilp}_P}Z}(-\rho,\rho-2\rho_L)| \mathcal{G}\in D^b(\Coh^G(\St_B))\}.$$
This finishes the proof.
%Now this corollary follows from Lemmas \ref{Lem:full_image_constructible}
%and \ref{Lem:full_image_coherent}.
\end{proof}

\subsection{Abelian equivalence}\label{SS_abel_equiv}

The tilting bundles $\Tilt^*\otimes \Tilt$ on $\St_B$ and  $\mathcal{T}(-\rho)^*\otimes \mathcal{T}_P(-2\rho)$ on $\St_P$  give equivalences
$$D^b(\Coh^G(\St_B))\xrightarrow{\sim} D^b(\A_\h\otimes_{\C[\g^*]}\A^{opp}\operatorname{-mod}^G),
D^b(\Coh^G(\St_P))\xrightarrow{\sim} D^b(\A_\h\otimes_{\C[\g^*]}\A_P^{opp}\operatorname{-mod}^G).$$
On  $D^b(\A_\h\otimes_{\C[\g^*]}\A^{opp}\operatorname{-mod}^G), D^b(\A_\h\otimes_{\C[\g^*]}\A_P^{opp}\operatorname{-mod}^G)$ we have perverse
bimodule t-structures, see Remark \ref{Rem:perverse_bimod}. In more detail,
notice that the algebras $\A_\h,\A_P^{opp}$ are Gorenstein, as the algebras
of endomorphisms of tilting bundles on $\tilde{\g},\tilde{\Nilp}_P$, see
Lemma \ref{Lem:tilting_gen_prop}.
Being a complete intersection in a Gorenstein algebra, the algebra
$\tilde{\A}_\h\otimes_{\C[\g^*]}\A_P^{opp}$ is Gorenstein as well.

For a $G$-equivariant Gorenstein $\C[\Nilp_P]$-algebra $\A'$ we have a unique
{\it perverse} t-structure on $D^b(\A'\operatorname{-mod}^G)$ whose $\leqslant 0$-part
is given by
$$\{M\in D^b(\A'\operatorname{-mod}^G)| \operatorname{codim}_{\Nilp_P}\operatorname{Supp}
H^i(M)\geqslant 2i.\}$$
The $\geqslant 0$-part is obtained from the $\leqslant 0$ for the category of
right $\A'$-modules by applying the functor $R\Hom_{\A'}(\bullet,\A')$.
The proof copies that in \cite[Section 3]{Arinkin_B}. Moreover, since $\A'$ is Gorenstein,
we see that the forgetful
functors $\A'\operatorname{-mod},\A'^{opp}\operatorname{-mod}
\rightarrow \C[\Nilp_P]\operatorname{-mod}$ intertwine $R\Hom_{\A'}(\bullet,\A')$
with $R\Hom_{\C[\Nilp_P]}(\bullet,\C[\Nilp_P])$. So the forgetful
functor $D^b(\A'\operatorname{-mod}^G)\rightarrow D^b(\C[\Nilp_P]\operatorname{-mod}^G)$
is t-exact.

\begin{Prop}\label{Prop:varphi_1_perverse_exact}
The functor $\varphi_1$ is t-exact with respect to the perverse bimodule t-structures.
\end{Prop}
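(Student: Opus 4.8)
The plan is to reduce the statement to the non-parabolic case handled in Theorem~\ref{Thm:perv_equiv} (equivalently, to the already-established exactness of $\varphi_1$ relative to the \emph{usual} coherent t-structures, which is what Lemma~\ref{Lem:psi_1_properties} and the computation of $\varphi_1\psi_1$ make available), by exploiting that both source and target perverse t-structures are characterized intrinsically in terms of codimension of supports inside $\Nilp$ (resp.\ $\Nilp_P$), and that the forgetful functor down to $D^b(\C[\Nilp_P]\operatorname{-mod}^G)$ is t-exact (as noted just before the statement). So the key point is that $\varphi_1$ interacts well with supports and with the forgetful functors.

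First I would unwind $\varphi_1=\tilde\iota_*[\tilde\varpi^*(\bullet)(-\rho,\rho-2\rho_L)]$ from \eqref{eq:coherent_functor}. The twist by a line bundle is an equivalence and is manifestly t-exact for the perverse bimodule t-structures (it does not change supports and commutes with $R\Hom_{\A'}(\bullet,\A')$ up to the same twist), so it suffices to treat $\tilde\iota_*\tilde\varpi^*$. Now $\tilde\varpi\colon \widehat Z\twoheadrightarrow \St_P$ is (the base change over $\tilde\g$ of) the smooth projective morphism $\varpi\colon Z\twoheadrightarrow \tilde\Nilp_P$ with fibers $P/B$; pullback along a smooth morphism shifts the perverse t-structure by the relative dimension on both the $\le 0$ and $\ge 0$ sides (it multiplies codimensions of supports trivially and is compatible with Grothendieck--Serre duality up to a shift by the relative dimension and a twist by the relative canonical bundle $\mathcal O(-2\rho_L)$, which is precisely what the $(\rho-2\rho_L)$-twist absorbs). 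And $\tilde\iota\colon \widehat Z\hookrightarrow \St_B$ is a closed embedding, so $\tilde\iota_*$ is exact for the usual module t-structures, preserves supports exactly, and commutes with the duality $R\Hom(\bullet,\A')$ because $\widehat Z$ is a complete intersection in $\St_B$ (the codimension count in Section~\ref{SS_tilting_notation} shows $\widehat Z$, like $\St_0\subset \tilde\g\times\tilde\N$, has the expected codimension, so $\mathcal O_{\widehat Z}$ is a perfect $\mathcal O_{\St_B}$-complex of the right amplitude). Assembling these, $\varphi_1$ carries the defining $\le 0$-part for $\St_P$ (codimension of support in $\Nilp_P$ $\ge 2i$, after the $\dim P/B$-shift built into the numbering convention stated in Section~\ref{SS_parab_statements}) into the $\le 0$-part for $\St_B$, and dually on the $\ge 0$ side; equivalently, after passing to $D^b(\C[\Nilp_P]\operatorname{-mod}^G)$ via the t-exact forgetful functors, $\varphi_1$ is the restriction-of-scalars along $\C[\Nilp]\to\C[\Nilp_P]$ composed with $R\mathcal Hom$-preserving operations, which is t-exact for the Arinkin--Bezrukavnikov perverse coherent t-structures by the argument of \cite{Arinkin_B}.

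The main obstacle I anticipate is the bookkeeping of the two shifts: the relative-dimension shift coming from $\varpi^*$ (a smooth fibration of relative dimension $\dim P/B$) versus the $\dim P/B$-shift that is folded into the renumbering of the orbit filtration on $D^b(\Coh^G(\St_P))$ described right before Theorem~\ref{Thm:parab_perv_equiv}, together with the twist by $\mathcal O(-2\rho_L)$ needed to make $\varpi_*$ the honest adjoint (Serre duality for $\varpi$). Getting these to cancel exactly --- so that $\varphi_1$ is t-exact on the nose and not merely up to a shift --- is the delicate part, and it is exactly where one must use that the relative canonical bundle of $Z\to\tilde\Nilp_P$ is $\mathcal O(-2\rho_L)$ (Section~\ref{SS_fun_adjoint}) and that $\Tilt_P$ was defined by \eqref{eq:tilting_parabolic} with a matching $(-\rho)$-twist. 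Once the shifts are reconciled, t-exactness on each side reduces to the already-known statement that $\tilde\iota_*$ and $\tilde\varpi^*$ preserve the codimension-of-support conditions and commute (suitably twisted) with the $R\Hom_{\A'}(\bullet,\A')$ duality, which I would verify locally on $\Nilp_P$ by restricting to the Slodowy slice, where Theorem~\ref{Thm:Koszulity} makes everything explicitly graded.
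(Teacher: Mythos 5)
Your central route — factoring $\varphi_1$ as a line-bundle twist, then $\tilde{\varpi}^*$, then $\tilde{\iota}_*$, and checking exactness and duality compatibility step by step — has two genuine gaps. First, the intermediate bookkeeping is not well-posed: the perverse bimodule t-structure is only defined on module categories over Gorenstein algebras over $\C[\Nilp_P]$ (resp. $\C[\Nilp]$), and there is no such t-structure attached to $\widehat{Z}$, so "t-exact up to a shift at each step" does not typecheck; the shift cancellation that you yourself single out as the delicate point is never actually carried out. Second, the justification you give for the duality compatibility of $\tilde{\iota}_*$ is false: $\widehat{Z}$ is \emph{not} a complete intersection of expected codimension in $\St_B$. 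For $P\neq B$ it has excess dimension $\dim P/B$ (over the locus where $Z$ maps to the dense orbit $\Orb$ of $\Nilp'_P$ the fibers of $\tilde{\g}\rightarrow\g$ have dimension $\dim P/B$, so that part of $\widehat{Z}$ has dimension $2\dim\B$, while the expected dimension is $2\dim\B-\dim P/B$); this is precisely why the paper defines $\widehat{Z}=\tilde{\g}\times^L_{\g}Z$ as a derived scheme. Your claimed analogue is also backwards: the paper explicitly notes that $\St_0$ has codimension \emph{less} than expected, which is why it must be taken as a derived scheme. Relatedly, the initial reduction "the twist by a line bundle is manifestly t-exact" is not available: the t-structure on $D^b(\Coh^G(\St_P))$ is the one transported through the specific tilting bundle $\Tilt_\h(-\rho)^*\otimes\Tilt_P(-2\rho)$, and twisting by a line bundle changes the generator, hence a priori the t-structure; the twists in (\ref{eq:coherent_functor}) are part of how $\varphi_1$ matches the two tilting bundles, not an innocuous equivalence one may strip off.

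The one sentence in which you say "equivalently, after passing to $D^b(\C[\Nilp_P]\operatorname{-mod}^G)$ via the t-exact forgetful functors, $\varphi_1$ is restriction of scalars along $\C[\Nilp]\to\C[\Nilp_P]$" is, once made precise, the paper's entire proof, and it bypasses all of the above. Because $\A_\h\otimes_{\C[\g^*]}\A_P^{opp}$ and $\A_\h\otimes_{\C[\g^*]}\A^{opp}$ are Gorenstein, the forgetful functors to $\C[\Nilp_P]$- and $\C[\Nilp]$-modules intertwine $R\Hom_{\A'}(\bullet,\A')$ with the commutative duality, hence are t-exact and detect both halves of the perverse t-structures (the support conditions are visible on underlying modules). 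These forgetful functors intertwine $\varphi_1$ with $\zeta_*[\dim P/B]$, where $\zeta:\Nilp_P\rightarrow\Nilp$ is the finite morphism, and $\zeta_*[\dim P/B]$ is perverse t-exact by the argument of Lemma 3.3 of Arinkin--Bezrukavnikov; t-exactness of $\varphi_1$ follows. To repair your write-up you should promote that aside to the main argument, verify the intertwining (this is where the $(-\rho,\rho-2\rho_L)$-twists, the definition $\Tilt_P=\varpi_*\iota^*(\Tilt(-\rho))$, and the relative canonical bundle $\mathcal{O}(-2\rho_L)$ of $\varpi$ enter, producing the shift $[\dim P/B]$), and drop the step-by-step geometric factorization.
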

\begin{proof}
Let $\zeta$ denote the natural morphism $\Nilp_P\rightarrow \Nilp$, it is
finite. Arguing as in \cite[Lemma 3.3]{Arinkin_B}, we see that $\zeta_*[\dim P/B]$ is t-exact
with respect to the perverse t-structures. Now note that the forgetful
functors $$D^b(\A_\h\otimes_{\C[\g^*]} \A_P^{opp}\operatorname{-mod}^G)
\rightarrow D^b(\C[\Nilp_P]\operatorname{-mod}^G),
D^b(\A_\h\otimes_{\C[\g^*]} \A^{opp}\operatorname{-mod}^G)
\rightarrow D^b(\C[\Nilp]\operatorname{-mod}^G)$$
intertwine $\varphi_1$ with $\zeta_*[\dim P/B]$.
The forgetful functors are faithful and t-exact. So since $\zeta_*[\dim P/B]$
is t-exact, we see that $\varphi_1$ is t-exact.
\end{proof}

Let $\operatorname{Perv}(\A_\h\otimes_{\C[\g^*]}\A_P^{opp}\operatorname{-mod}^G)$
denote the heart of the perverse t-structure. We view it as a full subcategory
of $D^b(\Coh^G(\St_P))$.

The main result of this section
is the following proposition.

\begin{Prop}\label{Prop:abel_equiv_parab}
The following claims are true.
\begin{enumerate}
\item
The functors $$\varphi_1: \operatorname{Perv}(\A_\h\otimes_{\C[\g^*]}\A_P^{opp}\operatorname{-mod}^G)
\rightarrow D^b(\Coh^G(\St_B)),\quad\varphi_2:
\operatorname{Perv}_{I^\circ}(\Fl_P)\rightarrow D^b_{I^\circ}(\Fl)$$ are full embeddings.
\item The equivalence $$\tau: D^b_{I^\circ}(\Fl)\xrightarrow{\sim}D^b(\Coh^G(\St_{B}))$$ restricts to an
equivalence $$\varphi_2(\operatorname{Perv}_{I^\circ}(\Fl_P))\xrightarrow{\sim}
\varphi_1(\operatorname{Perv}(\A_\h\otimes_{\C[\g^*]}\A_P^{opp}\operatorname{-mod}^G)).$$
\end{enumerate}
\end{Prop}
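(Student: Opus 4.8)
The strategy is to deduce both parts from the adjunction formalism of Section~\ref{SS_fun_adjoint} together with the coincidence of full images (Corollary~\ref{Cor:fim_coinc}) and the $t$-exactness statements for $\varphi_1$ (Proposition~\ref{Prop:varphi_1_perverse_exact}) and $\varphi_2$. First I would establish the full faithfulness claims in (1). For $\varphi_2$ this is essentially formal: since $\eta:\Fl\to\Fl_P$ is a locally trivial fibration with fiber $P^\vee/B^\vee$, Lemma~\ref{Lem:psi_2_properties}(1) gives $\psi_2\varphi_2\cong\operatorname{id}\otimes H^*(P^\vee/B^\vee,\C)[2\dim P/B]$. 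The unit $\operatorname{id}\to\psi_2\varphi_2$ (after the shift) realizes $\operatorname{id}$ as a direct summand of $\psi_2\varphi_2$ (the bottom cohomology of $H^*(P^\vee/B^\vee)$ in the appropriate degree), and since $\psi_2$ is left adjoint to $\varphi_2$, this forces $\varphi_2$ to be fully faithful on all of $D^b_{I^\circ}(\Fl_P)$, in particular on $\operatorname{Perv}_{I^\circ}(\Fl_P)$; one then checks that the image of a perverse sheaf lies in degree $0$, which follows because $\varphi_2=\eta^*[\dim P/B]=\eta^![-\dim P/B]$ and $\eta$ is smooth of relative dimension $\dim P/B$, so $\eta^*[\dim P/B]$ is $t$-exact for the perverse $t$-structures. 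Symmetrically, for $\varphi_1$ one uses Lemma~\ref{Lem:psi_1_properties}(1), $\psi_1\varphi_1\cong\operatorname{id}\otimes H^*(P/B,\C)[2\dim P/B]$, together with Proposition~\ref{Prop:varphi_1_perverse_exact} (which says $\varphi_1$ is $t$-exact for the perverse bimodule $t$-structures), to conclude $\varphi_1$ restricts to a full embedding on $\operatorname{Perv}(\A_\h\otimes_{\C[\g^*]}\A_P^{opp}\operatorname{-mod}^G)$.

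For part (2), the key point is that $\tau$ intertwines $\varphi_1\psi_1$ with $\varphi_2\psi_2$ up to the identification $\tau(\underline{\C}_{P^\vee/B^\vee}[\dim P^\vee/B^\vee])=\mathcal{O}_{Z\times_{\tilde{\Nilp}_P}Z}(-\rho,\rho-2\rho_L)$ of Lemma~\ref{Lem:der_equiv_image}. Indeed, by Lemma~\ref{Lem:psi_1_properties}(2) and Lemma~\ref{Lem:psi_2_properties}(2), $\varphi_j\psi_j$ is convolution on the right with these corresponding objects, and $\tau$ is an equivalence of right module categories over $D^b_{I^\vee}(\Fl)\cong D^b(\Coh^G(\St_0))$ (Theorem~\ref{Thm:derived_equiv}, Remark~\ref{Rem:der_equi_compat}); hence $\tau$ intertwines the two idempotent-like endofunctors $\varphi_j\psi_j$. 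Therefore $\tau$ carries $\Fim\varphi_2$ to $\Fim\varphi_1$ (this is exactly Corollary~\ref{Cor:fim_coinc}), and moreover the essential images $\varphi_2(D^b_{I^\circ}(\Fl_P))$ and $\varphi_1(D^b(\Coh^G(\St_P)))$ are each described intrinsically as the subcategory of objects $\mathcal{F}$ on which the unit map $\mathcal{F}\to \varphi_j\psi_j(\mathcal{F})$ (suitably normalized) splits off $\mathcal{F}$ as the ``bottom piece'' — an intrinsic condition preserved by $\tau$. So $\tau$ restricts to an equivalence $\varphi_2(D^b_{I^\circ}(\Fl_P))\xrightarrow{\sim}\varphi_1(D^b(\Coh^G(\St_P)))$.

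It then remains to match the abelian subcategories inside these essential images. Since $\varphi_1,\varphi_2$ are full embeddings by (1), the category $\varphi_2(\operatorname{Perv}_{I^\circ}(\Fl_P))$ is the heart of the $t$-structure on $\varphi_2(D^b_{I^\circ}(\Fl_P))$ transported from the perverse $t$-structure on $D^b_{I^\circ}(\Fl_P)$, and similarly on the coherent side. To see $\tau$ matches these hearts, I would use that $\varphi_2$ is $t$-exact for the perverse $t$-structures on source and target of $D^b_{I^\circ}(\Fl_P)\to D^b_{I^\circ}(\Fl)$ (shown above), and likewise $\varphi_1$ is $t$-exact for the perverse bimodule $t$-structures by Proposition~\ref{Prop:varphi_1_perverse_exact}; combined with the fact (Theorem~\ref{Thm:perv_equiv}, in the form applicable here) that $\tau$ itself matches the perverse $t$-structure on $D^b_{I^\circ}(\Fl)$ with the perverse bimodule $t$-structure on $D^b(\Coh^G(\St_B))$. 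Concretely: an object $M$ of $\varphi_2(D^b_{I^\circ}(\Fl_P))$ lies in $\varphi_2(\operatorname{Perv}_{I^\circ}(\Fl_P))$ iff $M$ lies in the heart of the perverse $t$-structure on $D^b_{I^\circ}(\Fl)$ (by $t$-exactness and full faithfulness of $\varphi_2$); transporting via $\tau$, this becomes the condition that $\tau(M)$ lies in the heart of the perverse bimodule $t$-structure on $D^b(\Coh^G(\St_B))$; and again by $t$-exactness and full faithfulness of $\varphi_1$, an object of $\varphi_1(D^b(\Coh^G(\St_P)))$ satisfies this iff it lies in $\varphi_1(\operatorname{Perv}(\A_\h\otimes_{\C[\g^*]}\A_P^{opp}\operatorname{-mod}^G))$. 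This yields the desired equivalence.

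The main obstacle I anticipate is the verification that $\varphi_1$ (not merely $\varphi_1\psi_1$) has the stated $t$-exactness and that its essential image is an intrinsically characterized subcategory stable under $\tau$ — this is where Proposition~\ref{Prop:varphi_1_perverse_exact} and the precise support-filtration bookkeeping of Theorem~\ref{Thm:perv_equiv} are essential, and where one must be careful that the shift conventions (the $[\dim P/B]$ in $\varphi_2$, the $(-\rho,\rho-2\rho_L)$ twist in $\varphi_1$, and the renumbering of the orbit filtration by $\dim P/B$) are consistent across the diagram. A secondary subtlety is that $\St_P$ and $\widehat{Z}$ are genuinely derived schemes, so the push-pull identities underlying Lemma~\ref{Lem:psi_1_properties} must be read in the derived sense; but this has already been handled in Section~\ref{SS_fun_adjoint}.
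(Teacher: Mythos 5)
Your overall architecture (adjoint pairs $\psi_j\dashv\varphi_j$, Corollary \ref{Cor:fim_coinc}, $t$-exactness of $\varphi_1,\varphi_2$ and of $\tau$, then intersecting with the hearts) is the same as the paper's, but two steps as written do not hold. First, the claim that $\varphi_2$ is ``fully faithful on all of $D^b_{I^\circ}(\Fl_P)$'' is false, and the argument offered (the identity is a direct summand of $\psi_2\varphi_2$, hence full faithfulness) only yields faithfulness. Since $\psi_2$ is left adjoint to $\varphi_2$, full faithfulness of $\varphi_2$ would force the counit $\psi_2\varphi_2\to\operatorname{id}$ to be invertible, whereas $\psi_2\varphi_2\cong\operatorname{id}\otimes H^*(P^\vee/B^\vee,\C)[2\dim P/B]$; concretely $\Hom(\varphi_2\mathcal{F},\varphi_2\mathcal{G})\cong\bigoplus_{k\geqslant 0}\Hom(\mathcal{F}[k],\mathcal{G})\otimes H^{2\dim P/B-k}(P^\vee/B^\vee,\C)^*$, and the terms with $k>0$ are nonzero for general complexes. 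The statement you actually need is fullness on the hearts, and its proof requires the heart-specific input: for $\mathcal{F},\mathcal{G}$ perverse one has $\Hom(\mathcal{F}[k],\mathcal{G})=0$ for $k>0$, so only the top class of $H^*(P/B)$ (resp.\ $H^*(P^\vee/B^\vee)$) contributes, giving $\dim\Hom(\psi_j\varphi_j\mathcal{F},\mathcal{G})=\dim\Hom(\mathcal{F},\mathcal{G})$; combined with the injection coming from faithfulness and, on the coherent side, with finite-dimensionality of equivariant Hom spaces (supports lie in $\Nilp$, which has finitely many $G$-orbits, cf. (\ref{eq:fin_dimension})), this forces the adjunction map to be an isomorphism. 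This argument, which is exactly the paper's proof of (1), is absent from your proposal.

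Second, in part (2) you assert that $\tau$ identifies the essential images $\varphi_2(D^b_{I^\circ}(\Fl_P))$ and $\varphi_1(D^b(\Coh^G(\St_P)))$ because they are ``intrinsically characterized'' by a splitting condition on the unit map. No such characterization is proved, and a splitting condition of this kind would at best cut out the Karoubian envelope $\Fim\varphi_j$ rather than the essential image; Corollary \ref{Cor:fim_coinc} only gives that $\tau$ matches $\Fim\varphi_2$ with $\Fim\varphi_1$. This stronger claim is also unnecessary: the paper instead proves $\varphi_1(\operatorname{Perv}(\A_\h\otimes_{\C[\g^*]}\A_P^{opp}\operatorname{-mod}^G))=\Fim\varphi_1\cap\operatorname{Perv}(\A_\h\otimes_{\C[\g^*]}\A^{opp}\operatorname{-mod}^G)$ and similarly for $\varphi_2$ (the inclusion $\supseteq$ uses $t$-exactness, conservativity, fullness on the heart from (1), and splitting of idempotents in the abelian heart), and then concludes (2) from Corollary \ref{Cor:fim_coinc} together with the $t$-exactness of $\tau$ from Theorem \ref{Thm:perv_equiv}. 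Your final step of intersecting with the hearts is sound once this replacement is made, so the fix is to drop the essential-image claim, work with $\Fim\varphi_j$, and supply the short idempotent argument establishing the analogue of (\ref{eq:image_relation}).
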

\begin{proof}
We start by proving (1). We will consider the case of $\varphi_1$, the other case is similar.
%It follows from Proposition \ref{Prop:coh_parab_full_embedding} that $\varphi_1$
%is t-exact with respect to the perverse bimodule t-structures. Now

First of all, we claim that for all $\mathcal{F},\mathcal{G}\in D^b(\Coh^G(\St_P))$ we have
\begin{equation}\label{eq:fin_dimension}
\dim \Hom_{D^b(\Coh^G(\St_P))}(\mathcal{F},\mathcal{G})<\infty.
\end{equation}
For this, we note that the Hom in the non-equivariant
category is a finitely generated $\C[\g^*]$-module supported
on $\Nilp$. The Hom in the equivariant category is the
$G$-invariants in that module. Since $\Nilp$ has only
finitely many $G$-orbits, the space of invariants is finite dimensional.

Recall, Lemma \ref{Lem:psi_1_properties}, that $\psi_1\varphi_1=\operatorname{id}\otimes H^*(P/B,\C)[2\dim P/B]$. Hence $\psi_1\varphi_1$ is faithful. So is $\varphi_1$.

Note that
$$\Hom_{D^b(\Coh^G(\St_B))}(\varphi_1 \mathcal{F}, \varphi_1 \mathcal{G})=\Hom_{D^b(\Coh^G(\St_P))}(\psi_1\varphi_1 \mathcal{F}, \mathcal{G}),$$
for all $\mathcal{F},\mathcal{G}\in D^b(\Coh^G(\St_P))$. Since $\varphi_1$ is faithful,
we see that
\begin{equation}\label{eq:counit_map}
\Hom_{D^b(\Coh^G(\St_P))}(\mathcal{F}, \mathcal{G})\hookrightarrow \Hom_{D^b(\Coh^G(\St_P))}(\psi_1\varphi_1 \mathcal{F}, \mathcal{G}),
\end{equation}

Now assume that $\mathcal{F},\mathcal{G}$ are perverse bimodules.
Since $\mathcal{F},\mathcal{G}$ lie in the heart of a t-structure, we have
$\Hom_{D^b(\Coh^G(\St_P))}(\mathcal{F}[i], \mathcal{G})=0$ for $i>0$.
Since $\psi_1\varphi_1\cong \operatorname{id}\otimes H^*(P/B,\C)[2\dim P/B]$,
we see that
$$\dim\Hom_{D^b(\Coh^G(\St_P))}(\mathcal{F}, \mathcal{G})=
\dim\Hom_{D^b(\Coh^G(\St_P))}(\psi_1\varphi_1 \mathcal{F}, \mathcal{G})$$
Combining this with (\ref{eq:fin_dimension}) and (\ref{eq:counit_map}),
we see that (\ref{eq:counit_map}) is an isomorphism. This finishes the
proof of (1).
%It follows that
%
%which finishes the proof of 1).

Now we prove 2).  Proposition \ref{Prop:varphi_1_perverse_exact} says  that $\varphi_1$
is t-exact with respect to the perverse bimodule t-structures. And $\varphi_2$ is t-exact as well.
So we have
\begin{equation}\label{eq:image_relation}
\begin{split}
&\varphi_1(\operatorname{Perv}(\A_\h\otimes_{\C[\g^*]}\A_P^{opp}\operatorname{-mod}^G))=
\operatorname{Fim}\varphi_1\cap \operatorname{Perv}(\A_\h\otimes_{\C[\g^*]}\A^{opp}\operatorname{-mod}^G),\\
&\varphi_2(\operatorname{Perv}_{I^\circ}(\Fl_P))=
\operatorname{Fim}\varphi_2\cap \operatorname{Perv}_{I^\circ}(\Fl).
\end{split}
\end{equation}
By Theorem \ref{Thm:perv_equiv}, $\tau$ is t-exact with respect to the perverse t-structures.
Combining this with Corollary \ref{Cor:fim_coinc} and (\ref{eq:image_relation}), we get 2).
\end{proof}

So we get an equivalence $\Perv_{I^\circ}(\Fl_P)\xrightarrow{\sim} \Perv(\A_\h\otimes_{\C[\g^*]}\A_P^{opp}\operatorname{-mod}^G)$ that we denote
by $\tau_P$.

\subsection{Completion of proofs of main results}\label{SS_parab_complete}
First, we need to produce a functor $\tau_P: D^b_{I^\circ}(\Fl_P)
\xrightarrow{\sim} D^b(\Coh^G(\St_P))$ that makes the  diagram
in Theorem \ref{Prop:parabol_equiv_der} commutative. Note that
$D^b(\Perv_{I^\circ}(\Fl_P))\xrightarrow{\sim} D^b_{I^\circ}(\Fl_P)$
by \cite[Proposition 1.5]{BBM}.
Also, since $\Perv(\A_\h\otimes_{\C[\g^*]}\A_P^{opp}\operatorname{-mod}^G)$
is the heart of a t-structure on $D^b(\Coh^G (\St_P))$, by \cite[A6]{Beilinson} the embedding $\Perv(\A_\h\otimes_{\C[\g^*]}\A_P^{opp}\operatorname{-mod}^G)
\rightarrow D^b(\Coh^G (\St_P))$
extends to a triangulated realization functor $D^b(\Perv(\A_\h\otimes_{\C[\g^*]}\A_P^{opp}\operatorname{-mod}^G))
\rightarrow D^b(\Coh^G(\St_P))$ (note that $D^b(\Coh^G(\St_P))$ carries a canonical
filtered lifting as in \cite[Example A2]{Beilinson}).
We denote the composed functor
$$D^b_{I^\circ}(\Fl_P)\xrightarrow{\sim} D^b(\Perv(\A_\h\otimes_{\C[\g^*]}\A_P^{opp}\operatorname{-mod}^G))
\rightarrow D^b(\Coh^G(\St_P))$$
by $\tau_P$. That the diagram of Theorem
\ref{Prop:parabol_equiv_der} is commutative follows directly from the construction
of $\tau_P$.

\begin{Lem}\label{Lem:tau_P_equivalence}
The functor $\tau_P$ is an equivalence.
\end{Lem}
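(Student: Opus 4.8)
The plan is to prove $\tau_P$ is an equivalence directly, avoiding any finer analysis of the realization functor. Recall that $\tau_P$ was constructed so as to restrict on hearts to the abelian equivalence of Proposition \ref{Prop:abel_equiv_parab} and to make the square of Theorem \ref{Prop:parabol_equiv_der} commute. Essential surjectivity will then be automatic: the essential image of $\tau_P$ is a strictly full triangulated subcategory of $D^b(\Coh^G(\St_P))$ containing the heart $\Perv(\A_\h\otimes_{\C[\g^*]}\A_P^{opp}\operatorname{-mod}^G)$, and, the perverse bimodule t-structure being bounded, that heart generates the whole category. So the task reduces to full faithfulness, and by the usual dévissage --- using that $\Perv_{I^\circ}(\Fl_P)$ generates $D^b_{I^\circ}(\Fl_P)$ --- it is enough to check that $\tau_P\colon\Hom(X,Y[n])\to\Hom(\tau_PX,\tau_PY[n])$ is an isomorphism for every $n\in\Z$ and every pair $X,Y$ of objects of $\Perv_{I^\circ}(\Fl_P)$.

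The main tool will be the two adjunctions $\psi_i\dashv\varphi_i$ together with Lemmas \ref{Lem:psi_2_properties} and \ref{Lem:psi_1_properties}, which give $\psi_2\varphi_2\cong\operatorname{id}\otimes H^*(P^\vee/B^\vee,\C)[2d]$ and $\psi_1\varphi_1\cong\operatorname{id}\otimes H^*(P/B,\C)[2d]$, where $d:=\dim P/B$. Both cohomology rings are the cohomology of the flag variety of a Levi subgroup, and a Levi and its Langlands dual share a Weyl group, so they have the same Betti numbers $b_0,\dots,b_{2d}$, with $b_{2d}=1$; since $H^0(P/B,\C)\neq0$, the composites $\psi_i\varphi_i$ contain $\operatorname{id}$ as a direct summand, so $\varphi_1$ and $\varphi_2$ are faithful. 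Applying the adjunctions and the above formulas (and using that $H^*(-,\C)$ is a complex with zero differential) I obtain natural isomorphisms
\begin{align*}
\Hom_{D^b_{I^\circ}(\Fl)}(\varphi_2X,\varphi_2Y[n])&\cong\bigoplus_{i=0}^{2d}\Hom_{D^b_{I^\circ}(\Fl_P)}(X,Y[n-2d+i])^{\oplus b_i},\\
\Hom_{D^b(\Coh^G(\St_B))}(\varphi_1U,\varphi_1V[n])&\cong\bigoplus_{i=0}^{2d}\Hom_{D^b(\Coh^G(\St_P))}(U,V[n-2d+i])^{\oplus b_i}.
\end{align*}
Moreover, since $\tau$ is an equivalence and $\tau\varphi_2\cong\varphi_1\tau_P$ by the commutative square, the left-hand side of the first display (for a given $X,Y$) is identified with the left-hand side of the second for $U=\tau_PX$, $V=\tau_PY$. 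All the Hom-spaces occurring here are finite dimensional --- on the coherent side by (\ref{eq:fin_dimension}) (applied with the second argument shifted), on the constructible side by the standard finiteness for constructible complexes with bounded support.

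With this in place I will argue by induction on $n$ that $\tau_P\colon\Hom(X,Y[n])\to\Hom(\tau_PX,\tau_PY[n])$ is an isomorphism for all $X,Y\in\Perv_{I^\circ}(\Fl_P)$. For $n<0$ both sides vanish. For the inductive step, note that in the two displays the summand with $i=2d$ is exactly the degree-$n$ Hom-space (as $b_{2d}=1$), while every summand with $i<2d$ involves only Hom-spaces in degrees strictly less than $n$; by the inductive hypothesis the total dimension of the $i<2d$ part is the same on the two sides, so comparing through the isomorphism supplied by $\tau$ gives $\dim\Hom(X,Y[n])=\dim\Hom(\tau_PX,\tau_PY[n])$. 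Finally, the map $\tau_P$ between these two spaces is injective, because $\varphi_1\circ\tau_P\cong\tau\circ\varphi_2$ is injective on Hom-spaces ($\varphi_2$ faithful, $\tau$ an equivalence, $\varphi_1$ faithful); an injective linear map between finite-dimensional spaces of equal dimension is bijective. This completes the induction, hence the proof. I expect the point needing the most care to be not this numerical bookkeeping but the preliminary claims behind it --- that $\Perv(\A_\h\otimes_{\C[\g^*]}\A_P^{opp}\operatorname{-mod}^G)$ is the heart of a bounded t-structure and the precise form of the two $\Hom$-decompositions above; the whole argument is deliberately arranged so that it needs only the resulting equality of dimensions, and not any functorial matching of $\psi_1$ with $\psi_2$ through $\tau$ and $\tau_P$, which is not available a priori.
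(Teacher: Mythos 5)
Your proposal is correct and takes essentially the paper's own route: faithfulness from $\varphi_1\circ\tau_P\cong\tau\circ\varphi_2$, fullness by a dimension count resting on $\psi_i\varphi_i\cong\operatorname{id}\otimes H^*(P/B,\C)[2\dim P/B]$ and the isomorphism supplied by $\tau$, and essential surjectivity from boundedness of the perverse bimodule t-structure together with the equivalence on hearts; the only (cosmetic) difference is that the paper runs the dimension count for arbitrary objects in one shot (termwise inequality from faithfulness versus equality of the weighted totals) instead of your d\'evissage to perverse objects plus induction on the degree $n$. The one preliminary you flag but leave unproved, boundedness of the perverse t-structure, is settled in the paper in two lines: $\A_\h\otimes_{\C[\g^*]}\A_P^{opp}$ is Gorenstein, so the $\geqslant 0$ part is obtained from the $\leqslant 0$ part by applying $R\Hom_{\A'}(\bullet,\A')$ and is bounded as well.
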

\begin{proof}
We need to show that $\tau_P$ is fully faithful and essentially surjective.

As we have pointed out in the proof of Proposition \ref{Prop:abel_equiv_parab},
the functor $\varphi_2$ is faithful.
On the other hand, it is the composition $\tau^{-1}\varphi_1\tau_P$. So
$\tau_P$ is faithful.

Now we show $\tau_P$ is full. %First, we claim that for all
%$\mathcal{F}',\mathcal{G}'\in D^b(\Coh^G(\St_P))$,
%we have $\dim \Hom(\mathcal{F}',\mathcal{G}')<\infty$.

Thanks to (\ref{eq:fin_dimension}), we need to show that $\dim \Hom(\mathcal{F}',\mathcal{G}')=
\dim \Hom(\tau_P\mathcal{F}',\tau_P\mathcal{G}')$
for all $\mathcal{F}',\mathcal{G}'\in D^b_{I^\circ}(\Fl_P)$. We have an isomorphism
$$\Hom(\varphi_1\tau_P\mathcal{F}',\varphi_1\tau_P\mathcal{G}')\xrightarrow{\sim} \Hom(\varphi_2\mathcal{F}',\varphi_2\mathcal{G}')$$
given by $\tau^{-1}$.
Since $\psi_i \varphi_i\cong \operatorname{id}\otimes H^*(P/B,\C)[2\dim P/B]$, see Lemma
\ref{Lem:psi_1_properties}, \ref{Lem:psi_2_properties}, we then get
an isomorphism
$$\Hom(\tau_P\mathcal{F}'\otimes H^*(P/B,\C),\tau_P\mathcal{G}')\xrightarrow{\sim} \Hom(\mathcal{F}'\otimes H^*(P/B,\C),\mathcal{G}').$$
Since $\dim \Hom(\mathcal{F}'[i],\mathcal{G}')\leqslant \Hom(\tau_P\mathcal{F}'[i],\tau_P\mathcal{G}')$
for all $i$ ($\tau_P$ is faithful), we see that $$\dim \Hom(\mathcal{F}',\mathcal{G}')= \Hom(\tau_P\mathcal{F}',\tau_P\mathcal{G}').$$
Therefore, $\tau_P$ is fully faithful.

Now let us show that $\tau_P$ is essentially surjective. Note that the perverse bimodule
t-structure is bounded:  for each $\mathcal{F}'\in
D^b(\Coh^G(\St_P))$ there are $i>j$ such that $\mathcal{F}'$ lies in the intersection
of $\leqslant i$ and $\geqslant j$ subcategories. The existence of $i$ is straightforward from
the definition of a perverse bimodule t-structure. Since the $\geqslant 0$-category is
obtained from $\leqslant 0$ by taking $R\Hom_{\A'}(\bullet,\A')$, where $\A'$
is a Gorenstein algebra, the existence of  $j$ follows as well.
Now we easily  prove that $\mathcal{F}'$ lies in the image of $\tau_P$ by induction on $i-j$.
The base, $i-j=0$, follows because $\tau_P$ is an equivalence between the hearts by the construction.
The induction step follows because $\tau_P$ is fully faithful.
%Finally, since $\tau_P$ is fully faithful and restricts to an equivalence between the hearts,
%it is essentially
%Since $\tau_P$ restricts to an equivalence between the hearts of t-structures, we only need
%to show that $\tau_P$ is fully faithful (then it is automatically essentially surjective).
%Since $\psi_i \varphi_i\cong \operatorname{id}\otimes H^*(P/B,\C)$, for $\mathcal{F},\mathcal{G}\in %D^b_{I^\circ}(\Fl_P)$, we have identifications
%\begin{equation}\label{eq:adj_isom}
%\begin{split}
%&\Hom(\varphi_1\tau_P \mathcal{F}, \varphi_1\tau_P \mathcal{G})\xrightarrow{\sim}
%\Hom(\tau_P(\mathcal{F})\otimes H^*(P/B,\C), \tau_P(\mathcal{G})),\\
%&  \Hom(\varphi_2 \mathcal{F}, \varphi_2 \mathcal{G})\xrightarrow{\sim}
%\Hom(\mathcal{F}\otimes H^*(P/B,\C), \mathcal{G}).
%\end{split}
%\end{equation}
%Under these identifications, the isomorphism
%$\Hom(\varphi_2 \mathcal{F}, \varphi_2 \mathcal{G})\xrightarrow{\sim}
%\Hom(\varphi_1\tau_P \mathcal{F}, \varphi_1\tau_P \mathcal{G})$
\end{proof}

Theorem \ref{Thm:parab_perv_equiv} follows.
To finish the proof of Theorem \ref{Prop:parabol_equiv_der},
it remains to show two things: that
$\tau_P:D^b_{I^\circ}(\Fl_P)\xrightarrow{\sim} D^b(\Coh^G(\St_P))$
is equivariant for the action of $D^b_{pu}(I^\circ\backslash G^\vee((t))/I^\circ)
\cong D^b(\Coh^G(\St^\wedge_{\h}))$ and that
$\tau_P(\underline{\C}_{P^\vee/P^\vee})\cong\mathcal{O}_{Z^{diag}}$.

\begin{Lem}
The equivalence $\tau_P:D^b_{I^\circ}(\Fl_P)\xrightarrow{\sim} D^b(\Coh^G(\St_P))$
is equivariant.
\end{Lem}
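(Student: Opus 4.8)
The plan is to upgrade the equivalence $\tau_P$ to a module functor over $\mathsf{H}:=D^b_{pu}(I^\circ\backslash G^\vee((t))/I^\circ)\cong D^b(\Coh^G(\St_\h^\wedge))$, where the action on $D^b(\Coh^G(\St_P))$ is the $\rho$-twisted convolution $*_\rho$ of (\ref{eq:twisted_convolution}). Since $\mathsf{H}=K^b(\mathsf{Tilt})$ is generated as a triangulated monoidal category by the free-monodromic tiltings $\mathfrak{T}_x$, $x\in W^a$ (Section \ref{SS_completed_cats}(3)), I would first reduce to producing, for each fixed $x$, a natural isomorphism $\tau_P(\mathfrak{T}_x*\bullet)\cong\tau(\mathfrak{T}_x)*_\rho\tau_P(\bullet)$; the associativity and unit coherences needed for a genuine module-functor structure will be forced by the corresponding coherences of $\varphi_1,\varphi_2,\tau$ together with the faithfulness of $\varphi_1$ used below, so the real content is the isomorphism for a single $x$. (Note that $\tau_P$ already intertwines the $\Br^a$-actions by Proposition \ref{Prop:braid_compatible}; that is the restriction of the sought structure to the invertible objects.)

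For fixed $x$, I would check that both $\tau_P\circ(\mathfrak{T}_x*\bullet)$ and $(\tau(\mathfrak{T}_x)*_\rho\bullet)\circ\tau_P$ are t-exact functors $D^b_{I^\circ}(\Fl_P)\to D^b(\Coh^G(\St_P))$ for the perverse t-structure on the source and the perverse-bimodule t-structure on the target. For the first this is immediate: $\mathfrak{T}_x*\bullet$ is perverse t-exact (Section \ref{SS_completed_cats}(2)) and $\tau_P$ is t-exact (Theorem \ref{Thm:parab_perv_equiv}). For the second, I would use that $\varphi_1$ is faithful and perverse-bimodule t-exact (Proposition \ref{Prop:varphi_1_perverse_exact}), hence reflects this (bounded) t-structure, and is $*_\rho$-equivariant by construction (the discussion around (\ref{eq:twisted_convolution})); combining $\varphi_1(\tau(\mathfrak{T}_x)*_\rho\tau_P(\bullet))\cong\tau(\mathfrak{T}_x)*\varphi_1(\tau_P(\bullet))\cong\tau(\mathfrak{T}_x*\varphi_2(\bullet))$ with the perverse t-exactness of $\varphi_2=\eta^*[\dim P/B]$ and of $\mathfrak{T}_x*\bullet$ on $D^b_{I^\circ}(\Fl)$, and the fact that $\tau$ (case $P=B$) carries the perverse to the perverse-bimodule t-structure (Theorem \ref{Thm:perv_equiv}), shows that $\tau(\mathfrak{T}_x)*_\rho\bullet$ is perverse-bimodule t-exact. (Here $\tau(\mathfrak{T}_x)$ is an honest $\A_\h^\wedge$-bimodule by Lemma \ref{Lem:bimod_image}, which is what makes the twisted convolution with it behave like tensoring in degree zero.)

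Next I would produce the isomorphism on perverse objects. For $M\in\Perv_{I^\circ}(\Fl_P)$, chaining the commutative square of Theorem \ref{Prop:parabol_equiv_der}, the equivariance of $\varphi_2$, the equivariance of $\tau$ for $P=B$ (Theorem \ref{Thm:derived_equiv} and Section \ref{SS_completed_cats}), the square again, and the equivariance of $\varphi_1$ gives
\[ \varphi_1\tau_P(\mathfrak{T}_x*M)\cong\tau\varphi_2(\mathfrak{T}_x*M)\cong\tau(\mathfrak{T}_x*\varphi_2 M)\cong\tau(\mathfrak{T}_x)*\tau\varphi_2 M\cong\tau(\mathfrak{T}_x)*\varphi_1\tau_P M\cong\varphi_1\bigl(\tau(\mathfrak{T}_x)*_\rho\tau_P M\bigr). \]
All six objects are perverse by the previous paragraph, so since $\varphi_1$ restricts to a full embedding on $\Perv(\A_\h\otimes_{\C[\g^*]}\A_P^{opp}\operatorname{-mod}^G)$ (Proposition \ref{Prop:abel_equiv_parab}(1)), this chain descends to a natural isomorphism $\tau_P(\mathfrak{T}_x*M)\cong\tau(\mathfrak{T}_x)*_\rho\tau_P M$ of functors on $\Perv_{I^\circ}(\Fl_P)$. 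Finally, since $\tau_P$ was built in Section \ref{SS_parab_complete} as the realization functor of an equivalence of hearts, and a t-exact triangulated functor out of the bounded derived category of an abelian category is determined up to canonical isomorphism by its restriction to the heart — applied through the realizations $D^b(\Perv_{I^\circ}(\Fl_P))\xrightarrow{\sim}D^b_{I^\circ}(\Fl_P)$ of \cite{BBM} and $D^b(\Perv(\A_\h\otimes_{\C[\g^*]}\A_P^{opp}\operatorname{-mod}^G))\xrightarrow{\sim}D^b(\Coh^G(\St_P))$ of \cite{Beilinson} — this heart isomorphism extends uniquely to the isomorphism of triangulated functors $\tau_P\circ(\mathfrak{T}_x*\bullet)\cong(\tau(\mathfrak{T}_x)*_\rho\bullet)\circ\tau_P$ required in the first step.

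The main obstacle I anticipate is the bookkeeping in passing from the hearts to the whole derived categories and assembling the isomorphisms for the various $\mathfrak{T}_x$ into a single monoidal datum: one must apply the ``determined on the heart'' principle correctly given that $\tau_P$ is a realization functor, and one must check that the resulting isomorphisms satisfy the module-functor axioms rather than being an unstructured family. The reason this is forced rather than an additional choice is that $\varphi_1$ is faithful and is itself a module functor for the $*_\rho$-action, so the entire coherence structure can be pulled back from the already-established module-functor structures of $\tau$, $\varphi_1$ and $\varphi_2$.
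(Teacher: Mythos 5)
Your proposal is correct and follows essentially the paper's own route: both arguments establish the intertwining at the level of hearts using the equivariance of $\varphi_1,\varphi_2$, the full embedding of Proposition \ref{Prop:abel_equiv_parab}, the t-exactness statements, and Lemma \ref{Lem:bimod_image}, and then conclude via $K^b(\mathsf{Tilt})\xrightarrow{\sim}D^b_{pu}(I^\circ\backslash G^\vee((t))/I^\circ)$. The only differences are presentational — the paper packages the heart-level step as equivariance for the abelian monoidal category of completed bimodules rather than tilting-by-tilting, and your blanket principle that a t-exact functor out of $D^b$ of an abelian category is ``determined by its restriction to the heart'' is stated more strongly than is literally true, though you invoke it in the same terse way the paper's final step does.
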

\begin{proof}
Recall the algebra $\A_\h^\wedge:=\C[\h^*]^{\wedge_0}\widehat{\otimes}_{\C[\h^*]}\A_\h$.

Consider the negative parts
$D^{b,\leqslant 0}(\A^{\wedge}_{\h}\widehat{\otimes}_{\C[\g^*]}\A_\h^{\wedge,opp}\operatorname{-mod}^G)$
and $D_{perv}^{b,\leqslant 0}(\A_{\h}\widehat{\otimes}_{\C[\g^*]}\A_P^{opp}\operatorname{-mod}^G)$.
It is straightforward from the definition of the perverse bimodule t-structure that
$$D^{b,\leqslant 0}(\A^{\wedge}_{\h}\widehat{\otimes}_{\C[\g^*]}\A_\h^{\wedge,opp}\operatorname{-mod}^G)\otimes^L_{\A_\h}
D_{perv}^{b,\leqslant 0}(\A_{\h}\otimes_{\C[\g^*]}\A_P^{opp}\operatorname{-mod}^G)
\subset D_{perv}^{b,\leqslant 0}(\A_{\h}\otimes_{\C[\g^*]}\A_P^{opp}\operatorname{-mod}^G).$$
We can carry the action of $D^{b,\leqslant 0}(\A^{\wedge}_{\h}\widehat{\otimes}_{\C[\g^*]}\A_\h^{\wedge,opp}\operatorname{-mod}^G)$
to   $D^b_{I^\circ}(\Fl)$ using the equivalence $\tau$.

So we get the action of $\A^{\wedge}_{\h}\widehat{\otimes}_{\C[\g^*]}\A_\h^{\wedge,opp}\operatorname{-mod}^G$
on $$\Perv_{I^\circ}(\Fl), \Perv(\A_{\h}\otimes_{\C[\g^*]}\A^{opp}\operatorname{-mod}^G),
\Perv(\A_{\h}\otimes_{\C[\g^*]}\A_P^{opp}\operatorname{-mod}^G)$$
by right t-exact functors.
The full embedding $$\varphi_1:\Perv(\A_{\h}\otimes_{\C[\g^*]}\A_P^{opp}\operatorname{-mod}^G)
\hookrightarrow \Perv(\A_{\h}\otimes_{\C[\g^*]}\A^{opp}\operatorname{-mod}^G)$$
is equivariant by the construction. Since $\varphi_2:D^b_{I^\circ}(\Fl_P)
\hookrightarrow D^b_{I^\circ}(\Fl)$ is $D^b(\Coh^G\St^{\wedge}_\h)$-equivariant, we see
that $\varphi_2(\Perv_{I^\circ}(\Fl_P))\subset \Perv_{I^\circ}(\Fl)$ is
closed under the action of the category $\A^{\wedge}_{\h}\widehat{\otimes}_{\C[\g^*]}\A_\h^{\wedge,opp}\operatorname{-mod}^G$.
So $\Perv_{I^\circ}(\Fl_P)$ becomes an $\A^{\wedge}_{\h}\widehat{\otimes}_{\C[\g^*]}\A^{\wedge,opp}_\h\operatorname{-mod}^G$-module
category and $$\tau_P: \Perv_{I^\circ}(\Fl_P)\xrightarrow{\sim}
\Perv(\A_{\h}\otimes_{\C[\g^*]}\A_P^{opp}\operatorname{-mod}^G)$$
is $\A^{\wedge}_{\h}\otimes_{\C[\g^*]}\A^{\wedge,opp}_\h\operatorname{-mod}^G$-equivariant.
It follows from Lemma \ref{Lem:bimod_image} that $\tau_P$ is $\mathsf{Tilt}$-equivariant.
Since $K^b(\mathsf{Tilt})\xrightarrow{\sim}
D^b_{pu}(I^\circ\backslash G^\vee((t))/I^\circ)$, see property (3) in
Section \ref{SS_completed_cats}, we are done.
%Inside a suitable completion of $\operatorname{Perv}_{un}(I^\circ\backslash G^\vee((t))/I^\circ)$ we can %consider the full subcategory $\mathsf{Tilt}$ of the monodromic tilting objects.
%The image of $\mathsf{Tilt}$ under $\tau$ is contained in $\A_\h^{\wedge_0}\otimes_{\C[\g^*]^\wedge_0}
%(\A_{\h}^{\wedge_0})^{opp}\operatorname{-mod}^G$, where $\bullet^\wedge$ stands for the completion
%at $0$ (these are the bimodules giving the projective functors). The convolution with
%objects from $\mathsf{Tilt}$ is t-exact on  $D^b_{I^\circ}(\Fl_P)$. In particular,
%$\tau_P$ is $\mathsf{Tilt}$-equivariant.  Since
%$$D^b(\Perv_{I^\circ}(\Fl_P))\xrightarrow{\sim} D^b_{I^\circ}(\Fl_P),
%D^b(\Perv(\A_{\h}\otimes_{\C[\g^*]}\A_P^{opp}\operatorname{-mod}^G))
%\xrightarrow{\sim}D^b(\Coh^G(\St_P)),$$
%we conclude that $\tau_P: D^b_{I^\circ}(\Fl_P)\xrightarrow{\sim}
%D^b(\Coh^G(\St_P))$ is $K^b(\mathsf{Tilt})$-equivariant.
%Note that $D^b_{un}(I^\circ\backslash
%G^\vee((t))/I^\circ)$ embeds into $K^b(\mathsf{Tilt})$.
%This implies the claim of the lemma.
%We claim that the latter equivariance implies that $\tau_P: D^{b,\leqslant %0}_{I^\circ}(\Fl_P)\xrightarrow{\sim}
%D_{perv}^{b,\leqslant 0}(\A_{\h}\otimes_{\C[\g^*]}\A_P^{opp}\operatorname{-mod}^G)$
%is $D^{b,\leqslant 0}(\A_\h\otimes_{\C[\g^*]}\A_\h^{opp}\operatorname{-mod}^G)$-equivariant.
%The lemma will follow.
%
%The category $$
\end{proof}

\begin{Lem}
We have $\tau_P(\underline{\C}_{P^\vee/P^\vee})\cong \mathcal{O}_{Z^{diag}}$.
\end{Lem}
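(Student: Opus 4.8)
\textbf{Proof proposal for $\tau_P(\underline{\C}_{P^\vee/P^\vee})\cong \mathcal{O}_{Z^{diag}}$.}

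The plan is to pin down $\tau_P(\underline{\C}_{P^\vee/P^\vee})$ by exploiting the commutative diagram of Theorem \ref{Prop:parabol_equiv_der} together with the adjunction formulas of Lemmas \ref{Lem:psi_1_properties} and \ref{Lem:psi_2_properties}. First I would note that $\varphi_2(\underline{\C}_{P^\vee/P^\vee})=\eta^*\underline{\C}_{P^\vee/P^\vee}[\dim P/B]$ is, up to the shift, the constant sheaf on $\eta^{-1}(P^\vee/P^\vee)=P^\vee/B^\vee\subset \Fl$; in other words $\varphi_2(\underline{\C}_{P^\vee/P^\vee})=\underline{\C}_{P^\vee/B^\vee}[\dim P/B]$, the $!$-unit convolved/supported on the finite-dimensional Schubert cell. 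By the remark immediately following Lemma \ref{Lem:der_equiv_image} (which transports Step 1 of that lemma from $D^b_{I^\vee}$ to $D^b_{I^\circ}$ via Remark \ref{Rem:perv_equiv_compat}), we have $\tau(\underline{\C}_{P^\vee/B^\vee}[\dim P^\vee/B^\vee])=\mathcal{O}_{Z\times_{\tilde{\Nilp}_P}Z}(-\rho,\rho-2\rho_L)$. Since the diagram commutes, $\tau\varphi_2=\varphi_1\tau_P$, so $\varphi_1(\tau_P(\underline{\C}_{P^\vee/P^\vee}))\cong\mathcal{O}_{Z\times_{\tilde{\Nilp}_P}Z}(-\rho,\rho-2\rho_L)$.

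Next I would compute $\varphi_1(\mathcal{O}_{Z^{diag}})$ directly and check it equals the same object. Here $Z^{diag}$ is the image of $Z$ under the diagonal embedding into $\St_P=\tilde{\g}\times^L_\g\tilde{\Nilp}_P$; concretely, under the identification $\widehat Z=\tilde{\g}\times^L_\g Z$ with $\tilde\varpi:\widehat Z\twoheadrightarrow\St_P$ and $\tilde\iota:\widehat Z\hookrightarrow\St_B$, the structure sheaf $\mathcal{O}_{Z^{diag}}$ pulls back under $\tilde\varpi$ to $\mathcal{O}_{\widehat Z}$ (as $\tilde\varpi$ restricted over $Z^{diag}$ is the identity, so $\tilde\varpi^*\mathcal{O}_{Z^{diag}}$ is the structure sheaf of the relevant copy of $Z$ inside $\widehat Z$). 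Then $\varphi_1(\mathcal{O}_{Z^{diag}})=\tilde\iota_*(\tilde\varpi^*\mathcal{O}_{Z^{diag}}(-\rho,\rho-2\rho_L))=\tilde\iota_*(\mathcal{O}_{\widehat Z}(-\rho,\rho-2\rho_L))$, which is the structure sheaf of the image of $\widehat Z$ in $\St_B$ twisted by $\mathcal{O}(-\rho,\rho-2\rho_L)$. That image is exactly $Z\times_{\tilde{\Nilp}_P}Z$ sitting inside $\St_B=\tilde\g\times_\g\tilde{\Nilp}$ — the pair of Borels containing a fixed $\mathfrak m$-like subalgebra compatible with a common point of $\tilde{\Nilp}_P$ — so $\varphi_1(\mathcal{O}_{Z^{diag}})\cong\mathcal{O}_{Z\times_{\tilde{\Nilp}_P}Z}(-\rho,\rho-2\rho_L)$, matching the previous paragraph. (One should double-check the derived-scheme identifications here are honest, but the relevant fiber products are complete intersections of the expected dimension, as noted in Section \ref{SS_tilting_notation}, so the derived structure sheaves agree with the classical ones.)

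Finally, from $\varphi_1(\tau_P(\underline{\C}_{P^\vee/P^\vee}))\cong\varphi_1(\mathcal{O}_{Z^{diag}})$ I would conclude $\tau_P(\underline{\C}_{P^\vee/P^\vee})\cong\mathcal{O}_{Z^{diag}}$ using that $\varphi_1$ is faithful and that both objects lie in a subcategory on which $\varphi_1$ is fully faithful. Concretely: $\underline{\C}_{P^\vee/P^\vee}$ is a perverse sheaf on $\Fl_P$ (the skyscraper-type simple/standard at the base point, up to shift — in fact $\underline\C_{P^\vee/P^\vee}$ is the constant perverse sheaf on a point, hence perverse), so $\tau_P(\underline{\C}_{P^\vee/P^\vee})$ lies in $\Perv(\A_\h\otimes_{\C[\g^*]}\A_P^{opp}\text{-}\mathrm{mod}^G)$, and by Proposition \ref{Prop:abel_equiv_parab}(1), $\varphi_1$ restricted to that heart is a full embedding. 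Since $\mathcal{O}_{Z^{diag}}$ is (a shift of) a perverse bimodule with the same image under $\varphi_1$, full faithfulness forces $\tau_P(\underline{\C}_{P^\vee/P^\vee})\cong\mathcal{O}_{Z^{diag}}$, possibly after checking the homological shift is zero (which follows by comparing with the $\underline\C_{P^\vee/B^\vee}[\dim P/B]$ normalization already fixed on the $B$-side). The step I expect to be the main obstacle is the careful identification of the derived scheme $Z^{diag}\subset\St_P$ and of $\tilde\varpi^*\mathcal{O}_{Z^{diag}}$, i.e.\ making sure $\varphi_1(\mathcal{O}_{Z^{diag}})$ really is the twisted structure sheaf of $Z\times_{\tilde{\Nilp}_P}Z$ and not a more complicated complex — everything else is bookkeeping with the commutative diagram and the already-established full faithfulness.
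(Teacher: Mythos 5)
Your route differs from the paper's: the paper does not try to cancel $\varphi_1$ at all, but instead passes to the adjoint diagram (with $\psi_1,\psi_2$), applies it to $\underline{\C}_{1B^\vee}$, whose image under $\tau$ is $\mathcal{O}_{\tilde{\Nilp}^{diag}}$ (the $P=B$ case of Lemma \ref{Lem:der_equiv_image}), and then computes directly that $\psi_2(\underline{\C}_{1B^\vee})=\underline{\C}_{1P^\vee}[\dim P/B]$ and $\psi_1(\mathcal{O}_{\tilde{\Nilp}^{diag}})=\mathcal{O}_{Z^{diag}}[\dim P/B]$ (transversality of $\tilde{\Nilp}^{diag}$ with $\widehat{Z}$, plus $\tilde{\varpi}$ being a closed embedding on $Z^{diag}$). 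That immediately gives the lemma, with no need to reflect an isomorphism through a non-invertible functor. Your approach could in principle work, but as written it has two genuine gaps.

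First, the computation of $\varphi_1(\mathcal{O}_{Z^{diag}})$ is wrong in the middle: $\tilde{\varpi}^*\mathcal{O}_{Z^{diag}}$ is neither $\mathcal{O}_{\widehat{Z}}$ nor ``the structure sheaf of a copy of $Z$'' --- since $\tilde{\varpi}$ is a smooth $P/B$-fibration, the pullback is the structure sheaf of the preimage $\tilde{\varpi}^{-1}(Z^{diag})\cong Z\times_{\tilde{\Nilp}_P}Z$ inside $\widehat{Z}$, and it is only because $\tilde{\iota}$ restricts to a closed embedding on this locus that one lands on $\mathcal{O}_{Z\times_{\tilde{\Nilp}_P}Z}(-\rho,\rho-2\rho_L)$; note also that $\tilde{\iota}_*\mathcal{O}_{\widehat{Z}}$ would be supported on all of $\tilde{\g}\times_{\g}Z$, not on $Z\times_{\tilde{\Nilp}_P}Z$, so your stated reasons do not give your (correct) final formula. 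Second, and more seriously, the cancellation step is not justified: $\varphi_1$ is only shown to be faithful on $D^b(\Coh^G(\St_P))$ and fully faithful on the perverse bimodule heart, and faithfulness alone does not let you conclude $X\cong Y$ from $\varphi_1X\cong\varphi_1Y$. To invoke Proposition \ref{Prop:abel_equiv_parab}(1) you must know that $\mathcal{O}_{Z^{diag}}$ lies in that heart, which you assert but do not prove, and which is essentially part of what the lemma establishes (it is a nontrivial vanishing statement about the tilting bundle restricted to $Z$). This step can be repaired without perversity, e.g.\ by applying $\psi_1$ to the identity $\varphi_1(\tau_P\underline{\C}_{P^\vee/P^\vee})\cong\varphi_1(\mathcal{O}_{Z^{diag}})$, using $\psi_1\varphi_1\cong\operatorname{id}\otimes H^*(P/B,\C)[2\dim P/B]$, the finite-dimensionality of Hom's (so Krull--Schmidt holds), and the indecomposability of $\tau_P(\underline{\C}_{P^\vee/P^\vee})$ to pin down the unique shift --- or, more simply, by running the paper's adjoint argument.
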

\begin{proof}
We have the following commutative diagram:

\begin{picture}(100,30)
\put(2,22){$D^b_{I^\circ}(\Fl_P)$}
\put(2,2){$D^b_{I^\circ}(\Fl)$}
\put(52,22){$D^b(\Coh^G(\St_P))$}
\put(52,2){$D^b(\Coh^G(\mathsf{St}_{B}))$}
\put(8,7){\vector(0,1){14}}
\put(65,7){\vector(0,1){14}}
\put(20,23){\vector(1,0){31}}
\put(18,3){\vector(1,0){33}}
\put(9,13){\tiny $\psi_2$}
\put(66,13){\tiny $\psi_1$}
\put(35,24){\tiny $\sim$}
\put(35,21){\tiny $\tau_P$}
\put(35,4){\tiny $\sim$}
\end{picture}

Consider the object $\underline{\C}_{1B^\vee}\in D^b_{I^\circ}(\Fl)$.
We have $\psi_2(\underline{\C}_{1B^\vee})=\underline{\C}_{1P^\vee}[\dim P/B]$.
Furthermore, under the equivalence $\tau:D^b_{I^\circ}(\Fl)\xrightarrow{\sim}
D^b(\Coh^G(\St_B))$ the object $\underline{\C}_{1B^\vee}$
goes to $\mathcal{O}_{\tilde{\Nilp}_{diag}}$, where we write
$\tilde{\Nilp}_{diag}$ for the diagonal in $\St_B$, this is a special
case of Lemma \ref{Lem:der_equiv_image} (for $P=B$).

So we need to   prove that
$\psi_1(\mathcal{O}_{\tilde{\Nilp}^{diag}})=\mathcal{O}_{Z^{diag}}[\dim P/B]$, equivalently,
$$\tilde{\varpi}_*\circ \tilde{\iota}^*(\mathcal{O}_{\tilde{\Nilp}^{diag}}(\rho,-\rho))=
\mathcal{O}_{Z^{diag}}.$$

First, note that $\mathcal{O}_{\tilde{\Nilp}^{diag}}(\rho,-\rho)=\mathcal{O}_{\tilde{\Nilp}^{diag}}$.
Next, let us compute the pull-back of $\mathcal{O}_{\tilde{\Nilp}^{diag}}$
to $\widehat{Z}=\tilde{\g}\times_{\g}Z$. The intersection of $\tilde{\Nilp}^{diag}$
and $\widehat{Z}$ is transversal and equals to $Z^{diag}$
so the pull-back, $(\operatorname{id}\boxtimes \iota)^*(\mathcal{O}_{\tilde{\Nilp}^{diag}})$,
is $\mathcal{O}_{Z^{diag}}$. Finally, the restriction of $\tilde{\varpi}$ to
$Z^{diag}$ is the closed embedding $Z^{diag}\hookrightarrow \St_{P}$.
So we see that $\psi_1(\mathcal{O}_{\tilde{\Nilp}^{diag}})=\mathcal{O}_{Z^{diag}}[\dim P/B]$.
\end{proof}

This finishes the proof of Theorem \ref{Prop:parabol_equiv_der}.

\begin{proof}[Proof of Corollary \ref{Thm:abelian_quotient}]
Let us prove (1). We write $\tilde{\Orb}$ for the open orbit in $\tilde{\Nilp}_P$,
this is a $G$-equivariant cover of $\Orb$.

We first check  that the equivalence
\begin{equation}\label{eq:one_more_equiv}
D^b(\Coh^{Z_P(e)}\B_e)\xrightarrow{\sim}D^b(\Coh^G(\tilde{\g}\times_{\g}\tilde{\Orb}))
\end{equation}
is t-exact. The heart of the t-structure on the source
is $\A_{\h,e}\operatorname{-mod}^{Z_P(e)}$, while the heart
in the target is naturally identified with $\A_{\h,e}\otimes \A_{P,e}^{opp}\operatorname{-mod}^{Z_P(e)}$. Both  categories $$D^b(\Coh^{Z_P(e)}\B_e),D^b(\Coh^G(\tilde{\g}\times_{\g}\tilde{\Orb}))$$ are the derived categories
of their hearts.
But $\A_{P,e}=\End(\mathcal{T}_{P,e}(-2\rho))$ and the restriction of (\ref{eq:one_more_equiv})
to the heart is just tensoring with $\mathcal{T}_{P,e}(-2\rho)^*$. So it is t-exact.

What remains to check to prove (1) is that
$D^b_{I^\circ}(\Fl_P)\twoheadrightarrow D^b(\Coh^G(\tilde{\g}\times_{\g}\tilde{\Orb}))$
is t-exact. This follows directly from Theorem \ref{Thm:parab_perv_equiv}.

(2) is a direct corollary of the equivariance part from Theorem
\ref{Prop:parabol_equiv_der}. To prove (3) note that, thanks to
Theorem \ref{Prop:parabol_equiv_der}, the image of $\underline{\C}_{P^\vee/P^\vee}$ in
$D^b(\Coh^{Z_G(e)}\B_e)$ is the restriction of $\Str_{Z^{diag}}$ to $\B_e$. But the (derived scheme theoretic)
intersection of $Z^{diag}$ with $\B_e$ is precisely $\B_{\mathfrak{m}}$.
\end{proof}

\section{Duality}\label{S_duality}
\subsection{Notation and content}
The notation $G,\g,\h,\mathfrak{b},e,h,f, \nu,L, \g^i,\underline{P},P, \mathfrak{m},\mathfrak{m}^-,T_0$
has the same meaning as in Section \ref{SS_notation_basic}.
 We fix root generators $e_\alpha, f_\alpha\in \g$
for each simple root $\alpha$. Recall ${\underline{Q}}:=Z_{\underline{G}}(e,h,f)$.
%We write $T_0$ for $Z(G^0)^\circ$ so that $T_0={\underline{Q}}^\circ$.

Recall that we write $W^a$ for the (extended) affine Weyl group of $G$. By $\rho_L$ we denote
half the sum of positive roots for $L$.

We continue to write $p$ for a sufficiently large prime and $\chi\in \g_\F^{(1)*}$
for the reduction of $(e,\cdot)$ mod $p$. Recall the splitting bundle
$\mathcal{V}^\chi_\mu$ on $\tilde{\g}_\F^{(1)}\times_{\g_\F^{*(1)}}\g_\F^{*(1)\wedge_\chi}$,
see (\ref{eq:splitting_Springer}). It carries a ${\underline{Q}}_\F$-equivariant structure as explained
in Section \ref{SS_splitting_equivar}.

In this section we introduce a duality functor, a contravariant t-exact self-equivalence $\D$
of $D^b(\U^\chi_\F\operatorname{-mod}^{\underline{Q}})$ and study its properties. Section
\ref{SS_dual_basic} defines this functor and states (and mostly proves) its
basic properties. The most important property is that $\D$ fixes the
$K_0$-classes of $\chi$-Weyl modules, Proposition \ref{Prop:K_0_identity}.
This proposition is proved in the next two sections: in Section
\ref{SS_K_0_distinguished} we treat the case when $\chi$ is distinguished
and then in Section \ref{SS_K_0_general} we deal with the general case.
Finally, in Section \ref{SS_loc_dual} we study an interplay between
$\D$ and the derived localization equivalence.

\subsection{Duality functor: construction and basic properties}\label{SS_dual_basic}
For the time being, we are working over $\C$.
Consider the  standard anti-involution $\sigma$ of $\g$ defined by $\sigma_{\h}=\operatorname{id},
\sigma(e_\alpha)=f_\alpha, \sigma(f_\alpha)=e_\alpha$
for each simple root $\alpha$. As was checked in \cite[Section 2.6]{W_dim}, one can replace $(e,h,f)$
with a conjugate triple in such  a way that
$\sigma(e)=f, \sigma(f)=e, h$ is dominant. If $n$ is the image in $G$
of the matrix $\begin{pmatrix}0&i\\i&0\end{pmatrix}\in \operatorname{SL}_2$, then $\varsigma:x\mapsto \operatorname{Ad}(n)\sigma(x)$
is still an anti-involution that now fixes $e,f$ (and maps $h$ to $-h$). Also note that
$\varsigma$ lifts to an anti-involution of $G$. We can assume that $T_0$ is
$\varsigma$-stable.  Clearly, $\varsigma$
maps $\g^i$ to $\g^{-i}$. It also fixes the parabolic subalgebra $\underline{\p}\subset
\underline{\g}$.
And, of course, we can reduce $\varsigma$ mod $p$.
Note also that $\varsigma$ fixes ${\underline{Q}}$.

For a module $M\in (\U^\chi_\F)^{opp}\operatorname{-mod}^{\underline{Q}}$, consider its twist
with $\varsigma$ and denote it by $\,^\varsigma\! M$, it is an object of $\U^\chi_\F\operatorname{-mod}^{\underline{Q}}$.
Also consider the ${\underline{Q}}_\F$-module $\Lambda^{top}(\underline{\g}_\F/\underline{\mathfrak{p}}_\F)$.
We set $\,^{\tw}\! M:=(\,^\varsigma\!M)\otimes \Lambda^{top}(\underline{\g}_\F/\underline{\mathfrak{p}}_\F)$. We note that tensoring with
$\Lambda^{top}(\underline{\g}_\F/\underline{\mathfrak{p}}_\F)$ does not affect
the action of $(\U^\chi_\F)^{opp}$.
The similar definition of $\,^{\tw}\! M'$ makes sense for $M'\in \U^\chi_\F\operatorname{-mod}^{\underline{Q}}$.

%\begin{Lem}\label{Lem:top_power_properties} The torus
%$T_{0,\F}$ acts trivially on $\Lambda^{top}(\g_\F^0/\mathfrak{p}_\F^0)$.
%Moreover, $\Lambda^{top}(\g_\F^0/\mathfrak{p}_\F^0)^{\otimes 2}$ is
%the trivial ${\underline{Q}}_\F$-module.
%\end{Lem}
%\begin{proof}
%
%\end{proof}

\begin{Lem}\label{Lem:involution_naive}
We have $\,^{\tw}\!(\,^{\tw}\!M)\cong M$.
\end{Lem}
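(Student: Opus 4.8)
The statement is that the "twisted dual" $M \mapsto {}^{\tw}M$ is an involution on $\U^\chi_\F\operatorname{-mod}^{\underline{Q}}$ up to natural isomorphism. Since $\,^{\tw}\!M$ is built from $\,^\varsigma\!M$ by tensoring with the one-dimensional $\underline Q_\F$-module $\Lambda^{top}(\underline\g_\F/\underline{\mathfrak p}_\F)$, the first reduction is to analyze $\,^\varsigma$ alone. Here $\varsigma$ is an anti-involution of $\g_\F$ (and of $G_\F$), so $\varsigma^2 = \operatorname{id}$ as an automorphism; twisting an $(\U^\chi_\F)^{opp}$-module by $\varsigma$ produces a $\U^\chi_\F$-module, and twisting again by $\varsigma$ returns an $(\U^\chi_\F)^{opp}$-module canonically isomorphic to the original, simply because the underlying vector space is unchanged and the action is recovered via $\varsigma\circ\varsigma = \operatorname{id}$. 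One must check that $\varsigma$ preserves the $p$-character $\chi$: since $\varsigma$ fixes $e$ (by the arrangement recalled just before the lemma, $\varsigma(e)=e$, $\varsigma(f)=f$), and $\chi = (e,\cdot)$, the twist of a module with generalized $p$-character $\chi$ again has generalized $p$-character $\chi$, so $\,^\varsigma$ is well-defined on the $\chi$-central reduction. Similarly $\varsigma$ fixes $\underline Q$ (stated in the text), so the $\underline Q_\F$-equivariant structure transports compatibly.

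\textbf{Key steps.} First I would record that for a vector space $V$ with an action map $\rho\colon (\U^\chi_\F)^{opp}\to \operatorname{End}(V)$, the double twist $\,^\varsigma(\,^\varsigma(V,\rho))$ has action map $\rho\circ\varsigma\circ\varsigma = \rho$ (using $\varsigma^2 = \operatorname{id}$ on $\U^\chi_\F$, which follows from $\varsigma$ being an anti-involution of $\g_\F$ extended to the enveloping algebra, and from $\varsigma(e)=e$ so that it descends to $\U^\chi_\F$), giving a canonical identity natural isomorphism. Second, the tensor factors: $\,^{\tw}\!(\,^{\tw}\!M) = \,^\varsigma\!\bigl(\,^\varsigma\!M \otimes \Lambda^{top}(\underline\g_\F/\underline{\mathfrak p}_\F)\bigr) \otimes \Lambda^{top}(\underline\g_\F/\underline{\mathfrak p}_\F)$. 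Since tensoring with $\Lambda^{top}(\underline\g_\F/\underline{\mathfrak p}_\F)$ does not affect the $(\U^\chi_\F)^{opp}$-action (noted in the text) and only twists the $\underline Q_\F$-equivariant structure, the $\varsigma$-twist of such a tensor product is the $\varsigma$-twist of the underlying module tensored with the $\varsigma$-twist of the line $\Lambda^{top}(\underline\g_\F/\underline{\mathfrak p}_\F)$. Third, I would identify the $\underline Q_\F$-character of $\,^\varsigma\!\Lambda^{top}(\underline\g_\F/\underline{\mathfrak p}_\F)$: because $\varsigma$ is an anti-involution fixing $\underline{\mathfrak p}_\F$ and acting on $\underline\g_\F$, it acts on the quotient $\underline\g_\F/\underline{\mathfrak p}_\F$, and the resulting action on the top exterior power is by the sign of $\det(\varsigma|_{\underline\g_\F/\underline{\mathfrak p}_\F})$ composed with the inverse-transpose on $\underline Q_\F$-weights; since $\Lambda^{top}(\underline\g_\F/\underline{\mathfrak p}_\F)$ is a character of $\underline Q_\F$ and $\varsigma$ is an involution, its $\varsigma$-twist is the inverse character. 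Thus the two tensor factors $\Lambda^{top}(\underline\g_\F/\underline{\mathfrak p}_\F)$ appearing after the double twist are mutually inverse and cancel, yielding $\,^{\tw}\!(\,^{\tw}\!M) \cong M$ canonically.

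\textbf{Main obstacle.} The one genuinely nontrivial point is the bookkeeping of the $\underline Q_\F$-equivariant structure under the two twists: one must verify that the line $\Lambda^{top}(\underline\g_\F/\underline{\mathfrak p}_\F)$ that reappears after the second $\varsigma$-twist is precisely the inverse of the one introduced at the first stage, so that they cancel rather than square. This rests on the fact that $\varsigma|_{\underline Q_\F}$ is an anti-involution, hence on characters it acts by $\lambda\mapsto\lambda^{-1}$, together with $\varsigma$ fixing $\underline{\mathfrak p}_\F$ setwise. Everything else — well-definedness on the $\chi$-central reduction, compatibility with the $(\U^\chi_\F)^{opp}$ versus $\U^\chi_\F$ sides, naturality — is formal. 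I would therefore spend the bulk of the argument on this character computation and treat the rest as routine; since the text labels the lemma "standard," the expected proof is short and essentially an unwinding of definitions.
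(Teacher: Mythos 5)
Your reduction is the same as the paper's first line: since $\varsigma^2=\operatorname{id}$, everything comes down to showing that the $\varsigma$-twist of the line $\Lambda^{top}(\underline{\g}_\F/\underline{\mathfrak{p}}_\F)$ is its dual. But the step where you declare this to be formal --- ``since $\varsigma|_{\underline{Q}_\F}$ is an anti-involution, on characters it acts by $\lambda\mapsto\lambda^{-1}$'' --- is false as a general statement, and it is exactly where the real content of the lemma sits. An anti-automorphism of a group composed with a character is again a character (the target is abelian), but it need not be the inverse character: the identity map of an abelian group is an anti-automorphism and fixes every character. What the twist does to the equivariant structure is compose the character with the automorphism $g\mapsto\varsigma(g)^{-1}$ of $\underline{Q}_\F$, so the question is how $\varsigma$ actually acts on $\underline{Q}_\F$ --- in effect on the component group $\underline{Q}_\F/\underline{Q}_\F^\circ$, since $T_0$ centralizes $\underline{\g}_\F$ and so acts trivially on $\underline{\g}_\F/\underline{\mathfrak{p}}_\F$ --- and on the particular character $\Lambda^{top}(\underline{\g}_\F/\underline{\mathfrak{p}}_\F)$. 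For instance, if the component group were $(\Z/2)^2$ and $\varsigma$ induced the automorphism swapping the two factors, the twist of a character nontrivial on only one factor would not be isomorphic to its dual; nothing in your argument excludes this kind of behaviour.

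The paper's proof is devoted precisely to ruling it out. It reduces to $G=\underline{G}$ simple of adjoint type, so that $\underline{Q}=A$ is the component group attached to a distinguished nilpotent element, then invokes the classification of such $A$ (sums of copies of $\Z/2\Z$ in classical types; $S_1,\dots,S_5$ in exceptional types) together with the square-triviality of their one-dimensional characters. In the exceptional cases the unique nontrivial one-dimensional character is fixed by any (anti-)automorphism, which settles the claim there; in the classical, abelian, cases one must still prove that $a\mapsto\varsigma(a)^{-1}$ is an inner automorphism of $A$ --- equivalently that $\varsigma$ acts on characters of $A$ as inversion does --- and this uses the specific shape $\varsigma=\operatorname{Ad}(n)\sigma$, a comparison with the anti-automorphism $x\mapsto -x$ via the element coming from $\operatorname{diag}(i,-i)\in\operatorname{SL}_2$, and a separate argument (passing to $\operatorname{O}_{2n}$) in type $D_n$ with $n$ odd, where the comparison automorphism is outer. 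So while your identification of what must be proved, and the $\varsigma^2=\operatorname{id}$ bookkeeping, are correct, the central step of your proposal is a genuine gap: the needed isomorphism is not a consequence of $\varsigma$ being an anti-involution, but a case-by-case fact about how $\varsigma$ acts on the component groups.
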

\begin{proof}
Since $\varsigma^2=\operatorname{id}$, the claim of the lemma reduces to checking  that
\begin{equation}\label{eq:rep_iso}\Lambda^{top}(\underline{\g}_\F/\underline{\mathfrak{p}}_\F)^*\cong \,^\varsigma\! \Lambda^{top}(\underline{\g}_\F/\underline{\mathfrak{p}}_\F)
\end{equation}
First, we notice that (\ref{eq:rep_iso}) reduces to $G=\underline{G}$.
Note that $Z(G_\F)$ acts trivially on $\Lambda^{top}(\underline{\g}_\F/\underline{\mathfrak{p}}_\F)$.
We can replace $G_\F$ with $G_\F/Z(G_\F)$ and assume that $G_\F$ is semisimple and of adjoint type.
Here ${\underline{Q}}=A$. Also (\ref{eq:rep_iso}) reduces to the case when $G_\F$ is simple.

 The group $A$ is either the sum of
several copies of $\Z/2\Z$ (classical types) or $S_1,S_2,S_3,S_4,S_5$
(exceptional types), see \cite[Sections 6.1,8.4]{CM}. For all these groups the square
of any one-dimensional representation is trivial. For $A=S_i$, there is a
unique nontrivial one-dimensional representation and (\ref{eq:rep_iso}) follows.  It remains
to prove that if $\g$ is of classical type (hence $A$ is abelian),
then
\begin{itemize}\item[(*)] $a\mapsto \varsigma(a)^{-1}$ is an inner
automorphism of $A$.
\end{itemize} This will imply $\varsigma(a)=a^{-1}$ and (\ref{eq:rep_iso}) will follow.

To prove (*) we consider the standard antiautomorphism $\sigma':\g\rightarrow\g^{opp},
x\mapsto -x$. Assume first that  anti-automorphism $\sigma^{-1}\sigma'$ is an inner
automorphism of $\g$. Then consider the element $n'\in G$, the image of
$\operatorname{diag}(i,-i)\in \operatorname{SL}_2$ and set
$\varsigma':=\operatorname{Ad}(n')\circ \sigma'$. Note that $\varsigma'(e)=\varsigma(e)=e,\varsigma'(f)=f,
\varsigma'(h)=-h$. It follows that $\varsigma=\operatorname{Ad}(a')\circ \varsigma'$ for
some $a'\in A$. Since $A$ is commutative and $\varsigma'(a)=a^{-1}$ for all $a\in A$,
(*) follows.

Now we need to consider the situation when $\sigma^{-1}\sigma'$  is an outer automorphism.
In type A, the group $A$ is trivial, so we only need to consider type $D_n$ (with odd $n$).
Here $\sigma^{-1}\sigma'$ is induced by an element of $\operatorname{O}_{2n}$. Now we can
run the argument in the previous paragraph replacing $G$ with $\operatorname{O}_{2n}$
and still arrive at the same conclusion.
\end{proof}

Since $\U_\F^{\chi}\operatorname{-mod}$ consists of finite dimensional $\U_\F$-modules,
we see that the functor $$R\Hom_{\U_\F}(\bullet,\U_\F)[\dim \g]$$ is $t$-exact.
Define $\D: \U^\chi_\F\operatorname{-mod}^{\underline{Q}}\rightarrow \U^\chi_\F\operatorname{-mod}^{{\underline{Q}},opp}$
 by
\begin{equation}\label{eq:D_definition}
\D(M)=\,^{\tw}\! R\Hom_{\U_\F}(M,\U_\F)[\dim \g].
\end{equation}
%The following claim is an immediate corollary of Lemma \ref{Lem:involution_naive}
%combined with the standard properties of the functor $R\Hom_{\U_\F}(\bullet,\U_\F)$.
The following lemmas establish basic properties of the functor $\D$.

\begin{Lem}\label{Lem:dual_square}
We have $\D^2\cong \operatorname{id}$.
\end{Lem}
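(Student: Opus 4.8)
The plan is to reduce the statement $\D^2\cong\operatorname{id}$ to the two functorial ingredients that make up $\D$, namely the naive twist $\,^{\tw}\!(-)$ and the homological duality $R\Hom_{\U_\F}(-,\U_\F)[\dim\g]$, and to show they commute up to natural isomorphism and each squares to the identity. First I would observe that $\D$ is the composite
\[
\U^\chi_\F\operatorname{-mod}^{\underline{Q}}\xrightarrow{\ R\Hom_{\U_\F}(-,\U_\F)[\dim\g]\ }(\U^\chi_\F)^{opp}\operatorname{-mod}^{\underline{Q}}\xrightarrow{\ \,^{\tw}\!(-)\ }\U^\chi_\F\operatorname{-mod}^{\underline{Q}},
\]
so that $\D^2=\,^{\tw}\!\circ R\Hom_{\U_\F}(-,\U_\F)[\dim\g]\circ\,^{\tw}\!\circ R\Hom_{\U_\F}(-,\U_\F)[\dim\g]$. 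The twist $\,^{\tw}\!$ intertwines the $\U_\F$- and $\U_\F^{opp}$-module structures via the anti-involution $\varsigma$ (twisting by $\Lambda^{top}(\underline{\g}_\F/\underline{\mathfrak p}_\F)$ does not touch the module action), so $\,^{\tw}\!$ commutes with $R\Hom_{\U_\F}(-,\U_\F)$ up to the obvious identification; the shift $[\dim\g]$ is central, and $\,^{\tw}\!$ is exact, so the shifts just add up to $[2\dim\g]$ and then cancel after we account for the homological duality squaring.

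Next I would handle the homological duality. The key point, already used implicitly in the paragraph defining $\D$, is that $\U^\chi_\F$ is a finite-dimensional algebra (indeed $\U_\F^\chi\operatorname{-mod}$ consists of finite-dimensional modules) and, crucially, that $\U_\F$ is an Auslander–Gorenstein ring of injective dimension $\dim\g$ with respect to which $R\Hom_{\U_\F}(-,\U_\F)[\dim\g]$ is a duality: more concretely, one uses that $\U_\F$ has a rigid dualizing complex isomorphic to $\U_\F[\dim\g]$ (up to the twist by the one-dimensional character giving the module $\Lambda^{top}(\g_\F)$, which for a Lie algebra is trivial), so that the functor $M\mapsto R\Hom_{\U_\F}(M,\U_\F)[\dim\g]$, followed by the analogous functor for right modules, returns $M$. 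Concretely: for $M\in\U^\chi_\F\operatorname{-mod}^{\underline Q}$, set $M^\vee:=R\Hom_{\U_\F}(M,\U_\F)[\dim\g]$, a complex of right $\U_\F$-modules with generalized $p$-character $\chi$ and with homology supported in a suitable range; then the biduality morphism $M\to R\Hom_{\U_\F^{opp}}(M^\vee,\U_\F)[\dim\g]$ is an isomorphism because it is so for $\U_\F$ itself (both sides are $\U_\F$) and both functors are exact on finite-dimensional modules by the $t$-exactness assertion recalled just before (\ref{eq:D_definition}). All of this is $\underline Q_\F$-equivariant because $\varsigma$ fixes $\underline Q_\F$ and the constructions are functorial.

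The two facts, once assembled, give $\D^2\cong(\,^{\tw}\!)^2\cong\operatorname{id}$, the last isomorphism being exactly Lemma \ref{Lem:involution_naive}. So the proof is: (i) commute $\,^{\tw}\!$ past the homological duality; (ii) invoke biduality for $\U_\F$ (Auslander–Gorenstein, dualizing complex $\U_\F[\dim\g]$) to get $R\Hom\circ R\Hom\cong\operatorname{id}$ on the relevant category, tracking that the $\Lambda^{top}(\g_\F)$-twist from the dualizing complex is trivial; (iii) apply Lemma \ref{Lem:involution_naive}; all compatibly with the $\underline Q_\F$-equivariance. The main obstacle I expect is step (ii): making precise that $R\Hom_{\U_\F}(-,\U_\F)[\dim\g]$ is an honest (contravariant) duality between $\U_\F^\chi$-mod and $(\U_\F^\chi)^{opp}$-mod — i.e.\ checking the Auslander–Gorenstein property of $\U_\F$ (or citing Levasseur/Bj\"ork) and verifying that the rigid dualizing complex is $\U_\F[\dim\g]$ with trivial character twist, and that $\chi$-central reduction is compatible with this duality. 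Everything else is formal manipulation of shifts, the central element $[\dim\g]$, and the anti-involution $\varsigma$.
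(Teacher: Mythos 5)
Your proposal is correct and follows essentially the same route as the paper's proof: commute the $\varsigma$-twist (and the $\Lambda^{top}(\underline{\g}_\F/\underline{\mathfrak{p}}_\F)$-tensoring) past $R\Hom_{\U_\F}(-,\U_\F)$ using $\,^\varsigma\U_\F\cong\U_\F$ as a bimodule, invoke biduality for $R\Hom_{\U_\F}(\bullet,\U_\F)$, and finish with Lemma \ref{Lem:involution_naive}. Your appeal to the Gorenstein/finite-homological-dimension property of $\U_\F$ (so that finitely generated modules are perfect and biduality holds, with the shifts cancelling) is just an expanded justification of the paper's unproved assertion that $R\Hom_{\U_\F}(\bullet,\U_\F)$ is an involution; the rigid dualizing complex and its $\Lambda^{top}(\g_\F)$-twist are not actually needed for this.
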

\begin{proof}
We have $\!R\Hom_{\U_\F^{opp}}(\,^{\varsigma}\!M,\,^\varsigma\U_\F)\cong
\,^{\varsigma}\!R\Hom_{\U_\F}(M,\U_\F)$, where we write $\,^\varsigma\U_\F$
for the bimodule with both left and right action twisted. But $\U_\F\cong \,^\varsigma\U_\F$
as a $\U_\F$-bimodule because $1\in \,^\varsigma\U_\F$ is a central
generator. We deduce that $\!R\Hom_{\U_\F^{opp}}(\,^{\tw}\!M,\,\U_\F)\cong
\,^{\tw}\!R\Hom_{\U_\F}(M,\U_\F)$
Combining  this with the fact
that $R\Hom_{\U_\F}(\bullet,\U_\F)$ is an involution and Lemma
\ref{Lem:involution_naive}
we get $\D^2\cong \operatorname{id}$.
\end{proof}

\begin{Lem}\label{Lem:dual_HC_character}
$\D$ maps $\U^\chi_{(\lambda),\F}\operatorname{-mod}^{{\underline{Q}}}$
to $\U^\chi_{(\lambda),\F}\operatorname{-mod}^{{\underline{Q}}}$ for every HC character
$\lambda$.
\end{Lem}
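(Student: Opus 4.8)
The plan is to follow the Harish-Chandra center $\U_\F^{G_\F}$ through the two operations out of which $\D$ is assembled: the functor $R\Hom_{\U_\F}(\bullet,\U_\F)[\dim\g]$ and the twist $M\mapsto {}^{\tw}M$. Fix $M\in\U^\chi_{(\lambda),\F}\operatorname{-mod}^{\underline{Q}}$, so that some power of $z-\lambda(z)$ annihilates $M$ for every $z$ in a finite generating set of $\U_\F^{G_\F}$; the goal is to deduce the same for $\D(M)$.

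First I would record the interaction of $R\Hom_{\U_\F}(\bullet,\U_\F)$ with central elements. Since $\U^\chi_\F\operatorname{-mod}$ consists of finite-dimensional modules, $R\Hom_{\U_\F}(M,\U_\F)$ is a bounded complex. For $z\in\U_\F^{G_\F}$, centrality of $z$ in $\U_\F$ means left and right multiplication by $z$ on the bimodule $\U_\F$ coincide, and $\U_\F$-linearity of morphisms then identifies the right action of $z$ on $\Hom_{\U_\F}(M,\U_\F)$, and on each $\operatorname{Ext}^i_{\U_\F}(M,\U_\F)$, with precomposition by the action of $z$ on $M$. Hence $R\Hom_{\U_\F}(M,\U_\F)$, as a bounded complex of right $\U_\F$-modules, again has generalized Harish-Chandra character $\lambda$.

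The one genuine point is that $\varsigma$ fixes $\U_\F^{G_\F}$ pointwise; I expect this to be the heart of the argument, the rest being bookkeeping with central actions. Because $\varsigma=\operatorname{Ad}(n)\circ\sigma$ with $\operatorname{Ad}(n)$ inner, hence acting trivially on $\operatorname{ad}$-invariant elements, it suffices to treat $\sigma$. Here I would use that $\sigma(\n^\pm_\F)=\n^\mp_\F$ and $\sigma|_{\h_\F}=\operatorname{id}$: these force $\sigma$ to preserve both summands of the Poincar\'e--Birkhoff--Witt decomposition $\U_\F=U(\h_\F)\oplus(\n^-_\F\U_\F+\U_\F\n^+_\F)$ and to restrict to the identity on $U(\h_\F)$, so the Harish-Chandra projection $\U_\F\to U(\h_\F)$ intertwines $\sigma$ with the identity. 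Since the Harish-Chandra isomorphism $\U_\F^{G_\F}\xrightarrow{\sim}\F[\h^*_\F]^{(W,\cdot)}$ is the $\rho$-shift of the restriction of this projection to the (central, hence $\sigma$-stable) subalgebra $\U_\F^{G_\F}$ and is injective, $\sigma$, and therefore $\varsigma$, acts as the identity on $\U_\F^{G_\F}$.

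Finally I would assemble the pieces. Writing $N:=R\Hom_{\U_\F}(M,\U_\F)[\dim\g]$, the twisted module ${}^{\varsigma}\!N$ carries the $\U_\F$-action in which $z\in\U_\F^{G_\F}$ acts by right multiplication by $\varsigma(z)=z$, so by the first step this action has generalized eigenvalue $\lambda$; tensoring with the one-dimensional $\underline{Q}_\F$-module $\Lambda^{top}(\underline{\g}_\F/\underline{\mathfrak{p}}_\F)$ does not affect the $\U^\chi_\F$-action, whence $\D(M)={}^{\tw}\!N$ still has generalized Harish-Chandra character $\lambda$. Together with the fact, already established in Section \ref{SS_dual_basic} when $\D$ was defined, that $\D$ sends $\U^\chi_\F\operatorname{-mod}^{\underline{Q}}$ into $\U^\chi_\F\operatorname{-mod}^{\underline{Q},opp}$, this shows $\D$ restricts to a functor $\U^\chi_{(\lambda),\F}\operatorname{-mod}^{\underline{Q}}\to\U^\chi_{(\lambda),\F}\operatorname{-mod}^{\underline{Q},opp}$, i.e. it preserves the $\lambda$-block, as claimed.
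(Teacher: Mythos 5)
Your proposal is correct and follows essentially the same route as the paper: the whole content of the paper's (one-line) proof is the classical fact that the principal anti-involution $\sigma$ acts trivially on the Harish-Chandra center, which is exactly the heart of your argument. Your additional bookkeeping — that $\operatorname{Ad}(n)$ is inner and hence trivial on $\U_\F^{G_\F}$, and that $R\Hom_{\U_\F}(\bullet,\U_\F)$ transports the generalized central character via centrality of $z$ — is just the detail the paper leaves implicit, and it is carried out correctly.
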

\begin{proof}
This follows from the classical fact that the principal anti-involution $\sigma$
acts trivially on the HC center.
\end{proof}

Here is another useful property of $\D$ describing the interaction of
this functor with the categorical braid group action.

\begin{Prop}\label{Prop:duality_braid_action}
We have $\D\circ T_x\cong T^{-1}_{x^{-1}}\circ \D$ for any $w\in W^{a}$.
\end{Prop}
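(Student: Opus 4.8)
The plan is to reduce the identity $\D\circ T_x\cong T_{x^{-1}}^{-1}\circ\D$ to the case of a single simple affine reflection $x=s$, and then to analyze how $\D$ interacts with the two-term complex defining the wall-crossing functor $T_s$. Since both sides are functors between triangulated categories that send $T_x T_y$ to the appropriate composites (using $\D(T_xT_y\bullet)=\D T_x(T_y\bullet)$ and the braid relations, together with the fact that $(xy)^{-1}=y^{-1}x^{-1}$), it suffices to check the statement on the generators $T_s$, $s=0,\dots,r$, where $T_s=T_{s^{-1}}$. So the real content is the isomorphism
\begin{equation*}
\D\circ T_s\cong T_s^{-1}\circ\D.
\end{equation*}

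First I would recall that $T_s$ is given by the complex $\operatorname{id}\to \mathsf{T}_{\mu^\circ\leftarrow\mu_s}\mathsf{T}_{\mu_s\leftarrow\mu^\circ}$ with the target in degree $0$, i.e.\ $T_s$ is the cone of the unit of the adjunction between the translation-to-the-wall and translation-from-the-wall functors (shifted), and $T_s^{-1}$ is the cone the other way, $\mathsf{T}_{\mu^\circ\leftarrow\mu_s}\mathsf{T}_{\mu_s\leftarrow\mu^\circ}\to\operatorname{id}$ with source in degree $0$. The key computation is that $\D$ intertwines the translation functors $\mathsf{T}_{\mu_s\leftarrow\mu^\circ}$ and $\mathsf{T}_{\mu^\circ\leftarrow\mu_s}$: translation functors are given by projecting a tensor product with a finite-dimensional $G_\F$-module $V$, namely $\operatorname{pr}_\bullet(V\otimes-)$, and the functor $R\Hom_{\U_\F}(-,\U_\F)[\dim\g]$ together with the twist $\,^{\tw}$ turns $V\otimes-$ into $V^*\otimes-$ (up to the one-dimensional character $\Lambda^{top}(\underline\g_\F/\underline\p_\F)$, which is $\varsigma$-invariant up to duality by Lemma \ref{Lem:involution_naive}), and $\varsigma$ sends the highest weight of $V$ to minus the lowest, exchanging the two translation directions. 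Concretely I would show $\D\circ\mathsf{T}_{\mu_s\leftarrow\mu^\circ}\cong\mathsf{T}_{\mu^\circ\leftarrow\mu_s}\circ\D$ and $\D\circ\mathsf{T}_{\mu^\circ\leftarrow\mu_s}\cong\mathsf{T}_{\mu_s\leftarrow\mu^\circ}\circ\D$, and that $\D$ sends the adjunction unit to the adjunction counit (since $\D$ is a contravariant equivalence it reverses the direction of adjunctions, sending left adjoints to right adjoints). Applying $\D$ to the cone $\operatorname{id}\to\mathsf{T}_{\mu^\circ\leftarrow\mu_s}\mathsf{T}_{\mu_s\leftarrow\mu^\circ}$ then yields, after using $\D^2\cong\operatorname{id}$ (Lemma \ref{Lem:dual_square}), the cone $\mathsf{T}_{\mu^\circ\leftarrow\mu_s}\mathsf{T}_{\mu_s\leftarrow\mu^\circ}\to\operatorname{id}$, which is exactly $T_s^{-1}$. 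One has to be a little careful with the homological shift in the cone—$\D$ is contravariant so it negates degrees—but this is precisely what converts $T_s$ (target in degree $0$) into $T_s^{-1}$ (source in degree $0$), so the bookkeeping works out.

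I would also want to check that $\D$ does preserve the regular block—i.e.\ that $\D$ restricted to $D^b(\U^\chi_{(\lambda),\F}\operatorname{-mod}^{\underline{Q}})$ with $\lambda+\rho$ regular lands in the same block—which is Lemma \ref{Lem:dual_HC_character}, so the braid action is defined on the target and the statement makes sense. A cleaner alternative for organizing the argument: invoke Proposition \ref{Prop:braid_equivariance} and the fact that on the coherent side $D^b(\Coh_{\B_\chi}(\tilde\g_\F^{(1)}))$ the braid group acts by the standard Bezrukavnikov--Riche action, for which the analogous identity with the Serre-type duality $\D_{coh}$ is known (and is studied in Section \ref{SS_loc_dual}); then transport along $R\Gamma(\mathcal{V}^\chi_\mu(\rho)\otimes-)$. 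But since Section \ref{SS_loc_dual} comes after this statement, I expect the intended proof is the direct one via translation functors sketched above.

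The main obstacle will be matching up the equivariant and duality structures precisely: verifying that the twist by $\Lambda^{top}(\underline\g_\F/\underline\p_\F)$ and the $\operatorname{Ad}(n)$-conjugation in $\varsigma$ combine so that $\D$ genuinely commutes with the $\underline{Q}_\F$-equivariant structure on $\mathsf{T}_{?\leftarrow?}$ and carries the specific adjunction unit to the specific counit with the correct sign/shift—rather than just abstractly exchanging the functors. In particular one must confirm that the isomorphism $\D\circ\mathsf{T}_{\mu_s\leftarrow\mu^\circ}\cong\mathsf{T}_{\mu^\circ\leftarrow\mu_s}\circ\D$ can be chosen compatibly with adjunctions (not merely object-wise), which amounts to the statement that $\D$ exchanges the two adjunctions $(\mathsf{T}_{\mu_s\leftarrow\mu^\circ},\mathsf{T}_{\mu^\circ\leftarrow\mu_s})$ on the nose; this follows from $\D$ being a contravariant self-equivalence, but writing it out carefully with the $\varsigma$-twist is where the work lies.
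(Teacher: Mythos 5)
Your overall strategy coincides with the paper's: reduce to generators, and analyze how $\D$ interacts with the two-term complex of translation functors defining the wall-crossing. But your key intermediate claim is wrong as stated. You assert that $\D$ \emph{exchanges} the two translation directions, i.e.\ $\D\circ\mathsf{T}_{\mu_s\leftarrow\mu^\circ}\cong\mathsf{T}_{\mu^\circ\leftarrow\mu_s}\circ\D$. This does not even typecheck: by Lemma \ref{Lem:dual_HC_character}, $\D$ preserves each Harish-Chandra block, so the left-hand side is a (contravariant) functor from the $\mu^\circ$-block to the $\mu_s$-block, while the right-hand side goes the other way. If one tried to run your cone argument with such swapped relations, $\D\circ(\mathsf{T}^*\mathsf{T})$ would turn into $\mathsf{T}\mathsf{T}^*\circ\D$, a wall-crossing on the \emph{singular} block, and the identification of the dual complex with $T_s^{-1}$ would collapse. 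The correct statement, and the actual content of the paper's proof, is that $\D$ \emph{commutes} with each translation functor in the same direction: writing $\mathsf{T}=\operatorname{pr}_{\lambda'}(V\otimes\bullet)$, one has $\D(V\otimes\bullet)\cong\,^\varsigma\!(V^*)\otimes\D(\bullet)$, and the whole point of using $\varsigma=\operatorname{Ad}(n)\sigma$ is that $\,^\varsigma\!(V^*)\cong\,^\sigma\!(V^*)\cong V$ (not $V^*$): the twist by the transpose-type anti-involution undoes the dualization of $V$. Your assertion that ``$\varsigma$ sends the highest weight of $V$ to minus the lowest'' is exactly the miscomputation here. Once $\D\circ\mathsf{T}\cong\mathsf{T}\circ\D$ (for both $\mathsf{T}$ and $\mathsf{T}^*$) is in place, your concluding step — applying the contravariant $\D$ to $\operatorname{id}\to\mathsf{T}^*\mathsf{T}$ and obtaining $\mathsf{T}^*\mathsf{T}\to\operatorname{id}$, i.e.\ $T_s^{-1}$ — is the paper's argument.

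A second, smaller gap: $W^a=W\ltimes\mathfrak{X}(T)$ is the \emph{extended} affine Weyl group, so the elements $T_s$ for simple affine reflections do not generate $\Br^a$. You must also treat the length-zero elements, which act by translation equivalences; the same commutation argument ($\D$ commutes with $\operatorname{pr}_{\lambda'}(V\otimes\bullet)$ because $\,^\varsigma\!(V^*)\cong V$) handles them, and the paper does this explicitly before invoking that simple affine reflections together with length-zero elements generate $W^a$.
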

\begin{proof}
Consider the case when $x$ is a simple affine reflection $s$,
in which case $T_s$ is a classical
wall-crossing functor. By what was recalled in Section \ref{SS_transl_braid},
the functor $T_s^{-1}$ is
given by $\mathsf{T}^*\circ \mathsf{T}(\bullet)\rightarrow \bullet$, with the
target functor  in  cohomological degree $1$. So we need to show that
$\D\circ\mathsf{T}\cong \mathsf{T}\circ \D$. The functor $\mathsf{T}$
has the form $\operatorname{pr}_{\lambda'}(V\otimes \bullet)$, for a suitable
central character $\lambda'$ (singular with the singularity corresponding to $s$)
and a suitable finite dimensional irreducible $G$-module $V$. By  Lemma
\ref{Lem:dual_HC_character}, $\D$ commutes with the functors $\operatorname{pr}_{\lambda'}$
so it remains to show that $V\otimes\D(\bullet)\cong \D(V\otimes \bullet)$.
Clearly, $\D(V\otimes \bullet)\cong \,^\varsigma\!(V^*)\otimes \D(\bullet)$, where
in the right hand side $V^*$ is viewed as a right $G$-module and so
the twist $\,^\varsigma\!(V^*)$ is a left $G$-module. And
$\,^\varsigma\!(V^*)\cong \,^\sigma\!(V^*)\cong V$. This finishes the proof.

Similarly, one checks that $\D$ commutes with the length $0$ elements in $W^a$,
those act by translation equivalences.
Since the simple affine reflections $s$ and the length zero elements generate
$W^a$, we are done.
\end{proof}

Finally, let us state the main result of this section.
In particular, it explains a  reason why we twist with $\varsigma$ and not with some other anti-involution of $\g$.

\begin{Prop}\label{Prop:K_0_identity}
Let $\lambda=0$. For all $x\in W^{a,P}$, we have $[\D W^\chi_\F(\mu_x)]=[W^\chi_\F(\mu_x)]$.
\end{Prop}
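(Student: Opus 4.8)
The plan is to compute both sides of the claimed identity in $K_0(\U^\chi_{(\lambda),\F}\operatorname{-mod}^{\underline{Q}})$ by reducing to a statement about the $W^a$-action on this $K_0$-group, which was recalled in Lemma \ref{Lem:W_aff_action}. Recall from that lemma that for $x\in W^a$,
$$x[W^\chi_\F(\mu^\circ)]=(-1)^{\ell(x)-\ell(x_-)}[W^\chi_\F(\mu_{x_+})],$$
where $x_-$ (resp. $x_+$) is the shortest (resp. longest) element in $xW_P$. Taking $x\in W^{a,P}$ (so $x=x_+$) and using that $x^{-1}$ has shortest representative $x_-$ in its coset only after the appropriate bookkeeping, the point is that $[W^\chi_\F(\mu_x)]$ is obtained from $[W^\chi_\F(\mu^\circ)]$ by an element of $W^a$ up to sign. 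By Proposition \ref{Prop:duality_braid_action}, $\D$ intertwines $T_x$ with $T^{-1}_{x^{-1}}$, and hence on $K_0$ the operator induced by $\D$ intertwines the action of $x$ with that of $x^{-1}$ (the signs coming from the $\pm$ in the categorical relation $T_s^2 = \ldots$ match up because the induced $W^a$-action on $K_0$ factors through $W^a$, not $\Br^a$, and inversion is a group anti-automorphism that is the identity on $W^a$ up to conjugation). Therefore it suffices to prove the single base case $[\D W^\chi_\F(\mu^\circ)] = [W^\chi_\F(\mu^\circ)]$, i.e. the identity for the anti-dominant weight $\mu^\circ$ (which for $\lambda=0$ is $-2\rho$), and then propagate it along $W^{a,P}$ using Proposition \ref{Prop:duality_braid_action} and Lemma \ref{Lem:W_aff_action}; the signs $(-1)^{\ell(x)-\ell(x_-)}$ will appear on both sides and cancel.

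For the base case, I would compute $\D W^\chi_\F(\mu^\circ)$ directly from the construction of $W^\chi_\F(\mu^\circ)=\underline{\Delta}^\chi(W_{L,\F}(\mu^\circ))$ as an induced module. Using the $\underline{Q}_\F$-equivariant linear isomorphism $\U^\chi_\F\xrightarrow{\sim}U^\chi(\mathfrak{m}^-_\F)\otimes U^0(\mathfrak{p}_\F)$ recalled in Section \ref{SS_chi_Weyl}, the module $W^\chi_\F(\mu^\circ)$ is free over $U^\chi(\mathfrak{m}^-_\F)$, so $R\Hom_{\U_\F}(W^\chi_\F(\mu^\circ),\U_\F)[\dim\g]$ can be computed by a Koszul-type resolution and is again concentrated in a single degree — it is a module coinduced from $\mathfrak{p}_\F$, which one rewrites as induced from the opposite parabolic $\mathfrak{p}^-_\F$ (the twist by $\Lambda^{top}(\underline{\g}_\F/\underline{\mathfrak{p}}_\F)$ together with the $\dim\g$-shift is exactly the bookkeeping that converts a $\operatorname{Hom}$-dual of a $\mathfrak{p}$-induced module into a $\mathfrak{p}^-$-induced one, cf. the standard BGG-type duality). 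Then applying the anti-involution $\varsigma$, which by the normalization in Section \ref{SS_dual_basic} satisfies $\varsigma(\g^i)=\g^{-i}$, $\varsigma(e)=e$, $\varsigma(f)=f$ and fixes $\underline{\mathfrak{p}}$, converts the $\mathfrak{p}^-$-induced module back into a $\mathfrak{p}$-induced module: specifically $\,^{\tw}\!$ of the $\mathfrak{p}^-$-induction of $W_{L,\F}(\mu^\circ)^\vee$ (the $\varsigma|_L$-twist of the $\operatorname{Hom}$-dual of the Weyl module, which for the Levi $L$ is again a Weyl module with the same highest weight because $\mu^\circ$ is $W_L$-anti-dominant and $\varsigma|_L$ is the principal anti-involution of $L$) becomes $\underline{\Delta}^\chi(W_{L,\F}(\mu^\circ))=W^\chi_\F(\mu^\circ)$ on the nose. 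Thus in fact $\D W^\chi_\F(\mu^\circ)\cong W^\chi_\F(\mu^\circ)$, which is stronger than the $K_0$ statement.

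I expect the main obstacle to be the careful bookkeeping in the base case: keeping track of which twists (the $\Lambda^{top}(\underline{\g}_\F/\underline{\mathfrak{p}}_\F)$ factor, the homological shift $[\dim\g]$, the action of $\varsigma$ on the various weight lattices) combine to give exactly the $\mathfrak p$-induction from the \emph{same} Weyl module $W_{L,\F}(\mu^\circ)$ rather than a twisted or shifted version. In particular one must check that $\varsigma$ restricted to $L_\F$ is conjugate to the principal anti-involution of $L_\F$ (so that it preserves each $W_{L,\F}(\mu)$ up to isomorphism) — this is where the argument is genuinely using $\varsigma$ and not some arbitrary anti-involution, and it is the analogue of the computation in \cite[Section 2.6]{W_dim}. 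A secondary subtlety is confirming that the sign $(-1)^{\ell(x)-\ell(x_-)}$ produced by propagating along $W^{a,P}$ via Lemma \ref{Lem:W_aff_action} is symmetric under $x\mapsto$ (the relevant inverted element), so that it genuinely cancels; this follows because $\ell(x)=\ell(x^{-1})$ and the shortest/longest coset representative lengths are preserved, but it should be spelled out. Once the base case and this sign-symmetry are in hand, the general case is immediate from Proposition \ref{Prop:duality_braid_action}: apply $T_x$ (for $x\in W^{a,P}$ with $x=x_+$) to the base identity, use that $\D T_x\cong T^{-1}_{x^{-1}}\D$, and pass to $K_0$.
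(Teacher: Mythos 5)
Your reduction of the statement to a single base case is sound and is in fact how the paper itself argues: Proposition \ref{Prop:duality_braid_action} makes $[\D]$ a $W^a$-linear operator on $K_0$ (both $T_x$ and $T^{-1}_{x^{-1}}$ act by $x$ on $K_0$, so no sign analysis is needed), and Lemma \ref{Lem:W_aff_action} then propagates the identity from one $\chi$-Weyl class to all of them; this is exactly the equivalence (2)$\Leftrightarrow$(3) in Lemma \ref{Lem:disting_K_0_D}. Two small corrections to that part: the base point should be $\mu_{w_{0,P}}=2\rho_L-2\rho$ rather than $\mu^\circ=-2\rho$, which is not dominant for $L$ when $P\neq B$, so $W^\chi_\F(\mu^\circ)$ is not among the defined $\chi$-Weyl modules.

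The genuine gap is in the base case, and it is not bookkeeping. Since $\varsigma$ fixes $e,f$ and sends $h\mapsto -h$, it preserves the $h$-grading and flips the $\nu$-grading; hence $\varsigma(\p)=\underline{\p}\oplus\g^{<0}$ and $\varsigma(\p^-)=\underline{\p}^-\oplus\g^{>0}$, and neither equals $\p$ unless $\chi$ is distinguished. So for general $\chi$ your twist does not convert the $\Hom$-dual of $W^\chi_\F(\mu)$ (a coinduced, $\nabla$-type object) back into a $\p$-induced module: one lands in a module induced from a different parabolic with the same Levi $\lf$, and comparing its $K_0$-class with the $\chi$-Weyl classes is a genuine extra step --- in the paper this is Proposition \ref{Prop:D_baby} together with the identity $[\underline{\Delta}_\nu(M)]=[\underline{\nabla}_\nu(M)]$, proved by restricting to $\ug$ and using upper triangularity of the simples. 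Even in the distinguished case, where $\varsigma(\p)=\p$ and $\D$ of a $\p$-induced module is indeed again $\p$-induced, the twists do not cancel: what comes out is induction of a twisted dual Weyl module with highest weight shifted by $2\rho_L-2\rho$, which is why Step 2 of the paper's proof produces a nonzero map from $W^\chi_\F(2\rho_L-2\rho)$, not from $W^\chi_\F(0)$, into $\D W^\chi_\F(0)$. The equality $[W^\chi_\F(2\rho_L-2\rho)]=[W^\chi_\F(0)]$ --- these are $\chi$-Weyl modules attached to \emph{different} elements of $W^{a,P}$, so it is not an instance of the statement being proved --- is precisely Lemma \ref{Lem:chi_Weyl_coincide}, whose proof uses characteristic-zero primitive ideal theory (the kernel of $U(\g)\twoheadrightarrow D(G/P)$, Jantzen's socle theorem for the Duflo involution) and reduction mod $p$; your claim that all twists return you to the same Weyl module ``on the nose'' implicitly assumes it. In addition, the paper needs Premet's irreducibility theorem to upgrade a nonzero Hom to an isomorphism, and the explicit construction of a two-sided $\p$-invariant element of $U^0(\p_\F)$ (Steps 3--6) to produce that Hom equivariantly. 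None of these ingredients appears in your plan, so the base case, and with it the whole argument, does not close as written.
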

Recall that here $\mu_x=x^{-1}\cdot (-2\rho)$.

Below, Section \ref{SS_canonical}, we will see that this proposition implies that
$\D$ gives the identity map on $K_0(\U^\chi_{(0),\F}\operatorname{-mod}^{\underline{Q}})$.

This proposition will be proved in the next two sections.

\subsection{Behavior on $K_0$, distinguished case}\label{SS_K_0_distinguished}
Throughout this section we assume that $\chi$ is distinguished.

We start with a series of lemmas.

\begin{Lem}\label{Lem:disting_K_0_D}
Let $\lambda=0$.
The following  claims are equivalent:
\begin{enumerate}
\item $\D W^\chi_\F(2\rho_L-2\rho)\cong W^\chi_\F(2\rho_L-2\rho)$.
\item $[\D W^\chi_\F(2\rho_L-2\rho)]\cong [W^\chi_\F(2\rho_L-2\rho)]$.
\item $[\D W^\chi_\F(\mu_x)]\cong [W^\chi_\F(\mu_x)]$
for all $x\in W^{a,P}$.
\end{enumerate}
\end{Lem}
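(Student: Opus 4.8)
The strategy is to establish the cycle $(1)\Rightarrow(2)$, $(3)\Rightarrow(2)$, $(2)\Rightarrow(3)$, $(2)\Rightarrow(1)$. The first is trivial. For $(3)\Rightarrow(2)$ it suffices to take $x=w_{0,P}$: this element lies in $W^{a,P}$ (it is the longest element of the coset $W_P=1\cdot W_P$), and $\mu_{w_{0,P}}=w_{0,P}^{-1}\cdot\mu^\circ=w_{0,P}\cdot(-2\rho)=2\rho_L-2\rho$, the last equality using $w_{0,P}(\rho)=\rho-2\rho_L$, which in turn follows from the decomposition $\rho=\rho_L+\rho^L$ with $\rho^L$ (half the sum of the positive roots not in $L$) being $W_L$-invariant, since $W_L$ permutes those roots.

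For $(2)\Rightarrow(3)$ I would pass to $K_0:=K_0(\U^\chi_{(0),\F}\operatorname{-mod}^{\underline{Q}})$. Let $d$ be the involution of $K_0$ induced by $\D$; it is well defined since $\D$ is exact and preserves the block (Lemma~\ref{Lem:dual_HC_character}), and $d^2=\operatorname{id}$ by Lemma~\ref{Lem:dual_square}. By Proposition~\ref{Prop:duality_braid_action} we have $\D\circ T_x\cong T^{-1}_{x^{-1}}\circ\D$; passing to $K_0$ and using that both $T_x$ and $T^{-1}_{x^{-1}}$ act on $K_0$ by the element $x$ of the $W^a$-action of Section~\ref{SS_transl_braid}, we conclude that $d$ commutes with this $W^a$-action. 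Now fix $x\in W^{a,P}$ and let $z=x_-$ be the shortest element of $xW_P$, so $x=z_+$ and $\ell(z)-\ell(z_-)=0$. Lemma~\ref{Lem:W_aff_action} gives $z\,[W^\chi_\F(\mu^\circ)]=[W^\chi_\F(\mu_x)]$, and its $z=1$ case gives $[W^\chi_\F(\mu^\circ)]=[W^\chi_\F(\mu_{w_{0,P}})]=[W^\chi_\F(2\rho_L-2\rho)]$. Applying $d$ and using $(2)$:
$$d\,[W^\chi_\F(\mu_x)]=z\,d\,[W^\chi_\F(2\rho_L-2\rho)]=z\,[W^\chi_\F(2\rho_L-2\rho)]=z\,[W^\chi_\F(\mu^\circ)]=[W^\chi_\F(\mu_x)],$$
which is $(3)$.

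The implication $(2)\Rightarrow(1)$ carries the actual content, and the point is that $W^\chi_\F(2\rho_L-2\rho)$ is in fact an irreducible module. Indeed $2\rho_L-2\rho=-2\rho^L$ is orthogonal to all coroots of $L$, so the Weyl module $W_{L,\F}(2\rho_L-2\rho)$ is one-dimensional (it is the character of $L$ on $\Lambda^{top}(\g_\F/\p_\F)$). Hence $\dim W^\chi_\F(2\rho_L-2\rho)=\dim U^\chi(\m^-_\F)=p^{\dim\m^-}=p^{\dim\Orb/2}$, the last equality because $e$, being distinguished, is even, so that $\dim\g^e=\dim\lf$ and therefore $\dim\Orb=2\dim\m^-$. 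Since every nonzero $\U^\chi_\F$-module has dimension divisible by $p^{\dim\Orb/2}$ (the Kac--Weisfeiler bound, valid for $p\gg0$ by Premet's theorem), a module of dimension exactly $p^{\dim\Orb/2}$ has no proper nonzero submodule, so $W^\chi_\F(2\rho_L-2\rho)$ is simple. Finally $\D$ is t-exact and contravariant, so $\D W^\chi_\F(2\rho_L-2\rho)$ is again an object of $\U^\chi_\F\operatorname{-mod}^{\underline{Q}}$; by $(2)$ it has the same class in $K_0$ as the simple module $W^\chi_\F(2\rho_L-2\rho)$, hence it is isomorphic to it, giving $(1)$. The main obstacle is precisely this recognition of $W^\chi_\F(2\rho_L-2\rho)$ as irreducible via the dimension computation and the Kac--Weisfeiler bound; granting that (and the $W^a$-equivariance of $d$), the equivalence of the three statements is formal.
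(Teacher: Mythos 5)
Your proof is correct and follows essentially the same route as the paper: the equivalence of (1) and (2) comes from the irreducibility of the $\chi$-Weyl module via Premet's theorem on the Kac--Weisfeiler bound (the paper computes $\dim W^\chi_\F(0)=p^{\dim\Pcal}$, while you compute $\dim W^\chi_\F(2\rho_L-2\rho)$ directly from the one-dimensionality of $W_{L,\F}(2\rho_L-2\rho)$, which is the same point), and the equivalence of (2) and (3) comes from Proposition \ref{Prop:duality_braid_action} making $[\D]$ a $W^a$-linear automorphism of $K_0$ combined with Lemma \ref{Lem:W_aff_action}. Your write-up just spells out the details that the paper leaves implicit.
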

\begin{proof}
We have $\dim W^\chi_\F(0)=p^{\dim \Pcal}$. Since $\dim \Pcal=\frac{1}{2}\dim G_\F\chi$,
we use the main result of \cite{Premet} (proving the Kac-Weisfeiler conjecture) to deduce that $W^\chi_\F(0)$ is irreducible.
So (1) and (2) are equivalent.

Proposition \ref{Prop:duality_braid_action} implies that
$[\D]$ acts on $K_0(\U^\chi_{(0),\F}\operatorname{-mod}^{\underline{Q}})$
by a $W^a$-linear automorphism. The equivalence
(2)$\Leftrightarrow$(3) now follows from
Lemma \ref{Lem:W_aff_action}.
\end{proof}

Here is another technical result that we are going to need.

\begin{Lem}\label{Lem:chi_Weyl_coincide}
We have $W^\chi_\F(2\rho_L-2\rho)\xrightarrow{\sim} W^\chi_\F(0)$.
\end{Lem}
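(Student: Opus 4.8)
The claim is that the two $\chi$-Weyl modules $W^\chi_\F(2\rho_L - 2\rho)$ and $W^\chi_\F(0)$ are isomorphic. Recall these are obtained by inflating an $L_\F$-Weyl module along $U^0(\mathfrak{p}_\F)\twoheadrightarrow U^0(\mathfrak{l}_\F)$ and then inducing up via $\underline{\Delta}^\chi = \U^\chi_\F\otimes_{U^0(\mathfrak{p}_\F)}\bullet$. The first observation is that, since $\chi$ is distinguished, $e$ is even and in particular $h$ acts on $\mathfrak{p}$ with even eigenvalues; moreover $2\rho_L - 2\rho$ is (up to sign) the sum of the roots of $\mathfrak{m}^-$ (equivalently, $-2(\rho - \rho_L)$ equals minus the sum of the positive roots not in $\mathfrak{l}$), and this weight is exactly the $T$-weight by which $\Lambda^{\mathrm{top}}\mathfrak{m}_\F^-$ transforms, i.e. the weight of $\Lambda^{\mathrm{top}}(\mathfrak{g}_\F/\mathfrak{p}_\F)$ as an $L_\F$-module. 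So I want to exhibit $W_{L,\F}(2\rho_L - 2\rho)$ as a twist of the trivial $L_\F$-module $W_{L,\F}(0) = \F$ by this one-dimensional character, and then track what the twist does after $\underline{\Delta}^\chi$.

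\textbf{Key steps.} First I would check that $W_{L,\F}(2\rho_L - 2\rho)$ is one-dimensional: its highest weight $2\rho_L - 2\rho = -2(\rho - \rho_L)$ pairs to $0$ with every simple coroot of $L$ (since $\rho - \rho_L$ is $W_L$-invariant, being the half-sum of roots outside $\mathfrak{l}$ which form a $W_L$-stable set), hence the Weyl character formula gives a one-dimensional $L_\F$-module, namely the character $\mathfrak{X}(T)\ni 2\rho_L - 2\rho$ extended to $L_\F$. Call this character $\delta$, and note $\delta$ agrees with the $L_\F$-action on $\Lambda^{\mathrm{top}}(\mathfrak{g}_\F/\mathfrak{p}_\F) = \Lambda^{\mathrm{top}}\mathfrak{m}_\F^-$. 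Second, I would compare the induced modules: $W^\chi_\F(2\rho_L - 2\rho) = \U^\chi_\F\otimes_{U^0(\mathfrak{p}_\F)}\delta$ and $W^\chi_\F(0) = \U^\chi_\F\otimes_{U^0(\mathfrak{p}_\F)}\F$. Using the $\underline{Q}_\F$-equivariant PBW-type isomorphism $\U^\chi_\F \xrightarrow{\sim} U^\chi(\mathfrak{m}_\F^-)\otimes U^0(\mathfrak{p}_\F)$ recalled in Section \ref{SS_chi_Weyl}, both modules are $U^\chi(\mathfrak{m}_\F^-)\otimes(\text{1-diml})$ as $\mathfrak{m}_\F^-$-modules, so a priori they differ only by the $\mathfrak{p}_\F$-action, i.e. by the character $\delta$. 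The point is that twisting the induction by the character $\Lambda^{\mathrm{top}}(\mathfrak{g}_\F/\mathfrak{p}_\F)$ of $\mathfrak{p}_\F$ is absorbed by re-indexing the PBW basis: concretely, the top of $U^\chi(\mathfrak{m}_\F^-)$ as a $\mathfrak{p}_\F$-module (acting by the adjoint action composed with multiplication) transforms by $\Lambda^{\mathrm{top}}\mathfrak{m}_\F^- = \delta$, so $\U^\chi_\F\otimes_{U^0(\mathfrak{p}_\F)}\delta \cong \U^\chi_\F\otimes_{U^0(\mathfrak{p}_\F)}\F$ via the map induced by multiplication by (a lift of) a generator of $\Lambda^{\mathrm{top}}U^\chi(\mathfrak{m}_\F^-)$. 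I would make this precise by writing the isomorphism on the canonical generators and checking $\mathfrak{p}_\F$-equivariance degree by degree in the PBW filtration.

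\textbf{Expected main obstacle.} The delicate point is the last one: justifying that the character by which $\mathfrak{p}_\F$ acts on the ``top'' PBW component of $\U^\chi_\F$ is precisely $\Lambda^{\mathrm{top}}(\mathfrak{g}_\F/\mathfrak{p}_\F)$, and that this extends to a $\U^\chi_\F$-linear and $\underline{Q}_\F$-equivariant isomorphism of the two induced modules rather than merely a $T_\F$-equivariant linear one. This requires care because $U^\chi(\mathfrak{m}_\F^-)$ is only $p$-restricted (it is a truncated symmetric-algebra-like object of dimension $p^{\dim\mathfrak{m}^-}$, not a free PBW algebra), so the ``top component'' must be understood via the Frobenius-twist grading on $S(\mathfrak{m}_\F^{-(1)})$ or via the filtration by powers of the augmentation ideal; one needs that the socle/top of $U^\chi(\mathfrak{m}_\F^-)$ as a $\mathfrak{p}_\F$-module is one-dimensional with the stated character, which follows from the analogous statement for $U^0(\mathfrak{m}_\F^-) \cong $ a Frobenius-twisted truncated polynomial ring. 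Once this module-theoretic fact is in hand, the identification $W^\chi_\F(2\rho_L-2\rho)\cong W^\chi_\F(0)$ is immediate, and (in the ambient argument) it feeds into Lemma \ref{Lem:disting_K_0_D} to reduce Proposition \ref{Prop:K_0_identity} to the single irreducibility statement $\dim W^\chi_\F(0) = p^{\dim\mathcal P} = p^{\frac12\dim G_\F\chi}$ plus Premet's theorem. Alternatively, if the direct PBW computation proves awkward, one can instead argue that both modules have the same $\underline{Q}_\F$-character by Lemma \ref{Lem:chi_Weyl_basic}(2) — since $\hat d(2\rho_L - 2\rho) = \hat d(0)\cdot\delta|_{\underline Q}$ and $\delta|_{\underline Q}$ is trivial as $\underline Q$ is (semisimple-part) finite acting on a one-diml space by a character which, being a character of a finite group appearing in $\Lambda^{\mathrm{top}}(\mathfrak g/\mathfrak p)$, may a priori be nontrivial — so this shortcut needs the same triviality input as in the proof of Lemma \ref{Lem:involution_naive}, namely property (*) there; hence the cleanest route really is the explicit multiplication-by-top-form isomorphism.
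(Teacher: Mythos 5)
There is a genuine gap, and it sits exactly where you place the main weight of your argument. The proposed isomorphism ``multiplication by a generator of the top of $U^\chi(\mathfrak{m}^-_\F)$'' does not work: the element $\prod_i y_i^{p-1}$ has $T$-weight $(p-1)(2\rho_L-2\rho)\equiv 2\rho-2\rho_L \pmod p$, i.e.\ the \emph{negative} of the weight you need, so the map you write down is not even $\mathfrak{t}_\F$-equivariant for the character $\delta=2\rho_L-2\rho$. Already for $\g=\mathfrak{sl}_2$ with $\chi$ regular nilpotent, the singular vector of weight $-2$ in $W^\chi_\F(0)$ is $f\cdot v_0$, not $f^{p-1}v_0$. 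More fundamentally, the underlying principle you invoke --- that twisting the inducing character by $\Lambda^{\mathrm{top}}(\g_\F/\p_\F)$ is ``absorbed by re-indexing the PBW basis'' --- cannot be a formal fact: if it were, iterating it would give $W^\chi_\F(\mu)\cong W^\chi_\F(\mu+k(2\rho_L-2\rho))$ for all $k$, contradicting Lemma \ref{Lem:chi_Weyl_basic}(1), since those modules have different Harish-Chandra central characters in general. The isomorphism in the lemma is special to the linked pair of weights $0$ and $2\rho_L-2\rho$ (both of the form $x^{-1}\cdot(-2\rho)$) and to $\chi$ being nonzero on $\mathfrak{m}^-_\F$; no weight bookkeeping on $U^\chi(\mathfrak{m}^-_\F)$ by itself produces the required $\U^\chi_\F$-linear map. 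Your fallback via equality of $\underline{Q}$-characters is also insufficient: equal characters (or even irreducibility plus equal dimension and equal central character, which both modules do enjoy by Premet's theorem) does not force an isomorphism, because a regular central character generally supports several non-isomorphic simples.

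What is actually needed, and what the paper supplies, is the existence of a singular vector, i.e.\ an embedding at the level of parabolic Verma modules, produced in characteristic zero and then specialized. The paper shows $\Delta^P_\C(2\rho_L-2\rho)\hookrightarrow\Delta^P_\C(0)$ with cokernel not of full support on $\mathfrak{m}^{-*}$, using the simplicity of $\Delta^P_\C(2\rho_L-2\rho)$, the Borho--Brylinski theorem that both parabolic Vermas have annihilator equal to the kernel of $U(\g)\to D(G/P)$, Jantzen's socle theorem (Satz 7.11) for the relevant Duflo involution, and a Gelfand--Kirillov dimension comparison; this embedding is defined over a finite localization of $\Z$, so it reduces mod $p$, and since the support of the cokernel is a proper closed $P^{(1)}_\F$-stable subset of $\mathfrak{m}^{-(1)*}_\F$ it misses $\chi$, whence specializing at $\chi$ gives an injection $W^\chi_\F(2\rho_L-2\rho)\hookrightarrow W^\chi_\F(0)$ which is an isomorphism by dimension count. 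None of these ingredients (or substitutes for them) appear in your proposal, so the central step remains unproved.
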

\begin{proof}
Note that $W^\chi_\F(\mu)$ is the specialization of the parabolic Verma module
$\Delta^P_\F(\mu)$ to $\chi\in \g^{(1)*}_\F$. Consider the parabolic Verma
$\Delta^P_{\C}(2\rho_{L}-2\rho):=U(\g)\otimes_{U(\p)}\C_{2\rho_{L}-2\rho}$.

We claim that
$\Delta^P_\C(2\rho_{L}-2\rho)\hookrightarrow \Delta^P_\C(0)$ and the quotient
does not have  full support in $\mathfrak{m}^{-*}$. Note that $\Delta^P_\C(2\rho_{L}-2\rho)$ is simple.
The annihilators of  $\Delta^P_\C(2\rho_{L}-2\rho), \Delta^P_\C(0)$ in $U(\g)$
coincide because both coincide with the kernel $J$ of $U(\g)\twoheadrightarrow D(G/P)$,
see \cite[Section 3.6]{BorBr}. The highest weight $2\rho_{L}-2\rho$ corresponds
to the longest element $\underline{w}_0$ of $W_{\underline{G}}$. This is a Duflo involution.
So, according to \cite[Satz 7.11]{Jantzen},  the socle of $\Delta_\C(0)/J\Delta_\C(0)$ is $\Delta^P_\C(2\rho_{L}-2\rho)$
and the quotient of $\Delta_\C(0)/J\Delta_\C(0)$ by the socle has GK dimension smaller
than that of $\Delta^P_\C(2\rho_{L}-2\rho)$. Since  the GK dimensions of
$\Delta^P_\C(2\rho_{L}-2\rho)$ and $\Delta^P_\C(0)$ are the same,
it follows that the natural epimorphism $\Delta_\C(0)/J\Delta_\C(0)\twoheadrightarrow \Delta^P_\C(0)$
is actually an isomorphism. This yields the required embedding
$\Delta^P_\C(2\rho_{L}-2\rho)\hookrightarrow \Delta^P_\C(0)$.

The embedding $\Delta^P_{\C}(2\rho_{L}-2\rho)\hookrightarrow \Delta^P_\C(0)$ is defined over
a finite localization of $\Z$. Since $p$ is large enough, we get $\Delta^P_{\F}(2\rho_{L}-2\rho)
\hookrightarrow \Delta^P_\F(0)$ and the quotient does not have   full support in $\mathfrak{m}_\F^{-,(1)*}$.
This support is  closed and $P_\F^{(1)}$-stable. So the support
of the quotient does not contain $\chi$. We conclude that $W^\chi_\F(2\rho_{L}-2\rho)
\hookrightarrow W^\chi_\F(0)$. Since the dimensions coincide, this embedding is an isomorphism.
\end{proof}

\begin{proof}[Proof of Proposition
\ref{Prop:K_0_identity} for distinguished $\chi$]

The proof will be in several steps.

{\it Step 1}. First, we prove that $\D M=\,^\tw\! \Hom_{\U^\chi_\F}(M,\U^\chi_\F)$.

By the 5-lemma, $$R\Hom_{\U_\F}(M,\U_\F)[\dim \g]=\Hom(M, R\Hom_{\U_\F}(\U^\chi_\F,\U_\F)[\dim \g])$$
It follows that $$\D M=\,^\tw\!\Hom(M, R\Hom_{\U_\F}(\U^\chi_\F,\U_\F)[\dim \g]).$$
So we just need to prove that the $\U^\chi_\F$-bimodules
$\U^\chi_\F$ and $R\Hom_{\U_\F}(\U^\chi_\F,\U_\F)[\dim \g]$ are $A$-equivariantly isomorphic.
Let $n:=\dim \g$ and let $V$ denote the subspace in $S(\g_{\F}^{(1)})$ generated
by the elements $x-\langle \chi,x\rangle$ for $x\in \g_\F^{(1)}$. This subspace is
$A$-stable. Consider the Koszul complex for $V$ acting on $\U_\F$ by multiplication,
denote it by $K_\bullet(V,\U_\F)$. It is an $A$-equivariant bimodule resolution of $\U^\chi_\F$
by free left $\U_\F$-modules.  Then we have an $A$-equivariant
$\U_\F$-bilinear isomorphism
\begin{equation}\label{eq:iso_Koszul} R\Hom_{\U_\F}(\U^\chi_\F, \U_\F)[\dim\g]\cong K_\bullet(V, \U_\F\otimes \Lambda^{top}V^*)\end{equation}
Now we observe that the action of $A$ on $V$ is via a homomorphism
$A\rightarrow \operatorname{SO}(V)$. This is because the action is isomorphic
to the action on $A$ on $\g_\F^{(1)}$, which factors through $G_\F^{(1)}$.
It follows that $\Lambda^{top}V^*$ is the trivial $A$-module. So the right hand side
of (\ref{eq:iso_Koszul}) is nothing else but $\U^\chi_\F$. Hence
$\U^\chi_\F\cong R\Hom_{\U_\F}(\U^\chi_\F,\U_\F)[\dim \g]$.

{\it Step 2}. Both $W^\chi_\F(0), \D W^\chi_\F(0)$ are simple modules.
By Lemma \ref{Lem:chi_Weyl_coincide}, $W^\chi_\F(0)=W^\chi_\F(2\rho_{L}-2\rho)$.
To prove the proposition we need to show that
$$\Hom_{\U^\chi_\F}\left(W^\chi_\F(2\rho_{L}-2\rho), \D W^\chi_\F(0)\right)^A\neq 0.$$
We write $\F_0$ for the one-dimensional trivial $\p_\F$-module.
By Step 1,
\begin{align*}&\D W^\chi_\F(0)=\,^\tw\! \Hom_{\U^\chi_\F}(\U^\chi_\F\otimes_{U^0(\p_\F)}\F_{0},\U^\chi_\F)=\\
&^\varsigma\! \Hom_{U^0(\p_\F)}(\F_{0},\U^\chi_\F)\otimes \Lambda^{top}(\g_F/\p_\F)=
\Hom_{U^0(\p_\F)^{opp}}(\F_{0}, \U^\chi_\F)\otimes \Lambda^{top}(\g_\F/\p_\F).
\end{align*}
The last equality holds because $^\varsigma \U^\chi_\F\cong \U^\chi_\F$
(as discussed in the proof of Lemma \ref{Lem:dual_square}) and $\,^\varsigma\F_{0}\cong \F_{0}$. So
\begin{align*}&\Hom_{\U^\chi_\F\operatorname{-mod}^A}(W^\chi_\F(2\rho_{L}-2\rho), \D W^\chi_\F(0))=\\&\left[\Hom_{U^0(\p_\F)}\left(\F_{2\rho_{L}-2\rho}, \Hom_{U^0(\p_\F)^{opp}}(\F_{0}, \U^\chi_\F)\right)\otimes \Lambda^{top}(\g_\F,\p_\F)\right]^A=\\
&\left[\Hom_{U^0(\p_\F)\otimes U^0(\p_\F)^{opp}}\left(\F_{2\rho_{L}-2\rho}\otimes \F_{0},\U^\chi_\F\right)\otimes \Lambda^{top}(\g_\F/\p_\F)\right]^A.
\end{align*}
Note that $\Lambda^{top}(\g_\F/\p_\F)\cong \Lambda^{top}(\p_\F)$. Since $U^0(\p_\F)$
is an $A$-stable subalgebra of $\U^\chi_\F$, the space
$\Hom(W^\chi_\F(2\rho_{L}-2\rho), \D W^\chi_\F(0))$ contains
$$\left[\Hom_{U^0(\p_\F)\otimes U^0(\p_\F)^{opp}}(\F_{2\rho_{L}-2\rho}\otimes \F_{0},U^0(\p_\F))\otimes \Lambda^{top}(\p_\F)\right]^A.$$
Below in this proof we will see that the latter space is nonzero.

{\it Step 3}. Let $\mathfrak{u}$ be the Lie algebra of a unipotent algebraic group $U$
over $\F$. We claim that
there is a unique (up to rescaling) element $x(\mathfrak{u})\in U^0(\mathfrak{u})$ annihilated by $\mathfrak{u}$ on the left and on the right. The proof is by induction on $\dim \mathfrak{u}$.
Namely, set $\mathfrak{u}_1:=\mathfrak{u}/\mathfrak{z}(\mathfrak{u})$. Then
$U^0(\mathfrak{u})\twoheadrightarrow U^0(\mathfrak{u}_1)$. Let $x'$ be a lift
of $x(\mathfrak{u}_1)$ to $U^0(\mathfrak{u})$ and let $y_1,\ldots,y_k$ be a basis
in $\mathfrak{z}(\mathfrak{u})$. The element $x(\mathfrak{u}):=x'\prod_{i=1}^k y_i^{p-1}$
is independent of the choice of $x'$.  It is annihilated by $\mathfrak{u}$
on the left and on the right. On the other hand, if $x$ is annihilated by
$\mathfrak{u}$ on the left and on the right it must have the form
$x''\prod_{i=1}^ky_i^{p-1}$, where $x''\in U^0(\mathfrak{u})$ is such that
the projection of $x''$ to $U^0(\mathfrak{u}_1)$
is annihilated by $\mathfrak{u}_1$ on the left and on the right. So it must be proportional
to $x(\mathfrak{u}_1)$. This implies the claim in the beginning of the paragraph.

{\it Step 4}. We still assume that $\mathfrak{u}$ is the Lie algebra of  a unipotent
algebraic group. Let $y_1,\ldots, y_N$ be a basis of $\mathfrak{u}$. Then
$x(\mathfrak{u})$ is proportional to  $\prod_{i=1}^N y_i^{p-1}$. Indeed, this follows from
Step 3 for  a special choice of basis and it is easy to see that $\prod_{i=1}^N y_i^{p-1}$ is
independent of the choice of  a basis up to a scalar multiple. It follows that if $S$ is an algebraic
group acting on $\mathfrak{u}$ by algebraic Lie algebra automorphisms, then
$$s x(\mathfrak{u})=\chi_{\Lambda^{top}(\mathfrak{u})}(s)^{p-1} x(\mathfrak{u}),$$
where $\chi_{\Lambda^{top}(\mathfrak{u})}$ is the character of the $S$-action
in $\Lambda^{top}(\mathfrak{u})$.

{\it Step 5}.  Now consider an algebraic group $F=S\ltimes U$ over $\F$, where $S$
is connected reductive and $U$ is unipotent. Assume that there is
$x(\mathfrak{s})\in U^0(\mathfrak{s})$ that is annihilated by $\mathfrak{s}$
on the left and on the right. It follows from Step 4, that the element
$x(\mathfrak{f}):=x(\mathfrak{u})x(\mathfrak{s})\in U^0(\mathfrak{f})$ is annihilated by
$\mathfrak{f}$ on the right, while for any $y\in \mathfrak{f}$ we have
$$y x(\mathfrak{f})=-\langle \chi_{\Lambda^{top}(\mathfrak{u})},y\rangle x(\mathfrak{f}).$$
So to prove the claim in the end of Step 2 (where $\mathfrak{f}=\mathfrak{p}$),
it remains to check that
$x(\mathfrak{s})$ indeed exists.

{\it Step 6}.  Let $\mathfrak{s}=\mathfrak{n}^-\oplus \mathfrak{h}\oplus \mathfrak{n}$
be the triangular decomposition for $\mathfrak{s}$. Let $z_1,\ldots,z_r$ be an integral basis in
$\mathfrak{h}$. For $k\in \F_p$, set
$F_k(z):=(\prod_{i=1}^{p}(z-i))/(z-k)\in \F[z]$.  We claim that  $$x(\mathfrak{s}):=x(\mathfrak{n})(\prod_{i=1}^r F_{\langle 2\rho,z_i\rangle}(z_i))x(\mathfrak{n}^-)$$
satisfies the required properties.  Note that $\prod_{i=1}^r F_{\langle 2\rho,z_i\rangle}(z_i)$
does not depend on the choice of $z_1,\ldots,z_r$ (up to rescaling): it is the unique
element in $U^0(\mathfrak{h})$ annihilated by $z-\langle 2\rho,z\rangle$ for all
$z\in \mathfrak{h}$.

It is enough to show that $x(\mathfrak{s})$
is annihilated by the Cartan generators $e_i,f_i$ and also by $\mathfrak{h}$
on the left and on the right. For $\mathfrak{h}$, this is clear.
Let $\mathfrak{n}_0$ be the $H$-stable complement of $\F e_i$ in $\mathfrak{n}$
and $\mathfrak{n}_0^-$ have the similar meaning.  By Step 4, we have
$$x(\mathfrak{s})=x(\mathfrak{n}_0)\left[e_i^{p-1}(\prod_{i=1}^r F_{\langle 2\rho,z_i\rangle}(z_i))f_i^{p-1}\right]x(\mathfrak{n}^-_0).$$
The elements $x(\mathfrak{n}_0), x(\mathfrak{n}^-_0)$ commute with both
$e_i,f_i$ by Step 4. So we need to check that $e_i,f_i$ annihilate the middle bracket.
This reduces the computation to the case of $\mathfrak{s}=\mathfrak{sl}_2$:
we choose $z_1=\alpha_i^\vee$ and all other $z_i$ vanishing on $\alpha_i$.

In the case of $\mathfrak{sl}_2$ what we need to check is that
$f e^{p-1}(\prod_{i\neq 2} (h-i))f^{p-1}=0$ and $e^{p-1}(\prod_{i\neq 2} (h-i))f^{p-1}e=0$ in $U^0(\mathfrak{sl}_2)$. The first equality easily follows from $fe^{p-1}=e^{p-2}(h-2)+e^{p-1}f$.
The second is analogous.
\end{proof}

\subsection{Behavior on $K_0$, general case}\label{SS_K_0_general}
Now let us discuss a compatibility between $\D$ and $\underline{\D}$,
the similarly defined functor for $\underline{\U}^\chi_\F\operatorname{-mod}^{\underline{Q}}$.
Let $\underline{\Delta}_\nu$ and $\underline{\nabla}_\nu$ denote the baby Verma
and {\it dual baby Verma}
functors $\underline{\U}_\F\operatorname{-mod}^{\underline{Q}}
\rightarrow \U_\F\operatorname{-mod}^{\underline{Q}}$, the latter is defined
by $$\underline{\nabla}_\nu(M):=\Hom_{U^\chi(\mathfrak{g}^{\leqslant 0}_\F)}(\U^\chi_\F,M).$$

\begin{Prop}\label{Prop:D_baby}
We have $\D\circ \underline{\Delta}_\nu\cong \underline{\nabla}_\nu\circ \underline{\D}$,
an isomorphism of functors $\underline{\U}^\chi_\F\operatorname{-mod}^{{\underline{Q}}}
\rightarrow \U^\chi_\F\operatorname{-mod}^{{\underline{Q}},opp}$.
\end{Prop}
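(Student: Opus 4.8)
The plan is to unwind both sides of the asserted isomorphism to explicit $\U^\chi_\F$-bimodule expressions and then match them. First I would recall the decompositions coming from the triangular-type decomposition associated with $\nu$: namely $\g_\F=\mathfrak{m}^-_\F\oplus \ug_\F\oplus \g^{>0}_\F$, so that $\g^{\leqslant 0}_\F=\mathfrak{m}^-_\F\oplus\ug_\F$ and $\U^\chi_\F\cong U^\chi(\g^{>0}_\F)\otimes U^\chi(\g^{\leqslant 0}_\F)$ as a right $U^\chi(\g^{\leqslant 0}_\F)$-module, while $\U^\chi_\F\cong U^\chi(\mathfrak{m}^-_\F)\otimes U^\chi(\g^{\geqslant 0}_\F)$ as a right $U^\chi(\g^{\geqslant 0}_\F)$-module. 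This makes $\underline{\Delta}_\nu=\U^\chi_\F\otimes_{U^\chi(\g^{\geqslant 0}_\F)}(\bullet)$ (inflating a $\ug$-module along $\g^{\geqslant 0}_\F\twoheadrightarrow\ug_\F$) and $\underline{\nabla}_\nu(M)=\Hom_{U^\chi(\g^{\leqslant 0}_\F)}(\U^\chi_\F,M)$.

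The key computation is the same Koszul/duality manipulation used in Step 1 of the proof of Proposition \ref{Prop:K_0_identity} in the distinguished case. Applying $\D(M)=\,^{\tw}\!R\Hom_{\U_\F}(M,\U_\F)[\dim\g]$ to $\underline{\Delta}_\nu(\underline{N})=\U^\chi_\F\otimes_{U^0(\g^{\geqslant0}_\F)}(\text{infl }\underline{N})$ and using that $R\Hom_{\U_\F}(\U^\chi_\F,\U_\F)[\dim\g]\cong\U^\chi_\F$ as a $\U^\chi_\F$-bimodule (together with adjunction for the induction), I would reduce $\D\circ\underline{\Delta}_\nu(\underline{N})$ to $\,^\varsigma\!\bigl(\Hom_{U^0(\g^{\geqslant0}_\F)}(\U^\chi_\F,\,^\varsigma\!(\text{something involving }\underline{N}))\bigr)$ twisted by $\Lambda^{top}(\ug_\F/\underline{\p}_\F)$. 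The point is that $\varsigma$ interchanges $\g^{\geqslant 0}_\F$ and $\g^{\leqslant 0}_\F$ (since $\varsigma(\g^i)=\g^{-i}$ and $\varsigma$ fixes $\ug_\F$, indeed $\underline{\p}_\F\subset\ug_\F$): so the $\Hom$ over $U^0(\g^{\geqslant0}_\F)$ on one side becomes a $\Hom$ over $U^0(\g^{\leqslant 0}_\F)$ on the other, i.e. exactly the $\underline{\nabla}_\nu$ on the outside. The inner twist by $\varsigma$ then accounts for applying $\underline{\D}$ to $\underline{N}$ (using that $\underline{\D}$ is built from $\varsigma$ restricted to $\ug_\F$ and the twist $\Lambda^{top}(\ug_\F/\underline{\p}_\F)$), and the $\Lambda^{top}(\g_\F/\g^{\geqslant 0}_\F)=\Lambda^{top}(\mathfrak{m}^-_\F)$ and $\Lambda^{top}(\ug_\F/\underline{\p}_\F)$ factors recombine correctly because $\Lambda^{top}(\g_\F/\underline{\p}_\F)=\Lambda^{top}(\mathfrak{m}^-_\F)\otimes\Lambda^{top}(\g^{>0}_\F)\otimes\Lambda^{top}(\ug_\F/\underline{\p}_\F)$ and the $\underline{Q}_\F$-action on $\Lambda^{top}(\g^{>0}_\F)\otimes\Lambda^{top}(\mathfrak{m}^-_\F)=\Lambda^{top}(\g_\F/\ug_\F)$ is trivial (it factors through a map $\underline{Q}_\F\to \mathrm{SO}$ as in Step 1 of the distinguished case, since the adjoint action of $\underline{Q}_\F$ on $\g_\F/\ug_\F$ preserves a nondegenerate form).

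The main obstacle I expect is the careful bookkeeping of the $\underline{Q}_\F$-equivariant structures and the various one-dimensional determinant twists, plus checking that the intertwining with $\varsigma$ on the Hom-spaces is compatible with the inflation/restriction along $\g^{\geqslant0}_\F\twoheadrightarrow\ug_\F$ versus $\g^{\leqslant0}_\F\twoheadrightarrow\ug_\F$ (one must verify that $\varsigma$ carries the inflation of $\underline{N}$ to the inflation, along the opposite projection, of $\,^\varsigma\!\underline{N}$, which is where the hypothesis that $\varsigma$ stabilizes $\underline{\p}_\F$ inside $\ug_\F$ and acts on it by the canonical anti-involution is used). A secondary technical point is the t-exactness/vanishing of higher $\Ext$'s needed to pass freely between $R\Hom$ and $\Hom$; this follows as in Step 1 of Section \ref{SS_K_0_distinguished} from the fact that $\U^\chi_\F\operatorname{-mod}$ consists of finite-dimensional modules and from the Koszul resolution of $\U^\chi_\F$ over $\U_\F$, so I would just cite that argument rather than repeat it.
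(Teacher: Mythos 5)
Your overall strategy (dualize the induced module by a Koszul/Chevalley--Eilenberg computation and use $\varsigma$ to exchange $\g^{\geqslant 0}_\F$ and $\g^{\leqslant 0}_\F$) is indeed the route the paper takes, but your reduction hides the actual content of the proposition in an unjustified step. Tensor--Hom adjunction applied to $\underline{\Delta}_\nu(\underline{N})=\U^\chi_\F\otimes_{U^\chi(\g^{\geqslant 0}_\F)}\underline{N}$ gives $\Hom_{U^\chi(\g^{\geqslant 0}_\F)}(\underline{N},\U^\chi_\F)$, with $\underline{N}$ in the \emph{first} slot; since $\U^\chi_\F$ is free over $U^\chi(\g^{\geqslant 0}_\F)$, this is an \emph{induced} right module, $\Hom_{U^\chi(\g^{\geqslant 0}_\F)}(\underline{N},U^\chi(\g^{\geqslant 0}_\F))\otimes_{U^\chi(\g^{\geqslant 0}_\F)}\U^\chi_\F$, and after the $\varsigma$-twist it becomes a baby Verma module for the \emph{opposite} parabolic, i.e.\ $\underline{\Delta}_{-\nu}$ of a twist of $\underline{\D}(\underline{N})$ --- not the coinduction $\underline{\nabla}_\nu(\bullet)=\Hom_{U^\chi(\g^{\leqslant 0}_\F)}(\U^\chi_\F,\bullet)$. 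Your assertion that ``the Hom over $U^0(\g^{\geqslant 0}_\F)$ becomes a Hom over $U^0(\g^{\leqslant 0}_\F)$, i.e.\ exactly $\underline{\nabla}_\nu$'' is therefore precisely the missing step: one must still prove $\underline{\Delta}_{-\nu}\bigl(\underline{\D}(\underline{N})\otimes\Lambda^{top}(\g^{>0}_\F)^{\otimes 1-p}\bigr)\cong\underline{\nabla}_\nu(\underline{\D}(\underline{N}))$. The paper does this by observing that $\underline{\D}(\underline{N})$ is the top $\nu$-weight subspace of the left-hand side, obtaining a map to $\underline{\nabla}_\nu(\underline{\D}(\underline{N}))$, proving injectivity from the fact that every $\nu$-graded $\g^{>0}_\F$-submodule of $U^0(\g^{>0}_\F)$ meets the top weight component, and concluding by a dimension count. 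Nothing in your plan supplies this argument.

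The determinant bookkeeping is also off in a way that matters. The twist produced by dualizing the induction is $\Lambda^{top}(\g^{>0}_\F)^{\otimes(1-p)}$: in the paper this comes from resolving $\underline{\Delta}_\nu(\underline{\U}_\F)$ by the Chevalley--Eilenberg complex on $\g^{>0}_\F\oplus\g^{<0,(1)}_\F$ (Lie generators plus $p$-center generators), whence the shift $[2\dim\g^{>0}]$ and the determinant $(\Lambda^{top}\g^{>0}_\F)\otimes(\Lambda^{top}\g^{<0}_\F)^{\otimes p}$; equivalently, in your normalization, $\Hom_{U^0(\g^{>0}_\F)}(\F,U^0(\g^{>0}_\F))$ is the line spanned by $\prod y_i^{p-1}$, of $\underline{Q}_\F$-character $\Lambda^{top}(\g^{>0}_\F)^{\otimes(p-1)}$. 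This $(1-p)$-th power does not cancel by your $\mathrm{SO}$-type triviality argument (that argument only disposes of the ambient $\Lambda^{top}\g^{(1)*}_\F$ when passing from $R\Hom_{\U_\F}(\bullet,\U_\F)[\dim\g]$ to $\Hom_{\U^\chi_\F}(\bullet,\U^\chi_\F)$); it is exactly the $\nu$-weight shift by $(p-1)\sum_{\langle\alpha,\nu\rangle>0}\alpha$ that makes the comparison between $\underline{\Delta}_{-\nu}$ and $\underline{\nabla}_\nu$ work. A smaller slip with the same flavor: $\g_\F/\g^{\geqslant 0}_\F\cong\g^{<0}_\F$ and $\g^{\leqslant 0}_\F=\g^{<0}_\F\oplus\ug_\F$, not $\mathfrak{m}^-_\F\oplus\ug_\F$; the nilradical $\mathfrak{m}^-_\F$ of the parabolic opposite to $\p_\F$ also contains the negative part of $\underline{\p}_\F$ inside $\ug_\F$, so the PBW decompositions underlying $\underline{\Delta}_\nu$ and $\underline{\nabla}_\nu$ must be phrased with $\g^{<0}_\F$, not $\mathfrak{m}^-_\F$.
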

\begin{proof}
First, we compute $R\Hom_{\U_\F}(\underline{\Delta}_\nu(\underline{\U}_\F),\U_\F)$. Let $\underline{\Delta}^r_\nu$
denote the analog of the functor $\underline{\Delta}$ for the categories of right modules (with the
same parabolic). We claim that
\begin{equation}\label{eq:duality_Verma}
R\Hom_{\U_\F}(\underline{\Delta}_\nu(\underline{\U}_\F),\U_\F)[2\dim \mathfrak{g}^{>0}]\cong
\underline{\Delta}^r_\nu\left(\underline{\U}_\F\otimes \left[\Lambda^{top}(\mathfrak{g}^{>0}_\F)^{\otimes 1-p}\right]^*\right).\end{equation}
This is an isomorphism of right $(\U_\F,\underline{G}_{\F})$-modules (in particular, we view
$\left[\Lambda(\mathfrak{g}^{>0}_{\F})^{\otimes 1-p}\right]^*$ as a right $\underline{G}_{\F}$-module).
To prove (\ref{eq:duality_Verma}) we first
note that $\underline{\Delta}_\nu(\underline{\U}_\F)$ is the quotient of $\U_\F$ by
the ideal generated by $\g^{>0}_{\F}\oplus \g_{\F}^{<0,(1)}$.
Consider the Chevalley-Eilenberg complex of left modules associated with this Lie subalgebra of
generators.  We  denote this complex by  $\mathsf{CE}_\ell(\g^{>0}_{\F}\oplus\g_{\F}^{<0,(1)}, \U_\F)$.
This is an $L_\F$-equivariant resolution of  $\underline{\Delta}_{\nu}(\underline{\U}_{\F})$
by free left $\U_\F$-modules.

It follows that we have a quasi-isomorphism
$$R\Hom_{\U_\F}(\underline{\Delta}_\nu(\underline{\U}_{\F}),\U_\F)[2\dim \mathfrak{g}^{>0}]\cong\mathsf{CE}_r(\g^{>0}_{\F}\oplus\g_{\F}^{<0,(1)}, \U_\F\otimes \left[\Lambda^{top}(\mathfrak{g}^{>0}_{\F})^{\otimes 1-p}\right]^*),$$
where on the right we have the Chevalley-Eilenberg complex of right modules. This complex
is quasi-isomorphic to the right hand side of (\ref{eq:duality_Verma}).
This proves (\ref{eq:duality_Verma}).

In particular, (\ref{eq:duality_Verma}) implies that
\begin{equation}\label{eq:dual_iso1}
R\Hom_{\U_\F}(\underline{\Delta}_\nu(M),\U_\F)=\underline{\Delta}^r_\nu
\left(R\Hom_{\underline{\U}_{\F}}(M,\underline{\U}_{\F}\otimes
\left[\Lambda^{top}(\mathfrak{g}^{>0}_{\F})^{\otimes 1-p}\right]^*)\right)[2\dim \mathfrak{g}^{>0}].
\end{equation}
Now let us see what twisting by $\varsigma$ does. First of all, $\,^\varsigma\!\underline{\Delta}_\nu^r(\bullet)\cong \underline{\Delta}_{-\nu}(\,^\varsigma\bullet)$, where  $\underline{\Delta}_{-\nu}$ stands for the baby Verma module functor
for the opposite parabolic $\mathfrak{g}^{\leqslant 0}$.  Since
$\varsigma$ is the identity on the character lattice of $\underline{G}_{\F}$,
we have the following isomorphism of left $G_\F$-modules
$$\,^\varsigma\!\left[\Lambda^{top}(\mathfrak{g}^{>0}_{\F})^{\otimes 1-p}\right]^*\cong \Lambda^{top}(\mathfrak{g}^{>0}_{\F})^{\otimes 1-p}.$$

So (\ref{eq:dual_iso1}) yields
\begin{equation}\label{eq:dual_iso2}
\D(\underline{\Delta}_\nu(M))=\underline{\Delta}_{-\nu}(\underline{\D}(M)\otimes \Lambda^{top}(\mathfrak{g}^{>0}_{\F})^{\otimes 1-p}).
\end{equation}

Note that $\underline{\D}(M)$ is the maximal $\nu$-weight subspace
in $\underline{\Delta}_{-\nu}\left(\underline{\D}(M)\otimes \Lambda^{top}(\g^{>0}_{\F})^{\otimes 1-p}
\right)$.
This gives a homomorphism
\begin{equation}\label{eq:delta_nabla_homom}
\underline{\Delta}_{-\nu}(\underline{\D}(M)\otimes \Lambda^{top}(\g^{>0}_{\F})^{\otimes 1-p})\rightarrow
\underline{\nabla}_{\nu}(\underline{\D}(M))\end{equation}
that is the identity on $\underline{\D}(M)$.

Now we show that every  $\nu$-graded $\mathfrak{g}^{>0}_\F$-submodule
of $\underline{\Delta}_{-\nu}\left(\underline{\D}(M)\otimes \Lambda^{top}(\g^{>0}_{\F})^{\otimes 1-p}\right)$
intersects the maximal $\nu$-weight subspace $\underline{\D}(M)$. This will follow if we check
that every $\nu$-graded submodule of $U^0(\g^{>0}_\F)$ has nonzero eigenspace
with eigenvalue $(p-1)\sum_{\alpha| \langle\alpha,\nu\rangle>0}\alpha$.
This claim, in its turn, follows from the analogous claim for $S(\g^{>0}_\F)/(\g^{(1),>0}_\F)$,
where it is obvious.

It follows that (\ref{eq:delta_nabla_homom})
is an injection. Since the dimensions of the source and the target
are the same, we see  that (\ref{eq:delta_nabla_homom}) is an isomorphism.
It follows that
$$\,^\varsigma\!R\Hom_{\U_\F}(\underline{\Delta}_\nu(M),\U_\F)\cong
\underline{\nabla}_{\nu}(\,^\varsigma\!R\Hom_{\underline{\U}_\F}(M,\underline{\U}_\F)).$$
This implies the claim of the proposition.
\end{proof}

\begin{proof}[Proof of Proposition \ref{Prop:K_0_identity} in the general case]
Thanks to the distinguished case of this proposition that we have established
already in Section \ref{SS_K_0_distinguished}, we need to prove that if $[\underline{\D} M]=[M]$, then
$[\D \underline{\Delta}_\nu(M)]=[\underline{\Delta}_\nu(M)]$. Using
Proposition \ref{Prop:D_baby}, we reduce to proving
$[\underline{\Delta}_\nu(M)]=[\underline{\nabla}_\nu(M)]$.
We will prove this for an arbitrary module
$M\in \underline{\U}_\F^\chi\operatorname{-mod}^{\underline{Q}}$.

Note that every module in $\U^\chi_\F\operatorname{-mod}^{\underline{Q}}$
is also a module in $\underline{\U}^\chi_\F\operatorname{-mod}^{\underline{Q}}$
by restriction. The corresponding map between the $K_0$-groups
is injective because the images of simples are linearly independent
thanks to the upper triangularity. To finish the proof observe
that in $K_0(\U^\chi(\ug_\F)\operatorname{-mod}^{\underline{Q}})$ we have
$$[\underline{\Delta}_\nu(M)]=[U^0(\g^{<0}_\F)\otimes M]=[\underline{\nabla}_\nu(M)].$$
\end{proof}

\subsection{Duality and localization}\label{SS_loc_dual}
The goal of this section is to describe the autoequivalence
$$D^b(\Coh^{\underline{Q}}(\B_{\chi}))\xrightarrow{\sim}
D^b(\Coh^{\underline{Q}}(\B_{\chi}))^{opp},$$
induced by $\D: \U^\chi_{(0),\F}\operatorname{-mod}^{\underline{Q}}\xrightarrow{\sim}
\U^\chi_{(0),\F}\operatorname{-mod}^{{\underline{Q}},opp}$ under the derived
localization equivalence
$R\Gamma(\mathcal{V}^\chi_0(\rho)\otimes\bullet)$. Here we assume that
$\rho\in \mathfrak{X}(T)$ (we will explain what to do in the general case in the
end of the section). The bundle $\mathcal{V}^\chi_0$ is equipped
with a $\underline{Q}_\F$-equivariant structure as explained in
Section  \ref{SS_splitting_equivar} and $\mathcal{O}_\F(\rho)$ has a
$\underline{Q}_\F$-equivariant structure obtained by restriction of
the $G_\F$-equivariant structure. So
$\mathcal{V}^\chi_0(\rho)$ becomes a $\underline{Q}_\F$-equivariant
vector bundle.

Consider the Serre duality functor $R\Hom(\bullet, \Omega_{(\tilde{\g}_{\F}^{(1)})_\h})[\dim \g]$,
where  $\Omega_\bullet$ stands for the canonical bundle of $(\tilde{\g}_{\F}^{(1)})_\h$, this
bundle is trivial.
The Serre duality functor gives rise to an equivalence
$$D^b(\Coh^{\underline{Q}}(\B_{\chi}))\xrightarrow{\sim}
D^b(\Coh^{\underline{Q}}(\B_{\chi}))^{opp},$$
where we view $\B_\chi$ as a derived subscheme of $\tilde{\g}_\F^{(1)}$
(equivalently, of $(\tilde{\g}_\F^{(1)})_\h$).

Note that $\varsigma$ gives rise to an automorphism of
the $\h_\F^{(1)*}$-scheme $\tilde{\g}_\F^{(1)}$ such that
$\B_\chi$ is stable (as a derived subscheme).
We  twist the Serre duality functor by $\varsigma$ and then
tensor with the ${\underline{Q}}/{\underline{Q}}^\circ$-module $\Lambda^{top}(\ug_\F/\underline{\mathfrak{p}}_\F)$.
The resulting contravariant autoequivalence of $D^b(\Coh^{\underline{Q}}(\B_{\chi}))$
will be denoted by $\D_{coh}$. The following proposition is similar
to results of \cite[Section 3]{BMR_sing}.

\begin{Prop}\label{Prop:dual_localization}
Recall that we assume that $\rho\in \mathfrak{X}(T)$.
Then we have the following commutative diagram

\begin{picture}(100,30)
\put(2,20){$D^b(\Coh^{\underline{Q}}(\B_\chi))$}
\put(60,20){$D^b(\Coh^{\underline{Q}}(\B_\chi))^{opp}$}
\put(2,2){$D^b(\U^\chi_{(0),\F}\operatorname{-mod}^{\underline{Q}})$}
\put(60,2){$D^b(\U^\chi_{(0),\F}\operatorname{-mod}^{\underline{Q}})^{opp}$}
\put(15,19){\vector(0,-1){13}}
\put(75,19){\vector(0,-1){13}}
\put(33,3){\vector(1,0){26}}
\put(29,21){\vector(1,0){30}}
\put(44,4){\tiny $\D$}
\put(40,23){\tiny $T^{-1}_{w_0}\circ\D_{coh}$}
\put(16,12){\tiny $R\Gamma(\mathcal{V}^\chi_0(\rho)\otimes\bullet)$}
\put(76,12){\tiny $R\Gamma(\mathcal{V}^\chi_0(\rho)\otimes\bullet)$}
\end{picture}
\end{Prop}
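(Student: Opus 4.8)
The plan is to reduce the statement to a computation on the level of the underlying bimodules/kernels, tracing how the contravariant duality $\D$ on the module side is transported through the derived localization equivalence $R\Gamma(\mathcal{V}^\chi_0(\rho)\otimes\bullet)$. First I would recall that, by Theorem \ref{Thm:derived_localization} (and its completed form feeding into \eqref{eq:der_p_1}), the functor $R\Gamma(\mathcal{V}^\chi_0(\rho)\otimes\bullet)$ identifies $D^b(\Coh^{\underline{Q}}(\B_\chi))$ with $D^b(\U^\chi_{(0),\F}\operatorname{-mod}^{\underline{Q}})$, and that under this identification the Azumaya algebra $\tilde D^{\wedge_{-\rho}}_{\B_\F}$ (equivalently $\tilde D^{\wedge_\mu}_{\B_\F}$ with $\mu = -\rho$, up to the twist built into $\mathcal{V}^\chi_\mu$) corresponds to $\U^\chi_\F$. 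The duality functor $\D$ was defined in \eqref{eq:D_definition} as $\,^{\tw}\!R\Hom_{\U_\F}(\bullet,\U_\F)[\dim\g]$; the three ingredients — $R\Hom$ into the regular bimodule, the $\varsigma$-twist, and the tensor with $\Lambda^{top}(\underline{\g}_\F/\underline{\mathfrak{p}}_\F)$ — have to be matched, one at a time, with the three ingredients of $T^{-1}_{w_0}\circ\D_{coh}$: the Serre duality $R\Hom(\bullet,\Omega_{(\tilde\g_\F^{(1)})_\h})[\dim\g]$, the $\varsigma$-twist of the $\h^{(1)*}_\F$-scheme $\tilde\g_\F^{(1)}$, and the same $\Lambda^{top}$-twist, together with the wall-crossing correction $T^{-1}_{w_0}$.

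The key steps, in order, are as follows. (1) Handle the $R\Hom$ parts: on the coherent side Serre duality for the Azumaya algebra $\tilde D^{\wedge_{-\rho}}_{\B_\F}$ on the (Calabi–Yau, after the appropriate $\rho$-shift) scheme $(\tilde\g_\F^{(1)})_\h$ corresponds under global sections to $R\Hom_{\U_\F^{\wedge}}(\bullet,\U_\F^{\wedge})[\dim\g]$, because global sections intertwine duality functors on proper morphisms and because the splitting bundle $\mathcal{V}^\chi_0(\rho)$ is chosen precisely so that its own Serre dual is controlled — here is where Lemma \ref{Lem:split_properties} and the identification of $\mathcal{V}^\chi_0(\rho)$ with (summands of) the reduction $\Tilt_{\h,\F}$ from Lemma \ref{Lem:reduct_mod_p} enter; the mismatch between $\Omega$ being trivial on $(\tilde\g_\F^{(1)})_\h$ and the canonical bundle $\mathcal{O}(-2\rho)$ of $\tilde\g_\F$ (relevant before the Frobenius pushforward) is exactly what produces the line-bundle twist $\mathcal{O}(\rho)$ in $\mathcal{V}^\chi_0(\rho)$ and the appearance of $T^{-1}_{w_0}$. (2) Handle the $\varsigma$-twist: the anti-involution $\varsigma$ of $\g_\F$ induces an anti-automorphism of $\U_\F$ and, simultaneously, an involution of $\tilde\g_\F^{(1)}$ over the Artin–Schreier-twisted base; since $\varsigma$ acts trivially on the Harish-Chandra center (Lemma \ref{Lem:dual_HC_character}) but sends the dominant alcove to the anti-dominant one, transporting $\varsigma$ through localization costs precisely the longest-element wall-crossing $T_{w_0}$ — this is the content of the $T^{-1}_{w_0}$ in the top arrow, and the argument mirrors \cite[Section 3]{BMR_sing}. (3) The $\Lambda^{top}(\underline{\g}_\F/\underline{\mathfrak{p}}_\F)$-twist is literally the same on both sides, so it matches tautologically; I would just note $\underline{Q}_\F$-equivariance is preserved because $\varsigma$ fixes $\underline{Q}$ and the $\underline{Q}_\F$-equivariant structure on $\mathcal{V}^\chi_0$ of Section \ref{SS_splitting_equivar} is compatible with the $G_\F$-equivariant one (the last sentence of Section \ref{SS_splitting_equivar}).

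The main obstacle will be Step (1) combined with Step (2): pinning down the exact line-bundle twist and the exact wall-crossing factor $T^{-1}_{w_0}$, i.e. checking that the canonical-bundle bookkeeping on $(\tilde\g_\F^{(1)})_\h$ versus $\tilde\g_\F^{(1)}$ versus the $\U^\chi_\F$-module category produces neither an extra shift nor an extra character of $\underline{Q}_\F$. I expect to handle this by testing the commutativity of the diagram on the $\chi$-Weyl modules $W^\chi_\F(\mu_x)$, $x\in W^{a,P}$ (whose localizations are the parabolically induced objects studied in Section \ref{S_tilting} and whose classes are fixed by $\D$ by Proposition \ref{Prop:K_0_identity}) and then bootstrapping via the braid group equivariance of both sides — $\D\circ T_x \cong T^{-1}_{x^{-1}}\circ\D$ from Proposition \ref{Prop:duality_braid_action}, $\D_{coh}$ together with $T_{w_0}$ being braid-equivariant by Proposition \ref{Prop:braid_equivariance} and the construction of the $\Br^a$-action on $D^b(\Coh_{\B_\chi}(\tilde\g_\F^{(1)}))$ from \cite{BR} — since the $\chi$-Weyl modules generate the derived category under wall-crossing and cones. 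A clean way to organize this is to first verify the statement for $\chi = 0$, where everything is $G_\F$-equivariant and the splitting bundle is just $\Fr_{\B,*}\mathcal{O}(\rho)$ (Lemma \ref{Lem:split_properties}(1)), and then deform to general $\chi$ using the one-parameter family $\mathcal{V}^{\F\chi}_\mu$ of Section \ref{SS_splitting_equivar}, exactly as in the proof of Lemma \ref{Lem:split_properties}(2).
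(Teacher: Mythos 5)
Your overall shape — Serre duality on the coherent side matching $R\Hom_{\U_\F}(\bullet,\U_\F)[\dim\g]$ under global sections, the $\varsigma$-twist being responsible (together with a $\rho$-bookkeeping) for the appearance of $T^{-1}_{w_0}$, and the $\Lambda^{top}$-twist matching tautologically — is in the right spirit, but your verification mechanism does not prove the statement, and you never isolate the step that carries the actual content. The paper's argument runs through a concrete identification of splitting bundles: twisting by $\varsigma$ turns $(\tilde D_{\B,\F})^{opp}$ into $\tilde D^{-2\rho}_{\B,\F}$, so that $\,^\varsigma(\mathcal{V}^\chi_0(\rho))^*$ is a splitting bundle for the $(-2\rho)$-twisted sheaf; one must then prove the $\underline{Q}_\F$-equivariant isomorphism $\mathcal{V}^\chi_{-2\rho}(\rho)\cong\,^\varsigma(\mathcal{V}^\chi_0(\rho))^*$ (this is where the one-parameter family $\mathcal{V}^{\F\chi}$, the reduction to $\chi=0$, and the $K_0$-computation with Frobenius pushforwards à la Lemma \ref{Lem:split_properties} are actually used — only for this bundle isomorphism, not for the whole proposition), and separately invoke \cite[Theorem 2.1.4]{BMR_sing} to convert $R\Gamma(\mathcal{V}^\chi_{-2\rho}(\rho)\otimes\bullet)$ into $R\Gamma(\mathcal{V}^\chi_0(\rho)\otimes T_{w_0}^{-1}(\bullet))$. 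That chain of genuinely functorial identifications is what produces the commutative diagram on the nose.

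Your proposed substitute — testing the diagram on the $\chi$-Weyl modules using Proposition \ref{Prop:K_0_identity} and then bootstrapping by braid equivariance — cannot work: Proposition \ref{Prop:K_0_identity} is only an equality of classes in $K_0$, which does not yield isomorphisms of objects (the modules $W^\chi_\F(\mu_x)$ are in general not simple), and even object-wise agreement of the two compositions on a generating family would not produce the required isomorphism of functors; commutativity of a diagram of triangulated functors is not a property detectable on generators plus $K_0$, and cones are not functorial, so the ``generate under wall-crossing and cones'' step has no content at the level of natural transformations. Likewise, your plan to prove the whole proposition at $\chi=0$ and ``deform'' to general $\chi$ does not make sense for the module categories $\U^\chi_{(0),\F}\operatorname{-mod}^{\underline{Q}}$ themselves; the deformation argument is only available (and only needed) for the splitting-bundle isomorphism. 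To repair the proof you would need to replace the $K_0$-testing by the bimodule-level diagrams of the paper's Step 1 (relating $R\Hom(\bullet,\mathcal{O})$, $R\Hom(\bullet,\tilde D)$ and $R\Hom(\bullet,\U_\F)$) and then supply the two missing ingredients above.
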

\begin{proof}We note that the claim of this proposition reduces
to the case when $G$ is semisimple and simply connected. We are going
to assume this until the end of the proof.

{\it Step 1}. We write $\tilde{D}$ for $\tilde{D}_{\B_\F}$ to simplify the notation.
We have $R\Gamma(\tilde{D})=\U_{\h,\F}$.
The functors $R\Hom_{\U_\F}(\bullet, \U_\F)$ and $R\Hom_{\U_{\h,\F}}(\bullet, \U_{\h,\F})$
are isomorphic on $\U^\chi_{(0),\F}\operatorname{-mod}^{\underline{Q}}$
because the Artin-Schreier map $\h^*_\F\twoheadrightarrow \h^{*(1)}_{\F}$
is unramified. So we have the following commutative diagram

\begin{picture}(100,30)
\put(2,20){$D^b(\Coh^{\underline{Q}}(\tilde{D}|_{\B_\chi}))$}
\put(60,20){$D^b(\Coh^{\underline{Q}}(\tilde{D}^{opp}|_{\B_\chi}))^{opp}$}
\put(2,2){$D^b(\U^\chi_{(0),\F}\operatorname{-mod}^{\underline{Q}})$}
\put(60,2){$D^b(\U^{\chi,opp}_{(0),\F}\operatorname{-mod}^{\underline{Q}})^{opp}$}
\put(15,19){\vector(0,-1){13}}
\put(75,19){\vector(0,-1){13}}
\put(33,3){\vector(1,0){26}}
\put(34,22){\vector(1,0){24}}
\put(38,4){\tiny $R\Hom(\bullet, \U_\F)$}
\put(38,23){\tiny $R\Hom(\bullet, \tilde{D})$}
\put(16,12){\tiny $R\Gamma$}
\put(76,12){\tiny $R\Gamma$}
\end{picture}

Also we have the following commutative diagram, where in the top horizontal row
we take Hom over $\mathcal{O}_{\tilde{\g}_\F^{(1)}}$ and in the bottom arrow
we take Hom over $\tilde{D}_\F$.

\begin{equation}\label{eq:some_loc_diagr}
\begin{picture}(100,30)
\put(2,2){$D^b(\Coh^{\underline{Q}}(\tilde{D}|_{\B_\chi}))$}
\put(60,2){$D^b(\Coh^{\underline{Q}}(\tilde{D}^{opp}|_{\B_\chi}))^{opp}$}
\put(2,20){$D^b(\Coh^{\underline{Q}}(\B_{\chi}))$}
\put(60,20){$D^b(\Coh^{\underline{Q}}(\B_{\chi}))^{opp}$}
\put(15,19){\vector(0,-1){13}}
\put(75,19){\vector(0,-1){13}}
\put(35,3){\vector(1,0){24}}
\put(33,21){\vector(1,0){26}}
\put(38,23){\tiny $R\Hom(\bullet, \mathcal{O})$}
\put(38,4){\tiny $R\Hom(\bullet, \tilde{D})$}
\put(16,12){\tiny $\mathcal{V}^\chi_0(\rho)\otimes\bullet$}
\put(76,12){\tiny $(\mathcal{V}^\chi_0(\rho))^*\otimes \bullet$}
\end{picture}
\end{equation}

Combining the previous two diagram, twisting with $\varsigma$ and tensoring with
$\Lambda^{top}(\ug_\F/\underline{\p}_\F)$,  we get the following commutative diagram.

\begin{picture}(100,30)
\put(2,2){$D^b(\U^\chi_{(0),\F}\operatorname{-mod}^{\underline{Q}})$}
\put(60,2){$D^b(\U^{\chi}_{(0),\F}\operatorname{-mod}^{\underline{Q}})^{opp}$}
\put(2,20){$D^b(\Coh^{\underline{Q}}(\B_{\chi}))$}
\put(60,20){$D^b(\Coh^{\underline{Q}}(\B_{\chi}))^{opp}$}
\put(15,19){\vector(0,-1){13}}
\put(75,19){\vector(0,-1){13}}
\put(35,3){\vector(1,0){24}}
\put(33,21){\vector(1,0){26}}
\put(45,23){\tiny $\D_{coh}$}
\put(45,4){\tiny $\D$}
\put(16,12){\tiny $R\Gamma(\mathcal{V}^\chi_0(\rho)\otimes\bullet)$}
\put(76,12){\tiny $R\Gamma(\,^\varsigma\!(\mathcal{V}^\chi_0(\rho))^*\otimes \bullet)$}
\end{picture}

{\it Step 2}. The following commutative diagram is a consequence of  Proposition \ref{Prop:braid_equivariance}.

\begin{picture}(100,30)
\put(2,2){$D^b(\U^\chi_{(0),\F}\operatorname{-mod}^{\underline{Q}})$}
\put(60,2){$D^b(\U^{\chi}_{(0),\F}\operatorname{-mod}^{\underline{Q}})$}
\put(2,20){$D^b(\Coh^{\underline{Q}}(\B_{\chi}))$}
\put(60,20){$D^b(\Coh^{\underline{Q}}(\B_{\chi}))$}
\put(15,19){\vector(0,-1){13}}
\put(75,19){\vector(0,-1){13}}
\put(33,3){\vector(1,0){26}}
\put(31,21){\vector(1,0){28}}
\put(16,12){\tiny $R\Gamma(\mathcal{V}^\chi_0(\rho)\otimes\bullet)$}
\put(76,12){\tiny $R\Gamma(\mathcal{V}^\chi_0(\rho)\otimes\bullet)$}
\put(43,23){\tiny $T_{w_0}$}
\put(43,4){\tiny $T_{w_0}$}
\end{picture}

On the other hand, by \cite[Theorem 2.1.4]{BMR_sing}, we have

\begin{picture}(100,30)
\put(2,2){$D^b(\U^\chi_{(0),\F}\operatorname{-mod}^{\underline{Q}})$}
\put(60,2){$D^b(\U^{\chi}_{(0),\F}\operatorname{-mod}^{\underline{Q}})$}
\put(2,20){$D^b(\Coh^{\underline{Q}}(\tilde{D}^{-2\rho}|_{\B_\chi}))$}
\put(60,20){$D^b(\Coh^{\underline{Q}}(\tilde{D}|_{\B_\chi}))$}
\put(15,19){\vector(0,-1){13}}
\put(75,19){\vector(0,-1){13}}
\put(35,3){\vector(1,0){24}}
\put(38,21){\vector(1,0){21}}
\put(76,12){\tiny $R\Gamma$}
\put(16,12){\tiny $R\Gamma$}
\put(41,23){\tiny $\mathcal{O}(2\rho)\otimes\bullet$}
\put(43,4){\tiny $T_{w_0}$}
\end{picture}

This commutative diagram implies the following one.

\begin{picture}(100,30)
\put(2,2){$D^b(\U^\chi_{(0),\F}\operatorname{-mod}^{\underline{Q}})$}
\put(60,2){$D^b(\U^{\chi}_{(0),\F}\operatorname{-mod}^{\underline{Q}})$}
\put(2,20){$D^b(\Coh^{\underline{Q}}(\B_{\chi}))$}
\put(60,20){$D^b(\Coh^{\underline{Q}}(\B_{\chi}))$}
\put(15,19){\vector(0,-1){13}}
\put(75,19){\vector(0,-1){13}}
\put(33,3){\vector(1,0){26}}
\put(31,21){\vector(1,0){28}}
\put(76,12){\tiny $R\Gamma(\mathcal{V}^\chi_0(\rho)\otimes\bullet)$}
\put(16,12){\tiny $R\Gamma(\mathcal{V}^\chi_{-2\rho}(\rho)\otimes\bullet)$}
\put(43,23){\tiny $\operatorname{id}$}
\put(43,4){\tiny $T_{w_0}$}
\end{picture}

So we conclude that
\begin{equation}\label{eq:split_intertw}
R\Gamma(\mathcal{V}^\chi_{-2\rho}(\rho)\otimes \bullet)\cong R\Gamma(\mathcal{V}^\chi_{0}(\rho)\otimes
T_{w_0}^{-1}(\bullet)).
\end{equation}

{\it Step 3}. Suppose, for a moment, that we know that $\mathcal{V}^{\chi}_{-2\rho}(\rho)$
and $\,^\varsigma\! (\mathcal{V}^\chi_0(\rho))^*$ are ${\underline{Q}}_\F$-equivariantly isomorphic.
Using (\ref{eq:some_loc_diagr}), we get
$$\D\circ R\Gamma(\mathcal{V}^\chi_0(\rho)\otimes\bullet)\cong
R\Gamma(\mathcal{V}^\chi_{-2\rho}(\rho)\otimes \D_{coh}(\bullet)).$$
By (\ref{eq:split_intertw}), we get
$$R\Gamma(\mathcal{V}^\chi_{-2\rho}(\rho)\otimes \D_{coh}(\bullet))\cong R\Gamma(\mathcal{V}^\chi_{0}(\rho)\otimes T_{w_0}^{-1}\D_{coh}(\bullet)).$$
The last two isomorphisms imply the commutative diagram in the statement
of the proposition.

{\it Step 4}.  It remains to show that we have a ${\underline{Q}}_\F$-equivariant isomorphism $$\mathcal{V}^{\chi}_{-2\rho}(\rho)\cong \,^\varsigma\! (\mathcal{V}^\chi_0(\rho))^*.$$
 We will prove a stronger statement:  there is a ${\underline{Q}}_\F\times \F^\times$-equivariant and
 an $\F[z]$-linear (where $z$ is a coordinate on $\F\chi$) isomorphism
\begin{equation}\label{eq:splitting_iso_chi} \mathcal{V}^{\F\chi}_{-2\rho}(\rho)\cong \,^\varsigma\! (\mathcal{V}^{\F\chi}_0(\rho))^*,\end{equation}
where the bundles $\mathcal{V}^{\F\chi}_{?}$ were introduced in Section
\ref{SS_splitting_equivar}.
Note that the action of $\F^\times$ is contracting (to $(\B_\chi,0)$).
Therefore it is enough to prove that there is a $G_\F$-equivariant
isomorphism
\begin{equation}\label{eq:splitting_iso}\mathcal{V}^{0}_{-2\rho}(\rho)\cong \,^\varsigma\! (\mathcal{V}^{0}_0(\rho))^*.
\end{equation}
Indeed, we restrict (\ref{eq:splitting_iso}) to $\tilde{\g}_\F^{(1)}\times_{\g_\F^{*(1)}}\g_\F^{*(1)\wedge_\chi}$
and get (\ref{eq:splitting_iso_chi}) thanks to the $\F^\times$-equivariance (recall that, by the construction in Section \ref{SS_splitting_equivar} the $\F^\times$-action is also restricted
from $G_\F$).

The left hand side of (\ref{eq:splitting_iso}) is a splitting bundle
for $\tilde{D}^{-2\rho}_{\B,\F}$ restricted to
$$\tilde{\g}_\F^{(1)\wedge_0}=\tilde{\g}_\F^{(1)}\times_{\g_\F^{*(1)}}\g_{\F}^{*(1)\wedge_0}.$$
The right hand side  is a splitting bundle for $\,^\varsigma\!(\tilde{D}_{\B,\F})^{opp}$
restricted to $\tilde{\g}_\F^{(1)\wedge_0}$. We claim that the restrictions
of $\,^\varsigma\!(\tilde{D}_{\B,\F})^{opp}$ and $\tilde{D}^{-2\rho}_{\B,\F}$
to  $\tilde{\g}_\F^{(1)\wedge_0}$
are $G_\F$-equivariantly isomorphic Azumaya algebras.

First of all, note that $D_{\B,\F}$ is an $\operatorname{Ad}(\g_\F)$-equivariant
sheaf of algebras on $\B_\F$. The standard
isomorphism $D^{opp}_{\B,\F}\cong D^{-2\rho}_{\B,\F}$ gives the filberwise
multiplication by $-1$ on $T^*\B_\F^{(1)}$. So we get an isomorphism
$\,^{\sigma'}D^{opp}_{\B,\F}\cong D^{-2\rho}_{\B,\F}$ of Azumaya algebras,
where $\sigma'$ is the standard antiautomorphism of $\g_\F$ given by
$x\mapsto -x$. This isomorphism extends to an $\F[\h^*]$-semilinear
isomorphism $\,^{\sigma'}\tilde{D}^{opp}_{\B,\F}\cong \tilde{D}^{-2\rho}_{\B,\F}$,
where on $\F[\h^*]$ we get an automorphism given by $x\mapsto -x$.
And when we twist with $\varsigma$ instead we get an
$\F[\h^*]$-linear and $G_\F$-equivariant isomorphism $\,^{\varsigma}\tilde{D}^{opp}_{\B,\F}\cong \tilde{D}^{-2\rho}_{\B,\F}$.

% Consider the antiautomorphism
%$\sigma'$ of $\g_\F$ given by $x\mapsto -x$. It lies in the same
%$\operatorname{Ad}(\g_\F)$-coset as $\varsigma$ so we get a $G_\F$-equivariant
%isomorphism $\,^\varsigma\!(\tilde{D}_{\B})^{opp}\cong \,^{\sigma'}\!(\tilde{D}_{\B})^{opp}$. Note that
%$\,^{\sigma'}\!(\tilde{D}_{\B})^{opp}\cong \tilde{D}^{-2\rho}_{\B}$, a
%$G_\F$-equivariant isomorphism of Azumaya algebras.

So (\ref{eq:splitting_iso}) is true up to a twist with a line bundle.
We need to show that this line bundle is trivial and it is sufficient
to prove the restriction of the line bundle to $\B_\F^{(1)}$ is trivial.
Similarly to the proof of (1) of Lemma \ref{Lem:split_properties},
it is enough to show (\ref{eq:splitting_iso}) on the level of
$K_0$-classes in the non-equivariant K-theory.
Thanks to (1) of Lemma \ref{Lem:split_properties}
this equality reduces to
\begin{equation}\label{eq:splitting_K0_class}[(\Fr_{\B*}\Str)(\rho)]^*=[(\Fr_{\B*}\Str(-2\rho))(\rho)].\end{equation}
Recall that we have an isomorphism
$\operatorname{Fr}_*\Str(-\rho)\cong \Str^{(1)}(-\rho)^{\oplus p^{\dim G/B}}$.
So
$$[(\Fr_{\B*}\Str)(\rho)]^*=p^{\dim G/B}[ \Str(\rho/p)]^*,
[(\Fr_{\B*}\Str(-2\rho))(\rho)]=p^{\dim G/B}[\Str(-\rho/p)].$$
 (\ref{eq:splitting_K0_class}) follows.
\end{proof}

\begin{Rem}\label{Rem:dual_localization}
Now we no longer assume that $\rho\in \mathfrak{X}(T)$ (compare to Remark \ref{Rem:simply_conn}). Instead, let $\rho'$
be a character of $T$ that pairs by $1$ with all simple coroots. Then the argument
of the proof of Proposition \ref{Prop:dual_localization} shows that
the following diagram is commutative:

\begin{picture}(100,30)
\put(2,20){$D^b(\Coh^{\underline{Q}}(\B_\chi))$}
\put(60,20){$D^b(\Coh^{\underline{Q}}(\B_\chi))^{opp}$}
\put(2,2){$D^b(\U^\chi_{(0),\F}\operatorname{-mod}^{\underline{Q}})$}
\put(60,2){$D^b(\U^\chi_{(0),\F}\operatorname{-mod}^{\underline{Q}})^{opp}$}
\put(15,19){\vector(0,-1){13}}
\put(75,19){\vector(0,-1){13}}
\put(33,3){\vector(1,0){26}}
\put(29,21){\vector(1,0){30}}
\put(44,4){\tiny $\D$}
\put(40,23){\tiny $T^{-1}_{w_0}\circ\D_{coh}$}
\put(16,12){\tiny $R\Gamma(\mathcal{V}^\chi_0(\rho')\otimes\bullet)$}
\put(76,12){\tiny $R\Gamma(\mathcal{V}^\chi_0(2\rho-\rho')\otimes\bullet)$}
\end{picture}

Note that $\mathcal{V}^\chi_0(\rho'), \mathcal{V}^\chi_0(2\rho-\rho')$
differ by a twist with a character of $T$.
\end{Rem}

\section{$K_0$-classes of equivariantly simple $\U^\chi_{(0),\F}$-modules}\label{S_K_0_classes}
\subsection{Notation and content}
Assume that $\lambda+\rho$ is regular and pick a representative $\mu^\circ$
of $W\cdot \lambda$ in the anti-dominant $p$-alcove.
The meaning of $G, \underline{G}, L,P, \nu,\chi, W^a, W_P, W^{a,P},\mu_x,\alpha_0,\ldots,\alpha_r$
is as in Section \ref{SS_notation_basic} and the meaning of $\tilde{\Nilp},\tilde{\g}$
is as in Section \ref{SS_loc_notation}. We assume that $G$ is semisimple and simply connected.  We write $\underline{\rho},\rho_L$ for the elements
$\rho$ for the Levi subalgebras $\underline{\g},\lf$.

In Section \ref{SS_affine_Hecke_reminder} and some subsequent sections  we will introduce some additional notation related to (affine) Hecke algebras.

The goal of this section is to finish the proofs of Theorems \ref{Thm:disting_dim}
and \ref{Thm_dim_general} (see Theorems \ref{Thm:distinguished_p_dim} and
\ref{Thm:K0_class_general} below).   Section \ref{SS_splitting_induction}
contains two technical results that describe an interplay between
the parabolic induction and the derived localization equivalences.
Then in Section \ref{SS_char_distinguish} we prove Theorem
\ref{Thm:distinguished_p_dim}, a stronger version of
Theorem \ref{Thm:disting_dim}. Next, in Section \ref{SS_canonical}
we start explaining our approach to proving Theorem
\ref{Thm_dim_general}: it is based on the study of the graded lift
of the contravariant duality functor $\D$ from Section
\ref{S_duality}. In this respect it is similar to what was done
in \cite{BM}, and, in fact, a Koszulity result, Theorem
\ref{Thm:Koszulity}, from that paper is a crucial part of our approach. Unlike in \cite{BM}, we end up with explicit character formulas, which requires a substantial additional work.
Sections \ref{SS_Hecke_module}-\ref{SS_final} contain a proof of
Theorem \ref{Thm_dim_general}. In Section \ref{SS_irred} we discuss a relation
between the equivariantly irreducible modules and usual irreducible modules.
And then in Section
\ref{SS_categorif} we speculate on a categorification of
Theorem \ref{Thm:K0_class_general}.

\subsection{Parabolic induction and splitting bundles}\label{SS_splitting_induction}
The goal of this section is to establish  two results on an interplay of splitting
bundles with different instances of parabolic induction functors.
Proposition \ref{Prop:O_B_global_section} plays an important role in  determining the $K_0$-classes of $A$-equivariantly irreducible
modules in the case when $\chi$ is distinguished. Proposition
\ref{Prop:baby_Verma_local} is
in the case of general $\chi$, it will be used to extend the computation of
irreducible $K_0$-classes from the distinguished case to the general one.

Recall, Section \ref{SS_parab_statements}, that for a distinguished $\chi$, we have an irreducible
component $\B_{\mathfrak{m}}\subset \B_\chi$ that is naturally identified with
$P_\F^{(1)}/B_{\F}^{(1)}$.

\begin{Prop}\label{Prop:O_B_global_section}
We have $R\Gamma(\mathcal{V}^\chi_0|_{\B_{\mathfrak{m}}})=W^\chi_\F(2\rho_L-2\rho)$.
\end{Prop}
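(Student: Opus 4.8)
\emph{Plan of proof.} The plan is to compute $R\Gamma(\mathcal{V}^\chi_0|_{\B_{\mathfrak{m}}})$ by first making the bundle $\mathcal{V}^\chi_0|_{\B_{\mathfrak{m}}}$ on $\B_{\mathfrak{m}}\cong P^{(1)}_\F/B^{(1)}_\F$ explicit. Since $\B_{\mathfrak{m}}$ consists of Borel subalgebras containing $\mathfrak{m}^{(1)}_\F$, it is the fibre of $\B^{(1)}_\F\to\Pcal^{(1)}_\F$ over $[\p^{(1)}_\F]$, and it lies in $Z^{(1)}_\F=G^{(1)}_\F\times^{B^{(1)}_\F}\mathfrak{m}^{(1)}_\F$ (with covector $e$). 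I would use the $\underline{Q}_\F\times\F^\times$-equivariant deformation $\mathcal{V}^{\F\chi}_0$ of Section \ref{SS_splitting_equivar}: because $\mathfrak{m}$ is $\gamma$-stable and contains $\chi$, the subvariety $\B_{\mathfrak{m}}$ spreads out to a constant family over $\F\chi$, and the contracting $\F^\times$-action identifies $\mathcal{V}^\chi_0|_{\B_{\mathfrak{m}}}$ with the Bialynicki-Birula limit of $\mathcal{V}^0_0|_{\B_{\mathfrak{m}}}$ under the $\gamma$-action; in particular the two have the same Euler characteristic and the same $A$-representation on cohomology, and the first is acyclic once the second is. By Lemma \ref{Lem:split_properties}(1), $\mathcal{V}^0_0|_{\B^{(1)}_\F}\cong\Fr_{\B,*}\mathcal{O}_{\B_\F}$ $G_\F$-equivariantly, so flat base change along the finite flat morphism $\Fr_\B$ presents $\mathcal{V}^0_0|_{\B_{\mathfrak{m}}}$ as a (Frobenius) pushforward along $\B_\F\times_{\Pcal_\F}(\Pcal_\F)_1\to\B_{\mathfrak{m}}$ of a structure sheaf twisted by the line bundle coming from the $\mathcal{O}(-(p-1)\rho)$-factor in (\ref{eq:splitting_Springer}) and from the normalization used to fix $\mu^\circ=-2\rho$; here $(\Pcal_\F)_1$ is the Frobenius infinitesimal neighbourhood of $[\p]$ in $\Pcal_\F$.

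From here, $R\Gamma\circ\Fr_*=R\Gamma$ together with Kempf vanishing on the flag variety $\B_L=P/B$ (which gives $R\Gamma(\B_L,\mathcal{O})=\F$ in degree $0$, and vanishing of the relevant higher cohomology after the twist is taken into account) shows that $M:=R\Gamma(\mathcal{V}^\chi_0|_{\B_{\mathfrak{m}}})$ is concentrated in degree $0$ and has dimension $p^{\dim Ge/2}$ (recall $e$ is even distinguished, so $\dim Ge=2\dim G/P$). Since $\mathcal{O}_{\B_{\mathfrak{m}}}$ is supported on $Z^{(1)}_\F$, the $\U^\chi_\F$-module $M$ is induced from $\p_\F$: it is of the form $\underline{\Delta}^\chi(V)$ for an $L_\F$-module $V$ (this is the parabolic counterpart of the identification of baby Verma modules in \cite{BMR}; alternatively one factors $M$ through the parabolic splitting bundle of $D_{\Pcal,\F}$ and recognizes the $\varpi$-fibrewise cohomology as $U^\chi(\mathfrak{m}^-_\F)$ and the fibre of the parabolic splitting bundle as $W_{L,\F}$ of the appropriate weight). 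Comparing dimensions forces $\dim V=1$, so $M=\underline{\Delta}^\chi(V)$ is irreducible of minimal dimension $p^{\dim Ge/2}$ by the Kac--Weisfeiler conjecture \cite{Premet}; likewise $W^\chi_\F(2\rho_L-2\rho)=W^\chi_\F(0)$ is irreducible (Lemma \ref{Lem:chi_Weyl_coincide}, as used in Lemma \ref{Lem:disting_K_0_D}).

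To conclude $M\cong W^\chi_\F(2\rho_L-2\rho)$ as $A$-equivariant modules, it then suffices to match their classes in $K_0(\U^\chi_{(0),\F}\operatorname{-mod}^A)$, the classes of the simples forming a basis. On the one hand, by Lemma \ref{Lem:split_properties}(2) the class of $\mathcal{V}^\chi_0|_{\B_{\mathfrak{m}}}$ in $K_0^{A}(\B_\chi)$ is the restriction of $[\Fr_{\B,*}\mathcal{O}_{\B_\F}]$, so (via the derived localization equivalence $R\Gamma(\mathcal{V}^\chi_0\otimes\bullet)$) $[M]$ is computable; on the other hand, $\mu_{w_{0,P}}=w_{0,P}^{-1}\cdot(-2\rho)=2\rho_L-2\rho$, and the $W^a$-equivariance of the braid action (Proposition \ref{Prop:braid_equivariance}, Lemma \ref{Lem:W_aff_action}) reduces $[W^\chi_\F(2\rho_L-2\rho)]=[W^\chi_\F(\mu_{w_{0,P}})]$ to the same geometric class; equivalently, one reads off $V=W_{L,\F}(2\rho_L-2\rho)$ directly from the twist in the first paragraph using Lemma \ref{Lem:chi_Weyl_basic}(2). (One may also phrase the whole argument through Corollary \ref{Thm:abelian_quotient}(3) and Theorem \ref{Prop:parabol_equiv_der}, reduced mod $p$ via Lemma \ref{Lem:reduct_mod_p}, which identify $\mathcal{O}_{\B_{\mathfrak{m}}}$ with the image of $\underline{\C}_{P^\vee/P^\vee}$.) \textbf{The main obstacle is the first paragraph:} correctly bookkeeping the line-bundle and Frobenius twists through (\ref{eq:splitting_Springer}) and the base change, so that the inducing weight of $M$ comes out to be exactly $\mu_{w_{0,P}}=2\rho_L-2\rho$ and not some affine-Weyl translate of it; once that twist is pinned down, the vanishing and dimension count make the rest routine.
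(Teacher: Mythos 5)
Your overall skeleton is close in spirit to what the paper does (the paper also exploits the $\F^\times$-equivariant spread-out along $\F\chi$ and the $\chi=0$ identification $\mathcal{V}^0_0|_{\B^{(1)}_\F}\cong\Fr_{\B,*}\mathcal{O}$), and your first two paragraphs do correctly yield vanishing of higher cohomology, the dimension $p^{\dim G/P}$, and the $A$-character: semicontinuity plus constancy of the equivariant Euler characteristic in the flat family over $\F\chi$ is fine, and $R\Gamma$ of the Frobenius neighbourhood computation is correct. But note that this degeneration argument is blind to the $\U^\chi_\F$-module structure, since along the family the $p$-character degenerates to $0$; so the entire content of the proposition rests on the steps you leave unproved. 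The first genuine gap is the assertion that $M$ is of the form $\underline{\Delta}^\chi(V)$ ``since $\mathcal{O}_{\B_{\mathfrak{m}}}$ is supported on $Z^{(1)}_\F$.'' Support of the coherent sheaf does not give induction of the global sections; what is needed is an identification of $\mathcal{V}^\chi_0$ along $\B_{\mathfrak{m}}$ as a module over the restricted Azumaya algebra, i.e. a comparison with the D-module pushforward $\xi_{0,*}(\tilde{D}_{P/B,\F})$. This is exactly what the paper establishes in Lemma \ref{Lem:splitting_equiv} (where the ambiguity by a line bundle is killed by a computation of top exterior powers in equivariant $K_0$), and then the induced structure comes from the freeness of $\xi_{0,*}(\tilde{D}_{P/B,\F})$ as a right $\tilde{D}_{P/B,\F}$-module together with Lemma \ref{Lem:pushforward_global_section}, which produces the crucial twist $\F_{2\rho-2\rho_{L}}$. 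Your parenthetical ``alternatively one factors $M$ through the parabolic splitting bundle\dots'' gestures at this but proves none of it.

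The second gap is the determination of the one-dimensional inducing character as exactly $2\rho_L-2\rho$, which you yourself label ``the main obstacle'' and defer; this is not routine bookkeeping but precisely the content of Lemmas \ref{Lem:splitting_equiv} and \ref{Lem:pushforward_global_section} combined with Lemma \ref{Lem:split_properties}(1) applied on $P/B$. Moreover your proposed fallback via $K_0$-classes and braid equivariance is circular in context: Lemma \ref{Lem:W_aff_action} only relates the classes of different $\chi$-Weyl modules to one another, and the seed identification of the geometric class $[\mathcal{V}^\chi_0|_{\B_{\mathfrak{m}}}]$ with a specific $\chi$-Weyl class is exactly what Proposition \ref{Prop:O_B_global_section} supplies (and is how it is used in the proof of Theorem \ref{Thm:distinguished_p_dim}); likewise, irreducibility plus minimal dimension (Premet) does not single out which simple you have, since there may be several minimal-dimensional simples with the given Harish-Chandra character. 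So as written the proposal reduces the proposition to two assertions that are themselves the heart of the matter and are not established.
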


Our second result  gives a geometric interpretation of the baby Verma functor
$$\underline{\Delta}_\nu: \underline{\U}^\chi_{(-2\rho),\F}\operatorname{-mod}^{\underline{Q}}
\rightarrow \U^\chi_{(-2\rho),\F}\operatorname{-mod}^{\underline{Q}}.$$
We have a natural embedding $\xi:\tilde{\underline{\g}}_\F^{(1)}
\hookrightarrow \tilde{\g}^{(1)}_\F$ that sends a pair
$(x,\underline{\mathfrak{b}}_\F)\subset \mathfrak{g}_\F^{(1)}$ to
$(x,\underline{\mathfrak{b}}_\F\oplus \g^{>0}_\F)$. This map gives rise to the corresponding
embedding of (derived) Springer fibers at $\chi$ to be also denoted by
$\xi$. For $\mu\in \mathfrak{X}(T)$, we write $\underline{\mathcal{V}}^\chi_\mu$
for the splitting bundle of $\tilde{D}^\mu_{\underline{\B}_\F}$ defined
by the formula analogous to (\ref{eq:splitting_Springer}), where we use
$\underline{\rho}$ instead of $\rho$.

\begin{Prop}\label{Prop:baby_Verma_local}
We have the following commutative diagram.

\begin{picture}(100,30)
\put(2,2){$D^b(\underline{\U}^\chi_{(-2\rho),\F}\operatorname{-mod}^{\underline{Q}})$}
\put(72,2){$D^b(\U^\chi_{(-2\rho),\F}\operatorname{-mod}^{\underline{Q}})$}
\put(2,22){$D^b(\Coh^{\underline{Q}}(\underline{\B}_{\chi}))$}
\put(72,22){$D^b(\Coh^{\underline{Q}}(\B_{\chi}))$}
\put(15,21){\vector(0,-1){14}}
\put(84,21){\vector(0,-1){14}}
\put(40,3){\vector(1,0){31}}
\put(32,23){\vector(1,0){39}}
\put(55,5){\tiny $\underline{\Delta}_\nu$}
\put(55,25){\tiny $\xi_*$}
\put(16,13){\tiny $R\Gamma(\underline{\mathcal{V}}^\chi_{2\underline{\rho}-2\rho}(\rho)\otimes\bullet)$}
\put(85,13){\tiny $R\Gamma(\mathcal{V}^\chi_{0}(\rho)\otimes\bullet)$}
\end{picture}
\end{Prop}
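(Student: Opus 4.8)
The plan is to deduce the statement from the projection formula for the closed embedding $\xi$, together with an identification of the restricted splitting bundle $\xi^*(\mathcal{V}^\chi_0(\rho))$ as the datum computing parabolic induction; the line-bundle ambiguities and $\rho$-shifts are then fixed by a $K_0$-computation after reduction mod $p$, exactly as in the proof of Lemma \ref{Lem:split_properties}, while the $\underline{Q}_\F$-equivariance is handled by the one-parameter deformation of Section \ref{SS_splitting_equivar}.

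Since $\xi$ is a closed embedding and $R\Gamma\circ\xi_*=R\Gamma$, for any $\mathcal{F}\in D^b(\Coh^{\underline{Q}}(\underline{\B}_\chi))$ the projection formula gives a natural isomorphism of complexes of $\F$-vector spaces
$$R\Gamma\bigl(\mathcal{V}^\chi_0(\rho)\otimes\xi_*\mathcal{F}\bigr)\cong R\Gamma\bigl(\xi^*\mathcal{V}^\chi_0(\rho)\otimes\mathcal{F}\bigr).$$
The left-hand side is a $\U^\chi_\F$-module because $\mathcal{V}^\chi_0(\rho)\otimes\xi_*\mathcal{F}$ is a module over the Azumaya algebra $\tilde{D}^\rho_{\B_\F}$ with $R\Gamma(\tilde{D}^\rho_{\B_\F})=\U_{\h,\F}$; the right-hand side carries a $\U^\chi_\F$-action through the algebra map $\U_{\h,\F}=\Gamma(\tilde{D}^\rho_{\B_\F})\to\Gamma(\xi^*\tilde{D}^\rho_{\B_\F})$ (restriction of sections along $\xi$), under which the displayed isomorphism is $\U^\chi_\F$-linear. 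So the content is that the Azumaya algebra $\tilde{D}^\xi:=\xi^*\tilde{D}^\rho_{\B_\F}$ on (a completion of) $\tilde{\underline{\g}}_\F^{(1)}$, together with its splitting bundle $\xi^*\mathcal{V}^\chi_0(\rho)$, realizes $\underline{\Delta}_\nu$ applied to the $\underline{G}$-localization with splitting bundle $\underline{\mathcal{V}}^\chi_{2\underline{\rho}-2\rho}(\rho)$.

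To see this I would factor $\xi$ through the correspondence $\tilde{\g}_\F\xleftarrow{a}\tilde{Z}:=G^{\geqslant 0}_\F\times^{B_\F}\mathfrak{b}_\F\xrightarrow{b}\tilde{\underline{\g}}_\F$, where $a$ is the closed embedding induced by $G^{\geqslant 0}_\F\subset G_\F$ (its image is the locus of $(x,\mathfrak{b}')$ with $\mathfrak{b}'\subset\g^{\geqslant 0}_\F$) and $b$ is the vector bundle projection induced by $G^{\geqslant 0}_\F\twoheadrightarrow\underline{G}_\F$ and $\mathfrak{b}_\F\twoheadrightarrow\mathfrak{b}_\F/\g^{>0}_\F$ (this quotient is $B_\F$-equivariant since $\g^{>0}_\F$ is an ideal in $\g^{\geqslant 0}_\F$); one checks that $\xi=a\circ s$ for a section $s$ of $b$. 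Restricting $\tilde{D}^\rho_{\B_\F}$ along $a$ produces the sheaf of algebras on $\tilde{Z}^{(1)}_\F$ whose global sections surject onto $\U_{\h,\F}/\U_{\h,\F}\g^{>0}_\F$ and act through it as the $(\U_{\h,\F},\underline{\U}_{\h,\F})$-bimodule giving parabolic induction from $\g^{\geqslant 0}_\F$; this is the Grothendieck-resolution analogue of the classical surjectivity $\Gamma(D_{\B})\twoheadrightarrow\Gamma(D_{\mathcal{P}})$, and can be extracted from the argument proving Proposition \ref{Prop:O_B_global_section}. Further restricting along $s$ and tracking the splitting bundle, the $\Lambda^{top}\g^{>0}_\F$-twist (whose $T$-weight is exactly $2\rho-2\underline{\rho}$) and the two outer $\mathcal{O}(\rho)$-twists appear; that these match is checked on $K_0$-classes after reduction mod $p$, comparing both sides with classes of $\operatorname{Fr}_*\mathcal{O}(\mu)$-type bundles as in Lemma \ref{Lem:split_properties}. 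The $\underline{Q}_\F$-equivariance is obtained by running everything in the $\F\chi$-family $\mathcal{V}^{\F\chi}$: all the isomorphisms above are $\underline{Q}_\F\times\F^\times$-equivariant over $\F\chi$, and since the $\F^\times$-action contracts to the fiber over $\chi=0$, the sheaf-level identifications reduce to the case $\chi=0$, where full $G_\F$-equivariance is available and, by Lemma \ref{Lem:split_properties}(1) together with Frobenius flat base change along $\xi$, one has $\xi^*(\mathcal{V}^0_0(\rho))\cong\operatorname{Fr}_*(\xi^*\mathcal{O}_{\B_\F}(\rho))$ explicitly.

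I expect the main obstacle to be precisely the second paragraph above: upgrading the vector-space isomorphism from the projection formula to an isomorphism of $\U^\chi_\F$-modules that is visibly $\underline{\Delta}_\nu$ of the $\underline{G}$-localization, i.e. identifying $\Gamma(\tilde{D}^\xi\text{-modules})$ with $\underline{\Delta}_\nu(\underline{\U}^\chi_{\F}\text{-modules})$ on the nose rather than merely up to $K_0$-classes and supports. A way to bypass part of this is to prove the asserted isomorphism of functors directly on the generating family $\{\mathcal{O}_{\underline{\B}_\chi}(\underline{\mu})\}$: both functors are exact onto their abelian targets, and each sends $\mathcal{O}_{\underline{\B}_\chi}(\underline{\mu})$ to a single baby Verma module (up to a twist) -- on the right by a $\underline{G}$-analogue of Lemma \ref{Lem:split_properties} followed by the definition of $\underline{\Delta}_\nu$, and on the left by a global-sections computation over the subvariety $\operatorname{Im}(\xi)\cap\B_\chi$ entirely parallel to Proposition \ref{Prop:O_B_global_section} -- and these identifications are compatible, hence glue to the required natural isomorphism of functors.
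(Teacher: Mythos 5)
Your main route coincides with the paper's own proof. Proposition \ref{Prop:baby_Verma_local} is deduced in the paper (Section \ref{SS_splitting_induction}, with $P'=G^{\geqslant 0}$) from exactly the two ingredients you sketch in your second and third paragraphs: an identification of the transfer bimodule $\xi_{0,*}(\tilde{D}_{\underline{\B},\F})$, viewed as a splitting bundle on the lagrangian correspondence, with $\mathcal{V}^\chi_0\otimes(\underline{\mathcal{V}}^{\underline{\chi}}_0)^*$ -- proved by passing to the $\F\chi$-family of Section \ref{SS_splitting_equivar}, contracting to $\chi=0$, and killing the residual line-bundle ambiguity by comparing top exterior powers in $K_0$ in the style of Lemma \ref{Lem:split_properties} (this is Lemma \ref{Lem:splitting_equiv}) -- together with the computation that the global sections of this bimodule are $\U_{\h,\F}/\U_{\h,\F}\g^{>0}_\F\otimes_\F\F_{2\rho-2\underline{\rho}}$ (Lemma \ref{Lem:pushforward_global_section}), which is precisely where the shift from $\underline{\mathcal{V}}^\chi_0$ to $\underline{\mathcal{V}}^\chi_{2\underline{\rho}-2\rho}$ comes from.

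The genuine gap is in how you propose to close the step you yourself flag as the main obstacle. Your ``bypass'' does not work: the vertical localization functors are t-exact only for the exotic t-structures, not on the naive hearts, so $R\Gamma(\underline{\mathcal{V}}^\chi_{2\underline{\rho}-2\rho}(\rho)\otimes\mathcal{O}_{\underline{\B}_\chi}(\underline{\mu}))$ is in general a complex, and there is no reason for line-bundle twists of $\mathcal{O}_{\underline{\B}_\chi}$ to go to baby Verma modules -- in the paper it is structure sheaves of special subvarieties such as $\B_{\mathfrak{m}}$, not twists of the structure sheaf of the whole Springer fiber, that localize to induced modules (Proposition \ref{Prop:O_B_global_section}). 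Moreover, even if the two compositions agreed objectwise on a generating family, objectwise isomorphisms do not ``glue'' to a natural isomorphism of triangulated functors; the transformation has to be constructed first. The correct way to get the identification ``on the nose,'' and the way the paper does it, is to work at the level of kernels: upgrade your ``global sections surject onto $\U_{\h,\F}/\U_{\h,\F}\g^{>0}_\F$'' to the bimodule isomorphism with the $2\rho-2\underline{\rho}$ twist, and record that the transfer bimodule is free as a right module, generated by $U(\g^{<0}_\F)$ (proof of Lemma \ref{Lem:pushforward_global_section}); this freeness lets $R\Gamma$ commute with the relevant tensor product, so $R\Gamma(\mathcal{V}^\chi_0(\rho)\otimes\xi_*(\bullet))\cong\underline{\Delta}_\nu\circ R\Gamma(\underline{\mathcal{V}}^\chi_{2\underline{\rho}-2\rho}(\rho)\otimes\bullet)$ functorially by construction, with no appeal to generators. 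With those two statements made precise, your first two paragraphs already yield the proposition.
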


Let us explain a general construction that goes into the proofs of these
two propositions. Let $P'$ be a parabolic subgroup of $G$ containing $B$
and let $L'$ denote the standard Levi subgroup of $P'$. Below we take $P'=P$ (for
Proposition \ref{Prop:O_B_global_section}) and $P'=G^{\geqslant 0}$
(for Proposition \ref{Prop:baby_Verma_local}). Let $M'$ denote the unipotent
radical of $P'$ and $\mathfrak{m}'$ denote the Lie algebra of
$\mathfrak{m}'$. This, in particular, gives
rise to the Verma functor $\Delta^{P'}_\F: U(\lf'_\F)\operatorname{-mod}
\rightarrow \U_\F\operatorname{-mod}$.

We consider the fiberwise lagrangian subvariety $\tilde{Y}^{(1)}_\F\subset \tilde{\g}_{\F}^{(1)}\times\tilde{\lf}'^{(1)}_\F$, given by
$\{(\mathfrak{b}^{(1)}_{\F},x)| \mathfrak{b}^{(1)}_{\F}\supset\mathfrak{m}'^{(1)}_{\F}\}$
under the natural embedding.

The embedding $ \B'_\F:=P'_\F/B_\F\hookrightarrow \B_\F$, to be denoted by $\xi_0$,
gives rise to the D-module pushforward functor
$\xi_{0,*}: \Coh(\tilde{D}_{\B',\F})\rightarrow \Coh(\tilde{D}_{\B,\F})$.
This functor can be viewed as tensoring with the $\tilde{D}_{\B,\F}$-$\tilde{D}_{\B',\F}$-bimodule
$\xi_{0,*}(\tilde{D}_{\B',\F})$. Recall that this bimodule is defined as follows.
Note that $\mathcal{O}_{\B',\F}\otimes_{\mathcal{O}_{\B,\F}}\tilde{D}_{\B,\F}$
is a $\tilde{D}_{\B',\F}$-$\tilde{D}_{\B,\F}$-bimodule (note that $\tilde{D}_{\B',\F}$
is a flat sheaf of $\F[\h^*]$-algebras). We can identify $(\tilde{D}_{\B,\F})^{opp}$
with $\,^{\sigma'}\!(\tilde{D}^{-2\rho}_{\B,\F})$, where $\sigma'$ is the standard antiinvolution
for $\g_\F, x\mapsto -x$, and, similarly,  identify $(\tilde{D}_{\B',\F})^{opp}$
with $\,^{\sigma'}\!(\tilde{D}^{-2\rho}_{\B',\F})$, these are $\F[\h^*]$-semilinear
identifications with respect to $\sigma'$, see the proof of Proposition \ref{Prop:dual_localization}.
So we set
\begin{equation}\label{eq:pushforward}\xi_{0,*}(\tilde{D}_{\B',\F}):=\Omega_{\B'_\F}\otimes_{\Str_{\B_\F}}\tilde{D}_{\B,\F}\otimes_{\Str_{\B_\F}}
\Omega_{\B_\F}^{-1},\end{equation}
this is a $\tilde{D}_{\B,\F}$-$\tilde{D}_{\B',\F}$-bimodule.

We can view $\xi_{0,*}(\tilde{D}_{\B',\F})$
as a vector bundle on $\tilde{Y}^{(1)}_\F$. This is a splitting bundle
for the Azumaya algebra $\left(\tilde{D}_{\B,\F}\otimes \tilde{D}_{\B',\F}^{opp}\right)|_{\tilde{Y}^{(1)}_\F}$.

Let $\chi\in \g_{\F}^{(1)*}$ be a nilpotent element
vanishing on $\mathfrak{m}'^{(1)}_\F$ and let $\underline{\chi}$
be the induced element in $\lf'^{(1)*}_\F$.
Let $(\g_\F^{(1)*}\times \lf'^{(1)*}_\F)^{\wedge_{\chi,\underline{\chi}}}$
denote the spectrum of the completion of $\F[\g_\F^{(1)*}\times \lf'^{(1)*}]$
at $(\chi,\underline{\chi})$. We set
$$\tilde{Y}^{(1)\wedge_{\chi,\underline{\chi}}}_\F:=\tilde{Y}^{(1)}_\F\times_{\g_\F^{(1)*}\times \lf'^{(1)*}_\F}
(\g_\F^{(1)*}\times \lf'^{(1)*}_\F)^{\wedge_{\chi,\underline{\chi}}}.$$

The following lemma is a crucial technical statement that goes into proof
of the two propositions above.

\begin{Lem}\label{Lem:splitting_equiv}
We write $Q'_\F$ for the reductive
part of the centralizer of $\chi$ in $P'_\F$. Then the restriction
of $\xi_{0,*}(\tilde{D}_{\B',\F})$ to  $\tilde{Y}^{(1)\wedge_{\chi,\underline{\chi}}}_\F$
is $Q'_\F$-equivariantly isomorphic
to the restriction of
$$\mathcal{V}^\chi_0\otimes (\underline{\mathcal{V}}^{\underline{\chi}}_0)^*.$$
\end{Lem}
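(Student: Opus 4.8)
The plan is to reduce the statement to a question about splitting bundles for a single Azumaya algebra on a formal neighborhood, where splitting bundles are unique up to a line-bundle twist, and then to pin down the twist by a $K_0$-computation.

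First I would set up both sides as splitting bundles for the same Azumaya algebra. The sheaf $\xi_{0,*}(\tilde D_{\B',\F})$ is, by construction (see (\ref{eq:pushforward})), a splitting bundle for $\left(\tilde D_{\B,\F}\otimes \tilde D_{\B',\F}^{opp}\right)|_{\tilde Y^{(1)}_\F}$. On the other side, $\mathcal V^\chi_0$ splits $\tilde D^{\wedge_\chi}_{\B_\F}$ (that is $\tilde D^{\wedge_0}_{\B_\F}$ in the notation of Section \ref{SS_splitting}, with $\mu=0$) and $\underline{\mathcal V}^{\underline\chi}_0$ splits $\tilde D^{\wedge_{\underline\chi}}_{\underline\B_\F}$, so $\mathcal V^\chi_0\otimes(\underline{\mathcal V}^{\underline\chi}_0)^*$ splits $\left(\tilde D_{\B,\F}\otimes \tilde D_{\B',\F}^{opp}\right)$ restricted to $\tilde Y^{(1)\wedge_{\chi,\underline\chi}}_\F$. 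Here I use that $(\tilde D_{\B',\F})^{opp}\cong \,^{\sigma'}(\tilde D^{-2\rho'}_{\B',\F})$, exactly as in the proof of Proposition \ref{Prop:dual_localization}, so that a splitting bundle for $\tilde D_{\B',\F}$ gives, upon dualizing, a splitting bundle for the opposite algebra. Since $\tilde Y^{(1)\wedge_{\chi,\underline\chi}}_\F$ is the spectrum of a complete local ring (base changed along a connected formal scheme), any two splitting bundles of the same Azumaya algebra differ by a twist with a line bundle pulled back from that base; and because $\tilde Y^{(1)}_\F\to \B'_\F$ realizes $\tilde Y^{(1)}_\F$ as a vector bundle over $\B'_\F$, line bundles are controlled by $\operatorname{Pic}(\B'_\F)=\mathfrak X(T)/\mathfrak X(L')$. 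Equivalently, after restricting to the zero section $\B'_\F\subset \tilde Y^{(1)}_\F$, the two bundles differ by a $G_\F$-equivariant (or $Q'_\F$-equivariant) line bundle, which I want to show is trivial.

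To identify the twist I would compare $K_0$-classes, following the template of Lemma \ref{Lem:split_properties}(1): it suffices to check equality of classes in the ordinary (non-equivariant) $K_0$ of $\B'_\F$, since $\operatorname{Pic}(\B'_\F)\hookrightarrow K_0(\B'_\F)$ and a vector bundle class is a non-zero-divisor there. By Lemma \ref{Lem:split_properties}(2) the class of $\mathcal V^\chi_0$ is the pullback of $[\Fr_{\B,*}\mathcal O]$, and likewise $[\underline{\mathcal V}^{\underline\chi}_0]$ is the pullback of $[\Fr_{\underline\B,*}\mathcal O]$; meanwhile the class of $\xi_{0,*}(\tilde D_{\B',\F})$ restricted to $\tilde Y^{(1)}_\F$ can be computed from (\ref{eq:pushforward}) as the class of $\Fr_{\B'_\F,*}$ applied to a canonical-bundle twist of $\mathcal O_{\B_\F}$, using that Serre duality intertwines Frobenius pushforward with canonical twists (the same trick as in the commented-out Lemma \ref{Lem:splitting_parabolic}, Step 2, and as at the end of the proof of Proposition \ref{Prop:dual_localization}). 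Carrying out the bookkeeping with $\Fr_*\mathcal O(-\rho)\cong \mathcal O^{(1)}(-\rho)^{\oplus p^{\dim \B}}$ and the relative canonical bundle $\Omega_{\B'_\F}$ being $\mathcal O(-2\rho_{L'})$, I expect the two classes to coincide on the nose, so the twisting line bundle is trivial; the equivariant refinement then follows because a trivial line bundle on $\B'_\F$ carries a unique $Q'_\F$-equivariant structure, and on the closed point the equivariant structures agree by construction (the $Q'_\F$-equivariant structure on $\mathcal V^\chi_0$ is restricted from the $G_\F$-equivariant one via Lemma \ref{Lem:equiv_splitting}(2) and Lemma \ref{Lem:split_properties}).

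The main obstacle will be the $K_0$-bookkeeping in the non-semisimple / non-simply-connected bits: the relevant groups $L'$, $Q'_\F$ need not be semisimple or simply connected (in the application $P'=G^{\geqslant 0}$, $L'=\underline G$), so I will have to be careful that $\rho$, $\rho_{L'}$, $\underline\rho$ only enter through characters of the relevant tori and that the ``$p$th root'' manipulations on unipotent $K_0$-classes make sense, exactly in the spirit of Remarks \ref{Rem:simply_conn} and \ref{Rem:dual_localization}. A secondary technical point is justifying that a splitting bundle for $\left(\tilde D_{\B,\F}\otimes \tilde D_{\B',\F}^{opp}\right)|_{\tilde Y^{(1)}_\F}$ restricted to the completion is unique up to line-bundle twist even though $\tilde Y^{(1)}_\F$ is not affine; this is handled by passing to the contracting $\mathbb G_m$-action (the $\gamma$-action rescaling the fibers, as in Section \ref{SS_splitting_equivar}) so that everything is determined by its restriction to the proper base $\B'_\F$, and then invoking $\operatorname{Pic}(\B'_\F)\hookrightarrow K_0(\B'_\F)$ as above.
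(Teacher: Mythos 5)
Your proposal follows essentially the same route as the paper's proof: reduce, via the contracting $\mathbb{G}_m$-action and the one-parameter splitting bundles of Section \ref{SS_splitting_equivar}, to a comparison over $\B'_\F$ at the zero fiber, note that both sides are splitting bundles for the same Azumaya algebra and hence differ by an equivariant line bundle given by a character of a torsion-free lattice, and then kill the twist by a Frobenius-pushforward/Serre-duality $K_0$-computation in the spirit of Lemma \ref{Lem:split_properties}. The only cosmetic difference is that the paper detects the twist by comparing top exterior powers (determinants) rather than full non-equivariant $K_0$-classes, which shortens the bookkeeping but does not change the argument.
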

\begin{proof}
Similarly to the proof of Step 4 of Proposition \ref{Prop:dual_localization}
the isomorphism we need to prove will follow once we replace $(\chi,\underline{\chi})$
with the line $\F(\chi,\underline{\chi})$. The latter isomorphism, in its turn,
will follow once we know that there is an $L'_{\F}$-equivariant isomorphism
between
\begin{itemize}
\item[(i)] the restriction of $\xi_{0,*}(\tilde{D}_{\B',\F})$ to  $\tilde{Y}^{(1)\wedge_{0,0}}_\F$,
\item[(ii)] and the restriction of
$\mathcal{V}^0_0\otimes (\underline{\mathcal{V}}^{0}_0)^*$.
\end{itemize}

The  bundles in (i) and (ii) are still splitting bundles
for the same Azumaya algebra, the restriction of $\tilde{D}_{\B,\F}\otimes
\tilde{D}_{\B',\F}^{opp}$ to $\tilde{Y}_\F^{(1)\wedge_{0,0}}$. So (i) and (ii) differ by a twist with an $L'_\F$-equivariant line bundle.
Such a line bundle
is given by a character of the group scheme $B_\F P'^{(1)}_\F$. The character
lattice of this group scheme embeds into the character lattice of $B_\F$.
So the line bundle of interest is given by a suitable character of $B_\F$.
Since $\mathfrak{X}(B_\F)$ is torsion free, to prove that this character is
trivial, we need to verify that the $K_0$-classes of the top exterior powers of
the splitting bundles (i) and (ii) are the same.

Let us start with computing the class for the restriction of $\xi_{0*}(\tilde{D}_{\B',\F})$.
It follows from (\ref{eq:pushforward}) that $$[\xi_{0*}(\tilde{D}_{\B',\F})]=[\operatorname{Fr}_{\tilde{Y},*}\Omega_{\tilde{Y}_\F}].$$
Note that the class of $\Omega_{\tilde{Y}_\F}$ in $K_0^{L'_\F}(\tilde{Y}_\F)=\mathfrak{X}(T_\F)$
is $2(\rho_{L'}-\rho)$. Hence $[\mathcal{O}(\rho_{L'}-\rho)]$ is self-dual under the Serre
duality. Since $\operatorname{Fr}_*$ intertwines the Serre duality functors,
the class of $[\operatorname{Fr}_{\tilde{Y},*}][\mathcal{O}(\rho_{L'}-\rho)]$
is self-dual. Therefore its top exterior power is $[\mathcal{O}(\rho_{L'}-\rho)]$.
Arguing as in the proof of (1) of Lemma \ref{Lem:split_properties},
we see that
\begin{equation}\label{eq:top_power1}
[\Lambda^{top}\operatorname{Fr}_{\tilde{Y},*}\Omega_{\tilde{Y}_\F}]=p^{\dim \tilde{Y}-1}(p-1)(\rho_{L'}-\rho).
\end{equation}
Now we compute the class for the restriction of
$\mathcal{V}^0_0\otimes (\underline{\mathcal{V}}^{0}_{0})^*$.
It coincides with  that of the restriction to $\B'_\F$. Part (1) of Lemma
\ref{Lem:split_properties} implies that this restriction
is
\begin{equation}\label{eq:bundle_restriction}\operatorname{Fr}_{\B,\F}(\mathcal{O}_{\B_\F})\otimes \left( \Fr_{\B',*}(\mathcal{O}_{\B'_\F})\right)^*.
\end{equation}
The canonical bundle of $\B$ is $\mathcal{O}(-2\rho)$ hence the
top exterior power of $\operatorname{Fr}_{\B,\F}(\mathcal{O}_{\B_\F})$
is $-p^{\dim \B-1}(p-1)\rho$.  Similarly,
the equivariant canonical bundle of $\B'$ is $\mathcal{O}(-2\rho_{L'})$,
and the top exterior power of $\Fr_{\B',*}(\mathcal{O}_{\B'_\F})$
$-p^{\dim \B'-1}(p-1)\rho_{L'}$.
Therefore  the top  exterior power of
(\ref{eq:bundle_restriction}) coincides with (\ref{eq:top_power1}).
This finishes the proof.
\end{proof}

We will also need the following standard lemma.

\begin{Lem}\label{Lem:pushforward_global_section}
We have $\Gamma(\xi_{0*}(\tilde{D}_{\B',\F}))=\U_{\h,\F}/ \U_{\h,\F}\mathfrak{m}'_\F\otimes_{\F}
\F_{2\rho-2\rho_{L'}}$, where we write $\F_{2\rho-2\rho_{L'}}$ for the one-dimensional
$T_\F$-module with character $2\rho-2\rho_{L'}$.
\end{Lem}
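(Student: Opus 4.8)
The plan is to compute the global sections of the splitting bundle $\xi_{0,*}(\tilde D_{\B',\F})$ directly from its definition (\ref{eq:pushforward}) and standard facts about D-modules on flag varieties. Recall that $\xi_{0,*}(\tilde D_{\B',\F})=\Omega_{\B'_\F}\otimes_{\Str_{\B_\F}}\tilde D_{\B,\F}\otimes_{\Str_{\B_\F}}\Omega_{\B_\F}^{-1}$, viewed as a $\tilde D_{\B,\F}$-$\tilde D_{\B',\F}$-bimodule supported on $\B'_\F\subset \B_\F$. First I would observe that, since $\B'_\F=P'_\F/B_\F$ is a closed subvariety of $\B_\F$ and $\Gamma(\tilde D_{\B,\F})=\U_{\h,\F}$ with vanishing higher cohomology (by \cite[Proposition 3.4.1]{BMR} as recalled in Section~\ref{SS_derived_loc}), the global sections of the D-module direct image along a closed embedding of smooth varieties can be computed as a Verma-type quotient. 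Concretely, $\xi_{0,*}(\tilde D_{\B',\F})$ is the D-module pushforward of the structure sheaf twisted by the relative canonical bundle; its global sections form the induced module $\U_{\h,\F}\otimes_{U(\p'_\F)}(\text{a rank-one }\p'_\F\text{-module})$, which is the same as $\U_{\h,\F}/\U_{\h,\F}\mathfrak{m}'_\F$ tensored by the appropriate character.

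The key computational step is to pin down which character of $T_\F$ (equivalently, of $\mathfrak b_\F$ made into a $\p'_\F$-module) appears. This comes from the relative canonical bundle twist in (\ref{eq:pushforward}): the functor $\xi_{0,*}$ for the embedding $\xi_0:\B'_\F\hookrightarrow\B_\F$ involves $\Omega_{\B'_\F}\otimes\Omega_{\B_\F}^{-1}|_{\B'_\F}$, which as a $T_\F$-equivariant line bundle on $\B'_\F$ restricts at the base point to the character $2\rho-2\rho_{L'}$ (since $\Omega_{\B_\F}\cong\Str(-2\rho)$ and $\Omega_{\B'_\F}\cong\Str(-2\rho_{L'})$ equivariantly, and the normal bundle contribution is exactly the difference). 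Thus $\Gamma$ of the pushforward is $\U_{\h,\F}/\U_{\h,\F}\mathfrak m'_\F$ with its left $\U_{\h,\F}$-action, but the right $\tilde D_{\B',\F}$-module structure (hence the $T_\F$-grading bookkeeping) is twisted by $\F_{2\rho-2\rho_{L'}}$. I would make this precise by working on $G_\F/U_\F$ and keeping track of the $T_\F$-weights, or alternatively by invoking the standard description of $\Gamma\circ\xi_{0,*}$ as parabolic induction with a $\rho$-shift, which is classical over $\C$ and descends to $\F$ for $p\gg0$ because all the objects are defined over a finite localization of $\Z$.

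The main obstacle is bookkeeping the various $\rho$-shifts and the left-versus-right module structures correctly: the anti-involution $\sigma'$ used in (\ref{eq:pushforward}) to identify $(\tilde D_{\B,\F})^{opp}$ with $\,^{\sigma'}(\tilde D^{-2\rho}_{\B,\F})$ introduces a sign and a shift by $-2\rho$ on each side, and one must check these cancel against the canonical bundle twists so that the final answer is a genuine (untwisted on the left) quotient module with the asserted character $2\rho-2\rho_{L'}$ on the remaining factor. I expect the cleanest route is to first establish the statement over $\C$ using the well-known formula for global sections of D-module pushforwards along $\B'\hookrightarrow\B$ (this is essentially Beilinson-Bernstein localization applied to a parabolic), then reduce mod $p$; since the formula is an equality of $\U_{\h}$-modules defined over $\Z[1/N]$ for suitable $N$, and $p\gg0$, the reduction is immediate. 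The $T_\F$-module label $\F_{2\rho-2\rho_{L'}}$ should then be read off from the characteristic-zero computation verbatim.
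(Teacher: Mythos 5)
Your main geometric input is the same as the paper's: the twist $\Omega_{\B'_\F}\otimes\xi_0^*(\Omega_{\B_\F}^{-1})$ is a trivial line bundle on $\B'_\F$ whose equivariant structure is the character $2\rho-2\rho_{L'}$, and this is exactly where the paper gets the $\F_{2\rho-2\rho_{L'}}$ factor. Where you diverge is in how the isomorphism itself is established. The paper never leaves characteristic $p$ and never invokes a cohomological base-change or a characteristic-zero localization statement: it takes the global section $1$ of that trivial bundle, observes it has weight $2\rho-2\rho_{L'}$ and is killed by $\mathfrak m'_\F$, so there is a canonical $\U_{\h,\F}$-linear map from $\U_{\h,\F}/\U_{\h,\F}\mathfrak m'_\F\otimes\F_{2\rho-2\rho_{L'}}$ to the global sections, and then proves this map is an isomorphism by a purely local argument: $\xi_{0,*}(\tilde D_{\B',\F})$ is free as a right $\tilde D_{\B',\F}$-module on $U(\mathfrak m'^-_\F)\cdot 1$ (checked on the associated graded), so its $\Gamma$ is $U(\mathfrak m'^-_\F)\otimes U_{\h}(\lf'_\F)$ on the nose. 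Also, your worry about the $\sigma'$-twists is a red herring: the bimodule is defined directly by the canonical-bundle formula, and the opposite-algebra identifications only set up the right module structure; no extra $\rho$-shift enters the computation of $\Gamma$.

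The soft spot in your plan is the sentence ``the reduction is immediate.'' The two sides are infinitely generated $\U_{\h}$-modules, so an isomorphism over $\C$ (or $\Q$) does not by itself give one over $\F$ for all $p\gg0$: in each PBW-filtration degree the cokernel of the integral comparison map is killed by some integer, but a priori the primes involved could grow with the degree, and one also has to know that forming $\Gamma(\xi_{0,*}(\tilde D_{\B'}))$ commutes with base change from $\Z[1/N]$ to $\F$ (some flatness or higher-cohomology-vanishing input on the filtered pieces). This is fixable — e.g.\ pass to the associated graded, where both sides are finitely generated graded modules over a Noetherian $\Z[1/N]$-algebra and generic isomorphism does spread out, plus a $\Gamma$-versus-$\gr$ compatibility — but it needs to be said; or, better, run the direct characteristic-$p$ argument above, which makes the whole reduction step unnecessary.
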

\begin{proof}
Note that $\Omega_{\B'_\F}\otimes_{} \xi_0^*(\Omega^{-1}_{\B_\F})$ is a trivial
$\Str_{\B'_\F}$-bundle (with a nontrivial $T_\F$-action). Consider the global
section $1$ of this bundle. This is an element of the $L'_{\F}$-equivariant
$\U_{\h,\F}$-$U_{\h}(\lf'_\F)$-bimodule $\Gamma(\xi_{0*}(\tilde{D}_{\B',\F}))$
with weight $2\rho-2\rho_{L'}$.
Note that $\mathfrak{m}'_{\F}$ annihilates $1$. We get
an $L'_{\F}$-equivariant $\U_{\h,\F}$-linear homomorphism
\begin{equation}\label{eq:bimod_homom_gamma}
\U_{\h,\F}/ \U_{\h,\F}\mathfrak{m}'_\F\otimes_{\F}
\F_{2\rho-2\rho_{L'}}\rightarrow \Gamma(\xi_{0*}(\tilde{D}_{\B',\F})).
\end{equation}
An $L'_\F$-equivariant $\U_{\h,\F}$-linear map is automatically also $U_{\h}(\lf'_\F)$-linear.

Now we prove that the homomorphism we have constructed is an isomorphism.
We note that, as a right $\tilde{D}_{\B',\F}$-module, $\xi_{0*}(\tilde{D}_{\B',\F'})$
is freely generated by $U(\mathfrak{m}'^-_\F)\cong U(\mathfrak{m}'^-_\F)1
\subset \Gamma(\xi_{0*}(\tilde{D}_{\B',\F'}))$. To see that this is the case,
we reduce to the associated graded bimodule, where the claim is clear.
In particular,  $\Gamma(\xi_{0*}(\tilde{D}_{\B',\F'}))\xrightarrow{\sim}
U(\mathfrak{m}'^-_\F)\otimes_{\F}U_{\h}(\lf_\F)$. We can identify
$\U_{\h,\F}/\U_{\h,\F}\mathfrak{m}'_\F$ with $U(\mathfrak{m}'^-_\F)\otimes_{\F}U_{\h}(\lf_\F)$
and (\ref{eq:bimod_homom_gamma}) becomes the identity under these identifications.
\end{proof}

Now we proceed to proving the two propositions in the beginning of the section.
Proposition \ref{Prop:baby_Verma_local} is a straightforward corollary
of Lemmas \ref{Lem:splitting_equiv},\ref{Lem:pushforward_global_section}
(note that the twist by $\rho$ doesn't matter in the statement, but matters
for the applications below).

\begin{proof}[Proof of Proposition \ref{Prop:O_B_global_section}]
Thanks to Lemma \ref{Lem:splitting_equiv},
$\mathcal{V}^\chi_0|_{\B_{\mathfrak{m}}}$
is naturally identified with the fiber at $\chi$ of
$$\xi_{0*}(\tilde{D}_{P/B,\F})\otimes_{\tilde{D}_{P/B,\F}}
\underline{\mathcal{V}}^{0}_{0}|_{P_\F^{(1)}/B_\F^{(1)}}.$$
By the proof of Lemma \ref{Lem:pushforward_global_section},
$\xi_{0*}(\tilde{D}_{P/B,\F})$ is a free right
$\tilde{D}_{P/B,\F}$-module. It follows that
$$R\Gamma\left(\xi_{0*}(\tilde{D}_{P/B,\F})\otimes_{\tilde{D}_{P/B,\F}}
\underline{\mathcal{V}}^{0}_{0}|_{P_\F^{(1)}/B_\F^{(1)}}\right)
\xrightarrow{\sim}
\Gamma\left(\xi_{0*}(\tilde{D}_{P/B,\F})\right)\otimes_{\Gamma(\tilde{D}_{P/B,\F})}
R\Gamma\left(\underline{\mathcal{V}}^{0}_{0}|_{P_\F^{(1)}/B_\F^{(1)}}\right).$$
Now we use Lemma \ref{Lem:pushforward_global_section} to show that
$$R\Gamma(\mathcal{V}^\chi_0|_{\B_{\mathfrak{m}}})=
\underline{\Delta}^\chi(R\Gamma\left(\underline{\mathcal{V}}^{0}_{2\rho_L-2\rho}|_{P_\F^{(1)}/B_\F^{(1)}}
\right)).$$
Then we use (1) of Lemma \ref{Lem:split_properties} that says, in particular, that
$$\mathcal{V}^{0}_{2\rho_L-2\rho}|_{P_\F^{(1)}/B_\F^{(1)}}=
\operatorname{Fr}_{P/B,*}\mathcal{O}_{P/B}(2\rho_L-2\rho).$$
Of course, $R\Gamma(\operatorname{Fr}_{P/B,*}\mathcal{O}_{P/B}(2\rho_L-2\rho))$
is the one-dimensional representation of $L_\F$ with character
$2\rho_L-2\rho$. So we see that $R\Gamma(\mathcal{V}^\chi_0|_{\B_{\mathfrak{m}}})=
W^\chi_\F(2\rho_L-2\rho)$.
\end{proof}

\subsection{Reminder on affine Hecke algebras}\label{SS_affine_Hecke_reminder}
Consider the affine Hecke algebra $\Hecke^a_{G}$ for $G$ over $\Z[v^{\pm 1}]$,
where $v$ is an indeterminate.
%It contains $\Hecke^a_{G^0}$ as a subalgebra. The algebra $\Hecke^a_G$ is a free finite rank
%$\Hecke^a_{G^0}$-module (note that the lattice parts
%of $\Hecke^a_G$ and $\Hecke^a_{G^0}$ are the same). For $\chi\in \X(T)$, we write
%$X_\chi$ for the corresponding lattice element of $\Hecke^a_{G^0}$. Further,
For $x\in W^a$, let $H_x$ denote the standard basis element of $\Hecke^a_G$.
Recall that the product on $\Hecke^a_{G}$ is determined by
\begin{align*}
& H_{x}H_y=H_{xy}\text { if }\ell(xy)=\ell(x)+\ell(y),\\
& (H_s+v)(H_s-v^{-1})=0,
\end{align*}
where $s$ is runs over the simple affine reflections.

The Hecke algebra $\Hecke^a_{G}$ comes with a $\Z[v^{\pm 1}]$-linear ring involution,
called the bar-involution and denoted by $\bar{\bullet}$, it is given by
$\bar{H}_x:=H^{-1}_{x^{-1}}$. As Kazhdan and Lusztig checked in
\cite{KL}, there is a unique basis $C_x, x\in W^a,$ of $\Hecke^a_G$
with the following two properties:
\begin{itemize}
\item $\bar{C}_x=C_x$ for all $x\in W^a$,
\item and $C_x-H_x\in v^{-1}\operatorname{Span}_{\Z[v^{-1}]}(H_y| y\in W^a)$
for all $x\in W^a$.
\end{itemize}
Then $C_x=\sum_{y\preceq x}c_{xy}(v)H_y$, where $\prec$ stands for the Bruhat order
and $c_{xy}\in \Z[v^{-1}]$ is a Kazhdan-Lusztig polynomial.

We will need a parabolic version of this construction. Let $P$ be a parabolic subgroup
of $G$.
The Hecke algebra $\Hecke_{W_P}$
of the parabolic subgroup $W_P\subset W^a$ embeds into $\Hecke^a_G$. Consider the
sign representation $\operatorname{sgn}_P\cong \Z[v^{\pm 1}]$ of $\Hecke_{W_P}$, where $H_w$ acts via
$(-v)^{\ell(w)}$, and the induced module $\Hecke^{a,P}_G:=\Hecke^a_G\otimes_{\Hecke_{W_P}}
\operatorname{sgn}_P$. For $x\in W^{a,P}$, define $H^P_x$ as $H_x\otimes 1$.
We note that $\operatorname{sgn}_P$ embeds into $\Hecke_{W_P}$ via
$1\mapsto C_{w_{0,P}}=\sum_{w\in W_P}(-v)^{-\ell(ww_{0,P})}H_w$.
This gives rise to an embedding $\Hecke^{a,P}_G\hookrightarrow \Hecke^a_G$
so that $C_x$ for $x\in W^{a,P}$ lies in the image. So, for
$x\in W^{a,P}$ we can expand $C_x$ as $\sum_{y\in W^{a,P}}c^P_{xy}(v)H^P_y$.
The coefficient $c_{xy}^P$ is known as  a parabolic (affine) Kazhdan-Lusztig polynomial.
This construction also implies that if $x\in W^{a,P}$ lies in the two-sided cell
of $w_{0,P}$, then $x$ actually lies in the left cell of $w_{0,P}$.

Let us now recall the representation theoretic meaning of the values
$c_{xy}^P(1)$. For $x\in W^{a,P}$, let $\Delta^P_x,L^P_x$ denote the standard
and simple objects in $\Perv_{I^\circ}(\Fl_P)$. Then we have
a $W^a$-equivariant isomorphism $\C\otimes_{\Z}K_0(\Perv_{I^\circ}(\Fl_P))
\xrightarrow{\sim} \Hecke^{a,P}_G|_{v=1}$
that maps the specializations of $C_x,H^P_y$ to $[L^P_x],[\Delta^P_y]$,
respectively.
In particular, the classes in $K_0$
are related as follows:
\begin{equation}\label{eq:mult_perverse}
[L^P_x]=\sum_{y\in W^{a,P}}c^P_{xy}(1)[\Delta^P_y].
\end{equation}

Now recall that, for $\theta\in \mathfrak{X}(T)$, we have the corresponding
element $X_\theta\in \Hecke^a_G$. We have
\begin{equation}\label{eq:X_bar}
\bar{X}_\theta= H_{w_0}X_{w_0(\theta)}H_{w_0}^{-1}.
\end{equation}
Also we will need a coherent geometric realization of $\Hecke^a_G$ and the corresponding
formula for the bar-involution. Namely, consider the $\C^\times$-action on $\g$ via
$(t,x):=t^{-2}x$. This gives rise to the action of $\C^\times$ on $\St_\h$.
Consider the equivariant $K_0$-group $K_0^{G\times\C^\times}(\St_\h)$, which is
an algebra with respect to convolution. It is also a module over $K_0^{\C^\times}(\operatorname{pt})$
and hence a $\Z[v^{\pm 1}]$-algebra. By a theorem of Kazhdan-Lusztig and Ginzburg,
see \cite[Section 7]{CG} or \cite[Section 8]{Lusztig_K1}, we have a  $\Z[v^{\pm 1}]$-algebra isomorphism $\Hecke^a_G\cong K_0^{G\times\C^\times}(\St_\h)$ that sends $X_\theta$ to the class of the line bundle
$\mathcal{O}(\theta)$ on the diagonal.

Consider the $\varsigma$-twisted and $(-\dim \g)$-shifted Serre duality functor
$$\tilde{\D}_{coh}:=\,^\varsigma R\Hom_{\tilde{\g}\times \tilde{\g}}(\bullet, \Omega_{\tilde{\g}\times \tilde{\g}})[\dim \g]:
D^b(\Coh^{G\times \C^\times}(\St_\h))\xrightarrow{\sim}
D^b(\Coh^{G\times \C^\times}(\St_\h))^{opp},$$
where we consider $\Omega_{\tilde{\g}\times \tilde{\g}}$
with its natural $G\times \C^\times$-equivariant structure.
The following result is \cite[Proposition 9.12]{Lusztig_K1} (note that our $H_x$
is $\tilde{T}_x^{-1}$).

\begin{Lem}\label{Lem:bar_coherent}
Under the identification $\Hecke^a_G\cong K_0^{G\times\C^\times}(\St_\h)$,
we have
$$\bar{a}=v^{-2\ell(w_0)}H_{w_0}[\tilde{\D}_{coh}](a)H_{w_0}^{-1}.$$
\end{Lem}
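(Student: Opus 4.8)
The statement to prove, Lemma~\ref{Lem:bar_coherent}, identifies the bar-involution on $\Hecke^a_G$ with the conjugated $\varsigma$-twisted Serre duality on $K_0^{G\times\C^\times}(\St_\h)$ under the Kazhdan--Lusztig--Ginzburg isomorphism. The plan is to reduce to the cited computation \cite[Proposition~9.12]{Lusztig_K1}, being careful only about the normalization differences. First I would recall precisely the dictionary: under $\Hecke^a_G\cong K_0^{G\times\C^\times}(\St_\h)$ the element $X_\theta$ goes to the class of $\mathcal O(\theta)$ on the diagonal $\tilde\g^{\mathrm{diag}}\subset \St_\h$, and, as the excerpt notes, our $H_x$ corresponds to Lusztig's $\tilde T_x^{-1}$. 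So the bar-involution, which on standard generators is $\bar H_s = H_s^{-1}$ and on the lattice part is given by \eqref{eq:X_bar}, translates into Lusztig's involution up to the substitution $\tilde T_x\leftrightarrow \tilde T_x^{-1}$ and the inversion $v\leftrightarrow v^{-1}$ that this substitution forces on the Hecke parameter.

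The core of the argument is then purely a matter of matching \cite[Proposition~9.12]{Lusztig_K1}. That proposition expresses Lusztig's bar-involution on $K_0^{G\times\C^\times}(\St_\h)$ in terms of a Serre-duality-type operator on $D^b(\Coh^{G\times\C^\times}(\St_\h))$, conjugated by $H_{w_0}$ and twisted by a power of $v$. I would carry out the following steps in order: (1) write down the operator $\tilde\D_{coh}=\,^\varsigma R\Hom_{\tilde\g\times\tilde\g}(\bullet,\Omega_{\tilde\g\times\tilde\g})[\dim\g]$ and check it is a well-defined contravariant auto-equivalence of $D^b(\Coh^{G\times\C^\times}(\St_\h))$ — here one uses that $\St_\h$ is a complete intersection in $\tilde\g\times\tilde\g$ of the expected codimension $\dim\g$ (recalled in Section~\ref{SS_tilting_notation}), so the dualizing complex is a shifted line bundle and the $[\dim\g]$-shift lands $\tilde\D_{coh}$ in homological degree $0$ on $K_0$; (2) compute $[\tilde\D_{coh}]$ on the generators $\mathcal O(\theta)|_{\tilde\g^{\mathrm{diag}}}$: Serre duality on the diagonal $\tilde\g$ sends $\mathcal O(\theta)$ to $\mathcal O(-\theta)\otimes\Omega_{\tilde\g}^{?}\otimes(\text{character of }\C^\times)$, and the $\varsigma$-twist, since $\varsigma$ is the identity on $\h$ and hence on $\mathfrak X(T)$ up to the sign built into $\varsigma$, contributes the appropriate sign change $\theta\mapsto -w_0(\theta)$ after conjugation; (3) track the $\C^\times$-weights to produce the power $v^{-2\ell(w_0)}$, using the $\C^\times$-action $t\cdot x = t^{-2}x$ on $\g$ (and the induced action on $\St_\h$) together with the fact that $\Omega_{\tilde\g\times\tilde\g}$ restricted to $\St_\h$ carries $\C^\times$-weight equal to $-2\dim\g + \dots$, the exact bookkeeping of which pins the $v$-power; (4) conjugate by $H_{w_0}$ and compare with \eqref{eq:X_bar}, checking that $v^{-2\ell(w_0)}H_{w_0}[\tilde\D_{coh}]X_\theta H_{w_0}^{-1} = \bar X_\theta$ and similarly on the finite Hecke generators $H_s$ for $s\in W$ (here $[\tilde\D_{coh}]$ on $H_s$ is read off from the fact that $\tilde\D_{coh}$ fixes the structure sheaves of the relevant relative-position-$s$ subvarieties up to a shift, using that $\St_\h$ is reduced). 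Since $X_\theta$ for $\theta\in\mathfrak X(T)$ together with $H_s$, $s\in W$, generate $\Hecke^a_G$, matching on these generators suffices.

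The main obstacle I anticipate is \emph{bookkeeping of normalizations}: the factor $v^{-2\ell(w_0)}$, the precise $\C^\times$-linearization on $\Omega_{\tilde\g\times\tilde\g}$, and the interplay between Lusztig's conventions (where the relevant operator in \cite[Proposition~9.12]{Lusztig_K1} may be stated without the $\varsigma$-twist, or with $H_{w_0}$ on the other side, or with $v$ replaced by $v^{-1}$) and the conventions of the present paper (in particular the choice $H_x = \tilde T_x^{-1}$ and the $\varsigma$-twisted Serre duality $\tilde\D_{coh}$ rather than the bare one). The cleanest way to handle this is to first verify the identity on $X_\theta$ alone, where everything is a line-bundle class on the diagonal and the computation is explicit, and then bootstrap to all of $\Hecke^a_G$ using that both sides of the claimed identity are $\Z[v^{\pm1}]$-algebra anti-involutions (or involutions, depending on how one packages the $opp$), so they are determined by their values on a generating set; the finite-Hecke-generator case then follows because $\tilde\D_{coh}$ manifestly preserves the sub-convolution-algebra generated by the finite Steinberg variety, where the statement is the classical one. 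I would also double-check the claim by evaluating both sides on $C_{w_{0,P}}$-type elements or on $X_\theta$ for $\theta$ dominant, where $\bar X_\theta$ is computable directly from \eqref{eq:X_bar}, as a consistency test before writing up the general argument.
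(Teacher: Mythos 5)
Your proposal takes essentially the same route as the paper: the paper's entire proof of Lemma \ref{Lem:bar_coherent} is the citation of \cite[Proposition 9.12]{Lusztig_K1} together with the single remark that the present $H_x$ is Lusztig's $\tilde{T}_x^{-1}$, so the content is precisely the convention-matching you describe. Your additional checks on the generators $X_\theta$ and $H_s$, using that $[\tilde{\D}_{coh}]$ is multiplicative up to the factor $v^{-2\ell(w_0)}$ (which is \cite[Lemma 9.5]{Lusztig_K1}, invoked later in the paper in the proof of Proposition \ref{Prop:dualities_compat}), are a sound way of carrying out that matching.
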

Here and below we write $[\tilde{\D}_{coh}]$ for the operator on $K_0$
induced by $\tilde{\D}_{coh}$.

\subsection{Character formulas in the distinguished case}\label{SS_char_distinguish}
We assume that $\chi$ is distinguished. Let $w_{0,P}$ be the longest element
in $W_P$. Consider the left cell ${\mathfrak{c}}_P\subset W^a$ containing
$w_{0,P}$. It is contained in $W^{a,P}$.

Let $V_L(\mu_x)$ denote the irreducible representation of $L$ (over $\C$) with highest
weight $\mu_x$. Let   $\hat{d}_L(x)$ be its ${\underline{Q}}$-character. We write $\mathsf{ch}_{\mathfrak{m}^-}$
for the ${\underline{Q}}$-character of $U^0(\mathfrak{m}_\F^-)$.

The following is one of the main results of the paper. In particular, it implies
Theorem \ref{Thm:disting_dim}.

\begin{Thm}\label{Thm:distinguished_p_dim}
Let $\lambda+\rho$ be regular and $\chi$ be distinguished.  The following claims are true:
\begin{enumerate}
\item There is a bijection between $\mathfrak{c}_P$ and $\operatorname{Irr}(\U^\chi_{\lambda,\F}\operatorname{-mod}^{\underline{Q}})$.
    Let $L^\chi_{x,\F}$ denote the simple object corresponding to $x\in \mathfrak{c}_P$.
\item The multiplicity of $L^\chi_{x,\F}$ in $W^\chi_\F(\mu_y)$
coincides with the multiplicity of $L^P_x$ in $\Delta^P_y$.
\item The classes $[W^\chi_\F(\mu_y)], y\in W^{a,P},$ span $K_0(\U^\chi_{\lambda,\F}\operatorname{-mod}^{\underline{Q}})$.
\item If $x\in {\mathfrak{c}}_P$, then $\sum_{y\in W^{a,P}}c^P_{xy}(1) \mathsf{ch}_{\mathfrak{m}^-}[W^\chi_\F(\mu_y)]=[L^\chi_{x,\F}]$.
\item If $x\not\in {\mathfrak{c}}_P$, then $\sum_{y\in W^{a,P}}c^P_{xy}(1)[W^\chi_\F(\mu_y)] =0$.
\end{enumerate}
\end{Thm}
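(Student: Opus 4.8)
\textbf{Proof proposal for Theorem \ref{Thm:distinguished_p_dim}.}

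The plan is to transport the combinatorics of the parabolic affine category $\mathcal O$ across the chain of equivalences assembled in the previous sections, and then read off all five statements at once. First I would set up the characteristic-zero counterpart: by Lemma \ref{Lem:reduct_mod_p} the reduction mod $p$ of the Bezrukavnikov--Mirkovic tilting bundle $\Tilt_\h$ produces, after restriction to the formal neighborhood of $\B^{(1)}_\chi$, the indecomposable summands of the splitting bundle $\mathcal V^\chi_0(\rho)$; hence there is a Morita equivalence between $(\A_{\h,e})_\F$-style algebras and $\U^\chi_{(0),\F}$, and the derived localization equivalence $R\Gamma(\mathcal V^\chi_0(\rho)\otimes\bullet)$ of (\ref{eq:der_p_1}) identifies $D^b(\U^\chi_{(0),\F}\operatorname{-mod}^{\underline Q})$ with $D^b(\Coh^{Z_P(e)}(\B_e))$ (using $\underline Q = A$ in the distinguished case and $Z_P(e)$ via the component group identification from Section \ref{SS_parab_statements}). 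Next I would invoke Corollary \ref{Thm:abelian_quotient}: the quotient functor $D^b_{I^\circ}(\Fl_P)\twoheadrightarrow D^b(\Coh^{Z_P(e)}\B_e)$ is t-exact for the perverse t-structure on the source and the $\mathcal T(-\rho)^*$-t-structure on the target, is $\Br^a$-equivariant, and sends $\underline{\C}_{P^\vee/P^\vee}$ to $\mathcal O_{\B_\m}$. Combining these, the heart $\operatorname{Perv}_{I^\circ}(\Fl_P)$ surjects onto the abelian category $\U^\chi_{(0),\F}\operatorname{-mod}^{A}$ as a cell quotient: the simples $L^P_x$ in the kernel are exactly those whose two-sided cell does not meet the dense orbit $\Orb$ of $\Nilp'_P$, i.e.\ those with $x\notin\mathfrak c_P$, so the surviving simples are indexed precisely by $\mathfrak c_P$. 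This gives (1), and (2) follows because the quotient functor is exact and kills precisely the $L^P_x$, $x\notin\mathfrak c_P$, so the composition multiplicities match those in $\Perv_{I^\circ}(\Fl_P)$ after quotienting.

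For (2) I need to match the images of the \emph{standard} objects, not just the simples. Here I would use Lemma \ref{Lem:trans_equiv} and Lemma \ref{Lem:W_aff_action} to reduce to $\lambda=0$, and then Proposition \ref{Prop:O_B_global_section}, which computes $R\Gamma(\mathcal V^\chi_0|_{\B_\m}) = W^\chi_\F(2\rho_L-2\rho)$, together with Lemma \ref{Lem:chi_Weyl_coincide} identifying $W^\chi_\F(2\rho_L-2\rho)\cong W^\chi_\F(0)$. More generally, the affine braid group acts compatibly on both sides (Proposition \ref{Prop:braid_equivariance}, Corollary \ref{Thm:abelian_quotient}(2)) and sends the distinguished standard object $\underline{\C}_{P^\vee/P^\vee}\mapsto\mathcal O_{\B_\m}$ to the other parabolic Verma/$\chi$-Weyl modules along the orbit of $W^{a,P}$; this is exactly the content of Lemma \ref{Lem:W_aff_action}, which shows $x[W^\chi_\F(\mu^\circ)] = (-1)^{\ell(x)-\ell(x_-)}[W^\chi_\F(\mu_{x_+})]$, mirroring the $W^a$-action on $[\Delta^P_y]$ in $\Hecke^{a,P}_G$. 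So under the $K_0$-identification $\C\otimes K_0(\Perv_{I^\circ}(\Fl_P))\cong\Hecke^{a,P}_G|_{v=1}$, the class $[\Delta^P_y]$ maps to $[W^\chi_\F(\mu_y)]$ up to the sign $(-1)^{\ell(y)-\ell(y_-)}$, which I would absorb by checking it is constant on $W^{a,P}$ (it is, since all $y\in W^{a,P}$ are longest in their coset) or simply track it through. Statement (3) is then immediate: the classes $[\Delta^P_y]$ span $K_0(\Perv_{I^\circ}(\Fl_P))$, the quotient functor is surjective on $K_0$, so their images $[W^\chi_\F(\mu_y)]$ span $K_0(\U^\chi_{(0),\F}\operatorname{-mod}^A)$ (equivalently $\U^\chi_{\lambda,\F}\operatorname{-mod}^{\underline Q}$, since $\U^\chi_{\lambda,\F}$ is a quotient of $\U^\chi_{(\lambda),\F}$ by a nilpotent ideal, which does not change $K_0$).

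For (4) and (5) I would plug the parabolic Kazhdan--Lusztig expansion (\ref{eq:mult_perverse}), $[L^P_x]=\sum_{y\in W^{a,P}} c^P_{xy}(1)[\Delta^P_y]$, into the quotient functor. When $x\notin\mathfrak c_P$ the simple $L^P_x$ lies in the kernel, so its image vanishes, giving $\sum_y c^P_{xy}(1)[W^\chi_\F(\mu_y)]=0$, which is (5). When $x\in\mathfrak c_P$ the image of $L^P_x$ is the simple $L^\chi_{x,\F}$ (by (1)), so $\sum_y c^P_{xy}(1)[W^\chi_\F(\mu_y)] = [L^\chi_{x,\F}]$ in $K_0(\U^\chi_{(0),\F}\operatorname{-mod}^A)$; the $\mathsf{ch}_{\m^-}$ factor in the statement of (4) appears because the stronger $K_0$-class statement compares classes in the equivariant K-group where $W^\chi_\F(\mu_y)$ carries the $\underline Q$-character $\mathsf{ch}_{\m^-}\hat d(\mu_y)$ by Lemma \ref{Lem:chi_Weyl_basic}(2) — so in the numerical (dimension) version one multiplies each $[W^\chi_\F(\mu_y)]$ by $p^{\dim Ge/2}d_L(\mu_y)$ as in Theorem \ref{Thm:disting_dim}, and in the $\underline Q$-character version by $\mathsf{ch}_{\m^-}\hat d_L(\mu_y)$. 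I expect the main obstacle to be a bookkeeping issue rather than a conceptual one: carefully verifying that the t-exact quotient functor of Corollary \ref{Thm:abelian_quotient} identifies the \emph{simple} objects $L^P_x$ ($x\in\mathfrak c_P$) with $L^\chi_{x,\F}$ and not with some twist or shift, and pinning down the normalization of the bijection in (1) so that the multiplicity statement (2) is literally "the multiplicity of $L^P_x$ in $\Delta^P_y$" with no correction term. This requires knowing that the cell quotient of $\Perv_{I^\circ}(\Fl_P)$ by the Serre subcategory generated by $L^P_x$, $x\notin\mathfrak c_P$, is equivalent as a highest weight category (with standards $\Delta^P_y$) to $\U^\chi_{\lambda,\F}\operatorname{-mod}^{\underline Q}$ with standards $W^\chi_\F(\mu_y)$ — and the $\Br^a$-equivariance plus the explicit computation of Proposition \ref{Prop:O_B_global_section} is precisely what makes this work. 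I would also need the finiteness remark that for fixed $x$ only finitely many $c^P_{xy}$ are nonzero, which is standard and already noted in the text, to make the sums in (4) and (5) literal finite sums in $K_0$.
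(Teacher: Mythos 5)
Your proposal is correct and follows essentially the same route as the paper's proof: reduction to $\lambda=0$ by translation functors, identification of $K_0(\U^\chi_{\lambda,\F}\operatorname{-mod}^{\underline{Q}})$ with $K_0(\Coh^{\underline{Q}}(\B_e))$ via Lemma \ref{Lem:reduct_mod_p} and localization, the $W^a$-equivariant surjection from $K_0(\Perv_{I^\circ}(\Fl_P))$ furnished by Corollary \ref{Thm:abelian_quotient}, the anchor computation $R\Gamma(\mathcal{V}^\chi_0|_{\B_{\mathfrak{m}}})=W^\chi_\F(2\rho_L-2\rho)$ of Proposition \ref{Prop:O_B_global_section}, and propagation to all $[W^\chi_\F(\mu_y)]$ via Lemma \ref{Lem:W_aff_action}, with (1)--(5) then read off from (\ref{eq:mult_perverse}). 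The only (cosmetic) difference is that the paper runs the argument purely at the level of $K_0$ rather than asserting an abelian cell-quotient equivalence, and the sign/normalization bookkeeping you flag is exactly what the $W^a$-equivariance you invoke takes care of.
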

\begin{proof}
In the proof we can assume that $\lambda=0$ thanks to the translation functors.
Recall  that the restriction of $\Tilt_\F(-\rho)$ to
$\tilde{\g}_{\F}^{(1)\wedge_\chi}$ has the same indecomposable
summands as $\mathcal{V}^\chi_0$, Lemma \ref{Lem:reduct_mod_p}.
It follows that $K_0(\U^\chi_{\lambda,\F}\operatorname{-mod}^{\underline{Q}})$
is identified with $K_0(\A_{e,\F}\operatorname{-mod}^{\underline{Q}})$. Both are
identified with $K_0(\Coh^{\underline{Q}}(\B_e))$ via the category
equivalences $R\Gamma(\mathcal{V}^\chi_0\otimes\bullet),
R\Gamma(\Tilt_\F(-\rho)\otimes\bullet)$. These identifications
are $W^a$-equivariant, where we twist the action on $K_0(\Coh^{\underline{Q}}(\B_e))$
as in Theorem \ref{Prop:parabol_equiv_der}.
Thanks to Proposition \ref{Prop:O_B_global_section},
the class of $R\Gamma(\mathcal{T}(-\rho)|_{\B_{\mathfrak{m}}})$ in
$K_0(\A_{e}\operatorname{-mod}^{\underline{Q}})$ is mapped to
$[W^\chi_\F(2\rho_L-2\rho)]\in K_0(\U^\chi_{\lambda,\F}\operatorname{-mod}^{\underline{Q}})$.
Then we use Corollary \ref{Thm:abelian_quotient} to get a
$W^a$-equivariant surjective map
\begin{equation}\label{eq:Wa_equiv} K_0(\Perv_{I^\circ}(\Fl_P))\twoheadrightarrow K_0(\U^\chi_{\lambda,\F}\operatorname{-mod}^{\underline{Q}}).\end{equation}
It maps $[L^P_x]$ to  a class of a simple object if $x\in {\mathfrak{c}}_P$ or zero else. This proves (1).
Thanks to Lemma \ref{Lem:W_aff_action} and the $W^a$-equivariance
of (\ref{eq:Wa_equiv}), the image of $[\Delta^P_y]$
is $[W^\chi_\F(\mu_y)]$ for all $y\in W^{a,P}$. This observation together with the rest of the proof now implies (2)-(5).
\end{proof}

\subsection{Canonical basis}\label{SS_canonical}
Now we assume that $\chi$ is a general nilpotent element.
The goal of this section is to explain a general approach to computing the multiplicities
of the simple objects in $\U^\chi_{(0),\F}\operatorname{-mod}^{\underline{Q}}$ in the $\chi$-Weyl
modules $W^\chi_\F(\mu_w), w\in W^{a,P}$. This general approach is a ramification of what was
used in \cite{BM}.

We will concentrate on a direct summand of the category $\U^\chi_{(0),\F}\operatorname{-mod}^{{\underline{Q}}}$. Let $\kappa$ be a character of
$T_{0}=\underline{Q}^\circ$.
Let $\U^\chi_{(0),\F}\operatorname{-mod}^{{\underline{Q}},\kappa}$ denote the Serre subcategory
of $\U^\chi_{(0),\F}\operatorname{-mod}^{{\underline{Q}}}$ consisting of all modules $M$
such that $\mathfrak{t}_{0,\F}$ acts on the graded component $M_{\upsilon}$
by $\upsilon+\kappa$ mod $p$ for all characters $\upsilon$ of $T_0$.
So $\U^\chi_{(0),\F}\operatorname{-mod}^{{\underline{Q}},\kappa}$ only depends on
$\kappa$ modulo $p\mathfrak{X}(T_0)$.
Note that $$\U^\chi_{(0),\F}\operatorname{-mod}^{\underline{Q}}=
\bigoplus_{\kappa\in \mathfrak{X}(T_0)/p\mathfrak{X}(T_0)}
\U^\chi_{(0),\F}\operatorname{-mod}^{{\underline{Q}},\kappa}.$$
%Recall that $$\operatorname{Irr}(\U^\chi_{(0),\F}\operatorname{-mod}^{\underline{Q}})
%\xrightarrow{\sim} \bigsqcup_{u}
%\operatorname{Irr}(\underline{\U}^\chi_{(u^{-1}\cdot 0),\F}\operatorname{-mod}^{\underline{Q}}),$$
%where the union is taken over shortest representatives $u$ in $W/W_{\underline{G}}$.
It is straightforward to see that we have $W^\chi_\F(\mu)\in \U^\chi_{(0),\F}\operatorname{-mod}^{\underline{Q},0}$
if and only if $\mu=x^{-1}\cdot (-2\rho)$ for some $x\in W^{a,P}$.

Also note that we have a category equivalence
$\U^\chi_{(0),\F}\operatorname{-mod}^{\underline{Q},0}\xrightarrow{\sim}
\U^\chi_{(0),\F}\operatorname{-mod}^{\underline{Q},\kappa}$ for all $\kappa\in
\mathfrak{X}(T_0)/ p\mathfrak{X}(T_0)$. An equivalence is given by twisting
with a suitable character of $\underline{Q}$. This works for every $\kappa$
because the restriction map $\mathfrak{X}(\underline{Q})\rightarrow
\mathfrak{X}(T_0)$ gives rise to an isomorphism
$\mathfrak{X}(\underline{Q})/p\mathfrak{X}(\underline{Q})\rightarrow
\mathfrak{X}(T_0)/p\mathfrak{X}(T_0)$.

%is equivalent to
%the direct sum of several copies of $\U^\chi_{(0),\F}\operatorname{-mod}^{{\underline{Q}},0}$.

Consider the completion $\hat{K}_0(\U^\chi_{(0),\F}\operatorname{-mod}^{{\underline{Q}},0})$
of $K_0(\U^\chi_{(0),\F}\operatorname{-mod}^{{\underline{Q}},0})$
consisting of all infinite sums $\sum_{\mathcal{L}} a_{\mathcal{L}} [\mathcal{L}]$, where the summation
is taken over all simple objects in  $\U^\chi_{(0),\F}\operatorname{-mod}^{{\underline{Q}},0}$,
such that, for each $N\in \Z$, only finitely many simples occurring with nonzero coefficient have $\nu$-highest weight
bigger than $N$.
%$$\sum_{\mu|\langle \mu,\nu\rangle \leqslant N} a_\mu [W^\chi_\F(\mu)]$$
%for some $N\in \Z_{>0}$.

\begin{Lem}\label{Lem:K_0_top_bases}
The following claims are true.
\begin{enumerate}
\item The classes $[W^\chi_\F(x\cdot (-2\rho))]$ for $x\in W^{a,P}$
form a topological generating set for $\hat{K}_0(\U^\chi_{(0),\F}\operatorname{-mod}^{\underline{Q}})$.
\item The classes of simples form a topological basis in
$\hat{K}_0(\U^\chi_{(0),\F}\operatorname{-mod}^{{\underline{Q}},0})$.
\item The classes $[\underline{\Delta}_\nu(\underline{\mathcal{L}})]$
where $\underline{\mathcal{L}}$ runs over the set of simples in the
categories of the form $\underline{\U}^\chi_{w\cdot 0}\operatorname{-mod}^{{\underline{Q}},0}$,
form a topological basis in $\hat{K}_0(\U^\chi_{(0),\F}\operatorname{-mod}^{{\underline{Q}},0})$.
\end{enumerate}
\end{Lem}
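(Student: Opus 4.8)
The three claims are about the completed Grothendieck group $\hat{K}_0(\U^\chi_{(0),\F}\operatorname{-mod}^{{\underline{Q}},0})$, and they are really three different spanning/basis statements that should be proved in the order (2), then (1), then (3). The key combinatorial input throughout is the partial order on simples by $\nu$-highest weight together with the upper-triangularity of the relevant transition matrices; the topology on $\hat K_0$ is designed precisely so that an upper-triangular (in a locally finite sense) matrix is invertible. So the heart of the argument in each case is: exhibit the transition matrix between the proposed set and the set of simple classes, and check it is unitriangular with respect to the $\nu$-highest-weight order.

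For (2): this is almost the definition of the completion. The set of simples is by construction a topological basis once we know that for each $N$ there are only finitely many simples of $\nu$-highest weight $>N$, and that any element of $K_0$ itself is a finite sum of simple classes. The first point follows because the simples in $\U^\chi_{(0),\F}\operatorname{-mod}^{{\underline{Q}},0}$ are parametrized (via taking the $\nu$-highest weight space, as recalled in the introduction and Section \ref{SS_centr_red}) by pairs consisting of a simple in some $\underline{\U}^\chi_{w\cdot 0,\F}\operatorname{-mod}^{{\underline{Q}},0}$ together with the coset data, and for fixed bound on the highest weight only finitely many such pairs occur; I would just spell this finiteness out. Hence (2) is essentially formal given the setup.

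For (1): I would show that the matrix expressing $[W^\chi_\F(x\cdot(-2\rho))]$, $x\in W^{a,P}$, in terms of simple classes is upper-triangular with $1$'s on the diagonal, with respect to the $\nu$-highest-weight order. The $\chi$-Weyl module $W^\chi_\F(\mu_x)=\underline{\Delta}^\chi(W_{L,\F}(\mu_x))$ has a $\nu$-highest weight space which, by the construction of $\underline{\Delta}^\chi$ (induction from $U^0(\mathfrak p_\F)$, with $\U^\chi_\F\cong U^\chi(\mathfrak m^-_\F)\otimes U^0(\mathfrak p_\F)$ as $\underline Q_\F$-modules), is the top piece coming from $W_{L,\F}(\mu_x)$ itself; so $W^\chi_\F(\mu_x)$ has a unique simple quotient/constituent $L^\chi_{x,\F}$ in its top $\nu$-degree, appearing once, and all other constituents have strictly smaller $\nu$-highest weight. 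This gives the unitriangularity, hence topological invertibility, hence (1). (One also needs that every simple in $\U^\chi_{(0),\F}\operatorname{-mod}^{{\underline{Q}},0}$ arises this way, i.e. that the index set of $\chi$-Weyl modules with highest weight of the form $x^{-1}\cdot(-2\rho)$, $x\in W^{a,P}$, matches the index set of simples of the category — this is exactly the bijection discussed around Theorem \ref{Thm:distinguished_p_dim} in the distinguished case and its globalization via $\underline{\Delta}_\nu$ in the general case.)

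For (3): here I would use Proposition \ref{Prop:baby_Verma_local} together with the distinguished case already handled. By the localization square in that proposition, $\underline{\Delta}_\nu$ corresponds to the pushforward $\xi_*$ along $\underline{\B}_\chi\hookrightarrow \B_\chi$, which is compatible with the $W^a$-actions and sends $\chi$-Weyl classes for $\underline{\g}$ to $\chi$-Weyl classes for $\g$ (this is precisely how $\underline{\Delta}^\chi(W_{\underline{L},\F}(\mu))=W^\chi_\F(\mu)$). Thus the base-change matrix between $\{[\underline{\Delta}_\nu(\underline{\mathcal L})]\}$ and $\{[W^\chi_\F(x\cdot(-2\rho))]\}$ is block-upper-triangular, with diagonal blocks given by the analogous transition matrices for the Levi $\underline{\g}$ (where $e$ is distinguished) — which are invertible by Theorem \ref{Thm:distinguished_p_dim}(3) applied to $\underline{\g}$. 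Combining with (1) gives (3). The main obstacle is bookkeeping: making the "$\nu$-highest-weight order is locally finite and the matrices are unitriangular for it" argument precise — i.e. checking that convergence in $\hat K_0$ is genuinely respected by these infinite transition matrices (each row/column meeting each half-space $\{\nu\text{-wt}>N\}$ in finitely many entries), and that $\underline{\Delta}_\nu$ and its geometric avatar $\xi_*$ interact correctly with the grading shifts. None of this is deep, but it is where care is needed.
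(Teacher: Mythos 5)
Your treatments of (2) and (3)-style upper triangularity are in the right spirit, but you have attached the unitriangularity argument to the wrong family of modules, and this is a genuine error in your proof of (1). The top $\nu$-degree component of $W^\chi_\F(\mu_x)=\underline{\Delta}^\chi(W_{L,\F}(\mu_x))$ is not a single simple appearing once: it is the $\chi$-Weyl module $\underline{W}^\chi_\F(\mu_x)$ for the Levi $\underline{\g}$ (in which $e$ is distinguished), and by Theorem \ref{Thm:distinguished_p_dim}(2) its simple multiplicities are parabolic affine Kazhdan--Lusztig numbers, so it is usually far from simple. Moreover the index sets do not match, contrary to your parenthetical claim: the $\chi$-Weyl modules are indexed by all of $W^{a,P}$, while the simples of $\U^\chi_{(0),\F}\operatorname{-mod}^{\underline{Q},0}$ are indexed by pairs $(u,\underline{x})$ with $\underline{x}$ in the cell $\mathfrak{c}_{\underline{P}}$, a strictly smaller set in general; Theorem \ref{Thm:distinguished_p_dim}(5) even exhibits explicit nontrivial relations $\sum_y c^P_{xy}(1)[W^\chi_\F(\mu_y)]=0$. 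So there is no unitriangular (let alone topologically invertible) transition matrix, which is why the lemma only asserts a topological \emph{generating set} in (1). The correct argument for (1) is the one the paper uses: by Theorem \ref{Thm:distinguished_p_dim}(3) applied to $\underline{\g}$, each Levi simple class is a finite combination of Levi $\chi$-Weyl classes; applying $[\underline{\Delta}_\nu]$ and transitivity of induction ($\underline{\Delta}_\nu$ of a Levi $\chi$-Weyl module is $W^\chi_\F(\mu_x)$) expresses each $[\underline{\Delta}_\nu(\underline{\mathcal{L}})]$ through $\chi$-Weyl classes, and then (3) gives generation.

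Relatedly, your route to (3) is both unnecessary and flawed: you invoke "invertibility" of diagonal blocks coming from Theorem \ref{Thm:distinguished_p_dim}(3), but that statement is a spanning statement for a rectangular system, not invertibility of a transition matrix, and linear independence of the classes $[\underline{\Delta}_\nu(\underline{\mathcal{L}})]$ cannot be extracted from (1) plus such a surjection. The upper-triangularity argument you formulated belongs exactly here: $\underline{\Delta}_\nu(\underline{\mathcal{L}})$ has top $\nu$-degree equal to the single Levi simple $\underline{\mathcal{L}}$, occurring with multiplicity one, and all other constituents have strictly smaller $\nu$-highest weight, so the transition matrix to the basis of simples is unitriangular and topologically invertible in $\hat{K}_0$ -- this is precisely the paper's proof of (3). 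In short: swap your arguments -- (3) is the unitriangularity statement, and (1) must be deduced as a spanning statement from the distinguished case via parabolic induction, in that order.
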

\begin{proof}
Part (1) follows from (3) of Theorem \ref{Thm:distinguished_p_dim}, part (2) is obvious,
and part (3) follows from upper triangularity.
\end{proof}

Note that the topological bases in (2) and (3) are both labelled
by $\operatorname{Irr}(\U^\chi_{(0),\F}\operatorname{-mod}^{\underline{Q},0})$: we write
$\underline{\Delta}_\mathcal{L}$ for the unique object of the form $\underline{\Delta}_{\nu}(\underline{\mathcal{L}})$
with an epimorphism onto $\mathcal{L}$.

Recall, Theorem \ref{Thm:Koszulity}, that the algebra $\A_\h|_{S}$ is Koszul. This yields
a positive grading on the specialization $\A_{\h,e}$. The grading can be assumed to be $Q$-stable, see
Remark \ref{Rem:Koszulity}, which gives a graded lift of $\A_{\h,e}\operatorname{-mod}^{{\underline{Q}}}$
to be denoted by $\A_{\h,e}\operatorname{-mod}^{{\underline{Q}},gr}$.  Since the grading on $\A_\h|_{S}$
comes from a $\C^\times$-equivariant structure on $\Tilt_\h|_S$, we get a derived equivalence
\begin{equation}\label{eq:derived_graded_equiv}
D^b(\A_{\h,e}\operatorname{-mod}^{{\underline{Q}},gr})\xrightarrow{\sim} D^b(\Coh^{{\underline{Q}}\times \C^\times}(\B_e)).
\end{equation}

Thanks to Lemma \ref{Lem:reduct_mod_p}, the algebras $(\A_{\h,e})_\F$ and $\U^\chi_{(0),\F}$
are Morita equivalent. Moreover, the images of $(\A_{\h,e})_\F\operatorname{-mod},
\U^\chi_{(0),\F}\operatorname{-mod}$ in $D^b(\Coh(\B_\chi))$
are the same, they coincide with the heart of a t-structure. Here and below we use the equivalences $R\Gamma(\mathcal{T}\otimes\bullet)$ and $R\Gamma(\mathcal{V}_0^\chi(\rho)\otimes\bullet)$.

The algebra
$(\A_{\h,e})_\F$ is acted on by $\underline{Q}^{(1)}$.
Then  $(\A_{\h,e})_\F\operatorname{-mod}^{\underline{Q}^{(1)}}$ is the category of
$\underline{Q}^{(1)}$-equivariant objects in that heart, while
$\U^\chi_{(0),\F}\operatorname{-mod}^{\underline{Q}}$ is identified
with the category of $\underline{Q}$-equivariant objects.
Under this identification,  $\U^\chi_{(0),\F}\operatorname{-mod}^{\underline{Q},0}$
is identified with the category of $\underline{Q}^{(1)}$-equivariant objects
in the heart in $D^b(\Coh(\B_\chi))$.
%Under this Morita equivalence, the shift of $\mathfrak{X}(T_0)$-grading
%by $\upsilon$ in $(\A_{\h,e})_\F\operatorname{-mod}^{\underline{Q}}$ corresponds to a shift by
%$p\upsilon$ in $\U^\chi_{(0),\F}\operatorname{-mod}^{\underline{Q}}$.

So we get a category equivalence $(\A_{\h,e})_\F\operatorname{-mod}^{\underline{Q}^{(1)}}\xrightarrow{\sim}
\U^\chi_{(0),\F}\operatorname{-mod}^{{\underline{Q}},0}$.
From this equivalence we  get  a graded lift $\U^\chi_{(0),\F}\operatorname{-mod}^{{\underline{Q}},gr}$
of $\U^\chi_{(0),\F}\operatorname{-mod}^{{\underline{Q}},0}$. We can also assume that
the grading on $\U^\chi_{(0),\F}$ corresponding to the positive grading on
$(\A_{\h,e})_\F$ is also positive.
 We get an equivalence
\begin{equation}\label{eq:graded_equiv_p}
D^b(\U^\chi_{(0),\F}\operatorname{-mod}^{{\underline{Q}},gr})\xrightarrow{\sim}
D^b(\Coh^{{\underline{Q}}^{(1)}\times \F^\times}(\B_\chi))
\end{equation}
intertwining the grading shift functors. The grading shift functor will be denoted by $\langle 1\rangle$.

Recall the contravaiant equivalence $\D: \U^\chi_{(0),\F}\operatorname{-mod}^{\underline{Q}}
\rightarrow \U^\chi_{(0),\F}\operatorname{-mod}^{{\underline{Q}},opp}$.

\begin{Lem}\label{Lem:dual_fix_simples}
We have $\D \mathcal{L}\cong \mathcal{L}$ for all simple objects in $\U^\chi_{(0),\F}\operatorname{-mod}^{\underline{Q},0}$.
\end{Lem}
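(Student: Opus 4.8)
The plan is as follows. Since $\D$ is a contravariant $t$-exact self-equivalence with $\D^2\cong\operatorname{id}$ (Lemma \ref{Lem:dual_square}), it induces, on the heart of the $t$-structure, an exact contravariant autoequivalence of $\U^\chi_{(0),\F}\operatorname{-mod}^{\underline Q}$; in particular it carries simple objects to simple objects, giving an involution $\mathcal{L}\mapsto\D\mathcal{L}$ on the set of simples. One should first note that $\D$ actually preserves the subcategory $\U^\chi_{(0),\F}\operatorname{-mod}^{\underline Q,0}$: because $T_0$ acts trivially on $\underline{\g}$ (hence on $\Lambda^{top}(\underline{\g}_\F/\underline{\mathfrak p}_\F)$) and $\varsigma$ stabilizes $T_0$, the functor $\D$ merely inverts the $T_0$-grading, so it carries the $\kappa=0$ condition to itself. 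Thus it suffices to show the induced involution on $\operatorname{Irr}(\U^\chi_{(0),\F}\operatorname{-mod}^{\underline Q,0})$ is trivial.

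Next I would feed in Proposition \ref{Prop:K_0_identity}. That proposition together with the translation equivalences of Lemma \ref{Lem:trans_equiv} (which $\D$ commutes with, exactly as exploited in the proof of Proposition \ref{Prop:duality_braid_action}) gives $[\D W^\chi_\F(\mu_x)]=[W^\chi_\F(\mu_x)]$ in $K_0(\U^\chi_{(0),\F}\operatorname{-mod}^{\underline Q,0})$ for every $x\in W^{a,P}$. Since $\D$ is exact and contravariant, comparing composition multiplicities yields $[W^\chi_\F(\mu_x):\D\mathcal{M}]=[W^\chi_\F(\mu_x):\mathcal{M}]$ for all $x\in W^{a,P}$ and all simple $\mathcal{M}$. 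In the distinguished case this already closes the argument: by Theorem \ref{Thm:distinguished_p_dim}(2) the multiplicities of the simples in the $\chi$-Weyl modules agree with the (unitriangular) decomposition matrix of the parabolic affine category $\mathcal{O}$, so distinct simples have distinct multiplicity vectors and $\D\mathcal{M}\cong\mathcal{M}$ follows. (Equivalently, $\{[W^\chi_\F(\mu_x)]:x\in\mathfrak{c}_P\}$ is a $\Z$-basis of $K_0$ fixed by $\D$, hence $[\D]=\operatorname{id}$ on $K_0$, hence the permutation of simples is trivial.)

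For general $\chi$ the $\chi$-Weyl classes only generate $\hat K_0$ topologically, so I would instead reduce to the distinguished case through the $\nu$-highest weight decomposition. Let $\mathcal{L}$ be simple in $\U^\chi_{(0),\F}\operatorname{-mod}^{\underline Q,0}$ and let $\underline{\mathcal{L}}$ be its $\nu$-highest weight space, a simple $\underline{\U}^\chi_\F$-module (where $e$ is distinguished in $\underline{\g}$), so that $\mathcal{L}$ is the head of $\underline{\Delta}_\nu(\underline{\mathcal{L}})$. Applying the contravariant exact $\D$ and Proposition \ref{Prop:D_baby} gives
\[
\D\mathcal{L}\;\cong\;\operatorname{soc}\bigl(\D\,\underline{\Delta}_\nu(\underline{\mathcal{L}})\bigr)\;\cong\;\operatorname{soc}\bigl(\underline{\nabla}_\nu(\underline{\D}\,\underline{\mathcal{L}})\bigr)\;\cong\;\operatorname{soc}\bigl(\underline{\nabla}_\nu(\underline{\mathcal{L}})\bigr),
\]
where the last step uses the distinguished case of the lemma applied to $\underline{\g}$ (namely $\underline{\D}\,\underline{\mathcal{L}}\cong\underline{\mathcal{L}}$, reduced via translation to central character $0$). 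Finally, a $\nu$-grading argument parallel to the one in the proof of Proposition \ref{Prop:D_baby} shows every nonzero submodule of $\underline{\nabla}_\nu(\underline{\mathcal{L}})$ meets the top $\nu$-weight space, forcing $\operatorname{soc}(\underline{\nabla}_\nu(\underline{\mathcal{L}}))$ to be the simple submodule generated by $\underline{\mathcal{L}}$, which is exactly $\mathcal{L}$. Hence $\D\mathcal{L}\cong\mathcal{L}$.

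The step I expect to be the main obstacle is precisely the passage from the $K_0$-level identity of Proposition \ref{Prop:K_0_identity} to an honest isomorphism of simple objects: in the distinguished case this rests on the unitriangularity of the $\chi$-Weyl decomposition matrix, while in the general case it requires the baby-Verma reduction above, and hence Proposition \ref{Prop:D_baby} together with the socle computation for dual baby Verma modules and the distinguished case for the (reductive) Levi $\underline{\g}$. Secondary points to verify carefully are that $\D$ really does preserve the block $\U^\chi_{(0),\F}\operatorname{-mod}^{\underline Q,0}$ and that it commutes with the translation equivalences used to pass from $\lambda=0$ to arbitrary regular $\lambda$ in Proposition \ref{Prop:K_0_identity}.
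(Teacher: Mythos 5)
Your proof is correct, and in the distinguished case it is essentially the paper's own argument: both rest on Proposition \ref{Prop:K_0_identity} together with the multiplicity data of Theorem \ref{Thm:distinguished_p_dim}, you merely phrase the conclusion through multiplicity vectors and unitriangularity rather than through the spanning statement in part (3). For general $\chi$, however, you take a genuinely different route. The paper deduces the lemma in one line from Lemma \ref{Lem:K_0_top_bases}(1) (topological generation of $\hat{K}_0$ by the classes $[W^\chi_\F(\mu_x)]$) combined with the general case of Proposition \ref{Prop:K_0_identity}; you instead stay at the level of modules: $\mathcal{L}$ is the head of $\underline{\Delta}_\nu(\underline{\mathcal{L}})$, Proposition \ref{Prop:D_baby} converts $\D\underline{\Delta}_\nu(\underline{\mathcal{L}})$ into $\underline{\nabla}_\nu(\underline{\D}\,\underline{\mathcal{L}})$, the distinguished case for the Levi gives $\underline{\D}\,\underline{\mathcal{L}}\cong\underline{\mathcal{L}}$ (legitimate via translation, and the paper itself applies Theorem \ref{Thm:distinguished_p_dim} to $\underline{G}$ in this way), and the socle of $\underline{\nabla}_\nu(\underline{\mathcal{L}})$ is $\mathcal{L}$ by the $\nu$-weight argument already contained in the proof of Proposition \ref{Prop:D_baby} together with the classification of simples by their $\nu$-highest weight components. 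Since the paper proves the general case of Proposition \ref{Prop:K_0_identity} precisely through Proposition \ref{Prop:D_baby}, the ingredients largely coincide; what your route buys is that it bypasses the completed $K_0$-group entirely (no continuity of $[\D]$ on $\hat{K}_0$ has to be addressed, a point the paper leaves implicit), and you also make explicit two facts the paper glosses over, namely that $\D$ preserves the block $\U^\chi_{(0),\F}\operatorname{-mod}^{\underline{Q},0}$ and exactly how the $K_0$-identity forces the permutation of simples to be trivial; what the paper's argument buys is brevity, since Lemma \ref{Lem:K_0_top_bases} is needed later anyway. Two small points to tighten: your parenthetical claim that $\{[W^\chi_\F(\mu_x)]\,:\,x\in\mathfrak{c}_P\}$ is a $\Z$-basis of $K_0$ is not established in the paper (only spanning over all of $W^{a,P}$ is), but your main unitriangularity argument does not use it; and in the socle step it is worth saying explicitly that a simple submodule meets the top $\nu$-weight space in a nonzero $\underline{Q}$-stable $\underline{\U}^\chi_\F$-submodule of $\underline{\mathcal{L}}$, hence contains all of $\underline{\mathcal{L}}$, so the socle is simple with top component $\underline{\mathcal{L}}$ and is therefore isomorphic to $\mathcal{L}$.
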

\begin{proof}
This follows from (1) of Lemma \ref{Lem:K_0_top_bases} combined
with Proposition \ref{Prop:K_0_identity}.
\end{proof}

Now we proceed to discussing graded lifts for the objects $\mathcal{L},\underline{\Delta}_\mathcal{L}$,
and an equivalence $\D$. First of all, the equivalences $\D_{coh}$ and $T_{w_0}^{-1}$ both admit natural
graded lifts to $D^b(\Coh^{{\underline{Q}^{(1)}}\times \F^\times}(\B_\chi))$ to be denoted by
$\tilde{\D}_{coh}$ and $\tilde{T}_{w_0}^{-1}$. For the former this is evident and for the latter
this follows from \cite[Theorem 1.3.2]{BR}.

Note that $\tilde{T}_{w_0}^{-1}$ intertwines
the grading shift functors, while $\tilde{\D}_{coh}$ intertwines $\langle 1\rangle$
with $\langle -1\rangle$. Transferring $(\tilde{T}_{w_0}^{-1}\circ \tilde{\D}_{coh})\langle \ell(w_0)\rangle$
to $D^b(\U^\chi_{(0),\F}\operatorname{-mod}^{{\underline{Q}},gr})$ using (\ref{eq:graded_equiv_p})
we get a self-equivalence
$$\tilde{\D}: D^b(\U^\chi_{(0),\F}\operatorname{-mod}^{{\underline{Q}},gr})\xrightarrow{\sim}
D^b(\U^\chi_{(0),\F}\operatorname{-mod}^{{\underline{Q}},gr})^{opp}$$
that intertwines $\langle 1\rangle$ with $\langle -1\rangle$.
By Proposition \ref{Prop:dual_localization}, $\tilde{\D}$ is a graded
lift of $\D$.

Rescaling the grading, if necessary, we achieve, thanks to Lemma
\ref{Lem:dual_fix_simples}, that each
$\mathcal{L}\in \operatorname{Irr}(\U^\chi_{(0),\F}\operatorname{-mod}^{\underline{Q}})$ admits
a unique graded lift $\tilde{\mathcal{L}}\in \operatorname{Irr}(\U^\chi_{(0),\F}\operatorname{-mod}^{{\underline{Q}},gr})$
such that $\tilde{\D}(\tilde{\mathcal{L}})=\tilde{\mathcal{L}}$.
Note that
$W^{\chi}_\F(2\rho_L-2\rho)$ is simple
so it makes sense to speak about its graded lift
$\tilde{W}^{\chi}_\F(2\rho_L-2\rho)$.

\begin{Lem}\label{Lem:stand_graded_lift}
We have a unique graded lift $\tilde{\underline{\Delta}}_\mathcal{L}$ of
$\underline{\Delta}_\mathcal{L}$ with $\tilde{\underline{\Delta}}_\mathcal{L}\twoheadrightarrow
\tilde{\mathcal{L}}$. The simple constituents of  the kernel
$\tilde{\underline{\Delta}}_\mathcal{L}\twoheadrightarrow
\tilde{\mathcal{L}}$ are of the form $\tilde{\mathcal{L}}'\langle -i\rangle$
for $i>0$.
\end{Lem}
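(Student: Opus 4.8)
The plan is to produce the graded lift $\tilde{\underline{\Delta}}_\mathcal{L}$ as a graded lift of the standard object $\underline{\Delta}_\mathcal{L}$ and then pin down its uniqueness and the grading of its radical. First I would recall that, since $\underline{\Delta}_\mathcal{L}$ has simple head $\mathcal{L}$ (this is how $\underline{\Delta}_\mathcal{L}$ was defined in Section \ref{SS_canonical}: it is the unique object of the form $\underline{\Delta}_\nu(\underline{\mathcal{L}})$ admitting an epimorphism onto $\mathcal{L}$), its endomorphism ring is local. A general fact about graded algebras with finite-dimensional graded pieces over $\F$: any object of $\U^\chi_{(0),\F}\operatorname{-mod}^{\underline{Q},0}$ whose endomorphism ring is local admits a graded lift, unique up to grading shift, once one fixes the grading normalization of its simple head. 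Concretely, I would pick the graded lift of $\mathcal{L}$ to be $\tilde{\mathcal{L}}$ (the $\tilde{\D}$-fixed one, as in the paragraph before the statement) and then choose the graded lift $\tilde{\underline{\Delta}}_\mathcal{L}$ so that $\tilde{\underline{\Delta}}_\mathcal{L}\twoheadrightarrow\tilde{\mathcal{L}}$ in degree $0$; this determines it uniquely because the head is simple and concentrated in one degree.

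The key point I would then have to verify is that all simple constituents of the radical $\operatorname{rad}\tilde{\underline{\Delta}}_\mathcal{L} = \ker(\tilde{\underline{\Delta}}_\mathcal{L}\twoheadrightarrow\tilde{\mathcal{L}})$ live in \emph{strictly positive} degrees, i.e. are of the form $\tilde{\mathcal{L}}'\langle -i\rangle$ with $i>0$ (using the convention $\langle 1\rangle$ that $\tilde{\D}$ swaps with $\langle -1\rangle$; note that in the standard convention a constituent of the radical of a standard object sits in a negative internal degree, which is exactly $\langle -i\rangle$, $i>0$). For this I would use that the grading on $\U^\chi_{(0),\F}$ transported from the Koszul grading on $(\A_{\h,e})_\F$ is \emph{positive} (this was arranged explicitly in Section \ref{SS_canonical}, right before (\ref{eq:graded_equiv_p})): a positively graded finite-dimensional algebra has its degree-$0$ part semisimple, so the radical of a graded module is contained in the part of strictly positive degree (or strictly negative, according to sign conventions). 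Since $\tilde{\underline{\Delta}}_\mathcal{L}$ has been normalized so that its head $\tilde{\mathcal{L}}$ is in degree $0$ and $\tilde{\underline{\Delta}}_\mathcal{L}$ is generated in degree $0$ (being a quotient of a parabolic induction of a module concentrated in a single degree, after the standard shift), the radical is exactly the positive-degree part, giving the claim about $\tilde{\mathcal{L}}'\langle -i\rangle$, $i>0$.

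The main obstacle I anticipate is the bookkeeping of grading conventions and the verification that $\underline{\Delta}_\mathcal{L}$ really is generated in a single degree under the transported grading: $\underline{\Delta}_\mathcal{L}=\underline{\Delta}_\nu(\underline{\mathcal{L}})$ is a parabolic induction, so I would need to check that the grading on $\U^\chi_{(0),\F}\operatorname{-mod}^{\underline{Q},gr}$ is compatible with the $\nu$-grading in such a way that $\underline{\mathcal{L}}$ (sitting in the $\nu$-top) carries a well-defined internal degree and the induction $U^0(\mathfrak{g}^{<0}_\F)\otimes\underline{\mathcal{L}}$ is generated over that degree. This compatibility should follow from Proposition \ref{Prop:baby_Verma_local}, which identifies $\underline{\Delta}_\nu$ geometrically with $\xi_*$ on the coherent side, combined with the fact that $\xi_*$ of a sheaf generated in one $\C^\times$-degree is again generated in one degree. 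Once that is in place, uniqueness of $\tilde{\underline{\Delta}}_\mathcal{L}$ follows formally: two graded lifts with the same head normalization differ by a degree-$0$ graded automorphism lifting the identity on the (local) ungraded endomorphism ring, hence are isomorphic. Finally, I would record that $\tilde W^\chi_\F(2\rho_L-2\rho)$ fits into this framework as the special case where $\underline{\Delta}_\mathcal{L}$ is already simple, so its graded lift is just $\tilde{\mathcal{L}}$ for the corresponding $\mathcal{L}$.
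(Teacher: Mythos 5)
Your uniqueness argument and your treatment of the radical (positivity of the grading, head concentrated in degree $0$, hence kernel in strictly positive degrees) are fine and agree with the paper's. The genuine gap is in the existence step: the ``general fact'' you invoke — that any object with local endomorphism ring over a graded algebra admits a graded lift — is false. Gradability of an indecomposable module is a nontrivial condition (only simples and projectives are automatically gradable; a cyclic module with simple head can fail to be gradable), and the existence of a graded lift of $\underline{\Delta}_\mathcal{L}$ is exactly the content that needs an argument here, so you cannot get it from a general principle. The paper's proof supplies the missing idea: it characterizes $\underline{\Delta}_\mathcal{L}$ intrinsically as $\pi^!_{\leqslant i}\pi_{\leqslant i}(\mathcal{L})$, where $\pi_{\leqslant i}$ is the quotient of the Serre subcategory of $\U^\chi_{(0),\F}\operatorname{-mod}^{\underline{Q},0}$ spanned by simples of $\nu$-highest weight $\leqslant i$ onto the corresponding subquotient, and $\pi^!_{\leqslant i}$ is its left adjoint. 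Since these Serre subcategories are defined by sets of simples (hence stable under $\langle 1\rangle$ in the graded category), the quotient functors and their adjoints lift canonically to the graded setting, and applying the lifted $\pi^!_{\leqslant i}\pi_{\leqslant i}$ to $\tilde{\mathcal{L}}$ produces $\tilde{\underline{\Delta}}_\mathcal{L}$ together with the surjection onto $\tilde{\mathcal{L}}$.

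Your proposed fallback via Proposition \ref{Prop:baby_Verma_local} does not, as written, repair this: you aim it at showing $\underline{\Delta}_\mathcal{L}$ is ``generated in a single degree,'' but that is automatic once a graded lift exists (the graded head is a shift of $\tilde{\mathcal{L}}$, hence concentrated in one degree, and the radical equals $\U_{>0}$ times the module because the degree-zero part of the Koszul grading is semisimple), whereas the existence of the lift itself is the open point. One could try to upgrade your idea into an existence proof — namely, lift $\underline{\Delta}_\nu$ to a graded functor via $\xi_*$ on $\F^\times$-equivariant coherent sheaves and apply it to a graded lift of $\underline{\mathcal{L}}$ — but that requires a graded ($\F^\times$-equivariant) version of Proposition \ref{Prop:baby_Verma_local} and a check that $\xi_*$ respects the relevant hearts in the equivariant setting, none of which is established in the paper; the $\pi^!\pi$ characterization avoids all of this.
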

\begin{proof}
Let  $\U^\chi_{(0),\F}\operatorname{-mod}^{{\underline{Q}},0}_{\leqslant i}$ denote
the Serre subcategory of $\U^\chi_{(0),\F}\operatorname{-mod}^{{\underline{Q}},0}$
spanned by all simples $\mathcal{L}$ with $\nu$-highest weights $\leqslant  i$.
Let $\pi_{\leqslant i}:
\U^\chi_{(0),\F}\operatorname{-mod}^{{\underline{Q}},0}_{\leqslant i}
\twoheadrightarrow \operatorname{gr}_i\U^\chi_{(0),\F}\operatorname{-mod}^{{\underline{Q}},0}$
be the quotient functor and let $\pi^!_{\leqslant i}$ be its left adjoint.
Then $\underline{\Delta}_{\mathcal{L}}$ is nothing else but
$\pi_{\leqslant i}^! \pi_{\leqslant i}(\mathcal{L})$. The existence
of $\tilde{\underline{\Delta}}_\mathcal{L}$ follows from here. Since
$\underline{\Delta}_\mathcal{L}$ is indecomposable, we have the uniqueness.
And the claim that the simple constituents of $\tilde{\underline{\Delta}}_\mathcal{L}\twoheadrightarrow
\tilde{\mathcal{L}}$ are of the form $\tilde{\mathcal{L}}'\langle -i\rangle$ follows from $\U^\chi_{(0),\F}$
being positively graded.
\end{proof}

Consider the completed $K_0$-group $\hat{K}_0(\U^\chi_{(0),\F}\operatorname{-mod}^{{\underline{Q}},gr})$
defined similarly to $\hat{K}_0(\U^\chi_{(0),\F}\operatorname{-mod}^{{\underline{Q}},0})$.
This is a topological $\Z[v^{\pm 1}]$-module, where $v=[\langle 1\rangle]$.  Both families $[\tilde{\underline{\Delta}}_\mathcal{L}]$
and $[\tilde{\mathcal{L}}]$ are topological bases of $\hat{K}_0(\U^\chi_{(0),\F}\operatorname{-mod}^{{\underline{Q}},gr})$.

Our goal is to express  the basis $[\tilde{\mathcal{L}}]$
via the classes $H_x [\tilde{W}^\chi_\F(2\rho_L-2\rho)]$ for $x\in W^{a,P}$.
To find the expressions we will proceed  as follows. First, we will study an
action of $\Hecke^a_G$ on $\hat{K}_0(\U^\chi_{(0),\F}\operatorname{-mod}^{{\underline{Q}},gr})$
and explicitly describe the module structure and the classes $[\tilde{\underline{\Delta}}_\mathcal{L}]$,
Section \ref{SS_Hecke_module}. Then we describe the action of $[\tilde{\D}]$ on this module,
Section \ref{SS_D_computation}, this is the main part. In Section
\ref{SS_KL} we recall semi-periodic affine Kazhdan-Lusztig polynomials.
Finally, in Section \ref{SS_final} we fully compute the basis of
simples.

\subsection{Module structure}\label{SS_Hecke_module}
The affine Hecke algebra $\Hecke^a_{G}$  contains $\Hecke^a_{\underline{G}}$ as a subalgebra and  is a free right $\Hecke^a_{\underline{G}}$-module with basis $H_u$, where $u$ runs over the elements
of $W$ with $u$ shortest in $uW_{\underline{G}}$. This subset of $W$ will be
denoted by $W^{\underline{G},-}$.

Recall that $\Hecke^a_G$ gets identified with $K_0^{G\times \C^\times}(\St_\h)$,
Section \ref{SS_affine_Hecke_reminder}.
The action of $D^b(\Coh^{G\times \C^\times}(\St_{\h}))$ on $D^b(\Coh^{{\underline{Q}}\times \C^\times}(\B_e))$ gives rise to a left $\Hecke^a_{G}$-action on
$K_0^{{\underline{Q}}\times\C^\times}(\B_e)=
K_0(\U^\chi_{(0),\F}\operatorname{-mod}^{{\underline{Q}},gr})$.
The class $[\tilde{T}_{x}^{-1}]$ acts by $H_x$.

First, we need a result on a compatibility of dualities.
Let $a\mapsto \bar{a}$ denote the standard bar-involution on $\Hecke^a_G$.

\begin{Prop}\label{Prop:dualities_compat}
For $a\in \Hecke^a_G$ and $m\in K_0(\U^\chi_{(0),\F}\operatorname{-mod}^{{\underline{Q}},gr})$.
Then $[\tilde{\D}](am)=\bar{a}([\tilde{\D}]m)$.
\end{Prop}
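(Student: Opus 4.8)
The plan is to reduce the identity to the behaviour of $[\tilde\D]$ on a set of algebra generators of $\Hecke^a_G$, and then to deduce the generator cases from the braid‑group symmetry of $\D$ recorded in Proposition \ref{Prop:duality_braid_action}.

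First I would record two formal facts. Since $\tilde\D$ is t-exact and intertwines $\langle 1\rangle$ with $\langle -1\rangle$, the induced operator $[\tilde\D]$ on $K_0(\U^\chi_{(0),\F}\operatorname{-mod}^{\underline{Q},gr})$ is additive and $v$-antilinear; moreover $\D^2\cong\operatorname{id}$ by Lemma \ref{Lem:dual_square}, and $\tilde\D$ was normalized so that $\tilde\D(\tilde{\mathcal L})\cong\tilde{\mathcal L}$ for all simples, whence $\tilde\D^2\cong\operatorname{id}$ and $[\tilde\D]^{-1}=[\tilde\D]$. On the other hand $\bar\bullet$ is a $v$-antilinear ring homomorphism of $\Hecke^a_G$. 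It follows that the set $\mathcal S=\{a\in\Hecke^a_G\mid [\tilde\D](am)=\bar a\,[\tilde\D]m\ \text{for all }m\}$ is a $\Z[v^{\pm1}]$-subalgebra of $\Hecke^a_G$ containing $1$ and closed under $v$-antilinear combinations: indeed if $a,b\in\mathcal S$ then $[\tilde\D]((ab)m)=[\tilde\D](a(bm))=\bar a\,[\tilde\D](bm)=\bar a\bar b\,[\tilde\D]m=\overline{ab}\,[\tilde\D]m$. Hence it suffices to show that $\mathcal S$ contains the algebra generators $H_s$ ($s$ a simple affine reflection) and $H_\omega$ ($\omega$ a length-zero element of $W^a$).

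For these generators I would use that $H_x$ acts on $K_0$ as the operator induced by the autoequivalence $\tilde T_x^{-1}$; thus $\overline{H_s}=H_s^{-1}$ acts as $[\tilde T_s]$ and $\overline{H_\omega}=H_\omega$ acts as $[\tilde T_\omega^{-1}]$. By Proposition \ref{Prop:duality_braid_action} we have $\D\circ T_s\cong T_s^{-1}\circ\D$, hence $\D\circ T_s^{-1}\cong T_s\circ\D$, and similarly $\D$ commutes with $T_\omega$, hence with $T_\omega^{-1}$. Since $\tilde\D$ and the $\tilde T_x^{\pm1}$ are graded lifts of $\D$ and the $T_x^{\pm1}$, and since two graded lifts of a given autoequivalence of $D^b(\U^\chi_{(0),\F}\operatorname{-mod}^{\underline{Q},gr})$ differ only by an overall grading shift (standard, as $\U^\chi_{(0),\F}$ is positively graded with semisimple degree-zero part), these isomorphisms lift to $\tilde\D\circ\tilde T_s^{-1}\cong(\tilde T_s\circ\tilde\D)\langle k\rangle$ and $\tilde\D\circ\tilde T_\omega^{-1}\cong(\tilde T_\omega^{-1}\circ\tilde\D)\langle k'\rangle$ for some $k,k'\in\Z$. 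Passing to $K_0$ and using $[\tilde\D]^{-1}=[\tilde\D]$, these read $[\tilde\D]\,H_s\,[\tilde\D]=v^{k}H_s^{-1}$ and $[\tilde\D]\,H_\omega\,[\tilde\D]=v^{k'}H_\omega$ as operators on $K_0$.

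It then remains to check $k=k'=0$, which I would do with no computation. Conjugating the first relation once more by the $v$-antilinear involution $[\tilde\D]$ turns its left-hand side into $H_s$ and its right-hand side into $v^{-2k}H_s$, so $k=0$ as $H_s$ is invertible, giving $[\tilde\D]H_s[\tilde\D]=H_s^{-1}=\overline{H_s}$. For $\omega$, which has finite order $N$ in $W^a$, one has $H_\omega^{N}=H_{\omega^{N}}=1$; iterating $[\tilde\D]H_\omega[\tilde\D]=v^{k'}H_\omega$ yields $\operatorname{id}=[\tilde\D]H_\omega^{N}[\tilde\D]=v^{Nk'}\operatorname{id}$, so $v^{Nk'}=1$, $k'=0$, and $[\tilde\D]H_\omega[\tilde\D]=H_\omega=\overline{H_\omega}$. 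Thus $H_s,H_\omega\in\mathcal S$, proving the proposition. The genuine obstacle is exactly the grading bookkeeping of the previous paragraph — the naive translation of Proposition \ref{Prop:duality_braid_action} leaves an undetermined shift — and the devices of iterating the involution $[\tilde\D]$, respectively using the finite order of $\omega$, are what pin it down. An alternative, more computational route would combine Lemma \ref{Lem:bar_coherent} with a relative Grothendieck–Serre duality statement expressing compatibility of the $\varsigma$-twisted Serre duality on $D^b(\Coh^{\underline{Q}^{(1)}\times\F^\times}(\B_\chi))$ with the convolution action of $D^b(\Coh^{G\times\C^\times}(\St_\h))$; there the main obstacle would instead be establishing that compatibility with all shifts tracked, which is why I prefer the argument via Proposition \ref{Prop:duality_braid_action}.
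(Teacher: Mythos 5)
Your route is genuinely different from the paper's: the paper proves this in two lines by writing $[\tilde{\D}]$ in terms of $[\tilde{\D}_{coh}]$ and $H_{w_0}$ and then quoting Lemma \ref{Lem:bar_coherent} together with \cite[Lemma 9.5]{Lusztig_K1}, i.e.\ exactly the ``computational'' alternative you set aside at the end — and there the shift bookkeeping you worry about is already contained in Lusztig's lemma, so no new computation is needed. Your formal scaffolding (reduction to the generators $H_s$ and $H_\omega$, the fact that $[\tilde{\D}]$ is a $v$-antilinear involution on $K_0$ because of the normalization $\tilde{\D}(\tilde{\mathcal L})\cong\tilde{\mathcal L}$, and the finite-order trick for $H_\omega$, which is legitimate since $G$ is assumed semisimple and simply connected so the length-zero subgroup is finite) is fine.

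The genuine gap is the step you yourself flag: the claim that two graded lifts of the same autoequivalence of the ungraded category differ by an \emph{overall} grading shift. This is not standard in the generality you need: graded lifts of the identity functor of $\U^\chi_{(0),\F}\operatorname{-mod}^{\underline{Q},gr}$ are classified by a shift \emph{on each block}, and uniqueness up to a single uniform shift requires the category (equivalently the graded algebra) to be indecomposable, which is neither asserted nor obvious here. If the discrepancy between $\tilde{\D}\circ\tilde{T}_s^{-1}$ and $\tilde{T}_s\circ\tilde{\D}$ is only a block-dependent twist $\Phi$ (a $t$-exact lift of the identity sending $\tilde{\mathcal L}\mapsto\tilde{\mathcal L}\langle k_{\mathcal L}\rangle$), your pin-down arguments no longer close: conjugating $[\tilde{\D}]H_s[\tilde{\D}]=\Phi H_s^{-1}$ by $[\tilde{\D}]$ yields only $\Phi H_s\Phi=H_s$, and the $N$-th power identity for $H_\omega$ yields only that a product of conjugates of $\Phi_\omega$ is the identity; neither forces $\Phi=\operatorname{id}$, precisely because $H_s$, $H_\omega$ may permute blocks carrying different shifts. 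So as written the argument establishes the proposition only under an unproved connectedness hypothesis. Note also that the natural way to fix the shift by testing on graded baby Verma or $\chi$-Weyl modules risks circularity, since Proposition \ref{Prop:wc_baby_Verma_graded} (which controls those graded lifts under $\tilde{T}_u^{-1}$) is itself proved in the paper using the present proposition; the safe repair is the paper's: identify $[\tilde{\D}]$ with $v^{-\ell(w_0)}$ times $H_{w_0}\circ[\tilde{\D}_{coh}]$ on the coherent side and invoke Lemma \ref{Lem:bar_coherent} and \cite[Lemma 9.5]{Lusztig_K1}, where the normalizations are explicit.
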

\begin{proof}
Recall, Lemma \ref{Lem:bar_coherent}, that
$$\bar{a}=v^{-2\ell(w_0)}H_{w_0}[\tilde{\D}_{coh}](a)H_{w_0}^{-1}.$$
So $[\tilde{\D}](am)=\bar{a}([\tilde{\D}]m)$ boils down to
$$[\tilde{\D}_{coh}](am)=v^{-2\ell(w_0)}\left([\tilde{\D}_{coh}]a\right)\left( [\tilde{\D}_{coh}]m\right).$$
This is \cite[Lemma 9.5]{Lusztig_K1}.
\end{proof}

Our next goal is to describe the abelian group  $K_0(\U^\chi_{(0),\F}\operatorname{-mod}^{{\underline{Q}},gr})$
as an $\Hecke^a_{G}$-module and also compute  the classes $[\tilde{\Delta}_\mathcal{L}]$.

Let $\mathfrak{C}_{\underline{P}}$ be the left cell
module over $\Hecke^a_{\underline{G}}$ corresponding to the left cell ${\mathfrak{c}}_{\underline{P}}$. This left
cell labels the simples in $\underline{\U}^\chi_{(2\underline{\rho}-2\rho),\F}\operatorname{-mod}^{{\underline{Q},0}}$, thanks
to (1) of Theorem \ref{Thm:distinguished_p_dim}. For $x\in {\mathfrak{c}}_{\underline{P}}$
we write $C^{\mathfrak{C}}_x$  for the Kazhdan-Lusztig basis element labeled by $x$ in the
natural $\Z[v^{-1}]$-lattice of $\mathfrak{C}_{\underline{P}}$.

Recall that we have fixed a generic element $\nu\in \mathfrak{X}^+(\underline{G})$.
Note that $$\left(W^a_{\underline{G}}/(W^a_{\underline{G}},W^a_{\underline{G}})\right)
\otimes_{\Z}\Q$$
is naturally identified with $\mathfrak{X}(\underline{G})\otimes_{\Z}\Q$. So it makes sense
to talk about the projection of an element in $W^a_{\underline{G}}$ to $\mathfrak{X}(\underline{G})\otimes_{\Z}\Q$. For $x\in W^a_{\underline{G}}$
we abuse the notation and write $\langle\nu,x\rangle$ for the pairing of $\nu$ with the projection of $x$ to $\mathfrak{X}(\underline{G})\otimes_{\Z}\Q$.
We write
$\hat{\mathfrak{C}}_{\underline{P}}$ for the completion of $\mathfrak{C}_{\underline{P}}$, it consists of
all sums $$\sum a_x C_x^{\mathfrak{C}},$$
with $a_x\in \C[v^{\pm 1}]$ such that, for each $N\in \Z$, there are only finitely many
$x$ satisfying $a_x\neq 0$.

Consider the $\Hecke^a_G$-modules $$\,^G\mathfrak{C}_{\underline{P}}:=
\Hecke^a_G\otimes_{\Hecke^a_{\underline{G}}}\mathfrak{C}_{\underline{P}},
\,^G\hat{\mathfrak{C}}_{\underline{P}}:=
\Hecke^a_G\otimes_{\Hecke^a_{\underline{G}}}\hat{\mathfrak{C}}_{\underline{P}}$$
and the elements $M_{u,x}:=H_uC^{\mathfrak{C}}_x\in \,^G\mathfrak{C}_{\underline{P}}$, where
$u\in W^{\underline{G},-}$ and   $x\in {\mathfrak{c}}_{\underline{P}}$.
Note that the elements $M_{u,x}$
form a basis in $\,^G\mathfrak{C}_{\underline{P}}$ and a topological basis in   $\,^G\hat{\mathfrak{C}}_{\underline{P}}$.

%The following is the first main result of this section.

Let $\mathcal{L}$ be a simple object in $\U^\chi_{(0),\F}\operatorname{-mod}^{\underline{Q},0}$ and
$\underline{\mathcal{L}}$ be the simple in $$\bigoplus_{w\in W^{\underline{G},-}}\underline{\U}^\chi_{w\cdot(2\underline{\rho}-2\rho),\F}
\operatorname{-mod}^{{\underline{Q}},0}$$
that labels $\mathcal{L}$.
Suppose $\underline{\mathcal{L}}\in \operatorname{Irr}(\underline{\U}^\chi_{2\underline{\rho}-2\rho,\F}\operatorname{-mod}^{{\underline{Q}},0})$.
Let $x\in {\mathfrak{c}}_{\underline{P}}$ be the Weyl group element labelling
$\underline{\mathcal{L}}$.

\begin{Lem}\label{Lem:K_0_Hecke_module}
There is an  $\Hecke_G^a$-linear  map $\upsilon:\,^G\mathfrak{C}_{\underline{P}}\rightarrow K_0(\U^\chi_{(0),\F}\operatorname{-mod}^{{\underline{Q}},gr})$ such that, in the notation of the previous
paragraph, $\upsilon(M_{1,x})=[\tilde{\underline{\Delta}}_\mathcal{L}]$.
\end{Lem}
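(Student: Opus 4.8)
The plan is to construct the map $\upsilon$ geometrically, via the derived localization equivalence for the Levi $\underline{G}$, and then transport it along parabolic induction. First I would recall that, by (1) of Theorem~\ref{Thm:distinguished_p_dim} applied to $\underline{G}$ (note that $e$ is distinguished in $\underline{\g}$), the simples in $\underline{\U}^\chi_{2\underline{\rho}-2\rho,\F}\operatorname{-mod}^{\underline{Q},0}$ are labelled by the left cell ${\mathfrak{c}}_{\underline{P}}$, and the $\chi$-Weyl modules $\underline{W}^\chi_\F(\mu_x)$ span the $K_0$-group, which thereby gets identified $\Hecke^a_{\underline{G}}$-equivariantly with the completed cell module $\hat{\mathfrak{C}}_{\underline{P}}$ (using Lemma~\ref{Lem:W_aff_action} to match the $W^a_{\underline{G}}$-action with the action on standards, and the canonical-basis elements $C^{\mathfrak{C}}_x$ matching the graded simples $\tilde{\underline{\mathcal{L}}}$ by the Koszulity set-up of Section~\ref{SS_canonical} transported to $\underline{G}$). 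Under this identification $C^{\mathfrak{C}}_x \mapsto [\tilde{\underline{\Delta}}_{\underline{\mathcal{L}}}]$, where $\underline{\mathcal{L}}$ is the simple in $\underline{\U}^\chi_{2\underline{\rho}-2\rho,\F}\operatorname{-mod}^{\underline{Q},0}$ labelled by $x$, by the very construction of $\tilde{\underline{\Delta}}$ in Lemma~\ref{Lem:stand_graded_lift}.

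Next I would feed this into the parabolic induction functor. By Proposition~\ref{Prop:baby_Verma_local}, the baby Verma functor $\underline{\Delta}_\nu$ corresponds under the two derived localization equivalences to the pushforward $\xi_*: D^b(\Coh^{\underline{Q}}(\underline{\B}_\chi)) \to D^b(\Coh^{\underline{Q}}(\B_\chi))$, and hence on graded lifts it is realized by a functor on the $\C^\times$-equivariant coherent categories; in particular it induces a $\Z[v^{\pm1}]$-linear map on $\hat{K}_0$. Moreover, the construction of the $\Hecke^a_G$-action on $K_0(\U^\chi_{(0),\F}\operatorname{-mod}^{\underline{Q},gr})$ via convolution on $D^b(\Coh^{G\times\C^\times}(\St_\h))$ restricts on the subalgebra $\Hecke^a_{\underline{G}} = K_0^{\underline{G}\times\C^\times}(\St_{\underline{\h}})$ to the Levi action, because $\xi_*$ is compatible with convolution by the geometry of the embedding $\tilde{\underline{\g}}\hookrightarrow\tilde{\g}$ (this is where I expect to spend the most care: checking that $\xi_*$ intertwines the $\Hecke^a_{\underline{G}}$-module structures, which amounts to a base-change / projection-formula computation for the relevant Steinberg-type correspondences, together with the $\Br^a$-equivariance already available from Proposition~\ref{Prop:braid_compatible} and \cite{BR}). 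Granting that, the map $[\underline{\Delta}_\nu]:\hat{\mathfrak{C}}_{\underline{P}} \to \hat{K}_0(\U^\chi_{(0),\F}\operatorname{-mod}^{\underline{Q},gr})$ is $\Hecke^a_{\underline{G}}$-linear, so by the freeness of $\Hecke^a_G$ over $\Hecke^a_{\underline{G}}$ with basis $\{H_u : u\in W^{\underline{G},-}\}$ it extends uniquely to an $\Hecke^a_G$-linear map $\upsilon:\,{}^G\mathfrak{C}_{\underline{P}} = \Hecke^a_G\otimes_{\Hecke^a_{\underline{G}}}\mathfrak{C}_{\underline{P}} \to K_0(\U^\chi_{(0),\F}\operatorname{-mod}^{\underline{Q},gr})$, sending $M_{u,x} = H_u C^{\mathfrak{C}}_x$ to $H_u\cdot [\underline{\Delta}_\nu(C^{\mathfrak{C}}_x)]$.

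Finally I would identify $\upsilon(M_{1,x}) = [\underline{\Delta}_\nu(C^{\mathfrak{C}}_x)]$ with $[\tilde{\underline{\Delta}}_{\mathcal{L}}]$. Since $C^{\mathfrak{C}}_x = [\tilde{\underline{\Delta}}_{\underline{\mathcal{L}}}]$ in $\hat{K}_0(\underline{\U}^\chi_{2\underline{\rho}-2\rho,\F}\operatorname{-mod}^{\underline{Q},gr})$, and $\underline{\Delta}_\nu$ takes $\underline{\Delta}_{\underline{\mathcal{L}}}$ to $\underline{\Delta}_{\mathcal{L}}$ (the defining property of $\underline{\Delta}_{\mathcal{L}}$ as the parabolically induced standard object covering $\mathcal{L}$ — its $\nu$-highest weight space being $\underline{\mathcal{L}}$ — combined with exactness of $\underline{\Delta}_\nu$ so that it preserves the "$\pi^!_{\leqslant i}\pi_{\leqslant i}$" description from the proof of Lemma~\ref{Lem:stand_graded_lift}), the graded lift $\tilde{\underline{\Delta}}_{\mathcal{L}}$ is precisely the graded lift of $\underline{\Delta}_\nu$ applied to $\tilde{\underline{\Delta}}_{\underline{\mathcal{L}}}$, up to a grading shift which is normalized away by requiring the surjection onto $\tilde{\mathcal{L}}$; so $[\underline{\Delta}_\nu(C^{\mathfrak{C}}_x)] = [\tilde{\underline{\Delta}}_{\mathcal{L}}]$. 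The main obstacle, as noted, is verifying the $\Hecke^a_{\underline{G}}$-linearity of $[\underline{\Delta}_\nu]$ on the nose (rather than merely up to the $W^a_{\underline{G}}$-action on $K_0$); everything else is bookkeeping with the identifications already established in Sections~\ref{SS_canonical} and~\ref{SS_splitting_induction}.
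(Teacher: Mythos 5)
Your proposal follows essentially the same route as the paper's proof: identify the graded Levi $K_0$ with the cell module as a based $\Hecke^a_{\underline{G}}$-module, realize $\underline{\Delta}_\nu$ geometrically as $\xi_*$ via Proposition \ref{Prop:baby_Verma_local}, verify $\Hecke^a_{\underline{G}}$-linearity of $\xi_*$ (which the paper simply asserts as easy), and then extend to an $\Hecke^a_G$-linear map on $\,^G\mathfrak{C}_{\underline{P}}$ by Frobenius reciprocity. The only place where your citations fall short is the first step: Theorem \ref{Thm:distinguished_p_dim} and Lemma \ref{Lem:W_aff_action} are statements at $v=1$, so they do not by themselves produce an $\Hecke^a_{\underline{G}}$-linear map $\mathfrak{C}_{\underline{P}}\rightarrow K_0(\underline{\U}^\chi_{(2\underline{\rho}-2\rho),\F}\operatorname{-mod}^{\underline{Q},gr})$ matching $C^{\mathfrak{C}}_x$ with the graded simples; the paper supplies this by a short separate argument — uniqueness because $C_{w_{0,P}}$ generates $\mathfrak{C}_{\underline{P}}$, existence because $(H_s+v)$ annihilates $[\tilde{W}^\chi_\F(2\rho_L-2\rho)]$ (the wall-crossing $T_s$ for $s\in W_P$ only shifts this graded simple), and factoring of the resulting map on $\Hecke^a_{\underline{G}}\otimes_{\Hecke_{W_P}}\operatorname{sgn}_P$ through the cell quotient via the $\C^\times$-equivariant version of \cite[Theorem 55]{B_Hecke}. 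With that step repaired, the rest of your argument (including the grading normalization at the end, which the paper glosses over in the same way) coincides with the paper's proof.
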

%We will see below that the bases of $M_{u,w}$ coincides with the basis in
%$[\tilde{\underline{\Delta}}_\mathcal{L}]$.
\begin{proof}
The proof is in two steps.  Recall that we identify $K_0(\underline{\U}^\chi_{(2\underline{\rho}-2\rho),\F}\operatorname{-mod}^{{\underline{Q}},gr})$
with $K_0^{{\underline{Q}^{(1)}}\times \F^\times}(\underline{\B}_\chi)$ (using the splitting
bundle $\underline{\mathcal{V}}^\chi_{2\underline{\rho}-2\rho}(\rho)$).

{\it Step 1}.
We claim that we have an isomorphism of based $\Hecke^a_{\underline{G}}$-modules
$$\underline{\upsilon}:\mathfrak{C}_{\underline{P}}\xrightarrow{\sim} K_0(\underline{\U}^\chi_{(2\underline{\rho}-2\rho),\F}\operatorname{-mod}^{{\underline{Q}},gr}),$$
where in the source we consider the Kazhdan-Lusztig basis and in the target we consider
the basis of simples $\tilde{\mathcal{L}}$.
First of all, note that $C_{w_{0,P}}$ generates
the $\Hecke_{\underline{G}}^a$-module $\mathfrak{C}_{\underline{P}}$
so there can be at most one $\Hecke_{\underline{G}}^a$-linear map $\underline{\upsilon}$
with
$\underline{\upsilon}(C_{w_{0,P}})=[\tilde{W}^\chi_{\F}(2\rho_L-2\rho)]$.
Let us show that such a module homomorphism exists.

First, we show that we have a homomorphism
\begin{equation}\label{eq:Hecke_homom_induced}
\Hecke_{\underline{G}}^a\otimes_{\Hecke_{W_P}}\operatorname{sgn}_P\rightarrow
K_0(\underline{\U}^\chi_{(2\underline{\rho}-2\rho),\F}\operatorname{-mod}^{{\underline{Q}},gr})\end{equation}
sending $C_{w_{0,P}}$ to $[\tilde{W}^\chi_{\F}(2\rho_L-2\rho)]$.
Note that for a simple reflection $s$ in $W_P$, the functor
$T_s$ homologically shifts $\tilde{W}^\chi_{\F}(2\rho_L-2\rho)$
(this is already the case with the corresponding parabolic Verma module).
So $H_s[\tilde{W}^\chi_{\F}(2\rho_L-2\rho)]$ is of the form
$-v^?[\tilde{W}^\chi_{\F}(2\rho_L-2\rho)]$. We conclude
$(H_s+v)[\tilde{W}^\chi_{\F}(2\rho_L-2\rho)]=0$. This gives
(\ref{eq:Hecke_homom_induced}).

Now we prove that (\ref{eq:Hecke_homom_induced}) factors
through the projection  $\Hecke_{\underline{G}}^a\otimes_{\Hecke_{W_P}}\operatorname{sgn}_P
\twoheadrightarrow \mathfrak{C}_{\underline{P}}$. It follows from the $\C^\times$-equivariant
version of \cite[Theorem 55]{B_Hecke} (proved in the same way as the version there) that the $\Hecke_{\underline{G}}^a$-action on
$K_0^{\underline{Q}^{(1)}\times \F^\times}(\underline{\B}_\chi)\cong K_0^{{\underline{Q}}\times \C^\times}(\underline{\B}_e)$
belongs to the two-sided  cell corresponding to $\underline{G}e$. And
$\mathfrak{C}_{\underline{P}}$ is the largest quotient of
$\Hecke_{\underline{G}}^a\otimes_{\Hecke_{W_P}}\operatorname{sgn}_P$
that belongs to that two-sided cell.   So we get the required homomorphism
$\underline{\upsilon}$.

%The claim that this extends to an isomorphism
%of $\Hecke^a_{G^0}$-modules follows from Theorem \ref{Thm:abelian_quotient}
%combined with Theorem \ref{Thm:distinguished_p_dim}\footnote{Perhaps
%we need to care a bit about gradings here and make remarks about graded lifts
%all along the way...}. To prove that this is an isomorphism of based modules
%one applies Proposition \ref{Prop:dualities_compat}.

{\it Step 2}. We map $$K_0(\U^\chi_{(2\underline{\rho}-2\rho),\F}\operatorname{-mod}^{{\underline{Q}},gr})\xrightarrow{\sim}
K_0^{{\underline{Q}}\times \C^\times}(\underline{\B}_e)$$ to
$$K_0(\U^\chi_{(0),\F}\operatorname{-mod}^{{\underline{Q}},gr})\xrightarrow{\sim}K_0^{{\underline{Q}}\times \C^\times}(\B_e)$$
via $\xi_*$. The map $\xi_*$ is easily seen to be a $\Hecke^a_{\underline{G}}$-linear.
So $\xi_*$ induces a $\Hecke^a_G$-linear map $\,^G\mathfrak{C}_{\underline{P}}
\rightarrow K_0(\Coh^{{\underline{Q}}\times \C^\times}(\B_e))$. By Proposition
\ref{Prop:baby_Verma_local} combined with Step 1, it indeed sends $M_{1,x}$
to $[\tilde{\underline{\Delta}}_{\mathcal{L}}]$.
\end{proof}

Let $\mathcal{L}$ be such as in the lemma and let $u\in W^{\underline{G},-}$.
Note that the element $u^{-1}\cdot(2\underline{\rho}-2\rho)$ is dominant for $\underline{G}$.
The categories $\underline{\U}^\chi_{(2\underline{\rho}-2\rho),\F}\operatorname{-mod}^{\underline{Q}}$
and $\underline{\U}^\chi_{u^{-1}\cdot(2\underline{\rho}-2\rho),\F}\operatorname{-mod}^{\underline{Q}}$
are identified by means of the translation functor from $2\underline{\rho}-2\rho$
to $u^{-1}\cdot (2\underline{\rho}-2\rho)$. For $\underline{\mathcal{L}}\in \operatorname{Irr}(\underline{\U}^\chi_{(2\underline{\rho}-2\rho),\F}\operatorname{-mod}^{\underline{Q}})$
we write $\underline{\mathcal{L}}^u$ for the corresponding simple object in
$\underline{\U}^\chi_{u^{-1}\cdot(2\underline{\rho}-2\rho),\F}\operatorname{-mod}^{\underline{Q}}$.

Our next result is as follows.

\begin{Prop}\label{Prop:wc_baby_Verma_graded}
Let $u,\mathcal{L},\mathcal{L}^u$ have the same meaning as above. Then
\begin{equation}\label{eq:wc_baby_Verma_graded} \tilde{T}_u^{-1}\tilde{\underline{\Delta}}_{\mathcal{L}}
\cong \tilde{\underline{\Delta}}_{\mathcal{L}^u}.\end{equation}
\end{Prop}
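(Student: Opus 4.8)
\textbf{Proof proposal for Proposition \ref{Prop:wc_baby_Verma_graded}.} The plan is to deduce the isomorphism (\ref{eq:wc_baby_Verma_graded}) from the fact, established in the proof of Lemma \ref{Lem:K_0_Hecke_module}, that on the ungraded level the baby Verma functor $\underline{\Delta}_\nu$ is geometrically realized by $\xi_*$ (Proposition \ref{Prop:baby_Verma_local}), together with the $\Hecke^a_{\underline{G}}$-linearity of $\xi_*$ and the characterization of $\tilde{\underline{\Delta}}_\mathcal{L}$ as the unique graded lift of $\underline{\Delta}_\mathcal{L}$ with a graded surjection onto $\tilde{\mathcal{L}}$ (Lemma \ref{Lem:stand_graded_lift}). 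First I would note that $\tilde T_u$ for $u\in W^{\underline{G},-}$ is a genuine functor (no homological shift is needed since $u$ is shortest in $uW_{\underline{G}}$, so the corresponding wall-crossing is a composition of $T$-exact translation equivalences away from walls); more precisely, for $u\in W^{\underline G,-}$ the element $T_u\in\Br^a$ maps a baby Verma (parabolic Verma for $\g^{\geqslant 0}$) at a dominant weight to the baby Verma at $u^{-1}\cdot$ of that weight, exactly as in Lemma \ref{Lem:W_aff_action}, because $u^{-1}\cdot(2\underline\rho-2\rho)$ is again dominant for $\underline G$. Hence on the ungraded level $T_u^{-1}\underline{\Delta}_\mathcal{L}\cong\underline{\Delta}_{\mathcal{L}^u}$, using that the translation equivalences intertwine the $\Br^a$-action (Section \ref{SS_transl_braid}) and send $\underline{\Delta}_\nu$-modules to $\underline{\Delta}_\nu$-modules.

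Next I would promote this to the graded setting. The functor $\tilde T_u^{-1}$ on $D^b(\U^\chi_{(0),\F}\operatorname{-mod}^{\underline Q,gr})$ is the graded lift of $T_u^{-1}$ coming from the $\C^\times$-equivariant structure via \cite[Theorem 1.3.2]{BR}, transported through the equivalence (\ref{eq:graded_equiv_p}); it intertwines the grading shift functors (no $\langle 1\rangle$-twist), as recorded in Section \ref{SS_canonical}. Apply $\tilde T_u^{-1}$ to the graded surjection $\tilde{\underline{\Delta}}_\mathcal{L}\twoheadrightarrow\tilde{\mathcal{L}}$ from Lemma \ref{Lem:stand_graded_lift}. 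Since $\tilde T_u^{-1}$ is an equivalence that restricts, up to grading shift, to a $t$-exact functor on the relevant hearts (again \cite[Theorem 1.3.2]{BR} and the cell considerations: $T_u$ for $u$ of this type is $t$-exact on the blocks in question), we get a graded surjection $\tilde T_u^{-1}\tilde{\underline{\Delta}}_\mathcal{L}\twoheadrightarrow \tilde T_u^{-1}\tilde{\mathcal{L}}$. On the ungraded level this underlies $\underline{\Delta}_{\mathcal{L}^u}\twoheadrightarrow\mathcal{L}^u$, so $\tilde T_u^{-1}\tilde{\mathcal{L}}$ is a graded lift of $\mathcal{L}^u$, and $\tilde T_u^{-1}\tilde{\underline{\Delta}}_\mathcal{L}$ is a graded lift of $\underline{\Delta}_{\mathcal{L}^u}$ admitting a graded surjection onto it.

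It then remains to pin down the internal degree: by the uniqueness clause in Lemma \ref{Lem:stand_graded_lift} (applied in the block of $\mathcal{L}^u$), once I know $\tilde T_u^{-1}\tilde{\underline{\Delta}}_\mathcal{L}$ surjects onto \emph{some} graded lift of $\mathcal{L}^u$ with kernel having constituents only in strictly negative shifts relative to that lift, the lift on the quotient must be the normalized $\tilde{\mathcal{L}}^u$-one and the source must be $\tilde{\underline{\Delta}}_{\mathcal{L}^u}$. The kernel condition is inherited from that for $\tilde{\underline{\Delta}}_\mathcal{L}$ since $\tilde T_u^{-1}$ preserves the grading shifts; the normalization of the quotient lift follows because $\tilde T_u^{-1}$ commutes with $\tilde{\D}$ up to the expected shift — indeed by Proposition \ref{Prop:duality_braid_action} $\D\circ T_u\cong T_{u^{-1}}^{-1}\circ\D$, and for $u\in W^{\underline G,-}$ one checks $T_{u^{-1}}^{-1}$ agrees with $\tilde T_u^{-1}$ on the graded $K_0$ up to no shift, so $\tilde T_u^{-1}$ carries the $\tilde{\D}$-fixed simple $\tilde{\mathcal{L}}$ to the $\tilde{\D}$-fixed simple in the target block, which is precisely $\tilde{\mathcal{L}}^u$ by definition. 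This gives (\ref{eq:wc_baby_Verma_graded}).

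The main obstacle I anticipate is the bookkeeping of grading shifts: verifying that $\tilde T_u$ for $u\in W^{\underline G,-}$ really introduces no internal-degree twist (as opposed to the simple-reflection wall-crossings, which do), and that it is genuinely $t$-exact rather than merely right $t$-exact on the blocks $\underline{\U}^\chi_{w\cdot(2\underline\rho-2\rho),\F}\operatorname{-mod}^{\underline Q,gr}$. I would handle this by reducing to the characteristic-zero graded picture on $D^b(\Coh^{\underline Q^{(1)}\times\F^\times}(\underline{\B}_\chi))$ via (\ref{eq:graded_equiv_p}) and citing \cite[Theorem 1.3.2]{BR} for the $t$-exactness and grading-compatibility of the line-bundle-twist functors $T_\theta$ and the Dynkin generators $T_s$ entering $T_u$, using that $u$ being shortest in its coset means $\ell(u)$ equals the length of the corresponding reduced word with no cancellation against $W_{\underline G}$.
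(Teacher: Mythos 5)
Your overall skeleton (prove the ungraded isomorphism first, then pin down the grading shift using indecomposability and the duality) matches the paper's, but both key steps are asserted rather than proved, and the justifications offered do not work. For the ungraded statement you claim $T_u^{-1}\underline{\Delta}_{\mathcal{L}}\cong \underline{\Delta}_{\mathcal{L}^u}$ "exactly as in Lemma \ref{Lem:W_aff_action}," but that lemma concerns the modules $\Delta^P_\F(\mu)$ and $W^\chi_\F(\mu)$ induced from finite-dimensional $L$-modules with prescribed highest weights, where the effect of a wall-crossing is standard. Here $\underline{\Delta}_{\mathcal{L}}=\underline{\Delta}_\nu(\underline{\mathcal{L}})$ is induced from an \emph{arbitrary} simple $\underline{\U}^\chi$-module, which is not a highest-weight object, and the whole content of the ungraded claim is the compatibility of $T_s^{-1}$ (for $s\notin W_{\underline{G}}$) with the induction functor $\underline{\Delta}_\nu$. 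Your parenthetical justification --- that $T_u$ is "a composition of $t$-exact translation equivalences away from walls" --- is not correct: each $T_s$ is the cone of an adjunction morphism, not a translation equivalence, and is not $t$-exact. The paper supplies the missing argument by working over $\C$: it proves $T_s^{-1}\Delta^P(\underline{\U}_{\lambda_1})\cong \Delta^P(\underline{\U}_{\lambda_1\rightarrow\lambda_2})$ using the structure of $\mathsf{T}^*\mathsf{T}$ on translation Harish--Chandra bimodules (via the Bernstein--Gelfand equivalence), then reduces mod $p$ and tensors with $\underline{\mathcal{L}}$ over $\underline{\U}_{\lambda_1,\F}$; some argument of this kind is absent from your proposal.

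The grading normalization has a second, independent gap. You rely on $\tilde{T}_u^{-1}$ being $t$-exact on the relevant hearts and on it carrying the $\tilde{\D}$-self-dual simple $\tilde{\mathcal{L}}$ to the self-dual simple $\tilde{\mathcal{L}}^u$. Neither is available: wall-crossing functors do not preserve the heart, and Proposition \ref{Prop:duality_braid_action} only gives $\D\circ T_u^{-1}\cong T_{u^{-1}}\circ\D$, so self-duality of $\tilde{T}_u^{-1}\tilde{\mathcal{L}}$ would require comparing $T_{u^{-1}}$ with $T_u^{-1}$, i.e.\ controlling $\overline{H}_u-H_u$ --- which is exactly the point being begged when you write "one checks $T_{u^{-1}}^{-1}$ agrees with $\tilde{T}_u^{-1}$ on the graded $K_0$ up to no shift." The paper instead observes that both sides of (\ref{eq:wc_baby_Verma_graded}) are graded lifts of the indecomposable module $\underline{\Delta}_{\mathcal{L}^u}$, hence differ by a shift, and kills the shift on $K_0$: by Proposition \ref{Prop:dualities_compat}, $[\tilde{\D}]H_u[\tilde{\underline{\Delta}}_{\mathcal{L}}]=\overline{H}_u[\tilde{\D}][\tilde{\underline{\Delta}}_{\mathcal{L}}]$, and since $\overline{H}_u-H_u$ is a combination of $H_{u'}$ with $u'\prec u$, whose contributions have strictly smaller $\nu$-highest weight, the top $\nu$-weight constituent of $H_u[\tilde{\underline{\Delta}}_{\mathcal{L}}]$ is forced to be the unshifted $[\tilde{\mathcal{L}}^u]$. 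To close your argument you would need either this kind of $\nu$-weight bookkeeping or honest proofs of your $t$-exactness and self-duality claims; as written, the step "hence the quotient lift is the normalized one" does not follow.
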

\begin{proof}
First, we prove the ungraded analog of (\ref{eq:wc_baby_Verma_graded}).
We prove that by induction with respect to the Bruhat order
on $W^{\underline{G},-}$. For $u=1$, there is nothing to prove. Now we assume that
we know that claim for all $u'$ such that $u'\prec u$.
Note that there is a simple reflection $s$ such that $su \prec u, su\in W^{\underline{G},-}$.
So we need to check that $\tilde{T}_s^{-1} \underline{\Delta}_{\mathcal{L}^{su}}\cong
\underline{\Delta}_{\mathcal{L}^u}$.

{\it Step 1}.
Set $\lambda_1:=(su)^{-1}\cdot (2\underline{\rho}-2\rho)$
and $\lambda_2:=u^{-1}\cdot (2\underline{\rho}-2\rho)$. Let us write
$\underline{\U}_{\lambda_1\rightarrow \lambda_2}$ for the translation
$\underline{\U}_{\lambda_2}$-$\underline{\U}_{\lambda_1}$-bimodule (over $\C$),
the image of the regular module $\U_{\lambda_1}$ under the
translation equivalence $\U_{\lambda_1}\operatorname{-mod}
\xrightarrow{\sim}\U_{\lambda_2}\operatorname{-mod}$. We claim that
\begin{equation}\label{eq:parab_refl}T_s^{-1} \Delta^P(\underline{\U}_{\lambda_1})\cong \Delta^P(\underline{\U}_{\lambda_1\rightarrow \lambda_2}).\end{equation}
Recall that $T_s^{-1} \Delta^P(\underline{\U}_{\lambda_1})$ is given by the complex
$$\mathsf{T}^*\mathsf{T} \Delta^P(\underline{\U}_{\lambda_1})\rightarrow \Delta^P(\underline{\U}_{\lambda^1}),$$ where $\mathsf{T}$ is a translation to the wall given by $s$ and the source module is in homological
degree $0$. Since $su \prec u$, we see that $\mathsf{T}^*\mathsf{T} \Delta^P(\underline{\U}_{\lambda_1})=\Delta^P(M)$,
where $M$ is a $\underline{\U}$-bimodule that fits to the exact sequence
$$0\rightarrow \underline{\U}_{\lambda_1\rightarrow \lambda_2}\rightarrow M\rightarrow \underline{\U}_{\lambda_1}
\rightarrow 0.$$
Note that a similar claim is classical for the category $\mathcal{O}$ and our statement
follows from that thanks to the Bernstein-Gelfand equivalence between the category
$\mathcal{O}$ and the category of Harish-Chandra bimodules, both for $\underline{\g}$.

The homomorphism $\Delta^P(M)\rightarrow \Delta^P(\U_{\lambda_1})$ induced by the second arrow in the exact sequence above is surjective, so we
see that (\ref{eq:parab_refl}) indeed holds.

{\it Step 2}. Note that (\ref{eq:parab_refl}) is defined over $\mathbb{Q}$ and hence
can be reduced mod $p$ for $p\gg 0$. Note that
\begin{align*} &T^{-1}_{s}\Delta^P(\underline{\mathcal{L}}^{su})\cong
T^{-1}_s \left(\Delta^P(\underline{\U}_{\lambda_1,\F})\otimes^L_{\underline{\U}_{\lambda_1,\F}}
\underline{\mathcal{L}}^{su}\right)\cong
\left(T^{-1}_s \Delta^P(\underline{\U}_{\lambda_1,\F})\right)\otimes^L_{\underline{\U}_{\lambda_1,\F}}
\underline{\mathcal{L}}^{su}\\
&\cong \Delta^P(\underline{\U}_{\lambda_1\rightarrow \lambda_2,\F})\otimes^L_{\underline{\U}_{\lambda_1,\F}}\underline{\mathcal{L}}^{su}
\cong \Delta^P(\underline{\U}_{\lambda_1\rightarrow \lambda_2,\F}
\otimes^L_{\underline{\U}_{\lambda_1,\F}}\mathcal{L}^{su})\cong \Delta^P (\underline{\mathcal{L}}^{u}).
\end{align*}
Specializing to $0\in (\g_\F^{<0})^{(1)*}$, we get
$T^{-1}_{s}\underline{\Delta}_\nu(\underline{\mathcal{L}}^{su})\cong \underline{\Delta}_\nu (\underline{\mathcal{L}}^{u})$.
This is precisely the ungraded version of (\ref{eq:wc_baby_Verma_graded}).

{\it Step 3}. By Step 2, both  $\tilde{T}_u^{-1} \tilde{\underline{\Delta}}_{\mathcal{L}},
\tilde{\underline{\Delta}}_{\mathcal{L}^u}$ are graded lifts of
$\underline{\Delta}_{\mathcal{L}^u}$. The module $\underline{\Delta}_{\mathcal{L}^u}$ is
indecomposable so its graded lifts differ by a shift of grading. Note that
\begin{itemize}
\item[(i)] the difference of the $\nu$-highest weights of $\mathcal{L}^u$ and $\mathcal{L}$
is small comparing to $p$,
\item[(ii)] and if $u_1\prec u_2$, then
the difference of the $\nu$-highest weights of $\mathcal{L}^{u_2}$
and $\mathcal{L}^{u_1}$ is positive.
\end{itemize}
The element $H_u-\overline{H}_u\in \Hecke_W$ is a linear combination
of $H_{u'}$ for $u'\prec u$.
According to Proposition \ref{Prop:dualities_compat},
$[\tilde{\D}]H_u [\tilde{\underline{\Delta}}_{\mathcal{L}}]=
\overline{H}_u[\tilde{\D}][\tilde{\underline{\Delta}}_{\mathcal{L}}]$.
We see that
$[\tilde{\D}]H_u [\tilde{\underline{\Delta}}_{\mathcal{L}}]-H_u [\tilde{\D}][\tilde{\underline{\Delta}}_{\mathcal{L}}]$
is a linear combination of $H_{u'}[\tilde{\D}][\tilde{\underline{\Delta}}_{\mathcal{L}}]$
with $u'\prec u$. Combining this observation with
(i) and (ii), we see that $H_u[\tilde{\underline{\Delta}}_{\mathcal{L}}]$
is the sum of $[\tilde{\mathcal{L}}^u]$ and the classes with smaller $\nu$-highest
weights. We conclude that the shift of grading from $\tilde{T}_u^{-1} \tilde{\underline{\Delta}}_{\mathcal{L}}$ to $\tilde{\underline{\Delta}}_{\mathcal{L}^u}$
is trivial and hence $\tilde{T}_u^{-1} \tilde{\underline{\Delta}}_{\mathcal{L}}\cong\tilde{\underline{\Delta}}_{\mathcal{L}^u}$.
\end{proof}

\begin{Cor}\label{Cor:Hecke_K_0_action}
Under the linear map
$\,^G\mathfrak{C}_{\underline{P}}\rightarrow K_0(\U^\chi_{(0),\F}\operatorname{-mod}^{{\underline{Q}},gr})$
from Lemma \ref{Lem:K_0_Hecke_module}, the element $M_{u,x}$ is mapped to
$[\tilde{\underline{\Delta}}_{\mathcal{L}^u}]$, where $x$ labels $\mathcal{L}$. In particular,
this map induces an isomorphism
$\,^G\hat{\mathfrak{C}}_{\underline{P}}\rightarrow \hat{K}_0(\U^\chi_{(0),\F}\operatorname{-mod}^{{\underline{Q}},gr})$.
\end{Cor}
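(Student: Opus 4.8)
The plan is to deduce Corollary \ref{Cor:Hecke_K_0_action} by combining Lemma \ref{Lem:K_0_Hecke_module} with Proposition \ref{Prop:wc_baby_Verma_graded} and then checking that the resulting map is a topological isomorphism. First I would recall the setup: by Lemma \ref{Lem:K_0_Hecke_module} there is an $\Hecke^a_G$-linear map $\upsilon\colon \,^G\mathfrak{C}_{\underline P}\to K_0(\U^\chi_{(0),\F}\operatorname{-mod}^{{\underline{Q}},gr})$ with $\upsilon(M_{1,x})=[\tilde{\underline{\Delta}}_{\mathcal L}]$ whenever $x\in \mathfrak{c}_{\underline P}$ labels the simple $\underline{\mathcal L}\in \operatorname{Irr}(\underline{\U}^\chi_{(2\underline\rho-2\rho),\F}\operatorname{-mod}^{{\underline Q},0})$ that in turn labels $\mathcal L$. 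Since $M_{u,x}=H_u C^{\mathfrak C}_x = H_u M_{1,x}$ and the class $[\tilde T_u^{-1}]$ acts on $K_0$ by $H_u$, the $\Hecke^a_G$-linearity of $\upsilon$ gives $\upsilon(M_{u,x})=H_u\,\upsilon(M_{1,x})=H_u [\tilde{\underline{\Delta}}_{\mathcal L}]=[\tilde T_u^{-1}\tilde{\underline{\Delta}}_{\mathcal L}]$. By Proposition \ref{Prop:wc_baby_Verma_graded} (applied with $u\in W^{\underline G,-}$, noting $u$ shortest in $uW_{\underline G}$ which is exactly the hypothesis of that proposition), $\tilde T_u^{-1}\tilde{\underline{\Delta}}_{\mathcal L}\cong \tilde{\underline{\Delta}}_{\mathcal L^u}$, so $\upsilon(M_{u,x})=[\tilde{\underline{\Delta}}_{\mathcal L^u}]$. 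This gives the first assertion of the corollary on the nose.

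For the second assertion, I would argue that $\upsilon$ sends the basis $\{M_{u,x}\}$ (with $u\in W^{\underline G,-}$, $x\in \mathfrak c_{\underline P}$) bijectively onto the family $\{[\tilde{\underline{\Delta}}_{\mathcal L^u}]\}$. Here I would invoke the combinatorial bookkeeping already set up in Section \ref{SS_canonical}: the pairs $(u,\underline x)$ with $u\in W^{\underline G,-}$ shortest in $uW_{\underline G}$ and $\underline x\in \mathfrak c_{\underline P}$ index exactly $\operatorname{Irr}(\U^\chi_{\lambda,\F}\operatorname{-mod}^{\underline Q,0})$ (via the $\nu$-highest weight construction), and to $(u,\underline x)$ corresponds the simple $\mathcal L$ with $\nu$-highest weight component $\underline{\mathcal L}^u$, i.e. $\mathcal L^u$ in the notation above is well-defined and the assignment $(u,x)\mapsto \mathcal L^u$ is a bijection onto $\operatorname{Irr}$. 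By (3) of Lemma \ref{Lem:K_0_top_bases}, the classes $[\tilde{\underline{\Delta}}_{\mathcal L'}]$, $\mathcal L'\in\operatorname{Irr}(\U^\chi_{(0),\F}\operatorname{-mod}^{\underline Q,0})$, form a topological basis of $\hat K_0(\U^\chi_{(0),\F}\operatorname{-mod}^{{\underline Q},gr})$; hence $\upsilon$ carries the topological basis $\{M_{u,x}\}$ of $\,^G\hat{\mathfrak C}_{\underline P}$ to a topological basis of $\hat K_0$. Therefore the induced map $\,^G\hat{\mathfrak C}_{\underline P}\to \hat K_0(\U^\chi_{(0),\F}\operatorname{-mod}^{{\underline Q},gr})$ is a topological isomorphism. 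I should also remark that the map respects the $\Z[v^{\pm1}]$-module structures (grading shifts on both sides), which is automatic since $\upsilon$ is $\Hecke^a_G$-linear, in particular $X_0$-linear, hence $v$-linear; and that the completion on the source is matched to the completion on the target because the $\nu$-highest weight of $\mathcal L^u$ grows with the pair $(u,x)$ in the way specified in Section \ref{SS_canonical}, so ``finitely many terms above each $N$'' on the $\mathfrak C$-side corresponds to the same condition on the $K_0$-side.

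The main obstacle, I expect, is the bijectivity step rather than the algebra: one must be careful that the labelling of $\{M_{u,x}\}$ and the labelling of $\operatorname{Irr}(\U^\chi_{(0),\F}\operatorname{-mod}^{\underline Q,0})$ by pairs $(u,\underline x)$ genuinely match up under $\upsilon$, and that this is compatible with the completions. Concretely, I would check that the $\nu$-degree filtration used to define $\hat K_0$ is compatible with the $\langle\nu,\cdot\rangle$-filtration on $\,^G\hat{\mathfrak C}_{\underline P}$: the element $M_{u,x}$ has $\nu$-degree controlled by $\langle\nu,u\rangle$ plus a bounded correction coming from $x$, matching the $\nu$-highest weight of $\tilde{\underline{\Delta}}_{\mathcal L^u}$ up to a bounded shift, which is exactly the content invoked in the last paragraph of Section \ref{SS_canonical} and in the proof of Proposition \ref{Prop:wc_baby_Verma_graded}, Step 3 (items (i) and (ii) there). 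Granting that, the upper-triangularity of both bases (simples versus baby Verma-type objects) forces $\upsilon$ to be an isomorphism on each finite truncation and hence on the completion. So the proof is essentially an assembly: Lemma \ref{Lem:K_0_Hecke_module} gives the map, $\Hecke^a_G$-linearity plus Proposition \ref{Prop:wc_baby_Verma_graded} compute it on the basis, and Lemma \ref{Lem:K_0_top_bases}(3) upgrades it to a topological isomorphism.
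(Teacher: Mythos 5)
Your proposal is correct and matches the paper's (implicit) argument: the corollary is exactly the assembly of Lemma \ref{Lem:K_0_Hecke_module} ($\Hecke^a_G$-linearity and $\upsilon(M_{1,x})=[\tilde{\underline{\Delta}}_{\mathcal{L}}]$), the fact that $[\tilde{T}_u^{-1}]$ acts by $H_u$, Proposition \ref{Prop:wc_baby_Verma_graded}, and the topological-basis statement from Lemma \ref{Lem:K_0_top_bases}(3). Nothing essential is missing.
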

%The proof of this proposition will be given after a  lemma an ungraded analog of Proposition %\ref{Prop:wc_baby_Verma_graded}.
%
%\begin{Lem}\label{Lem:wc_baby_Verma}
%In the notation of Proposition \ref{}
%\end{Lem}

\subsection{Computation of $[\tilde{\D}]$}\label{SS_D_computation}
In this section  we explain how to compute the involution $[\tilde{\D}]$
of $K_0(\U^\chi_{(0),\F}\operatorname{-mod}^{{\underline{Q}},gr})$.
Note that $[\tilde{\D}]$ is continuous so it extends a semi-linear automorphism
of  $\hat{K}_0(\U^\chi_{(0),\F}\operatorname{-mod}^{{\underline{Q}},gr})$.  Corollary
\ref{Cor:Hecke_K_0_action} provides an identification
$$\hat{K}_0(\U^\chi_{(0),\F}\operatorname{-mod}^{{\underline{Q}},gr})\cong \,^G\hat{\mathfrak{C}}_{\underline{P}}.$$
By the definition, as a topological $\Hecke^a_G$-module, the right hand side
is generated by $\mathfrak{C}_{\underline{P}}$. Thanks to Proposition
\ref{Prop:dualities_compat}, it is enough to compute $[\tilde{\D}]$ on
$\mathfrak{C}_{\underline{P}}$.

Let us state the answer. Let $\underline{w}_{0}$ denote the longest element in $W_{\underline{G}}$.
Set $u_{\underline{G}}:=\underline{w}_{0}^{-1}w_0$. For $\theta\in \mathfrak{X}(P)$, the notation
$\theta\rightarrow +\infty$ means that $\langle\theta,\alpha_i^\vee\rangle\rightarrow +\infty$
for all simple roots $\alpha_i$ of $G$ that are not roots of $\underline{G}$. For
$\theta\in \mathfrak{X}(T)$, set $\theta^*:=-w_0(\theta)$.

\begin{Prop}\label{Prop:D_comput}
For $m\in \mathfrak{C}_{\underline{P}}$, the limit
$$\lim_{\theta\rightarrow +\infty}H_{u_{\underline{G}}} X_{-\theta^*}H_{u_{\underline{G}}}^{-1}X_{-\theta}[\underline{\tilde{\D}}]m$$
exists in $\hat{K}_0^{{\underline{Q}}\times \C^\times}(\B_e)$
and is equal to $[\tilde{\D}]m$.
\end{Prop}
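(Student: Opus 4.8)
\textbf{Proof plan for Proposition \ref{Prop:D_comput}.}
The strategy is to reduce the computation of $[\tilde\D]$ on $\mathfrak C_{\underline P}$ to the known behaviour of $\D$ on baby Verma modules, together with the stabilization of the relevant K-classes under twisting by $X_{-\theta}$ as $\theta\to+\infty$. First I would unwind the definitions: on the Levi side we already have (via Proposition \ref{Prop:dual_localization}, applied to $\underline{G}$, and the graded lift machinery of Section \ref{SS_canonical}) that $\underline{\tilde\D}$ fixes the classes $[\tilde W^\chi_\F(2\rho_L-2\rho)]$ up to grading shift, and more generally is computed on $\mathfrak C_{\underline P}$ through the Koszul-graded localization picture. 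The point of the displayed limit formula is that $[\tilde\D]$ on $\,^G\hat{\mathfrak C}_{\underline P}$, evaluated on the generating submodule $\mathfrak C_{\underline P}$, differs from $[\underline{\tilde\D}]$ precisely by the ``induction'' discrepancy, and this discrepancy is captured by conjugating with the translation $X_{-\theta}$ in the $G$-direction transverse to $\underline{G}$, in the limit.

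Concretely, the key steps are: (1) Recall from Proposition \ref{Prop:baby_Verma_local} and Corollary \ref{Cor:Hecke_K_0_action} that the embedding $\xi_*$ realizes $\,^G\mathfrak C_{\underline P}\hookrightarrow \hat K_0(\U^\chi_{(0),\F}\operatorname{-mod}^{\underline Q,gr})$ as induction from the Levi, so an element $m\in\mathfrak C_{\underline P}$ corresponds to $[\tilde{\underline\Delta}_\mathcal{L}]=[\underline\Delta_\nu(\underline{\mathcal L})]$-type classes. (2) Compute $[\tilde\D]$ on such an induced class using the compatibility $\D\circ\underline\Delta_\nu\cong\underline\nabla_\nu\circ\underline\D$ of Proposition \ref{Prop:D_baby}: applying $\tilde\D$ turns a baby Verma into a dual baby Verma. (3) Express the passage from $\underline\nabla_\nu$ back to $\underline\Delta_\nu$ (i.e.\ from costandard to standard in the $\nu$-direction) as the effect of $\lim_{\theta\to+\infty}X_{-\theta}(\bullet)$, using that in $K_0$ twisting by $\mathcal O(-\theta)$ with $\theta$ deep in the chamber transverse to $\underline G$ ``flips'' the order on the relevant $\nu$-graded pieces; this is where the $H_{u_{\underline G}}$-conjugation enters, since $u_{\underline G}=\underline w_0^{-1}w_0$ is exactly the element relating the $\rho$-shift conventions for $G$ and $\underline G$ (compare the $\theta^*:=-w_0(\theta)$ appearing, which is the manifestation of (\ref{eq:X_bar})). (4) Check convergence of the limit in $\hat K_0^{\underline Q\times\C^\times}(\B_e)$: for fixed $m$, only finitely many simples below a given $\nu$-weight contribute, and the family $H_{u_{\underline G}}X_{-\theta^*}H_{u_{\underline G}}^{-1}X_{-\theta}$ acts on each such truncation eventually independently of $\theta$, by the same stabilization argument as in Stroppel's thesis \cite{Stroppel} recalled in the introduction.

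The main obstacle I expect is step (3): matching the purely representation-theoretic operation ``replace $\underline\nabla_\nu$ by $\underline\Delta_\nu$'' with the precise combinatorial operator $\lim_\theta H_{u_{\underline G}}X_{-\theta^*}H_{u_{\underline G}}^{-1}X_{-\theta}$ on the affine Hecke module. This requires (a) knowing how $X_\theta$ acts on $[\underline\Delta_\nu(\underline{\mathcal L})]=[U^0(\g^{<0}_\F)\otimes\underline{\mathcal L}]$—essentially a character computation of the $\nu$-graded pieces of $U^0(\g^{<0}_\F)$, which shifts $\nu$-weights by multiples of the roots transverse to $\underline G$—and (b) combining this with the bar-involution formula $\bar X_\theta=H_{w_0}X_{w_0(\theta)}H_{w_0}^{-1}$ of (\ref{eq:X_bar}) and Lemma \ref{Lem:bar_coherent} to see that the $H_{u_{\underline G}}$-conjugate of $X_{-\theta^*}$ is exactly what converts the Levi-duality $[\underline{\tilde\D}]$ into the ambient duality $[\tilde\D]$ on the induced module. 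Once these two ingredients are in place, the identity follows by evaluating both sides on a cofinal system of truncations of $\mathfrak C_{\underline P}$ and invoking the density of $\mathfrak C_{\underline P}$ as a topological generating set together with Proposition \ref{Prop:dualities_compat}.
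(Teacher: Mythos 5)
There is a genuine gap, and it sits exactly where you flag ``the main obstacle'': your step (3) is not an auxiliary matching of conventions, it \emph{is} the content of the proposition, and the ingredients you propose for it do not suffice. Knowing the $\nu$-characters of $U^0(\g^{<0}_\F)$ and the formula $\bar X_\theta=H_{w_0}X_{w_0(\theta)}H_{w_0}^{-1}$ does not let you control the operator $H_{u_{\underline{G}}}X_{-\theta^*}H_{u_{\underline{G}}}^{-1}X_{-\theta}$ on induced classes, because $H_{u_{\underline{G}}}^{-1}$ spreads a baby Verma class over many terms and one must know this expansion exactly, not just its leading behavior. The paper's proof supplies two things you are missing: first, an equivariant-localization decomposition $\bigoplus_u[\xi^u_*]\colon \hat{K}_0^{\underline{Q}\times\C^\times}(\underline{\B}_e)^{\oplus|W|/|W_{\underline{G}}|}\xrightarrow{\sim}\hat{K}_0^{\underline{Q}\times\C^\times}(\B_e)$ indexed by the embeddings $\xi^u$, together with a weight argument showing that $\lim_{\theta}X_{-\theta^*}(\bullet)\,t_{-\theta}$ is precisely the projection $\pr_{u_{\underline{G}}}$ onto the opposite-embedding summand $\xi^-=\xi^{u_{\underline{G}}}$; second, the explicit identity $v^{-\ell(u_{\underline{G}})}\pr_{u_{\underline{G}}}\bigl(H^{-1}_{u_{\underline{G}}}[\xi_*]m\bigr)=[\xi^-_*]m$, which the paper proves by passing to the partial Steinberg variety, acting on the $T$-fixed-point basis of $K_0^{T\times\C^\times}(T^*\B)$, and reducing to a rank-one convolution computation on $T^*\mathbb{P}^1$. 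Nothing in your sketch produces these, and "the same stabilization argument as in Stroppel's thesis" addresses stabilization of KL polynomials, not this projection statement.

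There is also a problem with how you set up the duality side. You invoke Proposition \ref{Prop:D_baby}, but that is an \emph{ungraded} statement about $\D$, whereas the proposition concerns the graded lift $[\tilde{\D}]$ on $\hat{K}_0^{\underline{Q}\times\C^\times}(\B_e)$; at the ungraded level $[\underline{\Delta}_\nu(M)]=[\underline{\nabla}_\nu(M)]$, so the distinction your step (3) is supposed to capture disappears, and all the $v$-powers and grading shifts (the $\langle\ell(u_{\underline{G}})\rangle$ and the factor $v^{-\ell(u_{\underline{G}})}$ that cancel in the final statement) are invisible. The paper instead proves the graded, geometric compatibility $\tilde{\D}\circ\xi_*\cong\tilde{T}^{-1}_{u_{\underline{G}}}\circ\xi^-_*\circ\underline{\tilde{\D}}\langle\ell(u_{\underline{G}})\rangle$ directly from Serre duality, the relation $\varsigma\circ\xi\cong\xi^-\circ\underline{\varsigma}$, and the commutation $\tilde{T}^{-1}_{\underline{w}_0}\circ\xi^-_*\cong\xi^-_*\circ\tilde{T}^{-1}_{\underline{w}_0}$; in particular the conjugation by $H_{u_{\underline{G}}}$ comes from factoring $T_{w_0}^{-1}$ in the definition of $\tilde{\D}$ as $T^{-1}_{u_{\underline{G}}}T^{-1}_{\underline{w}_0}$, not from the bar-involution formula or $\rho$-shift conventions as you suggest. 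To repair your plan you would need a graded analogue of Proposition \ref{Prop:baby_Verma_local} for the opposite embedding $\xi^-$ (identifying dual/opposite baby Vermas with $\xi^-_*$ up to the braid twist), plus the localization-and-projection argument above; at that point you have essentially reconstructed the paper's proof.
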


The proof will be given after a series of lemmas. Recall the inclusion
$\xi: \underline{\B}_e\hookrightarrow \B_e$ that is induced by the inclusion $\underline{\mathcal{B}}
\hookrightarrow \mathcal{B}$ via $\underline{\mathfrak{b}}\mapsto \underline{\mathfrak{b}}\oplus
\g^{>0}$. We also have another inclusion,
$\xi^-: \underline{\B}_e\hookrightarrow \B_e$ induced from
$\underline{\mathfrak{b}}\mapsto \underline{\mathfrak{b}}\oplus \g^{<0}$.

Recall also, Section \ref{SS_canonical},  that
$\tilde{\D}: D^b(\Coh^{{\underline{Q}}\times \C^\times}(\B_e))
\xrightarrow{\sim} D^b(\Coh^{{\underline{Q}}\times \C^\times}(\B_e))$
is given by $\tilde{T}_{w_0}^{-1}\tilde{\D}_{coh}\langle \ell(w_0)\rangle$.

\begin{Lem}\label{Lem:D_comput_via_xi_minus}
We have $\tilde{\D}\circ \xi_*\cong \tilde{T}_{u_{\underline{G}}}^{-1}\circ \xi^-_*\circ \underline{\tilde{\D}}\langle \ell(u_{\underline{G}})\rangle$.
\end{Lem}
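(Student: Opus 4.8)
The plan is to reduce Lemma \ref{Lem:D_comput_via_xi_minus} to a statement purely about coherent sheaves on $\tilde{\g}$, via the derived localization equivalence $R\Gamma(\mathcal{V}^\chi_0(\rho)\otimes\bullet)$ together with Propositions \ref{Prop:dual_localization} and \ref{Prop:baby_Verma_local}. First I would recall that $\xi_*:D^b(\Coh^{{\underline{Q}}\times\C^\times}(\underline{\B}_e))\to D^b(\Coh^{{\underline{Q}}\times\C^\times}(\B_e))$ corresponds, under the graded localization equivalences for $\underline{\g}$ and $\g$, to the baby Verma functor $\underline{\Delta}_\nu$ (graded lift of Proposition \ref{Prop:baby_Verma_local}, with the twist by $\rho$ incorporated as in the remark following that proposition). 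Dually, the inclusion $\xi^-$ coming from $\underline{\mathfrak{b}}\mapsto\underline{\mathfrak{b}}\oplus\g^{<0}$ should correspond to the dual baby Verma functor $\underline{\nabla}_\nu$: this is the parabolic-opposite version of Proposition \ref{Prop:baby_Verma_local}, which follows by the same argument run with $P'$ replaced by the opposite parabolic $G^{\leqslant 0}$ (the proof of Lemma \ref{Lem:splitting_equiv} and Lemma \ref{Lem:pushforward_global_section} go through verbatim, only the roles of $\mathfrak{m}$ and $\mathfrak{m}^-$ swap, and one picks up $\Lambda^{top}(\mathfrak{g}^{>0})^{1-p}$ twists exactly matching those in the definition of $\underline{\nabla}_\nu$).

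Next I would invoke Proposition \ref{Prop:D_baby}, which says precisely $\D\circ\underline{\Delta}_\nu\cong\underline{\nabla}_\nu\circ\underline{\D}$. Transporting this isomorphism of functors through the graded localization equivalences and the graded lift $\tilde{\D}=\tilde{T}_{w_0}^{-1}\tilde{\D}_{coh}\langle\ell(w_0)\rangle$ of $\D$ (see Proposition \ref{Prop:dual_localization} and the construction of $\tilde{\D}$ in Section \ref{SS_canonical}) gives $\tilde{\D}\circ\xi_*\cong(\text{geometric realization of }\underline{\nabla}_\nu)\circ\underline{\tilde{\D}}$ up to a grading shift that I will track. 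So the content reduces to identifying the coherent functor realizing $\underline{\nabla}_\nu$ with $\tilde{T}_{u_{\underline{G}}}^{-1}\circ\xi^-_*$ (again up to a bookkeeping grading shift $\langle\ell(u_{\underline{G}})\rangle$). The appearance of $u_{\underline{G}}=\underline{w}_0^{-1}w_0$ is explained as follows: the naive pushforward $\xi^-_*$ uses the parabolic $G^{\leqslant 0}$, whose associated standard-parabolic data sits in $W$ "on the other side" of $W_{\underline{G}}$; the localization equivalence for $\underline{\g}$ at the central character $2\underline{\rho}-2\rho$ is related to the one at $-\underline{w}_0\cdot(2\underline{\rho}-2\rho)=0$ (after the shift built into $\mathcal{V}^\chi_\mu$), and reconciling the two $\rho$-shifts for $G$ versus $\underline{G}$ forces a wall-crossing by exactly the element $u_{\underline{G}}$ relating $w_0$ and $\underline{w}_0$. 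Concretely, I would compare the splitting bundles $\mathcal{V}^\chi_0(\rho)$ and $\underline{\mathcal{V}}^{\underline{\chi}}_{2\underline{\rho}-2\rho}(\rho)$ restricted along $\xi^-$ using the opposite version of Lemma \ref{Lem:splitting_equiv}, compute the resulting twist at the level of $K_0$-classes of top exterior powers exactly as in the proof of that lemma (the relevant canonical-bundle classes are $-2\rho$ on $\B$ and $-2\rho_{\underline{G}}$ on $\underline{\B}$, and their discrepancy on the $\xi^-$-image is a character giving the line bundle whose associated wall-crossing is $T_{u_{\underline{G}}}$), and read off that the correction functor is precisely $\tilde{T}_{u_{\underline{G}}}^{-1}$ with the grading shift $\langle\ell(u_{\underline{G}})\rangle$ coming from the homological shift in $T_{u_{\underline{G}}}$.

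I expect the main obstacle to be the grading/shift bookkeeping: making sure that the graded lifts $\tilde{T}_{u_{\underline{G}}}^{-1}$, $\tilde{\D}_{coh}$, and the shift $\langle\ell(u_{\underline{G}})\rangle$ assemble correctly, and that Proposition \ref{Prop:dual_localization} (and its Remark \ref{Rem:dual_localization} for the non-simply-connected Levi $\underline{G}$, which is the relevant case here since $\underline{\rho}$ need not lie in $\mathfrak{X}(T)$) is applied with the right twist. A secondary technical point is verifying the opposite-parabolic analogue of Proposition \ref{Prop:baby_Verma_local}: I would state this as an auxiliary lemma and note its proof is identical to the one given, with $P'=G^{\leqslant 0}$, using that $\varsigma$ swaps $\g^{>0}$ and $\g^{<0}$ and is the identity on $\mathfrak{X}(\underline{G})$ so that the $\Lambda^{top}$-twists are $\varsigma$-invariant. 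Once these two ingredients are in place, the lemma follows by composing the three displayed functor isomorphisms and matching grading shifts.
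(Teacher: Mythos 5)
Your overall idea---that the lemma should be the graded, geometric avatar of Proposition \ref{Prop:D_baby}---is reasonable, but the argument as written has genuine gaps exactly at the points that produce $\tilde{T}_{u_{\underline{G}}}^{-1}$ and $\langle\ell(u_{\underline{G}})\rangle$. First, the opposite-parabolic analogue of Proposition \ref{Prop:baby_Verma_local} does not go through ``verbatim'': the construction of Section \ref{SS_splitting_induction} (Lemma \ref{Lem:splitting_equiv}, Lemma \ref{Lem:pushforward_global_section}) is set up for a parabolic $P'$ \emph{containing the fixed Borel} $B$, and $G^{\leqslant 0}$ does not contain $B$ (since $\g^{>0}\subset\mathfrak{m}\subset\bor$). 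Redoing it with respect to another Borel changes the splitting bundles $\mathcal{V}^\chi_\mu$, which are normalized using $B$, and comparing the two normalizations is precisely the nontrivial content you are trying to extract; moreover what $\xi^-_*$ would realize is $\underline{\Delta}_{-\nu}$, and converting to $\underline{\nabla}_\nu$ costs the $\Lambda^{top}(\g^{>0}_\F)^{\otimes 1-p}$ twist from the proof of Proposition \ref{Prop:D_baby}, which also has to be reconciled with the bundle normalizations. Second, your mechanism for the correction functor---a discrepancy of top exterior powers giving ``a line bundle whose associated wall-crossing is $T_{u_{\underline{G}}}$''---is not valid: only the lattice part of $\Br^a$ acts by line-bundle twists, $T_w$ for $w$ in the finite Weyl group is not such a twist, and the one statement of this flavor available (\cite[Theorem 2.1.4]{BMR_sing}, used in Step 2 of Proposition \ref{Prop:dual_localization}) is specific to $w_0$ and the weights $0,-2\rho$. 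In the correct argument $u_{\underline{G}}$ has nothing to do with weights or canonical bundles: it enters only through $w_0=\underline{w}_0u_{\underline{G}}$, hence $\tilde{T}_{w_0}^{-1}=\tilde{T}_{u_{\underline{G}}}^{-1}\tilde{T}_{\underline{w}_0}^{-1}$, and the shift comes from $\ell(w_0)=\ell(\underline{w}_0)+\ell(u_{\underline{G}})$ in the normalizations $\tilde{\D}=\tilde{T}_{w_0}^{-1}\tilde{\D}_{coh}\langle\ell(w_0)\rangle$ and $\underline{\tilde{\D}}=\tilde{T}_{\underline{w}_0}^{-1}\underline{\tilde{\D}}_{coh}\langle\ell(\underline{w}_0)\rangle$, not from ``the homological shift in $T_{u_{\underline{G}}}$''. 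Third, Proposition \ref{Prop:D_baby} is an isomorphism of ungraded functors, while the lemma asserts an isomorphism of graded functors with a pinned-down shift; promoting the former to the latter requires a separate argument that the proposal does not supply.

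For contrast, the paper's proof stays entirely on the coherent side and is short: since $\xi$ is a closed embedding, $\xi_*$ intertwines Serre dualities; the identity $\varsigma\circ\xi\cong\xi^-\circ\underline{\varsigma}$ gives $\tilde{\D}_{coh}\circ\xi_*\cong\xi^-_*\circ\underline{\tilde{\D}}_{coh}\langle\ell(u_{\underline{G}})\rangle$; and it then remains to check $\tilde{T}_{\underline{w}_0}^{-1}\circ\xi^-_*\cong\xi^-_*\circ\tilde{T}_{\underline{w}_0}^{-1}$, which holds because the braid generators attached to Dynkin roots of $\underline{G}$ commute with $\xi^-_*$ by the construction in \cite{BR}. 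If you want to salvage your route, the missing ingredient is exactly this commutation (or, equivalently, an opposite-parabolic localization statement with the wall-crossing correction built in and proved); once you have it, the detour through Proposition \ref{Prop:D_baby} is no longer needed.
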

\begin{proof}
Recall that $\tilde{\D}_{coh}$ is the $\varsigma$-twisted   Serre duality functor.
Since $\xi$ is a closed embedding, the functor $\xi_*$ intertwines the usual Serre duality functors.
Note that $\varsigma\circ \xi\cong \xi^-\circ \underline{\varsigma}$, where $\underline{\varsigma}=\varsigma|_{\ug}$. It follows that $\tilde{\D}_{coh}\circ\xi_*\cong
\xi^-_*\circ \underline{\tilde{\D}}_{coh}\langle \ell(u_{\underline{G}})\rangle$. It remains to show
that $\tilde{T}_{w_0}^{-1}\circ \xi^-_*\cong \tilde{T}_{u_{\underline{G}}}^{-1}\circ \xi^-_*\circ \tilde{T}_{\underline{w}_{0}}^{-1}$,
equivalently $\tilde{T}_{\underline{w}_{0}}^{-1}\circ \xi^-_*\cong \xi^-_*\circ \tilde{T}_{\underline{w}_{0}}^{-1}$.
This is standard from the construction of the elements
$\tilde{T}_i$ for Dynkin roots $\alpha_i$ given in  \cite[Theorem 1.3.2]{BR}.
\end{proof}

To prove Proposition \ref{Prop:D_comput}, it remains to establish the following
formula
\begin{equation}\label{eq:main_limit}
[\xi^-_*]=v^{-\ell(u_{\underline{G}})}\lim_{\theta\rightarrow +\infty} X_{-\theta^*}H^{-1}_{u_{\underline{G}}} X_{-\theta}[\xi_*].
\end{equation}

Note that $\mathfrak{X}(\underline{G})$
naturally acts on the right on $\,^G\hat{\mathfrak{C}}_{\underline{P}}$ by $\Hecke^a_G$-linear automorphisms.
Let $t_\theta$ denote the image of $\theta$ under this action.  We have
\begin{equation}\label{eq:transl_commut}
X_{-\theta}[\xi_*]=[\xi_*]t_{-\theta}.
\end{equation}

In order to prove (\ref{eq:main_limit}) we need to find an alternative presentation
of $\hat{K}_0(\Coh^{\underline{Q}}(\B_e))$ and get some information
on $H_u^{-1} [\xi_*]$ in terms of this presentation. Let $u\in W$ be such that
$u^{-1}\in W^{\underline{G},-}$.
Let us write $\mathfrak{m}^{u}$ for the maximal
nilpotent subalgebra of the parabolic subalgebra $\ug+ u^{-1}(\mathfrak{b})$.
Then we have the embedding $\xi^{u}:\underline{\B}_e\hookrightarrow \B_e$
induced from $\underline{\mathfrak{b}}\mapsto \underline{\mathfrak{b}}\oplus \mathfrak{m}^{u}$.
Note that $\xi=\xi^1$ and $\xi^-=\xi^{u_{\underline{G}}}$. The corresponding embedding
$\underline{\tilde{\g}}\hookrightarrow \tilde{\g}$ will also be denoted by $\xi^{u}$.

\begin{Lem}\label{Lem:equiv_localization}
The map $\bigoplus_{u} [\xi^{u}_*]:\hat{K}_0^{{\underline{Q}}\times \C^\times}(\underline{\B}_e)^{\oplus |W|/|W_{\underline{G}}|}
\rightarrow  \hat{K}_0^{{\underline{Q}}\times \C^\times}(\B_e)$ is an isomorphism.
\end{Lem}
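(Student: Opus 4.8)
The plan is to prove this by localizing the Springer fiber $\B_e$ at the $\mathbb{G}_m$-action given by the generic one-parameter subgroup $\nu:\mathbb{G}_m\to T_0$. First I would pin down the geometry of the $\nu$-fixed locus. Since $\nu$ is generic, $Z_G(\nu)=Z_G(T_0)=\underline G$, and a Borel subalgebra of $\g$ is $\nu$-fixed exactly when it contains $\operatorname{Lie}(T_0)$; the set of such Borels is a disjoint union of $\underline G$-orbits indexed by $W_{\underline G}\backslash W\cong W^{\underline G,-}$, the orbit of index $u$ being precisely the image of $\xi^u$ (via $\underline{\mathfrak b}\mapsto\underline{\mathfrak b}\oplus\mathfrak m^u$). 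Intersecting with the Springer fiber and using $e\in\g^0=\underline{\g}$ (so that $e$ lies in such a Borel iff its degree-zero component $e$ lies in the corresponding Borel of $\underline{\g}$), one gets $\B_e^{\nu}=\bigsqcup_{u}\xi^u(\underline{\B}_e)$, a disjoint union of $|W|/|W_{\underline G}|$ connected (Springer fibers being connected) closed copies of $\underline{\B}_e$, one for each map $\xi^u$.

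Next I would run the Bialynicki--Birula decomposition of $\B_e$ with respect to $\nu$ (all limits exist because $\B_e$ is proper): $\B_e=\bigsqcup_u\B_e^{u,+}$, where $\B_e^{u,+}$ is the attracting locus of $\xi^u(\underline{\B}_e)$. Ordering the components by a $\nu$-linearization of an ample bundle so that the unions $\B_e^{\ge k}:=\bigcup_{\operatorname{ht}(u)\ge k}\B_e^{u,+}$ are closed and $\underline Q$-stable, the support filtration gives a filtration of $D^b(\Coh^{\underline Q\times\C^\times}(\B_e))$ whose one-step subquotient at a component is $D^b(\Coh^{\underline Q\times\C^\times}(\B_e^{u,+}))$; and the $\nu$-action contracts $\B_e^{u,+}$ onto $\xi^u(\underline{\B}_e)$, so restriction induces an isomorphism $K_0^{\underline Q\times\C^\times}(\B_e^{u,+})\xrightarrow{\sim}K_0^{\underline Q\times\C^\times}(\underline{\B}_e)$ (homotopy invariance under a contracting $\mathbb G_m$, cf. \cite[Ch.~5]{CG}). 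The $\underline Q$ versus $T_0=\underline Q^{\circ}$ issue is harmless: $[\underline Q:T_0]$ is invertible, $\underline Q$ preserves each component $\xi^u(\underline{\B}_e)$ individually, and one can establish everything for $T_0\times\C^\times$-equivariant $K$-theory and pass to $\underline Q/T_0$-invariants. The point is then that $\bigoplus_u[\xi^u_*]$ is block upper-triangular for this filtration: $\xi^u_*\mathcal G$ is supported on $\B_e^{\ge\operatorname{ht}(u)}$, and its class in the $\operatorname{ht}(u)$-subquotient, transported along the homotopy-invariance isomorphism, equals $\mathcal G\otimes\mathcal E_u$ with $\mathcal E_u=\iota_u^*\iota_{u*}\mathcal O$ (here $\iota_u:\xi^u(\underline{\B}_e)\hookrightarrow\B_e^{u,+}$), and $\mathcal E_u=1+(\text{terms of nonzero $\nu$-weight concentrated in the attracting normal directions})$, hence a unit in the completion $\hat K_0$. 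A block-triangular endomorphism with unit diagonal blocks is invertible once the ambient filtration is complete and separated, which $\hat K_0(\B_e)$ is by its definition in Section \ref{SS_canonical}; this gives the Lemma.

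The hard part will be the bookkeeping that makes the triangularity argument rigorous in the \emph{completed} setting: one must know that the support filtration induces, after completion, a genuine exhaustive, complete, separated filtration on $\hat K_0^{\underline Q\times\C^\times}(\B_e)$ with associated graded $\bigoplus_u\hat K_0^{\underline Q\times\C^\times}(\underline{\B}_e)$, and that all the $\mathcal E_u$ (and the off-diagonal corrections) lie in the appropriate completion direction so that the geometric series for $\mathcal E_u^{-1}$ converges. I would handle this by matching the $\nu$-highest-weight completion of Section \ref{SS_canonical} with localization at the multiplicative set of Euler classes $1-e^{\mu}v^{j}$ with $\langle\nu,\mu\rangle>0$, so that the concentration theorem of \cite[Ch.~5--7]{CG} applies verbatim and all the exterior-algebra classes of attracting normals become invertible. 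Alternatively, and perhaps more cleanly, I would bypass the completion subtleties by invoking the already-established identification $\hat K_0^{\underline Q\times\C^\times}(\B_e)\cong{}^{G}\hat{\mathfrak C}_{\underline P}$ of Corollary \ref{Cor:Hecke_K_0_action} together with the construction of $[\xi_*]$ in Lemma \ref{Lem:K_0_Hecke_module}: restricting $\Hecke^a_G$ to $\Hecke^a_{\underline G}$ realizes ${}^{G}\hat{\mathfrak C}_{\underline P}$ as the free $\hat{\mathfrak C}_{\underline P}\cong\hat K_0^{\underline Q\times\C^\times}(\underline{\B}_e)$-module on the $H_u$ with $u^{-1}\in W^{\underline G,-}$, and one checks (in the same spirit as Lemma \ref{Lem:D_comput_via_xi_minus}, using the wall-crossing/translation description of the $\tilde T$'s from \cite[Theorem 1.3.2]{BR}) that $[\xi^u_*]$ corresponds, up to a unit, to left multiplication by $H_u$ on this free module; the Lemma is then immediate.
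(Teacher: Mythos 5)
Your ``more cleanly'' parenthetical is in fact the paper's own proof: the paper deduces Lemma \ref{Lem:equiv_localization} by applying the equivariant localization (concentration) theorem for the $T_0$-action --- applied to the smooth ambient space $\tilde{\g}$, whose $T_0$-fixed locus is $\bigsqcup_u \xi^u(\underline{\tilde{\g}})$ --- so that $\bigoplus_u[\xi^u_*]$ becomes an isomorphism after inverting the classes $[\Lambda^\bullet N_{\tilde{\g}}\xi^u(\underline{\tilde{\g}})]$, and then observing that these Euler classes are already invertible in $\hat{K}_0^{\underline{Q}\times\C^\times}(\underline{\B}_e)$, so the localized isomorphism descends to the completions. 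If you take that route, your argument and the paper's coincide; note that working with the normal bundles inside the smooth $\tilde{\g}$ is what makes the Euler classes well defined.

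Your primary argument, however, has a genuine gap: you run a Bialynicki--Birula decomposition and a homotopy-invariance argument directly on $\B_e$, but $\B_e$ is a singular (indeed derived) Springer fiber, not a smooth variety. The attracting loci $\B_e^{u,+}$ need not be affine bundles (or even nicely behaved fibrations) over the fixed components $\xi^u(\underline{\B}_e)$, so neither the claimed isomorphism $K_0^{\underline{Q}\times\C^\times}(\B_e^{u,+})\xrightarrow{\sim}K_0^{\underline{Q}\times\C^\times}(\underline{\B}_e)$ nor the identification of the associated graded of the support filtration with $\bigoplus_u K_0$ of the strata is justified; the cellular-fibration machinery of \cite{CG} requires exactly the structure that fails here. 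Your second alternative is also not a sound independent proof: the assertion that $[\xi^u_*]$ is, ``up to a unit,'' left multiplication by $H_u$ on the free $\Hecke^a_{\underline{G}}$-module is false as an equality --- the true statement is only a triangularity, $H_u^{-1}$ applied to the image of $[\xi_*]$ has leading term $v^{\ell(u)}[\xi^{u}_*]$ plus contributions from other components --- and extracting even that leading term requires the projections $\operatorname{pr}_u$, i.e.\ the very decomposition Lemma \ref{Lem:equiv_localization} provides (this is exactly what Steps 2--5 of the proof of Proposition \ref{Prop:D_comput} do, after and using the lemma), so this route is circular in substance. The concentration-theorem argument is the one to keep.
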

\begin{proof}
The localization theorem in the equivariant $K$-theory applied
to the action of $T_0$ (which is a central subgroup of $\underline{Q}$)
on $\tilde{\g}$ says that
$\bigoplus_{u} [\xi^{u}_*]$ is an isomorphism
\begin{equation}\label{eq:K_0_loc_iso} \bigoplus_ u K_0(\Coh^{{\underline{Q}}\times \C^\times}(\underline{\B}_e))_{loc}
\xrightarrow{\sim}   K_0(\Coh^{{\underline{Q}}\times \C^\times}(\B_e))_{loc},
\end{equation}
where the subscript ``loc'' means that we localize the  classes
\begin{equation}\label{eq:classes}[\Lambda^\bullet N_{\tilde{\g}}\xi_u(\underline{\tilde{\g}})],
\end{equation}
where the notation $N_YX$ means a normal bundle to a smooth subvariety $Y$ in a smooth variety $X$.
The classes (\ref{eq:classes}) in $K_0^{{\underline{Q}}\times \C^\times}(\underline{\B}_e)$
have inverses in $\hat{K}_0^{{\underline{Q}}\times \C^\times}(\underline{\B}_e)$.
It follows that  the isomorphism (\ref{eq:K_0_loc_iso}) extends to an isomorphism between $\hat{K}_0$'s.
\end{proof}

For $m\in \hat{K}^{{\underline{Q}}\times \C^\times}_0(\B_e)$ we set
$$\pr_u(m):=[\xi^u_*]\left(\bigoplus_{u} [\xi^{u}_*]\right)^{-1}m,$$
so that $m=\sum_{u}\pr_u(m)$.

\begin{proof}[Proof of Proposition \ref{Prop:D_comput}]
We need to prove (\ref{eq:main_limit}). The proof is in several steps.

{\it Step 1}.
Consider the direct sum decomposition of $\hat{K}_0^{{\underline{Q}}\times \C^\times}(\B_e)$
from Lemma \ref{Lem:equiv_localization}. We claim that for $m\in \hat{K}_0^{{\underline{Q}}\times \C^\times}(\B_e)$ the limit
$$\lim_{\theta\rightarrow +\infty}X_{-\theta^*}m t_{-\theta}$$
exists and equals to $\pr_{u_{\underline{G}}}(m)$. This is equivalent to showing that
for $m\in \operatorname{im}[\xi^u_{*}]$ we have
$$\lim_{\theta\rightarrow +\infty} X_{-\theta^*}mt_{-\theta}=\begin{cases} m,\text{ if }u=u_{\underline{G}},\\
 0, \text{ else }\end{cases}$$

Recall that $X_{\theta^*}$ is the class of the line bundle $\mathcal{O}(\theta^*)$
on the diagonal in the Steinberg variety $\St_\h$.
We have $(\xi^u)^* \mathcal{O}(-\theta^*)=\underline{\mathcal{O}}(-u\theta^*)$. Note that
for $u=u_{\underline{G}}$, we have $-u\theta^*=\theta$ and hence $\underline{\mathcal{O}}(-u\theta^*)$ is
a trivial bundle with $\underline{Q}$-equivariant structure via $\theta$. It follows
that for $m\in \operatorname{im}[\xi^{u_{\underline{G}}}_{*}]$ we have
$X_{-\theta^*}m t_{-\theta}=m$.

Let $u\neq u_{\underline{G}}$. Set $\underline{m}:=(\bigoplus_{u} [\xi^{u}_*])^{-1}m$.
We need to show that
\begin{equation}\label{eq:limit_aux} X_{-u(\theta^*)}\underline{m} t_{-\theta}\rightarrow 0
\end{equation}
in $\hat{K}_0^{{\underline{Q}}\times \C^\times}(\underline{\B}_e)$. Consider the image of
$X_{-u(\theta^*)}\underline{m} t_{-\theta}$ in $\hat{K}_0(\underline{\U}^\chi_{(2\underline{\rho}-2\rho,\F)}\operatorname{-mod}^{{\underline{Q}},gr})$.
We can assume that the image of $\underline{m}$ there is the class of a simple object, say $\tilde{\mathcal{L}}$.
The $\nu$-highest weight of simples that appear in the cohomology of $(X_{-u(\theta^*)}\tilde{\mathcal{L}})\otimes \F_{-\theta}$ will be of the form $k+\langle -u\theta^*-\theta,\nu\rangle$,
where $k$ is the $\nu$-highest weight of $\tilde{\mathcal{L}}$. We reduce (\ref{eq:limit_aux})
to checking that $\lim_{\theta\rightarrow +\infty}\langle -u\theta^*-\theta,\nu\rangle=-\infty$.
Set $u'=u\underline{w}_{0}^{-1}w_0$ so that $-u\chi^*=u'\chi$ and $u'\in W^{\underline{G},-}$.
Then $u'\theta-\theta$ is a  $\Z_{\leqslant 0}$-linear combination of positive roots.
Note that there must be a root of $\g^{>0}$  appearing with a nonzero coefficient.
As $\theta\rightarrow +\infty$ the minimum of the coefficients in this linear
combination also goes to $-\infty$. It follows that $\lim_{\theta\rightarrow +\infty}\langle -u\theta^*-\theta,\nu\rangle=-\infty$. This finishes the proof of this step.

{\it Step 2}. It remains to prove that
\begin{equation}\label{eq:projections}
v^{-\ell(u_{\underline{G}})}\pr_{u_{\underline{G}}}\left(H_{u_{\underline{G}}}^{-1}[\xi_*]m\right)=[\xi_*^-]m.
\end{equation}
For this we will prove similar statements
for related $K_0$-groups, where it is easier to do computations.

Consider the partial Steinberg variety $\tilde{\g}\times_{\g}\underline{\tilde{\g}}$
(in fact, it is a derived scheme) and its $\underline{G}\times \C^\times$-equivariant
$K$-theory. The convolution map
$$K_0^{\underline{G}\times \C^\times}(\tilde{\g}\times_{\g}\underline{\tilde{\g}})\times
K_0^{{\underline{Q}}\times \C^\times}(\underline{\B}_e)\rightarrow K_0^{{\underline{Q}}\times \C^\times}(\B_e)$$
is continuous so gives rise to
$$\hat{K}_0^{\underline{G}\times \C^\times}(\tilde{\g}\times_{\g}\underline{\tilde{\g}})\times \hat{K}_0^{{\underline{Q}}\times \C^\times}(\underline{\B}_e)\rightarrow \hat{K}_0^{{\underline{Q}}\times \C^\times}(\B_e).$$
This map is $\Hecke^a_G$-linear in the first argument due to associativity of convolution.
Similarly to Lemma \ref{Lem:equiv_localization}, we have the decomposition
$$\bigoplus_{u}[\tilde{\xi}^u_*]:\hat{K}_0^{\underline{G}\times \C^\times}(\underline{\St}_\h)^{\oplus |W|/|W_{\underline{G}}|}\xrightarrow{\sim}\hat{K}_0^{\underline{G}\times \C^\times}(\tilde{\g}\times_{\g}\underline{\tilde{\g}}),$$
where $\underline{\St}_\h$ is the analog of $\St_\h$ for $\ug$ and the meaning of
$\tilde{\xi}^u$ is similar to that of $\xi^u$.

Let $\delta$ denote the class
of diagonal in $\hat{K}_0^{\underline{G}\times \C^\times}(\underline{\St}_\h)$, the unit in this algebra.
Then we have $[\xi^u_*](\bullet)=[\tilde{\xi}^u_*](\delta)*\bullet$. So
it is enough to prove that
\begin{equation}\label{eq:projection1}v^{-\ell(u_{\underline{G}})}\pr_{u_{\underline{G}}}\left(H^{-1}_{u_{\underline{G}}}[\tilde{\xi}_*]
(\delta)\right)=
[\tilde{\xi}^{u_{\underline{G}}}_*](\delta).\end{equation}

{\it Step 3}. Now consider the convolution map
$$\hat{K}_0^{\underline{G}\times \C^\times}(\tilde{\g}\times_{\g}\tilde{\underline{\g}})
\times \hat{K}_0^{T\times \C^\times}(T^*\underline{\B})
\rightarrow \hat{K}_0^{T\times \C^\times}(T^*\B),$$
where the second and the third completions are again with respect
to $\nu$. Let us write $\C_{1\underline{B}}$
for the skyscraper sheaf at the point $1\underline{B}\in \underline{\B}$.
We claim that the map
$$a\mapsto a*[\C_{1\underline{B}}]: \hat{K}_0^{\underline{G}\times \C^\times}(\tilde{\g}\times_{\g}\underline{\tilde{\g}}))
\rightarrow \hat{K}_0^{T\times \C^\times}(T^*\B).$$
is injective. This claim reduces to the case when $G=\underline{G}$ is semisimple
using the localization theorem.
In this case $K_0^{T\times \C^\times}(T^*\B)$ is the periodic
$\Hecke^a_G$-module up to localization, see \cite[Section 10]{Lusztig_K1}.
Our claim translates to the claim that the standard basis element labelled
by $1$ is not annihilated by any nonzero element of $\Hecke^a_G$. It is now immediate.

{\it Step 4}. We write $\C_{wB},w\in W,$ for the skyscraper sheaf at $wB\in T^*\B$
with the trivial $T\times \C^\times$-action. The classes $[\C_{wB}]$  form a basis in the $K_0(\operatorname{Rep}(T\times \C^\times))_{loc}$-module $K_0^{T\times \C^\times}(T^*\B)_{loc}$, where
``loc'' stands for localization functor, where we invert all nonzero
elements in $K_0^{T\times \C^\times}(\operatorname{pt})$. Hence what remains to prove is that
$H_{u_{\underline{G}}}^{-1}[\C_{1B}]=v^{\ell(u_{\underline{G}})}[\C_{(u_{\underline{G}})^{-1}B}]+?$, where $?$ stands for the sum of
the basis elements $1_{wB}$ with $w\prec u_{\underline{G}}$ with some coefficients
in  $K_0^{T\times \C^\times}(pt)_{loc}$. Note that this statement
immediately reduces to the following claim: $H_s^{-1} [\C_{wB}]=v[\C_{swB}]+?[\C_{wB}]$
if $sw>w$. And it is sufficient to verify that statement in the case when $G=\operatorname{SL}_2$
and $w=1$. This is what we assume from now on.

{\it Step 5}.
The character of $\C^\times$ on the cotangent fibers is $v^{-2}$.
The action of $G\times \C^\times$ on $T^*\mathbb{P}^1$ factors through that of
$\operatorname{GL}_2$ on the total space of $\mathcal{O}_{\mathbb{P}^1}(-2)$.
Let $\tilde{T}$ stand for the maximal torus $\{\operatorname{diag}(t_1,t_2)\}
\subset \operatorname{GL}_2$. We also write $t_1,t_2$ for the corresponding equivariant
parameters. Our convention is that $(t_1,t_2)$ acts on the standard homogeneous coordinate
functions $x,y$ on $\mathbb{P}^1$ by $t_1,t_2$, respectively.
The point $1B\in \mathbb{P}^1$ is $[1:0]$ (i.e., $x=1,y=0$) and
the point $sB$ is $[0:1]$. Then $v^{-2}=t_1t_2$.

Consider the element $c=v^{-1}H_{s}+1$.
According to \cite[Section 7.5]{CG}, the element $c$ acts on $K_0(\Coh^{T\times \C^\times}(T^*\mathbb{P}^1))$
as the convolution with
$$[\mathcal{O}_{\mathbb{P}^1}]\boxtimes
\left([\mathcal{O}_{T^*\mathbb{P}^1}]-t_1t_2[\pi^* \Omega_{\mathbb{P}^1}]\right).$$
The class of $\Omega_{\mathbb{P}^1}|_{[1:0]}$ is $t_1^{-2}$.
It follows that $c [\C_{[1:0]}]=[\mathcal{O}_{\mathbb{P}^1}](1-t_2t_1^{-1})$.
On the other hand, in the coordinate chart $(y\neq 0)$, the class
$[\C_{[0:1]}]$ is that of the complex $\mathcal{O}_{\mathbb{A}^1}\rightarrow \mathcal{O}_{\mathbb{A}^1}$,
where the map is the multiplication by $x/y$, of weight $t_1t_2^{-1}$.
It follows that the class  $[\C_{[0:1]}]$ in the coordinate chart is
also  $[\mathcal{O}_{\mathbb{P}^1}](1-t_2t_1^{-1})$.
Therefore $(v^{-1}H_s+1)[\C_{[1:0]}]=[\C_{[0:1]}]+? [\C_{[1:0]}]$.
Hence $H_s^{-1}[\C_{[1:0]}]=v[\C_{[0:1]}]+? [\C_{[1:0]}]$.
This finishes the proof.
\end{proof}

\subsection{Semi-periodic Kazhdan-Lusztig polynomials}\label{SS_KL}
The goal of this section is to define the completed semi-periodic module for $\Hecke_G^a$
with its standard basis, a bar-involution on this module, and the canonical
basis. Then we relate this canonical basis to affine Kazhdan-Lusztig polynomials.
We note that much of this is done in \cite{Stroppel}, but our construction also
allows to relate these semi-periodic polynomials to the multiplicities in
$\U^\chi_{(0),\F}\operatorname{-mod}^{\underline{Q}}$.

Fix a standard Levi subgroup $\underline{G}\subset G$. Let $\mathfrak{X}^+(\underline{G})$ denote the intersection of the positive Weyl chamber with $\mathfrak{X}(\underline{G})$.

We consider the completion
$\hat{\Hecke}^a_{\underline{G}}$ defined as follows: it consists of all infinite sums
$\sum_{x\in W^a_{\underline{G}}}a_xH_x$, where for each $N\in \Z$, the set
$\{x\in W^a_{\underline{G}}| \langle\nu,x\rangle>N, a_x\neq 0\}$ is finite.
Here we abuse the notation and write
$\langle \nu,x\rangle$ for the pairing of $\nu$ and the projection of $x$ to $\mathfrak{X}(\underline{G})\otimes_{\Z}\mathbb{Q}$ (see the discussion after
Proposition \ref{Prop:dualities_compat}).

Then set $\hat{\Hecke}^a_G:=\Hecke^a_G\otimes_{\Hecke_{\underline{G}}^a}\hat{\Hecke}_{\underline{G}}^a$.
This space carries a natural structure of a $\Hecke^a_G$-$\Hecke^a_{\underline{G}}$-bimodule.
It is also a complete topological $\mathbb{Z}[v^{\pm 1}]$-module and the topology is compatible
with the bimodule structure.

%Now we define a standard (topological) basis of $\hat{\Hecke}^a_G$.
%Let $x\in W^a$. We can uniquely decompose $x$ as $w\underline{x}$, where
%$\underline{x}\in W^a_{\underline{G}}$ and $w\in W^{\underline{G},-}$. Set
%$H^\infty_x:=H_wH_{\underline{x}}$. It is clear that these elements form
%a topological basis in $\hat{\Hecke}^a_G$.

Now we proceed to defining a bar-involution on $\hat{\Hecke}^a_G$.
For $\theta\in \mathfrak{X}(\underline{G})$ and $a\in \Hecke^a_G$ we set
\begin{equation}\label{eq:shifted_involution}
\overline{a}^\theta:=\overline{aX_{\theta}}X_{-\theta},
\end{equation}
where in the right hand side $\overline{\bullet}$ denotes the usual
bar-involution on $\Hecke^a_G$. The following lemma describes elementary
properties of $\overline{\bullet}^\theta$.

\begin{Lem}\label{Lem:shifted_involution}
The following claims are true:
\begin{enumerate}
\item $\overline{\bullet}^\theta$ is an involution.
\item We have $\overline{abc}^\theta=\overline{a}\cdot \overline{b}^\theta\overline{c}$ for all
$a,b\in \Hecke^a_G$ and $c\in \Hecke_{W_{\underline{G}}}$.
\item We have $\overline{1}^\theta=H_{u_{\underline{G}}}X_{-\theta^*}H_{u_{\underline{G}}}^{-1}X_{-\theta}$.
\end{enumerate}
\end{Lem}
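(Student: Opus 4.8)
The plan is to reduce all three claims to two standard facts about the Bernstein presentation of $\Hecke^a_G$. First, for $\theta\in\mathfrak{X}(\underline{G})$ the element $X_\theta$ commutes with the parabolic subalgebra $\Hecke_{W_{\underline{G}}}\subset\Hecke^a_G$: indeed $\langle\theta,\alpha^\vee\rangle=0$ for every simple root $\alpha$ of $\underline{G}$, so the Bernstein--Lusztig relation makes $X_\theta$ commute with $H_{s_\alpha}$ for each such $\alpha$, hence with all of $\Hecke_{W_{\underline{G}}}$. Second, the Kazhdan--Lusztig bar-involution $\overline{\bullet}$ is a $\Z[v^{\pm1}]$-semilinear ring automorphism of $\Hecke^a_G$ preserving $\Hecke_{W_{\underline{G}}}$; I also use formula (\ref{eq:X_bar}) and that $\theta\mapsto X_\theta$ is a group homomorphism, so $X_\theta X_{-\theta}=1$.

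Claim (1) is immediate: since $\overline{\overline{\bullet}}=\operatorname{id}$ on $\Hecke^a_G$,
\[\overline{\overline{a}^{\,\theta}}^{\,\theta}=\overline{\,\overline{aX_\theta}\,X_{-\theta}X_\theta\,}\,X_{-\theta}=\overline{\,\overline{aX_\theta}\,}\,X_{-\theta}=aX_\theta X_{-\theta}=a.\]
For claim (2), using $cX_\theta=X_\theta c$ (as $c\in\Hecke_{W_{\underline{G}}}$), multiplicativity of $\overline{\bullet}$, and $\overline{c}\,X_{-\theta}=X_{-\theta}\,\overline{c}$ (as $\overline{c}\in\Hecke_{W_{\underline{G}}}$),
\[\overline{abc}^{\,\theta}=\overline{abX_\theta c}\,X_{-\theta}=\overline{a}\,\overline{bX_\theta}\,\overline{c}\,X_{-\theta}=\overline{a}\,\bigl(\overline{bX_\theta}\,X_{-\theta}\bigr)\,\overline{c}=\overline{a}\,\overline{b}^{\,\theta}\,\overline{c}.\]

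Claim (3) carries the real content. By (\ref{eq:X_bar}) and $w_0(\theta)=-\theta^*$ we have $\overline{1}^{\,\theta}=\overline{X_\theta}\,X_{-\theta}=H_{w_0}X_{-\theta^*}H_{w_0}^{-1}X_{-\theta}$, so it suffices to prove the conjugation identity $H_{w_0}X_{-\theta^*}H_{w_0}^{-1}=H_{u_{\underline{G}}}X_{-\theta^*}H_{u_{\underline{G}}}^{-1}$. I would write $w_0=\underline{w}_0u_{\underline{G}}$ with $\ell(w_0)=\ell(\underline{w}_0)+\ell(u_{\underline{G}})$ (recall $u_{\underline{G}}=\underline{w}_0^{-1}w_0$ is the minimal-length element of $W_{\underline{G}}w_0$), so $H_{w_0}=H_{\underline{w}_0}H_{u_{\underline{G}}}$ and the identity becomes the assertion that $Y:=H_{u_{\underline{G}}}X_{-\theta^*}H_{u_{\underline{G}}}^{-1}$ is centralized by $\Hecke_{W_{\underline{G}}}$. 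By minimality of $u_{\underline{G}}$ in its coset, for a simple reflection $s_\alpha\in W_{\underline{G}}$ one has $\ell(s_\alpha u_{\underline{G}})=\ell(u_{\underline{G}})+1=\ell(u_{\underline{G}}s_\gamma)$ with $\gamma:=u_{\underline{G}}^{-1}(\alpha)$, which is a simple root of $G$ (since $u_{\underline{G}}^{-1}=w_0\underline{w}_0$ carries simple roots of $\underline{G}$ to simple roots of $G$); hence $H_{s_\alpha}H_{u_{\underline{G}}}=H_{u_{\underline{G}}}H_{s_\gamma}$, i.e. $H_{u_{\underline{G}}}^{-1}H_{s_\alpha}H_{u_{\underline{G}}}=H_{s_\gamma}$. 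It remains to check that $X_{-\theta^*}$ commutes with $H_{s_\gamma}$, i.e. $\langle\theta^*,\gamma^\vee\rangle=0$: indeed $\langle\theta^*,\gamma^\vee\rangle=\langle u_{\underline{G}}(\theta^*),\alpha^\vee\rangle$, and $u_{\underline{G}}(\theta^*)=-u_{\underline{G}}w_0(\theta)=-\underline{w}_0(\theta)=-\theta$, so $\langle\theta^*,\gamma^\vee\rangle=-\langle\theta,\alpha^\vee\rangle=0$. Thus $Y$ commutes with every generator $H_{s_\alpha}$ of $\Hecke_{W_{\underline{G}}}$, in particular with $H_{\underline{w}_0}$, which gives the conjugation identity and hence (3).

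The main obstacle is the conjugation identity in (3): one must pin down that conjugation by $H_{u_{\underline{G}}}$ sends the simple generators of $\Hecke_{W_{\underline{G}}}$ to honest simple generators $H_{s_\gamma}$ of $\Hecke^a_G$ — which is where the precise length bookkeeping $\ell(s_\alpha u_{\underline{G}})=\ell(u_{\underline{G}})+1$ from minimality of $u_{\underline{G}}$ enters — and then verify the single numerical vanishing $\langle\theta^*,\gamma^\vee\rangle=0$. Everything else (the commutation of $X_\theta$ with $\Hecke_{W_{\underline{G}}}$, and the ring-automorphism property of $\overline{\bullet}$) is formal.
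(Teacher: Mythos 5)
Your proof is correct and takes essentially the same route as the paper: (1) and (2) are formal consequences of the definition (the paper dismisses them as straightforward), and for (3) both arguments start from (\ref{eq:X_bar}) together with $w_0(\theta)=-\theta^*$. The only difference is that the paper asserts the final identity $H_{w_0}X_{-\theta^*}H_{w_0}^{-1}=H_{u_{\underline{G}}}X_{-\theta^*}H_{u_{\underline{G}}}^{-1}$ without justification, whereas you correctly supply it by factoring $H_{w_0}=H_{\underline{w}_0}H_{u_{\underline{G}}}$ and checking, via the length bookkeeping for the minimal coset representative $u_{\underline{G}}$ and the vanishing $\langle\theta^*,\gamma^\vee\rangle=0$, that $H_{u_{\underline{G}}}X_{-\theta^*}H_{u_{\underline{G}}}^{-1}$ centralizes $\Hecke_{W_{\underline{G}}}$.
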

\begin{proof}
(1) and (2) are straightforward from the definition. Let us prove (3).
We have $\overline{1}^\theta=\overline{X_\theta}X_{-\theta}$. As we recalled in
Section \ref{SS_affine_Hecke_reminder},
$\overline{X_\theta}=H_{w_0}X_{w_0(\theta)}H_{w_0}^{-1}$. But $w_0(\theta)=-\theta^*$,
hence $H_{w_0}X_{-\theta^*}H_{w_0}^{-1}=H_{u_{\underline{G}}}X_{-\theta^*}H_{u_{\underline{G}}}^{-1}$.
This proves (3).
\end{proof}

Here is a stabilization property for the involution $\overline{\bullet}^\theta$.

\begin{Prop}\label{Prop:stabilization_bar}
For all $a\in \Hecke^a_G$, the
limit $\lim_{\theta\rightarrow \infty} \overline{a}^\theta$ exists in
$\hat{\Hecke}^a_G$. %Moreover, $a\mapsto \lim_{\theta\rightarrow \infty} \overline{a}^\theta$
%extends to a continuous map $\hat{\Hecke}^a_G\rightarrow \hat{\Hecke}^a_G$.
\end{Prop}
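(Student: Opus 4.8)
The plan is to reduce to $a=1$, to peel off the leading term of $\overline{1}^\theta$ using Lemma~\ref{Lem:shifted_involution}, and to control the remainder by a computation in the Bernstein presentation of $\Hecke^a_G$.

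First, Lemma~\ref{Lem:shifted_involution}(2) with $b=c=1$ gives $\overline{a}^\theta=\overline{a}\cdot\overline{1}^\theta$ for every $a\in\Hecke^a_G$, and left multiplication by the fixed element $\overline{a}$ is continuous on $\hat{\Hecke}^a_G$: it is a right $\Hecke^a_{\underline{G}}$-linear endomorphism of $\Hecke^a_G$, hence given by a matrix over $\Hecke^a_{\underline{G}}$ in the basis $\{H_u:u\in W^{\underline{G},-}\}$, and multiplication by a fixed element of $\Hecke^a_{\underline{G}}$ is continuous on $\hat{\Hecke}^a_{\underline{G}}$. So it suffices to show that $\overline{1}^\theta$ converges. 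By Lemma~\ref{Lem:shifted_involution}(3), $\overline{1}^\theta=H_{u_{\underline{G}}}X_{-\theta^*}H_{u_{\underline{G}}}^{-1}X_{-\theta}$, and since $\theta$ is $\underline{W}$-invariant, $\theta^*=-w_0(\theta)$ and $u_{\underline{G}}=\underline{w}_0w_0$, we have $u_{\underline{G}}(-\theta^*)=\underline{w}_0(\theta)=\theta$. As $-\theta^*$ is antidominant for $G$, commuting $X_{-\theta^*}$ rightward through $H_{u_{\underline{G}}}$ by the Bernstein--Lusztig relations produces only correction weights bounded above by the weight of the Coxeter term; thus $H_{u_{\underline{G}}}X_{-\theta^*}=\sum_{y\le u_{\underline{G}}}g_y^\theta(X)H_y$ with $g_{u_{\underline{G}}}^\theta(X)=X_\theta$ occurring only in the top term, so that
$$\overline{1}^\theta=1+R^\theta,\qquad R^\theta:=\Bigl(\sum_{y<u_{\underline{G}}}g_y^\theta(X)\,H_yH_{u_{\underline{G}}}^{-1}\Bigr)X_{-\theta}.$$

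It remains to show $R^\theta$ converges, and this is the main obstacle. Writing $R^\theta=\sum_{w\in W}H_w h_w^\theta(X)$, the claim is that there is a constant $N_0$ independent of $\theta$ such that every weight $\mu$ occurring in $h_w^\theta(X)$ has $\langle\nu,\mu\rangle\le N_0$, and that for each fixed $(w,\mu)$ the coefficient of $X_\mu$ stabilizes in $\theta$; granting this, for every $N$ the $\nu$-weight $>N$ part of $R^\theta$ lies in a fixed finite-dimensional $\Z[v^{\pm1}]$-submodule on which coefficients stabilize, so $(R^\theta)_\theta$ is Cauchy, hence convergent since $\hat{\Hecke}^a_G$ (a finite free module over the complete module $\hat{\Hecke}^a_{\underline{G}}$) is complete. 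The claim is proved by commuting $X_{-\theta}$ leftward past $H_yH_{u_{\underline{G}}}^{-1}$ and re-expanding in the $\{H_wX_\mu\}$-basis: antidominance of $-\theta^*$ forces the positive roots $\gamma$ with $\theta-\gamma$ appearing to differ from a fixed $(w,\mu)$-dependent combination by an element of the $\Z_{\ge0}$-span of the roots of $\underline{G}$, which $\nu$ does not see, and the surviving positive-$\nu$-weight contributions cancel on rewriting with $H_w$ on the left. This cancellation is the only delicate ingredient; it is already visible for $\underline{G}=T\subset G=\mathrm{SL}_2$, where one computes $\overline{1}^{n\varpi}=1+(v-v^{-1})\,H_s\bigl(X_{-\alpha}+X_{-2\alpha}+\dots+X_{-n\alpha}\bigr)$, converging to $1+(v-v^{-1})H_s\sum_{k\ge1}X_{-k\alpha}$. (Alternatively, one may combine Proposition~\ref{Prop:D_comput}, which already yields convergence of $\overline{1}^\theta\cdot m$ for $m$ in the cell module $\mathfrak{C}_{\underline{P}}$, with the faithfulness of the periodic $\Hecke^a_{\underline{G}}$-module used in its proof, to obtain convergence of $\overline{1}^\theta$ itself.)
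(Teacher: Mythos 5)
Your reduction to $a=1$ via Lemma \ref{Lem:shifted_involution}(2) and continuity of multiplication is correct and is exactly how the paper begins. The problem is everything after that: the convergence of $R^\theta$, which is the entire content of the proposition, is asserted rather than proved. The sentence claiming that ``antidominance of $-\theta^*$ forces the positive roots $\gamma$ with $\theta-\gamma$ appearing to differ \dots by an element of the $\Z_{\ge0}$-span of the roots of $\underline{G}$'' and that ``the surviving positive-$\nu$-weight contributions cancel on rewriting with $H_w$ on the left'' is precisely the uniform-bound-plus-stabilization statement one has to establish, and you give no argument for it beyond the rank-one example $\underline{G}=T\subset \mathrm{SL}_2$. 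Iterating the Bernstein--Lusztig relation through a general $H_{u_{\underline{G}}}$ and then through $H_{u_{\underline{G}}}^{-1}$ produces denominators $(1-X_{-\gamma})^{-1}$ for many roots $\gamma$, re-expansion mixes Weyl-group components, and controlling which $\nu$-weights survive with $\theta$-independent coefficients is a genuinely nontrivial combinatorial statement (it is essentially the stabilization result the authors attribute to Stroppel and say they reprove independently). So as written the proposal does not prove the proposition; it restates it in the $\{H_wX_\mu\}$ basis.

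For comparison, the paper's proof avoids this combinatorics entirely: it embeds $\Hecke^a_G$ into the completed equivariant $K$-group $\hat{K}_0^{\underline{G}\times \C^\times}(\tilde{\g}\times_{\g}\tilde{\ug})$ of the partial Steinberg variety via $a\mapsto a*[\tilde{\xi}_*]\delta$, checks (using Step 2 of the proof of Proposition \ref{Prop:D_comput}) that this extends to a topological isomorphism $\hat{\Hecke}^a_G\cong \hat{K}_0^{\underline{G}\times \C^\times}(\tilde{\g}\times_{\g}\tilde{\ug})$, and then gets the existence of $\lim_\theta H_{u_{\underline{G}}}X_{-\theta^*}H_{u_{\underline{G}}}^{-1}X_{-\theta}[\tilde{\xi}_*]\delta$ from the localization-theorem computation carried out in the proof of Proposition \ref{Prop:D_comput}. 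Your parenthetical alternative points in this direction but is misstated: Proposition \ref{Prop:D_comput} gives convergence of $\overline{1}^\theta\cdot m$ for $m$ in $\mathfrak{C}_{\underline{P}}$ acting on $\hat{K}_0^{\underline{Q}\times\C^\times}(\B_e)$, and the cell module attached to one orbit is far from faithful, so pointwise convergence there does not transfer back to $\hat{\Hecke}^a_G$; what is needed (and what the paper uses) is the topologically free module $\hat{K}_0^{\underline{G}\times\C^\times}(\tilde{\g}\times_{\g}\tilde{\ug})$ with cyclic vector $[\tilde{\xi}_*]\delta$, together with the injectivity statement of Step 3 of that proof. If you want to keep your algebraic route, you must actually prove the stabilization of the structure constants, e.g.\ by an induction on $\ell(u_{\underline{G}})$ using the explicit Bernstein commutation formula and a careful bookkeeping of $\nu$-weights; the $\mathrm{SL}_2$ computation is a sanity check, not a proof.
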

\begin{proof}
Thanks to (2) of Lemma \ref{Lem:shifted_involution} and the continuity of the
product, the only thing we need to check is the existence
of $\lim_{\theta\rightarrow +\infty}\overline{1}^\theta$. For this, we use
the proof of Proposition \ref{Prop:D_comput}. Namely, consider
the completed $K_0$-group $\hat{K}_0^{\underline{G}\times \C^\times}(\tilde{\g}\times_\g \tilde{\ug})$.
We have an inclusion $\Hecke^a_G\hookrightarrow \hat{K}_0^{\underline{G}\times \C^\times}(\tilde{\g}\times_{\g}\tilde{\ug}), a\mapsto a*[\tilde{\xi}_*]\delta$.
This map extends to an isomorphism of topological $\Hecke^a_G$-modules
$\hat{\Hecke}^a_G\xrightarrow{\sim} \hat{K}_0^{\underline{G}\times \C^\times}(\tilde{\g}\times_{\g}\tilde{\ug})$.
The argument of Step 2 of the proof of Proposition \ref{Prop:D_comput} shows
that this homomorphism is a topological isomorphism.
The proof of the subsequent steps of
Proposition \ref{Prop:D_comput} shows that $\lim_{\theta\rightarrow +\infty}
H_{u_{\underline{G}}}X_{-\theta^*}H_{u_{\underline{G}}}^{-1}X_{-\theta}[\tilde{\xi}_*] \delta$ exists in
$\hat{K}_0^{\underline{G}\times \C^\times}(\tilde{\g}\times_{\g}\tilde{\ug})$.
%
%It remains to show that the map $a\mapsto \lim_{\theta\rightarrow \infty} \overline{a}^\theta:
%\Hecke^a_G\rightarrow \hat{\Hecke}^a_G$. By the construction, for all $a\in \Hecke^a_G,\theta'\in
%\mathfrak{X}(\underline{G})$,
%we have $\overline{aX_{\theta'}}^\infty=\overline{a}^\infty X_{\theta'}$.
%Any map
%$\Hecke^a_G\rightarrow \hat{\Hecke}^a_G$ is continuous.
%And if the limit of involutions exists, it is an involution. This finishes
%the proof of the proposition.
\end{proof}

We will write $\overline{a}^\infty$ for the limit.
%We denote the limiting involution by $\overline{\bullet}^\infty$.
Now we discuss the canonical basis for $\overline{\bullet}^\infty$.

\begin{Prop}\label{Prop:canon_basis}
The following claims are true:
\begin{itemize}
\item[(0)] For $x\in W^a$, the element $H_x^\infty:=H_{xt_\theta}X_\theta^{-1}$ is independent
of $\theta\in \mathfrak{X}^+(\underline{G})$ as long as $\theta$ is large enough.
The elements $H^\infty_x$ form a topological basis in $\hat{\mathcal{H}}^a_G$.
\item[(1)] The map $\overline{\bullet}^\infty:\mathcal{H}^a_G\rightarrow \hat{\mathcal{H}}^a_G$
extends to $\hat{\mathcal{H}}^a_G$ by continuity. The extension is an involution.
\item[(2)] There is a unique collection of elements
$C^\infty_x\in H^\infty_x+v^{-1}\operatorname{Span}^{conv}_{\Z[v^{-1}]}(H^\infty_y| y\in W^a)$
(where the superscript ``conv'' means that we take all converging sums) such that
$\overline{C^\infty_x}^\infty=C^\infty_x$.
\item[(3)] The coefficient of $H^\infty_y$ in $C^\infty_x$ coincides with
$c_{xt_\theta,yt_\theta}$, where $\theta\in \mathfrak{X}^+(\underline{G})$ is large enough (depending on $x,y$). In particular, $C^\infty_x=\lim_{\theta\rightarrow \infty} C_{xt_\theta}X_{-\theta}$.
\end{itemize}
\end{Prop}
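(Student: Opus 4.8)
The plan is to prove the four claims of Proposition~\ref{Prop:canon_basis} essentially by reducing them to already-established facts about the affine Hecke algebra and, crucially, to the stabilization result Proposition~\ref{Prop:stabilization_bar}. Throughout I would work inside the topological isomorphism $\hat{\Hecke}^a_G\xrightarrow{\sim}\hat{K}_0^{\underline{G}\times\C^\times}(\tilde{\g}\times_\g\tilde{\ug})$ from the proof of Proposition~\ref{Prop:stabilization_bar}, but most of the combinatorics can be done directly.

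For (0), first I would observe that $H_{xt_\theta}X_\theta^{-1}$ stabilizes: writing $xt_\theta = x\cdot t_\theta$, once $\theta\in\mathfrak{X}^+(\underline{G})$ is deep enough in the chamber (how deep depending only on $\ell(x)$ and the coroots not in $\underline{G}$), the element $t_\theta$ is dominant and $\ell(xt_\theta)=\ell(x)+\ell(t_\theta)$, so $H_{xt_\theta}=H_xH_{t_\theta}=H_xX_\theta$ up to the standard Bernstein presentation corrections; hence $H_{xt_\theta}X_\theta^{-1}=H_x$ on the nose in the stable range, and in particular the expression is eventually independent of $\theta$. (If one insists on the $X_\theta = H_{t_\theta}$ convention one just needs $t_\theta$ dominant; otherwise one uses that $X_\theta X_{\theta'}=X_{\theta+\theta'}$ and $\ell$-additivity to see the ratio stabilizes.) That the $H^\infty_x$ form a topological basis of $\hat{\Hecke}^a_G$ then follows from the definition of the completion: any converging sum $\sum a_x H_x$ with $a_x\in\Z[v^{\pm1}]$ can be rewritten, term by term, in terms of the $H^\infty_x$ because each $H_x$ lies in the $\Z[v^{\pm1}]$-span of finitely many $H^\infty_y$ and vice versa, and the rewriting preserves the support/$\nu$-finiteness condition since multiplication by a fixed $X_{\pm\theta}$ shifts $\langle\nu,\cdot\rangle$ by a bounded amount on each Bruhat layer.

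For (1), continuity of $\overline{\bullet}^\infty$ is where I would lean on Proposition~\ref{Prop:stabilization_bar} together with part (2) of Lemma~\ref{Lem:shifted_involution}: since $\overline{abc}^\theta=\bar a\cdot\overline{b}^\theta\cdot\bar c$ and the ordinary bar-involution is continuous on $\Hecke^a_G$ (it is $\Z[v^{\pm1}]$-linear and ring-theoretic), the limit $\overline{\bullet}^\infty$ is the composite of the continuous ordinary bar-involution with left/right multiplication operators, hence extends continuously; the extension is bijective because $(\overline{\bullet}^\theta)^2=\mathrm{id}$ for each $\theta$ by Lemma~\ref{Lem:shifted_involution}(1), and this passes to the limit since both $\overline{\bullet}^\theta$ and its square converge pointwise on $\Hecke^a_G$. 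I would spell out the involution property by noting $\overline{H^\infty_x}^\infty$ has leading term $H^\infty_x$ plus strictly lower $\nu$-weight terms — this requires checking the triangularity of $\overline{\bullet}^\theta$ in the $H^\infty$-basis, which follows from the triangularity of the ordinary bar-involution in the $H$-basis with respect to the Bruhat order, combined with the elementary fact that $t_\theta$-translation is order-preserving for $\theta$ large.

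For (2), I would run the standard Kazhdan–Lusztig existence-and-uniqueness argument (as in \cite{KL}) but in the completed setting: given the triangularity of $\overline{\bullet}^\infty$ established in (1), one constructs $C^\infty_x=H^\infty_x+\sum_{y}p_{yx}(v)H^\infty_y$ with $p_{yx}\in v^{-1}\Z[v^{-1}]$ by descending induction on $\langle\nu,y\rangle$ (which is well-founded on each finite Bruhat-layer intersected with the support condition), solving the fixed-point equation layer by layer; the $\Z[v^{-1}]$-integrality and the $v^{-1}$-positivity of exponents force uniqueness exactly as in the classical case. The only subtlety is convergence of the resulting infinite sum, which holds because at each step only finitely many new $y$ with bounded $\langle\nu,y\rangle$ are involved, so $C^\infty_x\in v^{-1}\mathrm{Span}^{conv}_{\Z[v^{-1}]}(H^\infty_y)$ as required. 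Finally, for (3), I would compare $C^\infty_x$ with $C_{xt_\theta}X_{-\theta}$: the latter is bar-invariant for $\overline{\bullet}^\theta$ (since $\overline{C_{xt_\theta}X_{-\theta}}^\theta = \overline{C_{xt_\theta}}X_\theta X_{-\theta}=C_{xt_\theta}X_{-\theta}$ by definition of $\overline{\bullet}^\theta$ and bar-invariance of $C_{xt_\theta}$), and it has the correct triangular leading term $H^\infty_x$ once $\theta$ is large (since $C_{xt_\theta}=H_{xt_\theta}+\sum_{z\prec xt_\theta}c_{z,xt_\theta}H_z$ and in the stable range $z$ ranges over $yt_\theta$ with $y$ in a fixed finite set, giving $C_{xt_\theta}X_{-\theta}=H^\infty_x+\sum_y c_{yt_\theta,xt_\theta}H^\infty_y$). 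By the uniqueness in (2), $C^\infty_x=\lim_{\theta\to\infty}C_{xt_\theta}X_{-\theta}$ and the coefficient of $H^\infty_y$ is $c_{xt_\theta,yt_\theta}$ for $\theta$ large. The main obstacle, I expect, is the bookkeeping in (0)–(1) showing that $t_\theta$-translation is eventually Bruhat-order-preserving and $\ell$-additive on the relevant finite sets, and that all the manipulations stay inside the $\nu$-finiteness completion; the ``hard analytic input'' — that the limits exist at all — has already been extracted in Proposition~\ref{Prop:stabilization_bar}, so what remains is genuinely the classical KL machinery transported across that completion.
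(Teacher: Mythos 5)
There are genuine gaps here, and two of your assertions are false. In (0) you claim that $\ell(xt_\theta)=\ell(x)+\ell(t_\theta)$ for every $x\in W^a$ once $\theta\in\mathfrak{X}^+(\underline{G})$ is large, so that $H^\infty_x=H_x$ "on the nose". This is wrong: take $x=t_{-\omega}$ with $\omega\in\mathfrak{X}^+(\underline{G})$ nonzero; then $xt_\theta=t_{\theta-\omega}$, length additivity fails for every $\theta$, and $H^\infty_x=X_{\theta-\omega}X_{-\theta}=X_{-\omega}=H_{t_\omega}^{-1}\neq H_{t_{-\omega}}=H_x$. Additivity holds only for $x=wt_\zeta$ whose translation part is nonnegative on the coroots outside $\underline{G}$ — this is exactly the length computation in Step 1 of the paper's proof — and the stabilization of $H_{xt_\theta}X_{-\theta}$ for general $x$ comes from first translating $x$ into that cone, not from the (false) identity $H^\infty_x=H_x$. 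In (1), your main continuity argument rests on the ordinary bar-involution being continuous; it is not, in this topology: $X_{-n\theta}\to 0$ while $\overline{X_{-n\theta}}=H_{w_0}X_{n\theta^*}H_{w_0}^{-1}$ has $\nu$-weights tending to $+\infty$. The discontinuity of $\overline{\bullet}$ is precisely why the shifted involutions $\overline{\bullet}^\theta$ and their limit are introduced. Continuity of $\overline{\bullet}^\infty$ must instead come from its triangularity in the basis $H^\infty_x$ with respect to $\preceq^\infty$ (which you mention, but only as an afterthought), and the involution property does not "pass to the limit": $(\overline{\bullet}^\infty)^2=\mathrm{id}$ involves an interchange of limits you do not justify. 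In the paper this property is obtained only at the very end, as a consequence of having produced the topological basis $\{C^\infty_x\}$ of $\overline{\bullet}^\infty$-fixed elements; since your KL-type induction in (2) uses the involution property as input, your plan is circular as it stands.

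The real content of (3) — that $c_{xt_\theta,yt_\theta}$ is eventually independent of $\theta$ — is assumed rather than proved in your sketch. Your claim that in the stable range the terms of $C_{xt_\theta}$ range over $yt_\theta$ with $y$ in a fixed finite set is false: the Bruhat interval below $xt_\theta$ grows without bound as $\theta$ grows (and all coefficients $c_{xt_\theta,z}$ with $z\preceq xt_\theta$ are nonzero), so $C^\infty_x$ is a genuinely infinite convergent sum — this is why the completion and the convergent span appear in the statement at all. Moreover, for fixed $\theta$ the element $C_{xt_\theta}X_{-\theta}$ is fixed by $\overline{\bullet}^\theta$, not by $\overline{\bullet}^\infty$, so the uniqueness in (2) cannot be applied to it before one has proved coefficientwise stabilization and identified the limit as $\overline{\bullet}^\infty$-fixed. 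The paper supplies exactly this: Proposition \ref{Prop:stabilization_bar} is translated into the statement that the coefficient of $H_{yt_\theta}$ in $\overline{H_{xt_\theta}}$ stabilizes, then the induced involutions on the quotients $\Hecke^a_{G,\preceq xt_\theta}/\Hecke^a_{G,\preceq yt_\theta}$ for different large $\theta$ are compared, forcing the canonical bases, hence the polynomials $c_{xt_\theta,yt_\theta}$, to stabilize; $C^\infty_x$ is then defined directly by these stabilized coefficients, and (1)–(3) follow. These are the steps you would need to supply.
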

\begin{proof}
The proof is in several steps. We write $\Delta,\Delta_{\underline{G}}$ for the root systems of
$\g,\underline{\g}$. The superscript ``+''means the system of positive roots.

{\it Step 1}. Let $x=wt_\zeta$ for $w\in W,\zeta\in \mathfrak{X}(T)$. Suppose that $\langle \zeta,\alpha^\vee\rangle<0$ and $\alpha>0\Rightarrow \alpha\in \Delta_{\underline{G}}$. Suppose that $w=w'w''$, where $w''\in W_{\underline{G}}$
and $w'\in W^{\underline{G},-}$.
We have
\begin{align*}
&\ell(x)=\sum_{\alpha, w(\alpha)>0}|\langle\zeta,\alpha^\vee\rangle|+
\sum_{\alpha>0, w(\alpha)<0}|1+\langle\zeta,\alpha^\vee\rangle|=
\sum_{\alpha\in \Delta^+\setminus \Delta^+_{\underline{G}}, w(\alpha)>0}\langle\zeta,\alpha^\vee\rangle
+\\
& \sum_{\alpha\in \Delta^+\setminus \Delta^+_{\underline{G}}, w(\alpha)<0}(1+\langle\zeta,\alpha^\vee\rangle)+
\sum_{\alpha\in \Delta_{\underline{G}}^+, w(\alpha)>0}|\langle\zeta,\alpha^\vee\rangle|+
\sum_{\alpha\in \Delta_{\underline{G}}^+, w(\alpha)<0}|1+\langle\zeta,\alpha^\vee\rangle|\\
&=\ell(w')+\sum_{\alpha\in \Delta^+\setminus \Delta^+_{\underline{G}}}\langle\zeta,\alpha^\vee\rangle+
\sum_{\alpha\in \Delta_{\underline{G}}^+, w''(\alpha)>0}|\langle\zeta,\alpha^\vee\rangle|+
\sum_{\alpha\in \Delta_{\underline{G}}^+, w''(\alpha)<0}|1+\langle\zeta,\alpha^\vee\rangle|.
\end{align*}

{\it Step 2}.
Recall that $\preceq$ denotes the Bruhat order on $W^a$. Then we have the following well-defined order $\preceq^\infty$ on $W^a$: $x\preceq^\infty y$ if and only if $xt_\theta\preceq yt_\theta$ for all large enough $\theta\in \mathfrak{X}^+(\underline{G})$. Note that $\hat{\Hecke}^a_G$ is the completion with respect to $\preceq^\infty$.

Fix $x\in W^a$ as in Step 1 and take  $\theta\in \mathfrak{X}^+(\underline{G})$.
Then $H_{t_\theta}=X_\theta$ and $\ell(x)+\ell(t_\theta)=\ell(xt_\theta)$, hence
\begin{equation}\label{eq:standard_shift}
H_x X_{\theta}=H_{xt_\theta}.
\end{equation}

 From here it follows that $H^\infty_x$ is indeed well-defined for all $x\in W^a$.
And these elements form a topological basis in $\hat{\mathcal{H}}^a_G$.
This implies (0).

Also, for $x\in W^a$ as in Step 1 and $\theta\in \mathfrak{X}^+(\underline{G})$,
(\ref{eq:standard_shift}) implies that  $\overline{H}_x^\theta=\overline{H_{xt_\theta}}X_{-\theta}$.
From here it follows that
\begin{equation}\label{eq:involution_basis}
\overline{H^\infty_x}^\infty\in
H^\infty_x+\operatorname{Span}_{\Z[v^{\pm 1}]}(H^\infty_y| y\prec^\infty x).
\end{equation}
Since the elements $H^\infty_x$ form a topological basis in $\hat{\Hecke}^a_G$,
(\ref{eq:involution_basis}) implies that $\overline{\bullet}^{\infty}$ extends
to a continuous map $\hat{\Hecke}^a_G\rightarrow \hat{\Hecke}^a_G$.
(\ref{eq:involution_basis}) also shows the collection of elements $C^\infty_x$ is unique if it exists.

{\it Step 3}.
The existence of the limit of $\overline{\bullet}^\theta$ (Proposition \ref{Prop:stabilization_bar})
can be interpreted as follows. Take $x=wt_\zeta\in W^a$ to be as in Step 1 and $\theta\in \mathfrak{X}^+(\underline{G})$.
Thanks to (\ref{eq:standard_shift}),
Proposition \ref{Prop:stabilization_bar} means that for all $x,y\in W^a$,
and all large enough $\theta\in \mathfrak{X}^+(\underline{G})$ the coefficient of
$H_{yt_\theta}$ in $\overline{H_{xt_\theta}}$ is independent of $\theta$.

{\it Step 4}.
Let us show $c_{xt_\theta, yt_\theta}$ is independent of
$\theta$ as long as $\theta$ is large enough. For an element
$x\in W^a$, let $\Hecke^a_{G,\preceq x}$ denote the span
of all $H_z$ with $z\preceq x$. Consider the quotient
$\Hecke^a_{G,\preceq xt_\theta}/\Hecke^a_{G,\preceq yt_\theta}$.
This quotient has a well-defined involution induced by the usual
bar-involution. Moreover,
such quotients for different $\theta,\theta'\gg 0$ are identified:
via $x t_{\theta}\mapsto x t_{\theta'}$. The previous step shows
that these identifications intertwine the involutions. It follows that
the identifications intertwine the canonical bases for these involutions.
Hence $c_{xt_\theta,yt_\theta}$ is independent of $\theta$
as long as $\theta$ is large enough. We will write $c_{xy}^\infty$
for that coefficient.
%Consider the $\Z[v^{\pm 1}]$-module
%$M$ with basis $m_z$ with $yt_\theta\preceq zt_\theta\preceq xt_\theta$.
%Note that the labelling set for the basis elements is independent of
%$\theta$ as long as $\theta$ is large enough. Consider the $\Z[v^{\pm 1}]$-linear
%embedding $M\hookrightarrow \Hecke^a_G$ given by $m_z\mapsto H_{xt_\theta}X_{-\theta}$.
%Note that this embedding identifies $M$ with a subquotient
%We get the involution on $M_\theta$ by pulling back $\overline{\bullet}^\theta$
%under that embedding. Note that both the $\Z[v^{-1}]$-lattice spanned by
%the basis elements $M_z$ and the involution are independent of $\theta$
%as long as $\theta$ is large enough. The latter follows from the previous paragraph.
%It follows that the canonical basis for these data is independent of the
%choice of an involution. This implies the claim in the beginning of the paragraph.
%This establishes (2). Let us denote this stable
%element by $c^\infty_{x,y}$.

Note that, by the construction, we have
$c^\infty_{xt_\theta, yt_\theta}=c^\infty_{x,y}$
for all $\theta\in \mathfrak{X}(\underline{G})$ and $c^\infty_{x,y}\neq 0\Rightarrow
y\preceq^\infty x$.

{\it Step 5}. Set
$$C^\infty_x=\sum_{y\preceq^\infty x} c^\infty_{x,y}H^\infty_y.$$
This sum makes sense in $\hat{\Hecke}^a_G$. Note that $c^\infty_{xx}=1$
and $c^\infty_{xy}\in v^{-1}\Z[v^{-1}], c_{xy}^\infty \neq 0\Rightarrow
y\preceq^\infty x$ by the construction. From here we deduce
that $$C^\infty_x\in H^\infty_x+v^{-1}\operatorname{Span}^{conv}_{\Z[v^{-1}]}(H^\infty_y|
y\prec^\infty x).$$
In particular, the elements $C^\infty_x$ form a topological basis in $\hat{\Hecke}^a_G$.
Step 4 shows that $\overline{C^\infty_x}^\infty=C^\infty_x$.
From here we finally deduce that $\overline{\bullet}^\infty$ is an involution.
This finishes the proof.
\end{proof}

\subsection{The basis of simples}\label{SS_final}
Recall that the $\Hecke^a_G$-module $\Hecke^{a,P}_G$
is embedded into $\Hecke^a_G$ as explained in
Section \ref{SS_affine_Hecke_reminder}. This gives rise to an
embedding $\hat{\Hecke}^{a,P}_G\hookrightarrow \hat{\Hecke}^a_G$.
For $x\in W^{a,P}$, set
$H^{\infty,P}_x=\sum_{u\in W_P}(-v)^{-\ell(u)}H^\infty_{xu}$.
These elements form a topological basis in
$\hat{\Hecke}^{a,P}_{G}$.

\begin{Prop}\label{Prop:canon_basis_Sigma}
The following statements hold:
\begin{enumerate}
\item The elements $C^\infty_x$ with $x\in W^{a,P}$ form a
topological basis in $\hat{\Hecke}^{a,P}_{G}$.
\item The kernel of $\hat{\Hecke}^{a,P}_{G}\twoheadrightarrow \,^G\hat{\mathfrak{C}}_{P}$
is topologically spanned by the elements $C^\infty_x$ with
$x$ of the form $u\underline{x}$ with $u\in W^{\underline{G},-}$
and $\underline{x}\in W^{a,P}_{\underline{G}}\setminus {\mathfrak{c}}_{\underline{P}}$.
\item Let $x=u\underline{x}$ be such that $u\in W^{\underline{G},-}$
and $\underline{x}\in {\mathfrak{c}}_{\underline{P}}$. Let $\mathcal{L}$ be the irreducible
module in $\U^\chi_{(0),\F}\operatorname{-mod}^{\underline{Q}}$ labelled
by $x$. The image of $H_u C_{\underline{x}}$ in $\,^G\hat{\mathfrak{C}}_{P}$
coincides with $[\tilde{\Delta}_{\mathcal{L}}]$, while the image of $C^\infty_{x}$
coincides with $[\tilde{\mathcal{L}}]$.
\end{enumerate}
\end{Prop}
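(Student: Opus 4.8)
The plan is to prove Proposition~\ref{Prop:canon_basis_Sigma} by combining the module structure from Section~\ref{SS_Hecke_module}, the explicit computation of $[\tilde{\D}]$ from Proposition~\ref{Prop:D_comput}, and the semi-periodic canonical basis of Section~\ref{SS_KL}, using the uniqueness characterization of the self-dual elements to pin down everything at once.

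First I would treat (1). The key point is that $\hat{\Hecke}^{a,P}_G\subset \hat{\Hecke}^a_G$ is closed under the involution $\overline{\bullet}^\infty$: this follows from (2) of Lemma~\ref{Lem:shifted_involution} together with the fact (recalled in Section~\ref{SS_affine_Hecke_reminder}) that the image of $\operatorname{sgn}_P$ in $\Hecke_{W_P}$ is $C_{w_{0,P}}$, which is bar-invariant. Since the $C^\infty_x$ are characterized among elements of $\hat{\Hecke}^a_G$ by self-duality plus unitriangularity with respect to the $H^\infty_y$ (Proposition~\ref{Prop:canon_basis}(2)), and since for $x\in W^{a,P}$ the finite-level element $C_{xt_\theta}$ lies in $\Hecke^{a,P}_G$ (as $x\in W^{a,P}\Rightarrow xt_\theta\in W^{a,P}$ for $\theta$ dominant, so $C_{xt_\theta}$ expands in the $H^{P}_y$), passing to the limit via Proposition~\ref{Prop:canon_basis}(3) gives $C^\infty_x\in\hat{\Hecke}^{a,P}_G$. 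Unitriangularity with respect to the topological basis $H^{\infty,P}_x$ then shows the $C^\infty_x$, $x\in W^{a,P}$, form a topological basis of $\hat{\Hecke}^{a,P}_G$.

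Next, (3) for the standard objects: the image of $H_uC_{\underline{x}}$ in $\,^G\hat{\mathfrak{C}}_P$. By Corollary~\ref{Cor:Hecke_K_0_action}, under $\upsilon:\,^G\hat{\mathfrak{C}}_{\underline{P}}\xrightarrow{\sim}\hat{K}_0(\U^\chi_{(0),\F}\operatorname{-mod}^{\underline{Q},gr})$ the element $M_{u,x}=H_uC^{\mathfrak{C}}_x$ maps to $[\tilde{\underline{\Delta}}_{\mathcal{L}^u}]$; and $H_uC_{\underline{x}}$ maps to $M_{u,\underline{x}}$ under $\hat{\Hecke}^{a,P}_G\twoheadrightarrow\,^G\hat{\mathfrak{C}}_P\to\,^G\hat{\mathfrak{C}}_{\underline{P}}$ (note $C_{\underline{x}}$ for $\underline{x}\in\mathfrak{c}_{\underline{P}}$ projects to $C^{\mathfrak{C}}_{\underline{x}}$), so it maps to $[\tilde{\underline{\Delta}}_{\mathcal{L}}]$ when $u=1$ and in general to $[\tilde{T}^{-1}_u\tilde{\underline{\Delta}}_{\mathcal{L}}]=[\tilde{\underline{\Delta}}_{\mathcal{L}^u}]=[\tilde{\Delta}_{\mathcal{L}}]$ by Proposition~\ref{Prop:wc_baby_Verma_graded}. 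For (2): the kernel of $\hat{\Hecke}^{a,P}_G\twoheadrightarrow\,^G\hat{\mathfrak{C}}_P$ is, by the cell construction of $\mathfrak{C}_{\underline{P}}$ (it is the largest quotient of $\Hecke^a_{\underline{G}}\otimes_{\Hecke_{W_P}}\operatorname{sgn}_P$ in the two-sided cell of $w_{0,P}$, used in the proof of Lemma~\ref{Lem:K_0_Hecke_module}), induced up from the kernel at the Levi level; by Theorem~\ref{Thm:distinguished_p_dim}(5) applied to $\underline{G}$, that kernel is topologically spanned by the $C^\infty_{\underline{x}}$ with $\underline{x}\notin\mathfrak{c}_{\underline{P}}$, and inducing up (and using Proposition~\ref{Prop:canon_basis}(3) to identify $H_uC^\infty_{\underline{x}}$ with $C^\infty_{u\underline{x}}$ for $u\in W^{\underline{G},-}$ — a length-additivity fact from Step~1 of the proof of Proposition~\ref{Prop:canon_basis}) gives the claimed spanning set.

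Finally, the identification of the image of $C^\infty_x$ with $[\tilde{\mathcal{L}}]$ in (3) is the crux and the main obstacle. The strategy is: the image $\bar{C}^\infty_x\in\,^G\hat{\mathfrak{C}}_P\cong\hat{K}_0(\U^\chi_{(0),\F}\operatorname{-mod}^{\underline{Q},gr})$ is, by (1)--(2), a topological basis element that is unitriangular in terms of the $[\tilde{\Delta}_{\mathcal{L}'}]$; and by Propositions~\ref{Prop:dualities_compat} and~\ref{Prop:D_comput}, the action of $[\tilde{\D}]$ on $\,^G\hat{\mathfrak{C}}_P$ is exactly the continuous extension of $m\mapsto\lim_\theta\overline{m}^\theta$ — i.e.\ it agrees with $\overline{\bullet}^\infty$ under the identification $\upsilon$ (one must check the normalization $v^{-\ell(u_{\underline{G}})}$ versus $H_{u_{\underline{G}}}$ and the $\langle\ell(w_0)\rangle$ shift match (3) of Lemma~\ref{Lem:shifted_involution} — this bookkeeping is where I expect the most friction). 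Hence the image of $C^\infty_x$ is $[\tilde{\D}]$-invariant. Now $\tilde{\mathcal{L}}$ is the unique graded lift of $\mathcal{L}$ fixed by $\tilde{\D}$ (chosen that way in Section~\ref{SS_canonical}), and by Lemma~\ref{Lem:stand_graded_lift} the transition matrix from $[\tilde{\mathcal{L}}']$ to $[\tilde{\Delta}_{\mathcal{L}'}]$ is lower-unitriangular with off-diagonal entries in $v^{-1}\Z[v^{-1}]$; so $[\tilde{\mathcal{L}}]$ is the unique $[\tilde{\D}]$-fixed element unitriangular with respect to the standards and lying in $[\tilde{\Delta}_{\mathcal{L}}]+v^{-1}(\cdots)$. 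Since the image of $C^\infty_x$ satisfies all these properties (self-duality just shown, unitriangularity from (1)--(2) together with the explicit leading term $H^\infty_x\mapsto H_uC^{\mathfrak{C}}_{\underline{x}}\mapsto[\tilde{\Delta}_{\mathcal{L}}]$), the uniqueness forces the image of $C^\infty_x$ to equal $[\tilde{\mathcal{L}}]$, completing the proof.
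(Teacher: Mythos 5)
Your outline for (1) and for the standard-object half of (3) matches the paper (limit of $C_{xt_\theta}$ plus unitriangularity in the $H^{\infty,P}_y$; Corollary \ref{Cor:Hecke_K_0_action} and Proposition \ref{Prop:wc_baby_Verma_graded}). But there is a genuine error in your argument for (2): the identity $H_u C^\infty_{\underline{x}}=C^\infty_{u\underline{x}}$ is false. Length additivity (Step 1 of the proof of Proposition \ref{Prop:canon_basis}) gives $H_uH^\infty_{\underline{x}}=H^\infty_{u\underline{x}}$ for the \emph{standard} elements, but it does not pass to canonical elements: since $\overline{H_u}\neq H_u$ for $u\neq 1$, the element $H_uC^\infty_{\underline{x}}$ satisfies $\overline{H_uC^\infty_{\underline{x}}}^\infty=\overline{H_u}\,C^\infty_{\underline{x}}\neq H_uC^\infty_{\underline{x}}$ (already for $G=\operatorname{SL}_2$, $\overline{H_s}=H_s+(v-v^{-1})$), so it cannot equal the self-dual element $C^\infty_{u\underline{x}}$. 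Hence "inducing the kernel up from the Levi" does not by itself identify the kernel with the span of the $C^\infty_{u\underline{x}}$, $\underline{x}\notin\mathfrak{c}_{\underline{P}}$. The paper instead obtains (2) as a by-product of the self-duality analysis: once one knows that the projection $\pi$ intertwines $\overline{\bullet}^\infty$ with $[\tilde{\D}]$ and that the image of the $\Z[v^{-1}]$-lattice spanned by the $C^\infty_x$ equals the lattice topologically spanned by the $[\tilde{\underline{\Delta}}_{\mathcal{L}}]$'s, the element $\pi(C^\infty_{u\underline{x}})$ for $\underline{x}\notin\mathfrak{c}_{\underline{P}}$ is a self-dual element of $v^{-1}\cdot(\text{lattice})$, hence zero; combined with (1) and (3) this spans the kernel. (Also, your citation of Theorem \ref{Thm:distinguished_p_dim}(5) is at $v=1$ and is not what describes the Hecke-module kernel at the Levi level; what is used there is just that $\mathfrak{C}_{\underline{P}}$ is the cell quotient.)

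Second, the step you dismiss as "normalization bookkeeping" is the actual pivot of (3) and is not automatic from Propositions \ref{Prop:dualities_compat} and \ref{Prop:D_comput} alone: Proposition \ref{Prop:D_comput} expresses $[\tilde{\D}]$ on $\mathfrak{C}_{\underline{P}}$ through $[\underline{\tilde{\D}}]$, so to conclude that $\pi$ intertwines $\overline{\bullet}^\infty$ with $[\tilde{\D}]$ you must first evaluate on the generator: $\pi(C_{w_{0,P}})$ corresponds to $[\tilde{W}^\chi_\F(2\rho_L-2\rho)]$, which is a simple object whose graded lift is normalized to be fixed by $\underline{\tilde{\D}}$ (this is where the distinguished case enters); then Lemma \ref{Lem:shifted_involution}(3) identifies $\overline{1}^\theta$ with the operator appearing in Proposition \ref{Prop:D_comput}, and Proposition \ref{Prop:dualities_compat} together with Lemma \ref{Lem:shifted_involution}(2) propagates the intertwining from the generator to all of $\,^G\hat{\mathfrak{C}}_{\underline{P}}$. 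Without this generator check your "self-duality of $\pi(C^\infty_x)$" is unproved. Finally, your triangularity input "$\pi(H^\infty_x)=[\tilde{\underline{\Delta}}_{\mathcal{L}}]$" is also unjustified as stated: the classes $[\tilde{\underline{\Delta}}_{\mathcal{L}}]$ correspond to the elements $H_uC^{\mathfrak{C}}_{\underline{x}}$, not to images of standard elements $H^\infty_x$; the paper bridges this by comparing the $\Z[v^{-1}]$-lattices spanned by $\{H_uC^\infty_{\underline{x}}\}$ and $\{C^\infty_x\}$ and their common image, and only then invokes the canonical-basis uniqueness argument you have in mind.
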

\begin{proof}
We first prove (1). Note that for $x\in W^{a,P}$ and sufficiently large
$\theta\in \mathfrak{X}(\underline{G})$, we have $xt_\theta\in W^{a,P}$.
Using this and (3) of Proposition \ref{Prop:canon_basis}
we deduce that $C^\infty_x\in \hat{\Hecke}^{a,P}_G$ for all
$x\in W^{a,P}$. By Step 5 of the proof of Proposition
\ref{Prop:canon_basis}, we have $C^\infty_x\in H^\infty_x+
\operatorname{Span}(H^\infty_y| y\prec^\infty x)$. It follows
that the change matrix from $H^{\infty,P}_x$ to $C^\infty_x$
(for $x\in W^{a,P}$) is strictly uni-triangular. This implies (1).
%$$
%By (2) of Lemma \ref{Lem:shifted_involution}, we have
%\begin{equation}\label{eq:duality_compat}
%\overline{bc}^\infty=\overline{b}^\infty \overline{c}, \forall b\in \hat{\Hecke}^a_G,c\in %\Hecke_{W_{\underline{G}}}.
%\end{equation}

%Thanks to (2) of
%Proposition \ref{Prop:canon_basis}, (1) of the present proposition follows from
%the analogous property for the usual Kazhdan-Lusztig basis.

Let us prove (2) and (3). Let $\pi$ denote natural projection
$\hat{\Hecke}^{a,P}_G\twoheadrightarrow \,^G\hat{\mathfrak{C}}_{\underline{P}}$.
Recall the isomorphism $\,^G\hat{\mathfrak{C}}_{\underline{P}}\cong
\hat{K}_0^{{\underline{Q}}\times \C^\times}(\B_e)$ from Corollary \ref{Cor:Hecke_K_0_action}.
Thanks to  Proposition \ref{Prop:D_comput}, we see that
$$[\tilde{\D}]\pi(C_{w_{0,P}})=\lim_{\theta\rightarrow +\infty}H_{u_{\underline{G}}} X_{-\theta^*}H_{u_{\underline{G}}}^{-1}X_{-\theta}[\underline{\tilde{\D}}]\pi(C_{w_{0,P}}).$$
In the case when $G=\underline{G}$, the element
$\pi(C_{w_{0,P}})$ corresponds to the class $[\tilde{W}^\chi_\F(2\rho_L-2\rho)]$
hence is fixed by $[\tilde{\D}]$. In general, we get that $\pi(C_{w_{0,P}})$
is fixed by $[\underline{\tilde{\D}}]$.

Applying (3) of Lemma \ref{Lem:shifted_involution} and Proposition \ref{Prop:D_comput},
we see that $$\pi(\overline{C_{w_{0,P}}}^\infty)=[\tilde{\D}]\pi(C_{w_{0,P}}).$$
Now we can combine Proposition \ref{Prop:dualities_compat} with (2) of
Lemma \ref{Lem:shifted_involution} to see that
$\pi$ intertwines $\overline{\bullet}^\infty$ and $[\tilde{\D}]$.

For  $x=u\underline{x}\in W^{a,P}$, the image of $H_uC_{\underline{x}}$ in $\,^G\hat{\mathfrak{C}}_{\underline{P}}$ is zero if
$\underline{x}\not\in {\mathfrak{c}}_{\underline{P}}$ and coincides with $[\tilde{\Delta}_{\mathcal{L}}]$
if $x\in {\mathfrak{c}}_{\underline{P}}$. This is a consequence of Corollary
\ref{Cor:Hecke_K_0_action}.

It remains that show that $\pi(C^\infty_x)$ is the simple labelled by $x$ if $\underline{x}\in \mathfrak{c}_{\underline{P}}$ and $0$ else.
Note that the topological $\Z[v^{-1}]$-spans of $H_u C^\infty_{\underline{x}}$ and of $C^\infty_x$
in $\hat{\Hecke}^{a,P}_G$ coincide.  The image of this $\Z[v^{-1}]$-lattice in $\,^G \hat{\mathfrak{C}}_{\underline{P}}$
coincides with the $\Z[v^{-1}]$-lattice topologically spanned by $[\tilde{\Delta}_{\mathcal{L}}]$'s.
The elements $\pi(C^\infty_{u\underline{x}})$, where $\underline{x}\in {\mathfrak{c}}_{\underline{P}}$,
in $\,^G\hat{\mathfrak{C}}_{\underline{P}}$  satisfy the canonical basis conditions analogous to those of
(1) of Proposition \ref{Prop:canon_basis}. So do the classes $[\tilde{\mathcal{L}}]$
thanks to the self-duality property and Lemma
\ref{Lem:stand_graded_lift}.  For the same reason as in Step 2 of
the proof of Proposition \ref{Prop:canon_basis}, $[\tilde{\mathcal{L}}]=\pi(C^\infty_{u\underline{x}})$.
This shows (3). And if $\underline{x}\not\in {\mathfrak{c}}_{\underline{P}}$, then
$$C^\infty_x\in v^{-1}\operatorname{Span}^{conv}_{\Z[v^{-1}]}([\tilde{\mathcal{L}}]),$$
where the span is taken over all simples $\mathcal{L}$.
 Such an element can only
be self-dual if it is equal to zero. This completes the proof.
\end{proof}

Our next result implies Theorem \ref{Thm_dim_general}.

\begin{Thm}\label{Thm:K0_class_general}
Let $x=u\underline{x}\in W^{a,P}$. Suppose that $\underline{x}\in {\mathfrak{c}}_{\underline{P}}$ and $u\in W^{\underline{G},-}$. Let $L^\chi_{x,\F}$ be the corresponding simple
object in $\U^\chi_{(0),\F}\operatorname{-mod}^{\underline{Q}}$.
We have the following identity in $\hat{K}_0$:
\begin{equation}\label{eq:k0_general1}[L^\chi_{x,\F}]=\sum_{y\in W^{a,P}}c^\infty_{x,y}(1) [W^\chi_{\F}(\mu_y)].
\end{equation}
And if $\underline{x}\not\in {\mathfrak{c}}_{\underline{P}}$, we get
\begin{equation}\label{eq:k0_general2}0=\sum_{y\in W^{a,P}}c^\infty_{x,y}(1) [W^\chi_{\F}(\mu_y)].
\end{equation}
\end{Thm}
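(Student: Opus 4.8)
The plan is to deduce Theorem \ref{Thm:K0_class_general} by specializing at $v=1$ a graded refinement that is essentially a repackaging of Proposition \ref{Prop:canon_basis_Sigma}. First I would observe that both identities live entirely in the $\kappa=0$ summand: the $\chi$-Weyl modules $W^\chi_\F(\mu_y)$ with $y\in W^{a,P}$ all lie in $\U^\chi_{(0),\F}\operatorname{-mod}^{\underline{Q},0}$, and by the labelling of Section \ref{SS_canonical} so do the simples $L^\chi_{x,\F}$ for $x=u\underline{x}$ with $u\in W^{\underline{G},-}$; hence it suffices to prove the two identities in $\hat{K}_0(\U^\chi_{(0),\F}\operatorname{-mod}^{\underline{Q},0})$. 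I would then lift to the graded category $\U^\chi_{(0),\F}\operatorname{-mod}^{\underline{Q},gr}$ of Section \ref{SS_canonical}, using the self-dual graded lifts $\tilde{L}^\chi_{x,\F}=\tilde{\mathcal{L}}$ and the graded lifts $\tilde{W}^\chi_\F(\mu_y)$, and aim at the graded identities $[\tilde{L}^\chi_{x,\F}]=\sum_{y\in W^{a,P}}c^\infty_{x,y}(v)[\tilde{W}^\chi_\F(\mu_y)]$ when $\underline{x}\in\mathfrak{c}_{\underline{P}}$ and $0=\sum_y c^\infty_{x,y}(v)[\tilde{W}^\chi_\F(\mu_y)]$ otherwise in $\hat{K}_0(\U^\chi_{(0),\F}\operatorname{-mod}^{\underline{Q},gr})$. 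Applying the exact forgetful functor, which is surjective on $\hat{K}_0$, sends $v\mapsto 1$, $[\tilde{\mathcal{L}}]\mapsto[L^\chi_{x,\F}]$ and $[\tilde{W}^\chi_\F(\mu_y)]\mapsto[W^\chi_\F(\mu_y)]$, then yields the theorem; convergence of the right-hand sums is built into the definitions of $\hat{K}_0$ and $\hat{\Hecke}^{a,P}_G$, so no estimates are needed.

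For the graded identities I would run everything through the chain of identifications already in place: $\hat{K}_0(\U^\chi_{(0),\F}\operatorname{-mod}^{\underline{Q},gr})\cong{}^G\hat{\mathfrak{C}}_{\underline{P}}$ from Corollary \ref{Cor:Hecke_K_0_action} and the surjection $\pi\colon\hat{\Hecke}^{a,P}_G\twoheadrightarrow{}^G\hat{\mathfrak{C}}_{\underline{P}}$ from Section \ref{SS_final}. By Proposition \ref{Prop:canon_basis}(3) (and the defining property of the semiperiodic parabolic Kazhdan--Lusztig polynomials, $c^\infty_{x,y}(v)=\lim_\theta c^P_{xt_\theta,yt_\theta}(v)$) one has the converging expansion $C^\infty_x=\sum_{y\in W^{a,P}}c^\infty_{x,y}(v)\,H^{\infty,P}_y$ in $\hat{\Hecke}^{a,P}_G$. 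Proposition \ref{Prop:canon_basis_Sigma}(2),(3) give $\pi(C^\infty_x)=[\tilde{\mathcal{L}}]$ when $\underline{x}\in\mathfrak{c}_{\underline{P}}$ and $\pi(C^\infty_x)=0$ otherwise. Hence the whole statement reduces to the single identification
\begin{equation}\label{eq:pi_standard}
\pi(H^{\infty,P}_y)=[\tilde{W}^\chi_\F(\mu_y)],\qquad y\in W^{a,P},
\end{equation}
after which applying $\pi$ to the expansion of $C^\infty_x$ produces exactly the graded identities above.

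To establish \eqref{eq:pi_standard} I would write $y=u\underline{y}$ with $u\in W^{\underline{G},-}$ and $\underline{y}\in W^{a,P}_{\underline{G}}$, so that $H^{\infty,P}_y=H_u\,H^{\infty,P}_{\underline{y}}$ (length additivity since $u$ is shortest in $uW_{\underline{G}}$ and $W_P\subseteq W_{\underline{G}}$). On the $\underline{G}$ side, the distinguished case, Theorem \ref{Thm:distinguished_p_dim} applied to $\underline{G}$, together with the graded isomorphism $\underline{\upsilon}$ of Lemma \ref{Lem:K_0_Hecke_module}, identifies $\underline{\pi}(H^{\infty,P}_{\underline{y}})$ with the class of the graded $\underline{\g}$-Weyl module $[\tilde{\underline{W}}^\chi_\F(\mu_{\underline{y}})]$ in $\hat{\mathfrak{C}}_{\underline{P}}$. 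Then applying the $H_u$-action and invoking Proposition \ref{Prop:baby_Verma_local}, which realizes $\underline{\Delta}_\nu$ as $\xi_*$, gives $\pi(H_u\otimes H^{\infty,P}_{\underline{y}})=[\tilde{\underline{\Delta}}_\nu(\tilde{\underline{W}}^\chi_\F(\mu_{\underline{y}}))]=[\tilde{W}^\chi_\F(\mu_y)]$; the last equality is the statement that $\underline{\Delta}_\nu$ of a $\underline{\g}$-Weyl module is the corresponding $\g$-Weyl module, with the base-point translation $-2\rho\leftrightarrow 2\underline{\rho}-2\rho$ being precisely the one already used in Proposition \ref{Prop:baby_Verma_local}. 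The matching of signs and internal grading shifts between $H^{\infty,P}_y$ and $[\tilde{W}^\chi_\F(\mu_y)]$ is then forced by the graded analogue of Lemma \ref{Lem:W_aff_action} (proved by the same argument using the graded wall-crossing functors) together with $\Hecke^a_G$-equivariance of $\pi$, so that no signs survive at $v=1$.

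The main obstacle I expect is exactly \eqref{eq:pi_standard}: morally it is the assertion that the standard basis of the semiperiodic parabolic Hecke module goes to the $\chi$-Weyl classes, but pinning it down requires simultaneously (a) the graded distinguished-case result for the Levi $\underline{G}$, (b) tracking the anti-dominant base-point shift between $\g$ and $\underline{\g}$, (c) matching the $H_u$-action with the baby-Verma/$\xi_*$ functor via Proposition \ref{Prop:baby_Verma_local}, and (d) checking the sign and grading-shift bookkeeping against Lemma \ref{Lem:W_aff_action}. Everything else — the reduction to the $\kappa=0$ block, the passage between graded and ungraded $\hat{K}_0$, and the final substitution into $C^\infty_x=\sum_y c^\infty_{x,y}(v)H^{\infty,P}_y$ — is routine on top of Sections \ref{SS_Hecke_module}--\ref{SS_final}.
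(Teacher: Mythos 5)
Your proposal is correct and follows essentially the same route as the paper: the paper's own proof is exactly the combination of Proposition \ref{Prop:canon_basis_Sigma}(2),(3) (giving $\pi(C^\infty_x)=[\tilde{\mathcal{L}}]$ or $0$) with Theorem \ref{Thm:distinguished_p_dim} for the Levi, Corollary \ref{Cor:Hecke_K_0_action}/Proposition \ref{Prop:baby_Verma_local} and Lemma \ref{Lem:W_aff_action}, which is precisely your identification of the images of the standard basis elements with the $\chi$-Weyl classes. The only caveat is that your intermediate graded identity involving classes $[\tilde{W}^\chi_\F(\mu_y)]$ is not needed (graded lifts of the non-simple $\chi$-Weyl modules are never constructed, so that statement is not quite well-posed as written), but since you specialize at $v=1$ in the end this does not affect the argument.
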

\begin{proof}
Theorem \ref{Thm:distinguished_p_dim} expresses the classes of
$[\underline{\Delta}_{\mathcal{L}}]$ via the classes  $[W^\chi_{\F}(\mu_y)]$.
Now (\ref{eq:k0_general1}), (\ref{eq:k0_general2}) follow
from (3) and (2) of Proposition \ref{Prop:canon_basis_Sigma}.
\end{proof}

\subsection{From equivariantly irreducible to usual irreducible}\label{SS_irred}
The goal of this section is to explain how to compute the dimensions of
the irreducible $\U^\chi_{(0),\F}$-modules. Let $V$ be an equivariantly
irreducible module. By Lemma \ref{Lem:irred_reln}, it is completely
reducible and all of its irreducible summands have the same dimension.
Every irreducible $\U^{\chi}_{(0),\F}$-module $U$ occurs in some $V$
so to compute the dimension of $U$ one needs to divide the dimension of
$V$ by the number of irreducible summands.
The computation of this number easily reduces to the case when $\chi$
is distinguished: by taking highest weight spaces for $\nu$.

Before considering the general case, let us discuss a relatively easy case
when $e$ is principal in  $\ug$. Here we consider the category
$\U^\chi_{(0),\F}\operatorname{-mod}^{Z(\underline{G})}$. A more traditional
category to consider would be  $\U^\chi_{(0),\F}\operatorname{-mod}^{Z(\underline{G})^\circ}$
of weight modules over $\U^\chi_{(0),\F}$. Let us compare the simple objects in these
two categories.

\begin{Lem}\label{Lem:irred_principal}
Every irreducible object in $\U^\chi_{(0),\F}\operatorname{-mod}^{Z(\underline{G})}$
remains irreducible in $\U^\chi_{(0),\F}\operatorname{-mod}^{Z(\underline{G})^\circ}$.
\end{Lem}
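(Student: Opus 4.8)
The plan is to analyze the branching of simple objects in $\U^\chi_{(0),\F}\operatorname{-mod}^{Z(\underline{G})}$ upon restricting the equivariance to the connected torus $Z(\underline{G})^\circ$. The key point is that $e$ being principal in $\ug$ forces the situation to be as rigid as possible: each block $\underline{\U}^\chi_{\underline{\lambda},\F}$ is just $\F$, so the highest-weight-space functor for a generic $\nu\colon \mathbb{G}_m\to Z(\underline{G})^\circ$ carries a simple object of $\U^\chi_{(0),\F}\operatorname{-mod}^{Z(\underline{G})^\circ}$ (or of $\U^\chi_{(0),\F}\operatorname{-mod}^{Z(\underline{G})}$) to a one-dimensional $Z(\underline{G})$-equivariant, hence $Z(\underline{G})^\circ$-equivariant, module over the relevant Levi enveloping algebra. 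First I would invoke Lemma \ref{Lem:irred_reln}: an irreducible $U\in \U^\chi_{(0),\F}\operatorname{-mod}^{Z(\underline{G})}$ decomposes over $\U^\chi_{(0),\F}$ (and hence over the subcategory with $Z(\underline{G})^\circ$-equivariance) into irreducibles that form a single orbit under $Z(\underline{G})/Z(\underline{G})^\circ$, all of the same dimension and all appearing with the same multiplicity; so what must be shown is that this orbit is a single point, i.e., the component group $Z(\underline{G})/Z(\underline{G})^\circ$ acts trivially on the isomorphism class, equivalently $U$ stays irreducible after restricting the equivariant structure.

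The main step is then to show that $\Hom_{\g_\F}(U', U)$, for $U'$ an irreducible $\U^\chi_\F$-module summand of $U$, is one-dimensional, not merely an irreducible projective representation of $Z(\underline{G})/Z(\underline{G})^\circ$. I would reduce this to the distinguished case inside $\ug$ — here $e$ is principal in $\ug$, so the relevant finite group is $A_{\ug}=Z_{\underline{G}}(e)/Z_{\underline{G}}(e)^\circ$, and by the discussion preceding Theorem \ref{Thm:distinguished_p_dim} the simples in $\underline{\U}^\chi_{\lambda,\F}\operatorname{-mod}^{A_{\ug}}$ are in bijection with $W/\underline{W}$ and each underlying $\underline{\U}^\chi_{\lambda,\F}$-module is $\F$, which is automatically irreducible with no multiplicity. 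Since $Z(\underline{G})^\circ = Z_{\underline{G}}(e)^\circ$ in the principal case (the centralizer of a principal nilpotent in a Levi has connected component equal to the center of that Levi, as $\ug$ is reductive and $p\gg 0$), the $Z(\underline{G})^\circ$-weight-space functor identifies $\U^\chi_{(0),\F}\operatorname{-mod}^{Z(\underline{G})^\circ}$-simples with $Z(\underline{G})^\circ$-equivariant simples over the Levi $L$, and the same functor applied to $\U^\chi_{(0),\F}\operatorname{-mod}^{Z(\underline{G})}$-simples gives $Z(\underline{G})$-equivariant (i.e. $L$-module-plus-a-character-lattice-refinement) simples; since the $L$-module in question is one-dimensional, restricting the equivariance changes nothing. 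Pulling this back through the baby Verma induction functor $\underline{\Delta}_\nu$ (which is exact, sends simples to modules with simple head, and commutes with the restriction of equivariance) shows the induced simple object has simple head already over $Z(\underline{G})^\circ$, and a parallel socle argument using the dual baby Verma functor $\underline{\nabla}_\nu$, together with Proposition \ref{Prop:D_baby} relating the two, upgrades "simple head" to "simple".

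Concretely I would carry out the steps in this order: (i) record that $Z(\underline{G})^\circ = Z_{\underline{G}}(e)^\circ$ when $e$ is principal in $\ug$, so that both equivariance categories have the $\nu$-highest-weight functor available to the same Levi $L$; (ii) observe that for such $e$ the relevant simple modules over $L$ (central characters built from $\mu_x$, $x$ in the appropriate cell) are one-dimensional, so $Z(\underline{G})$-equivariance versus $Z(\underline{G})^\circ$-equivariance carries no extra information at the Levi level; (iii) use exactness and adjunction properties of $\underline{\Delta}_\nu$, $\underline{\nabla}_\nu$ (as in Section \ref{SS_K_0_general}) to transport irreducibility from $L$ back up to $\g$; (iv) conclude via Lemma \ref{Lem:irred_reln}(2) that the $Z(\underline{G})/Z(\underline{G})^\circ$-orbit is trivial. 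The hard part will be step (iii): making precise that the head and socle of $\underline{\Delta}_\nu(\text{simple})$ remain simple after restricting equivariance, which requires knowing that the multiplicity spaces $\Hom_{\g_\F}$ do not acquire higher dimension — this is exactly where one uses that the inducing datum is one-dimensional (so the relevant $\operatorname{End}$ is $\F$) rather than an arbitrary simple $L$-module, and this is the only place the "principal in $\ug$" hypothesis is genuinely essential.
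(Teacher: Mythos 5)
Your proposal has a genuine gap: nowhere do you use the commutativity of $Z(\underline{G}_\F)$, and that is the actual crux of the lemma. What has to be excluded is that a simple object $U$ of $\U^\chi_{(0),\F}\operatorname{-mod}^{Z(\underline{G})}$ has underlying $\U^\chi_{(0),\F}$-module of the form $V^{\oplus n}$ with $n>1$, where $n$ is the dimension of the irreducible (projective) representation $\Hom_{\g_\F}(V,U)$ of $Z(\underline{G}_\F)$ from Lemma \ref{Lem:irred_reln}. Your justifications for multiplicity one --- that the inducing $L$-module is one-dimensional and that ``the relevant $\operatorname{End}$ is $\F$'' --- do not address this: $\operatorname{End}$ of a simple module is always $\F$, and a one-dimensional inducing datum is perfectly compatible with a higher-dimensional multiplicity space when the equivariance group has higher-dimensional irreducibles; this is exactly the phenomenon that does occur for nonabelian $\underline{Q}$, see Section \ref{SS_irred}, where nontrivial multiplicities and even nontrivial Schur multipliers appear. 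Indeed, already your opening assertion that the $\nu$-highest-weight space of a simple $Z(\underline{G})$-equivariant object is \emph{one-dimensional} is precisely the unproved point: a simple $Z(\underline{G})$-equivariant module over a block Morita-equivalent to $\F$ is just an irreducible representation of $Z(\underline{G}_\F)$, and its one-dimensionality is where commutativity (together with the fact that the Morita equivalence is implemented by the genuinely equivariant $\chi$-Weyl module, so no projective cocycle intervenes) must be invoked. The paper's proof is exactly this: reduce to $\underline{G}$, where $\chi$ is principal and the algebra is Morita-trivial via the irreducible $\chi$-Weyl modules, so the equivariant categories become $\operatorname{Rep}(Z(\underline{G}_\F))$ and $\operatorname{Rep}(Z(\underline{G}_\F)^\circ)$, and then conclude because every irreducible representation of the commutative group $Z(\underline{G}_\F)$ is one-dimensional, hence restricts irreducibly to $Z(\underline{G}_\F)^\circ$. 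Without some form of this input your steps (ii)--(iv) do not go through.

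Two further problems. First, step (i) is false as stated: for $e$ principal in $\ug$ the group $Z_{\underline{G}}(e)^\circ$ contains a unipotent part of dimension equal to the rank of $\ug$, so it is not $Z(\underline{G})^\circ$; what is true, and what you actually need, is that the reductive part of $Z_{\underline{G}}(e)$ is $Z(\underline{G})$, so that $T_0=Z(\underline{G})^\circ$ and the relevant equivariance group $\underline{Q}$ equals $Z(\underline{G})$. Second, ``simple head and simple socle plus self-duality'' does not imply simplicity, so step (iii) as described would not close even granting the Levi-level statement. A correct way to transport simplicity upward is: once the highest-weight object $\underline{U}$ stays simple $Z(\underline{G})^\circ$-equivariantly, $\underline{\Delta}_\nu(\underline{U})$ has a \emph{unique} maximal proper equivariant submodule for either equivariance group (any proper submodule meets the generating top $\nu$-weight space trivially), and by uniqueness the maximal proper $Z(\underline{G})^\circ$-equivariant submodule is $Z(\underline{G})$-stable, hence coincides with the maximal proper $Z(\underline{G})$-equivariant one; so the two heads agree and $U$ stays simple. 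With that argument supplied, and with the commutativity of $Z(\underline{G}_\F)$ used at the Levi step, your outline can be repaired, but as written the key step is missing.
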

\begin{proof}
Note that we can replace $G$ with $\underline{G}$ and also consider $\U^\chi_{0,\F}$
instead of $\U^\chi_{(0),\F}$. The element $\chi$ is now principal.
The algebra $\U^\chi_{0,\F}$ is Morita equivalent
to $\F$ and the Morita equivalence is given by any $\chi$-Weyl module,
they are all irreducible. So we reduce to showing that the restriction
of an irreducible $Z(\underline{G}_\F)$-module to $Z(\underline{G}_\F)^\circ$
is irreducible. Since   $Z(\underline{G}_\F)$ is commutative, every irreducible
module is one-dimensional, and our result follows.
\end{proof}

In particular, one can easily recover the multiplicities in $\U^\chi_{(0),\F}\operatorname{-mod}^{Z(\underline{G})^\circ}$
if one knows the multiplicities in
$\U^\chi_{(0),\F}\operatorname{-mod}^{Z(\underline{G})}$
but not vice versa.

It turns out that the former multiplicities are
the coefficients of the canonical basis elements associated
to periodic W-graphs.
Let us elaborate on this. Lusztig, \cite[Section 8]{Lusztig_K2},
identified $K_0^{T_0\times \C^\times}(\B_e)$ with the periodic $\Hecke^a_G$-module
he introduced in \cite{Lusztig_periodic}.
The latter (up to completion) comes with two bases: the standard basis and the canonical
basis. Lusztig checked in \cite[Section 10]{Lusztig_K2}, see, in particular, \cite[Proposition 10.7]{Lusztig_K2}, that the resulting canonical
basis is the canonical basis whose existence in the general case he
conjectured in \cite{Lusztig_K1} (in the $Z(\underline{G})^\circ$-equivariant K-theory). He conjectured in
\cite[9.20(a)]{Lusztig_K2} (see also \cite[Conjecture 13.16]{Lusztig_periodic})
that the coefficients of the transition
matrix from the standard basis to the canonical one are in
$\Z_{\geqslant 0}[v]$ (with our sign conventions).

\begin{Prop}\label{Prop:positive_Lusztig}
The positivity conjecture in the previous paragraph is true.
\end{Prop}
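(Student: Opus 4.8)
The plan is to deduce Proposition \ref{Prop:positive_Lusztig} from Theorem \ref{Thm:K0_class_general} together with the Koszulity result (Theorem \ref{Thm:Koszulity}). The key point is that the coefficients of the transition matrix from the standard basis to Lusztig's canonical basis in the periodic module are, up to a grading shift, precisely the graded multiplicities of simple modules in the graded $\chi$-Weyl modules of the category $\U^\chi_{(0),\F}\operatorname{-mod}^{Z(\underline{G})^\circ,gr}$, and these graded multiplicities are manifestly nonnegative polynomials in $v$.

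First I would recall that, by \cite[Section 8, Proposition 10.7]{Lusztig_K2}, Lusztig's canonical basis in the $Z(\underline{G})^\circ$-equivariant (periodic) setting is obtained from $K_0^{T_0\times\C^\times}(\B_e)$; in our notation this is the $v$-adic completion of $K_0(\U^\chi_{(0),\F}\operatorname{-mod}^{Z(\underline{G})^\circ,gr})$, where the graded structure comes from the Koszul grading on $\A_{\h,e}$ via the equivalence (\ref{eq:graded_equiv_p}) (applied with $Z(\underline{G})^\circ$ in place of $\underline{Q}$). Under this identification the standard basis corresponds to the classes $[\tilde{W}^\chi_\F(\mu_y)]$ of graded $\chi$-Weyl modules, while the canonical basis is characterized by bar-invariance with respect to the duality $\tilde{\D}$ and by upper-triangularity — exactly the characterization of $C^\infty_x$ in Proposition \ref{Prop:canon_basis}, restricted to the case $G=\underline{G}$ (equivalently, only length-zero $u$ occur). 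By the uniqueness in Proposition \ref{Prop:canon_basis_Sigma}(3), the image of $C^\infty_x$ is $[\tilde{\mathcal{L}}]$, the self-dual graded lift of the simple object.

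Next I would observe that the coefficient of $[\tilde{W}^\chi_\F(\mu_y)]$ in $[\tilde{\mathcal{L}}_x]$ — which is the relevant transition-matrix entry $c^\infty_{x,y}(v)$ after the appropriate sign/degree normalization — equals the graded multiplicity of the simple $\tilde{\mathcal{L}}_x$ in $\tilde{W}^\chi_\F(\mu_y)$ (a statement of BGG-reciprocity type), or rather its bar-conjugate; more directly, it is a graded composition-series count in the category $\U^\chi_{(0),\F}\operatorname{-mod}^{Z(\underline{G})^\circ,gr}$, which is the module category over the positively graded algebra $\U^\chi_{(0),\F}$. Here I would use that $\U^\chi_{(0),\F}$ is equipped with a positive grading (established in Section \ref{SS_canonical}, right before (\ref{eq:graded_equiv_p})), so that $\tilde{W}^\chi_\F(\mu_y)$, being generated in degree $0$, has all its simple subquotients $\tilde{\mathcal{L}}_x\langle i\rangle$ sitting in degrees $i\geqslant 0$ with respect to the chosen lift; consequently every graded multiplicity polynomial lies in $\Z_{\geqslant 0}[v]$ (with the sign conventions of the paper, which is why the sign appears in the statement). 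Combining this with the identifications of the previous paragraph gives the positivity of the transition matrix coefficients, which is exactly Lusztig's conjecture \cite[9.20(a)]{Lusztig_K2}.

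The main obstacle I expect is bookkeeping of gradings and sign conventions: one must check carefully that the Koszul grading on $\A_{\h,e}$ transported to $\U^\chi_{(0),\F}$ is the one for which $\chi$-Weyl modules are generated in degree $0$ and is bounded below, that the duality $\tilde{\D}$ matches Lusztig's bar-involution on the periodic module (this is essentially Propositions \ref{Prop:dual_localization} and \ref{Prop:dualities_compat} combined with Lemma \ref{Lem:bar_coherent}, but requires matching the normalizations $\langle\ell(w_0)\rangle$ and the twist $H_{w_0}$), and that ``standard basis of the periodic module'' as defined by Lusztig corresponds precisely to $[\tilde{W}^\chi_\F(\mu_y)]$ and not to some shift thereof. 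Once these compatibilities are pinned down, positivity is immediate from the existence of an honest graded abelian category whose graded composition multiplicities realize the coefficients. I would also remark that, as noted in the introduction of the paper, \cite[9.20(a)]{Lusztig_K2} already follows from \cite{BM}; here we are re-deriving it within the present framework, and the argument is essentially the graded-lift/Koszulity mechanism that underlies Theorem \ref{Thm:K0_class_general}.
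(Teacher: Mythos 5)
Your argument breaks at its central step. You identify the canonical basis with the self-dual graded simples $[\tilde{\mathcal{L}}_x]$ and the standard basis with graded $\chi$-Weyl (baby Verma) modules, and then assert that the coefficient of $[\tilde{W}^\chi_\F(\mu_y)]$ in the expansion of $[\tilde{\mathcal{L}}_x]$ equals the graded multiplicity of $\tilde{\mathcal{L}}_x$ in $\tilde{W}^\chi_\F(\mu_y)$, calling this ``a statement of BGG-reciprocity type''. That identity is not BGG reciprocity and is false in general: the matrix expressing the simples in the standard objects is the \emph{inverse} of the graded decomposition matrix $\bigl([\tilde{W}^\chi_\F(\mu_y):\tilde{\mathcal{L}}_x\langle i\rangle]\bigr)$, and entries of an inverse decomposition matrix are not composition counts and are not manifestly nonnegative -- this is exactly why Lusztig's positivity statement is a conjecture rather than a formal consequence of the mere existence of a graded abelian category. (BGG reciprocity relates multiplicities of standard objects in filtrations of \emph{projectives} to multiplicities of simples in dual standard objects; it says nothing about the expansion of a simple in the standard basis.) Your observation that the grading on $\U^\chi_{(0),\F}$ is positive, so composition factors of $\tilde{W}^\chi_\F(\mu_y)$ sit in nonnegative degrees, concerns the decomposition matrix itself and is therefore irrelevant to the transition matrix you need. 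No amount of care with sign conventions or with matching $\tilde{\D}$ to the bar-involution repairs this.

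The paper's proof supplies precisely the ingredient you are missing: it identifies the canonical basis of the periodic module with the classes of graded lifts of the indecomposable \emph{projectives} in $\U^\chi_{(0),\F}\operatorname{-mod}^{T_0}$ (this is where \cite[Theorem 5.3.5]{BM} is invoked), and the standard basis with the classes of graded lifts of the \emph{thick} baby Verma modules $\tilde{\underline{\mathbf{\Delta}}}(\alpha)$, induced from the whole local algebras $\underline{\U}^\chi_{(w\cdot 0),\F}$ -- since in the block category it is these, not the $\chi$-Weyl modules or ordinary baby Vermas, that filter the projectives; that the thick classes transform as the standard basis is checked via Ext-orthogonality against the graded dual baby Vermas. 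With those identifications the transition coefficients become honest graded filtration multiplicities of thick baby Vermas in projectives, and positivity is immediate. If you insist on a simples-based picture you would have to pass to the dual statement (projectives versus simples under the Euler pairing) and verify that Lusztig's periodic canonical basis, in the $T_0$-equivariant setting, matches the projectives rather than the simples; note also that Propositions \ref{Prop:canon_basis} and \ref{Prop:canon_basis_Sigma} concern the semi-periodic module with $\underline{Q}$-equivariance, so they do not by themselves give the identification you use. These are the substantive points your proposal skips, not bookkeeping.
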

\begin{proof}
First, we give a representation theoretic interpretation of the standard basis
in Lusztig's periodic module: as classes of graded lifts of thick baby Verma modules.

We write $\mathfrak{A}$ for the labelling set of baby Verma modules in $\U^\chi_{(0),\F}\operatorname{-mod}^{T_0}$ and $\tilde{\underline{\Delta}}(\alpha)$
for the canonical graded lift of the baby Verma module labelled by $\alpha$
for $\alpha\in \mathfrak{A}$.
By passing from $Z(\underline{G})$-equivariant to $T_0$-equivariant objects, from Corollary \ref{Cor:Hecke_K_0_action} we deduce
that under the action of $\Hecke^a_G$ the classes of graded lifts of baby Verma modules
in $\U^\chi_{(0),\F}\operatorname{-mod}^{T_0}$ transform as the elements of the standard
basis in the periodic module. The classes of graded lifts  of dual baby Verma
modules, to be denoted by $\tilde{\underline{\nabla}}(\alpha)$, are obtained from those for baby Verma modules by duality. So they transform in
the dual way. We can also  consider the {\it thick} baby Verma modules $\underline{\bf{\Delta}}(\alpha)$ in $\U^\chi_{(0),\F}\operatorname{-mod}^{T_0}$. While usual baby Verma modules are parabolically
induced from the one-dimensional representations of the algebras $\underline{\U}^\chi_{(w\cdot 0),\F}$,
the thick baby Verma modules are similarly induced from  $\underline{\U}^\chi_{(w\cdot 0),\F}$
themselves (note that all these algebras are finite dimensional, commutative, and local). As with the baby Verma modules (Lemma \ref{Lem:stand_graded_lift}),
each thick baby Verma module admits a unique graded lift,
to be denoted by $\tilde{\underline{\bf{\Delta}}}(\alpha)$. Note that in the graded category  we have
$$\dim\operatorname{Ext}^i(\tilde{\underline{\bf{\Delta}}}(\alpha),
\tilde{\underline{\nabla}}(\beta))=\delta_{i,0}\delta_{\alpha,\beta}.$$
It follows that under the action of $\Hecke^a_G$, the classes
$[\tilde{\underline{\bf{\Delta}}}(\alpha)]$ transform as the standard basis
of the periodic module. We identify the $K_0$ group of the exact category of
$\tilde{\underline{\bf{\Delta}}}$-filtered module with the periodic module
by sending the classes $[\tilde{\underline{\bf{\Delta}}}(\alpha)]$ to the standard basis elements preserving the labels.

The projective objects in the graded lift of $\U^\chi_{(0),\F}\operatorname{-mod}^{T_0}$
are $\tilde{\underline{\bf{\Delta}}}$-filtered.  It follows from  \cite[Theorem 5.3.5]{BM}
that the basis of graded lifts of projectives is the canonical basis.
So the coefficients of the transition matrix are in  $\Z_{\geqslant 0}[v]$.
\end{proof}

\begin{Rem} The action of the Kazhdan-Lusztig basis elements indexed by simple reflections on
the periodic module is categorified by graded lifts of reflection functors.
When we apply these graded lifts
to the graded projective objects, we get the finite direct sum of graded projective objects.
So the proof of Proposition \ref{Prop:positive_Lusztig} also implies the finiteness
conjecture from \cite[Introduction]{Lusztig_periodic}.
\end{Rem}

%Note that the multiplicities in
%$\U^\chi_{(0),\F}\operatorname{-mod}^{Z(\underline{G})^\circ}$
%are controlled by the canonical basis of Lusztig's periodic
%W-graph, see \cite{Lusztig_periodic}, \cite{Lusztig_K2}.
%Our result allows to express these multiplicities in as
%the sum of (semi-periodic) affine Kazhdan-Lusztig polynomials.
%This establishes \cite[Conjecture 13.16]{Lusztig_periodic}
%and also \cite[Conjecture 9.20]{Lusztig_K2}.

Now we return to the  case  when $\chi$ is a general distinguished element.
Let $V$ be an irreducible object in
$\U^\chi_{(0),\F}\operatorname{-mod}^{\underline{Q}}$ and we view it
as an object in $\U^\chi_{(0),\F}\operatorname{-mod}^{\underline{Q}^\circ}$.
Below we will produce a general recipe to compute the number of irreducible summands of $V$ based on the representation theory of affine Hecke
algebras. Unfortunately, our recipe is not so good for computations.
This method works best when  $A$ is commutative.
The latter is always the case for classical Lie algebras when $G$
is the corresponding linear group. From now on we assume that
$\chi$ is distinguished and $A$ is abelian.

First, let us recall a definition of a centrally extended set with a group action.
Let $Y$ be a finite set together with an action of a finite group $\Gamma$ and $\mathbb{K}$
be an algebraically closed field. By a {\it centrally
extended $\Gamma$-set} structure on $Y$ we mean a $\Gamma$-invariant assignment
$y\mapsto s_y\in H^2(\Gamma_y,\mathbb{K}^\times)$. To a centrally extended
set $Y$ one can assign a category $\mathsf{Sh}^\Gamma(Y)$ of $\Gamma$-equivariant
sheaves of finite dimensional vector spaces on $Y$: for such a sheaf its fiber at
$y$ is a projective representation of $\Gamma$ with Schur multiplier $s_y$.

An example we need is as follows. Take $\mathbb{K}:=\F,Y:=\operatorname{Irr}(\U^\chi_{(0),\F}),
\Gamma:=A$. The structure of a centrally extended $A$-set on $Y$ comes from
the action of $A$ on $\U^\chi_{(0),\F}$ by algebra automorphisms.

The category of semisimple objects in $\U^\chi_{(0),\F}\operatorname{-mod}^A$
is  equivalent to $\mathsf{Sh}^A(Y)$ via the functor $\Hom_{\U}(S,\bullet)$,
where $S$ is the direct sum of all irreducible $\U^\chi_{(0),\F}$-modules,
each with multiplicity $1$. Every irreducible object in $\mathsf{Sh}^A(Y)$ is supported on
a single orbit. Let $V$ be an irreducible object in $\mathsf{Sh}^A(Y)$
and $Ay$ be its support. Its fiber at $y$ is an irreducible projective representation $V_y$ of
$A_y$ with Schur multiplier $s_y$. Then the number of irreducible constituents
of $V$ coincides with $|Ay|\dim V_y$. We remark that this number is a power of 2
when $\g$ is of types B,C,D, or $\g$ is adjoint and $A$ is abelian.
To compute it we need to know $|Ay|$ and also $\dim K_0(\mathsf{Sh}^A(Ay))$, which
coincides with the  number of the irreducible projective $A_y$-modules with  Schur
multiplier $s_y$. We remark that nontrivial Schur multipliers
indeed appear, this can be deduced from \cite[Section 4]{LP}.

Let $J^a_G$ denote Lusztig's asymptotic Hecke algebra
for $W^a_G$. Lusztig has constructed a $\Z[v^{\pm 1}]$-algebra homomorphism  $\Hecke^a_G
\rightarrow J^a_G[v^{\pm 1}]$, \cite{Lusztig_affine}. Let $J^a_{G,\Orb}$ denote the direct summand
of $J^a_G$ corresponding to $\Orb$.

Recall that $\mathfrak{c}_P$ denotes the left
cell containing $w_{0,P}$.

The following claim is known to follow from Theorem \ref{Thm:perv_equiv}.
We provide a proof for readers convenience.

\begin{Prop}\label{Prop:cells}
The following claims hold:
\begin{enumerate}
\item
The $A$-orbits in $Y$ are identified with the left (or right) cells inside the two-sided
cell corresponding to $\Orb$.
\item The based algebras $J^a_{G,\Orb}$ and $K_0(\mathsf{Sh}^A(Y\times Y))$
are identified so that the image of $x\in W^a$ is in $K_0(\mathsf{Sh}^A(Y_\ell\times Y_r))$,
where $Y_{\ell},Y_r$ are $A$-orbits corresponding to the left cells
of $x,x^{-1}$.
\item The identification in (2) restricts to a bijection
between $\mathfrak{c}_P$ and the irreducible objects in
$\mathsf{Sh}^A(Y)$.
\end{enumerate}
\end{Prop}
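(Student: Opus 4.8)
The plan is to prove Proposition~\ref{Prop:cells} by transporting Lusztig's cell combinatorics through the chain of equivalences established in Sections~\ref{S_der_loc}--\ref{S_parab_equiv}, and by invoking the known theory of the asymptotic Hecke algebra together with Theorem~\ref{Thm:perv_equiv}. First I would set up the basic dictionary: by Theorem~\ref{Thm:derived_localization} and Lemma~\ref{Lem:reduct_mod_p}, the category $\U^\chi_{(0),\F}\operatorname{-mod}$ is Morita-equivalent to $(\A_{\h,e})_\F\operatorname{-mod}$, and the equivalences $R\Gamma(\mathcal V^\chi_0(\rho)\otimes\bullet)$, $R\Gamma(\mathcal T(-\rho)\otimes\bullet)$ identify its $K_0$ with $K_0(\Coh^{Z_G(e)}(\B_e))$, $A$-equivariantly. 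Via the $A$-equivariant ($=Z_G(e)$-equivariant modulo $Z_G(e)^\circ$, since $\chi$ is distinguished) version of Theorem~\ref{Thm:perv_equiv} restricted to the open orbit $\Orb$, this matches the action of $\Hecke^a_G$ on $K_0$ coming from $\Perv_{I^\circ,\Orb}(\Fl)$. On the perverse side the simple objects in the two-sided cell $\dcell$ corresponding to $\Orb$ are indexed by the elements of that cell, and the $\Hecke^a_G$-module they span, after passing to the asymptotic algebra $J^a_G$, becomes a direct sum of cell modules indexed by the left cells in $\dcell$; this is exactly Lusztig's description in \cite{Lusztig_affine}.

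Next I would prove (1). The set $Y=\operatorname{Irr}(\U^\chi_{(0),\F})$ is identified, as an $A$-set, with the set of $A$-orbits on simple objects of the category of $Z_G(e)$-equivariant $(\A_{\h,e})_\F$-modules supported on $\Orb$, and by the equivariant Theorem~\ref{Thm:perv_equiv} (as in the proof of Theorem~\ref{Thm:distinguished_p_dim}) these simples are in $A$-equivariant bijection with the simples $L^P_x$, $x$ in the two-sided cell $\dcell$ corresponding to $\Orb$ (here $P=\underline{P}=B$-type data for the distinguished $e$). The decomposition of $\dcell$ into left cells is permuted by $A$ through the identification $A\cong Z_G(e)/Z_G(e)^\circ$; more precisely, the standard fact (see \cite{BM} and \cite{Lusztig_affine}) that $K_0(\Coh^{Z_G(e)}(\B_e))$ is an induced module $\operatorname{Ind}_{\mathsf{triv}}$ whose $Z_G(e)^\circ$-isotypic pieces realize the individual left-cell modules gives that the $A$-orbits on $Y$ are precisely the left (equivalently right, by the anti-involution $x\mapsto x^{-1}$ of Section~\ref{SS_dual_basic}) cells inside $\dcell$. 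I would then deduce (2): the convolution algebra $K_0(\mathsf{Sh}^A(Y\times Y))$ is by construction the endomorphism algebra of $\bigoplus_{\mathcal L}\mathcal L$ in the semisimplified equivariant category, and under the identifications above this is $K_0^{Z_G(e)}$ of the fiber of $\St_\h$ over $e$, i.e.\ $K_0^{Z_G(e)}(\B_e\times_{\{e\}}\B_e)$, which by Lusztig \cite{Lusztig_K2} (the $J$-ring incarnation of the asymptotic Hecke algebra) is canonically $J^a_{G,\Orb}$ with basis element $t_x$ landing in the $(Y_\ell,Y_r)$-block determined by the left cells of $x$ and $x^{-1}$; the Schur multipliers $s_y$ are exactly the obstruction classes recording how $A$ acts projectively on the simple, matching the central extensions appearing in Lusztig's formula for $J^a_{G,\Orb}$ in terms of $\mathsf{Sh}^A(Y\times Y)$.

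Finally (3) is immediate from (2) together with the parabolic construction: the left cell $\dcell_P$ containing $w_{0,P}$ lies inside the two-sided cell $\dcell$ (this is the last sentence of Section~\ref{SS_affine_Hecke_reminder}), and under the identification of $\dcell$ with $\bigsqcup_{\text{left cells}}(\text{left cell})$ and of left cells with $A$-orbits on $Y$, the distinguished left cell $\dcell_P$ corresponds to a single $A$-orbit; the irreducible objects of $\mathsf{Sh}^A(Y)$ supported on that orbit are in bijection with $\dcell_P$ by part (1) of Theorem~\ref{Thm:distinguished_p_dim}, which already established the bijection $\dcell_P\xrightarrow{\sim}\operatorname{Irr}(\U^\chi_{\lambda,\F}\operatorname{-mod}^{A})$; combining with the $\mathsf{Sh}^A$-description gives the claim. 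The main obstacle I anticipate is matching the \emph{central extension} data precisely: one must check that the Schur multiplier $s_y$ coming from the $A$-action on $\U^\chi_{(0),\F}$ by algebra automorphisms agrees with Lusztig's cohomology class in his description of $J^a_{G,\Orb}$, and that the functor $\Hom_\U(S,\bullet)$ intertwines convolution on $Y\times Y$ with the $J$-ring product; this is where one genuinely needs the $A$-equivariant refinement of Theorem~\ref{Thm:perv_equiv} rather than just its $K_0$-level shadow, and I would spend most of the proof there, reducing everything else to citations of \cite{Lusztig_affine,Lusztig_K2,BM}.
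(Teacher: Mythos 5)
Your outline points at the right objects but the load-bearing step is missing, and in two places the argument as written is not correct. The paper's proof of (1) and (2) does not proceed through $K_0$ at all: it uses that the equivalence of Theorem \ref{Thm:perv_equiv} between the subquotient categories $\Perv_{I^\circ,\Orb}(\Fl)$ and $\A_\h\otimes_{\C[\g^*]}\A^{opp}\operatorname{-mod}_{\Orb}$ is \emph{monoidal}, passes to the full subcategories of semisimple objects (closed under truncated convolution on one side and tensor product on the other), and thereby obtains a monoidal equivalence $\mathfrak{J}^a_{G,\Orb}\xrightarrow{\sim}\Sh^A(Y\times Y)$ between Lusztig's asymptotic category and the category of equivariant sheaves; (1) and (2) then follow because left/right equivalence of simple objects is an intrinsic notion in a semisimple monoidal category, recovering cells on one side and left/right supports on the other. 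In your proposal this mechanism is replaced by a citation of \cite{Lusztig_K2} for the isomorphism $J^a_{G,\Orb}\cong K_0(\Sh^A(Y\times Y))$ and a ``standard fact'' about isotypic pieces of $K_0(\Coh^{Z_G(e)}(\B_e))$ realizing left-cell modules. Neither is available: in \cite{Lusztig_K2} the description of the asymptotic algebra in these terms is conjectural, and the ``standard fact'' you invoke is essentially the statement being proved. Moreover your argument for (1) conflates the indexing sets: the elements of the two-sided cell attached to $\Orb$ do not label points of $Y$ (nor $A$-orbits in $Y$, nor the simples $L^P_x$ surviving in the cell quotient, which are labelled only by ${\mathfrak{c}}_P$); they label simple objects of $\mathfrak{J}^a_{G,\Orb}$, i.e.\ simple objects of $\Sh^A(Y\times Y)$, which carry both an orbit pair and projective-representation data. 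You correctly flag that the central-extension matching and the intertwining of convolutions is the crux and that one needs the categorical, not $K_0$-level, form of Theorem \ref{Thm:perv_equiv}, but you do not supply the argument, so the crux remains open in your write-up.

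For (3) your route also misses the decisive observation. The paper's proof is that the $A$-orbit in $Y$ corresponding to the left cell ${\mathfrak{c}}_P$ is a single $A$-fixed point, namely the simple module $W^\chi_\F(0)$ (which is irreducible by the Kac--Weisfeiler bound and is $A$-stable); consequently the block of $\Sh^A(Y\times Y)$ with right support at that point is $\Sh^A(Y)$ itself, and the simples of $\mathfrak{J}^a_{G,\Orb}$ lying in ${\mathfrak{c}}_P$ go exactly to the irreducible objects of $\Sh^A(Y)$ under the identification of (2). Your version instead asserts that the irreducibles of $\Sh^A(Y)$ ``supported on that orbit'' are in bijection with ${\mathfrak{c}}_P$ (they are not -- those are the projective $A$-representations attached to one point), and then appeals to Theorem \ref{Thm:distinguished_p_dim}(1); that theorem does give an abstract bijection ${\mathfrak{c}}_P\xrightarrow{\sim}\operatorname{Irr}(\U^\chi_{\lambda,\F}\operatorname{-mod}^A)\cong\operatorname{Irr}(\Sh^A(Y))$, but it does not show that this bijection is the restriction of the identification in (2), which is what part (3) claims. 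So both the main identification in (1)--(2) and the compatibility statement in (3) need the arguments sketched above rather than the citations you propose.
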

\begin{proof}
Recall, Theorem \ref{Thm:perv_equiv}, that we have an equivalence of the subquotient categories
$\Perv_{I}(\Fl)_{\Orb}\xrightarrow{\sim} \A\otimes_{\C[\g^*]}\A^{opp}\operatorname{-mod}_{\Orb}$.
This equivalence is monoidal. We can consider the categories of semisimple objects
in these two categories, they are closed with respect to tensor product functors
(the truncated convolution for $\Perv_{I}(\Fl)_{\Orb}$ and the usual tensor product for
$\A\otimes_{\C[\g^*]}\A^{opp}\operatorname{-mod}_{\Orb}$).
This category for  $\A\otimes_{\C[\g^*]}\A^{opp}\operatorname{-mod}_{\Orb}$
is nothing else but $\Sh^A(Y\times Y)$. For $\Perv_{I}(\Fl)_{\Orb}$
we get Lusztig's asymptotic Hecke category $\mathfrak{J}^a_{G,\Orb}$
categorifying $J^a_{G,\Orb}$. So we have a monoidal equivalence
$\mathfrak{J}^a_{G,\Orb}\xrightarrow{\sim}
\Sh^A(Y\times Y)$.

We have naturally defined left and right equivalence relations on the
set of simple objects in any semisimple monoidal category. For $\mathfrak{J}^a_{G,\Orb}$
we recover the usual left and right equivalence relations on elements in a
given two-sided cell. For $\Sh^A(Y\times Y)$, two simple objects are left
(resp., right) equivalent if and only if their left (resp., right)
supports coincide. These observations imply (1) and (2).

To prove (3), we note that the orbit corresponding to $\mathfrak{c}_P$
is a single point: a simple $\U^\chi_{0,\F}$-module corresponding to
$w_{0,P}$ is $W^\chi_\F(0)$, it is $A$-stable.
\end{proof}

For $y\in Y$, we write $\mathfrak{c}_y$ for the right cell
corresponding to $Ay$. By (3) of the previous proposition,
the irreducible objects in
$\mathsf{Sh}^A(Ay)$ are in bijection with $\mathfrak{c}_y\cap \mathfrak{c}_P$.

Now we interpret $|Ay|$.
For a generic number $\alpha\in \C^\times$, we have $\Hecke^a_G|_{v=\alpha}
\twoheadrightarrow J^a_{G,\Orb}$. This homomorphism is surjective because
the target is finite dimensional and semisimple and, as Lusztig proved
in \cite{Lusztig_affine}, the pullbacks of the irreducible representations
of $J^a_{G,\Orb}$ to $\Hecke^a_G|_{v=\alpha}$
are irreducible and pairwise non-isomorphic. The representation of
$\Hecke^a_G|_{v=\alpha}$ at $K^{\C^\times}(\B_e)|_{v=\alpha}$ is pulled back
from the $J^a_{G,\Orb}$-module $K_0(\mathsf{Sh}(Y))$   (here we consider
complexified  $K_0$-groups). Let $e_y$ denote the idempotent in $J^a_{G,\Orb}$
corresponding to $Ay$. Let $\tilde{e}_y$ be its preimage in $\Hecke^a_G$. Then $|Ay|$
coincides with the trace of $\tilde{e}_y$ in $K^{\C^\times}(\B_e)|_{v=\alpha}$.

\subsection{Towards categorification of Theorem \ref{Thm:K0_class_general}}\label{SS_categorif}
Theorem \ref{Thm:distinguished_p_dim} is essentially a $K_0$-manifestation
of results of Section \ref{SS_parab_statements} but Theorem \ref{Thm_dim_general} is
a purely $K$-theoretic statement. One could try to categorify it by producing a
constructible realization of $D^b(\Coh^{\underline{G}}(\tilde{\g}\times^L_\g \underline{\tilde{\Nilp}}))$
or the heart of one of its t-structures, e.g. $\operatorname{Perv}(\A_\h\otimes_{\C[\g^*]}\underline{\A}^{opp}\operatorname{-mod}^{\underline{G}})$,
where $\underline{\A}$ is an analog of $\A$ for $\underline{G}$.

We expect that $\operatorname{Perv}(\A_\h\otimes_{\C[\g^*]}\underline{\A}^{opp}\operatorname{-mod}^{\underline{G}})$
should be equivalent to the category of $I^\circ$-equivariant perverse sheaves on a
``quarter-infinite'' affine flag variety. Morally, this should be a category
of perverse sheaves on $G^\vee((t))/I'$, where $I'$ is a mixed Iwahori subgroup
constructed as follows. Let $\underline{I}$ denote the Iwahori subgroup
of $\underline{G}^\vee((t))$.
We set $I':=\underline{I}\ltimes G^{\vee,>0}((t))$. An issue, however, that
the space $G^\vee((t))/I'$ behaves pretty badly. In the case when $\underline{G}=T$
the issue was circumvented in \cite{ABBGM}, where the right version of
of $\Perv_{I^\circ}(G^\vee((t))/I')$ was constructed. Moreover, it was proved
that the multiplicities in this category are given by the periodic affine Kazhdan-Lusztig
polynomials. And finally, \cite{ABBGM} establishes an equivalence between
$\Perv_{I^\circ}(G^\vee((t))/I')$ and $\A_{\h,0}\operatorname{-mod}^T$, which is what
the category of perverse bimodules becomes in this particular case. It is an interesting question of whether these constructions and results generalize to the case of an arbitrary Levi
subgroup $\underline{G}$.


\begin{thebibliography}{99}
\bibitem[AJS]{AJS} H.H. Andersen, J.C. Jantzen, W. Soergel,
{\it Representations of quantum groups at a $p$-th root of unity and of semisimple groups in characteristic $p$: independence of $p$}, Ast\'{e}risque No. 220 (1994).
\bibitem[AB]{Arinkin_B} D. Arinkin, R. Bezrukavnikov,
{\it Perverse coherent sheaves}. Mosc. Math. J. 10 (2010), no. 1, 3–29, 271.
\bibitem[ABBGM]{ABBGM} S. Arkhipov, R. Bezrukavnikov, A. Braverman, D. Gaitsgory,
I. Mirkovic, {\it Modules over the small quantum group and semi-infinite flag manifold},
Transform. Groups 10 (2005), no. 3-4, 279-362.
\bibitem[ABG]{ABG} S. Arkhipov, R. Bezrukavnikov, V. Ginzburg,
{\it Quantum groups, the loop Grassmannian, and the Springer resolution}. J. Amer. Math. Soc. 17 (2004) 595-678.
\bibitem[Be]{Beilinson} A. Beilinson, {\em On the derived category of perverse sheaves,}
Springer LNM 1289, 27--41.
\bibitem[BBM]{BBM} A. Beilinson, R. Bezrukavnikov, I. Mirkovic, {\em Tilting exercises,}
Mosc. Math. J. 4 (2004), no. 3, 547--557.
\bibitem[B]{B_Hecke} R. Bezrukavnikov,  {\em On two geometric realizations of an affine Hecke algebra,}
    Publ. IHES 123(1) (2016), 1-67.
\bibitem[BK]{BK} R. Bezrukavnikov, D. Kaledin, {\it McKay equivalence for symplectic quotient singularities}. Proc. of the Steklov Inst. of Math. 246 (2004), 13-33.
\bibitem[BL]{BL} R. Bezrukavnikov, I. Losev,
 {\it On dimension growth of modular irreducible representations of semisimple Lie algebras}. Lie groups, geometry, and representation theory, 59–89, Progr. Math., 326, Birkhäuser/Springer, Cham, 2018.
\bibitem[BM]{BM} R. Bezrukavnikov, I. Mirkovic, {\it Representations of semisimple Lie algebras in prime characteristic
and noncommutative Springer resolution}.  Ann. Math. 178 (2013), n.3, 835-919.
\bibitem[BMR1]{BMR_sing} R. Bezrukavnikov, I. Mirkovic, D. Rumynin.
{\it Singular localization and intertwining functors for reductive Lie algebras in prime characteristic}.
Nagoya Math. J.  184  (2006), 1–55.
\bibitem[BMR2]{BMR} R. Bezrukavnikov, I. Mirkovic, D. Rumynin. {\it Localization of modules for a semisimple Lie algebra
in prime characteristic} (with an appendix by R. Bezrukavnikov and S. Riche), Ann. of Math. (2)
167 (2008), no. 3, 945-991.
\bibitem[BR1]{BR} R. Bezrukavnikov, S. Riche, {\it Affine braid group actions on derived categories of Springer resolutions}.
Ann. Sci. Ec. Norm. Super. (4) 45 (2012), no. 4, 535–599.
\bibitem[BR2]{BR_Soergel} R. Bezrukavnikov, S. Riche,
{\it A topological approach to Soergel theory}. arXiv:1807.07614.
\bibitem[BY]{BY} R. Bezrukavnikov, Z. Yun,  {\it On Koszul duality for Kac-Moody groups}.
Represent. Theory 17 (2013), 1-98.
\bibitem[BB]{BorBr} W.~Borho, J.-L.~Brylinksi, {\it Differential operators on homogeneous spaces. I. Irreducibility of the associated variety for annihilators of induced modules},
		Invent. Math. 69 (1982), no. 3, 437-476.
\bibitem[CG]{CG} N. Chriss, V. Ginzburg, {\it Representation theory and complex geometry}. Birkhäuser Boston, Inc., Boston, MA, 1997.
\bibitem[CM]{CM} D. Collingwood, W. McGovern, {\it Nilpotent orbits in semisimple Lie algebras}.
Chapman and Hall, London, 1993.
\bibitem[J]{Jantzen} J.C. Jantzen, {\it Einh\"{u}llende Algebren halbeinfacher
Lie-Algebren}. Springer-Verlag, 1983.
\bibitem[KW]{KW} V.G. Kac, B.Yu. Weisfeiler, {\it The irreducible representations of Lie
$p$-algebras.} Funkcional. Anal. i Prilozen. 5 1971 no. 2, 28-36 (in Russian).
\bibitem[KT1]{KT1} M. Kashiwara, T. Tanisaki,
{\it Kazhdan-Lusztig conjecture for affine Lie algebras with negative level}.
Duke Math. J. 77 (1995), no. 1, 21-62.
\bibitem[KT2]{KT2} M. Kashiwara, T. Tanisaki,
{\it Kazhdan-Lusztig conjecture for affine Lie algebras with negative level. II. Nonintegral case}.
Duke Math. J. 84 (1996), no. 3, 771-813.
\bibitem[KL1]{KL} D. Kazhdan, G. Lusztig, {\it Representations of Coxeter groups and Hecke algebras}. Invent. Math. 53 (1979), no. 2, 165-184.
\bibitem[KL2]{KL_affine} D. Kazhdan, G. Lusztig,
{\it Tensor structures arising from affine Lie algebras}.
I,II, J. Amer. Math. Soc. 6 (1993), no. 4, 905-947, 949-1011,
III, IV, J. Amer. Math. Soc. 7 (1994), no. 2, 335-381, 383-453.
\bibitem[Lo]{W_dim}  I. Losev, {\it Dimensions of irreducible modules over W-algebras and Goldie ranks}.
Invent. Math. 200 (2015), N3, 849-923.
\bibitem[LP]{LP} I. Losev, I. Panin, {\it Goldie ranks of primitive ideals and indexes of equivariant Azumaya algebras}.
 arXiv:1802.05651.
\bibitem[Lu1]{Lusztig_Jantzen} G. Lusztig,
{\it Hecke algebras and Jantzen's generic decomposition patterns}.
Adv. in Math. 37 (1980), no. 2, 121-164.
\bibitem[Lu2]{Lusztig_affine} G. Lusztig, {\it Cells in affine Weyl groups, IV}.
J. Fac. Sci. Univ. Tokyo Sect. IA Math. 36 (1989), no. 2, 297–328.
\bibitem[Lu3]{Lusztig_monodromic} G. Lusztig,
{\it Monodromic systems on affine flag manifolds}.
Proc. Roy. Soc. London Ser. A 445 (1994), no. 1923, 231-246.
\bibitem[Lu4]{Lusztig_periodic} G. Lusztig, {\it Periodic
W-graphs}. Represent. Theory 1 (1997), 207–279.
\bibitem[Lu5]{Lusztig_K1} G. Lusztig, {\it Bases in equivariant K-theory}.  Represent. Theory 2 (1998), 298–369.
\bibitem[Lu6]{Lusztig_K2} G. Lusztig, {\it Bases in equivariant K-theory, II}.  Represent. Theory 3 (1999), 281–353.
\bibitem[P]{Premet} A. Premet, {\it Irreducible representations of Lie algebras of reductive groups and the Kac-Weisfeiler conjecture}.
Invent. Math. 121 (1995), no. 1, 79-117.
\bibitem[R]{Riche} S. Riche, {\it Geometric braid group action on derived categories of coherent sheaves}. Represent. Theory 12 (2008) 131-169.
\bibitem[S]{Stroppel} C. Stroppel, {\it Untersuchungen zu den parabolischen
Kazhdan-Lusztig-Polynomen f\"{u}r affine Weyl-Gruppen}. Master thesis, 1997.
\bibitem[V]{Veldkamp} F. Veldkamp, {\it The center of the universal enveloping algebra of a Lie algebra in characteristic p}, Ann. Sci. cole Norm. Sup. (4), 5 (1972), 217-240.
\bibitem[W]{Weibel} C. Weibel, {\it Introduction to homological algebra}. Cambridge Studies in
Advanced Mathematics, Cambridge University Press, 1994.
\end{thebibliography}
\end{document}